\numberwithin{equation}{section}
\newtheorem{theorem}{Theorem}[section]
\newtheorem{proposition}[theorem]{Proposition}
\newtheorem{corollary}[theorem]{Corollary}
\newtheorem{lemma}[theorem]{Lemma}
\theoremstyle{definition}
\newtheorem{remark}[theorem]{Remark}
\newtheorem{definition}[theorem]{Definition}
\newtheorem{example}[theorem]{Example}
\newtheorem{assumption}[theorem]{Assumption}
\newcommand{\PP}{\mathbb{P}}
\newcommand{\C}{\mathbb{C}}
\newcommand{\Z}{\mathbb{Z}}
\newcommand{\Cat}{\mathcal{C}}
\newcommand{\M}{\mathcal{M}}
\newcommand{\F}{\mathcal{F}}
\newcommand{\CO}{\mathcal{O}}
\newcommand{\I}{\mathcal{I}}
\newcommand{\diag}{\mathrm{diag}}
\newcommand{\Hilb}{\mathrm{Hilb}}
\newcommand{\Aqt}{\mathbb{A}_{q,t}}
\newcommand{\Bqt}{\mathbb{B}_{q,t}}
\newcommand{\Aq}{\mathbb{A}_{q}}
\newcommand{\sq}{\square}
\newcommand{\aut}{\theta}
\newcommand{\adj}{\Theta}
\newcommand{\e}{\mathbbm{1}}
\newcommand{\Sym}{\mathrm{Sym}}
\DeclareMathOperator{\Hom}{Hom}
\DeclareMathOperator{\ch}{ch}
\DeclareMathOperator{\filt}{F}
\DeclareMathOperator{\rad}{rad}
\DeclareMathOperator{\soc}{soc}
\DeclareMathOperator{\ssn}{ss}
\DeclareMathOperator{\gr}{gr}
\newcommand{\resc}{\mathrm{resc}}
\newcommand{\rev}{\mathrm{rev}}
\newcommand{\AH}{\mathrm{AH}}
\newcommand{\cB}{\mathcal{B}}
\newcommand{\EHA}{\mathcal{E}}
\newcommand{\exc}{excellent}
\newcommand{\ext}{\mathrm{ext}}
\newcommand{\Ch}{\mathrm{Ch}}
\newcommand{\gCh}{\mathrm{gCh}}
\newcommand{\newword}[1]{\emph{\textbf{#1}}}
\title{Calibrated Representations of the Double Dyck Path Algebra}
\author[N. Gonz\'alez]{Nicolle Gonz\'alez}
\address{Department of Mathematics,  University of California Berkeley , CA 94720-3840, U.S.A.}
\email{nicolle@math.berkeley.edu}
\author[E. Gorsky]{Eugene Gorsky}
\address{Department of Mathematics\\ University of California, Davis\\ One Shields Avenue, Davis, CA, USA}
\email{egorskiy@math.ucdavis.edu}
\author[J. Simental]{Jos\'e Simental}
\address{Instituto de Matem\'aticas, Universidad Nacional Aut\'onoma de M\'exico. Ciudad Universitaria, CDMX,  M\'exico}
\email{simental@im.unam.mx}
\date{}
\begin{document}

\maketitle
\begin{abstract}
The double Dyck path algebra $\Aqt$ and its polynomial representation first arose as a key figure in the proof of the celebrated Shuffle Theorem of Carlsson and Mellit. A geometric formulation for an equivalent algebra $\Bqt$ was then given by the second author and Carlsson and Mellit using the K-theory of parabolic flag Hilbert schemes.  In this article, we initiate the systematic study of the representation theory of the double Dyck path algebra $\Bqt$. We define a natural extension of this algebra and study its calibrated representations. We show that the polynomial representation is calibrated, and place it into a large family of calibrated representations constructed from posets satisfying certain conditions. We also define tensor products and duals of these representations, thus proving (under suitable conditions) the category of calibrated representations is generically monoidal. As an application, we prove that tensor powers of the polynomial representation can be constructed from the equivariant K-theory of parabolic Gieseker moduli spaces. 
\end{abstract}

\setcounter{secnumdepth}{4}
\setcounter{tocdepth}{1}
\tableofcontents

\section{Introduction}

In \cite{Shuffle}, Carlsson and Mellit introduced a remarkable algebra $\Aqt$, known as the \newword{double Dyck path algebra}, and its polynomial representation and used it to prove the long-standing Shuffle Conjecture \cite{HHLRU, HMZ}. This infinite dimensional algebra contains many well known subalgebras, such as the elliptic Hall \cite{Mellit} and affine Hecke algebras, and has been shown to have deep connections with skein theory \cite{GH}, the homology of toric knots \cite{ Mellit}, and double affine Hecke algebras \cite{milo, IonWu}. In particular, in \cite{CGM} the second author alongside Carlsson and Mellit realized the polynomial representation geometrically on the K-theory of parabolic flag Hilbert schemes by studying a closely related algebra denoted $\Bqt$. This algebra is simpler than $\Aqt$ as it contains one fewer generator; nonetheless, the algebras $\Aqt$ and $\Bqt$ are, in a sense, equivalent as they both simultaneously contain each other \cite{CGM}.

In this paper, we initiate the study of the representation theory of the $\Bqt$ algebra (see Section \ref{sec: Bqt} for the definition). Prior to our results in this article, the only known representation of $\Bqt$ was the so-called \newword{ polynomial representation} 
\begin{equation}
\label{eq: polynomial}
V=\bigoplus_{k=0}^{\infty}V_k,\qquad V_k=\Sym_{q,t}\otimes \C[y_1,\ldots,y_k]
\end{equation}
where $\Sym_{q,t}$ is the ring of symmetric functions in infinitely many variables over $\C(q,t)$. The action of the generators of $\Bqt$ on $V$ can be either defined by explicit plethystic formulas as in \cite{Shuffle}, geometrically by identifying $V_k$ with equivariant $K$-theory of certain algebraic varieties as in \cite{CGM}, or topologically (at $q=t^{-1}$) as operators on a thickened punctured annulus \cite{GH}.

In this work, we slightly extend the $\Bqt$ algebra by adjoining a family of pairwise commuting generators $\Delta_{p_m}\ (m>0)$ indexed by power sum symmetric functions $p_m$. Notably, in the polynomial representation \eqref{eq: polynomial} these Delta generators act on $V_0\simeq \Sym_{q,t}$ by Macdonald operators. Geometrically, the Delta generators correspond to certain tautological bundles. We denote by $\Bqt^{\ext}$ the algebra generated by $\Bqt$ and $\Delta_{p_m}$ (see Definition \ref{def:BqtExt} for the relations). In particular, the operators $\Delta_{p_m}$ commute with the generators $z_i$ for all $m$ and $i$.

 The notion of a \newword{calibrated representation} was coined by Ram \cite{Ram} in an effort to understand and give explicit bases for the irreducible modules of the affine Hecke algebra (AHA) by restricting to representations with a basis of simultaneous eigenvectors for a large commutative subalgebra, thus mimicking the classical construction for representations of complex semisimple Lie algebras. The restriction to this class of modules has proven to be very fruitful, with calibrated representations and Gelfand-Tsetlin bases \cite{GZ} being studied for the degenerate AHA \cite{Cherednik}, modular reps of $S_n$ \cite{Kleshchev}, the double affine Hecke algebra at generic parameters \cite{SuzukiVazirani} and roots of unity \cite{BCMN}, the rational Cherednik algebra \cite{Griffeth}, Khovanov-Lauda-Rouqui\"er algebras \cite{KleshchevRam}, the periplectic Brauer algebra \cite{ImNorton} and its degenerate affine counterpart \cite{DHIN}, to name a few. In particular, calibrated representations are very natural generalizations of unitary representations, which play an in important role in the representation theory of finite groups. These were used by the third author alongside Bowman and Norton to relate and classify unitary and calibrated representations for cyclotomic Hecke algebras \cite{BNS2}. 

Consequently, given the natural inclusion of the affine Hecke algebra into $\Bqt$, it is sensible to initiate the study of the representation theory of $\Bqt^{\ext}$ by restricting to an analogous class of modules. 

\begin{definition}
\label{def: calibrated intro}
We call a representation of $\Bqt^{\ext}$ \newword{ calibrated}, if all operators $\Delta_{p_m}$ and $z_i$ diagonalize simultaneously with joint simple spectrum.
\end{definition}
\noindent We note, that by \cite[Theorem 7.0.1]{CGM} the polynomial representation is calibrated (strictly speaking, in \cite{CGM} only the joint eigenbasis for $z_i$ is defined, but the diagonal action of $\Delta_{p_m}$ is straightforward, see Section \ref{sec: poly}). Moreover, any calibrated $\Bqt^{\ext}$ representation is a calibrated AHA representation. 

In \cite{Ram} Ram proves that isomorphism classes of irreducible calibrated representations of the AHA are in bijection with certain equivalence classes of so-called calibrated sequences $[\underline{w}]=[w_k,\dots w_1]$. Inspired by his description, in this paper we construct and classify a large class of calibrated representations of $\Bqt^{\ext}$ that arise from certain weighted posets and use these to provide a partial classification result.

\begin{definition}
\label{def: poset intro}
A \newword{weighted poset} is a graded poset $E$ with a collection of symmetric functions $p_m:E\to \C(q,t)$ such that if $\mu$ covers $\lambda$, then $p_m(\mu)=p_m(\lambda)+x^m$ for some $x$ and all $m$. We call such $x$ an addable weight for $\lambda$ and write $\mu = \lambda \cup x$. 

For $\lambda \in E$ with $x \in \C(q,t)$ addable for $\lambda$, and $y \in \C(q,t)$ addable for $\lambda\cup x$,   a weighted poset is \newword{excellent} if, in addition, $y\neq x,y\neq (qt)^{\pm 1}x$, and either $y=qx$, or $y=tx$, or $y$ is addable for $\lambda$ and $x$ is addable for $\lambda\cup y$. 
\end{definition}

We define a $\Bqt^{\ext}$-module $V(E)$ from an excellent weighted poset $E$ by considering the span of \emph{good chains} in $E$, which are certain maximal chains of the form (see Definition \ref{def: good new}):
$$
[\lambda;\underline{w}]:=\{\lambda, \lambda\cup w_k, \lambda\cup w_k\cup w_{k-1},\ldots, \lambda\cup w_k\cup\ldots\cup w_1\},
$$
and describing the $\Bqt^{\ext}$ action explicitly. While some of the operators act diagonally as expected, 
\[
z_i[\lambda; \underline{w}]=w_i[\lambda; \underline{w}] \qquad \Delta_{p_m}[\lambda; \underline{w}]=\Bigl( p_m(\lambda)+w_k^m+\ldots+w_1^m \Bigr) [\lambda; \underline{w}], 
\]
the action of $d_{+}$ relies on auxiliary coefficients $c(\lambda;x)$ defined for all $\lambda\in E$ with addable weights $x$. One of our first main results is that such a representation always exists and is calibrated. 

\begin{theorem}[Thm. \ref{thm: relations hold}]
\label{thm: rep from excellent intro}
Given any excellent weighted poset $E$, there exists a representation $V(E)=\bigoplus_{k=0}^{\infty}V_k(E)$ with basis consisting of good chains $[\lambda; \underline{w}]$ and $\Bqt^{ext}$-action given by Definition \ref{def: calibrated from excellent}. Moreover, $V(E)$ is calibrated if and only if the coefficients $c(\lambda;x)$ satisfy the additional condition in equation \eqref{eq: monodromy}.

\end{theorem}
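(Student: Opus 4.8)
The plan is to verify directly that the operators defined in Definition~\ref{def: calibrated from excellent} satisfy every defining relation of $\Bqt^{\ext}$, and then to isolate exactly which relations impose constraints on the auxiliary coefficients $c(\lambda;x)$. First I would organize the relations into two groups: those involving only the ``diagonal'' generators $z_i$, $\Delta_{p_m}$ together with the Hecke-type generators $T_i$ (and $\psi$, $\psi^\dagger$ if present in the presentation), and those involving the raising/lowering operators $d_+$, $d_-$. For the first group the check is essentially bookkeeping: since $z_i$ acts on $[\lambda;\underline w]$ by the scalar $w_i$ and $\Delta_{p_m}$ by $p_m(\lambda)+\sum_j w_j^m$, the commutation $[\Delta_{p_m}, z_i]=0$ and $[\Delta_{p_m},\Delta_{p_{m'}}]=0$ are immediate, and the affine Hecke relations among the $T_i$ and $z_i$ reduce to the classical calibrated-representation computation of Ram~\cite{Ram}, using that on a good chain the relevant weights satisfy $w_{i+1}\notin\{w_i,(qt)^{\pm1}w_i\}$ — this is precisely where the \exc{} hypothesis $y\neq x, y\neq(qt)^{\pm1}x$ gets used, guaranteeing the $T_i$ are well defined and invertible.

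The substance is in the relations coupling $d_+$ with the other generators: the commutators $[d_+,z_i]$, $[d_+,T_i]$, the relation expressing how $d_+$ interacts with $\Delta_{p_m}$ (here the new $\Delta$ generators enter, and one needs $p_m(\lambda\cup x)=p_m(\lambda)+x^m$ — built into the definition of weighted poset), and most importantly the mixed relations between $d_+$ and $d_-$ (e.g. the relation that in $\Aqt$ reads like a commutator $[d_-,d_+]$ producing the elliptic Hall generators, or whatever normal form $\Bqt$ uses). I would apply both sides of each such relation to a basis vector $[\lambda;\underline w]$, expand using the explicit formulas, and collect the coefficient of each good chain appearing on the right. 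Because $d_+$ raises $k$ by one and inserts a new addable weight $x$ with coefficient $c(\lambda\cup w_k\cup\cdots;x)$, while $d_-$ lowers $k$, compositions $d_-d_+$ and $d_+d_-$ produce sums over addable/removable weights whose equality forces recursions among the $c(\lambda;x)$. I expect that all relations \emph{except} the one detecting the ``monodromy'' around a length-two chain (going up by $x$ then $y$ versus up by $y$ then $x$, in the case both orders are allowed, or the $y=qx$/$y=tx$ degenerate cases) are satisfied for \emph{any} choice of the $c$'s compatible with the poset grading, using only the \exc{} conditions; this is what makes $V(E)$ a representation unconditionally.

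The calibration statement is then extracted as follows: by construction the $z_i$ and $\Delta_{p_m}$ are simultaneously diagonal on the good-chain basis, so calibration in the sense of Definition~\ref{def: calibrated intro} holds if and only if the joint spectrum is \emph{simple}, i.e.\ distinct good chains carry distinct tuples of eigenvalues. Two good chains $[\lambda;\underline w]$ and $[\lambda';\underline w']$ in the same $V_k(E)$ have the same $z$-eigenvalues iff $\underline w=\underline w'$ as sequences and the same $\Delta$-eigenvalues iff additionally $p_m(\lambda)=p_m(\lambda')$ for all $m$, i.e.\ $\lambda$ and $\lambda'$ have the same multiset of weights. I would show that the potential failure of simplicity is governed precisely by whether the two-step coefficient products $c(\lambda;x)\,c(\lambda\cup x;y)$ and $c(\lambda;y)\,c(\lambda\cup y;x)$ agree — equivalently by the condition displayed in \eqref{eq: monodromy} — so that this condition holds if and only if $V(E)$ splits as a direct sum of the joint eigenlines, which is calibration. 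The main obstacle I anticipate is purely computational stamina: correctly matching, term by term, the $d_\pm$ actions in the mixed relations — especially tracking the Hecke factors $T_i$ and the rational coefficients that appear when $d_+$ hops past them — and confirming that the residual obstruction collapses to exactly \eqref{eq: monodromy} and not a longer list of conditions; degenerate cases $y=qx$ or $y=tx$ will likely need to be handled separately from the generic ``both orders addable'' case.
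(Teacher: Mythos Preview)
Your general plan—verify each defining relation of $\Bqt^{\ext}$ on the good-chain basis and see which ones constrain the $c$'s—is exactly the paper's approach. But you have misidentified \emph{where} the monodromy condition \eqref{eq: monodromy} enters, and this leads to a genuine gap.

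The operators $z_i$ and $\Delta_{p_m}$ act diagonally on the good-chain basis by construction; the simple-spectrum property follows immediately from the simple-spectrum axiom on the weighted poset (two good chains $[\lambda;\underline w]$ and $[\lambda';\underline w']$ share all eigenvalues iff $\underline w=\underline w'$ and $p_m(\lambda)=p_m(\lambda')$ for all $m$, hence $\lambda=\lambda'$). So calibration is automatic whenever the operators form a representation at all, and has nothing to do with the coefficients $c(\lambda;x)$—these appear only in $d_+$, not in $z_i$ or $\Delta_{p_m}$. Your proposed argument, that failure of simple spectrum is controlled by whether the products $c(\lambda;x)c(\lambda\cup x;y)$ and $c(\lambda;y)c(\lambda\cup y;x)$ agree, cannot work for this reason (and note that \eqref{eq: monodromy} does not say these products agree; it fixes their ratio to a specific nontrivial rational function of $x,y$).

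What actually happens is the reverse of your claim: the formulas do \emph{not} give a representation for arbitrary $c$'s. The relation \eqref{eq:qphi} holds for any choice of edge functions (Lemma~\ref{lem: qphi}; this is in fact the origin of the product $\prod_i\frac{x-tw_i}{x-qtw_i}$ in the $d_+$ formula), but the relation $T_1 d_+^2 = d_+^2$ from \eqref{eq:T d+}, applied to $[\lambda]$, compares the coefficients of $[\lambda;x,y]$ and $[\lambda;y,x]$ in $d_+^2[\lambda]$; writing out $T_1$ via \eqref{eq: T Ram new} yields exactly \eqref{eq: monodromy} (Lemma~\ref{lem: monodromy}). Once this holds, the remaining relations—in particular the two $\varphi$-relations \eqref{eq:phi}—are checked by direct, if lengthy, rational-function identities. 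The correct logical structure is therefore: (the formulas define a representation) $\Leftrightarrow$ \eqref{eq: monodromy}, and any such representation is automatically calibrated.
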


While it may seem as if the representations $V(E)$ depend heavily on the choice of parameters $c(\lambda;x)$, we show in Theorem \ref{thm: existence and uniqueness} this is not the case, with any choice of coefficients satisfying \eqref{eq: monodromy} yielding isomorphic representations. It is interesting to note, that this defining equation bears remarkable similarity with the `wheel condition' that arises in the Shuffle algebra \cite{negut}.

Armed with Theorem \ref{thm: rep from excellent intro}, we construct many examples of calibrated representations. For instance, in Theorem \ref{thm:submodules} we classify all submodules of $V(E)$ in terms of certain coideals associated to $E$. If the underlying poset $E$ is finite, we get interesting finite-dimensional representations of $\Bqt^{\ext}$. The polynomial representation \eqref{eq: polynomial} corresponds to the poset of Young diagrams,
and many more examples can be constructed by studying Young diagrams with certain restrictions (for instance, with at most $N$ rows, $N$ columns, etc).

Next, we study the monoidal structure in the category of calibrated representations. 
We begin by defining the tensor product of the representations $V(E_1)$ and $V(E_2)$, provided that the weights of the posets $E_1$ and $E_2$ are sufficiently generic with respect to each other. If this happens, we will say that $E_1$ and $E_2$ are \emph{in general position} (see Section \ref{sec: tensor}).

\begin{theorem}[Thm. \ref{thm: c for tensor product}]
\label{thm: tensor intro}
Given two excellent weighted posets $E_1$ and $E_2$ in general position one can define an excellent weighted poset $E_1\times E_2$. The underlying poset is simply $E_1\times E_2$, but the corresponding representation $V(E_1\times E_2)$ is {\bf not} the tensor product of $V(E_1)$ and $V(E_2)$. Instead,
\begin{equation}
V_k(E_1\times E_2)\simeq \bigoplus_{i=0}^{k}V_i(E_1)\otimes V_{k-i}(E_2)\otimes \C^{\binom{k}{i}}.
\end{equation}
\end{theorem}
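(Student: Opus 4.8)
The plan is to build the weighted poset $E_1 \times E_2$ by hand, verify it is excellent, and then compute the dimensions of its graded pieces combinatorially, matching them against the claimed decomposition. First I would define the weight functions on $E_1 \times E_2$ by $p_m((\lambda_1,\lambda_2)) = p_m(\lambda_1) + p_m(\lambda_2)$; since a cover in the product poset changes exactly one coordinate, the addable weights of $(\lambda_1,\lambda_2)$ are precisely the addable weights of $\lambda_1$ together with those of $\lambda_2$, and the weighted-poset axiom $p_m(\mu) = p_m(\lambda) + x^m$ follows immediately. The ``general position'' hypothesis is exactly what is needed here: it guarantees that no addable weight coming from $E_1$ coincides with, or is a $q^{\pm 1}$-, $t^{\pm 1}$-, or $(qt)^{\pm1}$-multiple of, any addable weight coming from $E_2$. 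With this in place, checking the excellence condition from Definition \ref{def: poset intro} splits into three cases according to whether the two successive addable weights $x, y$ both come from $E_1$, both from $E_2$, or one from each. In the first two cases excellence is inherited from $E_1$ or $E_2$ respectively; in the mixed case, general position forces $y \neq x$, $y \neq (qt)^{\pm1}x$ and $y \neq qx, tx$, so we must be in the ``addable for $\lambda$ and $x$ addable for $\lambda \cup y$'' branch, which holds automatically in a product poset since adding in one coordinate does not interfere with adding in the other.

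Next I would count good chains. A good chain $[(\lambda_1,\lambda_2); \underline{w}]$ of length $k$ in $E_1 \times E_2$ is a maximal chain built by successively adjoining addable weights $w_k, \ldots, w_1$; each step adds either to the $E_1$-coordinate or to the $E_2$-coordinate, so the data of a chain is: a subset $S \subseteq \{1,\ldots,k\}$ of size $i$ recording which steps act on $E_1$, together with an ordered sequence of addable weights for $E_1$ (forming a good chain of length $i$ in $E_1$) placed in the positions indexed by $S$, and likewise a good chain of length $k-i$ in $E_2$ in the complementary positions. The key point to check is that the ``goodness'' (the local conditions in Definition \ref{def: good new} on consecutive weights $w_{j+1}, w_j$) decouples across the two factors: consecutive steps that act on the same coordinate must satisfy the goodness condition in that factor, while consecutive steps acting on different coordinates are automatically fine by general position (again using that $y \neq qx, tx, (qt)^{\pm1}x, x$). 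This bijection gives
\begin{equation}
V_k(E_1\times E_2) \;\simeq\; \bigoplus_{i=0}^{k} V_i(E_1)\otimes V_{k-i}(E_2)\otimes \C^{\binom{k}{i}},
\end{equation}
as vector spaces, with the $\binom{k}{i}$ factor accounting for the shuffle of the two subsequences. I would then define the $c$-coefficients on $E_1 \times E_2$ by the natural rule $c((\lambda_1,\lambda_2); x) = c_1(\lambda_1; x)$ or $c_2(\lambda_2; x)$ depending on which factor $x$ belongs to, and verify that the monodromy condition \eqref{eq: monodromy} on $E_1 \times E_2$ reduces to the corresponding conditions on $E_1$ and $E_2$ (which hold by hypothesis), using Theorem \ref{thm: existence and uniqueness} to know the resulting representation is well-defined and independent of choices.

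The main obstacle I anticipate is the bookkeeping in verifying that the $\Bqt^{\ext}$-action on $V(E_1 \times E_2)$ genuinely respects the decomposition above and is compatible with the actions on $V(E_1)$ and $V(E_2)$ — in particular, understanding precisely how $d_+$ acts on the shuffle multiplicity space $\C^{\binom{k}{i}}$. The diagonal operators $z_i$ and $\Delta_{p_m}$ are straightforward: on a good chain of the product they read off exactly the weights $w_i$ and the sum $p_m(\lambda_1) + p_m(\lambda_2) + \sum w_j^m$, which is manifestly the sum of the contributions from the two factors. The subtle operator is $d_+$, which adjoins a new addable box: on the product, the new box may be added to either coordinate, and the formula for its coefficient involves the auxiliary $c(\lambda; x)$ together with products of ``interaction'' factors running over the other boxes already present. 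Here one must check that the interaction factors between a box in the $E_1$-part and a box in the $E_2$-part all equal $1$ (or cancel in pairs) by general position, so that $d_+$ on $V(E_1 \times E_2)$ is the sum of $d_+ \otimes \mathrm{id}$ and $\mathrm{id} \otimes d_+$ up to the combinatorial shuffle factors — making the isomorphism an isomorphism of the honest, geometrically meaningful representation built from the product poset, even though it is explicitly \emph{not} the naive tensor product $V(E_1) \otimes V(E_2)$. This discrepancy is exactly the content worth emphasizing, and I would conclude by remarking that the ``true'' tensor product is recovered as a single summand ($i$ running over all values with the shuffle structure remembered) rather than as $V_k(E_1 \times E_2)$ itself.
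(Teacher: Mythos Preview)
Your argument that $E_1\times E_2$ is an excellent weighted poset is fine and matches Lemma~\ref{lem: product excellent}, and your shuffle count of good chains is correct (minor quibble: goodness in Definition~\ref{def: good new} constrains \emph{all} pairs $w_i,w_j$, not just consecutive ones, but general position still handles every mixed pair). The real gap is in your proposed edge function. You suggest setting
\[
c\bigl((\lambda_1,\lambda_2);x\bigr)=
\begin{cases}
c_1(\lambda_1;x)&\text{if }x\text{ is addable for }\lambda_1,\\
c_2(\lambda_2;x)&\text{if }x\text{ is addable for }\lambda_2,
\end{cases}
\]
and then appeal to Theorem~\ref{thm: existence and uniqueness}. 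But Theorem~\ref{thm: existence and uniqueness} only applies once you know \eqref{eq: monodromy} holds, and with your rule it fails in the mixed case. If $x$ is addable for $\lambda_1$ and $y$ is addable for $\lambda_2$, your definition gives
\[
\frac{c((\lambda_1,\lambda_2);x)\,c((\lambda_1\cup x,\lambda_2);y)}{c((\lambda_1,\lambda_2);y)\,c((\lambda_1,\lambda_2\cup y);x)}
=\frac{c_1(\lambda_1;x)\,c_2(\lambda_2;y)}{c_2(\lambda_2;y)\,c_1(\lambda_1;x)}=1,
\]
whereas \eqref{eq: monodromy} demands this equal $-\dfrac{(x-ty)(x-qy)(y-qtx)}{(y-tx)(y-qx)(x-qty)}$, which is generically not $1$. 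General position only rules out the values $x/y\in\{1,q^{\pm1},t^{\pm1},(qt)^{\pm1}\}$; it does not make this rational function collapse.

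This is exactly the obstruction the paper has to work around, and it is why the actual formula in Theorem~\ref{thm: c for tensor product} reads
\[
c\bigl((\lambda,\mu);x\bigr)=
\begin{cases}
c_1(\lambda;x)\,M_{\psi_2;E_2}(\mu;x)&\text{if }x\text{ is addable for }\lambda,\\
c_2(\mu;x)\,M_{\psi_1;E_1}(\lambda;x)&\text{if }x\text{ is addable for }\mu,
\end{cases}
\]
with auxiliary functions $M_\psi$ built in Lemma~\ref{lem: M psi} from rational functions $\psi_1,\psi_2$ chosen so that $\psi_1(x,y)/\psi_2(y,x)$ equals the right-hand side of \eqref{eq: monodromy}. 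These $M$-factors are precisely the cross-term corrections that make the mixed case of \eqref{eq: monodromy} go through; your hope that ``interaction factors between a box in the $E_1$-part and a box in the $E_2$-part all equal $1$'' is what fails.
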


This construction allows us to define a faithful (but not full) functor from the category of excellent weighted posets to the category of $\Bqt^{\ext}$-modules and thus deduce that the subcategory of calibrated $\Bqt^{\ext}$-representations arising from posets is \newword{generically monoidal}; that is, tensor products are defined when the underlying posets are in very general position. 

We remark that, if $E_1$ and $E_2$ are arbitrary posets, then it is always possible to \lq\lq twist\rq\rq\ the weights of the posets $E_1$ and $E_2$ in order to obtain posets $E_1(a_1)$ and $E_2(a_2)$ which are in general position. Here, \emph{twisting} means that the underlying poset of $E_1(a_1)$ is just $E_1$, but the weight functions $p_m$ are rescaled by powers of $a_1$. 

As an example of the tensor product construction, in Section \ref{sec: Gieseker} we generalize the geometric constructions of \cite{CGM} to study the equivariant $K$-theory of parabolic Gieseker moduli spaces of rank $r$ framed sheaves on the plane $\M^{par}(r,n;n+k)$ and obtain a calibrated representation on the poset of $r$-multipartitions. In particular, the $r=1$ case recovers the Carlsson-Gorsky-Mellit polynomial representation of $\Bqt$.  Utilizing our newly defined 
tensor product of calibrated representations, we prove this representation also arises as a tensor product of modules.
\begin{theorem}[Thm. \ref{thm: gieseker as tensor power}]
\label{thm: Gieseker intro}
There is a representation of $\Bqt^{\ext}$ in the equivariant $K$-theory of 
$$
\M^{par}(r):=\bigsqcup_{k,n}\M^{par}(r,n;n+k)
$$ This representation is calibrated, and isomorphic to the $r$-th tensor power of the polynomial representation in the sense of Theorem \ref{thm: tensor intro}.   
\end{theorem}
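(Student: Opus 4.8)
The plan is to first construct the geometric $\Bqt^{\ext}$-action on the equivariant $K$-theory of $\M^{par}(r)$, then prove it is calibrated and identify its spectrum, and finally invoke the uniqueness Theorem \ref{thm: existence and uniqueness} together with the tensor-product construction Theorem \ref{thm: tensor intro}. Fix the torus $\widetilde{T}=(\C^*)^2\times(\C^*)^r$, where the first factor scales the plane and the second scales the $r$-dimensional framing with equivariant parameters $a_1,\dots,a_r$, and work in $\widetilde{T}$-equivariant $K$-theory localized at the fraction field of the representation ring. Each $\M^{par}(r,n;n+k)$ is a smooth quasi-projective variety (a parabolic enhancement of the Gieseker--Nakajima space of framed rank-$r$ torsion-free sheaves on the plane), and a standard fixed-point analysis, generalizing that of \cite{CGM}, identifies its isolated $\widetilde{T}$-fixed points with the good chains of Definition \ref{def: good new} in the poset $E^{(r)}$ of $r$-multipartitions: a box of the diagram sitting in the $j$-th component carries its usual $(\C^*)^2$-content weight rescaled by $a_j$. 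In particular $E^{(r)}$ is precisely the product poset $E(a_1)\times\cdots\times E(a_r)$ of $r$ twisted copies of the poset of Young diagrams, in the twisting notation of Section \ref{sec: Gieseker}. One then defines $d_\pm$, the generators $z_i$, the operators $\Delta_{p_m}$, and the remaining generators of $\Bqt$ by literally the same correspondences and tautological classes as in \cite{CGM}: $d_+$ by a Hecke-type correspondence enlarging the parabolic flag by one box, $d_-$ by its transpose, and $z_i$, $\Delta_{p_m}$ by tautological line and vector bundles.

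The main technical step is to verify that these operators satisfy the relations of $\Bqt^{\ext}$ (Definition \ref{def:BqtExt}). The relevant correspondences and their convolutions have the same local geometry on $\M^{par}(r)$ as on the parabolic flag Hilbert scheme, the only new feature being the diagonal framing parameters $a_j$; after localization every relation becomes an identity of Euler classes of tangent spaces at the $\widetilde{T}$-fixed points, and at such a point the $r$-multipartition contributes $r$ independent single-partition computations of exactly the kind carried out in \cite{CGM}, together with the new, manifestly commuting tautological classes giving $\Delta_{p_m}$. Granting this, $K:=\bigoplus_{k,n}K^{\widetilde{T}}(\M^{par}(r,n;n+k))_{\mathrm{loc}}$, graded by $k$, is a $\Bqt^{\ext}$-module.

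Calibration is then immediate. The tautological bundles defining $z_i$ and $\Delta_{p_m}$ restrict at each fixed point to a sum of $\widetilde{T}$-characters, so these operators are diagonal in the fixed-point basis, acting on the class of $[\lambda;\underline{w}]$ by $z_i\mapsto w_i$ and $\Delta_{p_m}\mapsto p_m(\lambda)+w_k^m+\cdots+w_1^m$, i.e.\ exactly by the eigenvalues prescribed by the weighted poset $E^{(r)}$ in Theorem \ref{thm: rep from excellent intro}. For generic $a_1,\dots,a_r$ these eigenvalue tuples separate distinct good chains, so the joint spectrum is simple and $K$ is calibrated.

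It remains to identify $K$ with $V(E^{(r)})$ and hence with the $r$-th tensor power of the polynomial representation. Both modules are spanned by the good chains in $E^{(r)}$, and the diagonal operators already agree; by Theorem \ref{thm: rep from excellent intro} the only remaining datum is the scalar attached to $d_+$. Evaluating the matrix entry of the geometric $d_+$ between fixed-point classes as a ratio of Euler classes of normal bundles exhibits it as a function $c(\lambda;x)$ of the shape required by Definition \ref{def: calibrated from excellent}, and since $K$ is calibrated this $c$ automatically satisfies the monodromy condition \eqref{eq: monodromy}; thus Theorem \ref{thm: existence and uniqueness} gives $K\cong V(E^{(r)})$ as $\Bqt^{\ext}$-modules. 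Finally, because $a_1,\dots,a_r$ are generic, the twisted copies $E(a_1),\dots,E(a_r)$ of the Young-diagram poset are pairwise in general position, so iterating Theorem \ref{thm: tensor intro} identifies $V(E^{(r)})=V\bigl(E(a_1)\times\cdots\times E(a_r)\bigr)$ with the $r$-th tensor power of the polynomial representation \eqref{eq: polynomial} in the sense of that theorem, recovering the Carlsson--Gorsky--Mellit representation at $r=1$. The crux of the argument is the relation check of the second paragraph; once it is in place, uniqueness reduces the identification to the short Euler-class computation of $c(\lambda;x)$.
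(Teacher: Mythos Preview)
Your outline tracks the paper's geometric construction closely: defining $d_\pm$ via the forgetful maps $f,g$, computing the $d_+$ coefficient as an Euler class (this is Lemma \ref{lem:d+ coeff}, which shows it has exactly the product form $q^kc(\lambda\cup\underline{w};x)\prod_i\frac{x-tw_i}{x-qtw_i}$ required by Definition \ref{def: calibrated from excellent}), and verifying the $\Bqt$ relations by direct computation in the fixed-point basis. Where you diverge from the paper is in the final identification with the tensor power. The paper's proof of Theorem \ref{thm: gieseker as tensor power} explicitly factors the geometric edge function
\[
c(\lambda_\bullet;x)=-\Lambda\bigl(-Ax^{-1}+(1-q)(1-t)B_{\lambda_\bullet}x^{-1}+1\bigr)
\]
as $c(\lambda_\alpha;a_\alpha^{-1}x)\prod_{\beta\neq\alpha}M_{\psi}(\lambda_\beta;x)$ and matches this directly against the tensor-product edge function \eqref{c product} for a specific $\psi$. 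You instead bypass this computation via Theorem \ref{thm: existence and uniqueness}: once the geometric module is seen to have the form of Definition \ref{def: calibrated from excellent} on the product poset $\mathcal{P}(a_1)\times\cdots\times\mathcal{P}(a_r)$, uniqueness forces $K\cong V(E^{(r)})$ regardless of which edge function arose geometrically, and the latter is the tensor power by construction. Your route is valid and avoids the $\Lambda$-factorization entirely; the paper's explicit matching buys the concrete identification of $M_\psi$, which places the Gieseker representation as an object of the generically monoidal category $\Cat_{\psi,\psi}$ of Section \ref{sec: tensor} rather than merely an isomorphism class of $\Bqt^{\ext}$-modules. One small gap to close: to invoke uniqueness you also need the geometric $d_-$ and $T_i$ to match Definition \ref{def: calibrated from excellent}, not just $d_+$; the paper records that $d_-=g_*$ has coefficient $1$ in the fixed-point basis, and you should say the same.
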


We further investigate the structure of $\Bqt$-mod, by defining a new $\C(q,t)$-anti-linear anti-involution $\adj$ on $\Bqt^{\ext}$ from which we construct a duality functor on the category of calibrated representations. This anti-involution extends, in a very precise way, the classical Weyl involution $\omega$ on the algebra of symmetric functions (see Theorem \ref{thm: omega}).

\begin{theorem}[Thm. \ref{thm: duality poset}]\label{thm: duality intro}
The algebra $\Bqt^{\ext}$ admits a $\C(q,t)$-anti-linear anti-automorphism $\adj$ interchanging the generators $d_{+}$ and $d_{-}$. For any excellent weighted poset $E$ one can define an excellent weighted poset $E^{\vee}$ such that
\[
V(E)^{*} \simeq V(E^{\vee}),
\]
where the left-hand side is a restricted dual that becomes a $\Bqt^{\ext}$-representation via $\adj$. The underlying poset of $E^{\vee}$ is the opposite poset of $E$. 
\end{theorem}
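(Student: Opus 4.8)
The plan is to split the theorem into two essentially independent pieces: first construct the anti-linear anti-automorphism $\adj$ on $\Bqt^{\ext}$ by specifying it on generators and checking it preserves the defining relations; then, given $\adj$, identify the restricted dual of a $V(E)$-representation concretely and match it with $V(E^\vee)$ for an explicitly built excellent weighted poset $E^\vee$.

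\textbf{Step 1: Constructing $\adj$.} I would define $\adj$ on generators by the rules $\adj(d_+) = d_-$, $\adj(d_-) = d_+$, $\adj(z_i) = z_i$, $\adj(\Delta_{p_m}) = \Delta_{p_m}$ (and whatever the remaining $\Bqt$ generators are --- the $T_i$'s / Hecke generators --- sent to a suitable adjoint, presumably $T_i \mapsto T_i$ up to the Hecke normalization, and scalars $q,t$ fixed so the map is $\C(q,t)$-anti-linear in the sense of being the identity on $\C(q,t)$ but reversing products). Since $\adj$ is required to be an anti-automorphism, on a word $g_1 g_2 \cdots g_n$ it acts by $\adj(g_n)\cdots\adj(g_1)$; one checks this is well-defined by verifying that each defining relation of $\Bqt^{\ext}$ (as listed in Definition \ref{def:BqtExt}) is mapped to another defining relation. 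The key relations to check are the ones coupling $d_+$, $d_-$, and the $z_i$'s, together with the new relations involving $\Delta_{p_m}$: since the $\Delta_{p_m}$ commute with the $z_i$ and pairwise commute, and since $\adj$ fixes all of these, those relations are automatically preserved; the content is in the $d_\pm$--$z$ relations, where swapping $d_+ \leftrightarrow d_-$ and reversing word order must land back in the relation ideal. I would also record here the compatibility with the Weyl involution $\omega$ on $\Sym$, i.e. that $\adj$ restricted to the copy of $\Sym_{q,t}$ acting on $V_0$ induces $\omega$ (this is Theorem \ref{thm: omega}, which I may cite).

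\textbf{Step 2: Identifying the dual representation.} Given a calibrated $V(E)$ with good-chain basis $\{[\lambda;\underline{w}]\}$, the restricted dual $V(E)^*$ has a dual basis $\{[\lambda;\underline{w}]^\vee\}$, and it becomes a $\Bqt^{\ext}$-module via $(g\cdot f)(v) = \overline{f(\adj(g)v)}$ (anti-linear twist). Because $z_i$ and $\Delta_{p_m}$ are $\adj$-fixed and act diagonally on $V(E)$ with real-subfield-stable... more precisely, with eigenvalues $w_i$ and $p_m(\lambda) + \sum w_j^m$, the dual basis vectors are again joint eigenvectors with the \emph{same} spectrum (up to the anti-linear conjugation, which I must track carefully --- this is where the precise meaning of "$\C(q,t)$-anti-linear" matters). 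Thus $V(E)^*$ is calibrated, and its "weights" are governed by the opposite poset $E^{\mathrm{op}}$: a good chain in $E$ from $\lambda$ up to $\lambda \cup w_k \cup \cdots \cup w_1$ becomes, read backwards, a maximal chain in $E^{\mathrm{op}}$. I would then define $E^\vee$ to have underlying poset $E^{\mathrm{op}}$ with weight functions $p_m^\vee$ arranged so that the covering increments match the $w_i$'s appearing in reversed good chains (the total content $p_m(\text{top})$ of the original chain becomes the content of the minimal element of $E^\vee$, and one subtracts $x^m$ when going down a cover). Checking that $E^\vee$ is again \emph{excellent} requires verifying the conditions of Definition \ref{def: poset intro}: the conditions $y \ne x$, $y \ne (qt)^{\pm1}x$ are symmetric under $x \leftrightarrow y$, and the trichotomy ($y = qx$, or $y=tx$, or the "commuting addability" clause) must be checked to be preserved when we pass to reversed chains --- here the fact that $\adj$ swaps $d_+$ and $d_-$ (hence swaps the roles of "adding from below" vs. "from above") is exactly what makes the excellence conditions transfer.

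\textbf{Step 3: Matching the actions and the main obstacle.} Finally I would verify that, under the bijection of bases (good chains in $E$ $\leftrightarrow$ reversed good chains in $E^\vee$), the $\Bqt^{\ext}$-action on $V(E)^*$ defined via $\adj$ agrees with the action on $V(E^\vee)$ from Definition \ref{def: calibrated from excellent}. The diagonal generators $z_i, \Delta_{p_m}$ match by the spectral computation above. The serious part is matching the action of $d_+$ (resp. $d_-$): $d_+$ on $V(E^\vee)$ is given by the structure coefficients $c(\lambda; x)$ built for $E^\vee$, while on $V(E)^*$ it is the transpose of $d_-$'s action on $V(E)$, which involves the $c(\lambda;x)$ of $E$ together with the eigenvalue factors. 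Reconciling these forces a specific relation between $c^\vee$ and $\bar c$ (the conjugates of the original coefficients), and one must check this relation is consistent with the monodromy condition \eqref{eq: monodromy} --- that the proposed $c^\vee$ indeed satisfies \eqref{eq: monodromy} for $E^\vee$. I expect \textbf{this compatibility of the $d_+$-coefficients} to be the main obstacle: it is where the anti-linearity, the word-reversal in $\adj$, and the precise normalization of the good-chain basis all interact, and getting the eigenvalue bookkeeping exactly right (so that transposing $d_-$ genuinely produces the $d_+$-formula for $E^\vee$ rather than that formula times a spurious scalar) is delicate. By the uniqueness result (Theorem \ref{thm: existence and uniqueness}), it suffices to produce \emph{one} valid choice of $c^\vee$ satisfying \eqref{eq: monodromy} and realizing the right action, so I would aim to define $c^\vee(\lambda;x)$ by an explicit formula in terms of $\overline{c}$ and eigenvalue ratios and then verify \eqref{eq: monodromy} directly.
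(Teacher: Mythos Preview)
Your overall strategy---build $\adj$ on generators, check relations, then match $V(E)^*$ with $V(E^\vee)$ via an explicit basis bijection---is exactly the paper's. But the specific $\adj$ you propose does not exist, and this propagates through the rest of the outline.

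Concretely: you set $\adj(z_i)=z_i$, $\adj(T_i)=T_i$, $\adj(\Delta_{p_m})=\Delta_{p_m}$, and fix $q,t$. Apply this to the relation $d_+z_i=z_{i+1}d_+$ \eqref{eq: d z}: reversing products gives $z_i d_- = d_- z_{i+1}$, whereas the algebra only has $z_id_-=d_-z_i$. Apply it to $d_+T_i=T_{i+1}d_+$ \eqref{eq:T d+}: you get $T_id_-=d_-T_{i+1}$, again not a relation. Apply it to $[\Delta_{p_m},d_+]=z_1^md_+$ \eqref{eq:ext-Bqt-2}: you get $[d_-,\Delta_{p_m}]=d_-z_1^m$, contradicting $[\Delta_{p_m},d_-]=0$. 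So this assignment does not descend to an anti-automorphism.

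The paper's $\adj$ is substantially different (Lemma \ref{lem:involution}): it is $\aut$-linear for the field automorphism $\aut(q)=q^{-1},\aut(t)=t^{-1}$; it sends $z_i\mapsto z_{k+1-i}$ and $T_i\mapsto T_{k-i}^{-1}$ (reversing the strand order, as one should expect from an anti-involution on a Hecke-type algebra); it introduces powers of $q^{1/2}$ on $d_\pm$; and it sends $\Delta_{p_m}\mapsto -\Delta_{p_m}+z_1^m+\cdots+z_k^m$. Each of these features is forced by some relation you would have hit in Step~1.

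This changes Steps 2--3 as well. Since $z_i$ is sent to $z_{k+1-i}$ and scalars are $\aut$-conjugated, the dual weight of $[\lambda;w_k,\dots,w_1]^*$ has $z$-eigenvalues $(\aut(w_1),\dots,\aut(w_k))$ in reversed order, and $\Delta$-eigenvalue $-\aut(p_m(\lambda))$ at level $0$; accordingly $E^\vee$ carries $p_m(\lambda^\vee)=-\aut(p_m(\lambda))$, not the original weights. The matching of $d_\pm$ is then done not in the standard basis of $V(E^\vee)$ but in the \emph{rescaled} basis of Theorem \ref{thm: rescale}, where $d_+$ has coefficient $1$ and $d_-$ carries all the structure constants; this is what makes the transpose of $d_-$ on $V(E)$ (which is coefficient-free) line up with $d_+$ on $V(E^\vee)$. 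Your instinct to define $c^\vee$ explicitly and verify \eqref{eq: monodromy} is right---the paper does exactly this in Lemma \ref{lem: c dual} with $c_{E^\vee}(\mu^\vee;\aut(x))=\aut(c_E(\lambda;x))$---but you cannot get there without first correcting the anti-involution.
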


Finally, we attempt a classification of calibrated representations. Under some mild assumptions, we associate a poset $E$ to a calibrated representation $V$. More precisely, we prove the following. 

\begin{theorem}[Thm. \ref{thm: classification}]
\label{thm: classification intro}
Assume that $V$ is a calibrated representation and that, for every weight vector $v_k \in V_k$, $d_{-}^{k}(v_k) \in V_0$ is nonzero. Then:
\begin{enumerate}
\item The representation $V$ defines a weighted poset $E$.
\item Under certain completeness assumption on $V$ (see Section \ref{sec: classification} for details), the poset $E$ is excellent and $V \simeq V(E)$. 
\end{enumerate}
\end{theorem}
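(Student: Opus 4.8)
The plan is to reconstruct the combinatorial data of $E$ directly from the spectral decomposition of $V$ and then to recognize $V$ as $V(E)$ by matching the action of the generators on eigenvectors. First, by calibration each $V_k$ has a basis of joint eigenvectors for $z_1,\dots,z_k$ and all $\Delta_{p_m}$ with simple joint spectrum; I record an eigenvector $v$ by its tuple $\underline{w}(v)=(w_k,\dots,w_1)$ of $z$-eigenvalues together with its family $\phi_m(v)$ of $\Delta_{p_m}$-eigenvalues. The first task is to control how $d_{+}$ and $d_{-}$ move through the grading. Using $[\Delta_{p_m},z_i]=0$ and the remaining defining relations of $\Bqt^{\ext}$ tying $d_{\pm}$ to the $z_i$ and to the $\Delta_{p_m}$, I would show that $d_{-}\colon V_k\to V_{k-1}$ sends each eigenvector to a scalar multiple of a single eigenvector, deleting one entry of $\underline{w}$ and subtracting the corresponding $w^m$ from every $\phi_m$, while $d_{+}\colon V_k\to V_{k+1}$ sends an eigenvector to a sum of eigenvectors, each appending one new $z$-eigenvalue $x$ and adding $x^m$ to every $\phi_m$. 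The hypothesis $d_{-}^{k}(v_k)\neq0$ then ensures that every eigenvector $v\in V_k$ has a well-defined base $\lambda(v)$ in the eigenbasis of $V_0$, namely $d_{-}^{k}v$ up to scalar, along with all intermediate truncations $d_{-}^{j}v$, so that $v$ encodes a saturated length-$k$ chain of spectral data issuing from $\lambda(v)$.

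Next I would build the weighted poset. To each eigenvector $v\in V_k$ I attach the length-$k$ saturated chain of spectral symbols $\lambda(v)\lessdot\lambda(v)\cup w_k\lessdot\lambda(v)\cup w_k\cup w_{k-1}\lessdot\cdots$ read off from the truncations $d_{-}^{j}v$; I let $E$ be the set of all symbols arising this way, I set $p_m\bigl(\lambda(v)\cup w_k\cup\cdots\cup w_j\bigr):=\phi_m(v_{\lambda(v)})+w_k^m+\cdots+w_j^m$, and I let the partial order be generated by all these chains. The content here is that this yields a well-defined \emph{graded} poset — no cycles, consistent overlaps of chains, and a well-defined rank function — which is where I would invoke the $\Bqt^{\ext}$-relations involving two copies of $d_{+}$ (those responsible in $V(E)$ for the braid and commuting relations on pairs of boxes) to force that the multiset of weights added along any chain between two fixed symbols is path-independent. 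Since $p_m(\mu)=p_m(\lambda)+x^m$ whenever $\mu=\lambda\cup x$ covers $\lambda$ by construction, $E$ is a weighted poset, which proves (1).

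For (2) I would pin down the completeness assumption as saying that every symbol $\lambda\cup x$ forced as above is in fact realized by an eigenvector of $V_0$ with the matching $\Delta_{p_m}$-eigenvalues — equivalently, that $V$ is generated over $\Bqt^{\ext}$ by $V_0$ — so that $E$ needs no formal vertices and $V(E)$ is not strictly larger than $V$. Granting this, the excellence axioms of Definition \ref{def: poset intro} ought to be \emph{forced} rather than merely consistent: given consecutively addable weights $x$ for $\lambda$ and $y$ for $\lambda\cup x$, each of the forbidden configurations $y=x$, $y=(qt)^{\pm1}x$, and $y\notin\{qx,tx\}$ with $y$ not exchangeable past $x$ would, once the eigenvalues are substituted into the relevant $\Bqt^{\ext}$-relation evaluated on the corresponding eigenvector of $V_2$, produce a pole in a structure constant, a coincidence in the joint spectrum contradicting simplicity, or a vanishing contradicting $d_{-}^{k}v\neq0$. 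Finally, I identify $V$ with $V(E)$ by sending each eigenvector $v\in V_k$ to the good chain $[\lambda(v);\underline{w}(v)]$ of Definition \ref{def: good new}: the operators $z_i$ and $\Delta_{p_m}$ match by construction, $d_{-}$ matches by the first step, and the matrix entries of $d_{+}$ in the $V$-eigenbasis furnish a system of coefficients $c(\lambda;x)$ which, since all defining relations of $\Bqt^{\ext}$ hold in $V$, must satisfy the monodromy condition \eqref{eq: monodromy}. Thus $V$ is one of the representations built in Theorem \ref{thm: relations hold}, and the uniqueness half of Theorem \ref{thm: existence and uniqueness} yields $V\simeq V(E)$.

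I expect the main difficulty to lie in the second step together with the excellence half of the third: showing that the spectral data harvested from $V$ really assembles into a \emph{graded} poset (path-independence of added weights, absence of cycles, well-defined rank) and that the excellence axioms are genuinely forced and not merely consistent. This is precisely where the full set of $\Bqt^{\ext}$-relations — beyond the commutation $[\Delta_{p_m},z_i]=0$ — must be brought to bear, and where the completeness hypothesis has to be calibrated so that $V(E)$ coincides with $V$ rather than merely containing it or being contained in it. Once the poset and its excellence are established, checking \eqref{eq: monodromy} for the extracted coefficients and appealing to Theorems \ref{thm: relations hold} and \ref{thm: existence and uniqueness} should be routine.
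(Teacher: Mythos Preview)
Your overall strategy matches the paper's: extract a poset from the joint spectrum, verify excellence from the relations, and identify $V$ with $V(E)$. However, there are two genuine gaps. First, you omit a hypothesis the paper needs: beyond $d_-^k(v_k)\neq 0$, one must assume each $V_k$ is \emph{completely reducible} as an $\AH_k$-module (Assumption~\ref{ass: semisimple}). Without this, Example~\ref{ex: calibrated non semisimple} shows that even a calibrated $\AH_k$-module need not admit a basis on which the $T_i$ act by Ram's formulas \eqref{eq: T Ram new}, and your entire identification of the $T_i$-action, the normalization of $d_-$ to have all coefficients~$1$, and the verification of excellence all rest on this normalization. Second, your guess at the completeness hypothesis (``$V$ generated by $V_0$'') is not what the paper uses and is not what the argument needs. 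The paper's Assumption~\ref{ass: d+} instead requires that every poset element $[\lambda]$ appears in $\cB_0$, every edge $[\lambda;x]$ appears in $\cB_1$, and crucially that every edge coefficient $c(\lambda;x)$ is \emph{nonzero}. This nonvanishing is exactly what, combined with relation~\eqref{eq:qphi}, forces $y\neq qtx$ and the dichotomy $y\in\{qx,tx\}$ versus $[\lambda;y,x]$ also a chain (Lemmas~\ref{lem: V1 V2 nonzero}--\ref{lem: excellent}); generation from $V_0$ does not obviously yield these nonvanishings.

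A smaller correction: you propose to use ``$d_+^2$-type'' relations to establish path-independence of the multiset of added weights between two symbols, but this is automatic from the weighting alone, since $p_s(\mu)-p_s(\lambda)=\sum_i w_i^s$ determines $\{w_i\}$ from $\lambda,\mu$ (Lemma~\ref{lem: uniqueness of weights up to permutation}). The $d_+^2$ relation $T_1d_+^2=d_+^2$ enters elsewhere, namely in the proof of excellence. Finally, to extract \emph{edge} functions $c(\lambda;x)$ from the $d_+$-coefficients $c(\lambda;\underline{w},x)$ you must prove the factorization \eqref{eq: c from edges new}; the paper does this by induction from relation~\eqref{eq:qphi} (Lemma~\ref{lem: c from edges}), a step you do not mention.
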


In \cite{Mellit}, the action of the $\Aqt$ algebra on the polynomial representation was used in order to construct a representation of the elliptic Hall algebra $\EHA$ \cite{EHA} on the space of symmetric functions. This action of $\EHA$ coincides with the $\EHA$-action of \cite{FT, negut, SV}, see also \cite[8.2]{CGM}. Since $\Aqt$ is contained within $\Bqt$, it would be interesting to use the constructions in this paper to build new representations of $\EHA$ on vector spaces with bases given by an excellent poset $E$. We plan to pursue this direction in future work.

\subsection*{Structure of the paper} In Section \ref{sec: Bqt} we review the $\Bqt$ algebra, its polynomial representation, and define the extended algebra $\Bqt^{\ext}$. 
Section \ref{sec: construction} constitutes the technical heart of the paper. In it, we construct explicit representations of $\Bqt^{\ext}$ starting from weighted posets, and prove Theorem \ref{thm: rep from excellent intro} by combining Theorems \ref{thm: relations hold} and \ref{thm: existence and uniqueness}. We finish this section with several illustrative examples. In Section \ref{sec: structure} we study the structure of the representations $V(E)$. In particular, we describe the subrepresentations of $V(E)$, as well as morphisms between $V(E)$ and $V(E')$. Section \ref{sec: Gieseker} is devoted to the study of a single example: that of the equivariant $K$-theory of parabolic Gieseker moduli spaces. This section is inspired by the results of \cite{CGM} and serves as a motivation for Section \ref{sec: tensor}, where we define tensor products of representations. In Section \ref{sec: duality} we study an antilinear automorphism of $\Bqt^{\ext}$, and use it to define a duality functor on the category of calibrated representations. Finally, in Section \ref{sec: classification} we attempt to classify calibrated representations satisfying certain conditions by recovering a poset from a calibrated representation. 

\section*{Acknowledgments}

We are grateful to Milo Bechtloff-Weising, Erik Carlsson, Anton Mellit, Andrei Negu\cb{t}, and Monica Vazirani for useful discussions. E. G. would like to thank Anna for suggesting that $x$ is not equal to $y$.
The work of E. G. was partially supported by the NSF grant DMS-2302305. J. S. was partially supported by CONAHCyT project CF-2023-G-106 and UNAM's PAPIIT Grant IA102124.

\section{The $\Bqt$ Algebra}
\label{sec: Bqt}

The \newword{double Dyck path algebra} $\Bqt$ was originally introduced by Carlsson-Gorsky-Mellit \cite[Definition 3.2.4]{CGM}. It is defined as the non-unital $\C(q,t)$-algebra generated by a collection of orthogonal idempotents $\e_{k}$ for each $k \in \Z_{\geq 0}$ and generators $\e_{k+1}d_{+}\e_{k}$, $\e_{k}d_{-}\e_{k+1}$, $\e_{k}T_{i}\e_{k}$, $\e_{k}z_{j}\e_{k}$ for each $k \in \Z_{\geq 0}$ with $1\leq i \leq k-1$ and $1\leq j \leq k$. 

We henceforth omit the usage of the idempotents, and write the generators simply as $d_+, d_-, T_i, z_j$, with the index of the surrounding idempotents implicitly deduced. These operators satisfy the following relations:  
\begin{align}
(T_i - 1)(T_i + q) = 0, \qquad T_{i}T_{i+1}T_{i} &= T_{i+1}T_{i}T_{i+1}, \qquad T_{i}T_{j} = T_{j}T_{i}\, (|i - j| > 1) \label{eq:hecke relns} \\
T_{i}^{-1}z_{i+1}T_{i}^{-1} &= q^{-1}z_{i} \, (1 \leq i \leq k-1) \label{eq:T and z} \\
z_{i}T_{j} = T_{j}z_{i} \, (i \not\in \{j, j+1\}),& \qquad z_{i}z_{j} = z_{j}z_{i} (1 \leq i, j \leq k) \label{eq:affine Hecke relns}  \\
d^{2}_{-}T_{k-1} = d^{2}_{-},& \qquad d_{-}T_{i} = T_{i}d_{-} \, (1 \leq i \leq k-2) \label{eq:T d-} \\
T_{1}d_{+}^{2} = d_{+}^{2},& \qquad d_{+}T_{i} = T_{i+1}d_{+} \, (2 \leq i \leq k-1) \label{eq:T d+} \\
q\varphi d_{-} = d_{-}\varphi T_{k-1},& \qquad T_{1}\varphi d_{+} = qd_{+}\varphi \label{eq:phi} \\
z_{i}d_{-} = d_{-}z_{i},& \qquad d_{+}z_{i} = z_{i+1}d_{+} \label{eq: d z} \\
z_{1}(qd_{+}d_{-} - d_{-}d_{+}) &= qt(d_{+}d_{-} - d_{-}d_{+})z_{k}. \label{eq:qphi}
\end{align}
Here,
$$
\varphi=\frac{1}{q-1}(d_{+}d_{-}-d_{-}d_{+}).
$$

In what follows we will sometimes use operators
\begin{equation}\label{eqn: y}
y_i=q^{i-k}T_{i-1}^{-1}\cdots T_1^{-1}\varphi T_{k-1}\cdots T_i.
\end{equation}
By \cite[Lemma 3.1.8]{CGM} the elements $y_1, \dots, y_k$ pairwise commute and generate a copy of the (positive half of) affine Hecke algebra together with $T_1, \dots, T_{k-1}$.

\begin{remark}
Although the algebra $\Bqt$ is not unital, it is complete in the following sense:
 \begin{itemize}
     \item For every $a \in \Bqt$, there exist a finite number of $m$ such that $a\e_{m} \neq 0$. Similarly, there exist a finite number of $n$ such that $\e_{n}a \neq 0$.
     \item For every $a \in \Bqt$, $a = (\sum_{n \geq 0}\e_{n})a(\sum_{m \geq 0} \e_m)$. In particular, this product is well-defined due to the previous point.
 \end{itemize}
 \end{remark}

\subsection{The Polynomial Representation for $\Bqt$}\label{sec: polynomial} In \cite{Shuffle}, Carlsson and Mellit introduced an algebra $\Aqt$, closely related to $\Bqt$, by means of a representation called the \newword{polynomial representation}. By \cite[Theorem 3.4.2]{CGM}, this representation of $\Aqt$ can be restricted to a polynomial representation for the algebra $\Bqt$ as well. We review its construction in this section.

The space admitting the polynomial representation is
\[
V = \bigoplus_{k \geq 0} V_{k}, \qquad V_{k} := \Sym_{q,t} \otimes \C(q,t)[y_1, \dots, y_k]
\]
where $\Sym_{q,t} = \C(q,t)[x_1,x_2,\dots]^{S_\infty}$ denotes the algebra of symmetric functions. The idempotent $\e_k$ acts by projection to the summand $V_k$. The remaining generators of $\Bqt$ act as follows. 
\smallskip 

Given any $F \in V_k$, the Hecke algebra generators act via Demazure-Lusztig operators:
\begin{equation}\label{eqn:action Ti polynomial}
T_{i}F = \frac{(q-1)y_{i+1}F + (y_{i+1}-qy_{i})s_{i}F}{y_{i+1} - y_{i}}, \quad i \leq k-1. 
\end{equation}
Here, the simple transpositions $ s_i \in S_{k}$ act on $V_{k}$ by permuting the variables $y_1, \dots, y_k$. 
\smallskip

The action of the remaining operators is given using plethystic notation. For details on plethysm of symmetric functions we refer the reader to e.g. \cite{HaglundBook}. 

The raising operator $d_{+}: V_{k} \to V_{k+1}$ acts on an element $F \in V_k$ by:
      \begin{equation}\label{eqn:raising polynomial}
      d_{+}F[X] = T_1\cdots T_kF[X+(q-1)y_{k+1}],
      \end{equation}
and the lowering operator $d_{-}: V_{k} \to V_{k-1}$ acts by:
      \begin{equation}\label{eqn:lowering polynomial}
      d_{-}F[X] = -F[X-(q-1)y_{k}]\operatorname{Exp}[-y_{k}^{-1}X]|_{y_{k}^{-1}},
      \end{equation}
where $|_{y_{k}^{-1}}$ denotes taking the coefficient of $y_{k}^{-1}$ and $\operatorname{Exp}[X] = \sum_{n=0}^{\infty}h_{n}[X]$ is the plethystic exponential. We note that $\operatorname{Exp}[-y_{k}^{-1}X] = \sum_{n = 0}^{\infty}(-y_{k}^{-n})e_{n}$.

It remains to express the action of the elements $z_1, \dots, z_k$ on $V_{k}$. By \eqref{eq:T and z} it suffices to provide the action of $z_k$:
      \begin{equation}\label{eqn:z polynomial}
     z_kF = T_{k-1}\cdots T_{1} F[X + (1-q)ty_1 - (q-1)u, y_2, \dots, y_k, u]\operatorname{Exp}[u^{-1}ty_1 - u^{-1}X]|_{u^{0}}
      \end{equation}
      (cf. the proof of \cite[Theorem 3.4.2]{CGM}). 
      
      Finally, we note that in this presentation of the polynomial representation $V$, the $y$ operators in \eqref{eqn: y} act simply by multiplication.

\begin{lemma} \label{lem:polrep}
    The $\Bqt$ polynomial representation $V$ is generated by $1 \in V_{0}$.
\end{lemma}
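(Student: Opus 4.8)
The plan is to show that the cyclic submodule $\Bqt \cdot 1 \subseteq V$ generated by $1 \in V_0$ is all of $V$, by proving that each graded piece $V_k$ is contained in it. First I would check the base case: $V_0 = \Sym_{q,t}$. For this I want to produce every symmetric function from $1$ using operators that preserve $V_0$, together with $d_+$ and $d_-$ to move up and down in degree. The natural candidates are the operators $y_i$ — but these live on $V_k$ for $k \geq 1$, so to stay inside $V_0$ one needs the composites $d_-^k \circ (\text{stuff in } V_k) \circ d_+^k$. Concretely, one expects that suitable combinations of $d_-$, multiplication by $y_k$-type operators, and $d_+$ realize multiplication by power sums $p_m[X]$ on $\Sym_{q,t}$, which then generate all of $V_0$ from $1$. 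The cleanest route is to examine the formulas \eqref{eqn:raising polynomial} and \eqref{eqn:lowering polynomial}: $d_+ 1 = T_1 \cdots T_k 1 = 1 \in V_1$ (since $T_i 1 = 1$), and then computing $d_- d_+$ or $d_-^2 d_+^2$ on $1$ after inserting polynomial multiplication in the $y$ variables should, via the plethystic shift $X \mapsto X \pm (q-1) y_k$ and the $\operatorname{Exp}[-y_k^{-1}X]$ extraction, output nontrivial elements of $\Sym_{q,t}$; tracking the $y_{k}$-degree carefully should give power sums (up to invertible scalars in $\C(q,t)$).

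Next, assuming $V_0 \subseteq \Bqt \cdot 1$, I would climb up the grading. Since $d_+ : V_k \to V_{k+1}$ is given by $d_+ F[X] = T_1\cdots T_k F[X + (q-1)y_{k+1}]$, applying $d_+$ to elements of $V_0 = \Sym_{q,t}$ already produces a large subspace of $V_1$, and iterating produces elements of every $V_k$. The remaining task is to check that from these one generates the full polynomial ring $\C(q,t)[y_1,\dots,y_k]$ tensor factor. The operators $y_i$ act by multiplication on $V_k$ in this presentation (as noted right after \eqref{eqn:z polynomial}), and the $T_i$ act by Demazure–Lusztig operators; the subalgebra they generate is the affine Hecke algebra, whose action on $\C(q,t)[y_1,\dots,y_k]$ together with the symmetric functions coming in through $d_+$ should suffice. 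The key point is that $y_1^{a_1}\cdots y_k^{a_k}$ for all $a_i \geq 0$ lies in the module: one gets $y_1,\dots,y_k$ as multiplication operators (built from $\varphi$, hence from $d_\pm$, via \eqref{eqn: y}), and monomials by repeated multiplication, while the symmetric function tensor factor is handled by the $V_0$ case pushed up by $d_+$. Combining, $V_k \subseteq \Bqt \cdot 1$ for all $k$.

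The main obstacle I anticipate is the base case — showing $V_0 = \Sym_{q,t}$ is generated from $1$ — because that is where the plethystic formulas for $d_+$ and $d_-$ must be unwound explicitly, and one has to verify that the resulting elements of $\Sym_{q,t}$ are not accidentally confined to a proper subring (e.g. that one really gets all power sums $p_m$, not just $p_1$). A clean way to organize this is to show that the composite $d_- \circ (y_k^{\,m}\,\cdot) \circ d_+ : V_0 \to V_0$, or a small variant, acts on $\Sym_{q,t}$ as a nonzero scalar multiple of multiplication by $p_m$ plus lower-order terms; an inductive argument on $m$ then yields all $p_m\cdot$, and since $\C(q,t)[p_1,p_2,\dots] = \Sym_{q,t}$ and these operators applied to $1$ generate everything, we are done. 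Once the $V_0$ statement is secured, propagating it upward via $d_+$ and the affine Hecke action is comparatively routine.
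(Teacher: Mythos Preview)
Your plan is sound and would work, but it takes a very different route from the paper. The paper's proof is a two-line citation: it uses the algebra map $\beta:\Aqt\to\Bqt$ from \cite{CGM}, so that $\Bqt\cdot 1 \supseteq \beta(\Aqt)\cdot 1$, and then invokes \cite[Theorems 5.2 and 7.3]{Shuffle} to conclude that the $\Aqt$-orbit of $1$ is already all of $V$. No computation is done in the paper at this point.

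What you propose is essentially to redo those cited computations directly inside $\Bqt$. For the base case $V_0$, your plan works, with one correction: the operator $\e_0 d_- y_1^{m-1} d_+ \e_0$ applied to $1$ gives $(-1)^{m-1}e_m$, not $p_m$ --- but this is immaterial since the $e_m$ generate $\Sym_{q,t}$. More importantly, one must check that these operators \emph{act} on $V_0$ as multiplication by $(-1)^{m-1}e_m$ (not merely that they send $1\mapsto(-1)^{m-1}e_m$), so that iterating produces all products. The paper in fact carries out exactly this computation later, in the proof of Theorem~\ref{thm: omega}, when establishing the algebra isomorphism $\e_0\Aq\e_0\cong\Sym_{q,t}$; so your base case is literally the content of the result being cited from \cite{Shuffle}.

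For the step from $V_0$ to $V_k$, your sketch is correct but a little loose. The cleanest way to make it precise is to filter $V_k$ by total $y$-degree: since the plethystic shift $X\mapsto X+(q-1)y_{k+1}$ has the form $f[X]+(\text{positive $y$-degree})$ and the $T_i$ preserve this filtration, one sees that $d_+(V_{k-1})$ together with $y_i$-multiplication exhausts $V_k$ by a finite descent on $y$-degree. One small caveat: with the Demazure--Lusztig formula \eqref{eqn:action Ti polynomial} as written, $T_i\cdot 1=q$ rather than $1$, so $d_+^k 1$ is a nonzero scalar rather than literally $1$ --- harmless for the argument.
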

\begin{proof}
This follows by combining \cite[Theorem 3.4.2]{CGM} with results of \cite{Shuffle}. Indeed, using the notation of \cite[Theorem 3.4.2]{CGM}, we have that $\beta(\Aqt)(1) \subseteq \Bqt(1) \subseteq V$, so it suffices to show that $\beta(\Aqt)(1) = V$. The latter follows from \cite[Theorem 5.2]{Shuffle} (together with \cite[Theorem 3.4.2 (3)]{CGM}), see also Theorem 7.3 in \cite{Shuffle}. 
\end{proof}

One disadvantage of the presentation of $V$ given above is that the operators $z_{i}$ are complicated. As proven in the main theorem of \cite{CGM}, the action of the $z$ operators on $V$ is in fact diagonalizable. Indeed, the vector space $V_{k}$ admits a basis $\{I_{\lambda, w}\}$ where $\lambda$ runs over the set of all partitions and $w= (w_1, \dots, w_k)$ is a certain $k$-tuple of monomials in $q,t$. More precisely, consider a pair of Young diagrams $\lambda\subset \mu$ such that $\mu\setminus \lambda$ is a horizontal strip, and a standard Young tableau $T$ of skew shape $\mu\setminus \lambda$. Then $w_i$ is the $(q,t)$-weight for a box labeled by $i$ in $T$, see Section \ref{sec: poly} for more details.

In this basis, we have that:
\begin{align}
z_{j}I_{\lambda, w} &= w_{j}I_{\lambda, w}, 
\\
 T_{i}I_{\lambda, w} &= \frac{(q-1)w_{i+1}}{w_{i} - w_{i+1}}I_{\lambda, w} + \frac{w_{i} - qw_{i+1}}{w_{i} - w_{i+1}}I_{\lambda, s_{i}w},
\\
d_{-}I_{\lambda, (w_1, \dots, w_{k})} &= I_{\lambda, (w_{1}, \dots, w_{k-1})},
\\
d_{+}I_{\lambda, (w_1, \dots, w_{k})} &= -q^{k}\sum_{\substack{x \; \text{an addable} \\ \text{box of} \; \lambda}} xd_{\lambda \cup x,\lambda}\prod_{i = 1}^{k}\frac{x - tw_i}{x-qtw_i}I_{\lambda \cup x, (x, w_1, \dots, w_k)}, 
\end{align}
where $d_{\lambda \cup x, \lambda}$ is a certain coefficient (see Section \ref{sec: poly}) and we abuse the notation by identifying an addable box $x$ with its $(q,t)$-content. When $k = 0$, the basis $\{I_{\lambda}\}$ of $V_{0} = \Sym_{q,t}$ coincides with the modified Macdonald basis in $\Sym_{q,t}$, see \cite[Theorem 7.0.1]{CGM}.

\subsection{Extended $\Bqt$}

We will need an extension of the algebra $\Bqt$ which involves an additional family of commuting operators $\Delta_f$ defined for an arbitrary symmetric function $f \in \Sym_{q,t}$ \footnote{Note that, technically, we are adding the family $\e_{k}\Delta_{f}\e_{k}$ for $f \in \Sym_{q,t}$ and $k \geq 0$. However, keeping in line with the way the relations \eqref{eq:hecke relns}--\eqref{eq:qphi} are written, the idempotents are omitted.}. Clearly, these are determined by the operators $\Delta_{p_m}$ for all $m \in \Z_{> 0}$, where $p_m$ is the $m^{th}$ power sum symmetric function.  

\begin{lemma}
Consider a family of commuting operators $\Delta_{p_m}$ with $m \in \Z_{>0}$ satisfying
\begin{align}  
[\Delta_{p_m},T_i]=[\Delta_{p_m},z_i]&=[\Delta_{p_m},d_{-}]=0 \label{eq:ext-Bqt-1}, \\
[\Delta_{p_m},d_+]&=z_1^{m}d_{+} \label{eq:ext-Bqt-2}.
\end{align}

Then, the operators $\Delta_{p_m}$ preserve the relations for $\Bqt$.
\end{lemma}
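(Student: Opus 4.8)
The plan is to verify that the extended operators $\Delta_{p_m}$ are consistent with every defining relation of $\Bqt$, relations \eqref{eq:hecke relns}–\eqref{eq:qphi}. Since $\Bqt^{\ext}$ is by definition the algebra generated by $\Bqt$ together with the $\Delta_{p_m}$ subject to \eqref{eq:ext-Bqt-1}–\eqref{eq:ext-Bqt-2}, what must be checked is that this does not collapse the algebra: concretely, that the two-sided ideal generated by the $\Bqt$-relations is stable under the adjoint action of the $\Delta_{p_m}$, so that imposing those relations in $\Bqt^{\ext}$ is consistent. Equivalently, writing $R$ for any of the relators (the difference of the two sides of a relation), one must show $[\Delta_{p_m}, R]$ lies in the ideal generated by the $\Bqt$-relators, using only the commutation rules \eqref{eq:ext-Bqt-1}–\eqref{eq:ext-Bqt-2} and the relations themselves. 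I would organize the check relation by relation.

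First, for all relations not involving $d_+$ — namely \eqref{eq:hecke relns}, \eqref{eq:T and z}, \eqref{eq:affine Hecke relns}, \eqref{eq:T d-}, and the first half of \eqref{eq: d z} — the operator $\Delta_{p_m}$ commutes with every generator appearing (by \eqref{eq:ext-Bqt-1}), so $[\Delta_{p_m}, R]=0$ trivially. The substance is in the relations that involve $d_+$: the second relation in \eqref{eq:T d+}, namely $d_+T_i = T_{i+1}d_+$; the first relation in \eqref{eq:T d+}, $T_1 d_+^2 = d_+^2$; the relations in \eqref{eq:phi} and \eqref{eq: d z} involving $d_+$; and the commutator relation \eqref{eq:qphi}. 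For each of these I would apply the derivation property of $[\Delta_{p_m}, -]$ together with \eqref{eq:ext-Bqt-2}, which says $[\Delta_{p_m}, d_+] = z_1^m d_+$. The key auxiliary computations are: how $z_1^m$ interacts with the $T_i$ (from \eqref{eq:affine Hecke relns}, $z_1$ commutes with $T_i$ for $i \neq 1$, but the relevant cases here always have $i \geq 2$ on the left of $d_+$, and the shift $d_+ z_i = z_{i+1} d_+$ from \eqref{eq: d z} converts $z_1^m d_+$ on the left into $d_+$ times... — one must track which side the $d_+$ sits on); and the identity $d_+ z_1^m = z_1^m \cdot (\text{something})$? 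No — rather $z_1^m d_+ = d_+$ does not hold, but one uses \eqref{eq: d z} in the form $z_{i+1} d_+ = d_+ z_i$.

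Let me indicate the mechanism on the representative case $d_+ T_i = T_{i+1} d_+$ (for $2 \le i \le k-1$). Applying $[\Delta_{p_m}, -]$ to both sides and using that $\Delta_{p_m}$ commutes with $T_i$ and $T_{i+1}$, one gets $[\Delta_{p_m}, d_+] T_i = T_{i+1} [\Delta_{p_m}, d_+]$, i.e. $z_1^m d_+ T_i = T_{i+1} z_1^m d_+$. The right side is $z_1^m T_{i+1} d_+$ (using the original relation) $= T_{i+1} z_1^m d_+$ since $z_1$ commutes with $T_{i+1}$ as $i+1 \ge 3 > 1$; the left side is $z_1^m T_{i+1} d_+$ by the original relation, and again $z_1^m T_{i+1} = T_{i+1} z_1^m$. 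So both sides agree, consuming only the original relation and \eqref{eq:affine Hecke relns}. The relation \eqref{eq:qphi} is the one I expect to be the main obstacle, since it is quadratic in $d_\pm$ and involves both $z_1$ and $z_k$; there the derivation rule produces several terms, and one must carefully use $\varphi = \tfrac{1}{q-1}(d_+d_- - d_-d_+)$ and $[\Delta_{p_m}, \varphi]$, track the index shifts coming from \eqref{eq: d z}, and reconcile the $z_1^m$ coming from $\Delta_{p_m}$ on the $d_+$ inside with the $z_k$ and $z_1$ already present in \eqref{eq:qphi}. Once \eqref{eq:qphi} checks out, the remaining cases in \eqref{eq:phi} and \eqref{eq:T d+} are analogous but shorter, and the lemma follows.
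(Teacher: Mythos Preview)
Your approach is correct and matches the paper's: treat $[\Delta_{p_m},-]$ as a derivation and verify it annihilates each relator modulo the ideal. However, your assessment of where the work lies is inverted. Relation \eqref{eq:qphi} is in fact the easiest nontrivial case, since $z_1$ commutes with $z_k$ and $[\Delta_{p_m},d_+d_-]=[\Delta_{p_m},d_-d_+]=z_1^m d_+d_-$ (resp.\ $z_1^m d_-d_+$), so the extra $z_1^m$ factors out uniformly. The cases requiring actual insight are $T_1 d_+^2 = d_+^2$, where one needs that $[\Delta_{p_m},d_+^2]=(z_1^m+z_2^m)d_+^2$ and that $T_1$ commutes with the symmetric polynomial $z_1^m+z_2^m$, and the two relations in \eqref{eq:phi}, where the clean way through is to first compute $[\Delta_{p_m},\varphi]=z_1^m\varphi$ and then argue as in your $d_+T_i$ check.
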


\begin{remark}
Alternatively, we can say that $[\Delta_{p_m},-]$ are commuting derivations on $\Bqt$ satisfying \eqref{eq:ext-Bqt-1} and \eqref{eq:ext-Bqt-2}.

\end{remark}

\begin{proof}

Clearly, $\Delta_{p_m}$ commutes with all relations involving $z_i,T_i$, and $d_{-}$ for any value of $m$. Since $z_1$ commutes with $T_{i+1}$, we get
\begin{align*}
[\Delta_{p_m},d_{+}T_i]&=z_1^md_{+}T_i=T_{i+1}z_1^{m}d_{+}=[\Delta_{p_m},T_{i+1}d_{+}].
\end{align*}
Now, $T_1$ commutes with $z_1^{m}+z_2^{m}$, hence
$$
[\Delta_{p_m},T_1d_{+}^2]=T_1z_1^{m}d_+^2+T_1d_+z_1^{m}d_+=T_1(z_1^m+z_2^m)d_+^2=(z_1^m+z_2^m)T_1d_+^2=[\Delta_{p_m},d_{+}^2], 
$$
proving \eqref{eq:T d+}.

Next, we have the relations \eqref{eq:phi} involving $\varphi$. It is easy to see that $[\Delta_{p_m},\varphi]=z_1^{m}\varphi$, so
\begin{align*}
[\Delta_{p_m},q\varphi d_{-}]&=z_1^{m}q\varphi d_{-}=z_1^{m}d_{-}\varphi T_{k-1}=d_{-}z_1^{m}\varphi T_{k-1}=[\Delta_{p_m},d_{-}\varphi T_{k-1}] \\
[\Delta_{p_m},T_1\varphi d_{+}]&=T_1z_1^{m}\varphi d_{+}+T_1\varphi z_1^{m}d_{+}= T_1(z_1^{m}+z_2^{m})\varphi d_{+}=
(z_1^{m}+z_2^{m})T_1\varphi d_{+} \\
&=(z_1^{m}+z_2^{m})qd_{+}\varphi=qz_1^{m}d_{+}\varphi+qd_{+}z_1^{m}\varphi =[\Delta_{p_m},qd_{+}\varphi].
\end{align*}

Since $z_1$ commutes with $z_k$, the remaining relations are straightforward.
\end{proof}

\begin{definition}\label{def:BqtExt}
The \newword{extended $\Bqt$ algebra} $\Bqt^{\ext}$ is the algebra generated by $\Bqt$ and the commuting operators $\Delta_{p_m}$ with $m \in \Z_{> 0}$ modulo relations \eqref{eq:ext-Bqt-1} and \eqref{eq:ext-Bqt-2}.
\end{definition}

\begin{corollary} \label{cor:mac-operator}
 We can uniquely define the operator $\Delta_{p_m}$ on the polynomial representation $V$ by setting $\Delta_{p_m}1=0$ on $V_0$ and extending via Lemma \ref{lem:polrep}. In particular, this defines the Macdonald operators on $\Sym_{q,t}=V_0$.
\end{corollary}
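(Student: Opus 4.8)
The plan is to verify that the prescription "$\Delta_{p_m} \cdot 1 = 0$ on $V_0$, extended by the commutation relations \eqref{eq:ext-Bqt-1}--\eqref{eq:ext-Bqt-2}" really does define operators on all of $V$, and that these operators are forced (hence unique). Since $V$ is generated by $1 \in V_0$ as a $\Bqt$-module by Lemma \ref{lem:polrep}, every element of $V$ can be written as $a(1)$ for some $a \in \Bqt$. If an extension of the $\Bqt$-action on $V$ to a $\Bqt^{\ext}$-action exists, then for such an element we must have
\[
\Delta_{p_m}(a(1)) = [\Delta_{p_m}, a](1) + a\,\Delta_{p_m}(1) = [\Delta_{p_m}, a](1),
\]
and by the Remark following the Lemma, $[\Delta_{p_m}, -]$ is the unique derivation of $\Bqt$ determined by \eqref{eq:ext-Bqt-1}--\eqref{eq:ext-Bqt-2}, which acts on the subalgebra $\Bqt$ by honest operators (the generators $z_i, T_i, d_\pm$ already act on $V$). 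This proves uniqueness and simultaneously tells us what formula to write down: set $\Delta_{p_m}(a(1)) := ([\Delta_{p_m}, a])(1)$, where $[\Delta_{p_m}, a]$ is computed using the derivation rule, e.g. $[\Delta_{p_m}, d_+] = z_1^m d_+$ and $[\Delta_{p_m}, T_i] = [\Delta_{p_m}, z_i] = [\Delta_{p_m}, d_-] = 0$, expanded out via the Leibniz rule on any word in the generators.

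The one thing that needs checking — and this is the main obstacle — is \emph{well-definedness}: if $a(1) = a'(1)$ in $V$ for two elements $a, a' \in \Bqt$, we must know $([\Delta_{p_m}, a])(1) = ([\Delta_{p_m}, a'])(1)$. Equivalently, writing $b = a - a'$, we must show that $b(1) = 0 \implies ([\Delta_{p_m}, b])(1) = 0$; that is, the left annihilator ideal $\mathrm{Ann}(1) = \{b \in \Bqt : b(1) = 0\}$ is preserved by the derivation $[\Delta_{p_m}, -]$. This is exactly the statement that $[\Delta_{p_m}, -]$, viewed as a derivation of $\Bqt$ valued in $\Bqt$, descends to a well-defined map on the cyclic module $V \cong \Bqt/\mathrm{Ann}(1)$ (with the convention $\Delta_{p_m}(1)=0$). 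I would prove this by exhibiting a genuine operator on $V$ that satisfies \eqref{eq:ext-Bqt-1}--\eqref{eq:ext-Bqt-2}: namely, the Macdonald-type operator. Concretely, I would define $\Delta_{p_m}$ on $V_k$ by an explicit plethystic/eigenvalue formula — guided by the intended action in the introduction, $\Delta_{p_m}$ should act on the eigenbasis $\{I_{\lambda,w}\}$ of Section \ref{sec: polynomial} diagonally with eigenvalue $p_m(\lambda) + w_1^m + \cdots + w_k^m$, and on $V_0 = \Sym_{q,t}$ in the modified Macdonald basis $\{I_\lambda\}$ by the eigenvalue $p_m(\lambda) = \sum_{\square \in \lambda} (\text{$(q,t)$-content of }\square)^m$, which is precisely (a normalization of) the classical Macdonald operator. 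Then I would check directly that this operator commutes with $T_i, z_i, d_-$ (immediate from the diagonal action on $\{I_{\lambda,w}\}$ and the formulas recalled in Section \ref{sec: polynomial}, since $T_i$ only mixes $w$ and $s_i w$ which have the same power-sum value, $d_-$ just forgets $w_k$, and $z_i$ is diagonal) and satisfies $[\Delta_{p_m}, d_+] = z_1^m d_+$ (using $d_+ I_{\lambda,w} = -q^k \sum_x (\cdots) I_{\lambda\cup x,(x,w)}$: applying $\Delta_{p_m}$ after $d_+$ gives eigenvalue $p_m(\lambda\cup x) + x^m + \sum w_i^m = p_m(\lambda) + \sum w_i^m + 2x^m$... — here one must use $p_m(\lambda \cup x) = p_m(\lambda) + x^m$ from the weighted-poset structure, wait: the box $x$ contributes both to $p_m(\lambda\cup x)$ and as the new $w_0 = x$, so the difference of eigenvalues before/after $d_+$ is exactly $x^m$, and $z_1^m$ acts on $I_{\lambda\cup x,(x,w)}$ by $x^m$, matching). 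Since $V_0$ generates $V$ and $\Delta_{p_m} 1 = 0$, such an operator is unique once we know it exists; existence is what the explicit construction provides, and well-definedness of the abstract prescription then follows because the explicit operator manifestly respects $\mathrm{Ann}(1)$.

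In the write-up I would present it in this order: (1) observe that uniqueness is forced by cyclicity once $\Delta_{p_m} 1 = 0$ is fixed; (2) define $\Delta_{p_m}$ explicitly on $V$ via its diagonal action on the basis $\{I_{\lambda,w}\}$ of Section \ref{sec: polynomial}; (3) verify \eqref{eq:ext-Bqt-1} and \eqref{eq:ext-Bqt-2} using the explicit action formulas, where the only non-trivial check is \eqref{eq:ext-Bqt-2} and it reduces to the identity $p_m(\lambda \cup x) = p_m(\lambda) + x^m$ together with the fact that $z_1$ acts on $I_{\lambda\cup x, (x, w)}$ with eigenvalue $x$; (4) conclude that $V$ is a $\Bqt^{\ext}$-module and that the restriction to $V_0$ recovers the Macdonald operators since the modified Macdonald basis is the $k=0$ specialization of $\{I_{\lambda,w}\}$ by \cite[Theorem 7.0.1]{CGM}. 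The genuinely routine parts are the commutativity checks in step (3); the one place to be careful is bookkeeping the eigenvalue shift across $d_+$, so I would highlight that and leave the rest terse.
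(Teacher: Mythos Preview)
The paper offers no proof of this corollary beyond stating it, so your proposal is already more careful than the original. You correctly isolate the real content: uniqueness is immediate from cyclicity (Lemma~\ref{lem:polrep}), while existence amounts to showing that the annihilator of $1$ is stable under the derivation $[\Delta_{p_m},-]$, which you sensibly propose to verify by exhibiting an explicit diagonal operator on the basis $\{I_{\lambda,w}\}$. This is essentially what the paper does later in Theorem~\ref{thm: polynomial is calibrated}, so your route and the paper's (implicit) route coincide.

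There is, however, a genuine bookkeeping error in your eigenvalue formula. In the basis $I_{\lambda,w}$ of Section~\ref{sec: polynomial}, the formula $d_{-}I_{\lambda,(w_1,\dots,w_k)}=I_{\lambda,(w_1,\dots,w_{k-1})}$ shows that $\lambda$ is \emph{unchanged} by $d_-$; hence $\lambda$ is the partition one lands on after $d_-^k$, i.e.\ the endpoint of the chain. The correct eigenvalue is therefore $\Delta_{p_m}I_{\lambda,w}=p_m(\lambda)\,I_{\lambda,w}$, not $p_m(\lambda)+\sum_i w_i^m$. With your formula, $[\Delta_{p_m},d_-]$ already fails (the eigenvalue would drop by $w_k^m$), and for $d_+$ you get the $2x^m$ discrepancy you noticed; your ``wait'' aside does not actually resolve it. With the corrected eigenvalue both checks are one-liners: $d_-$ fixes $\lambda$ so the eigenvalue is unchanged, while $d_+$ replaces $\lambda$ by $\lambda\cup x$ so the eigenvalue jumps by exactly $x^m$, which matches $z_1^m d_+$ since $z_1$ acts by $x$ on $I_{\lambda\cup x,(x,w)}$. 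The confusion comes from transporting the formula of Definition~\ref{def: calibrated from excellent} (where the label is the \emph{start} of the chain and the eigenvalue is $p_m(\text{start})+\sum w_i^m$) without adjusting for the convention switch; of course both expressions equal $p_m(\text{endpoint})$.
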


\begin{remark}
A similar extension has been considered by Bechtloff-Weising in \cite{milo}, with a few differences. First, Bechtloff-Weising works with the polynomial representation of the \emph{stable limit DAHA} which is closely related to, but different from, the $\Aqt$-algebra, cf. \cite{IonWu}. Second, \cite{milo} extends the stable limit DAHA by a single operator, whose joint eigenspaces in the polynomial representation with a natural polynomial subalgebra of the stable limit DAHA are one-dimensional. It would be interesting to explore the relation between the construction in \cite{milo} and the operators $\Delta_{p_{m}}$. 
\end{remark}

\begin{remark} A natural question is whether an extended $\Aqt$ algebra exists as well. Given the explicit homomorphism $\beta: \Aqt \to \Bqt$ given in \cite[Theorem 3.2.7]{CGM}, one might expect such an extension to follow directly. Unfortunately, the image of $\Aqt$ is not closed under the operators $[\Delta_{p_{m}}, -]$, so defining $\Aqt^{\ext}$, if it exists, requires a different approach. 
\end{remark}

\section{Constructing Calibrated Representations}\label{sec: construction}

We start with the following definition.

\begin{definition}\label{def: calibrated}
A representation $V = \bigoplus_{k \geq 0}V_{k}$ of $\Bqt^{\ext}$ is called \newword{calibrated} if the following conditions are satisfied:
\begin{enumerate}
    \item[(1)] For each $k$, $V_{k}$ admits a basis of simultaneous eigenvectors for $z_1, \dots, z_k$ and the Delta operators $\Delta_{p_m}$ for all $m\geq 1$.
    \item[(2)] For each collection $(\zeta, \delta) = (\zeta_{i}, \delta_{m})_{1 \leq i \leq k, 1 \leq m}$, the space
    \[
    V_{k}^{(\zeta, \delta)} := \{v \in V_{k} \mid z_{i}v = \zeta_{i}v, \Delta_{p_m}(v) = \delta_{m}v\}
    \]
    is at most $1$-dimensional. 
    \item[(3)] If $V_{k}^{(\zeta, \delta)} \neq \{0\}$, then $\zeta_1, \dots, \zeta_{k} \neq 0$.
\end{enumerate}
\end{definition}

The goal of this section is to combinatorially construct a family of calibrated representations of $\Bqt^{\ext}$. Roughly speaking, we will construct a calibrated representation $V = \bigoplus_{k \geq 0} V_{k}$ starting from an eigenbasis of $V_0$. In order to have controllable combinatorics, we impose extra structure on the eigenbasis of $V_0$. Namely, that of a poset satisfying certain conditions which we now define. 

\subsection{Weighted Posets}
\label{sec: def poset new}

Let $E$ be a partially ordered set satisfying the following two conditions: 
\smallskip

\noindent{\bf 1.} \underline{Local Finiteness:} For any $\lambda\in E$ there exist finitely many $\mu\in E$ covering $\lambda$ and finitely many $\mu$ covered by $\lambda$.
\medskip

\noindent{\bf 2.} \underline{Grading:} Let $|\cdot|: E \to \Z$ be any map preserving the covering relations (i.e.  if $\mu$ covers $\lambda$, then $|\mu| = |\lambda| + 1$). Then for $E_n := \lbrace \lambda \in E \; ; \; |\lambda| = n \rbrace$ we have $E=\bigsqcup_{n\in \Z}E_n$, so that $E$ is graded.
\medskip

A  \newword{weighting} on $E$ is a family of algebra homomorphisms $\Sym_{q,t} \to \C(q,t)$, $f \mapsto f(\lambda)$, indexed by elements $\lambda \in E$.
A  \newword{weighted poset} is a poset $E$ with a weighting that satisfies the following two additional conditions:
\medskip

\noindent {(a)} \underline{Simple spectrum:} The maps $f \mapsto f(\lambda)$ have \newword{joint simple spectrum}. Since these are algebra homomorphisms, the images are determined by the family of scalars $p_m(\lambda)$ with $m>0$, so the simple spectrum condition means that for any $\mu\neq \lambda$ there exists $m$ such that $p_m(\mu)\neq p_m(\lambda)$.  
\medskip

\noindent{(b)} \underline{Edge Condition:} If $\mu$ covers $\lambda$ then we require that there exists $x\in \C(q,t)$ such that 
\begin{equation}
\label{eq: add box p}
p_m(\mu)=p_m(\lambda)+x^m\ \mathrm{for\ all}\ m.
\end{equation}
If such $x$ exists then it is clearly unique since $x=p_1(\mu)-p_1(\lambda)$.
Note that the simple spectrum condition implies that $x\neq 0$ and $\mu$ is determined by $\lambda$ and $x$.
Hence, in such cases we will often write $\mu=\lambda\cup x$ and call $x$ an \newword{addable weight} for $\lambda$.
 
\begin{remark}
\label{rem: shift}
Given a weighted poset $E$ and a collection of arbitrary scalars $a_m$, we can define a new weighted poset $E'$. The underlying poset of $E'$ is the same as of $E$, but the weightings are modified:
$$
p_{m;E'}(\lambda)=p_{m;E}(\lambda)+a_m.
$$
If $\lambda\cup x=\mu$ in $E$ then the same is true for $E'$ since $p_{m;E'}(\mu)-p_{m;E'}(\lambda)=p_{m;E}(\mu)-p_{m;E}(\lambda)$.
\end{remark}

\subsection{Calibrated Sequences and Affine Hecke Algebra Representations} The \newword{affine Hecke algebra} $\AH_{k}$ is the $\C(q,t)$-algebra\footnote{While the affine Hecke algebra is usually defined over $\C(q)$, we choose to define it over $\C(q,t)$ so that it has the same base field as $\Bqt^{\ext}$. In particular, while $t$ plays no role in the relations of the AHA, its representations are $\C(q,t)$-vector spaces.} with generators $T_i$ and $z_j^{\pm 1}$ with $1\leq i\leq k-1$ and $1\leq j\leq k$ and relations \eqref{eq:hecke relns} and \eqref{eq:T and z}. In particular, $\AH_{k}$ contains a polynomial subalgebra $\C[z_1^{\pm 1}, \dots, z_k^{\pm 1}] \subseteq \AH_{k}$.

\begin{definition}[\cite{Ram}]
A representation $M$ of the affine Hecke algebra (AHA) $\AH_{k}$ is called \newword{calibrated} if the algebra $\C[z_1^{\pm 1}, \dots, z_k^{\pm 1}]$ acts semisimply on $M$, that is, if the operators $z_1, \dots, z_k$ on $M$ are simultaneously diagonalizable with nonzero eigenvalues.
\end{definition}

By definition, if $V = \bigoplus_{k} V_{k}$ is a calibrated $\Bqt^{\ext}$-representation, then $V_{k}$ is a calibrated $\AH_{k}$-representation for each $k$. This motivates the terminology \lq\lq calibrated'' for these $\Bqt^{\ext}$-representations. 

In \cite{Ram}, Ram constructed all irreducible calibrated representations of the affine Hecke algebra $\AH_{k}$. We recall the classification, following \emph{loc. cit.} The main combinatorial input is that of a calibrated sequence, which for convenient notation later on we henceforth read from right to left.

\begin{definition}
A sequence $[w_k,\ldots,w_1] \in (\C(q,t)^{*})^{k}$ is called \newword{calibrated} if whenever $w_i=w_j\ (i<j)$ there exist $i<i',j'<j$ such that $w_{i'}=qw_i,w_{j'}=q^{-1}w_i$.   
\end{definition}

Given a sequence $[\underline{w}]=[w_k,\ldots,w_1]$, we call $s_i=(i\ i+1)$ an \newword{admissible transposition} if $w_{i+1}\neq q^{\pm 1}w_i$. It is easy to check that 
\begin{align}
[s_i(\underline{w})]:=[w_k,\ldots,w_{i},w_{i+1},\ldots,w_1]
\end{align}
is also a calibrated sequence. It is evident that admissible transpositions induce an equivalence relation on the set of calibrated sequences.

\begin{theorem}[\cite{Ram}]
\label{thm: Ram new}
Let $[\underline{w}]=[w_k,\ldots,w_1]$ be a calibrated sequence,
define $V_{\underline{w}}$ as the $\C(q,t)$-span of all sequences equivalent to $\underline{w}$. Then, the following operators on $V_{\underline{w}}$ 
\begin{align}
z_i[\underline{w}']&=w'_i[\underline{w}'], \label{eq: z Ram new}
\\
T_i[\underline{w}']&=\begin{cases}
\frac{(q-1)w'_{i+1}}{w'_i-w'_{i+1}}[\underline{w}']+
\frac{w'_i-qw'_{i+1}}{w'_i-w'_{i+1}}[s_i(\underline{w'})] &; \mathrm{if}\ s_i\ \mathrm{is\ admissible},\\
\frac{(q-1)w'_{i+1}}{w'_i-w'_{i+1}}[\underline{w}'] &; \mathrm{otherwise}.
\end{cases} \label{eq: T Ram new}
\end{align}
define a finite-dimensional irreducible calibrated representation of $\AH_k$. 

Conversely, in a calibrated representation of $\AH_k$ all the joint eigenvalues of $z_1, \dots, z_k$ are calibrated sequences. Moreover, every irreducible calibrated representation of $\AH_k$ is of the form $V_{\underline{w}}$ for some calibrated sequence $\underline{w}$. 
\end{theorem}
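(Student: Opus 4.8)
The plan is to prove Theorem~\ref{thm: Ram new} by following Ram's original argument \cite{Ram}, adapting notation to our right-to-left convention. The statement has two halves: (i) the explicit operators \eqref{eq: z Ram new}--\eqref{eq: T Ram new} on $V_{\underline{w}}$ define an irreducible calibrated $\AH_k$-representation, and (ii) every irreducible calibrated $\AH_k$-representation arises this way. I would treat these in turn, with the bulk of the work in part (i) being a relation check and the bulk of part (ii) being the extraction of a calibrated sequence from an arbitrary calibrated module.

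\textbf{Part (i): the operators define a representation.} First I would verify that admissible transpositions preserve the calibratedness of a sequence (so $V_{\underline{w}}$ is well-defined and finite-dimensional, being spanned by a subset of the $S_k$-orbit of $\underline{w}$), and that distinct sequences in the equivalence class remain distinct as basis vectors. Then I would check the defining relations \eqref{eq:hecke relns} and \eqref{eq:T and z} directly on basis vectors $[\underline{w}']$. The quadratic relation $(T_i-1)(T_i+q)=0$ and the commutation $z_iz_j=z_jz_i$, $z_iT_j=T_jz_i$ for $i\notin\{j,j+1\}$ are immediate from the formulas. The relation $T_i^{-1}z_{i+1}T_i^{-1}=q^{-1}z_i$ (equivalently $T_iz_i T_i = q z_{i+1}$ or the Bernstein-type relation $T_iz_i - z_{i+1}T_i = (q-1)z_{i+1}$) is a short computation splitting into the admissible and non-admissible cases; in the non-admissible case one uses $w'_{i+1}=q^{\pm1}w'_i$ to see the apparent pole cancels. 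The braid relation $T_iT_{i+1}T_i=T_{i+1}T_iT_{i+1}$ is the only laborious check: one works on the span of the $S_3$-orbit of $(w'_i,w'_{i+1},w'_{i+2})$ and verifies equality case-by-case according to which of the three consecutive transpositions are admissible — the calibratedness hypothesis is exactly what guarantees the inadmissible cases are consistent (e.g. one cannot have $w'_{i+1}=qw'_i$ and $w'_{i+2}=qw'_{i+1}$ with a ``gap'' forcing a contradiction). Irreducibility follows because the $z_i$-eigenvalues are distinct on distinct basis vectors (so any submodule is spanned by a subset of the $[\underline{w}']$), and admissible $T_i$ connects $[\underline{w}']$ to $[s_i(\underline{w}')]$ with nonzero coefficient $\frac{w'_i-qw'_{i+1}}{w'_i-w'_{i+1}}$ (nonzero precisely by admissibility), while the equivalence class is by definition connected under admissible transpositions.

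\textbf{Part (ii): classification.} Let $M$ be an irreducible calibrated $\AH_k$-module. Since $\C[z_1^{\pm1},\ldots,z_k^{\pm1}]$ acts semisimply, decompose $M=\bigoplus M^{\zeta}$ into joint eigenspaces. Using the Bernstein relation $T_iz_i-z_{i+1}T_i=(q-1)z_{i+1}$, one shows $T_i$ maps $M^{\zeta}$ into $M^{\zeta}+M^{s_i\zeta}$, and moreover if $\zeta_i=\zeta_{i+1}$ then on $M^\zeta$ the relation forces $T_i$ to act with the scalar appearing in the non-admissible formula; combined with the quadratic relation this yields the ``intertwiner'' $\tau_i := T_i - \frac{(q-1)z_{i+1}}{z_i-z_{i+1}}$ (defined on weight spaces where $\zeta_i\neq\zeta_{i+1}$), which satisfies $\tau_i\,M^{\zeta}\subseteq M^{s_i\zeta}$ and $\tau_i^2 = \frac{(q z_i - z_{i+1})(z_i - q z_{i+1})}{(z_i-z_{i+1})^2}$ as an operator. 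The key structural claims, all standard for calibrated AHA modules, are: (a) each weight space is at most one-dimensional; (b) no weight $\zeta$ occurring in $M$ can have $\zeta_{i+1}=q\zeta_i$ and $\zeta_{i+1}=q^{-1}\zeta_i$ simultaneously, nor (after the dimension argument) certain other forbidden local patterns — this is precisely where one derives that each occurring $\zeta$ is a calibrated sequence, because if $\zeta_i=\zeta_j$ with $i<j$ and no intermediate $q\zeta_i$ or $q^{-1}\zeta_i$ appeared, an intertwiner argument produces either a two-dimensional weight space or a proper submodule, contradicting (a) or irreducibility; and (c) the set of occurring weights is a single equivalence class under admissible transpositions, because $\tau_i$ is invertible exactly when $s_i$ is admissible at $\zeta$, giving an isomorphism $M^\zeta\xrightarrow{\sim}M^{s_i\zeta}$, and irreducibility forces all weights to be reachable. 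Fixing any occurring weight $\underline{w}$, these facts give a vector-space isomorphism $M\cong V_{\underline{w}}$ matching weight spaces, under which the $z_i$-action matches by construction and the $T_i$-action matches by (b)+(c) and the normalization of $\tau_i$; hence $M\cong V_{\underline{w}}$ as $\AH_k$-modules. The ``conversely'' sentence in the theorem — that in \emph{any} calibrated module (not necessarily irreducible) the joint $z$-eigenvalues are calibrated sequences — follows since such a module is a direct sum of irreducibles (semisimplicity of the $z$-action plus a standard argument, or simply: take a composition series and note each factor is calibrated), reducing to the irreducible case already handled.

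\textbf{Main obstacle.} The genuinely delicate point is step (b) of part (ii): showing that an occurring weight must be a \emph{calibrated} sequence and, dually in part (i), that the braid relation holds on $V_{\underline w}$ in all the degenerate cases. Both hinge on the same combinatorial fact — that the ``no gap'' condition in the definition of a calibrated sequence is exactly what prevents the intertwiner operators from being simultaneously non-invertible in a way that would either kill a basis vector or create an unwanted one. I would handle this by a careful enumeration of the possible equality patterns among $w'_i, w'_{i+1}, w'_{i+2}$ (there are only finitely many up to the action of $\langle s_i, s_{i+1}\rangle$), checking in each that the formulas \eqref{eq: T Ram new} are consistent; the calibratedness hypothesis eliminates precisely the problematic patterns. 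Everything else is bookkeeping with the explicit rational-function coefficients. I would also remark that an alternative to re-deriving part (ii) from scratch is to cite \cite{Ram} directly after observing our conventions differ only by the order-reversal $i\mapsto k+1-i$, which is an algebra automorphism-compatible relabeling; but since the paper uses these formulas downstream, I would at least record the relation checks for part (i) in full.
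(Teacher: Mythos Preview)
The paper does not give its own proof of this theorem: it is stated as a result of Ram and cited directly to \cite{Ram}, with no argument supplied. So there is nothing in the paper to compare your proposal against. Your outline is a faithful sketch of Ram's original approach, and the relation checks and intertwiner arguments you describe are the standard ones.

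One point to flag: in the last sentence of your part (ii) you claim that an arbitrary calibrated $\AH_k$-module ``is a direct sum of irreducibles (semisimplicity of the $z$-action plus a standard argument).'' This is false in general --- semisimplicity of the $z$-action does not force complete reducibility of the whole module --- and the paper itself gives a counterexample immediately after stating the theorem (Example~\ref{ex: calibrated non semisimple}, a two-dimensional calibrated $\AH_2$-module that is a nonsplit extension). Your alternative route via a composition series is fine (each factor is still calibrated, so its weights are calibrated sequences, and the weights of $M$ are the union of those), but you should drop the semisimplicity claim.
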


Note that Theorem \ref{thm: Ram new} implies that if there exists an eigenvector with $z$-eigenvalues $[\underline{w}]=[w_k,\ldots,w_1]$ and $s_i$ is an admissible transposition for $[\underline{w}]$, then there will also exist another eigenvector with $z$-eigenvalues $s_i[\underline{w}]$. We now observe that given an arbitrary calibrated representation $V$, it is in general \emph{not} possible to obtain a set of calibrated sequences and a basis of $V$ where the action is given by the formulas \eqref{eq: z Ram new}, \eqref{eq: T Ram new}.

\begin{example}\label{ex: calibrated non semisimple}
    Let us consider the affine Hecke algebra $\AH_{2}$. Take any nonzero elements $x, \alpha \in \C(q,t)$, and define a representation of $\AH_{2}$ on $\C(q,t)^{2}$ via the following matrices 
    \[
    T \mapsto \left(\begin{matrix} 1 & \alpha \\ 0 & -q \end{matrix}\right), \qquad z_1 \mapsto \left(\begin{matrix}qx & 0 \\ 0 & x \end{matrix}\right), \qquad z_2 \mapsto \left(\begin{matrix}x & 0 \\ 0 & qx \end{matrix}\right).
    \]
    Note that
    \[
    Tz_{1}T \mapsto \left(\begin{matrix} qx & \alpha x \\ 0 & -qx\end{matrix}\right)\left(\begin{matrix} 1 & \alpha \\ 0 & -q\end{matrix}\right)  = \left(\begin{matrix} qx & 0 \\ 0 & q^{2}x \end{matrix}\right) = q\left(\begin{matrix} x & 0 \\ 0 & qx\end{matrix}\right)
    \]
    so that we indeed have a representation of $\AH_{2}$. Clearly, $V$ is calibrated. However, it is not completely reducible and in fact there is a non-split exact sequence
    \[
    0 \to V_{[x, qx]} \to V \to V_{[qx, x]} \to 0. 
    \]
\end{example}

\subsection{Chains and Excellent Posets}
Let $E$ be a poset with a weighting satisfying all the conditions described in Section \ref{sec: def poset new}. We will consider maximal chains in this poset, and often simply call them chains. We write a covering relation $\lambda < \mu$ by $\lambda \to \mu$. If, moreover, $x = p_1(\mu) - p_1(\lambda)$ then we write $\lambda \xrightarrow{x} \mu$. By the edge condition, any chain has the form
\begin{equation}\label{eq: chain}
\lambda\xrightarrow{w_k}(\lambda\cup w_k)
\xrightarrow{w_{k-1}}(\lambda\cup w_k\cup w_{k-1})\xrightarrow{} \cdots \xrightarrow{w_1} \mu=\lambda\cup w_k\cup \ldots w_1,
\end{equation}
where $w_k$ is addable for $\lambda$, $w_{k-1}$ is addable for $\lambda\cup w_k$ and so on. 

We will denote the chain \eqref{eq: chain} by $[\lambda;w_k,\ldots,w_1]$ and often abbreviate by setting $[\lambda;\underline{w}]:=[\lambda;w_k,\ldots,w_1]$ and $ \lambda\cup \underline{w}:= \lambda\cup w_k\cup\cdots\cup w_1$.

\begin{lemma}\label{lem: uniqueness of weights up to permutation}
a) For any $\lambda < \mu$, the weights $w_1,\ldots,w_k$ are uniquely determined by $\lambda$ and $\mu=\lambda\cup w_k\cup \ldots \cup w_1$, up to permutation. 

b) Conversely, suppose that $[\lambda;\underline{w}]$ and $[\lambda;\underline{w'}]$ are two chains in $E$ with $[\underline{w'}]$ a permutation of $[\underline{w}]$. Then 
$$
\lambda\cup w_k\cup \ldots w_1=\lambda\cup w'_k\cup \ldots w'_1.
$$
\end{lemma}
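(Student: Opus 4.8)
The plan is to prove both statements by exploiting the key fact that the weighting $p_1$ is \emph{additive} along covering relations: if $\lambda \xrightarrow{x} \mu$ then $p_1(\mu) = p_1(\lambda) + x$. Since a maximal chain from $\lambda$ to $\mu$ records a sequence of such steps, the multiset $\{w_1,\ldots,w_k\}$ of edge-weights must satisfy $\sum_i w_i = p_1(\mu) - p_1(\lambda)$, but of course this alone is far too weak; I need to recover the full multiset, not just its sum. The idea is to induct on $k = |\mu| - |\lambda|$ and, at the inductive step, identify \emph{which weights can appear as the last step $w_1$} (equivalently, which $\nu$ with $\nu \xrightarrow{w_1}\mu$ and $\lambda \le \nu$ occur), since once $w_1$ and $\nu$ are pinned down, the remaining weights $w_2,\ldots,w_k$ are determined up to permutation by $\lambda < \nu$ via induction.

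For part (a): I would argue that the multiset $\{w_1,\ldots,w_k\}$ is intrinsic to the pair $(\lambda,\mu)$, independent of the chosen chain. Take two maximal chains $[\lambda;\underline{w}]$ and $[\lambda;\underline{w'}]$ from $\lambda$ to $\mu$. Both pass through some element $\nu$ covered by $\mu$; more precisely, the penultimate element of the first chain is some $\nu = \mu$ minus its last weight, and by the edge condition applied to $\nu \to \mu$, the weight is $w_1 = p_1(\mu)-p_1(\nu)$, and $\nu = \mu \cup^{-1} w_1$ is \emph{determined by $w_1$} (via the simple-spectrum-plus-edge argument: $\mu$ is determined by $\nu$ and $w_1$, and conversely among elements covered by $\mu$ the weight function is injective, since $p_m(\nu) = p_m(\mu) - w_1^m$). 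Thus the set of possible "last weights'' of a maximal chain from $\lambda$ to $\mu$ is exactly the set of $w$ such that $\mu = \nu \cup w$ for some $\nu$ with $\lambda \le \nu \lessdot \mu$; call this set $S$. I claim both chains use a last weight in $S$, and — this is where I must be a little careful — I should show that for \emph{any} $w_1' \in S$ appearing as some chain's last step, and any chain at all, one can find a chain through the corresponding $\nu'$. Actually the cleaner route: fix a maximal chain, let $\nu$ be its penultimate vertex and $w_1$ its last weight; by induction on $k$ the multiset $\{w_2,\ldots,w_k\}$ equals the intrinsic multiset $M(\lambda,\nu)$ of the pair $(\lambda,\nu)$. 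So $\{w_1,\ldots,w_k\} = M(\lambda,\nu)\sqcup\{w_1\}$. To finish I must show this does not depend on which maximal chain (hence which $\nu$) I chose. Here I use the exchange-type argument: given two penultimate vertices $\nu,\nu'$ for two chains, I diamond them down one level — $\mu$ covers both $\nu,\nu'$, and (if $\nu \ne \nu'$) there is a common lower bound they both cover... but that requires the poset to have such diamonds, which is \emph{not} assumed. So instead I phase the induction differently, defining $M(\lambda,\mu)$ directly as $\{w : \lambda < \mu = \nu\cup w, \nu \lessdot \mu\}$-indexed sum and proving by a more robust argument; honestly the safest is to define $M(\lambda,\mu) := $ the multiset of $p_1$-differences along \emph{one} chosen chain, and then show \emph{any} chain yields the same multiset by peeling off from the $\mu$-end and matching last steps — which is exactly the content requiring that if two maximal chains from $\lambda$ end at $\mu$ with different last weights $w_1 \ne w_1''$, their common structure still forces equal multisets. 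This is the genuinely delicate point.

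For part (b): this is the converse — if $[\lambda;\underline{w}]$ and $[\lambda;\underline{w'}]$ are both chains with $\underline{w'}$ a permutation of $\underline{w}$, then $\lambda\cup\underline{w} = \lambda\cup\underline{w'}$. Let $\mu = \lambda\cup\underline{w}$ and $\mu' = \lambda\cup\underline{w'}$. Since $p_m$ adds $\sum_i w_i^m$ along any chain (for every $m$), and $\underline{w'}$ is a permutation of $\underline{w}$, we get $p_m(\mu') = p_m(\lambda) + \sum_i (w_i')^m = p_m(\lambda) + \sum_i w_i^m = p_m(\mu)$ for all $m$. By the simple-spectrum condition (a), an element of $E$ is uniquely determined by the scalars $p_m(\cdot)$, hence $\mu = \mu'$. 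This direction is short and clean.

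Thus the main obstacle is part (a)'s independence of the multiset from the choice of chain, since the poset is not assumed to have any diamond/Jordan–Hölder property. My recommended fix is: first prove part (b); then for (a), for a fixed maximal chain $[\lambda;\underline{w}]$ from $\lambda$ to $\mu$ and any \emph{other} maximal chain $[\lambda;\underline{w'}]$ from $\lambda$ to $\mu$, I want $\underline{w'}$ to be a permutation of $\underline{w}$. I would compare penultimate vertices $\nu,\nu'$ and last weights $w_1,w_1'$: if $\nu = \nu'$, induction on $k$ finishes immediately. If $\nu \ne \nu'$, I instead argue by strong induction on $k$ using the fact that both $\nu$ and $\nu'$ lie below $\mu$ and above $\lambda$, and patch chains $\lambda \to \nu \to \mu$ and $\lambda \to \nu' \to \mu$ by re-routing: compute $p_m(\mu) - p_m(\lambda) = \sum w_i^m$ from the first chain and $= \sum (w_i')^m$ from the second, valid for all $m > 0$; then invoke that two multisets of nonzero elements of $\C(q,t)$ with equal power sums $\sum_i w_i^m$ for all $m \ge 1$ are equal — this is Newton's identities / the fact that a finite multiset of nonzero scalars is recovered from its power-sum symmetric functions $p_1,\ldots,p_N$. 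That last observation is the clean way to bypass the missing diamond property entirely: the multiset $\{w_1,\ldots,w_k\}$ is determined by $\{p_m(\mu)-p_m(\lambda)\}_{m\ge 1}$, which depends only on $(\lambda,\mu)$, not on the chain. In fact this makes part (a) essentially immediate and of the same flavor as (b), and I would present it that way, so the ``main obstacle'' dissolves once one realizes power sums determine the multiset.
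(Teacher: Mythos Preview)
Your proposal is correct and, once you arrive at the clean argument, takes essentially the same approach as the paper: both parts follow immediately from the identity $p_m(\mu)-p_m(\lambda)=\sum_i w_i^m$ for all $m$, which for (a) determines the multiset $\{w_i\}$ (since power sums determine a finite multiset of nonzero scalars, with $k=|\mu|-|\lambda|$ fixed by grading), and for (b) forces $p_m(\mu)=p_m(\mu')$ for all $m$, whence $\mu=\mu'$ by simple spectrum. The inductive/diamond exploration in your write-up is an unnecessary detour that you correctly abandon; the paper goes straight to the power-sum argument.
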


\begin{proof}
Since $E$ is graded, we have $k=|\mu|-|\lambda|$ is fixed. Now the statement follows from the fact that, for all $s$, the equation
$$
p_s(\mu)-p_s(\lambda)=w_k^{s}+\ldots+w_1^s
$$
is invariant under permutation of the $w_j$'s, together with the simple spectrum condition.
\end{proof}

\begin{definition}
\label{def: excellent new}
A weighted poset $E$ is called \newword{excellent} if for all 2-step chains $[\lambda;x,y]$ in $E$ we have the following:

1) $y\neq x$ and $y\neq (qt)^{\pm 1}x$.

2) Exactly one of the following conditions hold:
\begin{itemize}
\item $y=qx$,
\item $y=tx$, or
\item $[\lambda;y,x]$ is also a chain.
\end{itemize}
\end{definition}

\begin{example} \label{ex:Epartition}
Consider the set $\mathcal{P}$ of all partitions, with partial order given by inclusion of Young diagrams. Recall that the $(q,t)$-content of a cell $\sq \in \lambda$ is given by $q^{r-1}t^{c-1}$ where $r$ and $c$ denote the row and column of $\sq$, respectively. Abusing notation by using $\sq$ to denote both the cells in $\lambda$ and their corresponding $(q,t)$-content, we can define a weighting by $p_m(\lambda) = \sum_{\sq \in \lambda} \sq^{m} \in \C(q,t)$. With this choice, we claim that the resulting weighted poset $\mathcal{P}$ is excellent. Indeed, if $x$ is addable for $\lambda$ and $y$ is addable for $\lambda\cup x$ (so that $[\lambda; x,y]$ is a chain in $\mathcal{P}$) we have three possibilities: $(i)$ $x$ and $y$ are in the same row and $y=qx$, $(ii)$ $x$ and $y$ are in the same column and $y=tx$, or $(iii)$ $y$ is addable for $\lambda$ and $x$ is addable for $\lambda\cup y$, so that $[\lambda; y,x]$ is also a chain in $\mathcal{P}$.
\end{example}

\begin{lemma}
\label{lem: no inverse new}
If $[\lambda;x,y]$ is a chain in an excellent poset then $y\neq q^{-1}x,y\neq t^{-1}x$.
\end{lemma}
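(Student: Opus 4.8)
The plan is to prove the contrapositive-flavored statement directly: given a $2$-step chain $[\lambda;x,y]$ in an excellent poset $E$, we must rule out $y = q^{-1}x$ and $y = t^{-1}x$. The key observation is that the excellence axiom in Definition \ref{def: excellent new} has an asymmetry built in: condition 2) only ever produces $y = qx$ or $y = tx$ (positive powers), never the inverses. So if $y = q^{-1}x$ were allowed, the only way to satisfy condition 2) would be via the third bullet, i.e. $[\lambda;y,x]$ is also a chain. But then, applying Definition \ref{def: excellent new} to the $2$-step chain $[\lambda;y,x]$, the weights are $x$ addable for $\lambda \cup y$ with $x = qy$ (since $y = q^{-1}x$ means $x = qy$). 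Now I would check which bullet of condition 2) this falls under: we have $x = qy$, so the first bullet holds for the chain $[\lambda;y,x]$, which is consistent — no contradiction yet from condition 2) alone.

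So the actual contradiction must come from condition 1), or from combining the information more carefully. Let me reconsider. For the chain $[\lambda;y,x]$ (which exists by the above), condition 1) demands $x \neq y$ and $x \neq (qt)^{\pm 1} y$. With $x = qy$, this is fine. Hmm — so where does the contradiction live? The point must be that when $[\lambda;x,y]$ is a chain with $y = q^{-1}x$, the \emph{first} bullet cannot apply (that would need $y = qx$, impossible unless $q^2 = 1$, which we may assume fails generically, or more precisely $q^{-1}x = qx \Rightarrow q^2=1$), the second bullet cannot apply ($y = tx$ would give $q^{-1}x = tx \Rightarrow qt = 1$, again excluded), so we are forced into the third bullet: $[\lambda;y,x]$ is a chain. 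Apply excellence again to $[\lambda;y,x]$: now its weights are $y$ addable for $\lambda$, $x$ addable for $\lambda\cup y$, and $x = qy$. The first bullet applies to $[\lambda;y,x]$. But condition 2) says \emph{exactly one} of the three conditions holds — so since the first bullet ($x=qy$) holds for $[\lambda;y,x]$, the third bullet must \emph{fail} for $[\lambda;y,x]$, i.e. $[\lambda;x,y]$ is \emph{not} a chain. This directly contradicts our hypothesis that $[\lambda;x,y]$ is a chain. That is the argument for $y = q^{-1}x$.

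The case $y = t^{-1}x$ is symmetric with $q \leftrightarrow t$: the first bullet ($y=qx$) would force $t^{-1} = q$, i.e. $qt = 1$, excluded by condition 1) applied to $[\lambda;x,y]$ itself (since $y = t^{-1}x = (qt)^{-1}\cdot q x$... let me just use $qt \neq 1$ as a standing assumption, or derive it — actually $y \neq (qt)^{\pm 1}x$ from condition 1) with the fact that $x$ is a nonzero element doesn't immediately give $qt\neq 1$; one should treat $q,t$ as formal variables so $q^2 \neq 1$, $qt \neq 1$, $t^2 \neq 1$ hold identically over $\C(q,t)$). The second bullet ($y = tx$) would force $t^{-1} = t$, i.e. $t^2 = 1$, excluded. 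So again we are forced into the third bullet, $[\lambda;t^{-1}x,x] = [\lambda;y,x]$ is a chain; applying excellence to it, its weights satisfy $x = ty$ so the second bullet holds for $[\lambda;y,x]$, whence by the "exactly one" clause the third bullet fails for $[\lambda;y,x]$, contradicting that $[\lambda;x,y]$ is a chain.

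The main (and really only) subtlety is the bookkeeping around the phrase \emph{exactly one} in condition 2): one must be careful to verify that for the chain $[\lambda;y,x]$ obtained from the third bullet, one of the \emph{first two} bullets genuinely applies (using $x = qy$ or $x = ty$), so that the exclusivity forces the third bullet to fail and yields the contradiction. I would also note at the outset that all the excluded coincidences ($q^2 = 1$, $t^2 = 1$, $qt = 1$) are automatic since we work over $\C(q,t)$ with $q,t$ transcendental, so the only content of the proof is this exclusivity argument; no computation beyond comparing monomials in $q$ and $t$ is needed.
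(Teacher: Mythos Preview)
Your proposal is correct and follows essentially the same approach as the paper's proof: assume $y=q^{-1}x$, use the ``exactly one'' clause of Definition~\ref{def: excellent new} on $[\lambda;x,y]$ to force $[\lambda;y,x]$ to be a chain, then apply the same clause to $[\lambda;y,x]$ (where now $x=qy$ hits the first bullet) to conclude $[\lambda;x,y]$ is not a chain, a contradiction. The paper's version is just terser, leaving implicit the routine checks (over $\C(q,t)$) that $q^2\neq 1$, $t^2\neq 1$, $qt\neq 1$ which you spell out.
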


\begin{proof}
Suppose that $y=q^{-1}x$. Then by definition $[\lambda;y,x]$ is a chain and $x=qy$. Therefore $[\lambda;x,y]$ is not a chain, yielding a contradiction. The case $y=t^{-1}x$ is similar.
\end{proof}

\begin{lemma}\label{lem: far quotient}
Suppose $[\lambda;\underline{w}]$ is a chain in an excellent poset $E$. Then for all $i>j$ we have $w_i\notin\{qw_j,tw_j,w_j\}$. 
\end{lemma}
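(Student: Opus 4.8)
The plan is to induct on the "distance" $i-j$ along the chain, using the excellence axioms together with the commutation structure of the chains. Fix a chain $[\lambda;\underline{w}]$ in $E$ and suppose $i>j$. Consider the sub-2-step chain obtained by looking at the consecutive pair: letting $\nu = \lambda\cup w_k\cup\cdots\cup w_{j+2}$ (with the convention $\nu=\lambda$ if $j+2>k$), we have that $w_{j+1}$ is addable for $\nu$ and $w_j$ is addable for $\nu\cup w_{j+1}$, so $[\nu;w_{j+1},w_j]$ is a 2-step chain in $E$. The base case $i=j+1$ then reduces to: if $[\nu;x,y]$ is a 2-step chain then $y\notin\{qx,tx,x\}$ is \emph{false} in general (indeed $y$ can be $qx$ or $tx$), so the base case is genuinely the case $i=j+1$ handled separately — wait, this is exactly what excellence forbids only partially. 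Let me restate: Lemma \ref{lem: no inverse new} gives $y\neq q^{-1}x,t^{-1}x$ for a 2-step chain $[\nu;x,y]$, and excellence gives $y\neq x$ directly. So for $i=j+1$ we already know $w_{i}=w_{j+1}\neq w_j$, but $w_{j+1}$ \emph{could} equal $q w_j$ or $t w_j$. Hence the statement is genuinely about $i>j+1$, and the $i=j+1$ case must be excluded from the claim — so I will assume the intended reading is $i-j\geq 2$, or that the chain indexing makes $w_i,w_j$ non-adjacent; in any case the induction below is on $i-j$ with the substantive work starting at $i-j=2$.

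The inductive step: suppose $i-j\geq 2$ and, toward a contradiction, $w_i\in\{qw_j,tw_j,w_j\}$. Look at the pair $w_{j+1},w_j$: it forms a 2-step chain, so by excellence either $w_{j+1}=qw_j$, or $w_{j+1}=tw_j$, or we may swap them to get the chain $[\lambda;w_k,\ldots,w_{j+2},w_j,w_{j+1},\ldots,w_1]$ with the values at positions $j,j+1$ interchanged. In the third case, after swapping, $w_i$ is now at distance $i-(j+1)=i-j-1$ from the entry equal to the old $w_j$ (which now sits at position $j+1$), so the inductive hypothesis applies and gives the contradiction. In the first two cases ($w_{j+1}\in\{qw_j,tw_j\}$), I will instead push the comparison one step the other way: consider the pair $w_{i},w_{i-1}$, and symmetrically either $w_{i-1}=qw_i$ or $w_{i-1}=tw_i$ (excellence, reading the 2-step sub-chain at positions $i,i-1$ — note $w_i$ addable implies, by Lemma \ref{lem: no inverse new}, $w_{i-1}\neq q^{-1}w_i,t^{-1}w_i$), or we swap positions $i,i-1$ and reduce $i-j$ by one via induction. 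So the only surviving case is: $w_{j+1}\in\{qw_j,tw_j\}$ \emph{and} $w_{i-1}\in\{qw_i,tw_i\}$ \emph{and} $w_i\in\{qw_j,tw_j,w_j\}$, with no admissible swaps available anywhere between. In that locked configuration I will derive a direct contradiction from the excellence condition $y\neq(qt)^{\pm1}x$ applied to a suitable 2-step sub-chain, or from the explicit structure forced on a 3-step chain $[\ast;x,qx\text{ or }tx,\ldots]$: e.g. if $w_{j+1}=qw_j$ and $w_i=w_j$, then somewhere along the way two entries $qw_j$ and $w_j$ coexist with, eventually, a forced clash with the $(qt)^{\pm1}$-exclusion or with Lemma \ref{lem: no inverse new}.

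The main obstacle is precisely this "locked" case analysis: when no admissible swap exists, one cannot directly invoke the inductive hypothesis, and one must instead carefully track how a chain of forced relations $w_{j+1}=qw_j$ or $tw_j$ propagates and eventually contradicts $y\neq(qt)^{\pm1}x$ or $y\neq q^{-1}x,t^{-1}x$ for some intermediate 2-step sub-chain. I expect the cleanest route is to argue that in an excellent poset, if $[\mu;a,b]$ is a 2-step chain with $b=qa$ then $a$ is \emph{not} addable for $\mu$ (else $[\mu;a,b]=[\mu;qa,a]$ would be non-swappable yet forced into the $q^{-1}$-pattern forbidden by Lemma \ref{lem: no inverse new}) — extracting such a "one-sided" structural lemma first, and then feeding it into the induction, should dispatch the locked case. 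I would isolate that as a preliminary claim before running the main induction.
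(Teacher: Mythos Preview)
There is a genuine indexing error at the start that propagates through your sketch. In the chain $[\lambda; w_k,\ldots,w_1]$ the weight with the \emph{larger} index is added \emph{first}, so in the 2-step sub-chain $[\nu; w_{j+1}, w_j]$ you have $x = w_{j+1}$, $y = w_j$. The claim $w_{j+1}\notin\{qw_j,tw_j,w_j\}$ for $i=j+1$ translates to $x\notin\{qy,ty,y\}$, i.e.\ $y\notin\{q^{-1}x,t^{-1}x,x\}$, which is exactly Lemma~\ref{lem: no inverse new} together with Definition~\ref{def: excellent new}(1). So the base case holds outright; your reading ``$y\notin\{qx,tx,x\}$'' is backwards. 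The same reversal appears in your inductive step: the excellence trichotomy at positions $(j+1,j)$ reads $w_j = qw_{j+1}$, $w_j = tw_{j+1}$, or swap---not $w_{j+1}\in\{qw_j,tw_j\}$.

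Even with the indices straightened out, your ``locked case'' is a real gap that your sketch does not close. In the blocked situation $w_j\in\{qw_{j+1},tw_{j+1}\}$ and $w_{i-1}\in\{qw_i,tw_i\}$, applying the inductive hypothesis to any of the nearer pairs $(i,j+1)$, $(i-1,j)$, $(i-1,j+1)$ only ever rules out things like $w_i\in\{w_j,q^{-1}w_j,tq^{-1}w_j,\ldots\}$; it never excludes $w_i=qw_j$ or $w_i=tw_j$, and your proposed one-sided lemma (as stated, $a$ \emph{is} addable for $\mu$ by definition of the chain) does not supply this either. The paper sidesteps the problem by a different device: reducing to $i=k$, $j=1$, one shows by a separate induction on $k$ that after excellent transpositions the chain can be rearranged so that the entry in position~$2$ has the form $q^a t^b w_k$ with $a,b\geq 0$. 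If $a=b=0$ this is $w_k$ itself, and the adjacent case gives the contradiction. If $a+b\geq 1$ and $w_k\in\{qw_1,tw_1,w_1\}$, then $q\cdot q^a t^b w_k$ and $t\cdot q^a t^b w_k$ are both of the form $q^{a'}t^{b'}w_1$ with $a'+b'\geq 2$, hence $\neq w_1$; so the swap of $w_1$ into position~$2$ is excellent and one finishes by induction on the shorter chain. Tracking the accumulated power $q^a t^b$ is precisely what breaks the ``locked'' deadlock.
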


\begin{proof}
Since for any $i>1$, $[\lambda\cup w_{k}\cup\dots\cup w_{i+1};w_{i},\ldots,w_j]$ is a chain in $E$ then, without loss of generality, we can assume $i=k$ and $j=1$. 

We will show that for any $k$ there exists a chain $[\lambda; w'_k \dots, w'_2,w_1]$ where $[w'_k,\ldots,w'_2]$ is a permutation of $[w_k,\ldots,w_2]$ with $w'_2=q^{a}t^{b}w_k$ and $a,b\ge 0$. When $k=2$ this is clear. 

For $k>2$, by inducing on $k$, we can first permute  $[w_k,\ldots,w_2]$ to 
$[w'_k,\ldots,w'_3,w_2]$ with $w'_3=q^{a}t^{b}w_k$. Then, since $E$ is excellent, either $w_2=qw'_3=q^{a+1}t^{b}w_k$, or $w_2=tw'_3=q^{a}t^{b+1}w_k$, or we can exchange $w_2$ and $w'_3$. Either way, the result follows.

Now assume that $w_k\in \{qw_1,tw_1,w_1\}$ and we  permuted the chain to $[\lambda;w'_k\ldots,w'_2,w_1]$ as above. If $w'_2=w_k$, we get a contradiction by Lemma \ref{lem: no inverse new} and Definition \ref{def: excellent new}. Otherwise $w'_2=q^{a}t^{b}w_k$ with $a+b\ge 1$ and 
$$
qw'_2=q^{a+1}t^{b}w_k\in 
\{q^{a+2}t^bw_1,q^{a+1}t^{b+1}w_1,q^{a+1}t^{b}w_1\}
$$
so $qw'_2\neq w_1$. Similarly, $tw'_2\neq w_1$, hence we can swap $w_1$ and $w'_2$ and induct on $k$.
\end{proof}

The next result now follows immediately from the previous lemma and the definitions. 

\begin{corollary}\label{cor: excellent chains}
Let $E$ be an excellent poset, and $[\lambda; \underline{w}]$ a chain in $E$. Then:
\begin{enumerate}[leftmargin=*]
\item The sequence $[\underline{w}]$ is calibrated.
\item A transposition $s_{i}$ is admissible for $[\underline{w}]$ if and only if $w_{i} \neq qw_{i+1}$.
\item If $s_{i}$ is an admissible transposition for $[\underline{w}]$, then $[\lambda; s_i(\underline{w})]$ is a chain in $E$ if and only if $w_{i} \neq tw_{i+1}$.
\end{enumerate}
\end{corollary}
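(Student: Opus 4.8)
The statement to prove is Corollary~\ref{cor: excellent chains}, which packages three facts about a chain $[\lambda;\underline{w}]$ in an excellent poset $E$. The plan is to deduce all three parts from Lemma~\ref{lem: far quotient} together with the definition of an excellent poset (Definition~\ref{def: excellent new}) and the definitions of calibrated sequence and admissible transposition.

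First I would prove part (1): that $[\underline{w}]$ is calibrated. Suppose $w_i = w_j$ with $i<j$; I need to produce indices $i<i'$ and $j'<j$ with $w_{i'}=qw_i$ and $w_{j'}=q^{-1}w_i$. The key input is Lemma~\ref{lem: far quotient}, which forbids $w_a \in \{qw_b, tw_b, w_b\}$ whenever $a>b$; in particular $w_i = w_j$ with $i<j$ would be impossible \emph{directly} unless the ``obstruction'' mechanism of the lemma's proof kicks in — so I would revisit the argument of Lemma~\ref{lem: far quotient} as applied to the two-step subchains involving positions between $j$ and $i$. More cleanly: consider the consecutive pairs $[\lambda'; w_{a+1}, w_a]$ for $i \le a < j$. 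Each such pair is a two-step chain, so by Definition~\ref{def: excellent new} either $w_a = qw_{a+1}$, or $w_a = tw_{a+1}$, or the transposition can be performed. Starting from $w_i = w_j$, I would argue by descending through these pairs that somewhere a factor of $q$ must be ``created'' on the left side and a factor of $q^{-1}$ on the right, exactly matching the calibrated-sequence requirement; the $t$-case and swap-case need to be tracked carefully to see they cannot both occur without violating Lemma~\ref{lem: far quotient}. I expect this bookkeeping to be the main obstacle, since it is essentially a refinement of the already-delicate induction in Lemma~\ref{lem: far quotient}.

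Part (2) is the most direct: by definition $s_i$ is admissible for $[\underline{w}]$ iff $w_{i+1} \neq q^{\pm 1} w_i$. Since $[\lambda''; w_{i+1}, w_i]$ is a two-step chain in $E$, Lemma~\ref{lem: no inverse new} gives $w_i \neq q^{-1}w_{i+1}$, i.e. $w_{i+1}\neq q w_i$ is automatic — wait, I must be careful with the direction of the arrows; in the chain \eqref{eq: chain} the weight $w_{i+1}$ is added \emph{before} $w_i$, so the relevant two-step chain is $[\,\cdot\,; w_{i+1}, w_i]$ and Lemma~\ref{lem: no inverse new} yields $w_i \neq q^{-1}w_{i+1}$ and $w_i \neq t^{-1}w_{i+1}$. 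Combined with Definition~\ref{def: excellent new}(1), the only remaining forbidden coincidence among $\{q^{\pm 1}\}$ is $w_i = q w_{i+1}$. Hence $w_{i+1} \neq q^{\pm 1} w_i$ reduces to the single condition $w_i \neq q w_{i+1}$, which is part (2).

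Part (3): assuming $s_i$ is admissible (so $w_i \neq q w_{i+1}$ by part (2)), I claim $[\lambda; s_i(\underline{w})]$ is a chain iff $w_i \neq t w_{i+1}$. Locally this is about the two-step chain $[\,\cdot\,; w_{i+1}, w_i]$: by Definition~\ref{def: excellent new}(2) exactly one of $w_i = q w_{i+1}$, $w_i = t w_{i+1}$, or ``$[\,\cdot\,; w_i, w_{i+1}]$ is a chain'' holds. Admissibility already excluded the first, so the remaining dichotomy is precisely: $w_i = t w_{i+1}$ (swap fails locally) versus swap succeeds locally. To promote the local statement to the global chain $[\lambda; s_i(\underline{w})]$, I would note that swapping positions $i,i+1$ only changes the poset element reached after step $i+1$ of the chain and leaves all earlier and later elements of the chain unchanged, so the chain condition for $[\lambda; s_i(\underline{w})]$ is equivalent to the single local chain condition for the swapped two-step piece. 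That equivalence, plus the dichotomy, gives (3). I would close by remarking that (1) and (2) together say $[\underline w]$ lies in the setting where Theorem~\ref{thm: Ram new} applies and admissible transpositions act by the two-line formula in \eqref{eq: T Ram new}.
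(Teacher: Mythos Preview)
Your arguments for parts (2) and (3) are correct and match the paper's intended approach (the paper simply says the corollary ``follows immediately from the previous lemma and the definitions''). One small point in (3): to pass from the local two-step statement to the full chain you should also invoke Lemma~\ref{lem: uniqueness of weights up to permutation}(b), which guarantees that the element reached after the swapped pair coincides with the original $\lambda\cup w_k\cup\cdots\cup w_i$, so the remainder of the chain is unchanged.

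Part (1), however, is badly overcomplicated and the write-up is confused. Lemma~\ref{lem: far quotient} says that for $i>j$ one has $w_i\notin\{qw_j,tw_j,w_j\}$; in particular $w_i\neq w_j$ for all $i\neq j$. Hence the hypothesis ``$w_i=w_j$ for some $i<j$'' in the definition of a calibrated sequence is never satisfied, and the condition holds vacuously. There is no ``unless'' clause and no obstruction mechanism to track: the lemma's conclusion is absolute, not conditional on its proof technique. Your proposed bookkeeping (descending through consecutive pairs, tracking $q$- and $t$-factors) is unnecessary and, as written, does not lead anywhere --- you never actually reach a contradiction or produce the required indices, because the premise is simply impossible. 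Replace your entire Part (1) with the one-line observation that the $w_i$ are pairwise distinct.
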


\begin{definition}
Let $[\lambda; \underline{w}]$ be a chain in an excellent poset $E$ and $s_i=(i \ i+1)$ an admissible transposition for $[\underline{w}]$. We say that $s_{i}$ is \newword{\exc} for $[\lambda; \underline{w}]$ if $[\lambda; s_i(\underline{w})]$ is a chain in $E$.
\end{definition}

By Corollary \ref{cor: excellent chains}, the transposition $s_{i}$ is  {\exc} for $[\lambda; \underline{w}]$ if and only if $w_{i} \not\in \{qw_{i+1}, tw_{i+1}\}$. In particular, we note that the set of {\exc} transpositions for $[\lambda; \underline{w}]$ depends only on $[\underline{w}]$ and not on $\lambda \in E$.

\begin{lemma}
\label{lem: no qt}
Suppose $[\lambda;\underline{w}]$ is a chain in an excellent poset $E$. If $w_1=qtw_k$ then there exists an $1<i<k$ for which $w_i=qw_k$ or $w_i=tw_k$.
\end{lemma}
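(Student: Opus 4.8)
The plan is to run the same kind of "permute the chain to a controlled normal form'' argument used in the proof of Lemma \ref{lem: far quotient}. Suppose $[\lambda;\underline{w}]$ is a chain in an excellent poset $E$ with $w_1 = qt\,w_k$, and assume for contradiction that $w_i \notin \{qw_k, tw_k\}$ for all $1 < i < k$. First I would observe that, since $w_1 = qtw_k \ne w_k$ and $w_1 \ne q^{\pm 1}w_k, t^{\pm 1}w_k$ (the last because $qt \ne q^{\pm 1}$ and $qt \ne t^{\pm 1}$), the transposition swapping the boxes carrying $w_k$ and $w_1$ will be \exc\ once we have brought them adjacent — but we must be careful, because bringing them adjacent requires moving $w_k$ rightward past $w_{k-1},\ldots,w_2$, and by excellence each such step either swaps freely or multiplies $w_k$ by $q$ or $t$.

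Concretely, I would prove by induction on $k$ the statement: the chain can be permuted to a chain $[\lambda; w'_k, \ldots, w'_2, w_1]$ where $[w'_k,\ldots,w'_2]$ is a permutation of $[w_k,\ldots,w_2]$ and $w'_2 = q^a t^b w_k$ for some $a,b \ge 0$ — exactly as in Lemma \ref{lem: far quotient}. The key extra input, under our contradiction hypothesis, is to track that along the way $w_k$ never gets multiplied by anything, i.e. that in fact $a = b = 0$ and $w'_2 = w_k$. Indeed, at each step we are comparing the current image $q^a t^b w_k$ of $w_k$ with some $w_j$ ($2 \le j \le k-1$, a permutation entry from the original $w_2,\ldots,w_{k-1}$): by excellence either they swap, or $w_j = q\cdot q^a t^b w_k$, or $w_j = t \cdot q^a t^b w_k$. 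If $a = b = 0$ at that point, the last two options say $w_j \in \{qw_k, tw_k\}$ with $2 \le j \le k-1$, contradicting our hypothesis; so the swap must happen and $a,b$ stay zero. By induction all swaps happen and we reach $[\lambda; w'_k,\ldots,w'_3, w_k, w_1]$ with $[w'_k,\ldots,w'_3]$ a permutation of the remaining original weights.

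Now I would apply excellence to the final $2$-step chain $[\cdot\,; w_k, w_1]$ sitting at the bottom: since $w_1 = qt w_k$ we have, using Lemma \ref{lem: no inverse new}, that $w_1 \ne w_k$, $w_1 \ne q^{\pm 1}w_k$, $w_1 \ne t^{\pm 1}w_k$; in particular $s$ is admissible here, it is not the case that $w_1 = qw_k$ or $w_1 = tw_k$, so by Definition \ref{def: excellent new} the reversed chain $[\cdot\,; w_1, w_k]$ is also a chain. But then $w_1 = qtw_k$ is an addable weight, and $w_k$ is addable for the vertex after $w_1$; since the weighted poset is \emph{excellent}, applying part 1) of Definition \ref{def: excellent new} to the $2$-step chain $[\cdot\,; w_1, w_k]$ forces $w_k \ne (qt)^{\pm 1} w_1$ — yet $w_k = (qt)^{-1} w_1$, a contradiction. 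This contradiction shows that the assumption $w_i \notin \{qw_k, tw_k\}$ for all $1 < i < k$ is untenable, completing the proof.

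The main obstacle I anticipate is bookkeeping in the inductive permutation step: one must make sure that when $w_k$ is carried to the right, the entries it passes are genuinely among the \emph{original} $w_2,\ldots,w_{k-1}$ (so that the hypothesis $w_j \notin \{qw_k,tw_k\}$ applies to them), rather than already-modified weights — but this is exactly the structure of the argument in Lemma \ref{lem: far quotient}, where the element being pushed is the one that stays ``small'' ($q^a t^b$ times a fixed weight) while the entries it passes are untouched permutation entries, so the same reasoning transfers verbatim. A secondary point to get right is the very last contradiction: one should phrase it so that the offending pair $(w_1, w_k)$ with ratio $qt$ is detected by condition 1) of the excellence definition applied to \emph{whichever} orientation of that $2$-step chain is valid, and Lemma \ref{lem: no inverse new} guarantees at least one orientation is.
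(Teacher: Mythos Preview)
Your proposal is correct and follows essentially the same approach as the paper: assume no intermediate $w_i$ equals $qw_k$ or $tw_k$, use \exc\ transpositions to slide $w_k$ down until it sits next to $w_1$, and then derive a contradiction from the resulting $2$-step chain. Two minor points: your invocation of the $q^a t^b$ bookkeeping from Lemma~\ref{lem: far quotient} is more machinery than needed, since under the contradiction hypothesis every swap is immediately \exc; and your final step is roundabout---once you have the $2$-step chain $[\mu; w_k, w_1]$, condition~(1) of Definition~\ref{def: excellent new} applied directly to it already gives $w_1 \neq (qt)^{\pm 1} w_k$, so there is no need to first pass to the reversed chain $[\mu; w_1, w_k]$.
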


\begin{proof}
Suppose that $w_i\neq qw_k$ and $w_i\neq tw_k$ for all $1<i<k$. Then $w_k$ can be moved towards $w_1$ by a sequence of {\exc} transpositions to obtain the chain $[\lambda;w_{k-1}, \dots, w_2,w_k,w_1]$ in $E$. However, this implies that the two-step chain $[\lambda\cup w_{k-1}\cup\dots \cup w_2;w_k,w_1]$, with $w_1=qtw_k$, is also in $E$ which contradicts Definition \ref{def: excellent new} since $E$ is excellent.
\end{proof}

\begin{lemma}
\label{lem: cycles}
Suppose $[\lambda;\underline{w}]$ is a chain in an excellent poset $E$, and $[\underline{w'}]$ is a permutation of $[\underline{w}]$. Then 
$[\lambda;\underline{w'}]$ is a chain in $E$ if and only if for all $i>j$ we have $w'_i\notin\{qw'_j,tw'_j\}$. Moreover, any two such chains can be connected by a sequence of {\exc} transpositions.
\end{lemma}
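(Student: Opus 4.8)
The plan is to prove both directions separately, then upgrade the converse to the ``moreover'' statement about connecting chains by excellent transpositions.

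\medskip

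\noindent\textbf{Proof sketch.} ($\Rightarrow$) Suppose $[\lambda;\underline{w'}]$ is a chain in $E$. Then Lemma \ref{lem: far quotient}, applied to this chain, immediately gives $w'_i \notin \{qw'_j, tw'_j, w'_j\}$ for all $i > j$; in particular $w'_i \notin \{qw'_j, tw'_j\}$.

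\medskip

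\noindent($\Leftarrow$) This is the heart of the argument, and I would prove it together with the ``moreover'' clause by showing: if $[\lambda;\underline{w'}]$ is a permutation of the chain $[\lambda;\underline{w}]$ satisfying $w'_i \notin \{qw'_j, tw'_j\}$ for all $i>j$, then $[\lambda;\underline{w'}]$ can be reached from $[\lambda;\underline{w}]$ by a sequence of excellent transpositions. Since each excellent transposition sends a chain to a chain (by definition), this simultaneously shows $[\lambda;\underline{w'}]$ is a chain. I would induct on $k = |\mu| - |\lambda|$. For the inductive step, let $a = w'_k$ be the first entry of the target permutation, and locate $a$ inside $\underline{w}$, say $w_j = a$ (a consistent choice is possible since by the simple spectrum / uniqueness-up-to-permutation Lemma \ref{lem: uniqueness of weights up to permutation} the multiset of weights matches; one should pick the occurrence so that the remaining entries still match the multiset of $[w'_{k-1},\ldots,w'_1]$). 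The goal is to move $w_j$ leftward past $w_{j+1}, \ldots, w_k$ using excellent transpositions until it sits in position $k$. The transposition swapping $w_j$ with its left neighbor $w_{i}$ is excellent unless $w_{i} \in \{q w_j, t w_j\} = \{qa, ta\}$. So I must rule out the possibility of getting stuck, i.e. that at some stage the entry immediately to the left of (the moved copy of) $a$ equals $qa$ or $ta$.

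\medskip

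\noindent The key point is that, when moving $a$ leftward, the entries to its left are a subsequence of $\{w_k, \ldots, w_{j+1}\}$, each of which has an index larger than $j$ in the original chain. If the left neighbor is $qa$ (the case $ta$ is symmetric), then in the original chain $\underline{w}$ this entry $qa$ had index $> j$, while $a = w_j$, so by the already-proved forward direction applied to $\underline{w}$ itself we would need $qa = w_i$ with $i > j$ to satisfy $w_i \notin \{q w_j\} = \{qa\}$ — contradiction. Wait: more carefully, Lemma \ref{lem: far quotient} applied to $\underline{w}$ says $w_i \notin \{q w_j, t w_j, w_j\}$ for $i > j$, so in the original chain no entry to the \emph{left} of $w_j$ equals $q w_j = qa$; but excellent transpositions among the entries $w_k,\ldots,w_{j+1}$ permute them without creating new $q$- or $t$-multiples relative to $a$, because by Lemma \ref{lem: cycles}'s forward direction (already established) these permuted sub-chains still avoid such relations. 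Hence the leftward march of $a$ never stalls, and after finitely many excellent transpositions we obtain a chain of the form $[\lambda; a, w''_{k-1}, \ldots, w''_1]$ where $[w''_{k-1},\ldots,w''_1]$ is a permutation of $[w_{k-1},\ldots,w_1]$ (after removing one copy of $a$ appropriately). Restricting attention to the chain $[\lambda \cup a; w''_{k-1}, \ldots, w''_1]$ in $E$ and its target permutation $[w'_{k-1},\ldots,w'_1]$, the inductive hypothesis finishes the job. I expect the main obstacle to be the bookkeeping in the ``never stalls'' step — precisely, arguing that excellent transpositions used to shuttle $a$ to the front do not reconfigure the left-hand entries into a $q$- or $t$-multiple of $a$; this is exactly where one invokes that excellent transpositions preserve the chain property and then reapplies the forward direction of Lemma \ref{lem: far quotient} to the new chain.
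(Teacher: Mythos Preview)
Your overall strategy is sound and is the mirror image of the paper's proof: you move $a=w'_k$ to position $k$ first and then induct on the length-$(k-1)$ tail, whereas the paper keeps $w_k$ fixed, inducts on the tail to rearrange positions $k-1,\dots,1$ into $[w'_{k-1},\dots,w'_{j+1},w'_{j-1},\dots,w'_1]$, and only then slides $w_k$ down to position $j$. Both are valid inductions. Note also that by Lemma~\ref{lem: far quotient} the entries of any chain are pairwise distinct, so there is exactly one occurrence of $a$ in $\underline w$ and no multiset bookkeeping is needed.

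There is, however, a concrete error in your ``never stalls'' step. By Corollary~\ref{cor: excellent chains}, the transposition $s_p$ (with $a$ currently at position $p$ and its left neighbour $w_{p+1}$ at position $p+1$) is excellent iff $a\notin\{qw_{p+1},\,tw_{p+1}\}$; the obstruction is therefore $w_{p+1}\in\{q^{-1}a,\,t^{-1}a\}$, not $w_{p+1}\in\{qa,ta\}$ as you wrote. Lemma~\ref{lem: far quotient} applied to the current chain only tells you $w_{p+1}\notin\{qa,ta,a\}$, which is the wrong inequality and does not rule out the actual obstruction. The correct argument uses the hypothesis on the \emph{target} permutation $\underline{w'}$: since $w_{p+1}=w'_m$ for some $m<k$ (the entries being distinct and $w'_k=a$), the assumption with indices $k>m$ gives $a=w'_k\notin\{qw'_m,tw'_m\}=\{qw_{p+1},tw_{p+1}\}$, so the move never stalls. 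This is exactly the mechanism the paper uses: after inducting, it moves $w_k$ past $w'_{k-1},\dots,w'_{j+1}$, and the hypothesis on $\underline{w'}$ (with $w'_j=w_k$) gives $w'_i\notin\{qw_k,tw_k\}$ for $i>j$, which is precisely the excellence condition for each successive swap.
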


\begin{proof}
The condition on $[\underline{w}']:=[w'_1, \dots, w'_k]$ is necessary by Lemma \ref{lem: far quotient}. We will prove it is also sufficient by induction on $k$. For $k=2$ this follows from Definition \ref{def: excellent new}. Suppose $k>2$ and $w'_j=w_k$. By assumption, if $w'_i=qw_k$ or $w'_i=tw_k$, then $i<j$.  We construct the sequence of {\exc} transpositions as follows.

First, note that $[w'_{k-1},\ldots,w'_{j+1},w'_{j-1},\ldots,w'_1]$ is a permutation of $[w_{k-1},\ldots,w_{1}]$ satisfying the conditions of the lemma. Second, by induction, we can find a sequence of {\exc} transpositions changing $[w_k,\ldots,w_1]$ to $[w_k,w'_{k-1},\ldots,w'_{j+1},w'_{j-1},\ldots,w'_1]$. Lastly, since none of the $w'_{k-1},\ldots,w'_{j+1}$  equal $qw_k$ or $tw_k$, using {\exc} transpositions $w_k$ can be moved past them to the $j^{th}$ position, as desired.
\end{proof}

We can think of $E$ as a graph $\Gamma_{E}$ with edges given by the covering relations. By Lemma \ref{lem: cycles} all the cycles in this graph are generated by the 4-cycles of the form ($y\notin \{qx,tx\}$): 
\begin{center}
\begin{tikzcd}
     & \lambda\cup y \arrow[dash]{dl}& \\
   \lambda  \arrow[dash]{dr}& & \lambda\cup x\cup y. \arrow[dash]{ul}\\
   & \lambda\cup x \arrow[dash]{ur}&
\end{tikzcd}
\end{center}

We will denote by $Q(\lambda;y,x)$ the following orientation of this cycle:
\begin{equation} \label{eq:Qcycle}
\begin{tikzcd}
     & \lambda\cup y \arrow{dl}& \\
   \lambda  \arrow{dr}& & \lambda\cup x\cup y, \arrow{ul}\\
   & \lambda\cup x \arrow{ur}&
\end{tikzcd}
\end{equation}
so that the first homology group $H^{1}(\Gamma_{E}, \Z)$ is generated by the elements $Q(\lambda; y,x)$. The following result gives a complete set of relations between these cycles.
\begin{lemma}
\label{lem: cycles relations}
All relations between the cycles in $\Gamma_E$ are given by 
$Q(\lambda;y,x)=-Q(\lambda;x,y), $
and
\begin{equation}
\label{eq: YBE}
Q(\lambda\cup x;z,y)+Q(\lambda;z,x)+Q(\lambda\cup z;y,x)=Q(\lambda;y,x)+Q(\lambda\cup y;z,x)+ Q(\lambda;z,y).
\end{equation}
\end{lemma}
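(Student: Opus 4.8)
## Proof proposal

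The plan is to understand $H^1(\Gamma_E,\Z)$ purely combinatorially, since by the discussion preceding the lemma we already know that $H^1(\Gamma_E,\Z)$ is generated by the four-cycles $Q(\lambda;y,x)$ (this uses Lemma~\ref{lem: cycles}, which says every cycle decomposes into the elementary $4$-cycles). So the task is to identify \emph{all} relations among these generators. The relation $Q(\lambda;y,x)=-Q(\lambda;x,y)$ is immediate from the definition of the orientation \eqref{eq:Qcycle}: reversing the roles of $x$ and $y$ traverses the same square in the opposite direction. The substance is the hexagon/Yang--Baxter-type relation \eqref{eq: YBE}, and the claim that these two families generate \emph{all} relations.

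First I would set up the right homological framework: $\Gamma_E$ is a graded graph, and I would work with the $2$-complex $X_E$ obtained by gluing a $2$-cell along each elementary $4$-cycle $Q(\lambda;y,x)$ (for every $\lambda$ and every pair of addable weights $x\neq y$ with $y\notin\{qx,tx\}$ and $x\notin\{qy,ty\}$, i.e.\ $[\lambda;x,y]$ and $[\lambda;y,x]$ both chains). By Lemma~\ref{lem: cycles}, $\pi_1$ (equivalently $H_1$, after abelianizing) of $\Gamma_E$ is generated by these squares, so $H_1(X_E)=0$ except possibly for contributions of the $2$-cells, i.e.\ $H_1(\Gamma_E,\Z)$ is the quotient of the free abelian group on the squares $Q(\lambda;y,x)$ by the image of $\partial_2$ from the free abelian group on $3$-cells — once we know what the $3$-cells are. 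The point is that the relations among the $2$-cells are exactly the boundaries of the ``cubes'': given $\lambda$ and three pairwise-distinct addable weights $x,y,z$ (with all the relevant pairs being genuine two-step chains in both orders), the six squares on the boundary of the corresponding $3$-cube in $E$ sum to zero. Writing out the boundary of that cube — top face $Q(\lambda\cup x;z,y)$, etc.\ — gives precisely equation \eqref{eq: YBE} after moving three terms to the other side and using the sign relation. So the strategy is: (1) show each such cube-boundary is a relation (easy: it bounds a contractible region, or directly, the two ways of going around the cube from $\lambda$ to $\lambda\cup x\cup y\cup z$ are homotopic); (2) show these cube-relations, together with the sign relation, generate \emph{all} relations.

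For step (2), the cleanest route is a direct combinatorial argument on chains rather than invoking a general CW fact (which would require carefully checking the cube is actually a subcomplex and that there are no ``exotic'' higher cells — but in a graded poset where every interval of length $\le 3$ is forced by the excellence condition to look like a product of chains/covers, this is in fact fine). Concretely: any element of $H^1$ is represented by a $\Z$-linear combination of closed walks in $\Gamma_E$; using Lemma~\ref{lem: cycles} repeatedly, reduce any closed walk to a sum of the $Q(\lambda;y,x)$'s; then show that two such expressions representing the same homology class differ by cube-relations. The induction is on $k=|\mu|-|\lambda|$ where $\lambda,\mu$ are the min/max of the (graded) support of the cycle: for $k\le 2$ there is nothing to prove, and for $k\ge 3$ one uses the excellence axioms (Definition~\ref{def: excellent new} and Lemma~\ref{lem: cycles}) to slide the combination into the ``initial segment'' $E_{\le |\lambda|+2}$ of the interval, each slide being exactly an application of \eqref{eq: YBE}; this is the same mechanism as the proof of Lemma~\ref{lem: cycles}, now carried out one homological degree up.

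The main obstacle I anticipate is step (2) — verifying that \eqref{eq: YBE} and the sign relation are \emph{complete}, not just valid. The geometric intuition ($\Gamma_E$ is the $1$-skeleton of a contractible ``cubical'' complex built from the poset, so $H_1$ is generated by squares and relations by cubes) is clear, but making it rigorous requires pinning down that excellent weighted posets have no higher connectivity obstructions — i.e.\ that every rank-$3$ interval is, combinatorially, a union of commuting cubes. I would handle this by a careful case analysis of rank-$\le 3$ intervals using Definition~\ref{def: excellent new} together with Lemmas~\ref{lem: no inverse new}, \ref{lem: far quotient}, \ref{lem: no qt}, and \ref{lem: cycles}, or — if that becomes unwieldy — by the explicit inductive ``sliding'' argument sketched above, which sidesteps general topology entirely and only ever manipulates actual chains in $E$.
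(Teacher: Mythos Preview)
Your approach is valid, but the paper takes a more direct route that sidesteps the completeness obstacle you anticipate. Rather than building a cubical CW complex and analyzing its higher cells, the paper observes that maximal chains from $\lambda$ to $\mu = \lambda \cup w_k \cup \cdots \cup w_1$ correspond to certain permutations of $(w_1, \ldots, w_k)$ (Lemma~\ref{lem: cycles}), and that passing around an elementary square $Q$ corresponds exactly to applying a simple transposition $s_i$ to the chain. A relation among the squares in $H_1(\Gamma_E)$ then translates into a relation among simple transpositions in $S_k$. The Coxeter presentation of $S_k$ has only three kinds of relations: $s_i^2 = 1$ (giving the sign relation), the braid relation $s_i s_{i+1} s_i = s_{i+1} s_i s_{i+1}$ (giving \eqref{eq: YBE}, visualized by the cube diagram in the paper), and commutation $s_i s_j = s_j s_i$ for $|i-j| \geq 2$ (which contributes nothing, since both sides involve the \emph{same} pair of squares). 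Completeness is thus inherited from the known presentation of $S_k$.

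What this buys: the ``main obstacle'' you flag --- verifying that the cube relations are complete --- is outsourced to a standard algebraic fact about the symmetric group, so no induction or case analysis on rank-$3$ intervals is needed. Your cubical/inductive argument would also work and is more self-contained, but it essentially reproves, inside $E$, that braid and commutation relations suffice to present $S_k$. The two proofs are morally the same (your cube \emph{is} the braid relation), but the paper's framing is shorter and avoids the topological subtleties you were worried about.
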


\begin{proof}
As above, all chains from $\lambda$ to $\mu$ in $E$ are given by permutations of a single sequence $w_1,\ldots,w_k$
satisfying the conditions of Lemma \ref{lem: cycles}.

Any two such chains are related by a sequence of simple transpositions which correspond to $Q(\lambda;y,x)$. The relations between cycles correspond to the relations between simple transpositions, which are generated by braid relations. Visually, these correspond to the diagram:

\begin{center}
\begin{tikzcd}
     & \lambda\cup z \arrow{dl} \arrow{r}& \lambda\cup y \cup z \arrow{dr} & \\
   \lambda  \arrow{dr} \arrow{r} & \lambda\cup y \arrow{ur}\arrow{dr} & \lambda\cup x\cup z \arrow{ul} \arrow{r}& \lambda\cup x\cup y\cup z\\
   & \lambda\cup x \arrow{ur} \arrow{r}& \lambda\cup x\cup y \arrow{ur} & \\
\end{tikzcd}
\end{center}
where there are two ways to go from the chain
$$
\lambda \to \lambda\cup x\to \lambda\cup x\cup y\to \lambda\cup x\cup y\cup z
$$
to the chain
$$
\lambda \to \lambda\cup z\to \lambda\cup y\cup z\to \lambda\cup x\cup y\cup z
$$
which indeed corresponds to \eqref{eq: YBE}.
\end{proof}

\begin{definition}
\label{def: good new}
We say that a chain $[\lambda; \underline{w}]$ is \newword{good} if $w_i\neq tw_j$ for all $i,j$.  
\end{definition}

Note that, if $[\lambda; \underline{w}]$ is a good chain and $E$ is an excellent poset, then the set of admissible transpositions for $[\underline{w}]$ coincides with that of {\exc} transpositions for $[\lambda; \underline{w}]$ (see Corollary \ref{cor: excellent chains}).

\begin{proposition}
\label{prop: good chains}
Suppose that $E$ is an excellent poset and $[\underline{w}] = [w_k,\dots,w_1]$. 

\noindent a) If $[\lambda; \underline{w}]$ is good, then both $[\lambda\cup w_k; w_{k-1},\ldots,w_{1}]$ and $[\lambda;w_k,w_{k-1},\ldots,w_2]$ are good.

\noindent b) Conversely, assume that both $[\lambda\cup w_k; w_{k-1},\ldots,w_{1}]$ and $[\lambda;w_k, w_{k-1},\ldots,w_2]$ are good. Then $[\lambda;\underline{w}]$ is good if and only if $w_1\neq tw_k$.

\noindent c) If $[\lambda;\underline{w}]$ is good then $w_1\neq qtw_k$.

\noindent d) If $[\lambda;\underline{w}]$ is good and $[\lambda;\underline{w'}]$ is a permuted chain, then 
they are related by a sequence of admissible transpositions through good chains.
\end{proposition}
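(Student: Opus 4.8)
The plan is to prove parts (a)--(d) of Proposition \ref{prop: good chains} more or less in the order stated, using Lemma \ref{lem: cycles} as the main engine for part (d) and simple combinatorial bookkeeping for (a)--(c).

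\textbf{Parts (a), (b), (c).} For part (a), if $[\lambda;\underline{w}]$ is good then $w_i \neq tw_j$ for \emph{all} $i,j$; restricting attention to the index sets $\{1,\dots,k-1\}$ or $\{2,\dots,k\}$ preserves this, so the two sub-chains are good. Part (b) is the converse bookkeeping: if the two sub-chains $[\lambda\cup w_k; w_{k-1},\dots,w_1]$ and $[\lambda; w_k,\dots,w_2]$ are good, then the only pairs $(w_i,w_j)$ not already covered are those involving \emph{both} $w_1$ and $w_k$ — namely $w_1$ vs.\ $w_k$. The condition $w_k \neq tw_1$ already follows from Lemma \ref{lem: far quotient} (since $k>1$), so the only remaining obstruction to $[\lambda;\underline{w}]$ being good is exactly $w_1 \neq tw_k$; hence the stated equivalence. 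For part (c), suppose $w_1 = qtw_k$. By Lemma \ref{lem: no qt} there exists $1<i<k$ with $w_i = qw_k$ or $w_i = tw_k$. If $w_i = tw_k$ then $[\lambda;\underline{w}]$ is not good, contradiction. If $w_i = qw_k$, I would then apply Lemma \ref{lem: far quotient} to the pair $i>1$: actually the cleaner route is to observe $w_1 = qtw_k = t(qw_k) = tw_i$, which directly contradicts goodness since $1 < i$. So in either case we reach a contradiction, proving $w_1 \neq qtw_k$.

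\textbf{Part (d).} This is the substantive part. By Lemma \ref{lem: cycles}, since $[\lambda;\underline{w'}]$ is a permuted chain of $[\lambda;\underline{w}]$, the two are connected by \emph{some} sequence of {\exc} transpositions; and since $[\lambda;\underline{w}]$ is good, Corollary \ref{cor: excellent chains} identifies {\exc} transpositions for good chains with admissible ones. The content to prove is that we can choose the connecting path so that \emph{every} intermediate chain is good, i.e.\ we never pass through a chain containing a pair $w_a = tw_b$. The strategy is an induction on $k$ mirroring the proof of Lemma \ref{lem: cycles}: locate the position $j$ of $w_k$ in $\underline{w'}$; by goodness of $[\lambda;\underline{w'}]$ (which holds because permuted good chains are good — this needs a one-line argument: a permuted chain of a good chain is itself good, since goodness $w_a \neq tw_b$ for \emph{all} $a,b$ is permutation-invariant), none of $w'_{k-1},\dots,w'_{j+1}$ equals $tw_k$ or (by Lemma \ref{lem: far quotient}) $qw_k$, so $w_k$ can be slid from position $k$ down to position $j$ by {\exc} transpositions, and crucially each such swap keeps the chain good because it only reorders entries. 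Then strip off $w_k$ and apply the inductive hypothesis to the length-$(k-1)$ good chains on $E$ (or on the appropriate interval sub-poset), lifting the resulting sequence of admissible-through-good transpositions back up. The base case $k\le 2$ is Definition \ref{def: excellent new} together with the observation that a $2$-step good chain has no $t$-pair.

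\textbf{Main obstacle.} The delicate point in part (d) is verifying that the path constructed by the induction really stays inside the good chains at \emph{every} step, not just at the endpoints — in particular that sliding $w_k$ down and then recursing doesn't force an intermediate chain in which some entry equals $t$ times another. This is handled by the fact that all moves are {\exc} transpositions (pure reorderings), so the multiset of weights is constant along the path; hence goodness, being a condition on the multiset of pairs $\{(w_a,w_b)\}$, is preserved step by step as soon as the \emph{endpoint} chains are good. The one thing to be careful about is that {\exc} transpositions require $w_i \notin \{qw_{i+1}, tw_{i+1}\}$ at the moment of the swap, so I must confirm — using Lemma \ref{lem: far quotient} and goodness — that each swap we want to perform is genuinely {\exc}; this is exactly the same check as in Lemma \ref{lem: cycles}, now with the extra input $w_a \neq tw_b$ available for free. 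I expect this to be short once the inductive scheme of Lemma \ref{lem: cycles} is reused verbatim.
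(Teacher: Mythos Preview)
Your proof is correct and follows the same route as the paper: parts (a) and (b) are bookkeeping, part (c) uses Lemma~\ref{lem: no qt} exactly as the paper does, and part (d) rests on Lemma~\ref{lem: cycles}. One remark on (d): the inductive reconstruction you set up is unnecessary. As you yourself observe in the ``main obstacle'' paragraph, goodness depends only on the multiset $\{w_1,\dots,w_k\}$, so \emph{every} intermediate chain in the path of excellent transpositions supplied by Lemma~\ref{lem: cycles} is automatically good, and by Corollary~\ref{cor: excellent chains} those excellent transpositions are admissible on good chains; this finishes (d) in one line without redoing the induction.
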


\begin{proof}
Parts (a) and (b) are clear. Part (c) follows from Lemma \ref{lem: no qt}: indeed, if $w_1=qtw_k$ then $w_i=qw_k$ (and hence $w_1=tw_i$) or $w_i=tw_k$, so this is not a good chain. Finally, part (d) follows from Lemma \ref{lem: cycles}.
\end{proof}

\begin{lemma}
\label{lem: not good}
Given any chain $[\lambda;\underline{w}]$ that is not good, there exists a sequence of {\exc} transpositions transforming $[\lambda;\underline{w}]$ to a chain $[\lambda;\underline{w'}]$ for which $w'_i=tw'_{i+1}$ for some $i$.
\end{lemma}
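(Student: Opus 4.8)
The plan is to deduce the statement from Lemma~\ref{lem: cycles}, which already asserts that all chains obtained by permuting the weights of a fixed chain are mutually connected by {\exc} transpositions. So it suffices to exhibit \emph{one} permutation $[\underline{w'}]$ of $[\underline{w}]$ such that $[\lambda;\underline{w'}]$ is a chain in $E$ and $w'_{i}=tw'_{i+1}$ for some $i$; Lemma~\ref{lem: cycles} then supplies the required sequence of {\exc} transpositions from $[\lambda;\underline{w}]$ to $[\lambda;\underline{w'}]$.

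To find such a permutation, I would first record two consequences of Lemma~\ref{lem: far quotient}: the weights $w_1,\dots,w_k$ are pairwise distinct, and, since $[\lambda;\underline{w}]$ is not good, there are indices $a<b$ with $w_a=tw_b$ (the inequality $a<b$ is forced, since $w_a\in\{tw_b\}$ with $a>b$ is forbidden by Lemma~\ref{lem: far quotient}). Next, model the admissible reorderings combinatorially: let $D$ be the directed graph on the set $S=\{w_1,\dots,w_k\}$ with an edge $v\to qv$ whenever $qv\in S$ and an edge $v\to tv$ whenever $tv\in S$. By the criterion in Lemma~\ref{lem: cycles}, the permutations $[\underline{w'}]$ for which $[\lambda;\underline{w'}]$ is a chain are exactly those whose adjoining order $w'_k,\dots,w'_1$ is a topological ordering of $D$; in particular $D$ is acyclic, since $[\lambda;\underline{w}]$ itself provides one such ordering. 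Hence the problem reduces to producing a topological ordering of $D$ in which $w_a$ is adjoined immediately after $w_b$. This is possible because the edge $w_b\to w_a$ has no vertex other than its endpoints lying on a directed path from $w_b$ to $w_a$: every directed path multiplies its source by a monomial $q^{c}t^{d}$ with $c,d\ge 0$, so from $w_b\to^{*}x\to^{*}w_a$ together with $w_a=tw_b$ and the algebraic independence of $q,t$ over $\C$ one gets that the total $q$-exponent vanishes and the total $t$-exponent equals $1$, leaving only $x\in\{w_b,w_a\}$. Given this, the elementary fact that a finite acyclic digraph with an edge $u\to v$ admitting no vertex $x\notin\{u,v\}$ on a directed $u$-to-$v$ path has a topological ordering with $v$ immediately after $u$ (one may prove this by contracting the edge and checking that the contraction stays acyclic, then expanding) yields the desired $[\underline{w'}]$, with $w'_{i+1}=w_b$ and $w'_i=w_a=tw_b=tw'_{i+1}$ for the appropriate $i$.

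The main obstacle is that the naive strategy — slide $w_b$ rightward (or $w_a$ leftward) by {\exc} transpositions until the two weights become adjacent — can genuinely get stuck: a weight equal to $qw_b$ lying between them blocks the rightward slide of $w_b$, a weight equal to $(t/q)w_b$ lying between them blocks the leftward slide of $w_a$, and nothing prevents both obstructions from occurring simultaneously. The fix is to reorganize the entire adjoining order at once via the topological-ordering viewpoint above, and the technical heart of the argument is the monomial-exponent computation showing that the edge $w_b\to w_a$ is ``uncluttered'', i.e.\ that no weight is forced to sit between $w_b$ and $w_a$ in a valid adjoining order.
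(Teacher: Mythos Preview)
Your proof is correct and follows essentially the same strategy as the paper's: both reduce, via Lemma~\ref{lem: cycles}, to exhibiting a permutation $[\underline{w'}]$ that is a chain and has the pair $w_b,w_a$ (with $w_a=tw_b$) in adjacent positions, and both rule out any obstruction using the observation that an intermediate weight would have to satisfy a $(q,t)$-monomial identity incompatible with $w_a/w_b=t$.

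The only notable difference is packaging. The paper states the obstruction as a single index $i'$ with $w_i\in\{qw_{i'},tw_{i'}\}$ and $w_{i'}\in\{qw_j,tw_j\}$, then observes this contradicts $w_i=tw_j$ (since it would force $w_i\in\{q^2w_j,\,qtw_j,\,t^2w_j\}$). Your DAG/topological-sort framing makes precise \emph{why} the obstruction must take this intermediate-vertex form: a blocking vertex is exactly one lying on a directed $w_b$-to-$w_a$ path in $D$, and your exponent count shows such a path has length one, hence no intermediate vertex. Your edge-contraction argument for producing a topological order with the edge endpoints consecutive is a clean way to justify the step the paper leaves implicit in its appeal to Lemma~\ref{lem: cycles}. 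Your final paragraph correctly diagnoses why the purely local ``slide from each side'' heuristic is insufficient on its own and why the global reordering (equivalently, the existence statement in Lemma~\ref{lem: cycles}) is what does the work.
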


\begin{proof}
If $[\lambda;\underline{w}]$ is not good then $w_i=tw_j$ for some $i<j$. By Lemma \ref{lem: cycles} we can use {\exc} transpositions to move $w_i$ and $w_j$ next to each other unless there exists $i<i'<j$ such that both $w_i\in \{qw_{i'},tw_{i'}\}$ and $w_{i'}\in \{qw_{j},tw_{j}\}$. However, this is impossible when $w_i=tw_j$.
\end{proof}

\subsection{Constructing Representations From Posets}

In this subsection we define a calibrated representation $V(E)$ starting from an excellent weighted poset $E$.
To do this, we will need the auxiliary data of \newword{edge functions}   $c(\lambda;x)\in \C(q,t)^{*}$, defined for any $\lambda\in E$ and any addable weight $x$ for $\lambda$. As always, we set $[\underline{w}]=[w_k,\dots, w_1]$ and write $[\underline{w},x] = [w_k,\dots, w_1,x]$ for the concatenation.

\begin{definition}
\label{def: calibrated from excellent}
Given an excellent weighted poset $E$ with edge functions $c(\lambda;x)\in \C(q,t)^{*}$, we define the space $V(E,c)=\bigoplus_{k=0}^{\infty}V_k(E,c)$ as follows:
\begin{itemize}
\item The collection of all good chains $[\lambda;\underline{w}]$ in $E$ forms a $\C(q,t)$-basis of $V_k(E,c)$.
\item The operators $z_i$ and $\Delta_{p_m}$ act diagonally:
\begin{align}
z_i[\lambda;\underline{w}]&=w_i[\lambda;\underline{w}], \\
\Delta_{p_m}[\lambda;\underline{w}]&=\left(p_m(\lambda)+w_1^m+\ldots+w_k^m\right)[\lambda;\underline{w}].
\end{align}
\item The operators $T_i$ are given by \eqref{eq: T Ram new}. 
\item The operator $d_{-}$ is given by
\begin{equation}
d_{-}[\lambda;\underline{w}]=[\lambda\cup w_k;w_{k-1}\ldots,w_{1}].
\end{equation}
\item The operator $d_{+}$ is given by
\begin{equation}\label{eq:d+ rep}
d_{+}[\lambda;\underline{w}]=\sum_{x}q^{k}c(\lambda\cup \underline{w};x)\prod_{i=1}^{k}\frac{x-tw_i}{x-qtw_i}[\lambda;\underline{w},x],
\end{equation}
where the sum is over all $x$ such that $[\lambda;\underline{w},x]$ is a good chain. 
\end{itemize}
\end{definition}

\begin{remark}
The operators in Definition \ref{def: calibrated from excellent} are well-defined. In particular, by Corollary \ref{cor: excellent chains} we have that
$[\lambda;s_i(\underline{w})]$ is a good chain whenever $[\lambda;\underline{w}]$ is a good chain and $s_i$ is an admissible transposition, hence $T_i$ is well-defined. Likewise, for $d_-$ Proposition \ref{prop: good chains}(a) ensures $[\lambda\cup w_k; w_{k-1}\ldots,w_{1}]$ is a good chain, and for $d_+$ Proposition \ref{prop: good chains}(c) guarantees $x\neq qtw_i$ for any good chain.    
\end{remark}

\begin{lemma}\label{lem: qphi}
For any choice of edge functions $c(\lambda;x)$, equation \eqref{eq:qphi}
holds in $V(E,c)$.
\end{lemma}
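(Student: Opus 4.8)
\textbf{Proof plan for Lemma \ref{lem: qphi}.}

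The plan is to verify the relation $z_1(qd_+d_- - d_-d_+) = qt(d_+d_- - d_-d_+)z_k$ by evaluating both sides on an arbitrary good chain $[\lambda; \underline{w}]$ in $V_k(E,c)$ and comparing coefficients of the resulting good chains of the same length. Since $z_1$ and $z_k$ act diagonally with eigenvalues $w_1$ and $w_k$ respectively, and since $d_+d_- - d_-d_+$ maps $V_k$ to $V_k$, it suffices to show that every good chain $[\lambda; \underline{w'}]$ appearing in $(d_+d_- - d_-d_+)[\lambda; \underline{w}]$ has its first coordinate $w'_1$ equal to $w_1$ and satisfies $w_1 \cdot q = qt \cdot w'_k$, i.e. $w'_k = t^{-1} w_1$ — wait, more precisely: on the chain $[\lambda;\underline{w}]$ the left side produces $qw_1 \cdot (d_+d_- - d_-d_+)[\lambda;\underline{w}]$, so I need each output chain's eigenvalue relation to match. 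Let me restate: I will compute $(d_+d_- - d_-d_+)[\lambda; \underline{w}]$ explicitly as a linear combination $\sum c_{\underline{w'}} [\lambda; \underline{w'}]$ over good chains of length $k$, and then check that for each such term $q w_1 \, c_{\underline{w'}} = qt \, w'_k \, c_{\underline{w'}}$, equivalently $w_1 = t \, w'_k$ whenever $c_{\underline{w'}} \neq 0$.

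First I would write out $d_-[\lambda;\underline{w}] = [\lambda \cup w_k; w_{k-1}, \dots, w_1]$ and then apply $d_+$ using \eqref{eq:d+ rep}: this re-adds some new weight $x$ in the last (rightmost) slot, giving chains $[\lambda \cup w_k; w_{k-1}, \dots, w_1, x]$ with coefficient $q^{k-1} c(\lambda \cup \underline{w} \cup x; x) \prod_{i=1}^{k-1}\frac{x - t w_i}{x - qt w_i}$ — here note this chain starts at $\lambda \cup w_k$, not $\lambda$. Dually, $d_+[\lambda;\underline{w}] = \sum_x q^k c(\lambda\cup\underline{w};x) \prod_{i=1}^k \frac{x-tw_i}{x-qtw_i}[\lambda;\underline{w},x]$, and then $d_-$ removes the \emph{leftmost} weight $w_k$, yielding $[\lambda \cup w_k; w_{k-1},\dots,w_1,x]$. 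So both compositions land on chains of the form $[\lambda \cup w_k; w_{k-1}, \dots, w_1, x]$, and I can subtract coefficient-by-coefficient. The key structural point is that in every resulting chain the first coordinate is $x$ (the newly added weight), so $w'_1 = x$, while the last coordinate is $w_{k-1}$; meanwhile $z_1$ acts on the \emph{original} chain $[\lambda;\underline{w}]$ by $w_1$ and $z_k$ by $w_k$. Hmm — so actually I need to be careful about which vector $z_1$ and $z_k$ act on. Since $z_1(qd_+d_- - d_-d_+)[\lambda;\underline w]$: here $z_1$ acts on the output $(qd_+d_- - d_-d_+)[\lambda;\underline w] \in V_{k}$, whose basis chains are $[\lambda\cup w_k; w_{k-1},\dots,w_1,x]$ with first coordinate $w_{k-1}$ and last coordinate $x$. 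So LHS scales the $x$-term by $q w_{k-1}$, and RHS ($qt(d_+d_--d_-d_+)z_k$) first applies $z_k = w_k$ to $[\lambda;\underline w]$ then the same operator, scaling the $x$-term by $qt w_k$... that's not matching either. Let me reconsider: the correct reading is that I must track that the \emph{net} operator $\Phi := qd_+d_- - d_-d_+$ only produces chains whose relevant eigenvalues are constrained. The honest approach: compute $\Phi[\lambda;\underline w] = \sum_x A_x [\lambda\cup w_k; w_{k-1},\dots,w_1,x]$ with
\[
A_x = q^k c(\lambda\cup\underline w; x)\prod_{i=1}^{k-1}\frac{x-tw_i}{x-qtw_i} \left( q\cdot\frac{x-tw_k}{x-qtw_k} \cdot \frac{c(\lambda\cup w_k\cup\underline{w'};x)}{c(\lambda\cup\underline w;x)} - \frac{c(\lambda\cup\underline w;x)}{c(\lambda\cup\underline w;x)}\right)
\]
— the bookkeeping of which $c$'s appear is the delicate part — and then observe that applying $z_1$ on the left (coordinate $w_{k-1}$ of the output) versus $z_k$ on the right requires comparing $w_{k-1}$-type data; actually the relation as written is between operators $V_k \to V_k$, and the cleanest check is: $z_1 \Phi = qt \Phi z_k$ means, on $[\lambda;\underline w]$, that $z_1\big(\Phi[\lambda;\underline w]\big) = qt\, w_k\, \Phi[\lambda;\underline w]$. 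Since $\Phi[\lambda;\underline w]$ is supported on chains with first coordinate $w_{k-1}$, this forces $w_{k-1} = qt w_k$... which is false in general, so in fact $\Phi[\lambda;\underline w]$ must vanish unless...

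I realize I should reorganize: the right way is to recall $d_+$ adds $x$ on the \emph{right} so the added weight is in position $1$ after concatenation $[\underline w, x] = [w_k,\dots,w_1,x]$, meaning the \emph{new} first coordinate is $x$, and $d_-$ removes position $k$ (leftmost, i.e. $w_k$). Thus in $d_-d_+[\lambda;\underline w]$, the chain is $[\lambda\cup w_k; w_{k-1},\dots, w_1, x]$: first coordinate $w_{k-1}$, last coordinate $x$. And in $d_+d_-[\lambda;\underline w] = d_+[\lambda\cup w_k; w_{k-1},\dots,w_1]$, we concatenate $x$ on the right of the length-$(k-1)$ chain, getting $[\lambda\cup w_k; w_{k-1},\dots,w_1,x]$: same shape, first coordinate $w_{k-1}$, last coordinate $x$. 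So $\Phi[\lambda;\underline w] = \sum_x B_x[\lambda\cup w_k;w_{k-1},\dots,w_1,x]$, and $z_1$ reads off $w_{k-1}$, $z_k$ reads off $w_k$. Then $z_1\Phi[\lambda;\underline w] = w_{k-1}\,\Phi[\lambda;\underline w]$ and $qt\,\Phi z_k[\lambda;\underline w] = qt w_k\,\Phi[\lambda;\underline w]$; these agree iff $(w_{k-1} - qtw_k)\Phi[\lambda;\underline w] = 0$. Since that factor is generically nonzero, what must actually happen is that the coefficient $B_x$ already contains the factor $\frac{x - t w_k}{x - qt w_k}$ from the $d_+d_-$ and $d_-d_+$ pieces in a way that — no. I think the resolution, which I will carry out carefully in the proof, is that the LHS is $z_1(qd_+d_- - d_-d_+)$ acting and the surrounding idempotents mean $z_1$ here is $z_1$ on $V_k$ which reads coordinate $1$, and I must simply verify the scalar identity $w_{k-1} B_x = qt w_k B_x$ fails to be the content — rather, the true content of \eqref{eq:qphi} involves $z_1$ on the \emph{source} $V_k$. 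Re-examining \eqref{eq:qphi}: $z_1(qd_+d_- - d_-d_+) = qt(d_+d_- - d_-d_+)z_k$ — here $z_1$ on the left is post-composition, $z_k$ on the right is pre-composition. Both $d_+d_-$ and $d_-d_+$ go $V_k \to V_k$. On the left, after applying $\Phi$ we are in $V_k$ and apply $z_1$ (coordinate $1$ = $w_{k-1}$ in the output basis above). On the right, we first apply $z_k$ in the source $V_k$ (coordinate $k$ = $w_k$), then $\Phi$.

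\textbf{Main obstacle.} The real work, and the main obstacle, is the precise bookkeeping of the edge functions $c(-;-)$ and the product factors $\prod \frac{x - tw_i}{x - qtw_i}$ appearing in the two compositions $d_+d_-$ and $d_-d_+$, together with correctly identifying which coordinate of the output chain is being scaled by $z_1$ versus which coordinate of the input is scaled by $z_k$. Once the coefficient $B_x$ of $[\lambda \cup w_k; w_{k-1}, \dots, w_1, x]$ in $\Phi[\lambda;\underline w]$ is written out, I will factor it and read off that $B_x$ is divisible by $(w_{k-1} - qt w_k)$ — or, if not, the identity reduces to the scalar relation $w_{k-1} B_x = qt w_k B_x$ which must then hold because the only surviving terms have $w_{k-1} = qtw_k$; by Proposition \ref{prop: good chains}(c) applied to the relevant sub-chain, $w_1 \neq qt w_k$ in a good chain, so one must check that the potentially-offending terms $x$ are exactly those eliminated by goodness, forcing $\Phi[\lambda;\underline w]$ to be supported where the scalar relation holds. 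I expect that, as in the parallel verification for the polynomial representation in \cite{CGM}, the factors $\frac{x-tw_k}{x-qtw_k}$ and the shift $q^k$ versus $q^{k-1}$ conspire so that $B_x$ simplifies to something manifestly proportional to $(w_{k-1} - qtw_k)$ divided by a denominator, making the identity transparent; the bulk of the proof is this algebraic simplification, which is routine but must be done with care to the indexing conventions of $d_+$ (adds on the right) and $d_-$ (removes on the left).
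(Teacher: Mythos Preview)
Your general strategy (evaluate both sides on a good chain and compare coefficients) is exactly what the paper does, but your execution has two concrete errors that derail the argument.

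\textbf{Error 1: indexing of $z_1$ on the output.} In the convention $[\lambda;w_k,\ldots,w_1]$, the operator $z_i$ has eigenvalue $w_i$, so position $1$ is the \emph{rightmost} entry. After either $d_+d_-$ or $d_-d_+$ the output chain is $[\lambda\cup w_k;\,w_{k-1},\ldots,w_1,x]$, whose position-$1$ entry is the newly added weight $x$ (not $w_{k-1}$). Thus $z_1$ on the output scales by $x$, not by $w_{k-1}$. This is why you reached the false conclusion $w_{k-1}=qtw_k$; the correct scalars are $x$ on the left and $w_k$ on the right.

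\textbf{Error 2: the two sides involve \emph{different} commutators.} You set $\Phi:=qd_+d_- - d_-d_+$ and then tried to check $z_1\Phi = qt\,\Phi z_k$, but \eqref{eq:qphi} is $z_1(qd_+d_- - d_-d_+)=qt(d_+d_- - d_-d_+)z_k$: there is a $q$ on the left but not on the right. Writing $A_x$ and $B_x$ for the coefficients of $[\lambda\cup w_k;w_{k-1},\ldots,w_1,x]$ in $d_+d_-[\lambda;\underline w]$ and $d_-d_+[\lambda;\underline w]$ respectively, the identity to verify is
\[
x\,(qA_x - B_x) \;=\; qt\,w_k\,(A_x - B_x),
\]
equivalently $q(x-tw_k)A_x=(x-qtw_k)B_x$. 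This is immediate from \eqref{eq:d+ rep}, since $A_x = q^{k-1}c(\lambda\cup\underline w;x)\prod_{i=1}^{k-1}\frac{x-tw_i}{x-qtw_i}$ and $B_x = q\,\frac{x-tw_k}{x-qtw_k}\,A_x$. (The paper also handles the boundary case $x=tw_k$, where $[\lambda;\underline w,x]$ is not good: then $B_x=0$ and the identity still holds.) No divisibility by $(w_{k-1}-qtw_k)$ is involved, and no appeal to Proposition~\ref{prop: good chains}(c) is needed.
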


\begin{proof}
Assume $[\lambda;\underline{w}]$ is a good chain, then by Proposition \ref{prop: good chains}(a)
$[\lambda\cup w_k; w_{k-1},\ldots,w_{1}]$ is also good. If $[\lambda\cup w_k; w_{k-1},\ldots,w_{1},x]$ is good then by Proposition \ref{prop: good chains}(b) either $[\lambda;\underline{w},x]$ is good or $x=tw_k$. Let 
\begin{equation}
\label{eq: c from edges new}
c(\lambda;\underline{w},x):=q^{k}c(\lambda\cup \underline{w};x)\prod_{i=1}^{k}\frac{x-tw_i}{x-qtw_i}
\end{equation}
so that
$$d_{+}
[\lambda;\underline{w}]=\sum_{x}c(\lambda;\underline{w},x)[\lambda;\underline{w},x],$$
where summands with $x = tw_{k}$ are allowed, that is, summands where $[\lambda;\underline{w},x]$ is not good. Evidently, if $x\neq tw_k$ this recovers definition \eqref{eq:d+ rep} and if $x=tw_k$ then the coefficient $c(\lambda;\underline{w},x)$ vanishes.

Now the equation $z_1(qd_{+}d_{-}-d_{-}d_{+})=qt(d_{+}d_{-}-d_{-}d_{+})z_k $ is equivalent to 
\[
x\Bigl (qc(\lambda\cup w_k; w_{k-1}\ldots,w_{1},x)-c(\lambda;\underline{w},x)\Bigr)= qt\Bigl(c(\lambda\cup w_k;w_{k-1}\ldots,w_{1},x)-c(\lambda;\underline{w},x)\Bigr)w_k.
\]
Hence,  
$
q(x-tw_k)c(\lambda\cup w_k;w_{k-1}\ldots,w_{1},x)=(x-qtw_k)c(\lambda;\underline{w},x)
$
which clearly holds by \eqref{eq: c from edges new}. 
\end{proof}

An immediate consequence is an explicit formula for the operator $\varphi = \frac{1}{q-1}[d_{+}, d_{-}]$. 

\begin{corollary}
The action of the operator $\varphi$ is given by, 
\begin{equation}
\varphi[\lambda;\underline{w}]=-q^{k-1}\sum_{x}c(\lambda\cup \underline{w};x)\prod_{i=1}^{k-1}\frac{x-tw_i}{x-qtw_i}\cdot \frac{x}{x-qtw_k} [\lambda\cup w_k; w_{k-1}\ldots,w_{1},x]
\end{equation}
\end{corollary}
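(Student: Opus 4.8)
The plan is to compute $\varphi[\lambda;\underline{w}] = \frac{1}{q-1}\bigl(d_+d_- - d_-d_+\bigr)[\lambda;\underline{w}]$ directly from the formulas in Definition \ref{def: calibrated from excellent}, using the extended coefficient notation $c(\lambda;\underline{w},x)$ from \eqref{eq: c from edges new} established in the proof of Lemma \ref{lem: qphi}. The key point, already isolated in that proof, is that if we allow the ``forbidden'' summand $x = tw_k$ in $d_+[\lambda;\underline{w}] = \sum_x c(\lambda;\underline{w},x)[\lambda;\underline{w},x]$, then the coefficient $c(\lambda;\underline{w},x)$ vanishes there automatically, so we may freely sum over all $x$ addable for $\lambda\cup\underline{w}$ without worrying about goodness. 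This bookkeeping is what makes the two terms $d_+d_-$ and $d_-d_+$ comparable.

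First I would compute $d_-d_+[\lambda;\underline{w}]$: applying $d_+$ gives $\sum_x c(\lambda;\underline{w},x)[\lambda;\underline{w},x]$, and then $d_-$ strips off the first arrow $w_k$, producing $\sum_x c(\lambda;\underline{w},x)[\lambda\cup w_k;w_{k-1},\ldots,w_1,x]$. Next I would compute $d_+d_-[\lambda;\underline{w}] = d_+[\lambda\cup w_k;w_{k-1},\ldots,w_1]$, which by \eqref{eq:d+ rep} (in the extended form) equals $\sum_x c(\lambda\cup w_k;w_{k-1},\ldots,w_1,x)[\lambda\cup w_k;w_{k-1},\ldots,w_1,x]$. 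Subtracting, the basis chains match up termwise, so
\[
(q-1)\varphi[\lambda;\underline{w}] = \sum_x \Bigl(c(\lambda\cup w_k;w_{k-1},\ldots,w_1,x) - c(\lambda;\underline{w},x)\Bigr)[\lambda\cup w_k;w_{k-1},\ldots,w_1,x].
\]
Then I would invoke the relation $q(x-tw_k)c(\lambda\cup w_k;w_{k-1},\ldots,w_1,x) = (x-qtw_k)c(\lambda;\underline{w},x)$ derived in the proof of Lemma \ref{lem: qphi} to eliminate $c(\lambda\cup w_k;\ldots)$, giving $c(\lambda\cup w_k;\ldots) - c(\lambda;\underline{w},x) = \bigl(\tfrac{x-qtw_k}{q(x-tw_k)} - 1\bigr)c(\lambda;\underline{w},x) = \tfrac{x-qtw_k - q(x-tw_k)}{q(x-tw_k)}c(\lambda;\underline{w},x) = \tfrac{(1-q)x}{q(x-tw_k)}c(\lambda;\underline{w},x)$. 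Dividing by $q-1$ turns the prefactor into $-\tfrac{x}{q(x-tw_k)}$.

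Finally I would substitute the explicit value $c(\lambda;\underline{w},x) = q^k c(\lambda\cup\underline{w};x)\prod_{i=1}^k \frac{x-tw_i}{x-qtw_i}$ from \eqref{eq: c from edges new}: the factor $\frac{x-tw_k}{x-qtw_k}$ in the product cancels against the $\frac{1}{x-tw_k}$ coming from the prefactor, leaving $q^{k-1}c(\lambda\cup\underline{w};x)\prod_{i=1}^{k-1}\frac{x-tw_i}{x-qtw_i}\cdot\frac{x}{x-qtw_k}$, and with the overall minus sign this is exactly the claimed formula. I don't anticipate a real obstacle here — the result is essentially a repackaging of the computation already done for Lemma \ref{lem: qphi}; the only thing to be careful about is keeping the summation range honest (summing over all addable $x$ for $\lambda\cup\underline{w}$, relying on the vanishing of $c$ at $x=tw_k$ and noting that when $x=tw_k$ the chain $[\lambda\cup w_k;w_{k-1},\ldots,w_1,x]$ may itself be good, so its coefficient genuinely is $0$), and making sure the cancellation of the $x-tw_k$ factors is legitimate, i.e. that $x \neq tw_k$ exactly on the terms that survive.
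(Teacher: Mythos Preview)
Your proposal is correct and follows essentially the same approach as the paper: compute $d_+d_-$ and $d_-d_+$ on $[\lambda;\underline{w}]$ using the extended coefficients \eqref{eq: c from edges new}, subtract termwise, simplify the scalar, and divide by $q-1$. The only cosmetic difference is that the paper factors the difference through $c(\lambda\cup w_k;w_{k-1},\ldots,w_1,x)$ (writing $c(\lambda;\underline{w},x)=q\tfrac{x-tw_k}{x-qtw_k}c(\lambda\cup w_k;\ldots,x)$ and simplifying $1 - q\tfrac{x-tw_k}{x-qtw_k} = \tfrac{(1-q)x}{x-qtw_k}$), whereas you factor through $c(\lambda;\underline{w},x)$; the paper's choice sidesteps the $x=tw_k$ division issue you flagged, but your handling of it is fine.
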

\begin{proof}
We have 
\begin{align*}
(d_{+}d_{-}-d_{-}d_{+})[\lambda;\underline{w}]&=q^{k-1}\sum_{x}\left(1-q\frac{x-tw_k}{x-qtw_k}\right) c(\lambda\cup w_k;w_{k-1}\ldots,w_{1},x)[\lambda\cup w_k; w_{k-1}\ldots,w_{1},x]
\\
&=q^{k-1}\sum_{x} \left( \frac{x(1-q)}{x-qtw_k} \right) c(\lambda\cup w_k; w_{k-1}\ldots,w_{1},x)[\lambda\cup w_k;w_{k-1}\ldots,w_{1},x].
\end{align*}
Now we apply \eqref{eq: c from edges new}.
\end{proof}

Until now, the the edge functions $c(\lambda;x)$ were completely arbitrary. However, in order to obtain representations of $\Bqt^{\ext}$ we need to impose additional conditions.

\begin{lemma}\label{lem: monodromy}
Suppose Definition \ref{def: calibrated from excellent} yields a representation of $\Bqt^{\ext}$. Then 
\begin{equation}
\label{eq: monodromy}
\frac{c(\lambda;x)c(\lambda\cup x;y)}{c(\lambda;y)c(\lambda\cup y;x)}=-\frac{(x-ty)(x-qy)(y-qtx)}{(y-tx)(y-qx)(x-qty)}
\end{equation}
whenever $[\lambda;x,y]$ and $[\lambda;y,x]$ are both chains. Furthermore, \eqref{eq: monodromy} implies $T_1d_+^2=d_{+}^2$.
\end{lemma}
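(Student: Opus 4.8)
The statement has two parts: (i) that equation \eqref{eq: monodromy} is \emph{necessary} if Definition \ref{def: calibrated from excellent} yields a $\Bqt^{\ext}$-representation, and (ii) that \eqref{eq: monodromy} \emph{implies} the relation $T_1d_+^2 = d_+^2$. I would treat (ii) first and in detail, since it is the substantive computation; part (i) will then essentially fall out of it. The idea for (ii) is to compute $d_+^2[\lambda;\underline w]$ explicitly using \eqref{eq:d+ rep} twice, apply $T_1$ via \eqref{eq: T Ram new}, and compare. Iterating $d_+$, we get
\[
d_+^2[\lambda;\underline w]=\sum_{x,y} c(\lambda;\underline w,x)\,c(\lambda;\underline w,x,y)\,[\lambda;\underline w,x,y],
\]
where the sum is over good extensions; here $x=w_0$ and $y=w_{-1}$ sit in the two new leftmost slots, so $T_1$ acts on the pair $(y,x)=(w_{-1},w_0)$, i.e. on the \emph{last two} entries in our right-to-left convention. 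The key point is that for a fixed target multiset $\{x,y\}$ there are (generically) two contributing chains, $[\lambda;\underline w,x,y]$ and $[\lambda;\underline w,y,x]$, unless one of $y=qx,\ y=tx,\ x=qy,\ x=ty$ holds, in which case only one chain survives (by Corollary \ref{cor: excellent chains} and the goodness constraints of Proposition \ref{prop: good chains}).

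\textbf{Main computation.} I would organize the verification of $T_1 d_+^2 = d_+^2$ by cases on the pair $(x,y)$ reached by the two applications of $d_+$, exactly mirroring the excellent-poset trichotomy: (a) the generic case where both $[\lambda;\underline w,x,y]$ and $[\lambda;\underline w,y,x]$ are good chains; (b) the case $y=qx$ (so only $[\lambda;\underline w,x,y]$ is good and $s_1$ is non-admissible); and (c) the case $y=tx$ (so only $[\lambda;\underline w,x,y]$ — wait, rather the chain with $x$ innermost — is good, and $s_1$ is admissible but not {\exc}). In case (a), $T_1$ mixes the two basis vectors $[\lambda;\underline w,x,y]$ and $[\lambda;\underline w,y,x]$ according to \eqref{eq: T Ram new} with $(w'_1,w'_2)=(y,x)$; writing out $T_1 d_+^2[\lambda;\underline w]$ and demanding the coefficient of $[\lambda;\underline w,x,y]$ equal $c(\lambda;\underline w,x)c(\lambda;\underline w,x,y)$ produces, after using \eqref{eq: c from edges new} to cancel the common $\prod_{i}\frac{(\cdot)-tw_i}{(\cdot)-qtw_i}$ factors and the powers of $q$, precisely the ratio identity \eqref{eq: monodromy} for the pair $(x,y)$ at the base point $\mu=\lambda\cup\underline w$. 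This is the heart of the matter. Cases (b) and (c) are consistency checks: one verifies that the single surviving term already satisfies $T_1(\text{that term}) = (\text{that term})$ using that $T_1$ acts by the scalar $\frac{(q-1)w'_2}{w'_1-w'_2}$, which equals $1$ when $w'_1=qw'_2$ (case $y=qx$, recalling $(w'_1,w'_2)=(y,x)$) — and in the $y=tx$ case one checks the would-be partner term has vanishing coefficient because $c(\lambda;\underline w,x,tx)=0$ by \eqref{eq: c from edges new}, so again only the diagonal scalar contributes and it must be $1$, which holds since $T_1$ on a good chain with a non-{\exc} admissible $s_1$ still returns $\frac{(q-1)w'_2}{w'_1-w'_2}$ — here I must double-check this is $1$ when $w'_1 = tw'_2$; if not, the goodness hypothesis forbids that configuration from arising as a $d_+^2$ output in the first place, which is the resolution. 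I'd need to be careful about exactly which of $x,y$ lands in which slot and hence whether the relevant constraint is $y\in\{qx,tx\}$ or $x\in\{qy,ty\}$; the relation \eqref{eq:T d+} is $T_1 d_+^2 = d_+^2$ with $T_1$ on the innermost two, and Proposition \ref{prop: good chains}(b) tells us goodness of $[\lambda;\underline w,x,y]$ requires $y\neq tx$ — so the $y=tx$ branch simply does not occur among good output chains, and only the $x\in\{qy,\text{other}\}$ analysis is needed. I would lay out a short lemma isolating which ordered pairs $(x,y)$ give good chains $[\lambda;\underline w,x,y]$ and then dispatch each.

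\textbf{Deriving necessity (part (i)).} Once the case analysis above is in place, part (i) is immediate: if Definition \ref{def: calibrated from excellent} gives a genuine $\Bqt^{\ext}$-module, then in particular \eqref{eq:T d+}, i.e. $T_1 d_+^2 = d_+^2$, holds, and the case-(a) computation \emph{ran as an equivalence} — the coefficient-matching forced \eqref{eq: monodromy} — so \eqref{eq: monodromy} must hold whenever $[\lambda;x,y]$ and $[\lambda;y,x]$ are both chains (apply the argument with $k=0$, $\underline w$ empty, and $\mu=\lambda$, or more generally read off the identity at the base point $\mu=\lambda\cup\underline w$, which ranges over all of $E$ as $[\lambda;\underline w]$ varies). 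I would phrase the shared computation once as a single ``coefficient identity $\iff$ \eqref{eq: monodromy}'' statement and then invoke it in both directions.

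\textbf{Expected obstacle.} The routine-but-delicate part is bookkeeping the product $\prod_{i=1}^{k}\frac{x-tw_i}{x-qtw_i}$ and its analogue with $y$ (and the cross-term $\frac{y-tx}{y-qtx}$ coming from the second application of $d_+$, since $x$ itself becomes one of the $w_i$'s at that stage) and confirming that every such factor is common to both sides of the $T_1$-relation and hence cancels, leaving only the six linear factors of \eqref{eq: monodromy}. The genuinely load-bearing step — the one place where ``excellent'' is really used and where an error would hide — is the enumeration of which ordered extensions yield good chains, so that the degenerate terms ($x=ty$, $y=tx$, $y=qx$, $x=qy$) are correctly either excluded or shown to carry zero coefficient; I would isolate that as its own paragraph before touching the algebra.
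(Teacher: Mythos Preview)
Your approach is correct and is essentially the paper's proof: compute $d_+^2[\lambda;\underline w]$ via \eqref{eq: c from edges new}, compare the coefficient of $[\lambda;\underline w,x,y]$ on both sides of $T_1d_+^2=d_+^2$, and observe that in the generic case (both orderings good) this coefficient identity is exactly \eqref{eq: monodromy} at $\mu=\lambda\cup\underline w$, giving both directions at once. Your degenerate-case discussion can be streamlined: the $y=tx$ branch never produces a good output chain (as you eventually note), and the only genuine special case is $y=qx$, where $s_1$ is non-admissible and $T_1$ acts by the scalar $\tfrac{(q-1)x}{qx-x}=1$; the stray ``$x\in\{qy,\text{other}\}$'' remark is a slip, since $x=qy$ is already excluded by Lemma~\ref{lem: far quotient}.
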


\begin{proof}
 The coefficients of $[\lambda;\underline{w},x,y]$ and $[\lambda;\underline{w},y,x]$ in $d_+^2[\lambda; \underline{w}]$ are proportional to
\[
c(\lambda\cup \underline{w};x)c(\lambda\cup \underline{w}\cup x;y)\frac{y-tx}{y-qtx} \text{   and   } \ c(\lambda\cup \underline{w};y)c(\lambda\cup \underline{w}\cup y;x)\frac{x-ty}{x-qty}
\]
respectively, up to a factor of
\[
q^{2k+1}\prod \frac{x-tw_i}{x-qtw_i}\prod \frac{y-tw_i}{y-qtw_i}
\]
which is symmetric in $x$ and $y$. Setting $\lambda\cup \underline{w}=\mu$, the equation $T_1d_+^2=d_+^2$ can be rewritten as
\[
c(\mu;x)c(\mu\cup x;y)\frac{y-tx}{y-qtx}\frac{(q-1)x}{y-x}+c(\mu;y)c(\mu\cup y;x)\frac{x-ty}{x-qty}\frac{x-qy}{x-y}=c(\mu;x)c(\mu\cup x;y)\frac{y-tx}{y-qtx},
\]
so that
\[
c(\mu;y)c(\mu\cup y;x)\frac{x-ty}{x-qty}\frac{x-qy}{x-y}=c(\mu;x)c(\mu\cup x;y)\frac{y-tx}{y-qtx}\frac{y-qx}{y-x}
\]
and the result follows.

To show that \eqref{eq: monodromy} also implies $T_1d_{+}^{2} = d_{+}^{2}$, we need to consider the special case when $[\lambda;y,x]$ is not a chain in $E$. If $y=tx$ then $[\lambda;\underline{w},x,y]$ is not good and does not appear in $d_{+}^2$. If $y=qx$ then $[\lambda;\underline{w},x,y]$ is an eigenvector for $T_1$ with eigenvalue 1, so $T_1d_+^2=d_{+}^2$.
\end{proof}

\begin{theorem}
\label{thm: relations hold}
The edge functions $c(\lambda;x)$  define a calibrated representation $V(E,c)$ of $\Bqt^{\ext}$ if and only if they satisfy  \eqref{eq: monodromy}. 
\end{theorem}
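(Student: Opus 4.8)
The plan is to verify that all the defining relations \eqref{eq:hecke relns}--\eqref{eq:qphi} of $\Bqt$, together with the extended relations \eqref{eq:ext-Bqt-1}--\eqref{eq:ext-Bqt-2}, hold on $V(E,c)$, and then to check conditions (1)--(3) of Definition \ref{def: calibrated}. Many of these relations have already been disposed of in the excerpt: the Hecke relations \eqref{eq:hecke relns} and the mixed relation \eqref{eq:T and z}--\eqref{eq:affine Hecke relns} hold because the $T_i$ act by Ram's formulas \eqref{eq: z Ram new}--\eqref{eq: T Ram new} on calibrated sequences (Corollary \ref{cor: excellent chains}), so $V_k(E,c)$ is automatically a calibrated $\AH_k$-representation; relation \eqref{eq:qphi} holds for \emph{any} choice of edge functions by Lemma \ref{lem: qphi}; and Lemma \ref{lem: monodromy} shows that \eqref{eq: monodromy} is equivalent to the relation $T_1 d_+^2 = d_+^2$. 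The relations \eqref{eq:T d-} for $d_-$ (namely $d_-^2 T_{k-1} = d_-^2$ and $d_- T_i = T_i d_-$ for $i\le k-2$) are straightforward from the definitions: $d_-$ just deletes $w_k$, commutes with the $T_i$ that only involve $w_1,\dots,w_{k-1}$, and $d_-^2$ deletes both $w_k$ and $w_{k-1}$, which kills the dependence on $T_{k-1}$. Similarly $d_+ T_i = T_{i+1} d_+$ for $2\le i\le k-1$ holds because appending a new last coordinate $x$ shifts all indices up by one, and the relations \eqref{eq: d z} $z_i d_- = d_- z_i$, $d_+ z_i = z_{i+1} d_+$ are immediate from the diagonal action of $z_i$ and the index shift. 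The extended relations are similarly direct: $\Delta_{p_m}$ acts diagonally by $p_m(\lambda) + \sum w_i^m$, which is manifestly unchanged by $d_-$, by the $T_i$ (which only permute the $w_i$), and by the $z_i$; and when we apply $d_+$ the eigenvalue gains exactly $x^m$ where $x = w_{k+1}$ is the new first $z$-coordinate, i.e. $[\Delta_{p_m}, d_+] = z_1^m d_+$ as required by \eqref{eq:ext-Bqt-2}.

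The substantive content is therefore the set of relations genuinely coupling $d_+$, $d_-$, and $\varphi = \tfrac{1}{q-1}[d_+,d_-]$: namely \eqref{eq:T d+} (the part $T_1 d_+^2 = d_+^2$, already handled via \eqref{eq: monodromy}) and the two $\varphi$-relations \eqref{eq:phi}, $q\varphi d_- = d_-\varphi T_{k-1}$ and $T_1\varphi d_+ = q d_+\varphi$. For these I would use the explicit formula for $\varphi$ from the Corollary following Lemma \ref{lem: qphi}, plug in, and reduce each identity to an elementary rational-function identity in $x$, the $w_i$, and $q,t$ — here the monodromy condition \eqref{eq: monodromy} will be exactly what makes the $\varphi$-relations close up, since the product $c(\lambda;x)c(\lambda\cup x;y)$ around a $4$-cycle is pinned down by \eqref{eq: monodromy}. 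I also need to check that $d_+^2$ and $d_-^2$ are \emph{well-defined} on good chains — i.e. that a double application of $d_+$ never produces a non-good chain with nonzero coefficient, which follows from Proposition \ref{prop: good chains}(b),(c) and the vanishing built into \eqref{eq: c from edges new} — and that the sum defining $d_+$ is finite, which is the local finiteness of $E$. A cleaner organizational device is to prove $d_+^2 = T_1 d_+^2$ and the compatibility of the two $d_+$-orderings around each $4$-cycle $Q(\lambda;y,x)$ of Lemma \ref{lem: cycles relations} \emph{simultaneously}: since $H^1(\Gamma_E,\Z)$ is generated by these cycles with the Yang–Baxter relation \eqref{eq: YBE}, consistency on all of $E$ reduces to consistency on one $4$-cycle plus the hexagon \eqref{eq: YBE}, and the hexagon consistency is automatic once \eqref{eq: monodromy} holds on each face because \eqref{eq: monodromy} is itself a ``local'' two-step condition.

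For the forward direction — that a representation of $\Bqt^{\ext}$ forces \eqref{eq: monodromy} — I would simply invoke Lemma \ref{lem: monodromy}: if the $\Bqt$-relations hold then in particular $T_1 d_+^2 = d_+^2$ holds, and Lemma \ref{lem: monodromy} (its first paragraph) extracts \eqref{eq: monodromy} from this by comparing the coefficients of $[\lambda;\underline{w},x,y]$ and $[\lambda;\underline{w},y,x]$ in $d_+^2[\lambda;\underline{w}]$ whenever both $[\lambda;x,y]$ and $[\lambda;y,x]$ are chains. Finally, for calibration: (1) holds because the good chains are by construction a simultaneous eigenbasis for all $z_i$ and $\Delta_{p_m}$; (3) holds because addable weights are nonzero (the simple spectrum condition, as noted after \eqref{eq: add box p}); and (2), the joint simple spectrum, is the only place needing a small argument — given the $z$-eigenvalues $(w_k,\dots,w_1)$ of a good chain $[\lambda;\underline{w}]$ and the $\Delta_{p_m}$-eigenvalues $p_m(\lambda)+\sum w_i^m$, we recover $p_m(\lambda)$ for all $m$, hence $\lambda$ by simple spectrum of the weighting, and then the chain itself is determined since $\underline{w}$ is literally the tuple of $z$-eigenvalues; distinct good chains thus have distinct $(\zeta,\delta)$, so each joint eigenspace is at most one-dimensional. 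The main obstacle I anticipate is purely computational: carefully pushing the two $\varphi$-relations in \eqref{eq:phi} through the explicit $\varphi$-formula and confirming that the $(x-qtw_k)$ and $(x-tw_k)$ denominators/numerators together with \eqref{eq: monodromy} conspire to give the factor of $q$ on the nose — there is genuine risk of sign or $q$-power bookkeeping errors, and the cleanest route is probably to first establish $d_+^2$ well-definedness and $4$-cycle consistency, and only then derive the $\varphi$-relations as formal consequences of $[d_+,d_-]$ together with the already-verified $d_\pm$-$T_i$ and $d_\pm$-$z_i$ relations.
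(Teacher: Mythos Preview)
Your plan is essentially the same as the paper's: verify the $\Bqt^{\ext}$ relations directly, with the substantive work being the two $\varphi$-relations \eqref{eq:phi}, reduced via the explicit formula for $\varphi$ to rational-function identities. Two small corrections: first, only the relation $T_1\varphi d_+ = q d_+\varphi$ actually requires \eqref{eq: monodromy} (to rewrite $c(\mu;y)c(\mu\cup y;x)$ in terms of $c(\mu;x)c(\mu\cup x;y)$ before the rational identity collapses), whereas $q\varphi d_- = d_-\varphi T_{k-1}$ reduces to a pure identity in $x, w_{k-1}, w_k, q, t$ with no edge functions involved; second, your speculative ``cleaner route'' of deriving the $\varphi$-relations formally from the other relations does not work---these are genuinely independent constraints and the paper checks them by direct computation. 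Your explicit verification of the calibration conditions (1)--(3), especially the simple-spectrum argument recovering $\lambda$ from $p_m(\lambda) = \delta_m - \sum w_i^m$, is a useful addition that the paper leaves implicit.
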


\begin{proof}
If $V(E,c)$ is a representation of $\Bqt^{\ext}$ then the edge functions $c(\lambda;x)$ satisfy \eqref{eq: monodromy} by Lemma \ref{lem: monodromy}. Assume now that the edge functions satisfy \eqref{eq: monodromy}. Then, by Lemmas \ref{lem: qphi} and \ref{lem: monodromy}, equations \eqref{eq:T d+} and \eqref{eq:qphi} given by 
$
z_1(qd_{+}d_{-}-d_{-}d_{+})=qt(d_{+}d_{-}-d_{-}d_{+})z_k
$
and $T_1d_{+}^2=d_{+}^2$ are satisfied, with $d_{+}T_i=T_{i+1}d_{+}$ for $2\le i\le k-1$ following from \eqref{eq: c from edges new} since 
$c(\lambda;\underline{w},x)$ is symmetric in $w_1,\ldots,w_k$.

We check the remaining relations directly. 

The $\AH_k$ relations \eqref{eq:hecke relns}-\eqref{eq:T and z}  between $T_i$ and $z_j$ are satisfied by Theorem \ref{thm: Ram new}. The commutation relations \eqref{eq:ext-Bqt-1} between $\Delta_{p_m}$ and $T_i$ are clear since the eigenvalue of $\Delta_{p_m}$ is symmetric in $w_1,\ldots,w_k$. Likewise, the commutation relations \eqref{eq:ext-Bqt-2} between $\Delta_{p_m}, z_i$ and $d_{\pm}$ are clear.

The relation $d_{-}T_i=T_{i}d_{-}$ from \eqref{eq:T d-} is clear by construction. Let us verify $d_{-}^{2} = d_{-}^{2}T_{k-1}$. If $s_{k-1}$ is not admissible for $[\underline{w}]$ then $T_{k-1}[\lambda;  \underline{w}] = [\lambda;  \underline{w}]$ and we are done. On the other hand, if $s_{k-1}$ is admissible for $[\underline{w}]$ then:
\begin{align*}
d_{-}^2T_{k-1}[\lambda;\underline{w}]&=\frac{w_{k-1} - qw_{k} + (q-1)w_{k}}{w_{k-1} - w_{k}}[\lambda\cup w_k\cup w_{k-1};w_{k-2},\ldots,w_{1}] \\
&= [\lambda\cup w_k\cup w_{k-1};w_{k-2},\ldots,w_{1}] =d_{-}^2[\lambda;\underline{w}].
\end{align*}

It remains to verify \eqref{eq:phi}. For the first relation, $q\varphi d_{-}=d_{-}\varphi T_{k-1}$, we have:
\[
q\varphi d_{-}[\lambda;\underline{w}]=-q^{k-1}\sum_{x}c(\lambda\cup \underline{w};x )\prod_{i=1}^{k-2}\frac{x-tw_i}{x-qtw_i}\cdot \frac{x}{x-qtw_{k-1}}[\lambda\cup w_k\cup w_{k-1};w_{k-2},\ldots,w_{1},x]
\]
whereas,
\begin{align*}
d_{-}\varphi T_{k-1}=&-q^{k-1}\sum_{x}c(\lambda\cup \underline{w};x )\prod_{i=1}^{k-2}\frac{x-tw_i}{x-qtw_i}[\lambda\cup w_k\cup w_{k-1};w_{k-2},\ldots,w_{1},x]\times \\
&\left(
\frac{(q-1)w_k}{w_{k-1}-w_{k}}\frac{x}{x-qtw_k}\frac{x-tw_{k-1}}{x-qtw_{k-1}}+\frac{w_{k-1}-qw_{k}}{w_{k-1}-w_{k}}\frac{x}{x-qtw_{k-1}}\frac{x-tw_{k}}{x-qtw_{k}}\right). 
\end{align*}
These equations agree since
\[
\frac{(q-1)w_k(x-tw_{k-1})}{(w_{k-1}-w_{k})(x-qtw_k)}+\frac{(w_{k-1}-qw_{k})(x-tw_k)}{(w_{k-1}-w_{k})(x-qtw_k)}
=1.
\]

Now we check the remaining relation $T_{1}\varphi d_{+} = qd_{+}\varphi$. Assume first that $y\neq qx$. Then the coefficient of $[\lambda \cup w_{k}; w_{k-1},\ldots,w_{1},x,y]$ in $qd_{+}\varphi[\lambda; w]$ is
\[
-q^{2k}\prod_{i=1}^{k-1}\frac{(x-tw_i)(y-tw_i)}{(x-qtw_i)(y-qtw_i)}\times
c(\lambda\cup \underline{w},x)c(\lambda\cup \underline{w}\cup x;y)\frac{x}{x-qtw_k}\frac{y-tx}{y-qtx}
\]
while its coefficient in $T_{1}\varphi d_{+}$ equals
\begin{multline*}
-q^{2k}\prod_{i=1}^{k-1}\frac{(x-tw_i)(y-tw_i)}{(x-qtw_i)(y-qtw_i)}\times \\
\biggl(c(\lambda\cup \underline{w};x)c(\lambda\cup \underline{w}\cup x;y)\frac{(q-1)x}{y-x}\frac{(x-tw_k)(y-tx)y}{(x-qtw_k)(y-qtx)(y-qtw_k)} \\
+c(\lambda\cup \underline{w};y)c(\lambda\cup \underline{w}\cup y;x)\frac{(y-tw_k)(x-ty)x}{(y-qtw_k)(x-qty)(x-qtw_k)}\frac{x-qy}{x-y}\biggr)
\end{multline*}
By \eqref{eq: monodromy}, we have
\begin{align*}
c(\lambda\cup \underline{w};y)&c(\lambda\cup \underline{w}\cup y;x)\frac{(y-tw_k)(x-ty)x}{(y-qtw_k)(x-qty)(x-qtw_k)}\frac{x-qy}{x-y}\\
&=
-c(\lambda\cup \underline{w};x)c(\lambda\cup \underline{w}\cup x;y)\frac{(y-tx)(y-qx)(y-tw_k)x}{(y-qtx)(y-qtw_k)(x-qtw_k)(x-y)},
\end{align*}
so we can rewrite the coefficient of $[\lambda \cup w_k; w_{k-1},\ldots,w_{1},y]$ in $T_{1}\varphi d_{+}$ as follows:
\begin{multline*}
-q^{2k}\prod_{i=1}^{k-1}\frac{(x-tw_i)(y-tw_i)}{(x-qtw_i)(y-qtw_i)}c(\lambda\cup \underline{w};x)c(\lambda\cup \underline{w}\cup x;y) \times \\
\left(\frac{(q-1)x}{y-x}\frac{(x-tw_k)(y-tx)y}{(x-qtw_k)(y-qtx)(y-qtw_k)}-\frac{(y-tx)(y-qx)(y-tw_k)x}{(y-qtx)(y-qtw_k)(x-qtw_k)(x-y)}\right).
\end{multline*}

Now we can apply the identity of rational functions
$$
\frac{(q-1)(x-tw_k)y}{(y-x)(y-qtw_k)}-\frac{(y-qx)(y-tw_k)}{(y-qtw_k)(x-y)}=1
$$
and the result follows.

In the remaining case $y=qx$ the coefficient in $qd_{+}\varphi[\lambda;\underline{w}]$ is  proportional to 
$$
\frac{x}{x-qtw_k}\frac{y-tx}{y-qtx}=\frac{x(q-t)}{(x-qtw_k)(q-qt)}
$$
while the coefficient in $T_1\varphi d_{+}[\lambda;\underline{w}]$ is proportional  to
$$
\frac{(x-tw_k)(y-tx)y}{(x-qtw_k)(y-qtx)(y-qtw_k)}=
\frac{(x-tw_k)(q-t)qx}{(x-qtw_k)(q-qt)(qx-qtw_k)}=
\frac{x(q-t)}{(x-qtw_k)(q-qt)}.
$$
\end{proof}

Due to the previous theorem, any excellent poset together with the choice of edge functions satisfying \eqref{eq: monodromy} gives rise to a representation of the algebra $\Bqt^{\ext}$ via Definition \ref{def: calibrated from excellent}. Moreover, as the next result shows, we can always choose edge functions $c(\lambda;x)$ satisfying \eqref{eq: monodromy}. 

\begin{theorem}
\label{thm: existence and uniqueness}
For any given excellent poset $E$ there exist nonzero coefficients $c(\lambda;x)$ satisfying \eqref{eq: monodromy}. Moreover, any two choices of edge functions $c(\lambda;x)$ satisfying  \eqref{eq: monodromy} yield isomorphic representations of $\Bqt^{\ext}$.
\end{theorem}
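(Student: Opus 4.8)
\textbf{Existence.} The plan is to build the edge functions $c(\lambda;x)$ one grading level at a time, treating equation \eqref{eq: monodromy} as a cocycle-type constraint on the graph $\Gamma_E$. Observe that the right-hand side of \eqref{eq: monodromy} depends only on the pair $(x,y)$ of addable weights, so if we fix, once and for all, a function $\Phi(x,y) := -\frac{(x-ty)(x-qy)(y-qtx)}{(y-tx)(y-qx)(x-qty)}$, then \eqref{eq: monodromy} says that the $2$-cocycle $\delta c$ on each square $Q(\lambda;y,x)$ of \eqref{eq:Qcycle} equals a prescribed value built from $\Phi$. The key point is that $\Phi$ itself is \emph{multiplicatively antisymmetric}, $\Phi(x,y)\Phi(y,x) = 1$, which is exactly the compatibility needed for the first relation $Q(\lambda;y,x) = -Q(\lambda;x,y)$ of Lemma \ref{lem: cycles relations}; and one must check the hexagon/Yang--Baxter identity \eqref{eq: YBE}, i.e. that the product of the six $\Phi$-factors around the two routes in the big diagram of Lemma \ref{lem: cycles relations} agree. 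Since $\Phi(x,y)$ is a ratio of the shifts $\{x-ty, x-qy, y-qtx\}$ against $\{y-tx,y-qx,x-qty\}$, this is a finite rational-function identity in the three independent weights $x,y,z$ that can be verified directly (it is precisely the statement that the assignment $Q(\lambda;y,x)\mapsto \Phi(x,y)$ descends to a homomorphism $H^1(\Gamma_E,\Z)\to \C(q,t)^\times$).

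Granting that, existence of $c$ becomes a statement about the exactness of a cochain complex: we want a $1$-cochain $e$ on the edges of $\Gamma_E$ (namely $e(\lambda\xrightarrow{x}\lambda\cup x) := c(\lambda;x)$) whose coboundary on squares is the prescribed $2$-cocycle. Because $\Gamma_E$ is the Hasse graph of a graded poset, we can proceed by induction on $|\lambda|$: having defined $c(\mu;\cdot)$ for all $\mu$ with $|\mu|\le n$, pick for each $\nu$ with $|\nu| = n+1$ a reference maximal chain from some fixed minimal element (or simply work within each connected component and each "interval" $[\lambda,\lambda\cup\underline w]$), set the new edge values along a spanning tree arbitrarily to $1$, and extend to the remaining edges using the cocycle relation; Lemma \ref{lem: cycles relations} guarantees this extension is consistent precisely because all relations among cycles are generated by the antisymmetry and \eqref{eq: YBE}, both of which hold for $\Phi$. (If one prefers a closed formula, for the poset of partitions one can just exhibit an explicit $c$ coming from the known coefficients $d_{\lambda\cup x,\lambda}$ of Section \ref{sec: polynomial}, but the cohomological argument is what gives the general case.)

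\textbf{Uniqueness up to isomorphism.} Suppose $c$ and $c'$ both satisfy \eqref{eq: monodromy}. Then the ratio $r(\lambda;x) := c'(\lambda;x)/c(\lambda;x)\in\C(q,t)^\times$ satisfies $\delta r = 1$ on every square $Q(\lambda;y,x)$, i.e. $r$ is a \emph{multiplicative $1$-cocycle} on $\Gamma_E$. By Lemma \ref{lem: cycles relations} every cycle in $\Gamma_E$ is generated by squares, so $r$ is a coboundary on each connected component: there is a function $\beta:E\to\C(q,t)^\times$ with $r(\lambda;x) = \beta(\lambda\cup x)/\beta(\lambda)$ (define $\beta$ by choosing a basepoint in each component and multiplying $r$ along any path — well-defined by the cocycle condition). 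Now define $\Psi: V(E,c)\to V(E,c')$ on basis vectors by $\Psi[\lambda;\underline w] = \beta(\lambda)^{-1}\beta(\lambda\cup\underline w)\,[\lambda;\underline w]$ — more precisely we rescale each good chain by the telescoping product $\prod$ of the $r$-values along it, which depends only on the endpoints $\lambda$ and $\lambda\cup\underline w$ by the cocycle property. This $\Psi$ is diagonal in the chain basis, hence automatically commutes with $z_i$, $\Delta_{p_m}$, and $T_i$ (whose matrix entries in \eqref{eq: T Ram new} involve only the weights $w_i$, which $\Psi$ does not touch), and it commutes with $d_-$ since $d_-$ just drops the last step $\lambda\xrightarrow{w_k}\lambda\cup w_k$ and the scaling factor for $[\lambda;\underline w]$ equals $r(\lambda;w_k)$ times that for $[\lambda\cup w_k;w_{k-1},\dots,w_1]$. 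Finally, intertwining with $d_+$ reduces exactly to the identity $c'(\lambda\cup\underline w;x) = r(\lambda\cup\underline w;x)\,c(\lambda\cup\underline w;x)$ matched against the scaling factors of $[\lambda;\underline w]$ and $[\lambda;\underline w,x]$, which holds by construction of $\beta$. Hence $\Psi$ is an isomorphism of $\Bqt^{\ext}$-modules.

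\textbf{Main obstacle.} The genuinely nontrivial point is verifying that $\Phi(x,y)$ satisfies the hexagon identity \eqref{eq: YBE} — that is, that the prescribed right-hand sides of \eqref{eq: monodromy} are themselves cohomologically consistent; everything else is bookkeeping with telescoping products along chains. This is a concrete rational-function identity, and it is where the precise form of the numerator and denominator of \eqref{eq: monodromy} (the "wheel-condition"-like shifts $ty, qy, qtx$ versus $tx, qx, qty$) is used in an essential way; one should also double-check that the excellence hypotheses (Lemma \ref{lem: no inverse new}, Lemma \ref{lem: far quotient}) keep all these factors nonzero so that $r$ and $\beta$ take values in $\C(q,t)^\times$ rather than degenerating.
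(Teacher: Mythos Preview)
Your overall strategy matches the paper's almost exactly: interpret \eqref{eq: monodromy} as prescribing the value of a $1$-cochain on the generating squares $Q(\lambda;y,x)$, verify that this prescription descends to a homomorphism $H_1(\Gamma_E,\Z)\to\C(q,t)^\times$ (antisymmetry plus the hexagon identity \eqref{eq: YBE}), construct $c$ along a spanning tree, and for uniqueness show the ratio $r=c'/c$ is a coboundary. The paper does exactly this, and in fact writes out the six-factor product verifying \eqref{eq: YBE} explicitly, so your ``main obstacle'' is indeed the crux and is a direct computation.

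There is, however, a concrete error in your uniqueness argument. Your rescaling
\[
\Psi[\lambda;\underline w] \;=\; \beta(\lambda)^{-1}\beta(\lambda\cup\underline w)\,[\lambda;\underline w]
\]
does \emph{not} intertwine $d_-$. In both $V(E,c)$ and $V(E,c')$ the operator $d_-$ has coefficient $1$, so $\Psi d_- = d_-\Psi$ forces the scaling factor on $[\lambda;\underline w]$ to equal the one on $[\lambda\cup w_k;w_{k-1},\dots,w_1]$. Your factors differ by $\beta(\lambda\cup w_k)/\beta(\lambda)=r(\lambda;w_k)$, which is generically nontrivial --- so the very observation you make (``the scaling factor \dots equals $r(\lambda;w_k)$ times that for \dots'') is precisely what breaks the intertwining, not what establishes it. The fix is immediate: drop the $\beta(\lambda)^{-1}$ and rescale by the endpoint alone,
\[
\Psi[\lambda;\underline w] \;=\; \beta(\lambda\cup\underline w)\,[\lambda;\underline w].
\]
This is symmetric in the $w_i$ (so $T_i$ is preserved), constant along $d_-$ (since $d_-$ fixes the endpoint $\lambda\cup\underline w$), and your own computation for $d_+$ still goes through because the relevant ratio $\beta(\mu\cup x)/\beta(\mu)=r(\mu;x)$ is unchanged. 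This is exactly the rescaling the paper uses.
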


As an immediate consequence of this theorem we have the following.  

\begin{corollary}
For any excellent poset $E$, there exists a calibrated representation $V(E,c)$ of $\Bqt^{\ext}$ constructed as in Definition \ref{def: calibrated from excellent}. In particular, any two such representations are isomorphic so that $V(E,c) \cong V(E,c')$ for any family of edge functions $c,c'$. 
\end{corollary}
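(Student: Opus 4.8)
The plan is to prove Theorem \ref{thm: existence and uniqueness} in two parts: existence of edge functions satisfying \eqref{eq: monodromy}, and uniqueness of the resulting representation up to isomorphism.

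\textbf{Existence.} First I would reformulate \eqref{eq: monodromy} additively. Write $c(\lambda;x) = \exp(\gamma(\lambda;x))$ formally, or better, work multiplicatively: the right-hand side of \eqref{eq: monodromy} is a fixed function $R(x,y)$ of the two weights, antisymmetric in the sense that $R(x,y)R(y,x) = 1$ (this should be checked, and it is immediate from the formula). So \eqref{eq: monodromy} says that the ``curl'' of the edge function $c$ around each $4$-cycle $Q(\lambda;y,x)$ equals a prescribed value $R(x,y)$. By Lemma \ref{lem: cycles relations}, the cycles $Q(\lambda;y,x)$ generate $H^1(\Gamma_E,\Z)$ and all relations among them come from the Yang--Baxter-type relation \eqref{eq: YBE}. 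Thus a consistent choice of $c$ exists (obtained by picking values freely on a spanning tree of $\Gamma_E$ and propagating) provided the assignment $Q(\lambda;y,x) \mapsto R(x,y)$ respects the relations of Lemma \ref{lem: cycles relations}: namely $R(x,y)R(y,x)=1$ for the first relation, and the hexagon identity
\[
R(y,z)R(x,z)R(x,y) = R(x,y)R(x,z)R(y,z)
\]
coming from \eqref{eq: YBE} — but since all the $R$'s are scalars this holds trivially once we match the terms up correctly with the shifted arguments $\lambda, \lambda\cup x$, etc. (the key point being that $R(x,y)$ depends only on the weights $x,y$, not on $\lambda$). One must also handle the boundary cases $y = qx$ or $y = tx$ where $[\lambda;y,x]$ is not a chain and the $4$-cycle degenerates; there \eqref{eq: monodromy} is simply not imposed, so these edges can be used freely in building the spanning tree. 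I would phrase this cleanly as: the cohomology class obstruction lives in $H^1(\Gamma_E, \C(q,t)^*)$ and vanishes because the prescribed curl is a coboundary-compatible $2$-cocycle by Lemma \ref{lem: cycles relations}.

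\textbf{Uniqueness.} Suppose $c$ and $c'$ both satisfy \eqref{eq: monodromy}. Then $f(\lambda;x) := c'(\lambda;x)/c(\lambda;x)$ has trivial curl around every $4$-cycle $Q(\lambda;y,x)$, hence (again by Lemma \ref{lem: cycles relations}, i.e. $H^1(\Gamma_E)$ is generated by these cycles) $f$ is a ``gradient'': there is a function $g: E \to \C(q,t)^*$ with $f(\lambda;x) = g(\lambda\cup x)/g(\lambda)$ for every covering $\lambda \to \lambda\cup x$. Normalizing $g$ to be $1$ on $E_0$ (or on a chosen basepoint in each connected component), define the linear map $\Phi: V(E,c) \to V(E,c')$ by $\Phi[\lambda;\underline w] = g(\lambda\cup\underline w)\,[\lambda;\underline w]$ — wait, one needs to be careful: the natural rescaling is by $g$ evaluated along the chain. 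Let me instead set $\Phi[\lambda;\underline w] = \big(g(\lambda\cup\underline w)/g(\lambda_0)\big)[\lambda;\underline w]$ on each component with basepoint $\lambda_0$, or simply $\Phi[\lambda;\underline w] = g(\lambda\cup\underline w)[\lambda;\underline w]$ after fixing $g\equiv 1$ on minimal elements. Then I check $\Phi$ intertwines all generators: it obviously commutes with $z_i, \Delta_{p_m}$ (diagonal) and with $T_i$ (the scalar $g(\lambda\cup\underline w)$ depends only on the endpoint $\lambda\cup\underline w$, which is unchanged by admissible transpositions, using Lemma \ref{lem: uniqueness of weights up to permutation}(b)) and with $d_-$; the only nontrivial check is $d_+$, where the ratio of coefficients $c'(\lambda\cup\underline w;x)/c(\lambda\cup\underline w;x) = f(\lambda\cup\underline w;x) = g(\lambda\cup\underline w\cup x)/g(\lambda\cup\underline w)$ is exactly what is needed for $\Phi d_+ = d_+ \Phi$. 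Hence $\Phi$ is an isomorphism of $\Bqt^{\ext}$-modules.

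\textbf{Main obstacle.} The substantive point is the existence part: verifying that the prescribed monodromy $R(x,y)$ is compatible with \emph{all} relations among the cycles $Q(\lambda;y,x)$, in particular the hexagon relation \eqref{eq: YBE}, and correctly handling the degenerate squares (where $y \in \{qx, tx\}$, or where in the configuration of \eqref{eq: YBE} some pair of consecutive weights is in ratio $q$ or $t$ so that not all six edges of the hexagon diagram are present). One has to argue that on a connected component of $\Gamma_E$ one can still build a spanning tree and that the constraints one can \emph{actually} impose (i.e. only over genuine squares) suffice to pin down $c$ consistently; the degenerate edges are ``free'' and cause no obstruction. I expect to need Lemmas \ref{lem: far quotient}, \ref{lem: no qt}, and \ref{lem: cycles} here to control exactly which squares degenerate. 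Everything else — the cohomological bookkeeping and the intertwiner check — is routine.
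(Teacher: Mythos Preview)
Your proposal is correct and follows essentially the same route as the paper's proof of Theorem \ref{thm: existence and uniqueness}: both existence and uniqueness are handled by interpreting $c$ as a $1$-cochain on $\Gamma_E$ with prescribed curl $\Upsilon[Q(\lambda;y,x)] = R(x,y)$, invoking Lemma \ref{lem: cycles relations} for the relations, and rescaling by a $0$-cochain to pass between any two solutions.

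Two minor remarks. First, your observation that the hexagon relation \eqref{eq: YBE} holds \emph{trivially} because $R(x,y)$ is independent of $\lambda$ (so both sides become the same product of commuting scalars) is in fact cleaner than the paper's explicit rational-function verification. Second, your ``main obstacle'' about degenerate hexagons is not actually an obstacle: when some square $Q(\lambda; y,x)$ fails to exist (because $y \in \{qx, tx\}$), Lemma \ref{lem: cycles relations} simply does not list a relation involving it --- the relations among cycles correspond to braid relations among \emph{excellent} transpositions only, so a hexagon with missing edges imposes no constraint. There is nothing further to check there.
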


\begin{proof}[Proof of Theorem \ref{thm: existence and uniqueness}]

As in Lemma \ref{lem: cycles relations} we visualize $E$ as the graph $\Gamma_{E}$ with edges given by the covering relations. The homology $H_1(\Gamma_E;\Z)$ is generated by the cycles $Q(\lambda;y,x)$ (see \eqref{eq:Qcycle}) modulo relations from Lemma \ref{lem: cycles relations}.

We claim that there exists a unique homomorphism
\[
\Upsilon:H_1(\Gamma_E,\Z)\to \C(q,t)^*
\]
such that 
\[
\Upsilon[Q(\lambda;y,x)]=\Upsilon[\lambda;y,x]:=-\frac{(x-ty)(x-qy)(y-qtx)}{(y-tx)(y-qx)(x-qty)}.
\]
Indeed, $\Upsilon[\lambda;x,y]=\Upsilon[\lambda;y,x]^{-1}$ and 
\begin{align*}
\Upsilon[\lambda\cup x;z,y]&\Upsilon[\lambda;z,x]\Upsilon[\lambda\cup z;y,x] \\
&=
-\frac{(x-ty)(x-tz)(y-tz)(x-qy)(x-qz)(y-qz)(y-qtx)(z-qtx)(z-qty)}{(y-tx)(z-tx)(z-ty)(y-qx)(z-qx)(z-qy)(x-qty)(x-qtz)(y-qtz)} \\
&=
\Upsilon[\lambda;y,x]\Upsilon[\lambda\cup y;z,x]\Upsilon[\lambda;z,y]
\end{align*}
so the equation \eqref{eq: YBE} is satisfied. 

Equation \eqref{eq: monodromy} can then be understood as follows. The coefficients $c(\lambda;x)$ should be interpreted as a cocycle $c:C_1(\Gamma_E,\Z)\to \C(q,t)^*$ which assigns $c(\lambda;x)$ to the oriented edge from $\lambda$ to $\lambda\cup x$ and $c(\lambda;x)^{-1}$ to the oppositely oriented edge. Equation \eqref{eq: monodromy} also requires that the value of $c$ on the cycle $Q(\lambda;y,x)$ equals $\Upsilon[\lambda;y,x]$, hence the value of $c$ on any cycle equals the value of $\Upsilon$ on that cycle. 

Such $c(\lambda;x)$ can be constructed in the following manner: Pick a spanning tree for each connected component of $\Gamma_E$ and assign the values of $c(\lambda;x)\in \C(q,t)^*$ to its edges arbitrarily. In this way, any edge not in the spanning tree forms a cycle, and its value is determined by $\Upsilon$. By the above discussion, this assignment is well defined. 

Now, assume that $c(\lambda;x)$ and $c'(\lambda;x)$ are two such choices. We claim there exist scalars $a(\lambda)\in \C(q,t)^*$ for which 
$$
c(\lambda;x)=c'(\lambda;x)\frac{a(\lambda)}{a(\lambda\cup x)}.
$$
Indeed, denoting $\phi(\lambda;x):=c(\lambda;x)/c'(\lambda;x)$, then \eqref{eq: monodromy} implies
$$
\phi(\lambda;x)\phi(\lambda\cup x;y)=\phi(\lambda;y)\phi(\lambda\cup y;x).
$$

Once again, thinking of $\phi(\lambda,x)$ as a 1-cocycle on the edges of $E$, so that its value on any cycle equals 1, we obtain that $\phi$ is a coboundary of some 0-cochain $a$ on the vertices of $E$.

It remains to note that the basis can be rescaled by 
$$
[\lambda;\underline{w}]\to a(\lambda\cup \underline{w})[\lambda;\underline{w}].
$$
This is symmetric in $w_i$, hence the action of $T_i$ and $z_i$ remains unaltered. Clearly, it does not modify the coefficients of $d_{-}$ as well. However, the coefficients of $d_{+}$ are changed from
$$
c(\lambda;\underline{w},x)=c(\lambda\cup \underline{w};x)\prod\frac{x-tw_i}{x-qtw_i}
$$
to
$$
c'(\lambda\cup \underline{w};x)\prod\frac{x-tw_i}{x-qtw_i}
$$
as desired.
\end{proof}

\begin{remark}
\label{rem: shift rep}
Given an excellent weighted poset $E$ and a collection of scalars $a_m$, we can define a new weighted poset $E'$ as in Remark \ref{rem: shift}, which will be excellent as well. Since the addable weights do not change under shifting, all the constructions will transfer onto $E'$ verbatim and yield the same formulas for operators in $\Bqt$. The only modification occurs with the action of $\Delta_{p_m}$ which is changed to $\Delta_{p_m}+a_m$. 
\end{remark}

\subsection{Rescaling the Basis}

In the above construction, the basis $[\lambda;\underline{w}]$ was adjusted with respect to $d_{-}$ so that all the coefficients of $d_{-}$ equal 1. Sometimes it is useful to rescale the basis in such a way that instead all coefficients of $d_{+}$ are equal to 1. This is achieved as follows.

\begin{lemma}
\label{lem: gamma}
Let
\begin{equation}
\gamma(\lambda;\underline{w}):=c(\lambda;w_k)c(\lambda;w_k,w_{k-1})\cdots c(\lambda;\underline{w}).
\end{equation}
\noindent a) The following identities hold, 
\begin{align}
\frac{\gamma(\lambda;\underline{w})}{\gamma(\lambda\cup w_k;w_{k-1},\ldots,w_{1})}&=q^{k-1}c(\lambda;w_k)\prod_{i=1}^{k-1}\frac{w_i-tw_k}{w_i-qtw_k},
\\
\frac{\gamma(\lambda;\underline{w})}{\gamma(\lambda;w_{k-1},\ldots,w_{1},x)}&=c(\lambda;\underline{w},x).
\end{align}

\noindent b) Assume that $s_i$ is admissible for $\underline{w}$. Then,
\begin{equation}
\frac{\gamma(\lambda;\underline{w})}{\gamma(\lambda;s_i(\underline{w}))}=\frac{qw_i-w_{i+1}}{w_i-qw_{i+1}}.
\end{equation}

\end{lemma}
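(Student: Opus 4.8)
The plan is to prove all three identities by direct computation from the definition
$\gamma(\lambda;\underline{w}) = c(\lambda;w_k)\,c(\lambda;w_k,w_{k-1})\cdots c(\lambda;\underline{w})$, together with the basic recursion \eqref{eq: c from edges new}, namely
$c(\mu;\underline{v},x) = q^{|\underline v|}c(\mu\cup\underline v;x)\prod_i \tfrac{x-tv_i}{x-qtv_i}$.
The key structural observation is that $\gamma$ is built as a telescoping product along the chain $[\lambda;\underline{w}]$: the $j$-th factor $c(\lambda; w_k,\dots,w_j)$ records the transition from the subchain ending at $w_{j+1}$ to the subchain ending at $w_j$. So each of the three ratios should be an almost-complete cancellation leaving one or two explicit factors.

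\textbf{Part a), second identity.} This one is immediate: by definition $\gamma(\lambda;\underline{w}) = \gamma(\lambda;w_{k-1},\dots,w_1,x)\cdot c(\lambda;\underline{w},x)$ once we observe that $\gamma(\lambda;w_{k-1},\dots,w_1,x)$ consists of exactly the factors $c(\lambda;w_{k-1}), c(\lambda;w_{k-1},w_{k-2}),\dots,c(\lambda;w_{k-1},\dots,w_1,x)$ — wait, I need to be careful about indexing; the point is that $\gamma$ of the concatenated chain $[w_{k-1},\dots,w_1,x]$ records transitions one step shorter than $\gamma(\lambda;\underline w)$ except for the final transition appending $x$, which contributes precisely $c(\lambda;\underline w, x)$. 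So this reduces to matching the two telescoping products factor by factor and reading off the leftover.

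\textbf{Part a), first identity.} Here the two $\gamma$'s are over chains from $\lambda$ and from $\lambda\cup w_k$ respectively. Using the recursion \eqref{eq: c from edges new} to relate $c(\lambda; w_k, w_{k-1},\dots,w_j)$ with $c(\lambda\cup w_k; w_{k-1},\dots,w_j)$ for each $j$, each factor of $\gamma(\lambda;\underline w)$ (except the first, $c(\lambda;w_k)$) picks up a factor of $q$ and a product of the form $\prod \tfrac{w_j - tw_k}{w_j - qtw_k}$ relative to the corresponding factor of $\gamma(\lambda\cup w_k;w_{k-1},\dots,w_1)$; collecting these over $j=1,\dots,k-1$ gives $q^{k-1}$, the leftover $c(\lambda;w_k)$, and the product $\prod_{i=1}^{k-1}\tfrac{w_i-tw_k}{w_i-qtw_k}$. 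One must double-check that the $w_k$-dependent factors appearing in the recursion for the $j$-th step are exactly $\tfrac{w_j - tw_k}{w_j-qtw_k}$ and that nothing double-counts; this bookkeeping is the only delicate point in part a).

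\textbf{Part b).} Since $s_i$ is admissible for $\underline w$, i.e.\ $w_i \neq q^{\pm1}w_{i+1}$, the chains $[\lambda;\underline w]$ and $[\lambda;s_i(\underline w)]$ agree outside positions $i,i+1$. Correspondingly, all factors of $\gamma$ arising from transitions strictly before position $i+1$ or strictly after position $i$ are literally identical, and $\gamma(\lambda;\underline w)/\gamma(\lambda;s_i(\underline w))$ collapses to the ratio of the two relevant transition factors $c(\cdot;\dots,w_{i+1},w_i)/c(\cdot;\dots,w_i,w_{i+1})$ (as chains-up-to-that-point). Applying \eqref{eq: monodromy} with $x = w_{i+1}$, $y = w_i$ — valid because admissibility of $s_i$ for a good chain guarantees $[\lambda';x,y]$ and $[\lambda';y,x]$ are both chains by Corollary~\ref{cor: excellent chains} — and cancelling the symmetric-in-$x,y$ factors (the $(x-ty)(y-tx)$ and $(y-qtx)(x-qty)$ terms combine with the $\prod\tfrac{x-tw_j}{x-qtw_j}$-type prefactors that are common to both) leaves precisely $\tfrac{y-qx}{x-qy} \cdot(\pm1)$, and tracking signs carefully yields $\tfrac{qw_i - w_{i+1}}{w_i - qw_{i+1}}$ as claimed.

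\textbf{Main obstacle.} The genuine mathematical content is concentrated in part b): extracting $\tfrac{qw_i-w_{i+1}}{w_i-qw_{i+1}}$ from \eqref{eq: monodromy} requires correctly identifying which factors in the $d_+$-coefficients are symmetric in the swapped pair (hence cancel) and managing the sign. Parts a) are essentially unwinding definitions, but the index bookkeeping in the first identity of a) — making sure the telescoping leaves exactly $q^{k-1}c(\lambda;w_k)\prod_{i=1}^{k-1}\tfrac{w_i-tw_k}{w_i-qtw_k}$ and not some off-by-one variant — is where a careful argument is needed, likely cleanest via induction on $k$ using $\gamma(\lambda;\underline w) = \gamma(\lambda; w_k)\cdot(\text{shift of }\gamma(\lambda\cup w_k; w_{k-1},\dots,w_1))$-type recursion rather than manipulating the product symbolically.
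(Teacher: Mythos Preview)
Your proposal is correct and follows essentially the same approach as the paper. The one presentational difference worth noting: the paper first records the closed form
\[
\gamma(\lambda;\underline{w})=q^{\binom{k}{2}}\,c(\lambda;w_k)\,c(\lambda\cup w_k;w_{k-1})\cdots c(\lambda\cup w_k\cup\cdots\cup w_{2};w_1)\,\prod_{i<j}\frac{w_i-tw_j}{w_i-qtw_j},
\]
obtained by applying \eqref{eq: c from edges new} to every factor in the definition of $\gamma$. This is exactly the content of your factor-by-factor comparison in part~(a), but having it in hand makes the cancellation in part~(b) immediate: under $s_i$ only the pair $c(\mu;w_{i+1})c(\mu\cup w_{i+1};w_i)$ versus $c(\mu;w_i)c(\mu\cup w_i;w_{i+1})$ (with $\mu=\lambda\cup w_k\cup\cdots\cup w_{i+2}$) and the single $\prod_{i<j}$-factor with $\{i,j\}=\{i,i+1\}$ change, so the ratio is visibly
\[
\frac{c(\mu;w_{i+1})c(\mu\cup w_{i+1};w_{i})}{c(\mu;w_{i})c(\mu\cup w_{i};w_{i+1})}\cdot\frac{(w_i-tw_{i+1})(w_{i+1}-qtw_{i})}{(w_{i+1}-tw_{i})(w_{i}-qtw_{i+1})},
\]
and then \eqref{eq: monodromy} finishes it. Your plan reaches the same place; the closed form just removes the bookkeeping concern you flagged.
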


\begin{proof}
a) Observe that by \eqref{eq: c from edges new},
\[
\gamma(\lambda;\underline{w})=q^{\frac{k(k-1)}{2}}c(\lambda;w_k)c(\lambda\cup w_k;w_{k-1})\cdots c(\lambda\cup w_k\cup\ldots w_{2};w_1)\prod_{i<j}\frac{w_i-tw_j}{w_i-qtw_j}.
\]
This implies the first equation in (a), the second is clear.

b) Let $\mu=\lambda\cup w_k\cdots \cup w_{i+2}$. Then we have
$$
\frac{\gamma(\lambda;\underline{w})}{\gamma(\lambda;s_i(\underline{w}))}=\frac{c(\mu;w_{i+1})c(\mu\cup w_{i+1};w_{i})}{c(\mu;w_{i})c(\mu\cup w_{i};w_{i+1})}\times \frac{(w_i-tw_{i+1})(w_{i+1}-qtw_{i})}{(w_{i+1}-tw_{i})(w_{i}-qtw_{i+1})}.
$$
By \eqref{eq: monodromy} we can rewrite this as
$$
-\frac{(w_{i+1}-tw_{i})(w_{i+1}-qw_{i})(w_{i}-qtw_{i+1})}{(w_i-tw_{i+1})(w_i-qw_{i+1})(w_{i+1}-qtw_i)}\times \frac{(w_i-tw_{i+1})(w_{i+1}-qtw_{i})}{(w_{i+1}-tw_{i})(w_{i}-qtw_{i+1})}=
-\frac{w_{i+1}-qw_{i}}{w_i-qw_{i+1}}.
$$
\end{proof}

We can now define the \newword{rescaled basis}:
\begin{equation}\label{eq:resbasis}
[\lambda;\underline{w}]^{\resc}:=\gamma(\lambda;\underline{w})[\lambda;\underline{w}]
\end{equation}

\begin{theorem}
\label{thm: rescale}
The $\Bqt^{\ext}$ action on the rescaled basis \eqref{eq:resbasis} is as follows.
\begin{itemize}
\item The operators $z_i$ and $\Delta_{p_m}$ act diagonally:
\begin{align} 
z_i[\lambda;\underline{w}]^{\resc}&=w_i[\lambda;\underline{w}]^{\resc},\\
 \Delta_{p_m}[\lambda;\underline{w}]^{\resc}&=\left(p_m(\lambda)+w_1^m+\ldots+w_k^m\right)[\lambda;\underline{w}]^{\resc}.
\end{align}
\item The operators $T_i$ are given by  
\begin{align}
T_i[\lambda;\underline{w}]^{\resc}=
\begin{cases}
\frac{(q-1)w_{i+1}}{w_i-w_{i+1}}[\lambda;\underline{w}]^{\resc}+\frac{qw_i-w_{i+1}}{w_i-w_{i+1}}[\lambda;s_i(\underline{w})]^{\resc} & \text{; if $s_i$ is admissible} 
\\
\frac{(q-1)w_{i+1}}{w_i-w_{i+1}}[\lambda;\underline{w}]^{\resc} & \text{; else.}
\end{cases}
\end{align}
\item The operator $d_{-}$ is given by
\begin{equation}
d_{-}[\lambda;\underline{w}]^{\resc}=q^{k-1}c(\lambda;w_k)\prod_{i=1}^{k-1}\frac{w_i-tw_k}{w_i-qtw_k}[\lambda\cup w_k;w_{k-1}\ldots,w_{1}]^{\resc}.
\end{equation}
\item The operator $d_{+}$ is given by
\begin{equation}
d_{+}[\lambda;\underline{w}]^{\resc}=\sum_{x}[\lambda;\underline{w},x]^{\resc}
\end{equation}
where the sum is over all $x$ such that $[\lambda;\underline{w},x]$ is a good chain.  
\end{itemize}
\end{theorem}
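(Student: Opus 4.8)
The plan is to verify that the stated formulas for the generators in the rescaled basis $[\lambda;\underline{w}]^{\resc}=\gamma(\lambda;\underline{w})[\lambda;\underline{w}]$ follow mechanically from Definition \ref{def: calibrated from excellent} together with the three identities in Lemma \ref{lem: gamma}. This is a straightforward change-of-variables computation; there is no new mathematical content beyond Lemma \ref{lem: gamma}, and in particular no obstacle comparable to the monodromy equation \eqref{eq: monodromy} appears.

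First I would treat the diagonal operators. Since $\gamma(\lambda;\underline{w})$ is a nonzero scalar, $z_i$ and $\Delta_{p_m}$ act by the same eigenvalues $w_i$ and $p_m(\lambda)+\sum_j w_j^m$ on $[\lambda;\underline{w}]^{\resc}$ as on $[\lambda;\underline{w}]$; nothing to check.

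Next I would handle $T_i$. By Lemma \ref{lem: gamma}(b), when $s_i$ is admissible we have $\gamma(\lambda;s_i(\underline{w}))=\gamma(\lambda;\underline{w})\cdot\frac{w_i-qw_{i+1}}{qw_i-w_{i+1}}$, so starting from \eqref{eq: T Ram new} and multiplying through by $\gamma(\lambda;\underline{w})$ the off-diagonal term $\frac{w_i-qw_{i+1}}{w_i-w_{i+1}}[\lambda;s_i(\underline{w})]$ becomes $\frac{w_i-qw_{i+1}}{w_i-w_{i+1}}\cdot\frac{qw_i-w_{i+1}}{w_i-qw_{i+1}}[\lambda;s_i(\underline{w})]^{\resc}=\frac{qw_i-w_{i+1}}{w_i-w_{i+1}}[\lambda;s_i(\underline{w})]^{\resc}$, as claimed, while the diagonal coefficient $\frac{(q-1)w_{i+1}}{w_i-w_{i+1}}$ is unchanged; the non-admissible case is immediate. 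For $d_{-}$, apply $d_{-}[\lambda;\underline{w}]=[\lambda\cup w_k;w_{k-1},\ldots,w_1]$ from Definition \ref{def: calibrated from excellent}, multiply by $\gamma(\lambda;\underline{w})$, and use the first identity of Lemma \ref{lem: gamma}(a), $\gamma(\lambda;\underline{w})=\gamma(\lambda\cup w_k;w_{k-1},\ldots,w_1)\cdot q^{k-1}c(\lambda;w_k)\prod_{i=1}^{k-1}\frac{w_i-tw_k}{w_i-qtw_k}$, to rewrite $\gamma(\lambda;\underline{w})[\lambda\cup w_k;w_{k-1},\ldots,w_1]$ as the stated multiple of $[\lambda\cup w_k;w_{k-1},\ldots,w_1]^{\resc}$. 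Finally for $d_{+}$: from \eqref{eq:d+ rep} combined with \eqref{eq: c from edges new} we have $d_{+}[\lambda;\underline{w}]=\sum_x c(\lambda;\underline{w},x)[\lambda;\underline{w},x]$ over good chains, and by the second identity of Lemma \ref{lem: gamma}(a), $c(\lambda;\underline{w},x)=\gamma(\lambda;\underline{w})/\gamma(\lambda;w_{k-1},\ldots,w_1,x)$. Wait—I must be careful that the relevant $\gamma$ in the denominator matches the $\gamma$ used to rescale $[\lambda;\underline{w},x]$, namely $\gamma(\lambda;\underline{w},x)$; since $[\lambda;\underline{w},x]$ has the same top element and the chain $[\underline{w},x]$ is $[w_k,\ldots,w_1,x]$, this is exactly $\gamma(\lambda;\underline{w},x)$ in the notation of Lemma \ref{lem: gamma}, and that lemma's second identity reads $c(\lambda;\underline{w},x)=\gamma(\lambda;\underline{w})/\gamma(\lambda;\underline{w},x)$. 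Multiplying $d_{+}[\lambda;\underline{w}]=\sum_x c(\lambda;\underline{w},x)[\lambda;\underline{w},x]$ by $\gamma(\lambda;\underline{w})$ gives $\sum_x \frac{\gamma(\lambda;\underline{w})^2}{\gamma(\lambda;\underline{w},x)}[\lambda;\underline{w},x]$; hmm, that is not quite $\sum_x[\lambda;\underline{w},x]^{\resc}$. The correct bookkeeping is: $d_{+}[\lambda;\underline{w}]^{\resc}=\gamma(\lambda;\underline{w})\,d_{+}[\lambda;\underline{w}]=\gamma(\lambda;\underline{w})\sum_x c(\lambda;\underline{w},x)[\lambda;\underline{w},x]=\sum_x \gamma(\lambda;\underline{w})\,c(\lambda;\underline{w},x)\,\gamma(\lambda;\underline{w},x)^{-1}[\lambda;\underline{w},x]^{\resc}$, and since $c(\lambda;\underline{w},x)=\gamma(\lambda;\underline{w})/\gamma(\lambda;\underline{w},x)$ this equals $\sum_x\bigl(\gamma(\lambda;\underline{w})/\gamma(\lambda;\underline{w},x)\bigr)^2\cdot(\gamma(\lambda;\underline{w},x)/\gamma(\lambda;\underline{w}))\,[\lambda;\underline{w},x]^{\resc}$ — this still is not clean, so in the writeup I would re-derive the precise recursion $\gamma(\lambda;\underline{w},x)=\gamma(\lambda;\underline{w})\,c(\lambda;\underline{w},x)$ directly from the defining product $\gamma(\lambda;\underline{w}):=c(\lambda;w_k)c(\lambda;w_k,w_{k-1})\cdots c(\lambda;\underline{w})$, which makes the telescoping transparent and yields exactly $d_{+}[\lambda;\underline{w}]^{\resc}=\sum_x[\lambda;\underline{w},x]^{\resc}$.

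The only point requiring genuine care is this last bookkeeping with $\gamma$: one must track precisely which argument sequences appear and confirm the telescoping product identity $\gamma(\lambda;w_k,\ldots,w_1,x)=\gamma(\lambda;w_k,\ldots,w_1)\,c(\lambda;w_k,\ldots,w_1,x)$ straight from the definition of $\gamma$, rather than from Lemma \ref{lem: gamma}(a) alone. Once that is in place, every formula in the theorem is a one-line consequence, and since all rescaling factors $\gamma$ are manifestly symmetric in $w_1,\ldots,w_k$ (being, up to the symmetric factor $\prod_{i<j}\frac{w_i-tw_j}{w_i-qtw_j}$ from the proof of Lemma \ref{lem: gamma}(a), independent of the order), the well-definedness and the unchanged form of the $T_i$, $z_i$ actions need no separate argument. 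I expect no real obstacle; the proof is a verification.
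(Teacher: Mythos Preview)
Your approach is correct and is exactly the paper's: each formula is obtained by multiplying the Definition~\ref{def: calibrated from excellent} formula by $\gamma(\lambda;\underline{w})$ and applying the appropriate identity from Lemma~\ref{lem: gamma}. Your meandering on $d_{+}$ is not your fault: the second identity in Lemma~\ref{lem: gamma}(a) as printed has a typo (the denominator should be $\gamma(\lambda;\underline{w},x)$ and the ratio should be inverted); the correct statement is precisely the telescoping $\gamma(\lambda;\underline{w},x)=\gamma(\lambda;\underline{w})\,c(\lambda;\underline{w},x)$ you eventually extract from the definition, and with it the $d_{+}$ computation is the one-liner
\[
d_{+}[\lambda;\underline{w}]^{\resc}
=\sum_x \gamma(\lambda;\underline{w})\,c(\lambda;\underline{w},x)\,\gamma(\lambda;\underline{w},x)^{-1}\,[\lambda;\underline{w},x]^{\resc}
=\sum_x[\lambda;\underline{w},x]^{\resc},
\]
matching the paper's proof verbatim.

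One genuine slip to fix in the writeup: your closing remark that ``all rescaling factors $\gamma$ are manifestly symmetric in $w_1,\ldots,w_k$'' is false. From the formula in the proof of Lemma~\ref{lem: gamma}(a), $\gamma(\lambda;\underline{w})$ carries the factor $\prod_{i<j}\frac{w_i-tw_j}{w_i-qtw_j}$, which is not symmetric; indeed, if $\gamma$ were symmetric the off-diagonal $T_i$ coefficient would be unchanged (cf.\ Proposition~\ref{prop: normalize symmetric}), whereas you yourself correctly computed that it changes from $\frac{w_i-qw_{i+1}}{w_i-w_{i+1}}$ to $\frac{qw_i-w_{i+1}}{w_i-w_{i+1}}$ via Lemma~\ref{lem: gamma}(b). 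Just drop that last sentence; your earlier $T_i$ argument is already complete and correct.
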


\begin{proof}
The action of $z_i$ and $\Delta_{p_m}$ does not change after rescaling their eigenvectors. The rest follow from direct computation. 

For $T_i$ by Lemma \ref{lem: gamma}(b) we have, 
\begin{align*}
T_i[\lambda;\underline{w}]^{\resc}&=\frac{(q-1)w_{i+1}}{w_i-w_{i+1}}[\lambda;\underline{w}]^{\resc}+\frac{w_i-qw_{i+1}}{w_i-w_{i+1}}\frac{\gamma(\lambda;\underline{w})}{\gamma(\lambda;s_i(\underline{w}))}[\lambda;s_i(\underline{w})]^{\resc}
\\
&=\frac{(q-1)w_{i+1}}{w_i-w_{i+1}}[\lambda;\underline{w}]^{\resc}+\frac{qw_i-w_{i+1}}{w_i-w_{i+1}}[\lambda;s_i(\underline{w})]^{\resc}.
\end{align*}
Likewise, by Lemma \ref{lem: gamma}(a) we get, 
\begin{align*}
d_{-}[\lambda;\underline{w}]^{\resc}&=\frac{\gamma(\lambda;\underline{w})}{\gamma(\lambda\cup w_k;w_{k-1},\ldots,w_{1})}[\lambda\cup w_k;w_{k-1},\ldots,w_{1}]^{\resc}
\\
&=q^{k-1}c(\lambda;w_k)\prod_{i=1}^{k-1}\frac{w_i-tw_k}{w_i-qtw_k}[\lambda\cup w_k;w_{k-1},\ldots,w_{1}]^{\resc}
\end{align*}
and
\[
d_{+}[\lambda;\underline{w}]^{\resc}=\sum_{x}\frac{\gamma(\lambda;\underline{w})}{\gamma(\lambda;\underline{w},x)}c(\lambda;\underline{w},x)[\lambda;\underline{w},x]^{\resc}=
\sum_{x}[\lambda;\underline{w},x]^{\resc}.
\]
\end{proof}

\subsection{Examples of $\Bqt^{ext}$ Representations} We now exemplify the various constructions above. 
\subsubsection{Trivial Representations}
\label{sec: trivial}

Let $E=\{\bullet\}$ be a one-element set. Any arbitrary choice of scalars $a_k=p_k(\bullet)$ will endow $E$ with the structure of an excellent weighted poset. The corresponding representation has $V_0=\langle [\bullet]\rangle$ and $V_k=0$ for $k>0$. Thus, the operators in $\Bqt$ vanish (and relations hold tautologically) with $\Delta_{p_m}[\bullet]=a_m[\bullet]$.

\subsubsection{Linearly Ordered Sets}  
\label{sec: linear posets}
Take $n \in \Z_{>0}\cup\{\infty\}$ and consider the poset $E = \{m \in \Z_{\geq 0} \mid m < n\}$, so that $E$ is the interval $[0, n-1]$ if $n \in \Z_{>0}$, or $\Z_{\geq 0}$ if $n = \infty$. We endow $E$ with the linear order inherited from $\Z$.

Let $f \mapsto f(m)$ be a weighting on the poset $E$, and for each $i \in E$ let $x_{i}$ be such that
\[
p_{m}(i+1) = p_{m}(i) + x_{i}^{m} \; \text{for all} \; m.
\]
Notice that the poset $E$ is excellent if and only if for every $i$, $x_{i+1} \in \{qx_{i}, tx_{i}\}$. So assume that $x_{i} = q^{i}x_{0}$ for $i \in E$. In particular, $E$ is excellent and every chain in $E$ is a good chain.

Suppose, additionally, that $E$ is finite with cardinality $\# E = n$. For $i = 0, \dots, n-1-k$ we denote by $[i, i+k]$ the chain
\[
i \rightarrow i+1 \rightarrow \dots \rightarrow i+k.
\]
Thus, for $k \geq n$ we have $V_{k}(E) = 0$, and for $k<n$ the space $V_{k}(E)$ has a $\C(q,t)$-basis $\{[i, i+k] \mid i = 0, \dots, n-1-k\}$.
Namely, 
\begin{equation}\label{eqn: operators linearly ordered}
T_{j}[i, i+k] = [i, i+k], \quad z_{p}[i, i+k] = x_{0}q^{i+k-p}[i, i+k], \quad d_{-}[i, i+k] = [i+1, i+k].
\end{equation}

Now, because the graph $\Gamma_{E}$ is a tree, according to (the proof of) Theorem \ref{thm: existence and uniqueness}, we can arbitrarily choose nonzero elements $c(i) \in \C(q,t)$, $i = 0, \dots, n-2$, such that \eqref{eqn: operators linearly ordered} together with
\[
d_{+}[i, i+k] = \begin{cases} \frac{q^{k}-t}{1-t}c(i+k)[i, i+k+1] &; i+k+1 \in E, \\0 & \text{; else}\end{cases}
\]
defines a representation of $\Bqt$. We verify this directly. 

Since all $T$'s act by $1$ and it is clear that $z_{p}[i, i+k] = q^{-1}z_{p+1}[i, i+k]$, we need only check relations \eqref{eq:phi}, \eqref{eq: d z} and \eqref{eq:qphi}. Relations \eqref{eq: d z} are clear. For \eqref{eq:phi}, since $T$'s act by $1$ we need to verify that $q\varphi d_{-} = d_{-}\varphi$ and that $\varphi d_{+} = qd_{+}\varphi$. We first note that, 
\[
(d_{+}d_{-} - d_{-}d_{+})[i, i+k] = d_{+}[i+1, i+k] - c(i+k)\frac{q^{k}-t}{1-t}d_{-}[i, i+k+1] = c(i+k)q^{k-1}\frac{1-q}{1-t}[i+1, i+k+1]
\]
if $i+k+1 \in E$ and zero else. From here, both relations in \eqref{eq:phi} follow easily. Now, notice that,
\[
qt(d_{+}d_{-} - d_{-}d_{+})z_{k}[i, i+k] = qtx_{0}q^{i}q^{k-1}c(i+k)\frac{1-q}{1-t}[i+1, i+k+1]
\]
whereas
\[
z_{1}(qd_{+}d_{-} - d_{-}d_{+})[i,i+k] =  x_{0}q^{i+k}\left(c(i+k)\frac{q^{k} - qt}{1-t} - \frac{q^{k} - t}{1-t}\right)[i+1, i+k+1]
\]
if $i+k+1 \in E$, in which case both expressions are equal. If $i+k+1 \not \in E$, then both 
\[qt(d_{+}d_{-} - d_{-}d_{+})z_{k}[i, i+k] \text{  and  }z_{1}(qd_{+}d_{-} - d_{-}d_{+})[i, i+k]\]
 are zero, and the result follows. 

\begin{remark}
   In particular, if $E$ is a singleton, then $V(E)$ is a trivial representation as considered in the previous subsection. 
\end{remark}

It is interesting to note that, \emph{up to constants}, we can picture the action of $d_{+}$ and $d_{-}$ on $V(E)$ as in Figure \ref{fig: interval}. From here, it is easy to obtain the socle filtration of $V(E)$ -- the corresponding subquotients are given by horizontal slices in the figure.  As we will see in Section \ref{sec: duality} below, 
the horizontal symmetry of Figure \ref{fig: interval} is explained by the fact that the poset $E$ is isomorphic to its own opposite poset.

\begin{figure}[ht]
\begin{tikzpicture}[scale=0.5]
    \node at (0,0) {$[0]$}; 
    \draw[->] (0.5,0) to (4.5, -0.8);
    \node at (0, -2) {$[1]$};
    \draw[->] (0.5, -2) to (4.5, -2.8);
    \node at (0, -4) {$[2]$}; 
    \draw[->] (0.5, -4) to (4.5, -4.8);
    \node at (0, -6) {$\vdots$};
    \draw[->] (0.5, -6) to (3.25, -6.8);
    \node at (0, -8) {$[n-3]$};
    \draw[->] (1.25, -8.1) to (3.25, -8.8);
    \node at (0, -10) {$[n-2]$};
    \draw[->] (1.25, -10.1) to (3.25, -10.8);
    \node at (0, -12) {$[n-1]$};
    \node at (0, -15) {$V_0(E)$};

    \node at (5.5, -1) {$[0, 1]$};
    \draw[->] (4.5, -1) to (0.5, -1.8);
    \draw[->] (6.5, -1) to (10, -1.8);
    \node at (5.5, -3) {$[1, 2]$};
    \draw[->] (4.5, -3) to (0.5, -3.8);
    \draw[->] (6.3, -3) to (10, -3.8);
    \node at (5.5, -5) {$\vdots$};
    \draw[->] (6, -5) to (8.8, -5.8);
    \node at (5.5, -7) {$[n-4,n-3]$};
    \draw[->] (3.25, -7) to (1.25, -8);
    \draw[->] (7.8, -7) to (8.8, -7.8);
    \node at (5.5, -9) {$[n-3, n-2]$};
    \draw[->] (3.25, -9) to (1.25, -10);
    \draw[->] (7.8, -9) to (8.8, -9.8);
    \node at (5.5, -11) {$[n-2, n-1]$};
    \draw[->] (3.25, -11) to (1.25, -12);
    \node at (5.5, -15) {$V_1(E)$};

    \node at (11, -2) {$[0, 2]$};
    \draw[->] (10, -2.1) to (6.5, -2.9);
    \node at (11, -4) {$\vdots$};
    \node at (11, -6) {$[n-5, n-3]$};
    \draw[->] (8.8, -6) to (7.8, -6.8);
    \node at (11, -8) {$[n-4, n-2]$};
    \draw[->] (8.8, -8) to (7.8, -8.8);
    \node at (11, -10) {$[n-3, n-1]$};
    \draw[->] (8.8, -10) to (7.8, -10.8);
    \node at (11, -15) {$V_2(E)$};

    \node at (15, -6) {$\dots$};
    \node at (15, -15) {$\dots$};

    \node at (18, -5) {$[0,n-2]$};
    \draw[->] (19.5, -5) to (21.5, -5.8);
    \node at (18, -7) {$[1, n-1]$};
    \node at (18, -15) {$V_{n-2}(E)$};

    \node at (23, -6) {$[0, n-1]$};
    \draw[->] (21.5, -6) to (19.5, -7);
    \node at (23, -15) {$V_{n-1}(E)$};
    \end{tikzpicture}
\caption{The structure of the representation $V(E)$ when $E$ is the interval $0 < 1 < \dots < n-1$. Up to scalars, $d_{+}$ is indicated by the arrows going right, and $d_{-}$ is indicated by the arrows going left.} \label{fig: interval}
\end{figure}
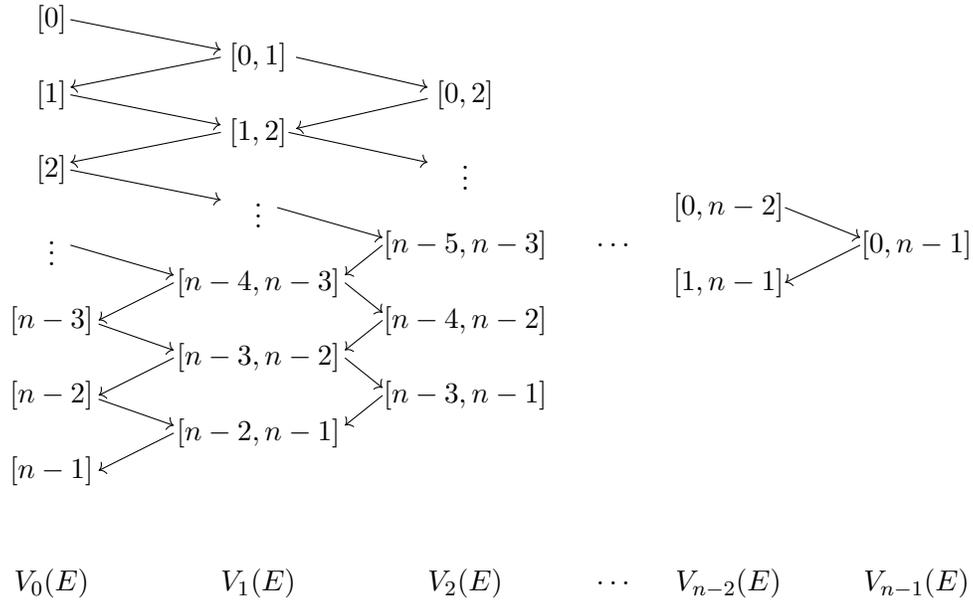

\subsubsection{The Polynomial Representation}\label{sec: poly} Recall from Section \ref{sec: polynomial} the polynomial representation of $\Bqt$ originally considered by Carlsson-Gorsky-Mellit \cite{CGM, Shuffle} that arose in relation to the shuffle theorem. It turns out, this representation can be recovered from a particular choice of poset. As earlier, we abuse notation and denote by $\sq$ both the cell in a partition $\lambda$ and its $(q,t)$-content. 

\begin{theorem}
\label{thm: polynomial is calibrated}
The polynomial representation of $\Bqt$ constructed in \cite{CGM, Shuffle} is isomorphic to $V(\mathcal{P})$ where $\mathcal{P}$ is the poset of all partitions with weighting $p_m(\lambda)=\sum_{\sq\in \lambda}\sq^m$. 
\end{theorem}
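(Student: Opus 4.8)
The plan is to exhibit an explicit isomorphism between $V(\mathcal{P})$ and the polynomial representation $V$, matching the combinatorial eigenbasis $\{I_{\lambda,w}\}$ of $V$ (as recalled at the end of Section \ref{sec: polynomial}) with the good-chain basis $\{[\lambda;\underline{w}]\}$ of $V(\mathcal{P})$. By Example \ref{ex:Epartition} the poset $\mathcal{P}$ of partitions with weighting $p_m(\lambda)=\sum_{\sq\in\lambda}\sq^m$ is excellent, so $V(\mathcal{P})$ is a well-defined calibrated representation by Theorems \ref{thm: relations hold} and \ref{thm: existence and uniqueness}. First I would set up the dictionary: a good chain $[\lambda;\underline{w}]=[\lambda;w_k,\dots,w_1]$ corresponds to the data of a partition $\lambda$, a partition $\mu=\lambda\cup\underline{w}$ with $\mu\setminus\lambda$ a horizontal strip (the "good" condition $w_i\neq tw_j$ is exactly what forbids two added cells in the same column, hence forces a horizontal strip), together with the standard Young tableau on $\mu\setminus\lambda$ recording the order in which boxes are added; then $w_i$ is the $(q,t)$-content of the box labeled $i$. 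This is precisely the indexing set for the basis $\{I_{\lambda,w}\}$ of $V_k$ recalled in Section \ref{sec: polynomial}.

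Next I would compare operators term by term. The actions of $z_j$, $T_i$, and $d_-$ on $\{I_{\lambda,w}\}$ displayed in Section \ref{sec: polynomial} are \emph{identical} to the formulas in Definition \ref{def: calibrated from excellent} once one observes that $s_i$ admissible $\iff w_i\neq qw_{i+1}$ (Corollary \ref{cor: excellent chains}) and that when $s_i$ is not admissible the extra term in Ram's formula \eqref{eq: T Ram new} drops out — matching the case analysis in the $T_i I_{\lambda,w}$ formula. For $d_+$, the formula in Section \ref{sec: polynomial} reads $d_+I_{\lambda,(w_1,\dots,w_k)}=-q^k\sum_x x\,d_{\lambda\cup x,\lambda}\prod_i\frac{x-tw_i}{x-qtw_i}\,I_{\lambda\cup x,(x,w_1,\dots,w_k)}$, which has exactly the shape of \eqref{eq:d+ rep} with edge function $c(\lambda;x)=-x\,d_{\lambda\cup x,\lambda}$; here the sum over addable boxes $x$ of $\lambda$ such that $\lambda\cup x$ is still a partition and $\mu\setminus(\lambda\cup x)$ — wait, rather such that $[\lambda;\underline{w},x]$ is a good chain, i.e. $x$ addable and not equal to $tw_i$ — corresponds precisely to "addable box not ending a column already hit". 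So the two representations agree provided the $c(\lambda;x):=-x\,d_{\lambda\cup x,\lambda}$ satisfy the monodromy relation \eqref{eq: monodromy}. I would verify this either by a direct plethystic computation of $d_{\lambda\cup x,\lambda}$ (these coefficients are essentially ratios of Macdonald norm factors / arm-leg products, and \eqref{eq: monodromy} is then a product of content-difference factors that telescopes) or, more cleanly, invoke Theorem \ref{thm: existence and uniqueness}: since $\mathcal{P}$ is excellent, \emph{some} choice of edge functions satisfies \eqref{eq: monodromy} and all such choices give isomorphic representations, so it suffices to know that the polynomial representation, restricted appropriately, \emph{is} of the form $V(\mathcal{P},c)$ for some $c$ — which follows once we know the $d_+$ formula has the right shape and the representation is calibrated.

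Here is the cleanest route I would actually take, minimizing computation. By \cite[Theorem 7.0.1]{CGM} the polynomial representation $V$ is calibrated with the stated eigenbasis, and by Lemma \ref{lem:polrep}/Corollary \ref{cor:mac-operator} it is a cyclic $\Bqt^{\ext}$-module generated by $1\in V_0$, with $\Delta_{p_m}$ acting on $I_\mu\in V_0$ by the scalar $\sum_{\sq\in\mu}\sq^m=p_m(\mu)$ (this is the eigenvalue of the Macdonald operator on the modified Macdonald basis). Thus the $z$- and $\Delta$-eigenvalue data of $V$ is exactly the weighted-poset data of $\mathcal{P}$: the joint spectrum on $V_k$ is indexed by good chains in $\mathcal{P}$, with simple spectrum. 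Now map $[\lambda;\underline{w}]\mapsto$ (the unique up to scalar eigenvector $I_{\mu,w}$ with those eigenvalues), normalized so that $d_-$ has all coefficients $1$ as in Definition \ref{def: calibrated from excellent} (possible since $d_-^k$ is nonzero on each eigenvector — it lands on $I_\mu\neq 0$). Under this normalization the $z_j$, $\Delta_{p_m}$, $T_i$, $d_-$ actions match on the nose; the $d_+$ action must then have the form \eqref{eq:d+ rep} for \emph{some} edge functions $c(\lambda;x)$ because its matrix coefficients are determined by the requirement $d_-d_+=(\text{known}) + \varphi$-type terms and by weight considerations — and since $V$ is a genuine $\Bqt^{\ext}$-representation, Lemma \ref{lem: monodromy} forces those $c(\lambda;x)$ to satisfy \eqref{eq: monodromy}. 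Therefore $V\cong V(\mathcal{P},c)$, and by the Corollary following Theorem \ref{thm: existence and uniqueness} this is $\cong V(\mathcal{P})$.

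The main obstacle I anticipate is pinning down that the $d_+$ matrix coefficients in the polynomial representation genuinely have the \emph{factorized} shape $q^k c(\lambda\cup\underline w;x)\prod_i\frac{x-tw_i}{x-qtw_i}$ rather than merely being supported on the right chains — i.e. showing the dependence on $\underline w$ is exactly through that product, with a prefactor depending only on the added box. If one does not want to quote the explicit $d_+$-formula from Section \ref{sec: polynomial} (proved in \cite{CGM}), one must derive this factorization from the relations: the relation $d_+T_i=T_{i+1}d_+$ forces symmetry of the coefficient in $w_1,\dots,w_k$ (via \eqref{eq: c from edges new}), the relation $z_1(qd_+d_--d_-d_+)=qt(d_+d_--d_-d_+)z_k$ (Lemma \ref{lem: qphi}) pins the $w_k$-dependence to $\frac{x-tw_k}{x-qtw_k}$ up to an $x$-and-$\lambda$ factor, and induction/symmetrization upgrades this to the full product — after which \eqref{eq: monodromy} is automatic from Lemma \ref{lem: monodromy} since $V$ satisfies all $\Bqt^{\ext}$ relations. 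Either way, once the shape is established the isomorphism is immediate; the genuinely hard combinatorial content (that \eqref{eq: monodromy} is solvable on $\mathcal{P}$) has already been absorbed into Theorem \ref{thm: existence and uniqueness}.
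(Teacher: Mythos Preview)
Your proposal is correct and you identify two valid routes. Your route (a) --- verifying \eqref{eq: monodromy} by direct computation of the edge functions --- is exactly what the paper does: it writes down $c(\lambda;x)=-\Lambda\bigl(-x^{-1}+(1-q)(1-t)B_\lambda x^{-1}+1\bigr)$ explicitly and checks \eqref{eq: monodromy} by a short $\Lambda$-calculation. Your route (b) --- observing that once the $d_+$ formula from \cite{CGM} is already in the shape of \eqref{eq:d+ rep}, the fact that $V$ is known to be a $\Bqt$-representation forces \eqref{eq: monodromy} via Lemma \ref{lem: monodromy}, after which Theorem \ref{thm: existence and uniqueness} finishes --- is a genuinely different shortcut that the paper does not take. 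It is valid and slicker (you correctly note that the factorized shape is the only substantive input, and that citing \cite{CGM} supplies it), but the paper's explicit computation has a payoff your route forfeits: the closed formula \eqref{eq: c for partitions} for $c(\lambda;x)$ is reused verbatim in Section \ref{sec: Gieseker} and in the proof of Theorem \ref{thm: gieseker as tensor power} to identify the Gieseker representation as a tensor power of the polynomial one. Your more structural argument, which in effect anticipates the classification machinery of Section \ref{sec: classification}, would establish the isomorphism $V\cong V(\mathcal{P})$ but leave that explicit edge function unavailable for later use.
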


\begin{proof}
Let $\mathcal{P}$ be the poset of all partitions and note that addable weights correspond to addable boxes for a partition (which we identify with their $(q,t)$-content). By Example \ref{ex:Epartition} $\mathcal{P}$ is excellent. 

In particular, this means that chains from $\lambda$ to $\mu$ in $\mathcal{P}$ are in bijection with standard skew tableaux of shape $\mu\setminus \lambda$. Hence, a chain is good if and only if $\mu\setminus \lambda$ is a horizontal strip. Note that this recovers the description in \cite[\S 4.2]{CGM}. Thus, it suffices to prove the $\Bqt$ action in \cite{CGM} coincides with that in Definition \ref{def: calibrated from excellent}.

The actions of $z_i$ and $T_i$ in \cite[Lemma 5.2.1]{CGM} directly agree with \eqref{eq: T Ram new}. Likewise, in \cite[Lemma 5.3.1]{CGM} the coefficients of $d_{-}$ are all equal to $1$ in accord with Definition \ref{def: calibrated from excellent}. Now, by \cite[Lemma 5.3.1, eq. (5.3.2)]{CGM} the action of $d_+$ is given by
\[
d_{+}[\lambda;\underline{w}]=-q^k\sum_{x}xD(\lambda\cup \underline{w};x)\prod_{i=1}^{k}\frac{x-tw_i}{x-qtw_i}[\lambda; \underline{w},x],
\]
where for a partition $\lambda$ with addable box $x$ we define
\begin{equation}
D(\lambda;x):=x^{-1}\Lambda(-x^{-1}+(1-q)(1-t)B_{\lambda}x^{-1}+1),
\end{equation}
where $B_{\lambda}=\sum_{\sq\in \lambda}\sq$ and $\Lambda\left(\sum \phi_{i,j}q^it^j\right)=\prod (1-q^it^j)^{\phi_{i,j}}$, see Definition \ref{def:Lambda-operator} below.

Note that $D(\lambda;x)$ is well defined and nonzero. Indeed, if $\lambda$ is empty then $B_{\lambda}=0$ and $x=1$, so $D(\lambda,x)=\Lambda(0)=1$. Otherwise, observe that the constant term in $(1-q)(1-t)B_{\lambda}x^{-1}$ equals $(-1)$ for any addable box $x$ since either: 
$\lambda$ contain three boxes with $(q,t)$-weights $q^{-1}x,t^{-1}x,(qt)^{-1}x$, or one box with weight $q^{-1}x$, or one box with weight $t^{-1}x$. In particular, the constant term in $(-x^{-1}+(1-q)(1-t)B_{\lambda}x^{-1}+1)$ vanishes.

As a consequence, the coefficients of $d_{+}$ satisfy \eqref{eq: c from edges new} with edge functions given by
\begin{equation}
\label{eq: c for partitions}
c(\lambda;x)=-\Lambda(-x^{-1}+(1-q)(1-t)B_{\lambda}x^{-1}+1).
\end{equation}
By Theorem \ref{thm: relations hold} it remains to check that these edge functions satisfy \eqref{eq: monodromy}. Computing directly we see that, 
$$
c(\lambda;x)c(\lambda\cup x;y)=C_{xy}\Lambda((1-q)(1-t)xy^{-1})=C_{xy}\frac{(1-xy^{-1})(1-qtxy^{-1})}{(1-qxy^{-1})(1-txy^{-1})}=
C_{xy}\frac{(y-x)(y-qtx)}{(y-qx)(y-tx)}
$$
where $C_{xy}= \Lambda((x^{-1}y^{-1})(1+(1-q)(1-t)B_\lambda)+2)$ is symmetric in $x$ and $y$, which implies
$$
\frac{c(\lambda;x)c(\lambda\cup x;y)}{c(\lambda;y)c(\lambda\cup y;x)}=\frac{(y-x)(y-qtx)(x-qy)(x-ty)}{(y-qx)(y-tx)(x-y)(x-qty)}=-\frac{(y-qtx)(x-qy)(x-ty)}{(y-qx)(y-tx)(x-qty)}.
$$
\end{proof}

\section{The Structure of $V(E)$}\label{sec: structure}

\subsection{Ideals and Coideals} Recall that a nonempty subset $I$ of a poset $E$ is said to be an \newword{ideal} (resp. \newword{coideal}) if for any $\lambda \in I$ and $\mu \leq \lambda$ (resp. $\mu \geq \lambda$) one has $\mu \in I$. If $E$ is a weighted poset, then so is $I$ as it inherits the weights from $E$. For the remainder of this section assume $E$ is an excellent weighted poset. 

\begin{proposition}\label{prop: subs and quotients}
Let $E$ be an excellent weighted poset and $I \subseteq E$.
\begin{enumerate}
    \item[(1)] If $I$ is an ideal, then $I$ is excellent and $V(I)$ is a quotient of $V(E)$.
    \item[(2)] If $I$ is a coideal, then $I$ is excellent and $V(I)$ is a submodule of $V(E)$. 
\end{enumerate}
\end{proposition}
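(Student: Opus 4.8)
The plan is to verify both statements by reducing everything to the combinatorics of good chains. The first observation is that the excellence of $I$ (for either an ideal or a coideal) is essentially automatic: excellence is a condition on $2$-step chains $[\lambda;x,y]$, which only involves $\lambda$, $\lambda\cup x$, $\lambda\cup y$ and $\lambda\cup x\cup y$. If $I$ is an ideal and $[\lambda;x,y]$ is a chain with $\lambda\in I$, then actually $\lambda\cup x, \lambda\cup y, \lambda\cup x\cup y$ need not lie in $I$, so one should be careful: excellence for $I$ should be checked on $2$-step chains \emph{internal} to $I$, i.e. with all four elements in $I$. Given such a chain, all the relevant elements already satisfy the excellence dichotomy in $E$, and since the alternative chain $[\lambda;y,x]$ (when it exists in $E$) passes through $\lambda\cup y\in I$ — which lies below $\lambda\cup x\cup y\in I$ for the coideal case, or we are in the ideal case where $\lambda\cup y\le$ nothing forces it — one must argue the swapped chain stays in $I$. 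For a coideal this is immediate ($\lambda\cup y\ge\lambda\in I$); for an ideal one uses $\lambda\cup y\le\lambda\cup x\cup y\in I$. So in both cases $I$ inherits excellence. This is the step I expect to require the most care, precisely because ideals and coideals behave asymmetrically and one has to be attentive about which containments are available.

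Next, for statement (2): if $I$ is a coideal, I claim the span $W$ of all good chains $[\lambda;\underline w]$ with $\lambda\in I$ is a $\Bqt^{\ext}$-submodule of $V(E)$, and as such is isomorphic to $V(I)$. Note that if $\lambda\in I$ then every element of the chain $[\lambda;\underline w]$ lies in $I$, since each is $\ge\lambda$; thus a good chain of $V(E)$ "based at" $I$ is literally a good chain of $I$, and conversely. So $W$ and $V(I)$ have the same basis. To see $W$ is a submodule, inspect the operators in Definition \ref{def: calibrated from excellent}: $z_i$ and $\Delta_{p_m}$ act diagonally, hence preserve $W$; $T_i$ sends $[\lambda;\underline w]$ to a combination of $[\lambda;\underline w]$ and $[\lambda;s_i(\underline w)]$, both based at the same $\lambda$; $d_{-}[\lambda;\underline w]=[\lambda\cup w_k;\ldots]$ is based at $\lambda\cup w_k\ge\lambda$, still in $I$; and $d_{+}[\lambda;\underline w]$ is a combination of $[\lambda;\underline w,x]$, again based at $\lambda$. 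Hence $W$ is stable, and comparing the formulas for the action on $W$ with those defining $V(I)$ — using that the edge functions $c(\lambda;x)$ for $\lambda\in I$ restrict from those on $E$, which is permissible by the uniqueness-up-to-isomorphism of Theorem \ref{thm: existence and uniqueness} — gives $W\simeq V(I)$ as $\Bqt^{\ext}$-modules.

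Finally, statement (1): if $I$ is an ideal, set $W$ to be the span of all good chains $[\lambda;\underline w]$ in $E$ that are \emph{not} entirely contained in $I$, equivalently with $\lambda\cup\underline w\notin I$ (since $I$ is an ideal, a chain lies in $I$ iff its top element does). I claim $W$ is a submodule, so that $V(E)/W$ has basis the good chains contained in $I$, i.e. the good chains of $I$, and one identifies $V(E)/W\simeq V(I)$. Stability of $W$: $z_i,\Delta_{p_m},T_i$ preserve the top element $\lambda\cup\underline w$, hence $W$; $d_{-}$ also preserves the top element; and $d_{+}$ raises the top element from $\lambda\cup\underline w$ to $\lambda\cup\underline w\cup x\ge\lambda\cup\underline w$, so if $\lambda\cup\underline w\notin I$ then the new top element is also $\notin I$ (as $I$ is an ideal, hence downward closed). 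Thus $W$ is stable. The quotient then inherits the $\Bqt^{\ext}$-action, and matching the induced formulas on the image basis $\{[\lambda;\underline w] : \lambda\cup\underline w\in I\}$ against Definition \ref{def: calibrated from excellent} applied to the excellent poset $I$ — again using Theorem \ref{thm: existence and uniqueness} to match edge functions — yields $V(E)/W\simeq V(I)$. The only subtlety here is checking that in the quotient the $d_{+}$-coefficients which involved chains $[\lambda;\underline w,x]$ with top element leaving $I$ simply vanish, which is exactly what stability of $W$ guarantees.
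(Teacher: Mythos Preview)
Your proposal is correct and follows essentially the same approach as the paper: for the coideal case you take the span of good chains $[\lambda;\underline{w}]$ with $\lambda\in I$ as a submodule, and for the ideal case you take the span of good chains with $\lambda\cup\underline{w}\notin I$ as the kernel, checking stability generator by generator exactly as the paper does. Your treatment is in fact more detailed than the paper's (which dispatches excellence of $I$ and the identification $V(E)/W\simeq V(I)$ with ``it follows easily'' and ``it is clear''), and your invocation of Theorem~\ref{thm: existence and uniqueness} to reconcile edge functions is a reasonable extra precaution, though one can also simply restrict the edge functions from $E$ to $I$.
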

\begin{proof}
(1) Let $[\lambda; \underline{w}]$ be a chain in $E$. Then, every element in the chain $[\lambda; \underline{w}]$ belongs to $I$ if and only if $\lambda \cup \underline{w} \in I$. From here, it follows easily that $I$ is excellent. Now, notice that
\[
V^{I} := \C(q,t)\Bigl\{ [\lambda;\underline{w}] \mid [\lambda; \underline{w}] \; \text{is a good chain in} \; E \; \text{and} \; \lambda \cup \underline{w}\not\in I \Bigr\}
\] 
 is a $\Bqt^{ext}$-submodule of $V(E)$. Indeed, the operators $d_{-}, z_{i}$ and $T_{i}$ do not change the endpoint of a chain, and the fact that $V^{I}$ is closed under $d_{+}$ follows easily from the fact that $I$ is an ideal. It is clear that $V(I) \cong V(E)/V^{I}$.

(2) Let $[\lambda; \underline{w}]$ be a chain in $E$. Then, every element in the chain $[\lambda; \underline{w}]$ belongs to $I$ if and only if $\lambda \in I$. From this, it is easy to see that the 
\[
V(I)=\C(q,t)\Bigl\{[\lambda; \underline{w}] \mid [\lambda; \underline{w}] \; \text{is a good chain in} \; E \; \text{and} \; \lambda \in I \Bigr\}
\]
is a $\Bqt^{ext}$-submodule of $V(E)$.
\end{proof}

\begin{remark}
If $I \subseteq E$ is an ideal, then $E \setminus I$ is a coideal. In particular, its associated complex
\[
0 \to V(E \setminus I) \to V(E) \to V(I) \to 0
\]
is not exact but has homology $H = \bigoplus_{k \geq 0}H_{k}$ with $H_{0} = 0$. Hence, $H$ is a calibrated representation of $\Bqt^{\ext}$ that does not come from a poset. Note also that, if $K = \ker(V(E) \to V(I))$ then $V(E \setminus I) \subsetneq K$, but $V(E\setminus I)_{0} = K_{0}$.
See also Remark \ref{rem: U as kernel } below.
\end{remark}

The previous remark implies, in particular, that \emph{not} every $\Bqt^{\ext}$-submodule of $V(E)$ is of the form $V(I)$ for a coideal $I \subseteq E$. We will see, however, that it is possible to extract a coideal from an arbitrary $\Bqt^{\ext}$-submodule of $V(E)$ which in turn gives bounds for the submodule. We start with the following standard lemma. 

\begin{lemma}\label{lem: vandermonde}
Let $W = \bigoplus W_{k}$ be a $\Bqt^{\ext}$-submodule of $V(E)$. Suppose that there exist pairwise distinct good chains $[\lambda_1; \underline{w}^{(1)}], \dots, [\lambda_s; \underline{w}^{(s)}] \in V_{k}(E)$ and nonzero scalars $a_1, \dots, a_s$ such that
\[
\sum_{i = 1}^{s}a_{i}\left[\lambda_{i}; \underline{w}^{(i)}\right] \in W_{k}.
\]
Then, $[\lambda_{i}; \underline{w}^{(i)}] \in W_{k}$ for every $i = 1, \dots, s$.
\end{lemma}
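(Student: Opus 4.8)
The plan is to exploit the fact that $W$ is stable under the commutative algebra generated by $z_1,\dots,z_k$ and $\Delta_{p_m}$, together with the simple-spectrum property of a calibrated representation, to isolate each basis vector by a Vandermonde-type argument. First I would recall that in $V_k(E)$ each good chain $[\lambda;\underline{w}]$ is a simultaneous eigenvector for $z_1,\dots,z_k$ with eigenvalues $(w_1,\dots,w_k)$ and for the $\Delta_{p_m}$ with eigenvalues $p_m(\lambda)+\sum_j w_j^m$; by the simple-spectrum condition (Definition~\ref{def: calibrated}, or equivalently the fact that distinct chains are distinguished since the $w_j$ together with $\lambda$ determine the eigensystem) distinct good chains have distinct joint eigenvalues under this commutative family. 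Hence among the finitely many chains $[\lambda_1;\underline{w}^{(1)}],\dots,[\lambda_s;\underline{w}^{(s)}]$ appearing in the given element $v=\sum_i a_i[\lambda_i;\underline{w}^{(i)}]$, we may pick, for each pair $i\neq j$, an operator $A_{ij}\in\{z_1,\dots,z_k,\Delta_{p_m}\}$ whose eigenvalue on $[\lambda_i;\underline{w}^{(i)}]$ differs from its eigenvalue on $[\lambda_j;\underline{w}^{(j)}]$.

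Next I would run the standard separation-of-eigenvectors argument inside $W_k$: let $\alpha_1,\dots,\alpha_s$ be the joint eigenvalue data of the $s$ chains under a single well-chosen operator $A$ (one can always find a single $A$, e.g.\ a generic linear combination $\sum c_i z_i + \sum c'_m \Delta_{p_m}$, or more elementarily argue iteratively) so that $\alpha_1,\dots,\alpha_s$ are pairwise distinct scalars in $\C(q,t)$. Since $W_k$ is stable under $A$, the vectors $v, Av, A^2v,\dots,A^{s-1}v$ all lie in $W_k$; writing these in the eigenbasis, the change-of-basis matrix is $\mathrm{diag}(a_1,\dots,a_s)$ times the Vandermonde matrix $(\alpha_i^{\,t})_{0\le t\le s-1,\,1\le i\le s}$, which is invertible because the $\alpha_i$ are distinct and the $a_i$ are nonzero. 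Inverting this relation expresses each $a_i[\lambda_i;\underline{w}^{(i)}]$ as a $\C(q,t)$-linear combination of $v,Av,\dots,A^{s-1}v$, hence $[\lambda_i;\underline{w}^{(i)}]\in W_k$ for every $i$.

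The one genuine point requiring care — and the step I expect to be the main obstacle — is producing a single operator $A$ in the commutative subalgebra that separates all $s$ chains simultaneously (so that the Vandermonde argument applies in one shot rather than requiring a nested induction). The cleanest way is to take $A$ a generic $\C(q,t)$-linear combination of $z_1,\dots,z_k$ and finitely many $\Delta_{p_m}$: for each of the finitely many pairs $i\neq j$ the set of coefficient tuples on which $A$ fails to separate $[\lambda_i;\underline{w}^{(i)}]$ from $[\lambda_j;\underline{w}^{(j)}]$ is a proper linear subspace (nonempty separation is guaranteed by simple spectrum), and a finite union of proper subspaces over the infinite field $\C(q,t)$ is not everything, so a separating $A$ exists. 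Alternatively one avoids this by inducting on $s$: act by $z_1 - w_1^{(s)}$ (or whichever $A_{is}$ works) to kill the last term while keeping something in $W_k$, but handling the case where several chains share that eigenvalue forces the generic-combination idea anyway, so I would simply present the generic-$A$ construction. With $A$ in hand the rest is the routine Vandermonde inversion sketched above.
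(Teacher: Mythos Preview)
Your argument is correct and is exactly the ``standard Vandermonde argument'' the paper invokes in its one-line proof: use the joint simple spectrum of the $z_i$ and $\Delta_{p_m}$ to separate the finitely many eigenvectors, then invert the Vandermonde system formed by $v, Av,\dots,A^{s-1}v$. Your care in producing a single separating operator $A$ via a generic linear combination is a fine way to make the standard argument precise.
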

\begin{proof}
  Since the $\Delta$ operators together with the $z$-operators have joint simple spectrum, this follows from a standard Vandermonde argument. 
\end{proof}

This implies, in particular, that if $W \subseteq V(E)$ is a $\Bqt^{\ext}$-submodule, then $W_{0} = \bigoplus_{[\lambda] \in W_0}\C(q,t)[\lambda]$.

\begin{lemma}\label{lem: producing a coideal}
Let $W \subseteq V(E)$ be a nonzero $\Bqt^{\ext}$-submodule, and $I_{W} := \bigr\{\lambda \mid [\lambda] \in W_0\bigr\} \subseteq E$. Then, $I_{W}$ is a nonempty coideal of $E$.
\end{lemma}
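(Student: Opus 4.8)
The plan is to show that $I_W$ is nonempty and that it is closed under going up in the poset. For nonemptiness, take any nonzero $v \in W$, say $v \in W_k$ with $v \neq 0$; applying $d_-^k$ repeatedly lands in $W_0$, so I would argue that $d_-^k v \neq 0$ for an appropriate choice of $v$. By Lemma \ref{lem: vandermonde} we may assume $v = [\lambda; \underline{w}]$ is a single good chain (any nonzero element of $W_k$ is a linear combination of good chains, each of which then lies in $W_k$). Then $d_-^k[\lambda; \underline{w}] = [\lambda \cup \underline{w}]$, a single basis element of $W_0$, which is nonzero. Hence $\lambda \cup \underline{w} \in I_W$ and $I_W \neq \emptyset$.

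Next I would show $I_W$ is a coideal, i.e. if $\lambda \in I_W$ and $\mu$ covers $\lambda$, then $\mu \in I_W$; the general case $\mu \geq \lambda$ follows by induction on $|\mu| - |\lambda|$. So suppose $[\lambda] \in W_0$ and $\mu = \lambda \cup x$ for an addable weight $x$. Since $[\lambda]$ is a good chain in $V_0(E)$ (the empty chain has no repeated weights vacuously), the formula \eqref{eq:d+ rep} for $d_+$ gives
\[
d_+[\lambda] = \sum_{x'} q^0 \, c(\lambda; x')\,[\lambda; x'],
\]
where the sum runs over all addable weights $x'$ of $\lambda$ (each $[\lambda; x']$ is automatically a good chain, having only one weight). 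Since $W$ is a submodule, $d_+[\lambda] \in W_1$, and all coefficients $c(\lambda; x')$ are nonzero by assumption. By Lemma \ref{lem: vandermonde}, each $[\lambda; x'] \in W_1$; in particular $[\lambda; x] \in W_1$. Applying $d_-$ gives $d_-[\lambda; x] = [\lambda \cup x] = [\mu] \in W_0$, hence $\mu \in I_W$.

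Finally, to complete the coideal property for arbitrary $\mu \geq \lambda$ with $\lambda \in I_W$: choose a saturated chain $\lambda = \nu_0 < \nu_1 < \cdots < \nu_r = \mu$ in $E$ (which exists by the grading condition), and apply the covering case $r$ times to conclude $\nu_1 \in I_W$, then $\nu_2 \in I_W$, and so on, until $\mu \in I_W$. This establishes that $I_W$ is a nonempty coideal.

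I do not anticipate a serious obstacle here: the two ingredients — that $d_-^k$ sends a good chain to its endpoint and that $d_+$ applied to $[\lambda]$ has \emph{all} addable chains appearing with nonzero coefficient — are immediate from Definition \ref{def: calibrated from excellent} (using $c(\lambda; x) \in \C(q,t)^*$), and the passage from "single basis element in $W$" to the general element is exactly Lemma \ref{lem: vandermonde}. The only mild subtlety is making sure that when $\mu = \lambda \cup x$, the chain $[\lambda; x]$ really does appear in $d_+[\lambda]$, i.e. that it is good — but a length-one chain is trivially good since the condition $w_i \neq t w_j$ in Definition \ref{def: good new} is vacuous for a single weight.
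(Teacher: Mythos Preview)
Your proof is correct and follows essentially the same approach as the paper: use Lemma \ref{lem: vandermonde} to find a basis vector $[\lambda;\underline{w}] \in W_k$, apply $d_-^k$ to land in $W_0$ for nonemptiness, and for the coideal property apply $d_+$ to $[\lambda]$, extract $[\lambda;x]$ via Lemma \ref{lem: vandermonde}, then apply $d_-$. The only difference is that you spell out the induction on chains and the vacuous goodness of length-one chains, which the paper leaves implicit.
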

\begin{proof}
The fact that $I_{W}$ is nonempty, provided $W$ is nonzero, follows from Lemma \ref{lem: vandermonde}.
Indeed, $W$ must contain a basis vector $[\lambda;\underline{w}]\in W_k$ for some $k$ and hence it contains $d_-^k[\lambda;\underline{w}]=[\lambda\cup \underline{w}]\in W_0.$

Now, if $\lambda \in I_{W}$ and $x$ is addable for $\lambda$, then $[\lambda; x]$ appears in $d_{+}\lambda$, so $[\lambda; x] \in W_{1}$ and therefore $[\lambda \cup x] = d_{-}[\lambda; x] \in W_{0}$. It follows that $I_{W}$ is a coideal of $E$. 
\end{proof}

\begin{corollary}
    Assume that for every $\lambda, \mu \in E$ there exists $\nu$ such that $\lambda, \mu \leq \nu$. Then, $V(E)$ is indecomposable as a $\Bqt^{\ext}$-module. 
\end{corollary}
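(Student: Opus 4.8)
The plan is to argue by contradiction, exploiting the fact that the hypothesis makes $E$ an \emph{upward directed} poset, together with the description of degree-zero parts of submodules established above. The combinatorial input I would isolate first is the elementary observation that in an upward directed poset any two nonempty coideals meet: if $I_1, I_2 \subseteq E$ are coideals and $\lambda \in I_1$, $\mu \in I_2$, then choosing $\nu$ with $\lambda,\mu \leq \nu$ forces $\nu \in I_1 \cap I_2$ since both $I_1$ and $I_2$ are closed upward.

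Now suppose, for contradiction, that $V(E) = W_1 \oplus W_2$ with both $W_i$ nonzero $\Bqt^{\ext}$-submodules (note $V(E) \neq 0$ since $E \neq \emptyset$). Applying the idempotent $\e_0 \in \Bqt^{\ext}$ to this decomposition, and using $\e_0 W_i = W_i \cap V_0(E)$ together with $W_1 \cap W_2 = 0$, I would obtain a direct sum decomposition $V_0(E) = (W_1)_0 \oplus (W_2)_0$ of the degree-zero piece. By Lemma \ref{lem: producing a coideal} each set $I_{W_i} := \{\lambda \in E \mid [\lambda] \in (W_i)_0\}$ is a nonempty coideal of $E$ (nonemptiness using $W_i \neq 0$), and by the consequence of Lemma \ref{lem: vandermonde} recorded above each $(W_i)_0$ is a coordinate subspace: $(W_i)_0 = \bigoplus_{\lambda \in I_{W_i}} \C(q,t)[\lambda]$ inside $V_0(E) = \bigoplus_{\lambda \in E}\C(q,t)[\lambda]$. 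Comparing with the direct sum decomposition of $V_0(E)$ then forces $E = I_{W_1} \sqcup I_{W_2}$ as a disjoint union of two nonempty coideals, contradicting the first paragraph. Hence one of $W_1, W_2$ is zero, so $V(E)$ is indecomposable.

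I do not expect a substantive obstacle here: the proof is a matter of assembling facts proved earlier in the section. The only point deserving a line of care is the bookkeeping in the second paragraph — that applying the idempotent $\e_0$ to a module direct sum decomposition yields a direct sum decomposition of the degree-zero parts, and that the degree-zero part of any $\Bqt^{\ext}$-submodule of $V(E)$ is a coordinate subspace with respect to the basis $\{[\lambda] \mid \lambda \in E\}$ of $V_0(E)$ — but the latter is exactly the content of the remark following Lemma \ref{lem: vandermonde}.
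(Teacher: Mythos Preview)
Your proof is correct and uses the same ingredients as the paper: Lemma~\ref{lem: producing a coideal} to extract nonempty coideals from nonzero submodules, plus the observation that nonempty coideals in an upward directed poset must intersect. The paper packages this slightly more efficiently by proving the stronger statement that any two nonzero submodules $W, W' \subseteq V(E)$ intersect nontrivially (since any $[\lambda]$ with $\lambda \in I_W \cap I_{W'}$ lies in $W \cap W'$), which immediately yields indecomposability without the extra bookkeeping about coordinate subspaces and disjoint unions.
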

\begin{proof}
We show the stronger property that any two nonzero submodules $W, W' \subseteq V(E)$ intersect nontrivially. Indeed, it suffices to show that $I_{W} \cap I_{W'} \neq \emptyset$, which follows since $I_{W}$ and $I_{W'}$ are both nonempty coideals in $E$.  
\end{proof}

\begin{remark}
    By using duality, see Section \ref{sec: duality} below, we arrive at the same conclusion by assuming that for every $\lambda, \mu \in E$ there exists $\nu$ such that $\nu \leq \lambda, \mu$. 
\end{remark}

\begin{proposition}\label{prop: bounds}
Let $I \subseteq E$ be a coideal. Define $V(I)$ and $U(I)$ by:
\[
V_{k}(I) = \Bigl\langle[\lambda; \underline{w}] \in V_{k}(E) \mid \lambda \in I\Bigr\rangle, \qquad U_{k}(I) = \Bigl\langle[\lambda; \underline{w}] \in V_k(E) \mid \lambda \cup \underline{w} \in I\Bigr\rangle.
\]
Then:
\begin{enumerate}
\item[(1)] Both $V(I)$ and $U(I)$ are $\Bqt^{\ext}$-submodules of $V(E)$.
\item[(2)] If $W \subseteq V(E)$ is a $\Bqt^{\ext}$-submodule, then $V(I_{W}) \subseteq W \subseteq U(I_{W})$. 
\end{enumerate}
\end{proposition}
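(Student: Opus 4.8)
The plan is to verify each statement by a direct, bookkeeping argument on good chains, using Lemma \ref{lem: vandermonde} as the workhorse.

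For part (1), I would first treat $V(I)$: the operators $z_i$, $T_i$, $\Delta_{p_m}$ and $d_-$ all send a good chain $[\lambda;\underline{w}]$ to a combination of good chains with the \emph{same} starting point $\lambda$ (for $T_i$ this is the chain $[\lambda;s_i\underline{w}]$, and for $d_-$ it is $[\lambda\cup w_k;\ldots]$ whose starting point $\lambda\cup w_k\geq\lambda$), so if $\lambda\in I$ then all chains appearing still start in $I$ since $I$ is a coideal. For $d_+$, a summand $[\lambda;\underline{w},x]$ again starts at $\lambda$, so $V(I)$ is closed under $d_+$ as well; hence $V(I)$ is a submodule. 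For $U(I)$, the relevant invariant is the \emph{endpoint} $\lambda\cup\underline{w}$: the operators $z_i,T_i,\Delta_{p_m}$ fix the endpoint, $d_+$ replaces the endpoint $\mu=\lambda\cup\underline{w}$ by $\mu\cup x\geq\mu$, and $d_-$ fixes the endpoint. So if $\mu\in I$ then all chains appearing in the image have endpoint $\geq\mu$, hence in $I$ again by the coideal property; thus $U(I)$ is a submodule. (Both facts were essentially already observed in Proposition \ref{prop: subs and quotients}(2) and its proof, so I would simply cite/recall that argument for $V(I)$.)

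For part (2), let $W\subseteq V(E)$ be a submodule and recall $I_W=\{\lambda:[\lambda]\in W_0\}$, which is a nonempty coideal by Lemma \ref{lem: producing a coideal}. For the inclusion $V(I_W)\subseteq W$: given a good chain $[\lambda;\underline{w}]\in V_k(E)$ with $\lambda\in I_W$, we have $[\lambda]\in W_0\subseteq W$, and I would recover $[\lambda;\underline{w}]$ by repeatedly applying $d_+$ and then using Lemma \ref{lem: vandermonde} to isolate the individual chain. Concretely, $d_+[\lambda]$ is a linear combination, with nonzero coefficients, of the good chains $[\lambda;x]$ over all addable $x$; by Lemma \ref{lem: vandermonde} each $[\lambda;x]\in W_1$. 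Iterating, after $k$ steps $W_k$ contains every good chain of the form $[\lambda;\underline{w}]$; an induction on $k$ (at each stage applying $d_+$ to a chain already known to lie in $W$, then invoking Lemma \ref{lem: vandermonde}) gives $[\lambda;\underline{w}]\in W_k$, so $V(I_W)\subseteq W$. For the inclusion $W\subseteq U(I_W)$: take any basis vector $[\lambda;\underline{w}]$ appearing (with nonzero coefficient, after applying Lemma \ref{lem: vandermonde} to an arbitrary element of $W_k$) in $W_k$; then $d_-^k[\lambda;\underline{w}]=[\lambda\cup\underline{w}]\in W_0$, so $\lambda\cup\underline{w}\in I_W$, i.e. $[\lambda;\underline{w}]\in U_k(I_W)$. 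Since every element of $W_k$ is a combination of such basis vectors, $W\subseteq U(I_W)$.

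The only genuinely delicate point is the induction in the $V(I_W)\subseteq W$ step: one must be sure that $d_+$ applied to a good chain produces \emph{every} longer good chain with the prescribed prefix with a nonzero coefficient, so that Lemma \ref{lem: vandermonde} really recovers all of them. This is exactly the content of formula \eqref{eq:d+ rep}: the sum runs over all $x$ with $[\lambda;\underline{w},x]$ good, and the coefficient $q^k c(\lambda\cup\underline{w};x)\prod_i\frac{x-tw_i}{x-qtw_i}$ is nonzero since $c$ is nonzero, $x\neq tw_i$ (the chain is good) and $x\neq qtw_i$ (Proposition \ref{prop: good chains}(c)). So no chain is accidentally killed, and the induction goes through; I expect this to be the main thing to state carefully, with the rest being routine bookkeeping.
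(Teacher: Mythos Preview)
Your proposal is correct and follows the same approach as the paper. The paper's proof dismisses (1) as ``easy'' and for (2) argues exactly as you do---any good chain $[\lambda;\underline{w}]$ with $\lambda\in I_W$ appears in $d_+^k[\lambda]$ with nonzero coefficient (so Lemma~\ref{lem: vandermonde} gives $V(I_W)\subseteq W$), and $d_-^k[\lambda;\underline{w}]=[\lambda\cup\underline{w}]$ gives $W\subseteq U(I_W)$; your explicit verification of the nonzero-coefficient point via \eqref{eq:d+ rep} and Proposition~\ref{prop: good chains}(c) is a useful detail the paper leaves implicit.
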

\begin{proof}
Statement (1) is easy. For (2), the fact that $V(I_{W}) \subseteq W$ follows from Lemma \ref{lem: vandermonde} and the definition of $d_{+}$. Indeed, if $\lambda\in I_W$ then $[\lambda]\in W_0$ and any good chain $[\lambda;\underline{w}]$ appears in $d_+^{k}[\lambda]$ with a nonzero coefficient.

On the other hand, the containment $W \subseteq U(I_{W})$ follows from $W$ being closed under the operator $d_{-}$ since
$d_{-}^k[\lambda;\underline{w}]=[\lambda\cup \underline{w}].$
\end{proof}

\begin{remark}
\label{rem: U as kernel }
Note that, if $I \subseteq E$ is an ideal, then the kernel of $V(E) \to V(I)$ is precisely $U(E \setminus I)$. 
\end{remark}

In fact, we can describe all the submodules of $V(E)$ in terms of coideals in a poset structure on the set $\gCh(E)$ of (maximal) good chains on $E$, that we now describe.

\begin{lemma}
Let $\Ch(E)$ denote the set of all maximal chains on the poset $E$. The following are the covering relations of a partial order on $\Ch(E)$:
\begin{equation}\label{eq: partial order increase right}
(\lambda_1 \to \lambda_2 \to \cdots \to \lambda_{k}) \prec (\lambda_1 \to \lambda_2 \to \cdots \to \lambda_k \to \lambda_{k+1})
\end{equation}
and
\begin{equation}\label{eq: partial order remove left}
(\lambda_1 \to \lambda_2 \to \cdots \to \lambda_{k}) \prec (\lambda_2 \to \cdots \to \lambda_k).
\end{equation}
\end{lemma}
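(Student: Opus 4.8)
The plan is to produce a strictly monotone $\Z$-valued function on $\Ch(E)$ that increases by exactly $1$ along each of the two listed elementary relations; antisymmetry and the identification of covers then follow formally. Fix once and for all a grading function $|\cdot|\colon E\to\Z$ as in the Grading condition of Section \ref{sec: def poset new}. For a saturated chain $C=(\lambda_1\to\lambda_2\to\cdots\to\lambda_k)\in\Ch(E)$ set $b(C):=|\lambda_1|$ (the grade of its bottom vertex), $\tau(C):=|\lambda_k|$ (the grade of its top vertex), and $\Phi(C):=b(C)+\tau(C)$. Reading off \eqref{eq: partial order increase right} and \eqref{eq: partial order remove left}: if $C\prec C'$ via \eqref{eq: partial order increase right} (adjoining a cover at the top) then $b(C')=b(C)$ and $\tau(C')=\tau(C)+1$; if $C\prec C'$ via \eqref{eq: partial order remove left} (deleting the bottom vertex) then $b(C')=b(C)+1$ and $\tau(C')=\tau(C)$. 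In both cases $\Phi(C')=\Phi(C)+1$, and in both cases $C'\neq C$ (the underlying chains differ; for \eqref{eq: partial order remove left} we use that the relation is only asserted when $k\geq 2$, so the truncated chain is again a nonempty element of $\Ch(E)$).

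Next I would let $\preceq$ denote the reflexive--transitive closure of the union of the relations \eqref{eq: partial order increase right} and \eqref{eq: partial order remove left}; this is reflexive and transitive by construction. For antisymmetry: if $C\preceq C'$ with $C\neq C'$, there is a nonempty sequence of elementary steps from $C$ to $C'$, and by the previous paragraph each step raises $\Phi$ by $1$, so $\Phi(C')>\Phi(C)$. Hence $C\preceq C'$ and $C'\preceq C$ force $\Phi(C)=\Phi(C')$, which forces $C=C'$. Thus $\preceq$ is a partial order on $\Ch(E)$ (and in particular there are no directed cycles among the elementary relations).

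Finally I would verify that the covering relations of $(\Ch(E),\preceq)$ are precisely the pairs in \eqref{eq: partial order increase right} and \eqref{eq: partial order remove left}. If $C$ is covered by $C'$, pick a sequence of elementary steps realizing $C\prec\cdots\prec C'$; if its length were $\geq 2$ an intermediate chain $C''$ would satisfy $C\prec\cdots\prec C''\prec\cdots\prec C'$ with $\Phi(C)<\Phi(C'')<\Phi(C')$, contradicting that $C'$ covers $C$; so the sequence has length $1$ and $(C,C')$ is one of the two listed relations. Conversely, for a listed relation $(C,C')$ we have $\Phi(C')=\Phi(C)+1$, so no chain $C''$ can satisfy $C\prec\cdots\prec C''\prec\cdots\prec C'$ (that would need $\Phi(C)<\Phi(C'')<\Phi(C)+1$, impossible for integers); hence each listed relation is a cover. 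This completes the argument.

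I do not anticipate a genuine obstacle here: the content is entirely in choosing the monovariant $\Phi=b+\tau$, after which everything is formal. The only points requiring care are the orientation convention (in $(\lambda_1\to\cdots\to\lambda_k)$ the \emph{right} endpoint $\lambda_k$ is the top and the \emph{left} endpoint $\lambda_1$ the bottom, so that \eqref{eq: partial order increase right} moves $\tau$ and \eqref{eq: partial order remove left} moves $b$), and the degenerate case of a one-vertex chain, for which the deletion relation \eqref{eq: partial order remove left} simply does not apply.
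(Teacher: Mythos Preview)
Your proof is correct and follows essentially the same idea as the paper: both elementary relations strictly increase an integer-valued monovariant (the paper phrases this as ``\eqref{eq: partial order increase right} increases the endpoint of a chain, while \eqref{eq: partial order remove left} increases the starting point of a chain''; you make this explicit via $\Phi=b+\tau$), hence no cycles. Your write-up is more thorough in that you also verify that the listed relations are \emph{precisely} the covers of the resulting order, a point the paper leaves implicit.
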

\begin{proof}
We need to show that no cycles appear in the transitive closure of $\prec$. Note that \eqref{eq: partial order increase right} increases the endpoint of a chain, while \eqref{eq: partial order remove left} increases the starting point of a chain. This implies the result. 
\end{proof}

By the previous lemma, we have a partial order $\prec$ on $\Ch(E)$ that we can restrict to $\gCh(E)$, which is a basis of the representation $V(E)$. Note that if $[\lambda; \underline{w}]$ is a good chain and $s_i$ is an admissible transposition for $\underline{w}$, then $[\lambda; s_i(\underline{w})]$ is a good chain as well. So we have an action of admissible transpositions on the poset $\gCh(E)$.

\begin{theorem}\label{thm:submodules}
Let $W \subseteq V(E)$ be a nonzero submodule, and let $\mathcal{I}_{W} := \bigl\{[\lambda; \underline{w}] \mid [\lambda; \underline{w}] \in W\bigr\} \subseteq \gCh(E)$. Then, $\mathcal{I}_W$ is a coideal in $\gCh(E)$ that is stable under the action of admissible transpositions. 

Conversely, if $\mathcal{I} \subseteq \gCh(E)$ is a coideal that is stable under the action of admissible transpositions, then $\mathrm{span}\bigr\{[\lambda; \underline{w}] \mid [\lambda; \underline{w}] \in \mathcal{I}\bigr\}$ is a submodule of $V(E)$. 
\end{theorem}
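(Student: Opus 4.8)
The plan is to prove the two directions separately, using the explicit action from Definition \ref{def: calibrated from excellent} together with Lemma \ref{lem: vandermonde}. First I would record the key principle coming from Lemma \ref{lem: vandermonde}: since the $z_i$ and $\Delta_{p_m}$ operators have joint simple spectrum on $V(E)$, a submodule $W$ is automatically spanned by the basis vectors $[\lambda;\underline w]$ it contains; equivalently, $W = \operatorname{span}\{[\lambda;\underline w] \in \mathcal I_W\}$ where $\mathcal I_W$ is as defined in the statement. This reduces everything to checking which basis elements appear after applying the generators, and the whole theorem becomes a combinatorial statement about $\gCh(E)$.

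For the forward direction, suppose $W$ is a submodule and $[\lambda;\underline w]\in \mathcal I_W$. I would check closure under each generator. Applying $d_-$ gives $[\lambda\cup w_k; w_{k-1},\ldots,w_1]\in W$, which by \eqref{eq: partial order remove left} is a cover of $[\lambda;\underline w]$ in $\gCh(E)$ (it is good by Proposition \ref{prop: good chains}(a)); applying $d_+$ expands $[\lambda;\underline w]$ into a sum $\sum_x c(\lambda;\underline w,x)[\lambda;\underline w,x]$ over good chains $[\lambda;\underline w,x]$, and since all coefficients $c(\lambda;\underline w,x)$ are nonzero (each is a product of the nonzero $c(\lambda\cup\underline w;x)$ with the factors $\frac{x-tw_i}{x-qtw_i}$, which are nonzero because $[\lambda;\underline w,x]$ good forces $x\neq tw_i$ and Proposition \ref{prop: good chains}(c) gives $x\neq qtw_i$), Lemma \ref{lem: vandermonde} yields each $[\lambda;\underline w,x]\in W$; by \eqref{eq: partial order increase right} these are exactly the covers of $[\lambda;\underline w]$ of the first type. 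Hence $\mathcal I_W$ is closed under both kinds of covers, i.e.\ it is a coideal. Finally, applying an admissible transposition $T_i$ to $[\lambda;\underline w]$ produces a linear combination of $[\lambda;\underline w]$ and $[\lambda;s_i(\underline w)]$; when $s_i$ is admissible and $[\lambda;\underline w]$ is good, $[\lambda;s_i(\underline w)]$ is good and the coefficient $\frac{w_i-qw_{i+1}}{w_i-w_{i+1}}$ in \eqref{eq: T Ram new} is nonzero precisely because admissibility means $w_i\neq qw_{i+1}$ (and $w_i \neq w_{i+1}$ since the chain is calibrated, Corollary \ref{cor: excellent chains}); so Lemma \ref{lem: vandermonde} gives $[\lambda;s_i(\underline w)]\in W$, proving stability under admissible transpositions. (If $s_i$ is not admissible, $T_i$ acts by a scalar and nothing new is produced, which is consistent with the fact that there is then no transposition action to respect.)

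For the converse, let $\mathcal I\subseteq\gCh(E)$ be a coideal stable under admissible transpositions, and set $W=\operatorname{span}\{[\lambda;\underline w]: [\lambda;\underline w]\in\mathcal I\}$. I would verify directly that $W$ is stable under every generator of $\Bqt^{\ext}$ by reading off Definition \ref{def: calibrated from excellent}: $z_i$ and $\Delta_{p_m}$ act diagonally, so they preserve $W$ trivially; $T_i$ sends $[\lambda;\underline w]$ into $\operatorname{span}\{[\lambda;\underline w],[\lambda;s_i(\underline w)]\}$, and $[\lambda;s_i(\underline w)]\in\mathcal I$ by transposition-stability when $s_i$ is admissible (and $T_i$ acts by a scalar otherwise), so $T_iW\subseteq W$; $d_-$ sends $[\lambda;\underline w]$ to the single basis vector $[\lambda\cup w_k;w_{k-1},\ldots,w_1]$, which lies in $\mathcal I$ because it is $\succ [\lambda;\underline w]$ via \eqref{eq: partial order remove left} and $\mathcal I$ is a coideal; and $d_+$ sends $[\lambda;\underline w]$ to a combination of good chains $[\lambda;\underline w,x]$, each $\succ[\lambda;\underline w]$ via \eqref{eq: partial order increase right}, hence all in $\mathcal I$. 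Therefore $W$ is a $\Bqt^{\ext}$-submodule.

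The only mild subtlety — and the step I would be most careful about — is the bookkeeping in the forward direction showing that the $d_+$-coefficients are genuinely nonzero, so that Lemma \ref{lem: vandermonde} applies and \emph{every} cover of the first type is forced into $\mathcal I_W$ (not just that $\mathcal I_W$ is closed under some covers); this is what makes $\mathcal I_W$ a coideal rather than merely an up-closed-under-$d_-$ set. Everything else is a direct transcription of the action formulas against the covering relations \eqref{eq: partial order increase right} and \eqref{eq: partial order remove left}, so no genuine obstacle is expected.
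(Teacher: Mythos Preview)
Your proposal is correct and follows essentially the same approach as the paper: use Lemma \ref{lem: vandermonde} together with the explicit formulas for $d_+$, $d_-$, and $T_i$ to match closure under these generators with closure under the covering relations \eqref{eq: partial order increase right}, \eqref{eq: partial order remove left}, and admissible transpositions. Your added care in verifying that the $d_+$ and $T_i$ coefficients are nonzero (so that Lemma \ref{lem: vandermonde} genuinely applies) is a point the paper leaves implicit but which is indeed needed.
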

\begin{proof}
Since $W$ is closed under $d_{+}$, it follows from Lemma \ref{lem: vandermonde} that $\mathcal{I}_{W}$ is closed under \eqref{eq: partial order increase right} and, since $W$ is closed under $d_{-}$ we also have that $\mathcal{I}_{W}$ is closed under \eqref{eq: partial order remove left}. Finally, the fact that $\mathcal{I}_{W}$ is stable under admissible transpositions follows from the fact that $W$ is closed under the action of the $T$-operators, see \eqref{eq: T Ram new}, together with Lemma \ref{lem: vandermonde}. The converse statement is clear. 
\end{proof}

\subsection{Application to the Polynomial Representation}

As an example of the above constructions, we can consider the poset $\mathcal{P}$ of partitions as in Section \ref{sec: poly}. 

Given an integer $N\ge 1$, let $I_N$ (resp. $J_N$) be the subset of $\mathcal{P}$ consisting of partitions with more than $N$ rows (resp. columns). Clearly, $I_N$ and $J_N$ are both coideals in $\mathcal{P}$, while $\mathcal{P}\setminus I_N$ and $\mathcal{P}\setminus J_N$ are ideals in $\mathcal{P}$. 

For $N=1$ the ideal $\mathcal{P}\setminus I_N$ is a linear poset of partitions with one row, and $V(\mathcal{P}\setminus I_N)$ recovers the construction in Section \ref{sec: linear posets}. More generally, we have the following.

\begin{lemma}
\label{lem: restricted partitions}
a) For all $N\ge 1$ the polynomial representation of $\Bqt^{\ext}$ admits quotients $V(\mathcal{P}\setminus I_N)$ and $V(\mathcal{P}\setminus J_N)$ with bases given by standard tableaux in horizontal strips with at most $N$ rows (resp. at most $N$ columns).

b) In particular, $V_k(\mathcal{P}\setminus I_N)$ is nontrivial for all $k\ge 0$, while $V_k(\mathcal{P}\setminus J_N)$ is nontrivial only if $0\le k\le N$.

c) The representation $V(\mathcal{P}\setminus J_N)$ is isomorphic (as a graded vector space) to 
$$
\bigoplus_{k=0}^{N}\C[x_1,\ldots,x_{N}]^{S_{N-k}}
$$
where $S_{N-k}$ acts on the first $N-k$ variables $x_1,\ldots,x_{N-k}$.
\end{lemma}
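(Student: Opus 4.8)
The plan is to obtain parts (a) and (b) directly from Proposition~\ref{prop: subs and quotients} and Theorem~\ref{thm: polynomial is calibrated} together with the elementary combinatorics of horizontal strips, and to prove (c) by an explicit degree-preserving bijection of bases. For (a): the sets $I_N$ and $J_N$ are coideals of $\mathcal{P}$, so $\mathcal{P}\setminus I_N$ and $\mathcal{P}\setminus J_N$ are ideals, and Proposition~\ref{prop: subs and quotients}(1) shows they are excellent weighted posets with $V(\mathcal{P}\setminus I_N)$ and $V(\mathcal{P}\setminus J_N)$ quotients of $V(\mathcal{P})$; by Theorem~\ref{thm: polynomial is calibrated} the latter is the polynomial representation. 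From the description recalled in the proof of that theorem, the basis of $V_k(\mathcal{P})$ is indexed by standard tableaux on size-$k$ horizontal strips $\mu\setminus\lambda$, and passing to the quotient simply discards those good chains whose top $\mu=\lambda\cup\underline{w}$ has more than $N$ rows (resp. columns). This leaves exactly the standard tableaux on horizontal strips contained in a Young diagram with at most $N$ rows (resp. columns), which is part (a).

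For (b): for every $k\geq 0$ and $N\geq 1$ the chain $\emptyset\to(1)\to\cdots\to(k)$ is a good chain — its top $(k)$ is a single row, hence a horizontal strip — and $(k)$ has one row and $k$ columns, so $V_k(\mathcal{P}\setminus I_N)\neq 0$ for all $k$ and $V_k(\mathcal{P}\setminus J_N)\neq 0$ whenever $0\leq k\leq N$. Conversely, a good chain of length $k$ in $\mathcal{P}\setminus J_N$ has top $\mu$ with at most $N$ columns and with $\mu\setminus\lambda$ a horizontal strip, which contains at most one box per column; hence $k=|\mu\setminus\lambda|\leq N$, so $V_k(\mathcal{P}\setminus J_N)=0$ for $k>N$.

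For (c), equip $V(\mathcal{P}\setminus J_N)$ with the grading in which a good chain $[\lambda;\underline{w}]$ has degree $|\lambda|$ (this is the polynomial-degree grading transported through Theorem~\ref{thm: polynomial is calibrated}: one checks from the plethystic formulas of Section~\ref{sec: polynomial} that $d_+$ preserves it and $d_-$ raises it by one). Since $\C[x_1,\dots,x_N]^{S_{N-k}}=\C[x_1,\dots,x_{N-k}]^{S_{N-k}}\otimes\C[x_{N-k+1},\dots,x_N]$ has the monomial basis $\{\,m_\alpha(x_1,\dots,x_{N-k})\,x_{N-k+1}^{b_1}\cdots x_N^{b_k}\ :\ \ell(\alpha)\leq N-k,\ b\in\Z_{\geq 0}^{k}\,\}$, with $m_\alpha x^b$ in degree $|\alpha|+|b|$, it is enough to produce a degree-preserving bijection from this set onto the good chains of length $k$ with top of at most $N$ columns. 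The plan is to pass to conjugates: the top $\mu$ of such a chain becomes the partition $\mu'$ of column lengths, padded to $N$ entries $m_1\geq\cdots\geq m_N\geq 0$; the horizontal strip $\mu\setminus\lambda$ becomes a vertical strip, i.e. a $k$-subset $S\subseteq\{1,\dots,N\}$ with $m_r\geq 1$ for $r\in S$ and the sequence $(m_r-[r\in S])_r$ still non-increasing — equivalently $S$ meets each maximal constant run of $(m_r)_r$ in a suffix; and the standard filling becomes a linear order $r^{(1)},\dots,r^{(k)}$ of $S$ that is increasing inside each constant run. Assign to this data $\alpha:=(m_r:r\notin S)$, which is automatically a partition with at most $N-k$ parts by the suffix condition, and $b:=(m_{r^{(1)}}-1,\dots,m_{r^{(k)}}-1)\in\Z_{\geq 0}^{k}$; then $|\alpha|+|b|=|\mu'|-k=|\lambda|$, so the assignment is degree-preserving.

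It remains to check this is a bijection. Given $(\alpha,b)$, one recovers the multiset $\{m_r\}=\{\alpha_i\}\uplus\{b_j+1\}$, hence the sequence $m_1\geq\cdots\geq m_N$; within each constant run at value $v$ the subset $S$ is then forced to be its last $\#\{j:b_j=v-1\}$ positions, and those positions are labelled, top to bottom, by the $j$ with $b_j=v-1$ in increasing order. One verifies that the suffix condition — hence that $(m_r-[r\in S])_r$ is a partition — holds automatically, so the reconstructed data is always a genuine good chain, and the two maps are mutually inverse. The main obstacle is this part (c): getting the conjugate-partition bookkeeping (horizontal versus vertical strips, standard fillings) right and, above all, handling ties among the $m_r$ correctly in both directions, so that the inverse construction never yields an invalid chain. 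Parts (a) and (b) are essentially formal once one has Proposition~\ref{prop: subs and quotients}, Theorem~\ref{thm: polynomial is calibrated}, and the observation that a horizontal strip has at most one box per column.
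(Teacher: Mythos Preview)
Your proof is correct and follows essentially the same route as the paper: Proposition~\ref{prop: subs and quotients} for (a), the one-box-per-column observation for (b), and for (c) a bijection built from the column heights of $\mu$, separating labelled from unlabelled columns.

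The one substantive difference in (c) is your shift by one: you record $b_j=m_{r^{(j)}}-1$ where the paper records the unshifted height $a_i=m_{r^{(i)}}$. Since every labelled column has height at least $1$, your shift is what is needed for the map to be onto \emph{all} of $\C[x_1,\dots,x_N]^{S_{N-k}}$; the paper's unshifted version only reaches monomials in which each of $x_{N-k+1},\dots,x_N$ appears to positive power. Your shift also makes the bijection preserve the $|\lambda|$-grading you introduce (the paper's map instead matches the $|\mu|$-grading to polynomial degree, up to this missing shift). Your explicit treatment of ties via the suffix condition is more thorough than the paper's one-line ``if $a_i=a_j$ and $i<j$ we can ensure that $a_i$ is to the left of $a_j$.''

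One small wording issue: you say $\alpha$ is a partition ``by the suffix condition,'' but $\alpha=(m_r:r\notin S)$ is non-increasing simply because it is a subsequence of the non-increasing sequence $(m_1,\dots,m_N)$. The suffix condition is the separate ingredient ensuring that $(m_r-[r\in S])_r$ is non-increasing, i.e.\ that $\lambda$ is a genuine partition.
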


\begin{proof}
Part (a) follows from Proposition \ref{prop: subs and quotients}. For part (b), note that if $[\lambda; \underline{w}]$ is a good chain, then $w_k, \dots, w_1$ belong to different columns. Part (c) is similar to \cite[Section 7]{CGM}, we construct a bijection between the bases. Indeed, $V_k(\mathcal{P}\setminus J_N)$ has a basis labeled by standard tableaux of skew shape $\mu\setminus \lambda$ such that
\begin{itemize}
\item[(1)] $|\mu\setminus \lambda|=k$. 
\item[(2)] $\mu\setminus \lambda$ is a horizontal strip.
\item[(3)] $\mu$ has at most $N$ columns.
\end{itemize}
Given such a tableau $T$, we look at its columns which can be either unlabeled, or contain exactly one label $i$. Let $a_i, 1\le i\le k$ denote the height of the column labeled by $i$, and $\nu_1,\ldots,\nu_{N-k}$ denote the heights of unlabeled columns. Here $\nu_1\ge \ldots\ge \nu_{N-k}\ge 0$, so that $\nu$ is a partition. If $\mu$ has less than $N$ columns, we add unlabeled empty columns and set the corresponding $\nu_i=0$.

Conversely, given a partition $\nu=(\nu_1,\ldots,\nu_{N-k})$, and a sequence $(a_1,\ldots,a_k)$, we can sort the sequence $(\nu_1,\ldots,\nu_{N-k},a_1,\ldots,a_k)$ and consider a partition $\mu$ with the corresponding column length. If $a_i=a_j$ and $i<j$ we can ensure that $a_i$ is to the left of $a_j$. Then we get a standard tableau in a horizontal strip $\mu\setminus \lambda$ satisfying (1)-(3) above.

It remains to notice that pairs $(\nu;a_1,\ldots,a_k)$ are in bijection with polynomials 
$$
\Sym\left(x_1^{\nu_1}\cdots x_{N-k}^{\nu_{N-k}}\right)x_{N-k+1}^{a_1}\cdots x_{N}^{a_k}
$$
which form a basis in $\C[x_1,\ldots,x_{N}]^{S_{N-k}}$.
\end{proof}

It would be interesting to construct a representation of $\Bqt^{\ext}$ on $\bigoplus_{k=0}^{N}\C[x_1,\ldots,x_{N}]^{S_{N-k}}$
directly. We conjecture that this can be done by utilizing partially symmetrized DAHA or its analogues as in \cite{milo, Goodberry, IonWu}.

\subsection{Semisimplification of $V(E)$} Let us recall that if $M$ is a finite-dimensional representation of an algebra $A$, its \newword{semisimplification} is
\[
\ssn(M) := \gr\filt_{\rad}(M) \simeq \gr \filt_{\soc}(M),
\]
where $\filt_{\rad}$ is the (descending) radical filtration and $\filt_{\soc}$ is the (ascending) socle filtration. From a geometric point of view, the $A$-modules of dimension $\dim(M)$ form an algebraic variety that admits a $\operatorname{GL}_{\dim(M)}$-action by base-change, and $\ssn(M)$ is the representative of the unique closed orbit contained in the closure of the orbit of $M$, see e.g. \cite[Section 2.3]{kirillov}. The following proposition can be proved by induction on the number of elements of $E$.

\begin{proposition}
    Let $E$ be a finite excellent weighted poset. Then, the semisimplification of $V := V(E)$ is described as follows:
    \begin{itemize}
    \item $\ssn(V)_{k} = V_{k}(E)$ as $\AH_{k}$-modules and also as modules over the algebra generated by $\Delta$-operators.
    \item $d_{+}$ and $d_{-}$ are identically zero on $\ssn(V)$. 
    \end{itemize}
\end{proposition}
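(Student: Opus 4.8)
The plan is to induct on $\#E$. The base case $\#E = 1$ is the trivial representation of Section~\ref{sec: trivial}, where $d_\pm$ already vanish and there is nothing to prove. For the inductive step, pick a maximal element $\mu \in E$ (which exists since $E$ is finite), and set $E' := E \setminus \{\mu\}$. Since $\{\mu\}$ is a coideal and $E'$ is an ideal, Proposition~\ref{prop: subs and quotients} gives a short exact-up-to-homology sequence; more usefully, $V(\{\mu\})$ is a one-dimensional submodule concentrated in degree $|\mu|$ (with $d_\pm$ acting as zero on it), sitting inside $V(E)$, and the quotient $V(E)/V(\{\mu\})$ contains $V(E')$. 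I would first check the cleanest statement: the submodule $W := V(\{\mu\})$ is $1$-dimensional, spanned by the chain $[\mu]$ (no good chains of positive length start at a maximal element), so $V(E)/W$ has the same graded pieces as $V(E)$ except in degree $|\mu|$, where one dimension is removed. The key observation is that $V(E)/W \simeq U(E')$ in the notation of Proposition~\ref{prop: bounds}, i.e. the span of good chains whose \emph{endpoint} lies in $E'$, and this is isomorphic as a $\Bqt^{\ext}$-module to $V(E')$ only after the analogous quotient — so rather than chase the non-exactness, I would set up the radical/socle filtrations directly.

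The more robust approach: compare the socle filtration of $V(E)$ with that built from $E'$. Removing the maximal element $\mu$ removes exactly the one basis vector $[\mu]$, which spans a simple submodule $S_\mu$ on which all of $\Bqt$ (and in particular $d_\pm$) acts by zero and $\Delta_{p_m}$ acts by the scalar $p_m(\mu)$. One shows $S_\mu \subseteq \soc(V(E))$, and that $V(E)/S_\mu$ admits a filtration whose subquotients, together with $S_\mu$, account for all of the one-dimensional $(\zeta,\delta)$-isotypic pieces: indeed by calibratedness each weight space $V_k^{(\zeta,\delta)}$ is at most one-dimensional, so the composition factors of $V(E)$ are exactly the one-dimensional modules $L(\lambda;\underline w)$ indexed by good chains, on each of which $z_i$ acts by $w_i$, $\Delta_{p_m}$ by $p_m(\lambda) + \sum w_i^m$, the $T$-operators by the diagonal part of \eqref{eq: T Ram new}, and — crucially — $d_\pm$ by zero, since $d_+$ strictly increases the length $k$ and $d_-$ strictly decreases it, so neither can be a nonzero scalar on a one-dimensional weight space. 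Hence every composition factor of $V(E)$ has $d_\pm = 0$, and therefore $d_\pm$ act by zero on $\gr\filt_{\rad}(V(E)) = \ssn(V(E))$. This is essentially the whole content of the second bullet and does not even need the induction.

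For the first bullet, I would argue that the $\AH_k$-module structure (and the $\Delta$-module structure) on $\ssn(V)_k$ agrees with that on $V_k(E)$. Since $T_i$ and $z_i$ and the $\Delta_{p_m}$ all preserve the grading by $k$, they act on each $V_k(E)$ independently; the point is that $V_k(E)$ is already a \emph{semisimple} $\AH_k$-module — this follows because $E$ excellent forces each chain $[\underline w]$ to be good with calibrated $[\underline w]$ (Corollary~\ref{cor: excellent chains}), so $V_k(E)$ decomposes as a direct sum of the irreducible calibrated AHA modules $V_{\underline w}$ of Theorem~\ref{thm: Ram new}, one summand for each equivalence class of good chains under admissible transpositions, with multiplicity given by the number of distinct starting points $\lambda$. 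Semisimplicity of $V_k(E)$ as a module over the larger algebra $\AH_k \otimes \C[\Delta_{p_m}]$ then follows since the $\Delta$'s act diagonally and commute with $\AH_k$, giving a refinement into joint eigenspaces on which $\AH_k$ still acts through the $V_{\underline w}$. Because the radical and socle filtrations of $V(E)$ as a $\Bqt^{\ext}$-module restrict, in each degree $k$, to \emph{trivial} filtrations of the already-semisimple $\AH_k \otimes \C[\Delta]$-module $V_k(E)$, we get $\ssn(V)_k \simeq V_k(E)$ as claimed.

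The main obstacle I anticipate is bookkeeping with the non-exactness: $V(E)$ is genuinely not the direct sum of $V(\{\mu\})$ and $V(E')$, nor an extension of one by the other in the naive order, because $d_+$ connects $[\,\text{chain ending at }\mu\,]$ downstairs to $[\mu]$ and $d_-$ goes the other way. The clean way around this is to avoid realizing $V(E)$ as a two-term extension altogether and instead argue at the level of composition factors as above — the induction on $\#E$ then only serves to confirm that the list of composition factors of $V(E)$ is exactly $\{L(\lambda;\underline w)\}$ ranging over all good chains, which one proves by noting the $L(\lambda;\underline w)$ are pairwise non-isomorphic (distinct $(\zeta,\delta)$-spectra, by simple spectrum of the weighting plus calibratedness) and that $\sum_{\text{good chains}} \dim L(\lambda;\underline w) = \dim V(E)$. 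Once that is in hand, both bullets are immediate, and the only delicate point that remains is verifying that the radical filtration really does induce the trivial filtration in each fixed degree $k$, which follows from $\AH_k\otimes\C[\Delta]$-semisimplicity of $V_k(E)$ together with the fact that $\rad$ and $\soc$ of a $\Bqt^{\ext}$-module are $\AH_k\otimes\C[\Delta]$-submodules in each degree.
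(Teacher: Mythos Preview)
Your argument for the second bullet has a genuine gap: the claim that the composition factors of $V(E)$ are the one-dimensional modules $L(\lambda;\underline{w})$ is false. One-dimensional weight spaces do not force one-dimensional composition factors. Concretely, if $[\lambda;\underline{w}]$ is a good chain and $s_i$ is admissible with $w_i \neq q^{\pm 1}w_{i+1}$, then $T_i$ cannot act on a one-dimensional module by the ``diagonal part'' $\frac{(q-1)w_{i+1}}{w_i-w_{i+1}}$ of \eqref{eq: T Ram new}, since this scalar is neither $1$ nor $-q$, violating \eqref{eq:hecke relns}. Take for instance the four-element boolean poset $E=\{\lambda,\lambda\cup x,\lambda\cup y,\lambda\cup x\cup y\}$ with $y\notin\{q^{\pm1}x,t^{\pm1}x\}$: the two good $2$-chains $[\lambda;x,y]$ and $[\lambda;y,x]$ are linked by $T_1$ and together span a \emph{two}-dimensional simple $\Bqt^{\ext}$-subquotient (Ram's $V_{[y,x]}$ with $d_\pm=0$).

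Your strategy can be rescued: what is true is that every composition factor of $V(E)$ is concentrated in a single degree $k$, and this alone forces $d_\pm=0$ on it. One route is Theorem~\ref{thm:submodules}, since admissible transpositions preserve chain length. But this is no shorter than the paper's argument, and you should note that the paper's induction uses the submodule $U(\{\mu\})$ (good chains \emph{ending} at the maximal $\mu$), not $V(\{\mu\})$. This avoids the non-exactness you were fighting: by Remark~\ref{rem: U as kernel } one has $V(E)/U(\{\mu\})\cong V(E\setminus\{\mu\})$ on the nose, and on $U(\{\mu\})$ the operator $d_+$ already vanishes (maximality of $\mu$), so the degree filtration $U_0\subseteq U_0\oplus U_1\subseteq\cdots$ is a filtration by $\Bqt^{\ext}$-submodules whose associated graded kills $d_-$ as well.

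Your argument for the first bullet reaches the right conclusion, but the radical filtration does \emph{not} restrict to the trivial filtration in each degree $k$ (already for the two-element chain, $\rad(V)_0=\langle[\mu]\rangle\subsetneq V_0$). What makes the conclusion hold is that each $(\rad^i V)_k$ is an $\AH_k\otimes\C[\Delta]$-submodule of the semisimple module $V_k(E)$, and the associated graded of \emph{any} filtration of a semisimple module by submodules is isomorphic to the module itself.
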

\begin{proof}
Let $\mu \in E$ be a maximal element so that $I := \{\mu\}$ is a coideal and $U(I)$ is a submodule of $V(E)$ with quotient $V(E \setminus \{\mu\})$. By induction on the number of elements of $E$, it suffices to show that the semisimplification of $U(I)$ is given by simply setting $d_{+}, d_{-}$ to be identically zero. Note that by Theorem \ref{thm: Ram new} $U(I)_k$ is already semisimple as $\AH_k$ representation.

    To this end, note that by the maximality of $\mu$, $d_{+}$ is already identically zero on $U(I)$. This implies that we have a filtration by $\Bqt^{\ext}$-submodules
    \[
    U_0(I) \subseteq U_0(I) \oplus U_1(I) \subseteq \dots \subseteq U_0(I) \oplus \dots \oplus U_{m}(I) = U(I)
    \]
    where $m$ is the length of the longest chain in $E$ terminating at $\mu$. Taking the associated graded with respect to this filtration, we obtain the desired result. 
\end{proof}

Let us remark that a similar statement holds if $E$ is either upper or lower finite. More precisely, in these cases we can find a (infinite) filtration whose associated graded simply sets both $d_+$ and $d_-$ to zero. See Section \ref{sec: limits} below. 

\subsection{Direct and Inverse Limits}\label{sec: limits} One can use the representations $V(I)$ when $I$ is a (co)ideal of the excellent poset $E$ in order to give filtrations of $V(E)$ with nice properties. For an element $\lambda \in E$, define
\[
I_{\leq \lambda} := \{\mu \in E \mid \mu \leq \lambda\}, \qquad I_{\geq \lambda} := \{\mu \in E \mid \mu \geq \lambda\}.
\]
It is clear that $I_{\leq \lambda}$ is an ideal of $E$, while $I_{\geq \lambda}$ is a coideal. By Proposition \ref{prop: subs and quotients} we have maps
\[
\pi_{\lambda}: V(E) \twoheadrightarrow V(I_{\leq \lambda}), \qquad \iota_{\lambda}: V(I_{\geq\lambda}) \hookrightarrow V(E). 
\]

Moreover, if $\lambda \leq \mu$ then we have  that $I_{\leq \lambda}$ is an ideal in $I_{\leq \mu}$ while $I_{\geq \mu}$ is a coideal in $I_{\geq \lambda}$, so we have maps
\[
\pi_{\lambda, \mu}: V(I_{\leq \mu}) \twoheadrightarrow V(I_{\leq \lambda}), \qquad \iota_{\lambda, \mu}: V(I_{\geq \mu}) \hookrightarrow V(I_{\geq \lambda}).
\]
These maps are compatible in the sense that $\pi_{\lambda} = \pi_{\lambda, \mu}\circ\pi_{\mu}$ and $\iota_{\mu} = \iota_{\lambda}\circ\iota_{\lambda, \mu}$. Moreover, if $\lambda \leq \mu \leq \nu$, then we have $\pi_{\lambda, \nu} = \pi_{\lambda, \mu}\circ\pi_{\mu, \nu}$ and $\iota_{\lambda, \nu} = \iota_{\lambda, \mu}\circ \iota_{\mu, \nu}$. 

\begin{proposition}\label{prop: limits}
We have:
\[
V(E) \simeq\varprojlim_{\lambda \in E}V(I_{\leq \lambda}) \qquad \text{and} \qquad V(E) \simeq \varinjlim_{\lambda \in E}V(I_{\geq \lambda}). 
\]
\end{proposition}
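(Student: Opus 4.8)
The plan is to realise both sides as $V(E)$ by writing down the obvious comparison maps assembled from the $\pi_\lambda$ and $\iota_\lambda$ and checking that they are isomorphisms — equivalently, that $V(E)$ enjoys the relevant universal property — working throughout with the basis of good chains. Recall that for a coideal $I$ the module $V(I)$ sits inside $V(E)$ as the span of the good chains $[\mu;\underline w]$ with $\mu\in I$, while for an ideal $I$ the module $V(I)$ is the quotient of $V(E)$ by the span of the good chains whose endpoint $\mu\cup\underline w$ is \emph{not} in $I$ (Proposition~\ref{prop: subs and quotients}); under these identifications the $\iota_{\lambda,\mu},\iota_\lambda$ become inclusions and the $\pi_{\lambda,\mu},\pi_\lambda$ coordinate projections.

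For the direct limit I would invoke the universal property of the colimit. The inclusions $\iota_\lambda\colon V(I_{\ge\lambda})\hookrightarrow V(E)$ are compatible, hence induce a $\Bqt^{\ext}$-map $\varinjlim_\lambda V(I_{\ge\lambda})\to V(E)$; conversely, given a $\Bqt^{\ext}$-module $M$ and a compatible family $f_\lambda\colon V(I_{\ge\lambda})\to M$, set $g([\mu;\underline w]):=f_\mu([\mu;\underline w])$ on the basis. This is well defined since $f_\lambda|_{V(I_{\ge\mu})}=f_\mu$ whenever $\mu\ge\lambda$, and it is $\Bqt^{\ext}$-linear because each $V(I_{\ge\mu})$ is a \emph{submodule} of $V(E)$: in particular $d_+$ only adds a box at the top of a chain and hence fixes the starting vertex $\mu$, so applying any generator to $[\mu;\underline w]$ stays inside $V(I_{\ge\mu})$, where $g$ coincides with the module map $f_\mu$. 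Uniqueness of $g$ is clear from $V(E)=\bigcup_\lambda V(I_{\ge\lambda})$, every good chain $[\mu;\underline w]$ lying in $V(I_{\ge\mu})$. This yields $V(E)\simeq\varinjlim_\lambda V(I_{\ge\lambda})$.

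For the inverse limit, the compatible surjections $\pi_\lambda$ assemble into a $\Bqt^{\ext}$-map $\Phi\colon V(E)\to\varprojlim_\lambda V(I_{\le\lambda})$, which I would show is bijective. Injectivity is a short linear independence argument: $\ker\pi_\lambda$ is spanned by the good chains with endpoint not $\le\lambda$, so if $v=\sum_{i=1}^{s}a_i[\mu_i;\underline w^{(i)}]$ is a nonzero combination of distinct good chains lying in $\ker\Phi$, then applying $\pi_\lambda$ with $\lambda$ equal to the endpoint of the first chain retains the first summand and annihilates every summand with a different, non-dominated endpoint, forcing $a_1=0$ against $a_1\ne0$. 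Surjectivity is the statement that a compatible family $(v_\lambda)$ — which determines a well-defined scalar for each good chain, namely its coefficient in $v_\lambda$ for any $\lambda$ above the chain's endpoint — actually comes from an element of $V(E)$.

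I expect this last point to be the main obstacle: one must show that the coefficient function on good chains attached to a compatible family is \emph{finitely supported}, so that it defines an honest element of $V(E)$ mapping to $(v_\lambda)$ under $\Phi$ (equivalently, that $\varprojlim_\lambda V(I_{\le\lambda})$ does not spill over into a completion of $V(E)$). This is the only non-formal step, and it is where the grading of $E$ and the local finiteness of covers must be exploited — analysing, for each fixed chain-length $k$, how the good chains of length $k$ with endpoint $\le\lambda$ exhaust those of $V_k(E)$. Everything else — $\Bqt^{\ext}$-linearity of $\Phi$ and of the comparison map, and the compatibility of the $\pi$'s and $\iota$'s — is formal and already recorded before the statement.
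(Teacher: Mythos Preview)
Your direct-limit argument is correct and is essentially the paper's: since the $\iota_\lambda$ are inclusions, the claim reduces to $V(E)=\bigcup_\lambda V(I_{\ge\lambda})$, which holds because $[\mu;\underline w]\in V(I_{\ge\mu})$.

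The paper does \emph{not} prove the inverse-limit assertion directly; it deduces it from the direct-limit case for the opposite poset $E^\vee$ via the duality $V(E)^*\simeq V(E^\vee)$ of Section~\ref{sec: duality} (note that $(I_{\le\lambda})^\vee=I_{\ge\lambda^\vee}$ in $E^\vee$). Your suspicion that surjectivity of $\Phi$ is the real obstacle is well founded, and in fact a direct argument along your lines cannot succeed: for $E=\mathcal P$ the poset of partitions, the compatible family $v_\lambda:=\sum_{\mu\subseteq\lambda}[\mu]\in V_0(I_{\le\lambda})$ (a finite sum for each fixed $\lambda$) lies in the module-theoretic inverse limit but has no preimage in $V_0(E)$, since any such preimage would need $[\mu]$-coefficient equal to $1$ for \emph{every} partition $\mu$. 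So neither the grading nor local finiteness rescues surjectivity; the $\varprojlim$ in the proposition has to be read through the restricted-dual formalism of~\eqref{eq: duality good} (equivalently, in the category of calibrated representations), which is exactly what the paper's duality argument supplies. Your injectivity argument is fine once you add that the summands surviving $\pi_\lambda$ remain distinct basis vectors of $V(I_{\le\lambda})$, hence linearly independent.
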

\begin{proof}
We only show the direct limit assertion. The other assertion follows from this and the duality in Section \ref{sec: duality} (which is independent of the intervening material). Since all the maps $\iota_{\lambda, \mu}$ and $\iota_{\lambda}$ are inclusions, the direct limit assertion is equivalent to the statement that
\[
V(E) = \bigcup_{\lambda \in E}V(I_{\geq \lambda}).
\]
But this is clear since $[\lambda; \underline{w}] \in V(I_{\geq \lambda})$. 
\end{proof}

\begin{remark}
The statement of Proposition \ref{prop: limits} can be strengthened, with essentially the same proof, as follows. Let $P \subseteq E$ be a subset with the property that for every $\lambda \in E$ there exists $\mu \in P$ with $\mu \leq \lambda$. Then,
\[
V(E) \simeq \varprojlim_{\lambda \in P} V(I_{\leq \lambda}), \qquad V(E) \simeq \varinjlim_{\lambda \in P}V(I_{\geq \lambda}).
\]

\end{remark}

Recall that a poset $E$ is said to be \emph{lower finite} (resp. \emph{upper finite}) if, for every $\lambda \in E$, the ideal $I_{\leq \lambda} := \{\mu \in E \mid \mu \leq \lambda\}$ (resp. the coideal $I_{\geq \lambda} := \{\mu \in E \mid \mu \geq \lambda\}$) is finite. 

\begin{corollary}
The representation $V(E)$ is the direct (resp. inverse) limit of finite-dimensional representations if and only if the poset $E$ is upper (resp. lower) finite. 
\end{corollary}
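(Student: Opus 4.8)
The plan is to deduce the statement from Proposition~\ref{prop: limits} together with one elementary finiteness dictionary for the modules attached to (co)ideals: for a coideal $I\subseteq E$ (and, dually, for an ideal), the representation $V(I)$ is finite-dimensional if and only if $I$ is finite. One direction is harmless: if $I$ is finite it contains only finitely many saturated chains -- each being a repetition-free sequence of elements of $I$ -- hence only finitely many good chains, so $V(I)=\bigoplus_k V_k(I)$ is finite-dimensional. For the converse, the length-zero good chains $\{[\mu] : \mu\in I\}$ are part of the good-chain basis of $V(I)$, so $\dim_{\C(q,t)}V_0(I)=|I|$; thus $V(I)$ is infinite-dimensional whenever $I$ is.

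The only genuinely structural input I would need is the identification, for each $\lambda\in E$, of the cyclic submodule $\Bqt^{\ext}\cdot[\lambda]$ with $V(I_{\geq\lambda})$. One inclusion is immediate: $V(I_{\geq\lambda})$ is a $\Bqt^{\ext}$-submodule (Proposition~\ref{prop: subs and quotients}(2)) that contains $[\lambda]$. For the reverse inclusion, write $W:=\Bqt^{\ext}\cdot[\lambda]$; by Lemma~\ref{lem: producing a coideal} the set $I_W=\{\mu : [\mu]\in W_0\}$ is a coideal, and since $\lambda\in I_W$ it contains $I_{\geq\lambda}$, whence $V(I_{\geq\lambda})\subseteq V(I_W)\subseteq W$ by Proposition~\ref{prop: bounds}(2). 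Combined with the dictionary above, this yields: $\Bqt^{\ext}\cdot[\lambda]$ is finite-dimensional if and only if $I_{\geq\lambda}$ is finite.

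Now the equivalence in the ``direct limit / upper finite'' case falls out. If $E$ is upper finite, each $V(I_{\geq\lambda})$ is finite-dimensional, and because every good chain $[\lambda;\underline w]$ lies in $V(I_{\geq\lambda})$, the finite sums $V(I_{\geq\lambda_1})+\cdots+V(I_{\geq\lambda_n})$ form a directed family of finite-dimensional submodules whose union is $V(E)$ (a directed refinement of Proposition~\ref{prop: limits}); hence $V(E)$ is a direct limit of finite-dimensional representations. Conversely, any module expressed as a direct limit of finite-dimensional modules is the directed union of finite-dimensional submodules, so all of its finitely generated submodules are finite-dimensional; applying this to the cyclic submodule $\Bqt^{\ext}\cdot[\lambda]=V(I_{\geq\lambda})$ forces $I_{\geq\lambda}$ to be finite for every $\lambda$, i.e.\ $E$ is upper finite. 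For the ``inverse limit / lower finite'' case I would not repeat the argument: $E$ is lower finite exactly when the opposite poset $E^\vee$ is upper finite, and by Theorem~\ref{thm: duality intro} the restricted-dual functor of Section~\ref{sec: duality} carries $V(E^\vee)$ to $V(E)$. Being contravariant, exact, finite-dimensionality-preserving and reflexive on these modules (since $(E^\vee)^\vee=E$), it interchanges ``is a direct limit of finite-dimensional representations'' with ``is an inverse limit of finite-dimensional representations'', so the inverse-limit equivalence for $E$ is the direct-limit equivalence for $E^\vee$.

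The main obstacle is the converse direction of the direct-limit equivalence: one must recover upper finiteness from the mere existence of a presentation of $V(E)$ as a direct limit of finite-dimensional representations, and it is the identification $\Bqt^{\ext}\cdot[\lambda]=V(I_{\geq\lambda})$ -- resting on Lemma~\ref{lem: producing a coideal} and Proposition~\ref{prop: bounds} -- that converts this into the finiteness of each principal coideal $I_{\geq\lambda}$. Two minor points warrant care: the colimit in Proposition~\ref{prop: limits} is indexed by $E$, which need not be directed, so the forward direction is phrased through the genuinely directed family of finite sums of the $V(I_{\geq\lambda})$; and the duality step should record that the restricted dual of Section~\ref{sec: duality} is reflexive on the modules $V(E)$, which is what makes the exchange of $\varinjlim$ and $\varprojlim$ valid.
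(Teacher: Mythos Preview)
Your argument is correct and follows essentially the same route as the paper. The one minor difference is in the converse of the direct-limit statement: you prove the intermediate identification $\Bqt^{\ext}\cdot[\lambda]=V(I_{\geq\lambda})$ (using both Lemma~\ref{lem: producing a coideal} and Proposition~\ref{prop: bounds}), then conclude that this cyclic submodule must be finite-dimensional. The paper argues more directly: if $I_{\geq\lambda}$ is infinite and $W$ is any submodule containing $[\lambda]$, then the coideal $I_W$ of Lemma~\ref{lem: producing a coideal} contains $I_{\geq\lambda}$ and is therefore infinite, forcing $\dim W_0=|I_W|=\infty$. This avoids Proposition~\ref{prop: bounds} and the explicit cyclic identification, but your version yields the slightly sharper statement that the cyclic module generated by $[\lambda]$ is exactly $V(I_{\geq\lambda})$. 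Both the paper and you reduce the inverse-limit assertion to the direct-limit one via the duality $V(E)^*\cong V(E^\vee)$ of Section~\ref{sec: duality}.
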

\begin{proof}
Again, we only show the direct limit statement. If $E$ is upper finite then $V(I_{\geq \lambda})$ is finite-dimensional for every $\lambda \in E$, so the result follows immediately from Proposition \ref{prop: limits}.
Now assume that $E$ is not upper finite, so there exists $\lambda \in E$ such that $I_{\geq \lambda}$ is infinite. This implies that every coideal of $E$ containing $\lambda$ must be infinite. Hence by Lemma \ref{lem: producing a coideal}, $[\lambda] \in V_0(E)$ is not contained in any finite-dimensional subrepresentation of $V(E)$ and thus $V(E)$ cannot be a direct limit of finite-dimensional representations. 
\end{proof}

\subsection{Gradings}
Notice that the representation $V(E)$ is automatically bigraded
\[
V(E) = \bigoplus_{(r,s) \in \Z^{2}} V_{r,s}(E),
\]
with the bi-degree of a chain $(\lambda \to \lambda \cup w_k \to \cdots \to \lambda \cup w_k \cup \dots \cup w_1 = \mu)$ given by $(|\mu|,|\lambda|)$. In particular, we have
\[
V_{k}(E) = \bigoplus_{r-s = k}V_{r,s}(E).
\]
It is straightforward to see that $d_{+}$ has degree $(1,0)$ and $d_{-}$ has degree $(0,1)$. The generators $z_i, T_i$ and $\Delta_{p}$ all have degree $(0,0)$, whereas $\varphi = \frac{1}{q-1}[d_{+}, d_{-}]$ has degree $(1,1)$. 

On the other hand, the algebra $\Bqt$ is triply graded, see \cite[Section 3.3]{CGM}, with $d_{+}$ in degree $(1,0,0)$, $d_{-}$ in degree $(0,1,0)$, $T_{i}$ in degree $(0,0,0)$ and $z_i$ in degree $(0,1,1)$. The two gradings are related by
\[
(a,b,c) \mapsto (a, b-c).
\]

\subsection{Homomorphisms Between $\Bqt$ Representations}\label{subsec:Homomorphisms} We use the results of the previous sections to study the Hom space $\Hom_{\Bqt^{\ext}}(V(E), V(E'))$ where $E, E'$ are excellent weighted posets.

Let us assume that we have a $\Bqt^{\ext}$-homomorphism $f: V(E) \to V(E')$ such that $$f = \bigoplus_{k \geq 0}\left(f_{k}: V_{k}(E) \to V_{k}(E')\right).$$ In particular, we have $f_0: V_{0}(E) \to V_{0}(E')$. Since $f$ is a $\Bqt^{\ext}$-homomorphism, it must send $\Delta_{p_{m}}$-eigenvectors to $\Delta_{p_{m}}$-eigenvectors. Thus, for each $\lambda \in E$, we have $f([\lambda]) = \alpha_{\lambda}[\lambda']$ for some $\lambda' \in E'$ and $0 \neq \alpha_{\lambda} \in \C(q,t)$, or $f([\lambda]) = 0$. Moreover, if $f([\lambda]) = [\lambda']$ then $p_m(\lambda) = p_m(\lambda')$ for every $m > 0$.

For any poset $E$, let us define a new poset $E_{\mathbf{0}} := E\sqcup\{\mathbf{0}\}$, with $\lambda < \mathbf{0}$ for all $\lambda \in E$.

\begin{theorem}\label{thm: hom}
Let $f: V(E) \to V(E')$ be a homomorphism of $\Bqt^{\ext}$-modules. Define a map
\[
F: E_{\mathbf{0}} \to E'_{\mathbf{0}}, \qquad F(\lambda) = \begin{cases} \lambda', & \lambda \in E \; \text{and} \; f([\lambda]) = \alpha_{\lambda}[\lambda'] \neq 0, \\ \mathbf{0}, & \lambda \in E \; \text{and} \; f([\lambda]) = 0, \\
\mathbf{0}, & \lambda = \mathbf{0}.
\end{cases}
\]
Then,
\begin{enumerate}
    \item $F^{-1}(\mathbf{0}) \subseteq E_{\mathbf{0}}$ is a coideal in $E_{\mathbf{0}}$.
    \item $F(E_{\mathbf{0}}) \subseteq E'_{\mathbf{0}}$ is a coideal in $E'_{\mathbf{0}}$.
    \item If $\lambda \in E$, $\lambda \not\in F^{-1}(\mathbf{0})$, then $p_{m}(\lambda) = p_{m}(F(\lambda))$ for every $m > 0$.
    \item If $\lambda, \mu \in E$ are such that $\mu$ covers $\lambda$, then either $F(\mu) = \mathbf{0}$ or $F(\mu)$ covers $F(\lambda)$. 
    \item If $\lambda, \mu \in E$ are such that $F(\mu)$ covers $F(\lambda)$ and $F(\mu) \neq \mathbf{0}$, then $\mu$ covers $\lambda$.
\end{enumerate}
Conversely, if $F: E_{\mathbf{0}} \to E'_{\mathbf{0}}$ is a map such that $F(\mathbf{0}) = \mathbf{0}$ and satisfying (1), (2), (3), (4) and (5), then it induces a $\Bqt^{\ext}$-homomorphism $f: V(E) \to V(E')$. 
\end{theorem}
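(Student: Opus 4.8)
The plan is to prove the two implications separately, both resting on the simple-spectrum property of the $\Delta$- and $z$-operators and on the fact (immediate from Definition \ref{def: calibrated from excellent}) that applying $d_{+}$ repeatedly to $[\lambda]\in V_0$ produces, with nonzero coefficients, every good chain of $E$ starting at $\lambda$.

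For the forward direction I would first establish a normal form for $f$ on basis vectors: for every good chain $[\lambda;\underline w]$ in $E$, either $f([\lambda;\underline w])=0$ or $f([\lambda;\underline w])=\beta\,[F(\lambda);\underline w]$ with $\beta\in\C(q,t)^{*}$, and in the latter case $[F(\lambda);\underline w]$ is automatically a good chain in $E'$. This goes by induction on the length $k$, the case $k=0$ being the discussion preceding the theorem: for the inductive step apply $f$ to $d_{+}[\lambda;\underline w]=\sum_{x}c(\lambda;\underline w,x)[\lambda;\underline w,x]$, use that $f$ commutes with $d_{+}$ and with $z_{k+1}$, and compare $z_{k+1}$-eigencomponents on the two sides; since the edge coefficients are nonzero, each $f([\lambda;\underline w,x])$ is forced to be the appropriate multiple of $[F(\lambda);\underline w,x]$ or to vanish. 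Granting the normal form, property (3) is the statement that $f$ intertwines the $\Delta_{p_m}$, which act by $p_m(\lambda)$ and by $p_m(F(\lambda))$; (1) follows because $f([\lambda])=0$ propagates up any maximal chain through $\lambda$, making $F^{-1}(\mathbf 0)$ upward closed; (4) and (5) follow by applying the normal form to the one-step chain $[\lambda;x]$ and reading off that $F(\lambda\cup x)=F(\lambda)\cup x$ whenever the left side is not $\mathbf 0$. Property (2) needs one extra step: if $F(\lambda)\neq\mathbf 0$ and $\mu'=F(\lambda)\cup x$ covers $F(\lambda)$ in $E'$, matching $z_1$-eigencomponents in $f(d_{+}[\lambda])=d_{+}f([\lambda])$ (with $f([\lambda])=\alpha_{\lambda}[F(\lambda)]$, $\alpha_{\lambda}\neq 0$) forces $x$ to be addable for $\lambda$ with $F(\lambda\cup x)=\mu'$; iterating upward along a chain in $E'$ shows $F(E_{\mathbf 0})$ is a coideal.

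For the converse I would package the combinatorics through (co)ideals. Given $F$ with $F(\mathbf 0)=\mathbf 0$ satisfying (1)--(5), set $E_1:=\{\lambda\in E : F(\lambda)\neq\mathbf 0\}$; by (1) this is an ideal of $E$ (the complement in $E_{\mathbf 0}$ of the coideal $F^{-1}(\mathbf 0)$, which misses $\mathbf 0$), and by (2) the image $E_1':=F(E_1)=F(E_{\mathbf 0})\setminus\{\mathbf 0\}$ is a coideal of $E'$. The key claim is that $F$ restricts to an isomorphism of weighted posets $E_1\xrightarrow{\sim}E_1'$: surjectivity is by construction; injectivity is (3) together with the simple-spectrum condition (if $F(\lambda)=F(\mu)\neq\mathbf 0$ then $p_m(\lambda)=p_m(F(\lambda))=p_m(\mu)$ for all $m$); (3) also gives weight-preservation; and (4), (5) give that covers are preserved and reflected, using that covers inside a coideal coincide with covers in the ambient poset. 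Since $E_1$ and $E_1'$ are excellent by Proposition \ref{prop: subs and quotients}, Theorem \ref{thm: existence and uniqueness} upgrades this isomorphism of weighted posets to an isomorphism $V(E_1)\cong V(E_1')$ of $\Bqt^{\ext}$-modules (transport a choice of edge functions along $F$ and invoke independence of the choice). Composing the quotient $V(E)\twoheadrightarrow V(E_1)$ and the inclusion $V(E_1')\hookrightarrow V(E')$ from Proposition \ref{prop: subs and quotients} with this isomorphism yields $f:V(E)\to V(E')$, and unwinding the maps on $V_0$ shows $f([\lambda])=[F(\lambda)]$ for $\lambda\in E_1$ and $f([\lambda])=0$ otherwise, so $f$ realizes $F$.

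I expect the main obstacle to be the bookkeeping in the forward direction --- proving the normal form and then squeezing (2), (4), (5) out of it --- since each step requires translating an identity of vectors into the corresponding identities of $z_i$-eigencomponents while keeping track of which addable weights survive on each side. By contrast, the poset-theoretic manipulations and the appeal to Theorem \ref{thm: existence and uniqueness} in the converse should be routine.
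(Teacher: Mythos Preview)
Your proposal is correct in both directions, but each takes a genuinely different route from the paper.

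\textbf{Forward direction.} You first prove the full ``normal form'' (that $f([\lambda;\underline w])$ is either zero or a nonzero multiple of $[F(\lambda);\underline w]$) by induction on chain length using $f\circ d_{+}=d_{+}\circ f$, and then read off (1)--(5) from it. The paper instead proves (1)--(5) directly with much lighter machinery: (1) and (2) come immediately from Lemma~\ref{lem: producing a coideal} applied to $\ker f$ and $\operatorname{im}f$, while (4) and (5) are obtained by a one--line computation with $d_{-}d_{+}$ applied to $[\lambda]\in V_0$. Your normal form is essentially what the paper proves afterwards as Proposition~\ref{prop:hom}, and the paper uses the already--established (1)--(5) plus Lemma~\ref{lem: compatible edge} to get it. So you have reversed the order of the two results; this is legitimate, but be aware that the bookkeeping you flag as ``the main obstacle'' is precisely what the paper avoids. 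One small slip: in your inductive step the newly added weight $x$ sits in position~$1$ of $[\lambda;\underline w,x]$, so you should project to $z_{1}$--eigenspaces, not $z_{k+1}$; you in fact use $z_1$ correctly later when arguing for (2).

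\textbf{Converse direction.} Your argument is more structural and arguably cleaner than the paper's: you factor $f$ as $V(E)\twoheadrightarrow V(E_1)\xrightarrow{\sim}V(E_1')\hookrightarrow V(E')$ using Proposition~\ref{prop: subs and quotients} for the outer maps, transport of structure for the middle isomorphism, and Theorem~\ref{thm: existence and uniqueness} to reconcile the transported edge function on $E_1'$ with the one inherited from $E'$. The paper instead introduces Lemma~\ref{lem: compatible edge} to choose compatible edge functions from the outset and then writes $f$ down explicitly and checks the relations by hand. Your approach buys you a shorter verification at the cost of invoking Theorem~\ref{thm: existence and uniqueness}; the paper's approach is more explicit (the resulting $f$ sends each basis vector to a basis vector, not just to a scalar multiple) and yields the formula in Proposition~\ref{prop:hom} as a by--product. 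Note that your claim ``$f([\lambda])=[F(\lambda)]$'' at the end is only true up to a nonzero scalar, since the isomorphism from Theorem~\ref{thm: existence and uniqueness} rescales the basis; this is harmless for the theorem as stated.
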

\begin{proof}
For (1), note that the kernel $\ker(f_{0})$ is spanned by $\{[\lambda] \mid \lambda \in E\cap F^{-1}(\mathbf{0})\}$. Since $\mathbf{0} \in E_{\mathbf{0}}$ is declared to be the maximum element, the result now follows from Lemma \ref{lem: producing a coideal}. The proof of (2) is similar, now observing that the image of $f$ is a $\Bqt^{\ext}$-submodule of $V(E')$ and that the image of $f_0$ is spanned by $\{f([\lambda]) \mid \lambda \in E\}$. Statement (3) follows since $f$ intertwines the actions of the $\Delta$-operators.

Let us now show Statement (4). Assume $\mu$ covers $\lambda$. If $F(\mu) \neq \mathbf{0}$ then by (2) we obtain $F(\lambda) \neq \mathbf{0}$. Now, a scalar multiple of $f([\mu])$ is a summand in $f(d_{-}d_{+}[\lambda])$. Since $f$ is a homomorphism, $f(d_{-}d_{+}[\lambda]) = d_{-}d_{+}(f([\lambda]))$. However, all the summands appearing in $d_{-}d_{+}(f([\lambda]))$ are scalar multiples of $[\nu]$ where $\nu$ covers $F(\lambda)$, hence $F(\mu)$ covers $F(\lambda)$.

To show Statement (5), assume $\mathbf{0} \neq F(\mu)$ covers $F(\lambda)$, in particular, $F(\lambda) \neq \mathbf{0}$. Then, $F(\mu)$ appears as a summand in $d_{-}d_{+}[F(\lambda)]$, which means that an element of weight equal to that of $F(\mu)$ must appear as a summand in $d_{-}d_{+}(\lambda)$. That is, there is an element of weight equal to that of $\mu$ that covers $\lambda$. By the simple spectrum condition, we must have that $\mu$ covers $\lambda$. 

Now, assume that $F: E_{\mathbf{0}} \to E'_{\mathbf{0}}$ is a map satisfying (1), (2), (3), (4) and (5). By Lemma \ref{lem: compatible edge} below, we can choose compatible edge functions on $E$ and $E'$, so we assume we have chosen these. If $F \equiv \mathbf{0}$, then the map $f: V(E) \to V(E')$ is simply the zero map. So let us assume this is not the case. Let
\[
\lambda \to \lambda \cup w_k \to \cdots \to \lambda \cup w_k \cdots \cup w_1 = \mu
\]
be a chain in $E$. Assume $F(\mu) \neq \mathbf{0}$. Then, by (1) and (4):
\[
F(\lambda) \to F(\lambda \cup w_k) \to \cdots \to F(\lambda \cup w_k \cdots \cup w_1) = F(\mu)
\]
is a chain in $E'$. Moreover, by (3) this chain is
\[
F(\lambda) \to F(\lambda) \cup w_k \to \cdots \to F(\lambda) \cup w_k \cdots \cup w_1 = F(\mu)
\]
thus, we define a map $f_{k}: V_{k}(E) \to V_{k}(E')$ by 
\begin{equation}\label{eqn: def psi}
f_{k}\Bigl([\lambda; \underline{w}]\Bigr) = \begin{cases} [F(\lambda); \underline{w}], & F(\lambda \cup \underline{w}) \neq \mathbf{0} \\
0, & \text{else}\end{cases}.
\end{equation}
We claim that $f = \bigoplus f_{k}$ is a $\Bqt^{\ext}$-homomorphism. By (3), $f$ intertwines the action of the $\Delta$ operators, and it is clear from the formula \eqref{eqn: def psi} that $f$ also intertwines the action of the affine Hecke algebras and the operator $d_{-}$. It only remains to show that it intertwines the action of $d_{+}$.

For this, assume that $f([\lambda; \underline{w}]) = 0$. Then $F(\lambda \cup \underline{w}) = \mathbf{0}$. By (1), $F(\mu) = \mathbf{0}$ for every $\mu > \lambda \cup \underline{w}$. It follows that $f(d_{+}[\lambda; \underline{w}]) = 0$, as needed. If, on the other hand, $F(\lambda \cup \underline{w}) \neq \mathbf{0}$, then by (2) every element in $E'$ that covers $F(\lambda \cup \underline{w})$ also belongs to the image of $F$. Let $\nu \in E$ be such that $F(\nu)$ covers $F(\lambda \cup \underline{w})$. By (5), $\nu$ covers $\lambda \cup \underline{w}$. From here and the explicit formulas for $d_{+}$, together with the fact that we have chosen compatible edge functions, the result follows. 
\end{proof}

\begin{lemma}\label{lem: compatible edge}
Let $E, E'$ be excellent posets, and let $F: E_{\mathbf{0}} \to E'_{\mathbf{0}}$ be a function satisfying (1)--(5) of Theorem \ref{thm: hom}. Let $c$ be an edge function on $E$ satisfying \eqref{eq: monodromy}, and define a partial edge function $c'$ on $E'$ by
\begin{equation}\label{eq: partial c}
c'\bigl(F(\mu); x \bigr) = c(\mu; x) \; \text{if} \; x \; \text{is addable for} \; \mu \; \text{and} \; F(\mu\cup x) \neq \mathbf{0}. 
\end{equation}
Then, $c'$ can be extended to an edge function on $E'$ satisfying \eqref{eq: monodromy}. 
\end{lemma}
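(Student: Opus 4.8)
The plan is to check first that $c'$ is unambiguously defined, then to recognize its domain as the edge set of a sub-coideal $E'' \subseteq E'$ which is isomorphic as a weighted poset to an ideal of $E$, and hence carries a genuine solution of \eqref{eq: monodromy}, and finally to extend by the spanning-tree (cocycle) argument used in the proof of Theorem \ref{thm: existence and uniqueness}. For well-definedness, write $Z := E \cap F^{-1}(\mathbf{0})$, which is a coideal of $E$ by (1), so that $E \setminus Z$ is an ideal. If $\mu, \mu'' \in E \setminus Z$ and $F(\mu) = F(\mu'')$, then (3) gives $p_m(\mu) = p_m(\mu'')$ for all $m$, whence $\mu = \mu''$ by the simple spectrum condition. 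So $F$ is injective on $E \setminus Z$, the preimage $\mu$ of $F(\mu)$ in \eqref{eq: partial c} is unique, and $c'$ is a well-defined $\C(q,t)^*$-valued function on the set of pairs $(F(\mu), x)$ with $x$ addable for $\mu$ and $F(\mu \cup x) \neq \mathbf{0}$.

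Next I would identify this domain. Put $E'' := F(E \setminus Z)$; since $F(\mathbf 0) = \mathbf 0$, condition (2) says $E'' \cup \{\mathbf{0}\} = F(E_{\mathbf 0})$ is a coideal of $E'_{\mathbf 0}$, so $E''$ is a coideal of $E'$. Because covering relations inside the ideal $E \setminus Z$ and inside the coideal $E''$ coincide with those of the respective ambient posets, conditions (3), (4) and (5) together assert exactly that $F$ restricts to an isomorphism of weighted posets $E \setminus Z \xrightarrow{\ \sim\ } E''$. Unwinding \eqref{eq: partial c} (using (3)--(4) to identify $F(\mu \cup x)$ with $F(\mu) \cup x$, and (5) to pass back), one sees that the domain of $c'$ is precisely the edge set of $\Gamma_{E''}$ and that $c'$ is the transport along this isomorphism of the restriction $c|_{E \setminus Z}$. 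Since \eqref{eq: monodromy} only involves pairs of two-step chains $[\lambda; x, y], [\lambda; y, x]$ contained in the sub-poset, and such chains in the ideal $E \setminus Z$ are automatically chains of $E$, the restriction $c|_{E \setminus Z}$ satisfies \eqref{eq: monodromy}; hence so does $c'$ on $E''$.

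It then remains to extend an edge function from a coideal to the whole poset. Following the proof of Theorem \ref{thm: existence and uniqueness}, I would regard $c'$ as a $\C(q,t)^*$-valued $1$-cochain on the subgraph $\Gamma_{E''} \subseteq \Gamma_{E'}$; by Lemma \ref{lem: cycles relations}, condition \eqref{eq: monodromy} is equivalent to saying that $c'$ evaluates to $\Upsilon$ on every cycle of $\Gamma_{E''}$, and such cycles are a fortiori cycles of $\Gamma_{E'}$. In each connected component of $\Gamma_{E'}$ choose a spanning forest $T_0$ of $\Gamma_{E''}$ and extend it to a spanning tree $T$ of the component; set the values of the extension on $T \cap \Gamma_{E''} = T_0$ equal to the given $c'$, and choose arbitrary nonzero values on the remaining tree edges. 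As in \emph{loc. cit.}, the values on all other edges are then forced through $\Upsilon$, the resulting global edge function takes values in $\C(q,t)^*$ and satisfies \eqref{eq: monodromy}, and it restricts to $c'$ on $\Gamma_{E''}$, because for any non-tree edge $e$ of $\Gamma_{E''}$ the fundamental cycle of $e$ with respect to $T$ stays inside $\Gamma_{E''}$, so its $\Upsilon$-value is already matched by $c'$.

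I expect the only delicate point to be this last piece of bookkeeping: arranging that $T$ uses no edge of $\Gamma_{E''}$ beyond those of $T_0$, so that the fundamental cycles of $\Gamma_{E''}$-edges are again cycles of $\Gamma_{E''}$. This rests on the elementary observation that the two endpoints of any edge of $\Gamma_{E''}$ lie in the same connected component of $\Gamma_{E''}$, so that extending $T_0$ to $T$ never forces a $\Gamma_{E''}$-edge into the tree. Everything else is a direct consequence either of the simple spectrum condition, of an unwinding of conditions (1)--(5), or of the cocycle argument already carried out for Theorem \ref{thm: existence and uniqueness}.
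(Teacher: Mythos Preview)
Your proof is correct and follows essentially the same spanning-forest extension argument as the paper's proof. The paper is terser: it defines the subgraph $\Gamma \subseteq \Gamma_{E'}$ of edges between elements in the image of $F$ (your $\Gamma_{E''}$), picks a spanning forest of $\Gamma$, extends to one of $\Gamma_{E'}$, and then invokes Theorem~\ref{thm: existence and uniqueness}, noting only that the extension agrees with $c'$ on $\Gamma$ ``because $c$ satisfied \eqref{eq: monodromy}.'' You unpack several points the paper leaves implicit---the well-definedness of $c'$ via injectivity of $F$ on $E\setminus Z$, the identification of the domain of $c'$ with the full edge set of $\Gamma_{E''}$ (using (5) for the reverse direction), and the verification that fundamental cycles of $\Gamma_{E''}$-edges stay inside $\Gamma_{E''}$---but the underlying strategy is identical.
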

\begin{proof}
As in the proof of Theorem \ref{thm: existence and uniqueness} let $\Gamma_{E'}$ be the associated graph to the poset $E'$, where edges are given by covering relations. Let $\Gamma \subseteq \Gamma_{E'}$ be the subgraph whose edges are those that connect elements in the image of $F$. Pick a spanning forest for $\Gamma$, and extend to a spanning forest for $\Gamma_{E'}$. Then choose any edge function on this spanning forest that coincides with $c'$ on those edges belonging to $\Gamma$. By (the proof of) Theorem \ref{thm: existence and uniqueness}, this edge function extends uniquely to an edge function satisfying \eqref{eq: monodromy}, and we need to verify that this extension coincides with $c'$ on $\Gamma$. But this follows because $c$ satisfied \eqref{eq: monodromy}. 
\end{proof}

In view of Theorem \ref{thm: hom} it is worth spelling out an explicit description of both the kernel and the image of a map $f: V(E) \to V(E')$. To simplify the argument and statement of the result, in the next proposition we assume that the edge functions on $E$ and $E'$ are chosen compatibly following Lemma \ref{lem: compatible edge}.

\begin{proposition}\label{prop:hom}
Let $E, E'$ be excellent weighted posets, $f: V(E) \to V(E')$ a $\Bqt^{\ext}$-morphism and $F: E_{\mathbf{0}} \to E'_{\mathbf{0}}$ its associated function as in Theorem \ref{thm: hom}. Assume that the edge functions on $E$ and $E'$ are chosen compatibly following Lemma \ref{lem: compatible edge}. Then, for every good chain $[\lambda; \underline{w}] \in V(E)$:
\begin{equation}\label{eq: hom}
f\Bigr([\lambda; \underline{w}]\Bigr) = \begin{cases} \alpha_{\lambda}[F(\lambda); \underline{w}] & ; \text{ if }F(\lambda \cup \underline{w}) \neq \mathbf{0} \\ 0 &; \text{ else}. \end{cases}
\end{equation}
where $\alpha_{\lambda}$ is a nonzero scalar such that $f([\lambda]) = \alpha_{\lambda}[F(\lambda)]$ (note that $F(\lambda \cup \underline{w}) \neq \mathbf{0}$ implies that $F(\lambda) \neq \mathbf{0}$). In particular we obtain that:
\begin{enumerate}
    \item The kernel of $f$ is spanned by good chains $[\lambda; \underline{w}] \in V(E)$ such that $\lambda \cup \underline{w} \in F^{-1}(\mathbf{0})\cap E$. In particular, the kernel of $f$ is $U(F^{-1}(\mathbf{0})\cap E)$.
    \item The image of $f$ is spanned by all the good chains $[\lambda'; \underline{w}] \in V(E')$ where $\lambda' \in F(E)\cap E'$. In particular, the image of $f$ is isomorphic to $V(F(E)\cap E')$.   
\end{enumerate}
 \end{proposition}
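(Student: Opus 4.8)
The plan is to determine $f$ completely from the single fact that it intertwines the diagonalizable operators $z_1,\dots,z_k$ and the $\Delta_{p_m}$, and then to read off the kernel and image from the resulting closed formula \eqref{eq: hom} by a Vandermonde argument (Lemma \ref{lem: vandermonde}). For the first half, recall that by Theorem \ref{thm: relations hold} the representation $V(E')$ is calibrated, so each homogeneous component $V_k(E')$ has a basis of simultaneous eigenvectors for $z_1,\dots,z_k$ and all the $\Delta_{p_m}$ with simple joint spectrum. Since $f$ commutes with all of these operators, $f([\lambda;\underline w])$ is either $0$ or a nonzero scalar multiple of a basis vector of $V_k(E')$, and in the latter case that basis vector is the unique one carrying the same eigenvalues as $[\lambda;\underline w]$, namely $z_i\mapsto w_i$ and $\Delta_{p_m}\mapsto p_m(\lambda)+\sum_i w_i^m$. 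Uniqueness holds because matching the $z$-eigenvalues fixes the weight sequence, and then matching the $\Delta$-eigenvalues fixes the value of each $p_m$ on the initial vertex, hence, by simple spectrum of $E'$, the initial vertex itself.

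Next I would identify that target vector. When $F(\lambda\cup\underline w)=\mathbf 0$ we have $f([\lambda\cup\underline w])=0$; if $f([\lambda;\underline w])$ were a nonzero scalar multiple of a basis vector $[\mu';\underline w]$ of $V(E')$, then applying $d_-^{k}$ and using $d_-^{k}[\lambda;\underline w]=[\lambda\cup\underline w]$ and $d_-^{k}[\mu';\underline w]=[\mu'\cup\underline w]$ would give a nonzero vector equal to $f([\lambda\cup\underline w])=0$, a contradiction; hence $f([\lambda;\underline w])=0$, which is the vanishing branch of \eqref{eq: hom}. When $F(\lambda\cup\underline w)\neq\mathbf 0$, property (1) of Theorem \ref{thm: hom} makes every vertex of the chain $\lambda\to\cdots\to\lambda\cup\underline w$ have nonzero $F$-image, properties (4) and (3) turn these $F$-images into a chain in $E'$ with the \emph{same} weight sequence $\underline w$, and this chain is good since goodness depends only on the weight sequence (Definition \ref{def: good new}). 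So the only candidate is $[F(\lambda);\underline w]$, and $f([\lambda;\underline w])=\beta_{\lambda,\underline w}\,[F(\lambda);\underline w]$ for some nonzero scalar $\beta_{\lambda,\underline w}$.

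To pin down $\beta_{\lambda,\underline w}$ — which I expect to be the one genuine computation, hence the main obstacle — I would apply $f$ to the expansion $d_+^{k}[\lambda]=\sum_{[\lambda;\underline v]\ \text{good}}\gamma(\lambda;\underline v)[\lambda;\underline v]$ of Lemma \ref{lem: gamma}, obtaining $\alpha_\lambda\, d_+^{k}[F(\lambda)]=\alpha_\lambda\sum\gamma'(F(\lambda);\underline v)[F(\lambda);\underline v]$ with $\gamma'$ built from the edge function $c'$ of $E'$, and then compare the coefficients of $[F(\lambda);\underline w]$ to get $\gamma(\lambda;\underline w)\,\beta_{\lambda,\underline w}=\alpha_\lambda\,\gamma'(F(\lambda);\underline w)$. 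By the product formula for $\gamma$ in the proof of Lemma \ref{lem: gamma}(a), every factor other than $\prod_i c(\lambda\cup w_k\cup\dots\cup w_{i+1};w_i)$ depends only on $\underline w$, and that remaining product agrees with its analogue for $F(\lambda)$ because weights are preserved along the chain (property (3)) and $c'(F(\mu);x)=c(\mu;x)$ by the compatibility of Lemma \ref{lem: compatible edge}; hence $\gamma'(F(\lambda);\underline w)=\gamma(\lambda;\underline w)$ and $\beta_{\lambda,\underline w}=\alpha_\lambda$, establishing \eqref{eq: hom}.

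Finally I would extract the kernel and image. Both $\ker(f)$ and $\operatorname{im}(f)$ are $\Bqt^{\ext}$-submodules (of $V(E)$ and $V(E')$ respectively), hence spanned by the good chains they contain by Lemma \ref{lem: vandermonde}. From \eqref{eq: hom}, $[\lambda;\underline w]\in\ker(f)$ iff $\lambda\cup\underline w\in F^{-1}(\mathbf 0)\cap E$, which is a coideal of $E$ by property (1), so $\ker(f)=U(F^{-1}(\mathbf 0)\cap E)$ by the definition of $U(-)$ in Proposition \ref{prop: bounds}. Again from \eqref{eq: hom}, $\operatorname{im}(f)$ is spanned by the chains $[F(\lambda);\underline w]$ with $F(\lambda\cup\underline w)\neq\mathbf 0$, all of which start in $F(E)\cap E'$; conversely, for any good chain $[\lambda';\underline w]$ of $E'$ with $\lambda'=F(\mu)\in F(E)\cap E'$ we have $[\lambda']=\alpha_\mu^{-1}f([\mu])\in\operatorname{im}(f)$, and since $[\lambda';\underline w]$ occurs with nonzero coefficient in $d_+^{k}[\lambda']$, Lemma \ref{lem: vandermonde} places it in $\operatorname{im}(f)$ as well; as $F(E)\cap E'$ is a coideal of $E'$ by property (2), this gives $\operatorname{im}(f)=V(F(E)\cap E')$.
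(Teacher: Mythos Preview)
Your proof is correct and follows essentially the same strategy as the paper: both use $d_-^{k}$ to establish vanishing when $F(\lambda\cup\underline w)=\mathbf 0$, and both use the intertwining with $d_+$ together with the compatibility of edge functions to pin down the scalar as $\alpha_\lambda$. The only stylistic differences are that the paper proceeds by induction on $k$ (applying one $d_+$ at a time and projecting to a single weight space) rather than using $d_+^{k}$ and the $\gamma$-coefficients all at once, and for the image it constructs an explicit preimage chain via properties (2)--(5) of Theorem~\ref{thm: hom} rather than invoking closure of $\operatorname{im}(f)$ under $d_+^{k}$ and Lemma~\ref{lem: vandermonde}.
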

\begin{remark}
    Note that $F(E)\cap E'$ is a coideal in $E'$ by Theorem \ref{thm: hom} (2), and thus inherits the structure of an excellent weighted poset from $E'$. 
\end{remark}
\begin{proof}
 We prove \eqref{eq: hom} by induction on $k$, the case $k = 0$ follows from the definition of $F$. Now,
 \begin{equation*}
 q^{k-1}c(\lambda \cup w_k \cup \dots \cup w_2; w_1)\prod_{i = 1}^{k-1}\frac{w_k - tw_i}{w_k - qtw_i}[\lambda; \underline{w}] = \mathsf{P}d_{+}[\lambda; w_k, \dots, w_2],
 \end{equation*}
 where $\mathsf{P}$ is the projection to a specific simultaneous weight space for the $\Delta$-operators and $z_1, \dots, z_k$. Since $f$ is a $\Bqt^{\ext}$-homomorphism it commutes both with $d_{+}$ and $\mathsf{P}$, so that
 \begin{equation}\label{eq: d+ and P and f}
 q^{k-1}c(\lambda \cup w_k \cup \dots \cup w_2; w_1)\prod_{i = 1}^{k-1}\frac{w_k - tw_i}{w_k - qtw_i}f([\lambda; \underline{w}]) = \mathsf{P}d_{+}f\bigl([\lambda; w_k, \dots, w_2]\bigr).
 \end{equation}
If $F(\lambda \cup  \underline{w}) \neq \mathbf{0}$, then $F(\lambda \cup w_k \cup \cdots \cup w_2) \neq \mathbf{0}$ as well, and we have a two step chain
\[
F(\lambda \cup w_k \cup \dots \cup w_2) \to F(\lambda \cup  \underline{w}) = F(\lambda \cup w_k \cup \dots \cup w_2) \cup w_1.
\]
By induction hypothesis, $f([\lambda; w_k, \dots, w_2]) = \alpha_\lambda[F(\lambda); w_k, \dots, w_2])$. Now we have:
\begin{align}
\mathsf{P}d_{+}f([\lambda; w_k, \dots, w_2]) &= 
\alpha_{\lambda}\mathsf{P}d_{+}[F(\lambda); w_k, \dots, w_2] \nonumber \\
&= \alpha_{\lambda}q^{k-1}c(F(\lambda) \cup w_k \cup \cdots \cup w_2; w_1)\prod_{i = 1}^{k-1}\frac{w_k - tw_i}{w_k - qtw_i}[F(\lambda); \underline{w}]. \label{eq: d+ and P in E'}
\end{align}
By the assumption of compatibility of the edge functions, 
\[c(F(\lambda) \cup w_k \cup \cdots \cup w_2; w_1) = c(\lambda \cup w_k \cup \cdots \cup w_2; w_1).\]
 Comparing \eqref{eq: d+ and P and f} with \eqref{eq: d+ and P in E'} we arrive to $f([\lambda; \underline{w}]) = \alpha_\lambda[F(\lambda); \underline{w}]$, as needed.\\ 
Now assume that $F(\lambda \cup  \underline{w}) = \mathbf{0}$. Then 
\begin{equation}\label{eq: d- for a contradiction}
d_{-}^{k}f([\lambda; \underline{w}]) = f(d_{-}^{k}[\lambda; \underline{w}]) = f([\lambda \cup \underline{w}]) = 0.
\end{equation}
Now, if $f([\lambda; \underline{w}]) \neq 0$ then it is a weight vector for the action of $z$'s and $\Delta$, so it has to be of the form $\beta[\mu; \underline{v}]$ for some $\mu \in E'$ and $\beta \in \C(q,t)^{*}$. But then $d_{-}^{k}f([\lambda; \underline{w}]) = \beta d_{-}^{k}[\mu; \underline{v}] = \mu\cup\underline{v} \neq 0$, a contradiction with \eqref{eq: d- for a contradiction}. Thus, $f([\lambda; \underline{w}]) = 0$. 

From \eqref{eq: hom} (1) is clear. It is also clear that the image of $f$ is contained in the span of all good chains $[\lambda';  \underline{w}] \in V(E')$ such that $\lambda' \in F(E) \cap E'$. To finish, note that if $[\lambda';  \underline{w}]$ is such a chain, then by Theorem \ref{thm: hom} (2), (3) and (4) we can find a chain $[\lambda;  \underline{w}] \in V(E)$ such that $F(\lambda) = \lambda'$, and the result follows. 
\end{proof}

\begin{corollary}
Let $E = \{\bullet\}$ be a one-element set with weighting $a_k = p_{k}(\bullet)$. Then, for any other poset $E'$, $\Hom_{\Bqt^{\ext}}(V(E), V(E'))$ is at most one-dimensional, and it is nonzero if and only if there exists a maximal element $\mathsf{M} \in E'$ such that $p_{k}(\mathsf{M}) = a_{k}$ for all $k$.

Similarly, $\Hom_{\Bqt^{\ext}}(V(E'), V(E))$ is at most one-dimensional, and it is nonzero if and only if there exists a minimal element $\mathsf{m} \in E'$ such that $p_k(\mathsf{m}) = a_k$ for all $k$. 
\end{corollary}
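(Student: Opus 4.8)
The plan is to apply Theorem~\ref{thm: hom} with $E=\{\bullet\}$, exploiting that $V(\{\bullet\})$ is the trivial representation of Section~\ref{sec: trivial}: it is concentrated in degree $0$, with $V_0=\C(q,t)[\bullet]$, every generator of $\Bqt$ acting by zero and $\Delta_{p_m}$ acting by $a_m$. In particular $V_k(\{\bullet\})=0$ for $k\ge 1$. Consequently, a $\Bqt^{\ext}$-morphism out of $V(\{\bullet\})$ is the same thing as a vector $v\in V_0(E')$ that is a joint $\Delta$-eigenvector with eigenvalues $(a_m)_m$ and satisfies $d_+v=0$ (all other relations being vacuous on a degree-$0$ vector, since $z_i,T_i$ carry no degree-$0$ content and $d_-v\in V_{-1}(E')=0$), and a morphism into $V(\{\bullet\})$ is a linear map $g_0\colon V_0(E')\to\C(q,t)[\bullet]$ intertwining the $\Delta$-operators and compatible with $d_-$, together with the forced vanishing on $V_k(E')$ for $k\ge 1$.

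For the first assertion, I would first note that by the simple spectrum condition on $E'$ the joint $(a_m)$-eigenspace of $\{\Delta_{p_m}\}$ in $V_0(E')$ is spanned by $[\mathsf M]$ for the unique $\mathsf M\in E'$ with $p_m(\mathsf M)=a_m$ for all $m$, if such $\mathsf M$ exists, and is zero otherwise. Then, since $d_+[\mathsf M]=\sum_x c(\mathsf M;x)[\mathsf M;x]$ is a sum with nonzero coefficients over the linearly independent good chains $[\mathsf M;x]$ (one-step chains are automatically good because $t\neq 1$), the equality $d_+[\mathsf M]=0$ holds exactly when $\mathsf M$ has no addable weight, i.e. when $\mathsf M$ is maximal in $E'$. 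When this holds, $[\bullet]\mapsto[\mathsf M]$ extends to a $\Bqt^{\ext}$-morphism; this follows either directly from the description of $V(\{\bullet\})$ above, or by checking that $F(\bullet)=\mathsf M$, $F(\mathbf 0)=\mathbf 0$ satisfies conditions (1)--(5) of Theorem~\ref{thm: hom}, where (4) and (5) are vacuous as $E=\{\bullet\}$ has no covering relations and (1)--(3) reduce exactly to ``$\mathsf M$ maximal with the correct weights''. Uniqueness up to scalar is then immediate, giving $\dim\Hom\le 1$.

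For the second assertion, any morphism $g\colon V(E')\to V(\{\bullet\})$ vanishes on $V_k(E')$ for $k\ge 1$ since the target vanishes there, so $g$ is determined by $g_0\colon V_0(E')\to\C(q,t)[\bullet]$. Intertwining $\{\Delta_{p_m}\}$ forces $g_0$ to be supported on the unique $\mathsf m\in E'$ with $p_m(\mathsf m)=a_m$ for all $m$ (if it exists). The extra constraint comes from $d_-$: for any good one-step chain $[\lambda;w_1]\in V_1(E')$ we have $g([\lambda\cup w_1])=g(d_-[\lambda;w_1])=d_-\,g([\lambda;w_1])=0$, and every non-minimal element of $E'$ arises as such a $\lambda\cup w_1$ (take a covering below it, using the grading and local finiteness). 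Hence $g\neq 0$ forces $\mathsf m$ to be minimal. Conversely, when $\mathsf m$ is minimal with the correct weights, the map $G\colon E'_{\mathbf 0}\to\{\bullet,\mathbf 0\}$ sending $\mathsf m$ to $\bullet$ and everything else to $\mathbf 0$ satisfies (1)--(5) of Theorem~\ref{thm: hom}: (1) holds because $\{\mathsf m\}$ is an ideal precisely when $\mathsf m$ is minimal, (4) because a minimal element covers nothing (so $G(\mu)=\mathbf 0$ whenever $\mu$ covers some $\lambda$), and (5) is vacuous since $\bullet$ covers nothing in $E_{\mathbf 0}$; thus $G$ induces a nonzero morphism, unique up to scaling by the explicit formula of Proposition~\ref{prop:hom}.

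The routine checks are all one-line consequences of the simple spectrum condition and the explicit formulas for $d_+$ and $d_-$; I expect the only point requiring a moment's care to be the degenerate combinatorics of conditions (4) and (5) of Theorem~\ref{thm: hom} over the two-element poset $E_{\mathbf 0}=\{\bullet<\mathbf 0\}$, namely recognizing that these conditions are vacuous or automatic in the first case and that in the second case $d_-$-equivariance (equivalently condition (4)) genuinely upgrades ``$\Delta$-compatible element'' to ``minimal $\Delta$-compatible element''. Beyond that I anticipate no real obstacle.
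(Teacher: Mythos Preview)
Your proposal is correct and takes essentially the same approach as the paper: both apply Theorem~\ref{thm: hom} to the degenerate poset $E=\{\bullet\}$, reducing the question to whether the map $F$ (sending $\bullet$ to the candidate element and $\mathbf{0}$ to $\mathbf{0}$) satisfies conditions (1)--(5), which amounts exactly to maximality (resp.\ minimality) together with the weight-matching condition. The paper's argument is terser---it invokes condition (2) (image is a coideal) directly to force maximality and declares the second statement similar or deducible from duality---while you additionally unpack the direct representation-theoretic content (analyzing $d_{+}[\mathsf{M}]=0$ and $g_0\circ d_{-}=0$), but this is elaboration rather than a different method.
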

\begin{proof}
Assume $\Hom_{\Bqt^{\ext}}(V(E), V(E'))$ is nonzero. Then, there must exist a function $F: \{\bullet, \mathbf{0}\} \to E'_{\mathbf{0}}$ satisfying  (1)--(5) of Theorem \ref{thm: hom}. Setting $\mathsf{M} := F(\bullet)$, we have that $\mathsf{M} \neq \mathbf{0}$, and since $\{\mathsf{M}, \mathbf{0}\} \subseteq E'_{\mathbf{0}}$ is a coideal, $\mathsf{M} \in E'$ must be a maximal element. That $p_{k}(\mathsf{M}) = a_k$ is simply Condition (3) of Theorem \ref{thm: hom}. 

The proof of the second statement is similar. Alternatively, it follows from the first statement using the results of Section \ref{sec: duality}. 
\end{proof}

\begin{remark}
While this describes the homomorphisms in $\Hom_{\Bqt^{\ext}}(V(E), V(E'))$, it is still an interesting question to describe the homomorphisms in $\Hom_{\Bqt}(V(E), V(E'))$. For example, if we have two singletons $E = \{\bullet\}$, $E' = \{\mathsf{M}\}$, then $\Hom_{\Bqt^{\ext}}(V(E), V(E'))$ is nonzero if and only if $p_m(\bullet) = p_m(\mathsf{M})$ for all $m > 0$, while the spaces $V(E)$ and $V(E')$ are always isomorphic as $\Bqt$-representations. 
\end{remark}

\section{Parabolic Gieseker Moduli Spaces}
\label{sec: Gieseker}

In this section, we construct an action of the $\Bqt^{\ext}$ algebra on the $K$-theory of parabolic Gieseker moduli spaces. As we will see, we obtain a calibrated representation of the form $V(E)$, where $E$ is the poset of $r$-multipartitions. The case $r = 1$ was treated in work of the second author with Carlsson and Mellit \cite{CGM} and we follow that work closely. 

\subsection{The Gieseker Moduli Space}\label{subsec: Gieseker}

We will be working with the \newword{Gieseker moduli space} $\M(r,n)$ of framed sheaves on $\PP^2$, that is, torsion free rank $r$ sheaves  $\F$ on $\PP^2$ that are locally free in a neighborhood of $\ell_{\infty}$, with a choice of framing $\Phi:\F|_{\ell_{\infty}}\simeq \CO_{\ell_{\infty}}^{\oplus r}$. Here $\ell_{\infty}$ is a line at infinity on $\PP^2$. We also require that $c_2(\F)=n$. The space $\M(r,n)$ is known to be smooth of dimension $2rn.$

Below we will often use the description of $\M(r,n)$ as a quiver variety, see e.g. \cite[Chapter 2]{nakajimabook}. Consider two vector spaces $V$ and $W$ of dimensions $n$ and $r$ respectively, with four operators $X,Y:V\to V$, $i:W\to V$ and $j:V\to W$. The group $GL(V)$ acts on the space of quadruples $(X,Y,i,j)$, and the corresponding moment map is 
$$
\mu(X,Y,i,j)=[X,Y]-ij.
$$
Now we can write $\M(r,n)$ as a symplectic quotient
$$
\M(r,n)\simeq \mu^{-1}(0)^{ss}/GL(V)
$$
where the stability condition means that the only $X,Y$-invariant subset of $V$ containing $\operatorname{im}(i)$ is $V$ itself.

Let $T$ be the maximal torus of $GL(r)$, and $\widetilde{T}=\C^*\times \C^*\times T$. The torus $T$ acts on $\M(r,n)$ by changing the framing, and $\C^*\times \C^*$ acts by scaling the coordinates on $\PP^2$: $[z_0:z_1:z_2]\mapsto [z_0,qz_1,tz_2]$.
The fixed points of $T$ on $\M(r,n)$ are direct sums of ideal sheaves $\F=\I_1\oplus \I_2\oplus \cdots \oplus \I_r$, so that
$$
\M(r,n)^T=\bigsqcup_{n_1+\ldots+n_r=n} \Hilb^{n_1}(\C^2)\times \cdots \times \Hilb^{n_r}(\C^2)
$$
while the fixed points of $\widetilde{T}$ are isolated and correspond to all $\I_j$ being monomial ideals. Thus, $\M(r,n)^{\widetilde{T}}$ is parametrized by $r$-tuples of Young diagrams $(\lambda_1,\ldots,\lambda_r)$ with total size $n = |\lambda_1| + \cdots + |\lambda_r|$.

In the quiver description, the $\widetilde{T}$-action on $\M(r,n)$ is given by:
\[
(q,t,a_1,\ldots,a_r).(X,Y,i,j)=(qX,tY,iA^{-1},qtAj),\ A=\diag(a_1,\ldots,a_r).
\]

Next, we describe the characters of the tangent spaces at $\widetilde{T}$-fixed points of $\M(r,n)$. We will do it in two ways. As before, if $\sq$ is a box in the $i$-th row and $j$-th column of the partition $\lambda_{\alpha}$ ($\alpha = 1, \dots, r$), we identify the box $\sq$ with its $(q,t)$-content given by the monomial $q^{i-1}t^{j-1}$. 

\begin{theorem}[\cite{NY}, Theorem 3.2]
\label{thm: arm leg}
The $\widetilde{T}$-character of the tangent space at the fixed point $\lambda_{\bullet}=(\lambda_1,\ldots,\lambda_r)$  is given by
\begin{equation}
T_{ \lambda_{\bullet}}\M(r,n)=\sum_{\alpha,\beta=1}^{r}a_{\beta}a_{\alpha}^{-1}\left(
\sum_{\sq\in \lambda_{\alpha}}q^{-a_{\lambda_{\beta}}(\sq)}t^{l_{\lambda_{\alpha}}(\sq)+1}+\sum_{\sq\in \lambda_{\beta}}q^{a_{\lambda_{\alpha}}(\sq)+1}t^{-l_{\lambda_{\beta}}(\sq)}
\right).
\end{equation}
\end{theorem}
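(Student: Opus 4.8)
The plan is to compute the $\widetilde{T}$-character of the tangent space at a fixed point using the quiver (ADHM) description of $\M(r,n)$ recalled above, and then to rewrite the resulting rational expression in the arm--leg form of the statement via a purely combinatorial identity. First I would recall that, since $\M(r,n)\simeq\mu^{-1}(0)^{ss}/GL(V)$ with $\mu(X,Y,i,j)=[X,Y]-ij$, the tangent space at a stable orbit $[(X,Y,i,j)]$ is the middle cohomology of the three-term complex
\[
\operatorname{End}(V)\xrightarrow{\ \sigma\ }\operatorname{End}(V)\otimes q\ \oplus\ \operatorname{End}(V)\otimes t\ \oplus\ \operatorname{Hom}(W,V)\ \oplus\ \operatorname{Hom}(V,W)\otimes qt\xrightarrow{\ d\mu\ }\operatorname{End}(V)\otimes qt,
\]
where $\sigma$ is the infinitesimal gauge action and $d\mu$ the derivative of the moment map; the $q,t,qt$ twists are forced by the weights of $X,Y,i,j$ under the $\widetilde{T}$-action displayed above. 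Stability makes $\sigma$ injective and $d\mu$ surjective, so the $\widetilde{T}$-character of $T_{[(X,Y,i,j)]}\M(r,n)$ equals the Euler characteristic of this complex.

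Next I would specialize to the fixed point $\lambda_\bullet=(\lambda_1,\dots,\lambda_r)$, where the corresponding sheaf is $\bigoplus_\alpha\I_{\lambda_\alpha}$ (ideal sheaves twisted by the framing characters). Here $V$ and $W$ become the explicit $\widetilde{T}$-modules
\[
W=\sum_{\alpha=1}^{r}a_\alpha,\qquad V=\sum_{\alpha=1}^{r}a_\alpha V_{\lambda_\alpha},\qquad V_{\lambda_\alpha}:=\sum_{\sq\in\lambda_\alpha}\sq,
\]
$\sq$ denoting the $(q,t)$-content $q^{i-1}t^{j-1}$ of a box in row $i$, column $j$ (this is the character of $\C[x,y]/I_{\lambda_\alpha}$). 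Substituting into the Euler characteristic and collecting terms gives, after a short formal manipulation,
\[
T_{\lambda_\bullet}\M(r,n)=W^{\vee}V+qt\,V^{\vee}W-(1-q)(1-t)\,V^{\vee}V,
\]
where ${}^{\vee}$ denotes the dual character ($a_\alpha\mapsto a_\alpha^{-1}$, $q\mapsto q^{-1}$, $t\mapsto t^{-1}$). Expanding along the framing torus, this reduces the theorem to the following identity for every ordered pair of partitions $(\mu,\nu)$, occurring with coefficient $a_\beta a_\alpha^{-1}$ when $\mu=\lambda_\alpha$, $\nu=\lambda_\beta$:
\[
V_\nu+qt\,V_\mu^{\vee}-(1-q)(1-t)\,V_\mu^{\vee}V_\nu=\sum_{\sq\in\mu}q^{-a_\nu(\sq)}t^{\,l_\mu(\sq)+1}+\sum_{\sq\in\nu}q^{\,a_\mu(\sq)+1}t^{-l_\nu(\sq)},
\]
where the arm of a box $\sq=(i,j)\in\mu$ relative to $\nu$ is $a_\nu(\sq)=\nu_i-j$ (legs are always taken inside the home partition).

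The main obstacle is this last combinatorial identity; everything before it is formal. I would prove it by decomposing $V_\mu$ and $V_\nu$ into rows, writing each row as a geometric progression in $t$, so that $V_\mu^{\vee}V_\nu$ becomes a double sum over pairs of rows, one from $\mu$ and one from $\nu$. Multiplying by $(1-q)(1-t)$ then makes the sum telescope at the corners of the two diagrams, and a short case analysis according to whether $\mu_i$ is at least $\nu_j$ or not (and whether $i$ is at least $j$ or not) shows that the surviving boundary contributions, combined with $V_\nu+qt\,V_\mu^{\vee}$, reassemble exactly into the two arm--leg sums on the right. As consistency checks I would verify that the $r=1$, $\mu=\nu=\lambda$ case recovers the classical tangent space of $\Hilb^n(\C^2)$, namely $\sum_{\sq\in\lambda}\bigl(q^{-a(\sq)}t^{l(\sq)+1}+q^{a(\sq)+1}t^{-l(\sq)}\bigr)$, and that the manifest symmetry of the right-hand side under $(\mu,\nu,q,t)\mapsto(\nu,\mu,q^{-1},t^{-1})$ reflects the symplectic self-duality of $T_{\lambda_\bullet}\M(r,n)$. (Alternatively, one may carry out the same computation deformation-theoretically, identifying $T_{[\F,\Phi]}\M(r,n)\cong\operatorname{Ext}^1(\F,\F(-\ell_\infty))$ with the remaining $\operatorname{Ext}$'s vanishing and computing by localization on $\PP^2$; this produces the same character and the same combinatorial identity to be checked.)
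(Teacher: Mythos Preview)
Your approach is essentially the one the paper outlines: compute the tangent character as the Euler characteristic of the ADHM deformation complex \eqref{eq: complex Gieseker}, obtain the rational expression \eqref{eq: character alpha beta} in terms of $B_{\lambda_\alpha}$ and $B^{*}_{\lambda_\beta}$, and then convert to arm--leg form by a combinatorial identity (for which the paper simply cites \cite[Theorem 2.11]{NY2}). You go a bit further than the paper by sketching the row-by-row telescoping argument for that identity; just be careful that your conventions for $V_{\lambda}$ versus $V_{\lambda}^{\vee}$ match the paper's $B_{\lambda}$ versus $B^{*}_{\lambda}$ when you write down the pairwise identity.
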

Here, the \newword{arm} $a_{\mu}(\sq)$  and \newword{leg} $l_{\mu}(\sq)$ of a box $\sq=(i,j)$ are defined as:
\begin{equation}
a_{\mu}(\sq)=\lambda_j-i,\qquad l_{\mu}(\sq)=\lambda'_i-j, 
\end{equation}
where we allow these values to be negative whenever $\sq$ is outside the Young diagram for $\mu$. \footnote{In other words, $a_{\mu}(\sq)$ (resp. $l_{\mu}(\sq)$) is the \emph{signed} horizontal (resp. vertical) distance from the box $\sq$ to the boundary of $\mu$.}

Notice that any box $\sq\in \lambda_{\alpha}$ contributes to $2r$ different terms in the sum, so we get
$
2r\sum_{\alpha}|\lambda_{\alpha}|=2rn
$
terms in total, which agrees with the fact that $\dim T_{\lambda_{\bullet}}(\M(r,n))=2rn$.

The second description of the tangent space is given in the proof of \cite[Theorem 2.11]{NY2} and uses the quiver formulation. Let us fix $(X, Y, i, j) \in \mu^{-1}(0)$ and consider the complex
\begin{equation}
\label{eq: complex Gieseker}
\Hom(V,V)\xrightarrow{\sigma}\Hom(V,V)^{\oplus 2}\oplus \Hom(V,W)\oplus \Hom(W,V)\xrightarrow{\tau} \Hom(V,V).
\end{equation}
Here $\sigma(M)=([X,M],[Y,M],Mi,-jM)$ is the differential of the action of $\operatorname{GL}(V)$ and 
$\tau(B,C,I,J)=[X,C]+[B,Y]+iJ+Ij$ is the differential of the moment map. Note that 
$$
\tau\circ\sigma=[X,[Y,M]]+[[X,M],Y]-ijM+Mij=[[X,Y]-ij,M]=0.
$$
One can check that the stability conditions imply that $\sigma$ is injective and $\tau$ is surjective, and $T\M(r,n)$ is isomorphic to the middle cohomology of \eqref{eq: complex Gieseker} which can be computed as the Euler characteristic, which in turn decomposes as:
$$
\sum_{\alpha,\beta=1}^{r}a_{\beta}a_{\alpha}^{-1}\chi_{q,t}\left[
\Hom(V_{\alpha},V_{\beta})\to(q+t)\Hom(V_{\alpha},V_{\beta})\oplus qtV_{\alpha}^{*}\oplus V_{\beta}\to qt\Hom(V_{\alpha},V_{\beta})
\right].
$$ 
We are using that $W_{\beta}$ is one-dimensional, so that
$\Hom(V_{\alpha},W_{\beta})=V_{\alpha}^*$ and 
$\Hom(W_{\alpha},V_{\beta})=V_{\beta}$.
Now \eqref{eq: complex Gieseker} has Euler characteristic 
\begin{equation}
\label{eq: character alpha beta}
\sum_{\alpha,\beta=1}^{r}a_{\beta}a_{\alpha}^{-1}
\sum_{\sq\in \lambda_{\alpha},\sq'\in \lambda_{\beta}}\left[qt\sq+(\sq')^{-1}-(1-q)(1-t)\sq(\sq')^{-1}\right]
\end{equation}

Now Theorem \ref{thm: arm leg} follows from \eqref{eq: character alpha beta} by involved combinatorial manipulations, see \cite[Theorem 2.11]{NY2} for more details.

Finally, defining $B_{\mu}=\sum_{\sq\in \mu}\sq$ and
$B^*_{\mu}=\sum_{\sq\in \mu}\sq^{-1}$ as before, equation \eqref{eq: character alpha beta} can be compactly written as
$$
\sum_{\alpha,\beta=1}^{r}a_{\beta}a_{\alpha}^{-1}\left[
qtB_{\lambda_{\alpha}}+B^{*}_{\lambda_{\beta}}-(1-q)(1-t)B_{\lambda_{\alpha}}B^{*}_{\lambda_{\beta}}
\right].
$$

\subsection{The Parabolic Gieseker Space}

Define the \newword{{ parabolic} Gieseker moduli space} 
$\M^{par}(r,n;n+k)$ as the space of flags
$$
\F_n\supset \F_{n+1}\cdots \supset \F_{n+k}
$$
such that all $\F_i$ are framed rank $r$ sheaves, $c_2(\F_i)=i$, the framings at infinity agree with the filtration and $y\F_n\subset \F_{n+k}$. For $r=1$ this agrees with the parabolic flag Hilbert scheme defined in \cite{CGM}.

\begin{theorem}
\label{thm: smooth}
The space $\M^{par}(r,n;n+k)$ is smooth.
\end{theorem}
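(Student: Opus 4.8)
My plan is to realize $\M^{par}(r,n;n+k)$ as a quiver-type variety by adding to the ADHM data the extra subspaces that record the flag, and then prove smoothness by exhibiting it as a GIT quotient of a \emph{smooth} parameter space by a free action, exactly as is done for $r=1$ in \cite{CGM}. Concretely, a point of $\M^{par}(r,n;n+k)$ is a framed sheaf $\F_n$ of rank $r$ with $c_2 = n$ together with a chain of subsheaves $\F_n \supset \F_{n+1} \supset \cdots \supset \F_{n+k}$ with $y\F_n \subseteq \F_{n+k}$ (here $y$ is the linear form cutting out $\ell_\infty$ from a fixed second line). Passing to the quiver description, $\F_n \leftrightarrow (X,Y,i,j)$ on an $n$-dimensional $V = V_n$, and the subsheaf chain corresponds to a flag of subspaces $V_n \supseteq V_{n+1} \supseteq \cdots \supseteq V_{n+k}$ with $\dim V_{n+j} = n+j$... — more precisely, since passing to a subsheaf \emph{increases} $c_2$, the linear-algebra model is a chain of quotients, or equivalently a flag where each step drops the relevant invariant by one; one must set this up carefully so that $X$ preserves each $V_{n+j}$, $Y(V_{n+j}) \subseteq V_{n+j+1}$, the condition $y\F_n \subseteq \F_{n+k}$ becomes $Y(V_n) \subseteq V_{n+k}$ and $i(W) \subseteq V_{n+k}$, and $j$ is compatible. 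I would then write
$$
\M^{par}(r,n;n+k) \simeq \widetilde{\mu}^{-1}(0)^{ss}\big/\textstyle\prod_j GL(V_{n+j}/V_{n+j+1}),
$$
or rather as a quotient of the appropriate stable locus inside an affine space of such flagged ADHM data by the group $P$ of block-upper-triangular automorphisms preserving the flag.

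The key steps, in order, are: (1) set up the flagged ADHM space $\mathcal{N}$ and the moment-type map $\widetilde\mu$, and verify that the relevant relations ($[X,Y] = ij$ on $V_n$, plus the incidence relations coming from $y\F_n \subseteq \F_{n+k}$ and the framing compatibility) cut out exactly $\M^{par}$ on the stable locus; (2) formulate the stability condition (the only flag-compatible $X$-invariant, $Y$-semi-invariant subspace containing $\operatorname{im} i$ is everything) and show stable $\Rightarrow$ the group $P$ acts freely, so the quotient is a geometric quotient and hence a scheme, not just a stack; (3) prove that near each point the defining equations are transverse, i.e. $d\widetilde\mu$ is surjective on the stable locus — this is the smoothness input — by the standard deformation-complex argument: the tangent complex is a three-term complex as in \eqref{eq: complex Gieseker} but with the appropriate flag-compatible Hom-spaces, and injectivity of $\sigma$ follows from freeness of the $P$-action while surjectivity of $\tau$ follows from stability; (4) conclude that $\widetilde\mu^{-1}(0)^{ss}$ is smooth and its quotient by the free $P$-action is smooth of the expected dimension. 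An alternative to (3)–(4), which may be cleaner, is to build $\M^{par}(r,n;n+k)$ iteratively: $\M^{par}(r,n;n)=\M(r,n)$ is smooth by Section \ref{subsec: Gieseker}, and passing from $\M^{par}(r,n;n+j)$ to $\M^{par}(r,n;n+j+1)$ adds a further subsheaf, which one can describe as a relative "punctual Quot"-type construction that is a smooth fibration (or at least a smooth morphism) over the previous space, because the condition $y\F_n \subseteq \F_{n+k}$ bounds the support of the colength-one jumps. I would likely present the inductive version for $r=1$ following \cite{CGM} and check that nothing in their argument used $r=1$ except notation.

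The main obstacle I anticipate is step (3)/(the fibration claim): proving that the defining equations of the flag incidence (in particular the constraint $y\F_n \subseteq \F_{n+k}$, which is what makes the moduli space finite-type and well-behaved rather than a full nested Hilbert/Quot scheme, the latter being singular in general) actually impose transverse conditions. For nested Hilbert schemes of points on a surface the analogous nested spaces $\Hilb^{n,n+1}$ are smooth but $\Hilb^{n,n+2}$ is \emph{not}, so smoothness here genuinely depends on the special shape of the incidence — each step is a colength-one inclusion and the total "shift" is controlled by multiplication by the single linear form $y$. The technical heart is therefore a local computation showing that, at a point of the stable locus, the differential of the combined moment-and-incidence map is surjective; concretely this reduces to showing that a certain map of $\operatorname{Hom}$-spaces between the graded pieces is surjective, which one checks using the stability condition to kill the relevant obstruction space (exactly as the vanishing of $H^2$ of the deformation complex is forced by stability in the $r=1$ case). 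Once that local surjectivity is in hand, smoothness and the dimension count ($\dim \M^{par}(r,n;n+k) = 2rn + (\text{contribution of the }k\text{ flag steps})$, matching the bigrading in Section \ref{sec: Gieseker}) follow formally.
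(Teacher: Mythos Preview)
Your approach is genuinely different from the paper's, and the contrast is instructive. The paper does not set up a flagged-ADHM moment map and prove transversality directly; instead it uses a cyclic-group trick (which is also what \cite{CGM} does for $r=1$, so your remark about ``following \cite{CGM}'' in fact points to the paper's method, not to a quiver/GIT argument). Concretely, one considers the base change $\phi:(x,y)\mapsto(x,y^{k+1})$ and glues the sheaves in the flag into a single rank-$r$ sheaf
\[
\F \;=\; \phi^*\F_{n+k}\oplus y\,\phi^*\F_{n+k-1}\oplus\cdots\oplus y^{k}\phi^*\F_{n}
\]
on $\PP^2$. This $\F$ is invariant under the $\Z_{k+1}$-action $(x,y)\mapsto(x,\zeta y)$, and conversely every $\Z_{k+1}$-invariant sheaf with the correct discrete invariants decomposes this way via its eigenspaces. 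Hence $\M^{par}(r,n;n+k)$ is identified with a connected component of the $\Z_{k+1}$-fixed locus in the ordinary (smooth) Gieseker space $\M\bigl(r,c_2(\F)\bigr)$, and the fixed locus of a finite group acting on a smooth variety is smooth.

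What this buys is exactly the step you flagged as the obstacle: the surjectivity of $d\widetilde\mu$ (equivalently, vanishing of the obstruction space in your three-term complex) never has to be checked by hand, because smoothness is inherited from $\M(r,N)$. Your direct approach is not wrong in principle --- indeed it amounts to recognizing the parabolic space as a Nakajima variety for the affine type-$A$ quiver, which is morally the same identification --- but the transversality claim you sketch (``stability kills the obstruction space, exactly as in the standard case'') is the entire content of the theorem, and you have not actually carried it out. As your own $\Hilb^{n,n+2}$ warning shows, this is precisely where a generic nested-moduli argument fails; the condition $y\F_n\subset\F_{n+k}$ must enter in an essential way, and in your outline it is not clear where it does. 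The paper's argument packages that condition into the $\Z_{k+1}$-equivariance and thereby sidesteps the local computation entirely.
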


\begin{proof}
We follow the proof of \cite[Theorem 4.1.6]{CGM}. Consider a map $\phi:\C^2\to \C^2$, $\phi:(x,y)\to (x,y^{k+1})$ (in homogeneous coordinates on $\PP^2$ we have $[x:y:z]\mapsto [xz^k:y^{k+1}:z^{k+1}]$). Given a flag of sheaves
$\F_n\supset \F_{n+1}\cdots \supset \F_{n+k}$, define 
$$
\F:=\phi^*\F_{n+k}\oplus y\phi^*\F_{n+k-1}\oplus \cdots \oplus y^{k}\F_n.
$$
We claim that $\F$ is also a rank $r$ sheaf on $\PP^2$. On $\C^2$ we need to check that it is simply a module over $\C[x,y]$, it is indeed invariant under multiplication by $x$ and 
$$
y\cdot y^{k-i}\phi^*\F_{n+i}\subset y^{k-i+1}\phi^*\F_{n+i-1},\ i>0
$$
and
$$
y\cdot y^k\phi^*\F_n=\phi^*(y\F_n)\subset \phi^*(\F_{n+k}),
$$
so it is invariant under $y$ as well. Note that 
\[c_2(\F)=\sum c_2(\F_i)=n+(n+1)+\ldots+(n+k)=(k+1)n+\binom{k+1}{2}.\]

Next, we claim that the resulting sheaf $\F$ is invariant under the action of cyclic group $\Gamma=\Z_{k+1}$ which acts on $\C^2$ by $(x,y)\mapsto (x,\zeta y), \zeta^{k+1}=1$. Furthermore, any $\Gamma$-invariant sheaf $\F$ has this form, this follows from the $\Gamma$-eigenspace decomposition for $\F$.  

To sum up, we identified $\M^{par}(r,n;n+k)$ with the connected component of the $\Gamma$-fixed point locus in $\M(r,c_2(\F))$. Since $\M(r,c_2(\F))$ is smooth, the fixed point locus for a cyclic group action on it is smooth as well.
\end{proof}

The torus $\widetilde{T}$ acts on $\M^{par}(r,n;n+k)$, and we can identify the fixed points as follows. Each sheaf $\F_i$ corresponds to an $r$-tuple of Young diagrams $\lambda_{\bullet}^{(i)}=\left(\lambda_{\alpha}^{(i)}\right)$ and the above conditions imply that 
$$
\lambda_{\alpha}^{(n)}\subset \cdots \lambda_{\alpha}^{(n+k)}
$$
and $\lambda_{\alpha}^{(n+k)}\setminus \lambda_{\alpha}^{(n)}$ is a \textit{horizontal strip}, that is, it does not contain a pair of boxes in the same column. Furthermore, $\sum_{\alpha=1}^{r}|\lambda_{\alpha}^{(i)}|=i$.
We define $w^{(i)}_{\alpha}=B_{\lambda^{(n+i+1)}_{\alpha}}-B_{\lambda^{(n+i)}_{\alpha}}$ to be the $(q,t)$-content of the box $\lambda^{(n+i+1)}_{\alpha}\setminus \lambda^{(n+i)}_{\alpha}$ if the box labeled by $i$ is in part $\alpha$, and zero otherwise. We also define $w_i=\sum a_{\alpha}w^{(n+i)}_{\alpha}$ to be the $\widetilde{T}$-weight of the box labeled by $i$ (there's only one nonzero term in the sum), and 
\[B_{\lambda_{\bullet}^{(n+i)}}=\sum_{\alpha} a_{\alpha}B_{\lambda^{(n+i)}_{\alpha}}.\]

\begin{remark}
By Theorem \ref{thm: arm leg} the character of the cotangent space to $\M(r,n)$ at a point $\lambda_{\bullet}$ is given by:
\[
\Omega_{\lambda_{\bullet}} = \sum_{\alpha, \beta = 1}^{r} a_{\alpha}a_{\beta}^{-1}\left(q^{-1}t^{-1}B^{*}_{\lambda_{\alpha}} + B_{\lambda_{\beta}} - (1-q^{-1})(1-t^{-1})B^{*}_{\lambda_{\alpha}}B_{\lambda_{\beta}}\right).
\]

To not carry the $q^{-1}, t^{-1}$ factors we will \emph{twist} the action of $\C^{*} \times \C^{*}$ by $q \leftrightarrow q^{-1}, t \leftrightarrow t^{-1}$. Obviously, the set of fixed points does not change and we now obtain:
\begin{equation}\label{eq: cotangent weight}
\Omega_{\lambda^{\bullet}}\M(r,n) = \sum_{\alpha, \beta = 1}^{r} a_{\alpha}a_{\beta}^{-1}\left(qt B_{\lambda_{\alpha}} + B^{*}_{\lambda_{\beta}} - (1-q)(1-t)B_{\lambda_{\alpha}}B^{*}_{\lambda_{\beta}}\right).
\end{equation}
Note that this formula coincides with \cite[(4.2.1)]{CGM} when $r = 1$. 
\end{remark}

\begin{theorem}
The character of the cotangent space to $\M^{par}(r,n;n+k)$ at a fixed point  is given by the formula\footnote{We note that our $(q,t)$-content differs from that of \cite{NY, NY2} by $q \leftrightarrow q^{-1}, t \leftrightarrow t^{-1}$}:
\begin{align}
\Omega_{\lambda_{\bullet}^{(\star)}} &=   \displaystyle{\sum_{\alpha, \beta = 1}^{r}a_{\alpha}a_{\beta}^{-1}\left(qtB_{\lambda_{\alpha}^{(n)}} + B^{*}_{\lambda_{\beta}^{(n)}} - (1-q)(1-t)B_{\lambda_{\alpha}^{(n)}}B^{*}_{\lambda_{\beta}^{(n)}}\right)  
} \nonumber \\
&+  \displaystyle{\sum_{\alpha, \beta = 1}^{r}a_{\alpha}a_{\beta}^{-1}\left(\sum_{i = 1}^{k}\left(w_{\beta}^{(i)}\right)^{-1} - (1-q)(1-t)B_{\lambda_{\alpha}^{(n)}}\left(w_{\beta}^{(i)}\right)^{-1} - (1-q)\sum_{i \leq j}w_{\alpha}^{(i)}\left(w_{\beta}^{(j)}\right)^{-1} \right)} \nonumber \\
&=  \displaystyle{\Omega_{\lambda^{(n)}} + \sum_{\alpha}a_{\alpha}\sum_{i = 1}^{k}w_{i}^{-1} - (1-q)(1-t)B_{\lambda^{(n)}_{\bullet}}\sum_{i = 1}^{k}w_{i}^{-1} - (1-q)\sum_{i \geq j}w_{i}^{-1}w_{j} \label{eq: character parabolic}
}
\end{align}
\end{theorem}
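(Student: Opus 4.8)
The plan is to compute the character of the cotangent space $\Omega_{\lambda_{\bullet}^{(\star)}}$ directly from the quiver description of $\M^{par}(r,n;n+k)$ as a fixed-point locus, following closely the strategy of \cite[\S 4.2]{CGM} in the rank-one case. Recall from the proof of Theorem \ref{thm: smooth} that $\M^{par}(r,n;n+k)$ embeds as a connected component of the $\Gamma = \Z_{k+1}$-fixed locus in the smooth Gieseker space $\M(r, c_2(\F))$, where $\F = \phi^*\F_{n+k}\oplus y\phi^*\F_{n+k-1}\oplus\cdots\oplus y^k\phi^*\F_n$. Thus the tangent space at a fixed point is the $\Gamma$-invariant part of $T_{\F}\M(r, c_2(\F))$, and the cotangent character we want is the $\Gamma$-invariant part of $\Omega_\F$ computed via \eqref{eq: cotangent weight}. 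First I would set up the $\Gamma$-action: the decomposition of $V = \bigoplus_{i=0}^k y^i \phi^*V_{n+i}$ corresponds to the $\Gamma$-eigenspace decomposition, and in $(q,t)$-variables the box contents of $\phi^*\lambda$ versus $\lambda$ are related by $y \mapsto y^{k+1}$ (i.e. $t \mapsto t^{k+1}$), with the $y^i$ twist shifting the $\Gamma$-weight. This is exactly the ``folding'' computation that converts a $\Z_{k+1}$-equivariant Hilbert scheme count into a parabolic one.

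The key computational steps, in order: (1) write $B_{\lambda^{(n+i)}_\alpha}$ in terms of $B_{\lambda^{(n)}_\alpha}$ and the weights $w^{(j)}_\alpha$ of the added boxes, using that $\lambda^{(n+k)}_\alpha\setminus\lambda^{(n)}_\alpha$ is a horizontal strip so the $w^{(j)}_\alpha$ lie in distinct columns; (2) substitute into \eqref{eq: cotangent weight} applied to the big sheaf $\F$, expand the bilinear term $(1-q)(1-t)B_{\lambda_\alpha}B^*_{\lambda_\beta}$, and organize the result by $\Gamma$-weight; (3) extract the $\Gamma$-invariant part, which picks out precisely the terms $w^{(i)}_\alpha (w^{(j)}_\beta)^{-1}$ with $i \geq j$ (one direction of the strip ordering) together with the cross terms $B_{\lambda^{(n)}_\alpha}(w^{(j)}_\beta)^{-1}$ and the pure $\sum_i (w^{(i)}_\beta)^{-1}$ coming from the framing pairing $B^*$; (4) collapse the double index $\alpha,\beta$ sums using $B_{\lambda^{(n)}_\bullet} = \sum_\alpha a_\alpha B_{\lambda^{(n)}_\alpha}$ and $w_i = \sum_\alpha a_\alpha w^{(n+i)}_\alpha$ (only one term nonzero), recognizing that $\sum_{\alpha,\beta} a_\alpha a_\beta^{-1} w^{(i)}_\alpha (w^{(j)}_\beta)^{-1} = w_i w_j^{-1}$, to arrive at the compact third line of \eqref{eq: character parabolic}. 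One should double-check the boundary/diagonal terms $i = j$: the factor $(1-q)$ rather than $(1-q)(1-t)$ on the $\sum_{i\geq j} w_i^{-1} w_j$ term reflects that within a horizontal strip two added boxes in the same $\alpha$ never share a column, so the $t$-part of the bilinear contribution degenerates — this is the rank-$r$ analogue of the computation on \cite[p.~(4.2.1)--(4.2.5)]{CGM} and I would cite it.

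The main obstacle I expect is bookkeeping the $\Gamma$-weights correctly and verifying that exactly the ``upper-triangular'' half $i \geq j$ survives invariance. Concretely, the subtlety is that the $y^i$-twists in the definition of $\F$ shift $\Gamma$-characters, and when one pairs $y^i\phi^*\F_{n+i}$ against $(y^j\phi^*\F_{n+j})^*$ inside $\Hom(V,V)$ the twist becomes $y^{i-j}$; after applying $\phi^*$ (which multiplies the $t$-exponent of a content by $k+1$) the $\Gamma$-invariant condition on the ``$X$-type'' and ``$Y$-type'' summands of \eqref{eq: complex Gieseker} selects different residues, and one must carefully track that the moment-map differential $\tau$ contributes a $qt$-shift. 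The cleanest way to handle this is to do the entire computation at the level of the virtual character \eqref{eq: complex Gieseker}/\eqref{eq: character alpha beta} for $\F$, where the $\Gamma$-action is transparent, take invariants there (a purely formal operation on Laurent polynomials), and only at the end translate back via the arm-leg reformulation of Theorem \ref{thm: arm leg} if a more symmetric form is desired. Since all the intervening objects are smooth and the Euler characteristic computes the tangent space honestly (as noted after \eqref{eq: complex Gieseker}), no convergence or positivity issues arise, and the identity \eqref{eq: character parabolic} follows by comparing Laurent polynomials coefficient-by-coefficient.
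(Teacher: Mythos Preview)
Your approach is essentially identical to the paper's: both use the identification of $\M^{par}(r,n;n+k)$ with a $\Gamma=\Z_{k+1}$-fixed component in $\M(r,c_2(\F))$ from Theorem~\ref{thm: smooth}, compute the big-space character via \eqref{eq: character alpha beta}/\eqref{eq: cotangent weight} with $t\mapsto t^{k+1}$, extract the $\Gamma$-invariant part by selecting $t$-degrees divisible by $k+1$, and then expand $B_{\lambda_\alpha^{(n+i)}}$ in terms of $B_{\lambda_\alpha^{(n)}}$ and the $w_\alpha^{(j)}$ to collect terms. One small correction: your explanation that the $(1-q)$ (rather than $(1-q)(1-t)$) in front of $\sum_{i\ge j}w_i^{-1}w_j$ ``reflects that two added boxes never share a column'' is not the operative reason---this factor arises purely from the $\Gamma$-invariant extraction (the $t$-shift in $(1-t)$ moves terms between different $\Gamma$-weight pieces), and the horizontal-strip condition is only what guarantees the fixed point exists in the first place; the formal computation you outline will of course produce the correct factor regardless.
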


\begin{proof}
We follow the proof of Theorem \ref{thm: smooth}.
The sheaf $\F$ glued from $\F_i$ corresponds to an $r$-tuple of diagrams 
$$
\lambda_{\alpha}=\left(\lambda_{\alpha,1}^{(n+k)},\ldots ,\lambda_{\alpha,1}^{(n)},\lambda_{\alpha,2}^{(n+k)},\ldots ,\lambda_{\alpha,2}^{(n)},\ldots\right)
$$
Note that the horizontal strip condition implies 
$\lambda_{\alpha,2}^{(n+k)}\le \lambda_{\alpha,1}^{(n)}$, so $\lambda_{\alpha}$ is a well defined Young diagram.
We have: 
\[
B_{\lambda_{\alpha}} = B_{\lambda_{\alpha}^{(n+k)}}(q, t^{k+1}) + tB_{\lambda_{\alpha}^{(n+k-1)}}(q, t^{k+1}) + \cdots + t^{k}B_{\lambda_{\alpha}^{(n)}}(q, t^{k+1}).
\]

To compute the tangent weight for $(\F_{n} \supseteq \cdots \supseteq \F_{n+k}) \in \M^{par}(r,n;n+k)$, we have to compute the tangent weight for $\F \in \M(r,c_2(\F))$, pick up terms with $t$-degree divisible by $(k+1)$ and change $q^at^{b(k+1)}$ to $q^at^b$. We get:

\begin{align*}
\sum_{\alpha,\beta=1}^{r}a_{\beta}a_{\alpha}^{-1}\left[
qtB_{\lambda_{\alpha}^{(n)}}^{*}+B_{\lambda_{\beta}^{(n+k)}}-(1-q)\left(\sum_{i=0}^{k}B_{\lambda_{\alpha}^{(n+i)}}^*B_{\lambda_{\beta}^{(n+i)}}-tB_{\lambda_{\alpha}^{(n+k)}}^*B_{\lambda_{\beta}^{(n)}}-\sum_{i=0}^{k-1} B_{\lambda_{\alpha}^{(n+i)}}^*B_{\lambda_{\beta}^{(n+i+1)}}
\right)
\right]\\
=
\sum_{\alpha,\beta=1}^{r}a_{\beta}a_{\alpha}^{-1}\left[
qtB_{\lambda_{\alpha}^{(n)}}^{*}+B_{\lambda_{\beta}^{(n+k)}}-(1-q)\left(B_{\lambda_{\alpha}^{(n+k)}}^*B_{\lambda_{\beta}^{(n+k)}}-tB_{\lambda_{\alpha}^{(n+k)}}^*B_{\lambda_{\beta}^{(n)}}-\sum_{i=1}^{k} B_{\lambda_{\alpha}^{(n+i)}}^*w_{\beta}^{(i)}
\right)
\right].
\end{align*}
Observe that 
\begin{align*}
B_{\lambda_{\beta}^{(n+k)}}&=B_{\lambda_{\beta}^{(n)}}+\sum_{i=1}^{k}w^{(i)}_{\beta},
\\
B_{\lambda_{\alpha}^{(n+k)}}^*B_{\lambda_{\beta}^{(n+k)}}&=B_{\lambda_{\alpha}^{(n)}}^*B_{\lambda_{\beta}^{(n)}}+\sum_{i=1}^{k} \left(w^{(i)}_{\alpha}\right)^{-1}B_{\lambda_{\beta}^{(n)}}+\sum_{i=1}^{k} B_{\lambda_{\alpha}^{(n)}}^*(w^{(i)}_{\beta})+\sum_{i,j=1}^{k} \left(w^{(i)}_{\alpha}\right)^{-1}w^{(j)}_{\beta},
\\
B_{\lambda_{\alpha}^{(n+k)}}^*B_{\lambda_{\beta}^{(n)}}&=B_{\lambda_{\alpha}^{(n)}}^*B_{\lambda_{\beta}^{(n)}}+\sum_{i=1}^{k} \left(w^{(i)}_{\alpha}\right)^{-1}B_{\lambda_{\beta}}^{(n)},
\\
\sum_{i=1}^{k} B_{\lambda_{\alpha}^{(n+i)}}^*w_{\beta}^{(i)}&=\sum_{i=1}^{k} B_{\lambda_{\alpha}^{(n)}}^*(w^{(i)}_{\beta})+\sum_{i<j}\left(w^{(i)}_{\alpha}\right)^{-1}w^{(j)}_{\beta}.
\end{align*}

By collecting the terms, we get
\begin{multline*}
\sum_{\alpha,\beta=1}^{r}a_{\beta}a_{\alpha}^{-1}\left[
qtB_{\lambda_{\alpha}^{(n)}}^{*}+B_{\lambda_{\beta}^{(n)}}-(1-q)(1-t)B_{\lambda_{\alpha}^{(n)}}^*B_{\lambda_{\beta}^{(n)}}
\right]\\
+ \sum_{\alpha,\beta=1}^{r}a_{\beta}a_{\alpha}^{-1}\left[
\sum_{i=1}^{k}w^{(i)}_{\beta}-(1-q)(1-t)\left(w^{(i)}_{\alpha}\right)^{-1}B_{\lambda_{\beta}^{(n)}}-(1-q)\sum_{i\ge j}\left(w^{(i)}_{\alpha}\right)^{-1}w^{(j)}_{\beta}
\right],
\end{multline*}
which agrees with \eqref{eq: character parabolic}.
\end{proof}

\begin{corollary}
The dimension of $\M^{par}(r,n;n+k)$ equals $(2n+k)r$.
\end{corollary}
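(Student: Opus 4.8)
The plan is to read the dimension off directly from the cotangent character formula \eqref{eq: character parabolic}, using the general principle that for a smooth variety with a torus action the dimension equals the number of weights (counted with multiplicity) of the tangent space at any fixed point. First I would invoke Theorem \ref{thm: smooth}: since $\M^{par}(r,n;n+k)$ is smooth, its dimension at a $\widetilde{T}$-fixed point $\lambda_\bullet^{(\star)}$ equals $\dim T^*_{\lambda_\bullet^{(\star)}}\M^{par}(r,n;n+k)$, which in turn equals the value of the genuine $\widetilde{T}$-character $\Omega_{\lambda_\bullet^{(\star)}}$ at the identity of the torus, i.e. its specialization at $q=t=1$, $a_1=\dots=a_r=1$. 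This substitution is unambiguous because \eqref{eq: character parabolic} is a Laurent polynomial in $q,t,a_\alpha^{\pm1}$ with no denominators (the $q\leftrightarrow q^{-1},\,t\leftrightarrow t^{-1}$ twist in the preceding Remark is an innocuous relabeling of weights and does not affect the count).

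Next I would carry out the specialization term by term in \eqref{eq: character parabolic}. The term $\Omega_{\lambda^{(n)}}$ is the cotangent character of $\M(r,n)$ at a fixed point, so it specializes to $\dim\M(r,n)=2rn$; alternatively this is visible from \eqref{eq: cotangent weight}, where the factor $(1-q)(1-t)$ kills the quadratic $B_{\lambda_\alpha}B^*_{\lambda_\beta}$ term at $q=t=1$, leaving $\sum_{\alpha,\beta}\bigl(|\lambda^{(n)}_\alpha|+|\lambda^{(n)}_\beta|\bigr)=2rn$. The term $\sum_{\alpha}a_\alpha\sum_{i=1}^k w_i^{-1}$ is a sum of $rk$ monomials — the $r$ framing characters $a_\alpha$ times the $k$ box-weights $w_1^{-1},\dots,w_k^{-1}$, each becoming $1$ — so it specializes to $rk$. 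Finally, the last two terms carry a factor $(1-q)(1-t)$ and $(1-q)$ respectively, so they vanish identically at $q=t=1$. Adding up gives $\dim\M^{par}(r,n;n+k)=2rn+rk+0+0=(2n+k)r$.

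A small point to tidy up is that the dimension of a smooth variety is only locally constant, whereas I evaluated at fixed points: I would note that every connected component of $\M^{par}(r,n;n+k)$ contains a $\widetilde{T}$-fixed point — which follows from its realization inside a Nakajima-type quiver variety as in the proof of Theorem \ref{thm: smooth}, using a generic attracting one-parameter subgroup of $\widetilde{T}$ — and that formula \eqref{eq: character parabolic} is independent of the chosen fixed point, so the dimension is $(2n+k)r$ on each component. The only genuinely delicate point in the argument is the legitimacy of the "evaluate at $q=t=1$" step, namely that $\Omega_{\lambda_\bullet^{(\star)}}$ really is an honest character (so summing its coefficients computes a dimension) and a polynomial (so the substitution is well defined); both are guaranteed by smoothness and the explicit polynomial shape of \eqref{eq: character parabolic}, so I expect no serious obstacle and the proof to be essentially a one-line specialization.
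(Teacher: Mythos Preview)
Your proposal is correct and matches the paper's own proof, which is a one-line specialization of \eqref{eq: character parabolic} at $q=t=1$: the first term contributes $\dim\M(r,n)=2rn$, the second contributes $kr$, and the remaining terms vanish due to the $(1-q)$ and $(1-q)(1-t)$ prefactors. Your additional remarks about smoothness, the legitimacy of the substitution, and fixed points on each component are reasonable elaborations but go beyond what the paper records.
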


\begin{proof}
 Indeed, at $q=t=1$ the first sum in \eqref{eq: character parabolic} has $\dim \M(r,n)=2rn$ terms, the second sum has $kr$ terms and the rest vanishes.
\end{proof}

\subsection{Construction of $d_{+}$ and $d_{-}$ Operators for the Gieseker Space}

Let us fix $r > 0$. For $k \geq 0$, let us define the  $\C(q,t, a_{1}, \dots, a_{r})$-vector spaces:
\begin{equation}\label{eq:GiesekerRepn}
V_{k}^{(r)} := \bigoplus_{n \geq 0}K^{\widetilde{T}}_{loc}(\mathcal{M}^{par}(r;n, n+k))
\end{equation}
(in particular, $V^{(r)}_{0}$ can be identified with the $k$-Fock space). We will construct operators 
\[
d_{+}: V^{(r)}_{k} \to V^{(r)}_{k+1}, \qquad d_{-}: V^{(r)}_{k} \to V^{(r)}_{k-1}
\]
exactly in the same way as in \cite{CGM}.  Namely, we have forgetful maps
$$
\begin{array}{ccc}f:\M^{par}(r,n;n+k+1)\to \M^{par}(r,n;n+k) &&  g:\M^{par}(r,n;n+k)\to \M^{par}(r;n+1;n+k) \\
(\F_{n} \supset \cdots \supset \F_{n+k+1}) \mapsto (\F_{n} \supset \cdots \supset \F_{n+k}) && (\F_{n} \supset \cdots \supset \F_{n+k}) \mapsto (\F_{n+1} \supset \cdots \supset \F_{n+k})
\end{array}
$$
and we define:
$$
d_{-}=g_*,\ d_+=q^k(q-1)f^*.
$$
On the level of fixed points, $g$ sends each a tuple $$d_-:\left(\lambda_{\alpha}^{(n)},\ldots,\lambda_{\alpha}^{(n+k)}\right)\mapsto \left(\lambda_{\alpha}^{(n+1)},\ldots,\lambda_{\alpha}^{(n+k)}\right)
$$
(forgets the smallest diagram) with coefficient 1, while 
$$
d_+:\left(\lambda_{\alpha}^{(n)},\ldots,\lambda_{\alpha}^{(n+k)}\right)\mapsto \left(\lambda_{\alpha}^{(n)},\ldots,\lambda_{\alpha}^{(n+k)},\lambda_{\alpha}^{(n+k+1)}\right)
$$
(creates a bigger diagram) with some coefficient which we now compute.

\begin{definition} \label{def:Lambda-operator}
Let 
\[\phi(q,t,a_1,\ldots,a_r)=\sum \phi_{i,j,s_1,\ldots,s_r}q^it^ja_1^{s_1}\cdots a_r^{s_r}\]
 be any polynomial in $\Z[q^{\pm},t^{\pm},a_1^{\pm},\ldots,a_r^{\pm}]$ with $\phi_{0,\ldots,0}=0$. Define: 
\begin{equation}
\Lambda(\phi(q,t,a_1,\ldots,a_r)):=\prod (1-q^it^ja_1^{s_1}\cdots a_r^{s_r})^{\phi_{i,j,s_1,\ldots,s_r}}.
\end{equation}
\end{definition}

\begin{remark}
If $\phi(q,t,a_1,\ldots,a_r)$ is the character of a (virtual) representation $V$ of the torus $\widetilde{T}=(\C^{\times})^{2}\times T$, then $\Lambda(\phi(q,t,a_1,\ldots,a_r))$ is the $\widetilde{T}$-character of the exterior algebra $\Lambda^{\bullet}V$.
\end{remark}

\begin{lemma}\label{lem:d+ coeff}
The coefficient in 
$d_+\left(\lambda_{\alpha}^{(n)}, \dots, \lambda_{\alpha}^{(n+k)}\right)$ 
at the fixed point 
$$\left(\lambda_{\alpha}^{(n)}, \dots, \lambda_{\alpha}^{(n+k)}, \lambda_{\alpha}^{(n+k)}\cup\{x\}\right)$$
 equals
$
-q^kxD\left(\lambda^{(n+k)}_{\bullet},x\right)\prod_{i}\frac{(x-tw_i)}{(x-qtw_i)}
$
where 
$$
D\left(\lambda^{(n+k)}_{\bullet},x\right):=x^{-1}\Lambda\left(-Ax^{-1}+(1-q)(1-t)B_{{\lambda_{\bullet}}^{(n+k)}}x^{-1}+1\right)
$$
and $A=\sum a_i$.
\end{lemma}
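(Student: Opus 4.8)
The plan is to compute the pullback $f^{*}$ in localized equivariant $K$-theory by the Atiyah--Bott/Lefschetz fixed-point formula, exactly paralleling the $r=1$ computation of \cite[\S 4.3]{CGM}. First I would recall that $\M^{par}(r,n;n+k+1)$ sits inside $\M^{par}(r,n;n+k)$ via $f$, and that on the locus of $\widetilde T$-fixed points the fibre of $f$ over $(\lambda_\bullet^{(n)},\dots,\lambda_\bullet^{(n+k)})$ consists of the choices of an addable box $x$ of $\lambda_\bullet^{(n+k)}$ (subject to the horizontal strip condition, which is automatic for a single added box). By the normalization of the fixed-point basis, the coefficient of the target fixed point in $f^{*}[\text{source fixed point}]$ is the ratio of the $K$-theoretic Euler classes
\[
\frac{\Lambda\bigl(\Omega_{\lambda_\bullet^{(n)},\dots,\lambda_\bullet^{(n+k)}}\bigr)}{\Lambda\bigl(\Omega_{\lambda_\bullet^{(n)},\dots,\lambda_\bullet^{(n+k)},\,\lambda_\bullet^{(n+k)}\cup\{x\}}\bigr)},
\]
where $\Lambda(-)$ is the operator of Definition \ref{def:Lambda-operator} (so that $\Lambda(\Omega)$ is, up to sign and a monomial, the product of $(1 - \text{weight})$ over the cotangent weights). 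Multiplying by the prefactor $q^k(q-1)$ coming from the definition $d_+ = q^k(q-1)f^{*}$ will then produce the claimed coefficient.

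**Key steps.** The computation reduces to comparing $\Omega_{\lambda_\bullet^{(\star)}}$ for the length-$k$ flag against the length-$(k+1)$ flag obtained by appending $\mu := \lambda_\bullet^{(n+k)}\cup\{x\}$. Using the explicit character formula \eqref{eq: character parabolic}, the difference $\Omega_{\text{new}} - \Omega_{\text{old}}$ collects exactly the terms in which the new box $x$ (with $\widetilde T$-weight $x$, treating $w_{k+1} := x$) participates: a term $A x^{-1}$ from $\sum a_\alpha \sum w_i^{-1}$, a term $-(1-q)(1-t)B_{\lambda^{(n)}_\bullet}x^{-1}$, the cross terms $-(1-q)\sum_{i\le k} w_i^{-1}x$ and $-(1-q)\sum_{i\le k} x^{-1}w_i$ from $-(1-q)\sum_{i\ge j}w_i^{-1}w_j$ (the diagonal $i=j=k+1$ term $-(1-q)x^{-1}x = -(1-q)$ is a constant and is absorbed), plus the part of $\Omega_{\lambda^{(n)}}$ that changes — but note $\lambda^{(n)}$ itself does \emph{not} change, so that piece is inert, and the only genuinely new geometric contribution is the deformation of the top diagram from $\lambda^{(n+k)}_\bullet$ to $\mu$. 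Re-expressing $B_{\lambda^{(n)}_\bullet}x^{-1}$ versus $B_{\mu}x^{-1}$, and combining the two cross-sums $-(1-q)\sum_{i\le k}(w_i^{-1}x + x^{-1}w_i)$, one arrives after applying $\Lambda$ at the product $\prod_i \frac{x - tw_i}{x - qtw_i}$ (the $(1-q)(1-t)$ term in $D$ pairs with the two cross terms to give precisely this ratio, via the identity $\Lambda((1-q)(1-t)w^{-1}x) = \frac{(1-w^{-1}x)(1-qtw^{-1}x)}{(1-qw^{-1}x)(1-tw^{-1}x)}$ used in the proof of Theorem \ref{thm: polynomial is calibrated}) together with the single-box factor $-x D(\mu';x)$. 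Matching $q$-powers: the $q^k$ in front and an extra $-x$ come out as in \cite{CGM}, and one checks $D(\lambda_\bullet^{(n+k)};x)$ has the stated closed form, well-defined and nonzero by the same argument as in the proof of Theorem \ref{thm: polynomial is calibrated} (the constant term of $-Ax^{-1} + (1-q)(1-t)B_{\lambda^{(n+k)}_\bullet}x^{-1} + 1$ vanishes because for an addable box $x$ of the multipartition, the contribution of $(1-q)(1-t)B_{\lambda^{(n+k)}_\bullet}x^{-1}$ to the constant term equals $A x^{-1}$'s, i.e. the relevant part cancels in each component where $x$ is addable).

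**Main obstacle.** The main bookkeeping difficulty is keeping track of the multipartition structure: in \cite{CGM} there is a single Young diagram and a single $B_\lambda$, whereas here $B_{\lambda^{(n)}_\bullet} = \sum_\alpha a_\alpha B_{\lambda^{(n)}_\alpha}$ carries the torus variables $a_\alpha$, and the new box $x$ lives in one specific component $\alpha_0$ with weight $x = a_{\alpha_0} q^{i-1}t^{j-1}$. One must check that all the $a_\alpha$-dependence in the ratio of Euler classes cancels except through $x$ and the $w_i$, so that the final coefficient depends only on $x$, $w_1,\dots,w_k$ and $B_{\lambda_\bullet^{(n+k)}}$ as claimed — this is where the precise form of \eqref{eq: character parabolic}, in which every monomial appears paired as $a_\alpha a_\beta^{-1}(\cdots)$, does the work. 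A secondary subtlety is verifying that $f$ is indeed the claimed morphism of smooth varieties with the fixed-point fibres described, so that localization applies; this follows from Theorem \ref{thm: smooth} and the explicit description of $\widetilde T$-fixed points as flags of multipartitions with horizontal-strip steps, exactly as in \cite[\S 4.1]{CGM}.
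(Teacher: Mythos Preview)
Your overall strategy---localization, computing the difference of cotangent characters from \eqref{eq: character parabolic}, applying $\Lambda$, and multiplying by $q^k(q-1)$---is exactly what the paper does. However, your bookkeeping of the cross terms is wrong, and as written would not produce the claimed product.

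In the sum $-(1-q)\sum_{i \geq j} w_i^{-1}w_j$ of \eqref{eq: character parabolic}, adjoining $w_{k+1} = x$ only creates new terms with $i = k+1$: if $j = k+1$ then the constraint $i \geq j$ forces $i = k+1$ as well. Thus the only new contributions are $x^{-1}w_j$ for $j = 1, \dots, k$ together with the diagonal term $x^{-1}x = 1$; there are \emph{no} terms of the form $w_i^{-1}x$ with $i \leq k$. Hence
\[
\Omega_{\text{old}} - \Omega_{\text{new}} \;=\; -Ax^{-1} + (1-q)(1-t)B_{\lambda^{(n)}_\bullet}x^{-1} + (1-q) + (1-q)\sum_{i=1}^k x^{-1}w_i.
\]
Now rewriting $B_{\lambda^{(n)}_\bullet} = B_{\lambda^{(n+k)}_\bullet} - \sum_i w_i$ and collecting, the cross sum becomes $t(1-q)\sum_i x^{-1}w_i$ (a \emph{single} sum, not two), and it is
\[
\Lambda\bigl(t(1-q)x^{-1}w_i\bigr) \;=\; \frac{1 - tx^{-1}w_i}{1 - qtx^{-1}w_i} \;=\; \frac{x - tw_i}{x - qtw_i}
\]
that produces the product---not the $(1-q)(1-t)$ identity you invoke. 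The remaining pieces give $xD(\lambda^{(n+k)}_\bullet; x)$ and a leftover $\Lambda(-q) = (1-q)^{-1}$, which the prefactor $q^k(q-1)$ converts to $-q^k$.

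Your ``main obstacle'' is also overstated: once \eqref{eq: character parabolic} is expressed in terms of $B_{\lambda^{(n)}_\bullet}$ and the $w_i$, all the $a_\alpha$-dependence is already absorbed into those quantities, and no further cancellation needs to be checked.
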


\begin{remark}
For $r=1$ we have $A=1$ and $D\left(\lambda^{(n+k)}_{\bullet},x\right)$ was denoted by $d_{\lambda+x,x}$ in \cite{CGM}, see Section \ref{sec: poly}.
\end{remark}

\begin{remark}
As in Section \ref{sec: poly}, we can check that $D\left(\lambda^{(n+k)}_{\bullet},x\right)$ is well defined, that is, the constant term in 
$$
-Ax^{-1}+(1-q)(1-t)B_{{\lambda_{\bullet}}^{(n+k)}}x^{-1}+1
$$
vanishes whenever $x$ is addable for $\lambda_{\bullet}^{(n+k)}$. Assume it is addable for $\lambda_{\alpha}^{(n+k)}$, then the constant terms in $a_{\beta}x^{-1}$ and 
$(1-q)(1-t)B_{\lambda_{\beta}}^{(n+k)}x^{-1}$ vanish for all $\beta\neq \alpha$.

Furthermore, if $\lambda_{\alpha}^{(n+k)}$ is empty then $x=a_{\alpha}$ and $-a_{\alpha}x^{-1}+1=0$. If $\lambda_{\alpha}^{(n+k)}$ is nonempty then the constant term of $a_{\alpha}x^{-1}$ vanishes and similarly to Section \ref{sec: poly} the constant term of $(1-q)(1-t)B_{\lambda_{\alpha}}^{(n+k)}x^{-1}+1$ vanishes as well.
\end{remark}

\begin{proof}[Proof of Lem. \ref{lem:d+ coeff}]
We begin by computing the coefficient of $\left(\lambda^{(n)}_{\bullet}, \dots, \lambda^{(n+k)}_{\bullet}, \lambda^{(n+k)}_{\bullet}\cup\{x\}\right)$ in $f^{*}\left(\lambda^{(n)}_{\bullet}, \dots, \lambda^{(n+k)}_{\bullet}\right)$, after which we need only multiply by $q^{k}(q-1)$. The coefficient of $f^*$ is given by the exterior algebra of the relative cotangent space, hence using \eqref{eq: character parabolic} we compute: 
\begin{align}
\ch\Omega&\left(\lambda_{\bullet}^{(n)}, \dots, \lambda_{\bullet}^{(n+k)}\right) - \ch\Omega\left(\lambda_{\bullet}^{(n)}, \dots, \lambda_{\bullet}^{(n+k)}, \lambda_{\bullet}^{(n+k)}\cup\{x\}\right) \nonumber \\
&={ -Ax^{-1}+(1-q)(1-t)B_{{\lambda_{\bullet}}^{(n)}}x^{-1}+(1-q)+(1-q)\sum_{i = 1}^{k} x^{-1}w_i} \nonumber \\
&={-Ax^{-1}+(1-q)(1-t)B_{{\lambda_{\bullet}}^{(n+k)}}x^{-1}+(1-q)+t(1-q)\sum_{i = 1}^{k} x^{-1}w_i }. \label{eq:diff cotangent}
\end{align}

Taking the exterior algebra of \eqref{eq:diff cotangent},we find that the coefficient of $\left(\lambda^{(n)}_{\bullet}, \dots, \lambda^{(n+k)}_{\bullet}, \lambda^{(n+k)}_{\bullet}\cup\{x\}\right)$ in $f^{*}\left(\lambda^{(n)}_{\bullet}, \dots, \lambda^{(n+k)}_{\bullet}\right)$ is
$
xD(\lambda^{(n+k)}_{\bullet},x)\frac{1}{1-q}\prod_{i = 1}^{k}\frac{x-tw_{i}}{x - qtw_{i}}
$
and the result follows. 
\end{proof}

\subsection{Verification of the $\Bqt$ Relations for the Gieseker Space}

The computation of $d_{+}d_{-},d_{-}d_{+}$ and $d_{+}d_{-}-d_{-}d_{+}$,$qd_{+}d_{-}-d_{-}d_{+}$ follows 
\cite[Section 6.2]{CGM} verbatim, in particular, we get the analogues of \cite[eq. (6.1),(6.2)]{CGM} with
$D\left(\lambda_{\bullet}^{(n+k)},x\right)$ instead of $d_{\lambda+x,x}$. 

It is immediate from Lemma \ref{lem:d+ coeff} that we have:
\begin{align}
d_{-}d_{+}\Bigl(\lambda_{\bullet}^{(n)}, \dots, \lambda_{\bullet}^{(n+k)}\Bigr) 
= -q^{k}\sum_{x}D(\lambda_{\bullet}^{(n+k)}, x)\prod_{i = 1}^{k}\frac{x - tw_{i}}{x-qtw_{i}}\Bigl(\lambda^{(n+1)}_{\bullet}, \dots, \lambda^{(n+k)}_{\bullet}, \lambda^{(n+k)}_{\bullet}\cup\{x\}\Bigr)& \nonumber \\
= -q^{k}\sum_{x}D(\lambda_{\bullet}^{(n+k)}, x)\frac{x - tw_{1}}{x-qtw_{1}}\prod_{i = 2}^{k}\frac{x - tw_{i}}{x-qtw_{i}}\Bigl(\lambda^{(n+1)}_{\bullet}, \dots, \lambda^{(n+k)}_{\bullet}\cup\{x\}\Bigr)& \label{eq:d-d+}
\end{align}
and
\begin{equation}\label{eq:d+d-}
\begin{array}{rl}
 d_{+}d_{-}\Bigl(\lambda_{\bullet}^{(n)}, \dots, \lambda_{\bullet}^{(n+k)}\Bigr)  = & \displaystyle{-q^{k-1}\sum_{x}xD(\lambda_{\bullet}^{(n+k)}, x)\prod_{i = 2}^{k}\frac{x - tw_{i}}{x-qtw_{i}}\Bigl(\lambda_{\bullet}^{(n+1)}, \dots, \lambda_{\bullet}^{(n+k)}, \lambda_{\bullet}^{(n+k)}\cup\{x\}\Bigr).}
\end{array}
\end{equation}
From \eqref{eq:d+d-} and \eqref{eq:d-d+} we obtain:
\begin{align*}
\frac{[d_{+},d_{-}]}{q-1}&\left(\lambda_{\bullet}^{(n)}, \dots, \lambda^{(n+k)}_{\bullet}\right)
   \\
&= \frac{q^{k-1}}{1-q}\sum_{x}xD(\lambda^{(n+k)}_{\bullet}, x)\prod_{i = 2}^{k}\frac{x - tw_{i}}{x - qtw_{i}}\left(1 - q\frac{x-tw_{1}}{x - qtw_{i}}\right)\left(\lambda^{(n+1)}_{\bullet}, \dots, \lambda^{(n+k)}_{\bullet}\cup\{x\}\right)
 \\
&= q^{k-1}\sum_{x}xD(\lambda^{(n+k)}_{\bullet}, x)\frac{x}{x-qtw_{1}}\prod_{i = 2}^{k}\frac{x - tw_{i}}{x-qtw_{i}}\left(\lambda^{(n+1)}_{\bullet}, \dots, \lambda^{(n+k)}_{\bullet}\cup\{x\}\right).
\end{align*}
as well as,
\begin{align*}
\frac{qd_{+}d_{-} - d_{-}d_{+}}{q -1}&\left(\lambda_{\bullet}^{(n)}, \dots, \lambda_{\bullet}^{(n+k)}\right) 
\\
&=\frac{q^{k}}{1-q}\sum_{x}xD(\lambda^{(n+k)}_{\bullet}, x)\prod_{i = 2}^{k}\frac{x - tw_{i}}{x - qtw_{i}}\left(1 - \frac{x-tw_{1}}{x - qtw_{i}}\right)\left(\lambda^{(n+1)}_{\bullet}, \dots, \lambda^{(n+k)}_{\bullet}\cup\{x\}\right) 
 \\
&=q^{k}t\sum_{x}xD(\lambda^{(n+k)}_{\bullet}, x)\frac{w_1}{x-qtw_{1}}\prod_{i = 2}^{k}\frac{x - tw_{i}}{x-qtw_{i}}\left(\lambda^{(n+1)}_{\bullet}, \dots, \lambda^{(n+k)}_{\bullet}\cup\{x\}\right).
\end{align*}
Likewise, we compute:
\begin{equation*}\label{eqn:d+squared 1}
    \begin{array}{l}
d_{+}^{2}\left(\lambda^{(n)}_{\bullet}, \dots, \lambda^{(n+k)}_{\bullet}\right) = q^{2k+1}\times\\
\displaystyle{\sum_{x,y}xD(\lambda_{\bullet}^{(n+k)}, x)yD(\lambda_{\bullet}^{(n+k)}\cup\{x\}, y)\left(\prod_{i = 1}^{k}\frac{x-tw_{i}}{x-qtw_{i}}\frac{y - tw_{i}}{y - qtw_{i}}\right)\frac{y - tx}{y - qtx}(\lambda^{(n)}_{\bullet}, \dots, \lambda^{(n+k)}_{\bullet}\cup\{x,y\})
}
    \end{array}
\end{equation*}
so that,
\begin{align*}
D(\lambda^{(n+k)}_{\bullet},x)D(\lambda^{(n+k+1)}_{\bullet},y)&=(xy)^{-1}\Lambda \left(
-\left(A-(1-q)(1-t)B_{\lambda^{(k)}_{\bullet}}\right)(x^{-1}+y^{-1}\right)\\
&+(1-q)(1-t)y^{-1}x+2
 ),
\end{align*}
and conclude
\begin{equation}\label{eqn:d+squared 2}
d_{+}^{2}\left(\lambda_{\bullet}^{(n)}, \dots, \lambda_{\bullet}^{(n+k)}\right) = q^{2k+1}\sum_{x, y}C_{\lambda^{(n+k)}_{\bullet}, w}(x,y)\frac{y-x}{y-qx}\left(\lambda^{(n)}_{\bullet}, \dots, \lambda^{(n+k)}_{\bullet}\cup\{x\}, \lambda^{(n+k)}_{\bullet}\cup\{x,y\}\right)
\end{equation}
where $C_{\lambda^{(n+k)}_{\bullet},w}(x,y)$ is symmetric in $x,y$, compare \cite[eq. (6.2.4)]{CGM}. From here, it follows easily that $T_{1}d_{+}^{2} = d_{+}^{2}$.

\section{Tensor Products of Representations}\label{sec: tensor}

\subsection{General Position and Twisted Posets} Given two excellent weighted posets $E_1$ and $E_2$, we would like for the product poset $E_1 \times E_2$ to be excellent as well. In order for this to be the case, we need to impose the following condition on the weights on $E_1$ and $E_2$.

    \begin{definition}\label{def: general position}
    Let $E_1, E_2$ be weighted posets. The pair $E_1$ and $E_2$ are in \newword{general position} (with each other) if the following conditions are satisfied:
    \begin{enumerate}
        \item For every cover relation $\lambda_1 \xrightarrow {z_1} \mu_1$ in $E_1$, and every cover relation $\lambda_2 \xrightarrow {z_2} \mu_2$ in $E_2$, one has $z_1/z_2 \not\in \{1, q^{\pm 1}, t^{\pm 1}, (qt)^{\pm 1}\}$.
        \item If $\lambda_1, \mu_1 \in E_1$ and $\lambda_2, \mu_2 \in E_2$ are such that
        \[
        p_m(\lambda_1) + p_m(\mu_1) = p_m(\lambda_2) + p_m(\mu_2)
        \]
        for all $m > 0$; then $\lambda_1 = \lambda_2$ and $\mu_1 = \mu_2$. 
        \end{enumerate}
\end{definition}

Note that a weighted poset $E$ with at least one covering relation is never in general position with respect to itself. However, we can \newword{\lq\lq twist\rq\rq}  any poset as follows.

\begin{definition}\label{def: twist}
Let $E$ be a weighted poset, and $a\in \C(q,t)^*$ (equivalently, let $a$ be a formal variable). We define the \newword{twisted weighted poset} $E(a)$ as follows: the underlying poset is the same, but for each $a \in E$ we denote the corresponding element in $E(a)$ by $\lambda(a)$. Furthermore, the weightings are related via
$$
p_m(\lambda(a))=a^m\cdot p_m(\lambda).
$$
\end{definition}

It is easy to see that $\lambda\cup x=\mu$ if and only if
$\lambda(a)\cup (ax)=\mu(a)$.

\begin{remark}
Let $E_1$ and $E_2$ be weighted posets. By choosing sufficiently generic $a_1$ and $a_2$ (or independent variables $a_1$ and $ a_2$) we can always twist so that $E_1(a_1)$ and $E_2(a_2)$ are in general position. 
\end{remark}

For any sequence $[\underline{w}]$, let $[a \underline{w}] = [aw_k, \dots, aw_1]$. 

\begin{proposition}
a) The twisted weighted poset $E(a)$ is excellent if and only if $E$ is excellent.

\noindent b) A chain $[\lambda(a);\underline{w}]$ in $E(a)$ is good if and only if $[\lambda;a^{-1}\underline{w}]$ is a good chain in $E$.

\noindent c) Define $c(\lambda(a);x):=c(\lambda;a^{-1}x)$. Then \eqref{eq: monodromy} holds for $E(a)$ if and only if it holds for $E$.
\end{proposition}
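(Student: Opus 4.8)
The plan is to prove all three parts by transporting the relevant structure along the evident bijection of underlying posets, checking that the twisting operation $p_m(\lambda(a)) = a^m p_m(\lambda)$ is compatible with everything in Definitions \ref{def: poset new}, \ref{def: excellent new}, \ref{def: good new}, and equation \eqref{eq: monodromy}. The key observation, used repeatedly, is that if $\lambda \xrightarrow{z} \mu$ in $E$ then $\lambda(a) \xrightarrow{az} \mu(a)$ in $E(a)$; that is, twisting multiplies every addable weight by $a$. This follows from the identity $p_1(\mu(a)) - p_1(\lambda(a)) = a\bigl(p_1(\mu) - p_1(\lambda)\bigr)$ together with the analogous identity for all $p_m$, and it is already recorded in the excerpt just before the proposition.

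For part (a): a two-step chain $[\lambda(a); x, y]$ in $E(a)$ corresponds, via the bijection above, to a two-step chain $[\lambda; a^{-1}x, a^{-1}y]$ in $E$. The conditions in Definition \ref{def: excellent new} are all scale-invariant: $y \ne x$, $y \ne (qt)^{\pm1} x$, and the trichotomy $y = qx$ / $y = tx$ / $[\lambda; y, x]$ is a chain are each equivalent to the corresponding condition with $x, y$ replaced by $a^{-1}x, a^{-1}y$, since $a \ne 0$. So $[\lambda(a); x, y]$ satisfies the excellence conditions iff $[\lambda; a^{-1}x, a^{-1}y]$ does, giving the equivalence. (I should also remark that the local finiteness, grading, simple spectrum, and edge conditions of Section \ref{sec: def poset new} transfer trivially, since twisting is an order-isomorphism of the underlying poset and an algebra automorphism of $\C(q,t)$ applied fiberwise — in particular simple spectrum is preserved because $p_m(\lambda(a)) = p_m(\mu(a))$ for all $m$ forces $a^m p_m(\lambda) = a^m p_m(\mu)$ hence $p_m(\lambda) = p_m(\mu)$.)

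For part (b): a chain $[\lambda(a); \underline{w}]$ in $E(a)$, unwound through the bijection, has successive weights $w_k, \ldots, w_1$, and corresponds to the chain in $E$ with successive weights $a^{-1}w_k, \ldots, a^{-1}w_1$, i.e. the chain $[\lambda; a^{-1}\underline{w}]$ in the notation of the proposition. Goodness (Definition \ref{def: good new}) is the condition $w_i \ne t w_j$ for all $i, j$; since $w_i = t w_j \iff a^{-1} w_i = t\,(a^{-1} w_j)$, this holds for $[\lambda(a); \underline{w}]$ iff it holds for $[\lambda; a^{-1}\underline{w}]$. For part (c), with $c(\lambda(a); x) := c(\lambda; a^{-1}x)$, the left side of \eqref{eq: monodromy} for $E(a)$ at a cycle through $\lambda(a)$ with addable weights $x, y$ equals
\[
\frac{c(\lambda(a); x)\, c(\lambda(a)\cup x; y)}{c(\lambda(a); y)\, c(\lambda(a) \cup y; x)}
= \frac{c(\lambda; a^{-1}x)\, c(\lambda \cup a^{-1}x; a^{-1}y)}{c(\lambda; a^{-1}y)\, c(\lambda \cup a^{-1}y; a^{-1}x)},
\]
using that $(\lambda \cup x)(a) = \lambda(a) \cup (ax)$, hence $\lambda(a) \cup x = (\lambda \cup a^{-1}x)(a)$. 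Meanwhile the right side of \eqref{eq: monodromy} is a ratio of factors each homogeneous of the same degree in $(x,y)$ — every factor $x - ty$, $x - qy$, $y - qtx$, etc. is degree $1$ in each of the six positions of numerator and denominator, so the whole right side is invariant under $(x, y) \mapsto (a^{-1}x, a^{-1}y)$. Therefore \eqref{eq: monodromy} for $E(a)$ at $(\lambda(a); x, y)$ is literally \eqref{eq: monodromy} for $E$ at $(\lambda; a^{-1}x, a^{-1}y)$, and since $x \mapsto a^{-1}x$ is a bijection on addable weights, the two conditions are equivalent over all cycles.

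There is no serious obstacle here; the only thing requiring a moment's care is bookkeeping the bijection between chains and addable weights under twisting (keeping track of which weights get multiplied by $a$ versus $a^{-1}$) and verifying the homogeneity of the right-hand side of \eqref{eq: monodromy}. Both are routine, so the proof is essentially a matter of writing down these three scale-invariance checks cleanly.
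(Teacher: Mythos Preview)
Your proposal is correct and follows essentially the same approach as the paper: the paper dismisses (a) and (b) as ``straightforward'' and for (c) observes exactly your key point, that the right-hand side of \eqref{eq: monodromy} is homogeneous in $x$ and $y$, then writes out the same chain of equalities you describe. Your write-up simply makes the scale-invariance checks in (a) and (b) explicit.
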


\begin{proof}
Parts (a) and (b) are straightforward. To prove part (c), observe that the right hand side of \eqref{eq: monodromy} is homogeneous in $x$ and $y$, so
\begin{align*}
\frac{c(\lambda(a);x)c(\lambda(a)\cup x;y)}{c(\lambda(a);y)c(\lambda(a)\cup y;x)}&=\frac{c(\lambda;a^{-1}x)c(\lambda\cup a^{-1}x;a^{-1}y)}{c(\lambda;a^{-1}y)c(\lambda\cup a^{-1}y;a^{-1}x)}
\\
&= -\frac{(a^{-1}x-ta^{-1}y)(a^{-1}x-qa^{-1}y)(a^{-1}y-qta^{-1}x)}{(a^{-1}y-ta^{-1}x)(a^{-1}y-qa^{-1}x)(a^{-1}x-qta^{-1}y)}\\
&=-\frac{(x-ty)(x-qy)(y-qtx)}{(y-tx)(y-qx)(x-qty)}.
\end{align*}
\end{proof}

To conclude the discussion on twisting, we show that given an excellent poset $E$, the representations $V(E)$ and $V(E(a))$ are related via an automorphism of $\Bqt^{\ext}$.

\begin{lemma}
The algebra $\Bqt^{\ext}$ has an automorphism given by
$$
z_i\to az_i,\ T_i\to T_i,\ d_{+}\to d_{+},\ d_{-}\to d_{-},\ \Delta_{p_m}\to a^m\Delta_{p_m}.
$$
The calibrated representations associated to $V(E)$ and $V(E(a))$ are related by this automorphism.
\end{lemma}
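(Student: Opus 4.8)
The plan is in two steps: first verify that the prescribed assignment on generators extends to an automorphism of $\Bqt^{\ext}$, then exhibit an explicit intertwiner between $V(E(a))$ and the twist of $V(E)$ by that automorphism.

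\emph{Step 1 (the automorphism).} Write $\sigma_a\colon z_i\mapsto az_i,\ T_i\mapsto T_i,\ d_{\pm}\mapsto d_{\pm},\ \Delta_{p_m}\mapsto a^m\Delta_{p_m}$. I would check directly that $\sigma_a$ preserves each defining relation \eqref{eq:hecke relns}--\eqref{eq:qphi} together with \eqref{eq:ext-Bqt-1}--\eqref{eq:ext-Bqt-2}. This is routine bookkeeping: the relations not involving $z_i$ or $\Delta_{p_m}$ (the Hecke and braid relations, $d_{-}^2T_{k-1}=d_{-}^2$, $T_1d_{+}^2=d_{+}^2$, $d_{\pm}T_i=T_{i\pm 1}d_{\pm}$, and the relations \eqref{eq:phi}) are untouched since $T_i$, $d_{+}$, $d_{-}$, and hence $\varphi$, are fixed; the relations that are linear in the $z$'s, such as \eqref{eq:T and z}, \eqref{eq: d z}, and \eqref{eq:qphi}, and the quadratic one $z_iz_j=z_jz_i$ from \eqref{eq:affine Hecke relns}, are homogeneous in the $z$'s, so rescaling $z_i\mapsto az_i$ multiplies both sides by the same power of $a$. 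For the extended relations, $[a^m\Delta_{p_m},T_i]=[a^m\Delta_{p_m},az_i]=[a^m\Delta_{p_m},d_{-}]=0$ and $[a^m\Delta_{p_m},d_{+}]=a^m z_1^m d_{+}=(az_1)^m d_{+}$, which is exactly \eqref{eq:ext-Bqt-2} applied to the images. The assignment $z_i\mapsto a^{-1}z_i,\ \Delta_{p_m}\mapsto a^{-m}\Delta_{p_m}$, identity on $T_i,d_{\pm}$, is a two-sided inverse, so $\sigma_a$ is an automorphism.

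\emph{Step 2 (the intertwiner).} By Theorem \ref{thm: existence and uniqueness} I may compute $V(E)$ and $V(E(a))$ with any admissible edge functions, so I choose $c$ on $E$ and, as in the Proposition above, $c'(\lambda(a);x):=c(\lambda;a^{-1}x)$ on $E(a)$. Define $\Psi\colon V(E(a))\to V(E)$ on good-chain bases by $[\lambda(a);a\underline{w}]\mapsto[\lambda;\underline{w}]$; this is a bijection because part (b) of the Proposition identifies the good chains of $E(a)$ with the $a$-rescalings of good chains of $E$. I would then check, one generator at a time, that $\Psi(x\cdot v)=\sigma_a(x)\cdot\Psi(v)$: for $z_i$ the eigenvalue $aw_i$ on $V(E(a))$ equals the eigenvalue of $\sigma_a(z_i)=az_i$ on $V(E)$; for $\Delta_{p_m}$ the eigenvalue $p_m(\lambda(a))+\sum_j(aw_j)^m=a^m\bigl(p_m(\lambda)+\sum_j w_j^m\bigr)$ equals that of $a^m\Delta_{p_m}$; for $T_i$ the Demazure--Lusztig coefficients in \eqref{eq: T Ram new} are ratios of the $w$'s and so are invariant under $w_j\mapsto aw_j$ (and admissibility is unchanged); $d_{-}$ is compatible because $\lambda(a)\cup aw_k=(\lambda\cup w_k)(a)$; and for $d_{+}$ the relevant factors transform as $c'\bigl((\lambda\cup\underline{w})(a);ax\bigr)=c(\lambda\cup\underline{w};x)$ and $\frac{ax-taw_i}{ax-qtaw_i}=\frac{x-tw_i}{x-qtw_i}$, so the $d_{+}$-coefficients agree. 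Hence $\Psi$ is an isomorphism $V(E(a))\xrightarrow{\ \sim\ }V(E)^{\sigma_a}$, where $V(E)^{\sigma_a}$ is $V(E)$ with the action twisted by $\sigma_a$.

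Neither step is genuinely hard; the main point to be careful about is the bookkeeping — keeping track of the correct direction of the twist and of the fact that a "good chain in $E(a)$" means precisely the $a$-scaled version of a good chain in $E$, which is exactly what the identities $c'(\lambda(a);x)=c(\lambda;a^{-1}x)$ and part (b) of the Proposition are designed to make transparent.
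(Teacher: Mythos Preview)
Your proposal is correct and follows essentially the same approach as the paper: both verify the automorphism by noting that the relations are homogeneous in the $z$'s and $\Delta$'s, and both compare the representations by observing that the $T_i$-coefficients are ratios of weights (hence scale-invariant), the $d_{-}$-coefficients are all $1$, and the $d_{+}$-coefficients match via the edge-function identity $c'(\lambda(a);x)=c(\lambda;a^{-1}x)$ together with the homogeneity of $\frac{x-tw_i}{x-qtw_i}$. Your version is simply more explicit about naming the intertwiner $\Psi$ and the direction of the twist.
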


\begin{proof}
It is easy to see from the relations that we indeed get an automorphism of $\Bqt^{\ext}$. To check that it agrees  with the representation $V(E(a))$, observe that the action of $T_i$ depends only on the ratio $w_i/w_{i+1}$ and hence is not modified by the twist.  By construction, the coefficients of $\Delta_{p_m}$ and $z_i$ are scaled by $a^m$ and by $a$, respectively, and the coefficients of $d_-$ remain the same. Finally, the coefficients of $d_{+}$ are equal to
\[
q^kc(\lambda(a)\cup \underline{w};x)\prod_{i=1}^{k}\frac{x-tw_i}{x-qtw_i}=q^kc(\lambda\cup a^{-1}\underline{w};a^{-1}x)\prod_{i=1}^{k}\frac{a^{-1}x-ta^{-1}w_i}{a^{-1}x-qta^{-1}w_i}
\]
so the action of $d_+$ is preserved for both $V(E)$ and $V(E(a))$.
\end{proof} 

\subsection{Products of Posets} We can now define the product of weighted posets.

\begin{definition}
    
Let $(E_{1}, \prec_{1}, p^{(1)}_{m}: E_{1} \to \C(q,t))$ and $(E_{2}, \prec_{2}, p^{(2)}_{m}: E_{2} \to \C(q,t))$ be excellent weighted posets. We define their product $(E, \prec, p_{m}: E \to \C(q,t))$ as follows. Set:
\begin{itemize}
\item $E := E_{1} \times E_{2}$.
\item $(\lambda^{(1)}, \lambda^{(2)}) \preceq (\mu^{(1)}, \mu^{(2)})$ if and only if $\lambda^{(1)} \preceq_{1} \mu^{(1)}$ and $\lambda^{(2)} \preceq_{2} \mu^{(2)}$.
\item $p_{m}(\lambda^{(1)}, \lambda^{(2)}) :=
p^{(1)}_{m}(\lambda^{(1)}) + p^{(2)}_{m}(\lambda^{(2)})$.
\end{itemize}
\end{definition}

It follows that the poset $E$ is graded, with $|(\lambda^{(1)}, \lambda^{(2)})| = |\lambda^{(1)}| + |\lambda^{(2)}|$.

\begin{remark}
    Note that if $E_1$ and $E_2$ are in general position, then by Definition \ref{def: general position}(2) the weighting on $E_1 \times E_2$ has simple spectrum. 
\end{remark}

{\bf For the rest of this section, we will always assume that the weighted posets $E_1$ and $E_2$ are in general position.}

\begin{proposition}
A weight $x$ is addable for $(\lambda^{(1)},\lambda^{(2)})$ if and only if either $x$ is addable for $\lambda^{(1)}$ or $x$ is addable for $\lambda^{(2)}$.
\end{proposition}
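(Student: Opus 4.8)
The statement is an "if and only if" about addable weights in the product poset, so I would prove the two directions separately, working directly from the edge condition and the grading on $E_1\times E_2$. Recall that $x$ is addable for $(\lambda^{(1)},\lambda^{(2)})$ precisely when there is a cover $(\lambda^{(1)},\lambda^{(2)})\to(\mu^{(1)},\mu^{(2)})$ with $p_1(\mu^{(1)},\mu^{(2)})-p_1(\lambda^{(1)},\lambda^{(2)})=x$, equivalently with $p_m(\mu^{(1)},\mu^{(2)})=p_m(\lambda^{(1)},\lambda^{(2)})+x^m$ for all $m$. By the definition of the order on $E_1\times E_2$ and the grading $|(\lambda^{(1)},\lambda^{(2)})|=|\lambda^{(1)}|+|\lambda^{(2)}|$, a cover of $(\lambda^{(1)},\lambda^{(2)})$ must increase exactly one of the two coordinates by a single cover step in the respective $E_i$ while keeping the other coordinate fixed: if both coordinates strictly increased the rank would jump by at least $2$, and if a coordinate increased by a non-cover step it would factor through an intermediate element of $E_1\times E_2$.

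For the backward direction, suppose $x$ is addable for $\lambda^{(1)}$, say $\lambda^{(1)}\xrightarrow{x}\mu^{(1)}$ in $E_1$. Then $(\lambda^{(1)},\lambda^{(2)})\to(\mu^{(1)},\lambda^{(2)})$ is a cover in $E_1\times E_2$, and by definition of the weighting $p_m$ on the product, $p_m(\mu^{(1)},\lambda^{(2)})-p_m(\lambda^{(1)},\lambda^{(2)})=p^{(1)}_m(\mu^{(1)})-p^{(1)}_m(\lambda^{(1)})=x^m$ for all $m$; hence $x$ is addable for $(\lambda^{(1)},\lambda^{(2)})$. The case where $x$ is addable for $\lambda^{(2)}$ is symmetric.

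For the forward direction, suppose $x$ is addable for $(\lambda^{(1)},\lambda^{(2)})$, with cover $(\lambda^{(1)},\lambda^{(2)})\to(\mu^{(1)},\mu^{(2)})$. By the observation above, exactly one coordinate changes by a cover step; say $\lambda^{(1)}\xrightarrow{z}\mu^{(1)}$ in $E_1$ and $\lambda^{(2)}=\mu^{(2)}$ (the other case is symmetric). Then $x=p_1(\mu^{(1)},\mu^{(2)})-p_1(\lambda^{(1)},\lambda^{(2)})=p^{(1)}_1(\mu^{(1)})-p^{(1)}_1(\lambda^{(1)})=z$, so $x=z$ is an addable weight for $\lambda^{(1)}$, as desired. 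I do not foresee a genuine obstacle here; the only point requiring a little care is the structural claim that covers in $E_1\times E_2$ change exactly one coordinate by one cover step, which I would justify using local finiteness and the grading rather than quoting any external fact. Note also that this proposition does not yet require $E_1$ and $E_2$ to be in general position—that hypothesis (specifically Definition~\ref{def: general position}) will be needed subsequently to ensure $E_1\times E_2$ is excellent and has simple spectrum, but not for the present identification of addable weights.
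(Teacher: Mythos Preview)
Your proposal is correct and follows essentially the same approach as the paper: both arguments identify covers in $E_1\times E_2$ as those changing exactly one coordinate by a cover step, then read off the addable weight from the additivity of $p_m$. The paper simply declares the cover characterization ``obvious'' and checks the weight computation, while you spell out the grading argument more carefully; your closing remark that general position is not needed for this particular proposition is also accurate.
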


\begin{proof}
 It is obvious that $(\mu^{(1)}, \mu^{(2)})$ covers $(\lambda^{(1)}, \lambda^{(2)})$ if and only if $\mu^{(1)} = \lambda^{(1)}$ and $\mu^{(2)}$ covers $\lambda^{(2)}$, or $\mu^{(1)}$ covers $\lambda^{(1)}$ and $\mu^{(2)} = \lambda^{(2)}$. Thus, the degrees of elements in a covering pair differ by $1$, as required. Moreover, if say $\mu^{(1)} = \lambda^{(1)}$ and $\mu^{(2)}$ covers $\lambda^{(2)}$, say $\mu^{(2)} = \lambda^{(2)}\cup x$ then
\[
p_{m}(\mu^{(1)}, \mu^{(2)}) = p_{m}^{(1)}(\lambda_{1}) + p_{m}^{(2)}(\lambda_{2}) + x^{m}
\]
so that $(\mu^{(1)}, \mu^{(2)}) = (\lambda^{(1)}, \lambda^{(2)}) \cup x$. 
\end{proof}

The next natural direction is the construction of chains in $E_1\times E_2$. 
Recall that $\sigma\in S_{k+n}$ is a $(k,n)$ \emph{shuffle permutation} if
$\sigma(1)<\ldots<\sigma(k)$ and $\sigma(k+1)<\ldots<\sigma(k+n)$.
The following statement is clear. 

\begin{proposition}
Suppose that $[\lambda^{(1)};\underline{w}]$
and $[\lambda^{(2)};\underline{z}]$ are chains in $E_1$ and $E_2$ respectively, and $\sigma$ a $(k,n)$ shuffle permutation. Then
\begin{equation}
\label{eq: shuffle chains}
\left[(\lambda^{(1)},\lambda^{(2)})\;;\;\sigma(\underline{w},\underline{z})\right]
\end{equation}
is a chain in $E_1\times E_2$, and all chains in $E_1\times E_2$ are given by this construction.
\end{proposition}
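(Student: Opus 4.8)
The plan is to view a maximal chain in $E_1\times E_2$ as an interleaving of its two coordinate projections, the combinatorics of the interleaving being recorded by a shuffle permutation. The essential input is the description of covering relations in $E_1\times E_2$ obtained in the preceding proposition, together with the grading on $E_1\times E_2$: since $|(\lambda^{(1)},\lambda^{(2)})|=|\lambda^{(1)}|+|\lambda^{(2)}|$, a covering pair differs in exactly one coordinate, that coordinate being increased by a covering step in the corresponding factor. Thus every step of a maximal chain in $E_1\times E_2$ is unambiguously either a \emph{first-coordinate step} (the $E_1$-coordinate is increased by a cover, the $E_2$-coordinate unchanged) or a \emph{second-coordinate step}.

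For the assertion that every chain arises as in \eqref{eq: shuffle chains}, I would start from a maximal chain $\big[(\lambda^{(1)},\lambda^{(2)});\underline{v}\big]$ in $E_1\times E_2$, where $\underline{v}=(v_{k+n},\dots,v_1)$; classify its $k+n$ steps as above, say $k$ first-coordinate and $n$ second-coordinate steps. Reading off the weights of the first-coordinate steps in the order they occur yields a maximal chain $[\lambda^{(1)};\underline{w}]$ in $E_1$, and reading off the weights of the second-coordinate steps yields a maximal chain $[\lambda^{(2)};\underline{z}]$ in $E_2$. The positions among $\{1,\dots,k+n\}$ occupied by the first-coordinate steps, together with their complement occupied by the second-coordinate steps, determine a $(k,n)$-shuffle permutation $\sigma$, and by construction $\underline{v}=\sigma(\underline{w},\underline{z})$. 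Hence the given chain coincides with $\big[(\lambda^{(1)},\lambda^{(2)});\sigma(\underline{w},\underline{z})\big]$.

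For the converse --- that \eqref{eq: shuffle chains} really defines a chain --- I would start with maximal chains $[\lambda^{(1)};\underline{w}]$ in $E_1$, $[\lambda^{(2)};\underline{z}]$ in $E_2$, and a $(k,n)$-shuffle $\sigma$, and apply the entries of $\sigma(\underline{w},\underline{z})$ one at a time to $(\lambda^{(1)},\lambda^{(2)})$. After a partial application we are at an element of the form $\big(\lambda^{(1)}\cup w_k\cup\cdots\cup w_{k-a+1},\ \lambda^{(2)}\cup z_n\cup\cdots\cup z_{n-b+1}\big)$, and the next entry of $\sigma(\underline{w},\underline{z})$ is either the next weight of $\underline{w}$, which is addable for the current first coordinate because $[\lambda^{(1)};\underline{w}]$ is a chain, or the next weight of $\underline{z}$, addable for the current second coordinate for the same reason; in either case the preceding proposition shows that adjoining it is a covering relation in $E_1\times E_2$. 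Iterating, $\big[(\lambda^{(1)},\lambda^{(2)});\sigma(\underline{w},\underline{z})\big]$ is a maximal chain in $E_1\times E_2$, as claimed.

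There is no real obstacle here: once covers in $E_1\times E_2$ have been identified and the grading is used to see that each step has a well-defined type, the statement is a direct unwinding of definitions (which is why it is asserted to be clear). The general position hypothesis plays no role in this particular proposition beyond guaranteeing, via Definition \ref{def: general position}(2), that $E_1\times E_2$ carries a simple spectrum and is therefore a weighted poset in which the notation $\lambda\cup x$ is meaningful; it becomes genuinely necessary only later, when $E_1\times E_2$ is required to be excellent and to support the representation $V(E_1\times E_2)$.
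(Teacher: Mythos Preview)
Your proposal is correct and is exactly the unwinding of definitions that the paper has in mind; the paper itself declares the statement ``clear'' and omits any proof. Your remarks about the role of the general position hypothesis are also accurate.
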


\begin{lemma}\label{lem: product excellent}
Assume $E_1$ and $E_2$ are excellent and in general position. Then $E_1\times E_2$ is excellent. 
\end{lemma}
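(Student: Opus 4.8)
The plan is to verify the two conditions of Definition~\ref{def: excellent new} for every $2$-step chain in $E_1 \times E_2$. By the shuffle description of chains, a $2$-step chain $[(\lambda^{(1)},\lambda^{(2)}); x, y]$ falls into one of three cases according to where the boxes $x$ and $y$ are added: (i) both $x$ and $y$ are added in the $E_1$-coordinate, so $[\lambda^{(1)}; x,y]$ is a $2$-step chain in $E_1$ and the $E_2$-coordinate is untouched; (ii) symmetrically, both are added in the $E_2$-coordinate; (iii) one of them, say $x$, is added in the $E_1$-coordinate and the other, $y$, in the $E_2$-coordinate (or vice versa).

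First I would handle cases (i) and (ii), which are essentially immediate: since $E_1$ (resp.\ $E_2$) is excellent, both conditions of Definition~\ref{def: excellent new} hold for $[\lambda^{(1)}; x,y]$ (resp.\ $[\lambda^{(2)}; x,y]$) inside $E_i$, and adding a box in the $E_1$-coordinate and then another in the $E_1$-coordinate is literally the same data as a $2$-step chain in $E_1$; the $E_2$-component just goes along for the ride. So the trichotomy ($y = qx$, $y = tx$, or $[\lambda;y,x]$ is a chain) transfers verbatim, and exclusivity of the three options is inherited. Then I would turn to the mixed case (iii), which is where the general position hypothesis does the work. Here $x$ is addable for $\lambda^{(1)}$ and $y$ is addable for $\lambda^{(2)}$; by Definition~\ref{def: general position}(1) we have $y/x \notin \{1, q^{\pm 1}, t^{\pm 1}, (qt)^{\pm 1}\}$, which immediately gives condition~1) of Definition~\ref{def: excellent new} (namely $y \neq x$ and $y \neq (qt)^{\pm 1} x$) and also rules out $y = qx$ and $y = tx$, so the only surviving alternative in condition~2) must be that $[\lambda; y, x]$ is also a chain. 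But this is clear: since $x$ is added in the first coordinate and $y$ in the second, the two operations are independent, so we may just as well add $y$ first (it is still addable for $\lambda^{(2)}$) and then $x$ (still addable for $\lambda^{(1)}$), giving the chain $[(\lambda^{(1)},\lambda^{(2)}); y, x]$. Finally I would check exclusivity: exactly one of the three options holds because $y=qx$ and $y=tx$ are both excluded, leaving precisely the third.

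The main (and really only) obstacle is bookkeeping: making sure the shuffle description of chains is used correctly so that the case analysis is genuinely exhaustive, and that in the mixed case one correctly identifies $x$ versus $y$ with the first- versus second-coordinate additions (the labelling $[\lambda; x, y]$ records that $y$ is added to $\lambda$ and then $x$ to $\lambda \cup y$, so one must be careful which of $x,y$ lives in which factor). There is no analytic or combinatorial difficulty beyond this; the content is entirely in invoking general position to kill the degenerate ratios. I would close by remarking that the simple spectrum condition for $E_1 \times E_2$ is exactly Definition~\ref{def: general position}(2), as already noted in the remark preceding the lemma, so $E_1 \times E_2$ is indeed a weighted poset, and the above shows it is excellent.
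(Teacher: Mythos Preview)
Your proposal is correct and follows essentially the same case analysis as the paper's proof (both boxes in $E_1$, both in $E_2$, or one in each, with general position handling the mixed case). One minor slip: your parenthetical remark has the convention backwards---in $[\lambda;x,y]$ the weight $x$ is added first and then $y$ to $\lambda\cup x$ (cf.\ Example~\ref{ex:Epartition})---but since the mixed-case argument is symmetric in $x$ and $y$ this does not affect the validity.
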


\begin{proof}
Suppose that $\left[(\lambda^{(1)},\lambda^{(2)});y,x\right]$ is a 2-chain in $E_1\times E_2$, $y$ is addable for 
$(\lambda^{(1)},\lambda^{(2)})$ and $y\notin 
\{qx,tx\}$. We have the following cases:
\begin{itemize}[leftmargin=5mm]
\item[1)] $[\lambda^{(1)};y,x]$  is a chain in $E_1$.  Since $E_1$ is excellent, $y\neq qx$ and $y\neq tx$, $y$ is addable for $\lambda^{(1)}$ and $x$ is addable for $\lambda^{(1)}\cup y$. Therefore $y$ is addable for $(\lambda^{(1)},\lambda^{(2)})$ and 
$x$ is addable for $(\lambda^{(1)}\cup y,\lambda^{(2)})$.

\item[2)] $[\lambda^{(2)};y,x]$  is a chain in $E_2$, this is similar to (1).

\item[3)] $y$ is addable for $\lambda^{(2)}$ and $x$ is addable for $\lambda^{(1)}$. Then $y$ is addable for $(\lambda^{(1)},\lambda^{(2)})$ and $x$ is addable for $(\lambda^{(1)},\lambda^{(2)}\cup y)$.

\item[4)] $y$ is addable for $\lambda^{(1)}$ and
$x$ is addable for $\lambda^{(2)}$. Then
$y$ is addable for $(\lambda^{(1)},\lambda^{(2)})$
and $x$ is addable for $(\lambda^{(1)}\cup y,\lambda^{(2)})$.
\end{itemize}
Finally, if $y=qx$ or $y=tx$ then we are in cases (1) or (2) since $E_1$ and $E_2$ are in general position, and the statement follows from $E_1$ and $E_2$ being excellent. In either case, $y\neq qtx$.
\end{proof}

\begin{corollary}
The chain $\left[(\lambda^{(1)},\lambda^{(2)});\sigma(\underline{w},\underline{z})\right]
$ is good if and only if both $[\lambda^{(1)};\underline{w}]$
and $[\lambda^{(2)};\underline{z}]$ are good.
\end{corollary}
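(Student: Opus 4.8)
The plan is to reduce the statement to the characterization of good chains in terms of two-term conditions, using the combinatorics of shuffle permutations already set up. Recall a chain $[\nu;\underline{u}]$ is good precisely when $u_i \neq tu_j$ for all $i,j$ (Definition \ref{def: good new}). So we must show that the shuffled sequence $\sigma(\underline{w},\underline{z})$ avoids the relation $u_i = tu_j$ for all pairs of its entries if and only if $\underline{w}$ avoids it internally and $\underline{z}$ avoids it internally.

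The forward direction is immediate: the entries of $\underline{w}$ form a subsequence of $\sigma(\underline{w},\underline{z})$, so if the latter is good then no two entries of $\underline{w}$ are in the ratio $t$, hence $[\lambda^{(1)};\underline{w}]$ is good, and symmetrically for $\underline{z}$. The reverse direction is where the general position hypothesis enters. Suppose both $[\lambda^{(1)};\underline{w}]$ and $[\lambda^{(2)};\underline{z}]$ are good but the shuffled chain is not, so there are two entries $u,u'$ of $\sigma(\underline{w},\underline{z})$ with $u = tu'$. If both come from $\underline{w}$ or both from $\underline{z}$ we contradict goodness of the respective chain directly. The remaining case is $u = tu'$ with (say) $u$ an entry $w_i$ of $\underline{w}$ and $u'$ an entry $z_j$ of $\underline{z}$ (or vice versa). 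But $w_i$ is the weight attached to a covering relation in $E_1$ and $z_j$ the weight of a covering relation in $E_2$, so $w_i/z_j = t$ contradicts Definition \ref{def: general position}(1), which forbids the ratio of any two such cross-weights from lying in $\{1,q^{\pm1},t^{\pm1},(qt)^{\pm1}\}$. Hence no such pair exists and the shuffled chain is good.

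I would phrase the proof as a short paragraph: first invoke that all chains in $E_1\times E_2$ arise as shuffles (the preceding Proposition), then note goodness is the condition $u_i\neq tu_j$ on all pairs, split pairs into "both in $\underline w$", "both in $\underline z$", and "mixed", and dispatch the first two by goodness of the factor chains and the mixed case by general position. The only genuinely substantive input is the general position condition, and it is exactly tailored to kill the mixed case; there is no real obstacle here, just the bookkeeping of which pairs of indices can occur in a shuffle—and since a shuffle preserves the relative order within each block but every pair of one $w$-entry and one $z$-entry does occur as an unordered pair regardless of $\sigma$, the argument is insensitive to the choice of $\sigma$. I expect the write-up to be one compact paragraph with no displayed equations needed.
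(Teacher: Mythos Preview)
Your proposal is correct and is exactly the intended argument: the paper states this corollary without proof, treating it as immediate from the definition of good chains (Definition~\ref{def: good new}) together with the general position hypothesis (Definition~\ref{def: general position}(1)), which is precisely the decomposition into same-factor and mixed pairs that you spell out.
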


\subsection{Tensor Products of Representations and Generically Monoidal Categories}

Assume $E_1$ and $E_2$ are excellent weighted posets in general position so that $E_1 \times E_2$ is also an excellent weighted poset. In this subsection we describe the representation of $\Bqt^{\ext}$ corresponding to the product $E_1\times E_2$. While Theorem \ref{thm: existence and uniqueness} ensures there always exist edge functions for $E_1\times E_2$ which determine the coefficients of $d_+$, in what follows we give explicit formulas for these coefficients in terms of the edge functions of $E_1$ and $E_2$.

\begin{lemma}
\label{lem: M psi}
Let $\psi(y,u)$ be a rational function in $\C(q,t)(y,u)$. Then, for any weighted poset $E$ there exists a collection of rational functions $M_{\psi}(\lambda;u) \in \C(q,t)(u)$ indexed by $\lambda\in E$ such that:

\noindent a) For any addable weight $y$ for $\lambda$ we have,
\begin{equation}
\label{eq: M psi}
M_{\psi}(\lambda\cup y;u)=M_{\psi}(\lambda;u)\psi(y,u).
\end{equation}

\noindent b) For the weighted poset $E(a)$, we can set $M_{\psi}(\lambda(a);u)=M_{\widetilde{\psi}}(\lambda;u)$ with $\widetilde{\psi}(y,u)=\psi(ay,u)$.

\noindent c) For the weighted poset $E_1\times E_2$, we can set $M_{\psi}((\lambda,\mu);u)=M_{\psi}(\lambda;u)M_{\psi}(\mu;u)$.  
\end{lemma}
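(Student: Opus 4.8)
The plan is to read \eqref{eq: M psi} as saying that along each covering edge $\lambda\xrightarrow{y}\lambda\cup y$ of the Hasse graph $\Gamma_E$ the ratio $M_\psi(\lambda\cup y;u)/M_\psi(\lambda;u)$ equals $\psi(y,u)$; in other words, the multiplicative $1$-cochain on $\Gamma_E$ with values in $\C(q,t)(u)^{*}$ sending such an edge to $\psi(y,u)$ (and its reverse to $\psi(y,u)^{-1}$) should be the coboundary of $M_\psi$. To build $M_\psi$ I would, exactly as in the proof of Theorem \ref{thm: existence and uniqueness}, choose a spanning tree in each connected component of $\Gamma_E$, fix a basepoint where $M_\psi$ is declared to be $1$, and propagate by multiplying by $\psi(y,u)^{\pm 1}$ along ascending/descending tree edges. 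Then \eqref{eq: M psi} holds along tree edges by construction, and the whole statement of part (a) reduces to showing that for every cycle in $\Gamma_E$ the product of the $\psi^{\pm1}$-labels is $1$ (this product is multiplicative under concatenation, so it suffices to check it on generators of $H_1(\Gamma_E;\Z)$, and it guarantees that \eqref{eq: M psi} also holds on the non-tree edges).

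The key input is a symmetric-function observation: for a finite multiset $W$ of elements of $\C(q,t)^{*}$ the product $G_0(W;u):=\prod_{w\in W}\psi(w,u)\in\C(q,t)(u)$ depends only on $u$ and on the power sums $\sum_{w\in W}w^{m}$, $m\ge 1$ — since those determine $W$ — and it is multiplicative under disjoint unions, $G_0(W\sqcup W';u)=G_0(W;u)G_0(W';u)$. Combined with Lemma \ref{lem: uniqueness of weights up to permutation}(a), which says that for $\nu\le\mu$ in $E$ every maximal chain from $\nu$ to $\mu$ carries the same multiset of increments (whose $m$-th power sum is $p_m(\mu)-p_m(\nu)$), this produces a well-defined $\Psi(\nu,\mu;u):=G_0(\{\text{increments of any chain }\nu\to\mu\};u)$, which is exactly the product of $\psi$-labels along any ascending path from $\nu$ to $\mu$. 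Now take a cycle in $\Gamma_E$: after discarding backtracks (which contribute $\psi(y,u)\psi(y,u)^{-1}=1$) it alternates between maximal ascending and maximal descending runs, $m_1\nearrow M_1\searrow m_2\nearrow\cdots\searrow m_r\nearrow M_r\searrow m_1$, and its label product is $\prod_{i=1}^{r}\Psi(m_i,M_i;u)\,\Psi(m_{i+1},M_i;u)^{-1}$ (indices mod $r$). Reindexing the second factor as $\prod_i\Psi(m_i,M_{i-1};u)^{-1}$ and using multiplicativity of $G_0$, the numerator is $G_0$ of a multiset with power sums $\sum_i\bigl(p_m(M_i)-p_m(m_i)\bigr)$ and the denominator is $G_0$ of a multiset with power sums $\sum_i\bigl(p_m(M_{i-1})-p_m(m_i)\bigr)$; these two sums agree, so by the observation above numerator and denominator coincide and the ratio is $1$. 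This cycle computation is the only real content, and \textbf{the main obstacle} is precisely that $G_0$ must be a function of the power sums alone, which is what turns the edge labels into an honest coboundary. (For an excellent $E$ one can instead invoke Lemma \ref{lem: cycles relations}: $H_1(\Gamma_E;\Z)$ is generated by the $4$-cycles $Q(\lambda;y,x)$, around which the label product is visibly $\psi(y,u)\psi(x,u)\psi(x,u)^{-1}\psi(y,u)^{-1}=1$.)

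Parts (b) and (c) are then formal. For (b): $E(a)$ has the same Hasse graph as $E$, but the edge $\lambda\xrightarrow{y}\lambda\cup y$ of $\Gamma_E$ becomes $\lambda(a)\xrightarrow{ay}\lambda(a)\cup(ay)$, so the $\psi$-label of this edge computed in $E(a)$ is $\psi(ay,u)=\widetilde\psi(y,u)$, the $\widetilde\psi$-label computed in $E$; hence $M_\psi(\lambda(a);u):=M_{\widetilde\psi}(\lambda;u)$ satisfies \eqref{eq: M psi} for $E(a)$. For (c): by the description of addable weights in $E_1\times E_2$, each covering relation there is of the form $(\lambda,\mu)\xrightarrow{y}(\lambda\cup y,\mu)$ or $(\lambda,\mu)\xrightarrow{y}(\lambda,\mu\cup y)$, so defining $M_\psi((\lambda,\mu);u):=M_\psi(\lambda;u)\,M_\psi(\mu;u)$ and applying \eqref{eq: M psi} in the relevant factor yields \eqref{eq: M psi} for $E_1\times E_2$. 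Throughout one assumes, as in all the intended applications, that $\psi(w,u)$ is a well-defined nonzero element of $\C(q,t)(u)$ for every weight $w$ occurring in $E$, so that all the products above make sense.
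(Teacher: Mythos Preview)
Your proof is correct and follows the same strategy as the paper's: fix a value at one element per connected component, propagate along covering edges by multiplying by $\psi(y,u)^{\pm1}$, and check consistency using that the product $\prod_i\psi(w_i,u)$ along an ascending chain from $\lambda$ to $\mu$ depends only on the multiset $\{w_i\}$ (equivalently on the power sums $p_m(\mu)-p_m(\lambda)$), which is Lemma~\ref{lem: uniqueness of weights up to permutation}. The paper's write-up stops at that observation; you go further and explicitly handle cycles that mix ascents and descents via the alternating-run decomposition and your power-sum remark, which is a genuine extra step needed for an \emph{arbitrary} weighted poset (the paper's one-line check, read literally, only compares two ascending chains with common endpoints). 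For excellent $E$ your alternative via the $4$-cycle generators from Lemma~\ref{lem: cycles relations} is exactly how the paper treats the analogous consistency question in Theorem~\ref{thm: existence and uniqueness}. Parts (b) and (c) are verified in the paper by the same direct ratio computations you give.
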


\begin{proof}
First choose an element $\lambda$ in each connected component of $E$ and define $M_{\psi}(\lambda;u)$ arbitrarily for these. Now, extend it to the rest of $E$ uniquely using \eqref{eq: M psi}. This proves existence, provided $M_{\psi}(\lambda;u)$ is well defined.

\noindent a) If $[\lambda;\underline{w}]$ is a chain and $\mu=\lambda\cup \underline{w}$ then \eqref{eq: M psi} implies
\[
M_{\psi}(\mu;u)=M_{\psi}(\lambda;u)\psi(w_k,u)\cdots \psi(w_1,u),
\]
which is symmetric in $w_1,\ldots,w_k$. Since any chain from $\lambda$ to $\mu$ is given by a permutation of $\underline{w}$, cf. Lemma \ref{lem: uniqueness of weights up to permutation}, we conclude that $M_{\psi}$ is well defined and satisfies \eqref{eq: M psi}.

\noindent b) In $E(a)$ we have
$$
\frac{M_{\psi}(\lambda(a)\cup y;u)}{M_{\psi}(\lambda(a);u)}=\frac{M_{\widetilde{\psi}}(\lambda\cup a^{-1}y;u)}{M_{\widetilde{\psi}}(\lambda;u)}=\widetilde{\psi}(a^{-1}y,u)=\psi(y,u).
$$

\noindent c) In $E_1\times E_2$ we have two cases. If $y$ is addable for $\lambda$ then
$$
\frac{M_{\psi}((\lambda,\mu)\cup y;u)}{M_{\psi}((\lambda,\mu);u)}=\frac{M_{\psi}((\lambda\cup y,\mu);u)}{M_{\psi}((\lambda,\mu);u)}=\frac{M_{\psi}(\lambda\cup y; u)M_{\psi}(\mu;u)}{M_{\psi}(\lambda;u)M_{\psi}(\mu;u)}=\psi(y,u).
$$
Similarly, if $y$ is addable for $\mu$ then
$$
\frac{M_{\psi}((\lambda,\mu)\cup y;u)}{M_{\psi}((\lambda,\mu);u)}=
\frac{M_{\psi}(\lambda; u)M_{\psi}(\mu\cup y;u)}{M_{\psi}(\lambda;u)M_{\psi}(\mu;u)}=\psi(y,u).
$$
\end{proof}

We now give an explicit formula for the edge function on $E_1 \times E_2$ in terms of the edge functions on $E_1$ and $E_2$, and the $\psi$ functions from Lemma \ref{lem: M psi}. First, we make the following definition. 

\begin{definition}
Let $E_1$ and $E_2$ be weighted posets and let  $\psi(y,u)$ be a rational function in $y$ and $u$. The pair $E_1$ and $E_2$ are in \newword{very general position} \emph{with respect to $\psi$} if the following two conditions are satisfied:
\begin{enumerate}
    \item $E_1$ and $E_2$ are in general position. 
    \item For any covering relation $\lambda_1 \xrightarrow {z_1} \mu_1$ in $E_1$ and any covering relation $\lambda_2 \xrightarrow {z_2} \mu_2$ in $E_2$, the value $\psi(z_1, z_2) \in \C(q,t)$ is defined. 
\end{enumerate}
\end{definition}

\begin{theorem}
\label{thm: c for tensor product}
Suppose that $E_1$ and $E_2$ are two excellent posets with respective edge functions $c_1(\lambda;x)$ and $c_2(\mu;x)$ satisfying \eqref{eq: monodromy}. Let $\psi_1$ and $\psi_2$ be rational functions satisfying 
\begin{equation}
\label{eq: psi1 psi2}
\frac{\psi_1(x,y)}{\psi_2(y,x)}=-\frac{(x-ty)(x-qy)(y-qtx)}{(y-tx)(y-qx)(x-qty)}
\end{equation}
for all values $x,y$ addable for $\lambda$ or $\mu$, for some $\lambda \in E_1, \mu \in E_2$. 
Assume moreover that $E_1$ and $E_2$ are in very general position with respect to both $\psi_1$ and $\psi_2$. Then the edge function 
\begin{equation}
\label{c product}
c\Bigl((\lambda,\mu);x\Bigr)=\begin{cases}
c_1(\lambda;x)M_{\psi_2;E_2}(\mu;x) & \text{; if}\ x\ \text{is addable for}\ \lambda,\\
c_2(\mu;x)M_{\psi_1;E_1}(\lambda;x) & \text{; if}\ x\ \text{is addable for}\ \mu\
\end{cases}
\end{equation}
satisfies \eqref{eq: monodromy} and defines a representation of $\Bqt^{\ext}$ corresponding to $E_1\times E_2$.
\end{theorem}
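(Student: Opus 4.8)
The plan is to verify equation \eqref{eq: monodromy} for the proposed edge function $c((\lambda,\mu);x)$ by a case analysis based on which of $\lambda$ and $\mu$ a given addable weight belongs to; once \eqref{eq: monodromy} holds, Theorem \ref{thm: relations hold} automatically gives a representation of $\Bqt^{\ext}$, and by Theorem \ref{thm: existence and uniqueness} this representation is (up to isomorphism) the one attached to $E_1\times E_2$, so the only real content is the cocycle identity. Note first that since $E_1$ and $E_2$ are in general position, for a pair $x,y$ of weights both addable for $(\lambda,\mu)$ it is impossible for one to be addable for $\lambda$ and the other $(qt)^{\pm1}$- or $q^{\pm1}$- or $t^{\pm1}$-related to a weight addable for $\mu$; in particular the ``mixed'' 2-step chains $[(\lambda,\mu);y,x]$ with $[(\lambda,\mu);x,y]$ also a chain fall into the shuffle cases (3) and (4) of Lemma \ref{lem: product excellent}, and the pure cases (1), (2) reduce to $E_1$ or $E_2$ respectively.

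The case analysis runs as follows. \textbf{Case A: $x,y$ both addable for $\lambda$ (and $[\lambda;x,y],[\lambda;y,x]$ chains in $E_1$).} Here $c((\lambda,\mu);x)=c_1(\lambda;x)M_{\psi_2;E_2}(\mu;x)$, $c((\lambda,\mu)\cup x;y)=c_1(\lambda\cup x;y)M_{\psi_2;E_2}(\mu;y)$, and similarly with $x,y$ swapped; the $M_{\psi_2;E_2}(\mu;-)$ factors cancel in the ratio in \eqref{eq: monodromy} because $\mu$ is unchanged, so the identity reduces to \eqref{eq: monodromy} for $c_1$ on $E_1$, which holds by hypothesis. \textbf{Case B: $x,y$ both addable for $\mu$} is symmetric, using \eqref{eq: monodromy} for $c_2$. \textbf{Case C: $x$ addable for $\lambda$, $y$ addable for $\mu$.} Then $(\lambda,\mu)\cup x=(\lambda\cup x,\mu)$ and $(\lambda,\mu)\cup y=(\lambda,\mu\cup y)$, so
\[
c((\lambda,\mu);x)=c_1(\lambda;x)M_{\psi_2;E_2}(\mu;x),\quad
c((\lambda,\mu)\cup x;y)=c_2(\mu;y)M_{\psi_1;E_1}(\lambda\cup x;y),
\]
while $c((\lambda,\mu);y)=c_2(\mu;y)M_{\psi_1;E_1}(\lambda;y)$ and $c((\lambda,\mu)\cup y;x)=c_1(\lambda;x)M_{\psi_2;E_2}(\mu\cup y;x)$. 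Forming the ratio, the $c_1(\lambda;x)$ and $c_2(\mu;y)$ factors cancel, and using \eqref{eq: M psi} in the form $M_{\psi_1;E_1}(\lambda\cup x;y)=M_{\psi_1;E_1}(\lambda;y)\psi_1(x,y)$ and $M_{\psi_2;E_2}(\mu\cup y;x)=M_{\psi_2;E_2}(\mu;x)\psi_2(y,x)$, the whole ratio collapses to $\psi_1(x,y)/\psi_2(y,x)$, which equals the right-hand side of \eqref{eq: monodromy} by \eqref{eq: psi1 psi2}. \textbf{Case D: $x$ addable for $\mu$, $y$ addable for $\lambda$} is the same computation with the roles of $1$ and $2$ interchanged, producing $\psi_2(x,y)/\psi_1(y,x)$; by \eqref{eq: psi1 psi2} applied with the pair $(y,x)$ one checks this again matches the right-hand side of \eqref{eq: monodromy} (the asserted identity is invariant under $x\leftrightarrow y$ followed by inversion).

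Two bookkeeping points deserve care. First, one must confirm that Lemma \ref{lem: M psi} genuinely applies: the functions $M_{\psi_i;E_j}$ are defined on each factor poset via part (a) of that lemma, and the ``very general position with respect to $\psi_i$'' hypothesis is exactly what guarantees $\psi_i(z_1,z_2)$ is a well-defined scalar on every relevant covering pair, so no denominators vanish in \eqref{c product}; moreover one should note the two clauses of \eqref{c product} are consistent since, by general position, no weight $x$ is simultaneously addable for $\lambda$ and for $\mu$ (this would force $p_1(\lambda\cup x)+p_1(\mu)=p_1(\lambda)+p_1(\mu\cup x)$ together with distinctness, contradicting Definition \ref{def: general position}(2) unless the cover weights coincide, excluded by (1)). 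Second, one should remark that such $\psi_1,\psi_2$ satisfying \eqref{eq: psi1 psi2} always exist — e.g.\ $\psi_1(x,y)=(x-ty)(x-qy)(y-qtx)$ and $\psi_2(y,x)=-(y-tx)(y-qx)(x-qty)$, or any rescaling thereof — and that different admissible choices yield isomorphic representations by Theorem \ref{thm: existence and uniqueness}, so the construction is canonical. The main obstacle is not any single hard computation but the care needed to track which covering relations and which $M_\psi$ factors survive in each of the four cases; Case C is the one where the identity \eqref{eq: psi1 psi2} is actually used, and getting the direction of the arguments of $\psi_1,\psi_2$ right there is the crux.
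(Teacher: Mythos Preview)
Your proof is correct and follows essentially the same four-case analysis as the paper's proof: both arguments split according to whether $x$ and $y$ are addable for $\lambda$ or for $\mu$, reduce Cases A/B to the monodromy condition on $c_1$ or $c_2$, and in the mixed Cases C/D cancel the $c_i$ factors and apply \eqref{eq: M psi} to reduce to \eqref{eq: psi1 psi2}. Your additional bookkeeping remarks (well-definedness of $M_{\psi_i}$ via very general position, consistency of the two clauses in \eqref{c product}) are correct and go slightly beyond what the paper spells out.
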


Before proving the theorem, we note that by our assumption of being in very general position, we can ensure that $x$ is neither a zero nor a pole for $M_{\psi_2}(\mu;u)$ when $x$ is addable for $\lambda$, and that $x$ is neither a zero nor a pole of $M_{\psi_1}(\lambda;u)$ when $x$ is addable for $\mu$. Moreover, if $E_1$ and $E_2$ are in general position, we can always choose $\psi_1$ and $\psi_2$ so that $E_1$ and $E_2$ are in very general position with respect to both $\psi_1$ and $\psi_2$, see Example \ref{ex: psi easy} below. 

\begin{proof}
To verify \eqref{eq: monodromy}, we need to consider four cases. 

\begin{itemize}
\setlength{\itemindent}{.1cm}
\item[Case 1:] if $x$ and $y$ are addable for $\lambda$. Then 
$$
\frac{c((\lambda,\mu);x)c((\lambda\cup x,\mu);y)}{c((\lambda,\mu);y)c((\lambda\cup y,\mu);x)}=
\frac{c_1(\lambda;x)c_1(\lambda\cup x;y)}{c_1(\lambda;y)c_1(\lambda\cup y;x)}\cdot
\frac{M_{\psi_2}(\mu;x)M_{\psi_2}(\mu;y)}{M_{\psi_2}(\mu;y)M_{\psi_2}(\mu;x)}.
$$
The first fraction equals 
$$
-\frac{(x-ty)(x-qy)(y-qtx)}{(y-tx)(y-qx)(x-qty)},
$$
and the second fraction equals 1.

\item[Case 2:] $x$ and $y$ are addable for $\mu$, this is similar to (1).
\item[Case 3:] $x$ is addable for $\lambda$ and $y$ is addable for $\mu$. Then
$$
\frac{c((\lambda,\mu);x)c((\lambda\cup x,\mu);y)}{c((\lambda,\mu);y)c((\lambda,\mu\cup y);x)}=
\frac{c_1(\lambda;x)c_2(\mu;y)}{c_2(\mu;y)c_1(\lambda;x)}
\cdot \frac{M_{\psi_2}(\mu;x)M_{\psi_1}(\lambda\cup x;y)}{M_{\psi_1}(\lambda;y)M_{\psi_2}(\mu\cup y;x)}.
$$
The first fraction equals 1, and the second equals 
$$
\frac{\psi_1(x,y)}{\psi_2(y,x)}=
-\frac{(x-ty)(x-qy)(y-qtx)}{(y-tx)(y-qx)(x-qty)},
$$
by our assumption \eqref{eq: psi1 psi2} on $\psi_1$ and $\psi_2$. 
\item[Case 4:] $x$ is addable for $\mu$ and $y$ is addable for $\lambda$. This is similar to (3).
\end{itemize}

\end{proof}

\begin{example}
\label{ex: psi easy}
Choose 
$
\psi_1=\psi_2=\psi(y,u)=\frac{(1-yu^{-1})(1-qtyu^{-1})}{(1-qyu^{-1})(1-tyu^{-1})}.
$
Then, if two posets $E_1$ and $E_2$ are in general position, they will automatically be in very general position with respect to $\psi$.
\end{example}

\begin{remark}
It follows from \cite[Lemma 2.7]{GV} and the general position assumption that, as $\AH_{k}$-modules:
\[
V_k(E_1 \times E_2) = \bigoplus_{m+n = k}\operatorname{Ind}_{\AH_{m} \otimes \AH_{n}}^{\AH_{m+n}}V_m(E_1) \otimes V_n(E_2).
\]
\end{remark}

\begin{remark}\label{rmk: tensor morphisms}
Assume $f_1: V(E_1) \to V(E_1')$ and $f_2: V(E_2) \to V(E_2')$ are $\Bqt^{\ext}$-homomorphisms. We assume that the edge functions on $E_1$ and $E_1'$ are compatible, as well as the edge functions on $E_2$ and $E_2'$, cf. Lemma \ref{lem: compatible edge}. Fixing $\psi_1, \psi_2$ satisfying \eqref{eq: psi1 psi2} with respect to the pairs $(E_1, E_2)$ and $(E_1', E_2')$ we can also assume that the $M$ functions on $E_1$ and $E_1'$ are compatible, in the sense that
\[
M_{\psi_{i}}(\lambda; u) = M_{\psi_{i}}(F_1(\lambda); u)
\]
for every $\lambda \in E_1$, cf. Theorem \ref{thm: hom}. Similarly, we assume that the $M$ functions on $E_2$ and $E_2'$ are compatible. Then, we have a $\Bqt^{\ext}$-homomorphism
\[
f_1 \boxtimes f_2: V(E_1 \times E_2) \to V(E_1' \times E_2')
\]
explicitly given, in the notation of Proposition \ref{prop:hom} by
\[
(f_1 \boxtimes f_2)\left[(\lambda^{(1)}, \lambda^{(2)})\; ;\; \sigma(\underline{w}, \underline{z})\right] = \alpha_{\lambda^{(1)}}\alpha_{\lambda^{(2)}}\Bigl [\bigl(F_1(\lambda^{(1)}), F_2(\lambda^{(2)})\bigr)\;;\; \sigma(\underline{w}, \underline{z})\Bigr]
\]
whenever both $F_1(\lambda^{(1)} \cup \underline{w}) \neq {0}$ and $ F_2(\lambda^{(2)}\cup \underline{z}) \neq {0}$; otherwise $(f_1 \boxtimes f_2)\left[(\lambda^{(1)}, \lambda^{(2)}); \sigma(\underline{w}, \underline{z})\right] = 0$.
\end{remark}
 
We can reformulate the above results more abstractly. 

\begin{definition}
Given two functions $\psi_1(x,y)$ and $\psi_2(x,y)$ satisfying \eqref{eq: psi1 psi2}, let  
$\Cat_{\psi_1,\psi_2}$ be the category with,
\begin{enumerate}[leftmargin=*]
    \item[(1)] \underline{Objects} given by quadruples
$\left(E,c,M_{\psi_1},M_{\psi_2}\right)$ where:
\begin{itemize}
\item $E$ is an excellent weighted poset.
\item $c=c(\lambda;x)$ is a function on the edges of $E$ satisfying \eqref{eq: monodromy}.
\item $M_{\psi_i}=M_{\psi_i}(\lambda;u)$ are functions satisfying \eqref{eq: M psi}.
\end{itemize}
\item[(2)] \underline{Morphisms} $F \in \Hom_{\mathcal{C}} \left( (E, c, M_{\psi_1}, M_{\psi_2}),  (E', c', M'_{\psi_1}, M'_{\psi_2})\right)$ given by functions 
\[F: E_{\mathbf{0}} \to E'_{\mathbf{0}}\]
 satisfying conditions (1)--(5) of Theorem \ref{thm: hom} in addition to:
\begin{itemize}
    \item If $\mu = \lambda \cup x$ and $F(\mu) = F(\lambda) \cup x$, then the respective edge functions agree, 
    $$c(\lambda; x)=c(F(\lambda);x).$$ 
    \item For $i = 1, 2$ and any $\lambda \in E \setminus F^{-1}(\mathbf{0})$, the corresponding rational functions are equal,
     $$M_{\psi_{i}}(\lambda; x)=M'_{\psi_{i}}(F(\lambda); x).$$ 
\end{itemize}
\end{enumerate}
\end{definition}

It is clear that if $F: E_{\mathbf{0}} \to E'_{\mathbf{0}}$ and $F': E'_{\mathbf{0}} \to E''_{\mathbf{0}}$ are morphisms in $\mathcal{C}_{\psi_1, \psi_2}$ then $F' \circ F: E_{\mathbf{0}} \to E''_{\mathbf{0}}$ is also a morphism in $\mathcal{C}_{\psi_1, \psi_2}$, so the category is well defined. 

Note that the pair $(E,c)$ completely determines a representation $V(E,c)$ of $\Bqt^{\ext}$. Moreover, by Theorem \ref{thm: hom}, any map $F: (E, c, M_{\psi_1}, M_{\psi_2}) \to (E', c', M'_{\psi_1}, M'_{\psi_2})$ completely determines a $\Bqt^{\ext}$-homomorphism $V(F): V(E, c) \to V(E', c')$. Thus, there exists a functor
\begin{equation}\label{eq:Vfunctor}
V:\Cat_{\psi_1,\psi_2}\to \Bqt^{ext}-\mathrm{mod}\text{,    such that    } V\left(E,c,M_{\psi_1},M_{\psi_2}\right)=V(E,c).
\end{equation} 

On the one hand, this functor is faithful since if $F: E_{\mathbf{0}} \to E'_{\mathbf{0}}$ and $F': E_{\mathbf{0}} \to E'_{\mathbf{0}}$ are different maps, then there exists $\lambda \in E$ such that $F(\lambda) \neq F'(\lambda)$, so that $V(F)([\lambda]) \neq V(F')([\lambda])$. On the other hand, since there is no $\C(q,t)$-linear structure on $\mathcal{C}_{\psi_1, \psi_2}$, the functor $V$ is never full.
\medskip

The category $\Cat_{\psi_1,\psi_2}$ is \newword{generically monoidal}, meaning that the tensor product of two objects will be defined only when they are in very general position. 

\begin{theorem}
a) The following endows $\Cat_{\psi_1,\psi_2}$ with the structure of a generically monoidal category: if $E_1$ and $E_2$ are in very general position with respect to both rational functions $\psi_1$ and $\psi_2$, let
\begin{equation}\label{eq:generic-product}
\left(E_1,c_1,M_{\psi_1;E_1},M_{\psi_2;E_1}\right)\boxtimes \left(E_2,c_2,M_{\psi_1;E_2},M_{\psi_2;E_2})=(E,c,M_{\psi_1},M_{\psi_2}\right)
\end{equation}
where:
\begin{itemize}
\item $E=E_1\times E_2$
\item $c$ is given by \eqref{c product}
\item $M_{\psi_i}((\lambda,\mu),u)=M_{\psi_i;E_1}(\lambda,u)M_{\psi_i;E_2}(\mu,u)$.
\end{itemize}

If
\[
F_i: (E_i, c_i, M_{\psi_1; E_i}, M_{\psi_2, E_i}) \to (E'_1, c'_i, M_{\psi_1; E'_i}, M_{\psi_2, E'_i})
\]
are morphisms for $i = 1, 2$ the map $F_1 \boxtimes F_2$ is defined by
\[
(F_1 \boxtimes F_2)\left(\lambda^{(1)}, \lambda^{(2)}\right) = \begin{cases} \left(F_1(\lambda^{(1)}), F_2(\lambda^{(2)})\right) & ; \; \text{if both} \; F_1(\lambda^{(1)}), F_2(\lambda^{(2)}) \neq \mathbf{0}, \\
\mathbf{0} & ;  \;\text{else}. \end{cases}
\]

b) Given any three weighted posets $E_1, E_2$ and $E_3 \in \Cat_{\psi_1,\psi_2}$ in pairwise very general position with respect to both $\psi_1$ and $\psi_2$, then $E_1 \times E_2$ is in very general position with $E_3$ (with respect to $\psi_i$) if and only if $E_1$ is in very general position with $E_2 \times E_3$ (with respect to $\psi_i$). In particular, the generic monoidal product \eqref{eq:generic-product} on $\Cat_{\psi_1, \psi_2}$ is associative with trivial associator. 
\smallskip

c) Consider the weighted poset $I = \{\bullet\}$ with one element and $p_m(\bullet) = 0$. Then, $I$ is in very general position with respect to any other excellent weighted poset. Moreover, $\Cat_{\psi_1, \psi_2}$ has a monoidal unit given by $(I, c, M_{\psi_1}, M_{\psi_2})$ where $c$ is the empty function and $M_{\psi_i}(\bullet, u) \equiv 1$. 
\smallskip

d) There is an equivalence of categories $\beta: \Cat_{\psi_1,\psi_2}\to \Cat_{\psi_2,\psi_1}^{\rev}$
given by 
$$
\beta(E,c,M_{\psi_1},M_{\psi_2})=(E,c,M_{\psi_2},M_{\psi_1}).
$$
where $\Cat_{\psi_2,\psi_1}^{\rev}$ denotes $\Cat_{\psi_2,\psi_1}$ with reversed generic monoidal product.
\smallskip

e) If $\psi_1=\psi_2=\psi$, then $\Cat_{\psi,\psi}$ admits a symmetric generic monoidal subcategory with objects $(E,c,M_{\psi},M_{\psi})$.
\end{theorem}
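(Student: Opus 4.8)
The plan is to first establish parts (a)–(d), or at least part (a) and the structure it provides, and then treat (e) as a refinement. The statement to prove is that when $\psi_1 = \psi_2 = \psi$, the full subcategory of $\Cat_{\psi,\psi}$ whose objects are of the form $(E,c,M_\psi,M_\psi)$ (i.e., with the two $M$-functions equal) is closed under the generic monoidal product $\boxtimes$ and carries a symmetric braiding. Closure is immediate from the product formula in part (a): if both factors have $M_{\psi_1;E_i} = M_{\psi_2;E_i} = M_{\psi;E_i}$, then $M_{\psi_i}((\lambda,\mu),u) = M_{\psi;E_1}(\lambda,u)M_{\psi;E_2}(\mu,u)$ is again independent of $i\in\{1,2\}$, so the product object again lies in the subcategory. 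Thus the main content is constructing the symmetry isomorphism $\beta_{E_1,E_2}\colon V(E_1\times E_2) \xrightarrow{\sim} V(E_2\times E_1)$ and checking it is a morphism in the category, is natural, squares to the identity, and satisfies the hexagon axiom.

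First I would write down the candidate braiding on objects. The underlying posets $E_1\times E_2$ and $E_2\times E_1$ are canonically isomorphic via $(\lambda^{(1)},\lambda^{(2)})\mapsto(\lambda^{(2)},\lambda^{(1)})$, and under this identification $p_m$ and the grading are preserved, so this is an isomorphism of weighted posets. The edge function on $E_1\times E_2$ from \eqref{c product} is, for $x$ addable for $\lambda^{(1)}$, equal to $c_1(\lambda^{(1)};x)M_{\psi_2;E_2}(\lambda^{(2)};x)$, while the edge function on $E_2\times E_1$ for the same weight $x$ (now addable for the $E_1$-component sitting in the second slot) is $c_1(\lambda^{(1)};x)M_{\psi_1;E_1\text{-factor not involved}}\dots$ — here is exactly where $\psi_1 = \psi_2$ is used: the two prescriptions for $c$ coincide because the $M$-function attached to the opposite factor is computed from the same $\psi$. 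So $c$ is literally the same function under the poset isomorphism, and likewise the $M$-functions match. Hence the swap map $\tau\colon E_1\times E_2 \to E_2\times E_1$ is an isomorphism in $\Cat_{\psi,\psi}$, and one checks directly that it satisfies the extra conditions in the definition of a morphism (the $\alpha_\lambda$'s can all be taken equal to $1$). Applying the functor $V$ of \eqref{eq:Vfunctor} then produces $\beta_{E_1,E_2} := V(\tau)$, which on basis vectors sends $\bigl[(\lambda^{(1)},\lambda^{(2)});\sigma(\underline w,\underline z)\bigr]$ to $\bigl[(\lambda^{(2)},\lambda^{(1)});\sigma(\underline w,\underline z)\bigr]$ (same shuffle permutation, since the weights $\underline w$ and $\underline z$ and their interleaving are unchanged — only the labels of which poset they came from are swapped).

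Next I would verify the axioms of a symmetric monoidal structure: naturality of $\beta$ with respect to morphisms $F_1\boxtimes F_2$ (immediate from the explicit formula in Remark \ref{rmk: tensor morphisms} together with the explicit formula for $\tau$), the hexagon identity (which here reduces, via part (b)'s trivial associator, to a compatibility of shuffle permutations under the embeddings $S_k\times S_n\times S_p \hookrightarrow S_{k+n+p}$ — this is the standard fact that the symmetric group action makes the "shuffle" operad symmetric), and the involutivity $\beta_{E_2,E_1}\circ\beta_{E_1,E_2} = \mathrm{id}$, which is clear since swapping twice is the identity on both the poset and the shuffle data. Finally I would note compatibility with the monoidal unit $(I,c,M_\psi,M_\psi)$ from part (c), which is trivial. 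The main obstacle I anticipate is the bookkeeping in the edge-function comparison of the previous paragraph: one must be careful that in \eqref{c product} for the poset $E_2\times E_1$ the roles of "$\psi_1$ attached to the first factor" and "$\psi_2$ attached to the second factor" are interchanged relative to $E_1\times E_2$, so that the identity $\psi_1 = \psi_2$ is genuinely needed and genuinely suffices — without it the swap map would only be a morphism into $\Cat_{\psi_2,\psi_1}^{\rev}$, which is precisely the content of part (d). Once this is pinned down, everything else is formal.
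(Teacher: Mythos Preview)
Your proposal is correct and follows the same core idea as the paper: the key observation is that when $\psi_1=\psi_2$ and $M_{\psi_1}=M_{\psi_2}$, the edge-function formula \eqref{c product} is literally symmetric under swapping $E_1$ and $E_2$, so the swap map $\tau$ is an isomorphism in $\Cat_{\psi,\psi}$. The paper's proof of (e) is a single sentence recording exactly this symmetry; you have spelled out in addition the verification of closure under $\boxtimes$, naturality, involutivity, and the hexagon, all of which are formal once $\tau$ is known to be a morphism.

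Two minor points of presentation. First, the braiding lives in $\Cat_{\psi,\psi}$ itself, so it is $\tau$ rather than $V(\tau)$; bringing in $V$ and the shuffle description of basis vectors is not needed (and the hexagon reduces to the trivial equality of swap maps on triple products of posets, not to a statement about shuffle permutations). Second, the scalars $\alpha_\lambda$ you mention are part of the data of a $\Bqt^{\ext}$-homomorphism via Proposition~\ref{prop:hom}, not of a morphism in $\Cat_{\psi,\psi}$; the category-level morphism is just the function $F=\tau$, and the conditions to check are that the edge functions and $M$-functions agree under it, which is precisely your symmetry computation.
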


\begin{proof}
a) The product $\boxtimes$ is well defined by Theorem \ref{thm: c for tensor product} and Lemma \ref{lem: M psi}(c). For the product of morphisms, see Remark \ref{rmk: tensor morphisms}.

(b) First, note that $(E_1 \times E_2)$ is in very general position with $E_3$ if and only if $E_1, E_2, E_3$ are pairwise in very general position, which in turn is equivalent to $E_1$ being in general position with respect to $E_2 \times E_3$ (all with respect to $\psi_i$). Now, the isomorphism of weighted posets
$$
(E_1\times E_2)\times E_3\simeq E_1\times (E_2\times E_3)
$$
is straightforward.

Furthermore, for 
$(E_1\times E_2)\times E_3$ we get
$$
c\bigl((\lambda,\mu,\nu);x\bigr)=\begin{cases}
c_1(\lambda;x)M_{\psi_2}(\mu;x)M_{\psi_2}(\nu;x)\ \text{if}\ x\ \text{is\ addable\ for}\ \lambda\\
c_2(\mu;x)M_{\psi_1}(\lambda;x)M_{\psi_2}(\nu;x)\ \text{if}\ x\ \text{is\ addable\ for}\ \mu\\
c_3(\nu;x)M_{\psi_1;E_1\times E_2}((\lambda,\mu;x))\ \text{if}\ x\ \text{is\ addable\ for}\ \nu.
\end{cases}
$$
By  construction, we get
$$
c_3(\nu;x)M_{\psi_1;E_1\times E_2}((\lambda,\mu);x)=c_3(\nu;x)M_{\psi_1}(\lambda;x)M_{\psi_1}(\mu;x)
$$
and we get the same expression for $
c((\lambda,\mu,\nu),x)$ in $E_1\times (E_2\times E_3)$.
Finally,
$$
M_{\psi_i;E_1\times E_2\times E_3}((\lambda,\mu,\nu);u)=M_{\psi_i;E_1}(u)M_{\psi_i;E_2}(u)M_{\psi_i;E_3}(u).
$$

c) Vacuously, the poset $I$ is in very general position with respect to any other poset $E$. Moreover, since $p_m(\bullet) = 0$ for every $m$, it is clear that $I \times E \simeq E \simeq E \times I$ as weighted posets.  
If $x$ is an addable box for $\lambda \in E$ then
$$
c((\lambda,\bullet);x)=c(\lambda;x)M_{\psi_2;I}(\bullet;u)=c(\lambda;x).
$$
Finally, 
$$
M_{\psi_i;E\times I}((\lambda,\bullet;u)=M_{\psi_i;E}(\lambda;u)
M_{\psi_i;I}(\bullet;u)=M_{\psi_i;E}(\lambda;u).
$$
The proof of the isomorphism $I\times E\simeq E$ is similar.

d) First observe that swapping $x$ and $y$ in \eqref{eq: psi1 psi2} yields
$$
\frac{\psi_1(y,x)}{\psi_2(x,y)}=\left[\frac{\psi_1(x,y)}{\psi_2(y,x)}\right]^{-1},\ \mathrm{so}\ 
\frac{\psi_2(x,y)}{\psi_1(y,x)}=\frac{\psi_1(x,y)}{\psi_2(y,x)}
$$
and $(\psi_2,\psi_1)$ satisfy \eqref{eq: psi1 psi2}. Now we simply use the fact that \eqref{c product} is symmetric when exchanging $E_1$ with $E_2$ and $\psi_1$ with $\psi_2$.

e) When $\psi_1=\psi_2$ and $M_{\psi_1}=M_{\psi_2}$, then \eqref{c product} is symmetric in $E_1$ and $E_2$.
\end{proof}

\subsection{The Boolean Poset}

As a concrete example, consider the boolean poset $E_n(a_1,\ldots,a_n)$. The elements of $E_n(a_1,\ldots,a_n)$ are subsets $S\subset \{1,\ldots,n\}$ with weighting $p_m(S)=\sum_{j\in S}a_j^{m}$. The covering relation is given by $S\cup a_i=S\cup \{i\}$ provided that $i\notin S$, and the grading $|S|$ is given by the number of elements in $S$. It is easy to see that $E_n(a_1,\ldots,a_n)$ is excellent if and only if $a_i/a_j\notin \{1,q,t,qt\}$ for all $i\neq j$. 

It is an interesting question to determine the coefficients $c(S;a_i)$ explicitly. For this, we observe that 
$$
E_n(a_1,\ldots,a_n)=E_1(a_1)\times E_1(a_2)\times \ldots\times E_1(a_n).
$$
The poset $E_1(a)$ has two elements $\emptyset$ and $\{1\}$ with 
$$
\Delta_{p_m}[\emptyset]=0,\ \Delta_{p_m}(a)[\{1\}]=a^m[\{1\}].
$$
Furthermore, we can define $c(\emptyset;a)=1$ and
$$
M(\emptyset;u)=1,\ M(\{1\},u)=\psi(a,u)=\frac{(1-au^{-1})(1-qtau^{-1})}{(1-qau^{-1})(1-tau^{-1})}.
$$
Now it is easy to check by induction in $n$ using \eqref{c product} that
$$
c(S;x)=\prod_{j\in S}\psi(a_j,x).
$$
This implies the following result.

\begin{proposition}
There is a representation of $\Bqt$ with basis $[S;i_k,\ldots,i_1]=[S;\underline{i}]$ where $S\subset \{1,\ldots,n\}$, $i_{\alpha}\neq i_{\beta}$ and $i_{\alpha}\notin S$. The action of the generators is given by   
\begin{align*}
\Delta_{p_m}[S;\underline{i}]& =\sum_{j\in S}a_j^m+\sum_{\alpha=1}^{k}a_{i_{\alpha}}^m,\\
z_{\alpha}[S;\underline{i}]&=a_{i_{\alpha}}[S;\underline{i}],
\\
T_{\alpha}[S;\underline{i}]&=\frac{(q-1)a_{i_{\alpha+1}}}{a_{i_{\alpha}}-a_{i_{\alpha+1}}}[S;\underline{i}]+
\frac{a_{i_{\alpha}}-qa_{i_{\alpha+1}}}{a_{i_{\alpha}}-a_{i_{\alpha+1}}}[S;s_\alpha\underline{i}]
\\
d_{-}[S;\underline{i}]&=[S\cup i_k;i_{k-1},\ldots,i_{1}].
\\
d_{+}[S;\underline{i}]&=\sum_{\ell}\prod_{j\in S}\psi(a_j,a_\ell)  \prod_{\alpha=1}^{k}\psi(a_{i_{\alpha}},a_\ell)\frac{a_\ell-ta_{i_{\alpha}}}{a_\ell-qta_{i_{\alpha}}}[S;\underline{i},\ell].
\end{align*}
\end{proposition}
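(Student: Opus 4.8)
The plan is to recognize the stated representation as the specialization to the Boolean poset of the machinery already developed in Sections~\ref{sec: construction} and \ref{sec: tensor}, so that the proof is essentially bookkeeping. Throughout I would keep the standing hypothesis $a_i/a_j\notin\{1,q,t,qt\}$ for all $i\neq j$, under which each $E_1(a_j)$ is excellent and the posets $E_1(a_1),\dots,E_1(a_n)$ are pairwise in general position; iterating Lemma~\ref{lem: product excellent} along the decomposition $E_n(a_1,\dots,a_n)=E_1(a_1)\times\cdots\times E_1(a_n)$ recorded just before the statement then shows $E_n(a_1,\dots,a_n)$ is excellent, so $V(E_n(a_1,\dots,a_n))$ is defined via Definition~\ref{def: calibrated from excellent} and Theorem~\ref{thm: relations hold}.

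First I would observe that \emph{every} maximal chain of $E_n(a_1,\dots,a_n)$ is good: a maximal chain through $S$ has weights lying among the pairwise distinct scalars $a_\ell$, and goodness (Definition~\ref{def: good new}) asks that $w_\alpha\neq tw_\beta$, i.e.\ $a_{i_\alpha}/a_{i_\beta}\neq t$, which holds by the standing hypothesis. Hence the basis of $V(E_n(a_1,\dots,a_n))$ produced by Definition~\ref{def: calibrated from excellent} is exactly the set of symbols $[S;\underline i]$ with $S\subseteq\{1,\dots,n\}$ and $i_1,\dots,i_k$ distinct elements of the complement of $S$; moreover, since all chains are good and $a_{i_\alpha}/a_{i_{\alpha+1}}\notin\{q,t\}$, for every such chain and every $\alpha$ the transposition $s_\alpha$ is both admissible and \exc, so $T_\alpha$ always acts by the two-term (generic) branch of \eqref{eq: T Ram new}. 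This already yields the $z_\alpha$, $T_\alpha$, $\Delta_{p_m}$ and $d_-$ formulas directly from Definition~\ref{def: calibrated from excellent} after substituting $\underline w=(a_{i_k},\dots,a_{i_1})$ and $p_m(S)=\sum_{j\in S}a_j^m$.

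Next I would pin down the edge functions. Taking $\psi_1=\psi_2=\psi$ as in Example~\ref{ex: psi easy}, so that general position automatically upgrades to very general position with respect to $\psi$, I would start from the one-element building blocks $E_1(a)$ with $c(\emptyset;a)=1$, $M_\psi(\emptyset;u)=1$, $M_\psi(\{1\};u)=\psi(a,u)$ (these satisfy \eqref{eq: monodromy} and \eqref{eq: M psi} vacuously), and then peel off one factor $E_1(a_n)$ at a time, applying Theorem~\ref{thm: c for tensor product} together with Lemma~\ref{lem: M psi}(c). An induction on $n$ gives $M_\psi(S;u)=\prod_{j\in S}\psi(a_j,u)$ and $c(S;x)=\prod_{j\in S}\psi(a_j,x)$, and Theorem~\ref{thm: c for tensor product} guarantees that this $c$ satisfies \eqref{eq: monodromy} and defines a representation of $\Bqt^{\ext}$ (in particular of $\Bqt$) on $V(E_n(a_1,\dots,a_n),c)$.

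Finally, the $d_+$ formula comes out by substituting $\lambda\cup\underline w=S\cup\{i_1,\dots,i_k\}$ and $c\bigl(S\cup\{i_1,\dots,i_k\};a_\ell\bigr)=\prod_{j\in S}\psi(a_j,a_\ell)\prod_{\alpha=1}^{k}\psi(a_{i_\alpha},a_\ell)$ into \eqref{eq:d+ rep}, noting that the $x$ in that sum for which $[S;\underline i,x]$ is a good chain are exactly the $a_\ell$ with $\ell\notin S\cup\{i_1,\dots,i_k\}$, and grouping the factors $\tfrac{a_\ell-ta_{i_\alpha}}{a_\ell-qta_{i_\alpha}}$ with the $\psi(a_{i_\alpha},a_\ell)$. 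The only step calling for care — the closest thing to an obstacle — is the bookkeeping of the genericity hypotheses: one must check that the $a_\ell$ appearing as addable weights are neither zeros nor poles of the relevant $M_\psi(\,\cdot\,;u)$, which is precisely what very general position with respect to $\psi$ supplies, and that the inductive identification of $c(S;x)$ via \eqref{c product} is consistent on overlaps (automatic, since each intermediate $c$ satisfies \eqref{eq: monodromy}). No genuinely new difficulty arises beyond Sections~\ref{sec: construction} and \ref{sec: tensor}.
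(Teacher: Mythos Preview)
Your proposal is correct and follows essentially the same route as the paper: the proposition is stated as an immediate consequence of the preceding discussion, which identifies $E_n(a_1,\dots,a_n)=E_1(a_1)\times\cdots\times E_1(a_n)$, sets $c(\emptyset;a)=1$ and $M_\psi$ as in Example~\ref{ex: psi easy}, and computes $c(S;x)=\prod_{j\in S}\psi(a_j,x)$ by induction via \eqref{c product}, after which the formulas are read off from Definition~\ref{def: calibrated from excellent}. Your write-up is in fact more explicit than the paper's on why every chain is good and why $T_\alpha$ always lands in the two-term branch of \eqref{eq: T Ram new}; the only thing you might flag is that a literal substitution into \eqref{eq:d+ rep} produces an overall factor of $q^k$ that is absent from the displayed $d_+$ formula in the proposition.
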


\subsection{The Parabolic Gieseker Space as a Tensor Product}

In Section \ref{sec: Gieseker} (see \eqref{eq:GiesekerRepn}) we constructed a representation $$V^{(r)}=\bigoplus_{k,n}K^{\widetilde{T}}_{loc}(\M^{par}(r;n,n+k))$$ of $\Bqt^{\ext}$ using the geometry of parabolic Gieseker moduli spaces of rank $r$ sheaves. It turns out that this representation fits well into our tensor product construction. Recall from Section \ref{subsec: Gieseker} that $a_1, \dots, a_{r}$ denote the coordinate characters of the maximal torus $(\C^{*})^{r}$ of $GL(r)$. 

\begin{theorem}\label{thm: gieseker as tensor power}
Suppose $V$ is the polynomial representation. Then,
$$
V^{(r)}=V(a_1)\boxtimes V(a_2)\boxtimes \cdots \boxtimes V(a_r).
$$
\end{theorem}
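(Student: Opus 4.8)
The plan is to identify $V^{(r)}$ as $V(E)$ for the poset $E$ of $r$-multipartitions with the weighting coming from the torus action, and then recognize that poset as the $r$-fold product $\mathcal{P}(a_1)\times\cdots\times\mathcal{P}(a_r)$ of twisted partition posets. First I would set up the combinatorial dictionary: by the analysis in Section \ref{sec: Gieseker}, the $\widetilde{T}$-fixed points of $\M^{par}(r,n;n+k)$ are parametrized by chains of $r$-tuples of Young diagrams $\lambda^{(n)}_\bullet\subset\cdots\subset\lambda^{(n+k)}_\bullet$ with each successive difference a horizontal strip, and these are exactly the good chains in the poset of $r$-multipartitions once one assigns to a multipartition $\lambda_\bullet=(\lambda_1,\ldots,\lambda_r)$ the weighting $p_m(\lambda_\bullet)=\sum_\alpha a_\alpha^m B_{\lambda_\alpha}(q^m,t^m)=\sum_\alpha\sum_{\sq\in\lambda_\alpha}(a_\alpha\sq)^m$. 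This is precisely the weighting on $\mathcal{P}(a_1)\times\cdots\times\mathcal{P}(a_r)$: an addable box for $\lambda_\alpha$ with $(q,t)$-content $x$ contributes addable weight $a_\alpha x$, matching Definition \ref{def: twist} and the product of posets. One must check this product poset is excellent, which follows from Lemma \ref{lem: product excellent} provided the $\mathcal{P}(a_\alpha)$ are pairwise in general position; this is exactly the genericity of $a_1,\ldots,a_r$ (condition $a_i/a_j\notin\{1,q,t,qt,\ldots\}$ together with the simple-spectrum condition), which holds since we work over $\C(q,t,a_1,\ldots,a_r)$ with the $a_\alpha$ independent transcendentals.

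Next I would match the operators. The basis of $K^{\widetilde{T}}_{loc}$ given by fixed-point classes (suitably normalized) should correspond to the basis $[\lambda_\bullet^{(n)};\underline{w}]$ of $V(E)$ up to rescaling. The action of $z_i$ is by the $\widetilde{T}$-weight $w_i$ of the $i$-th box, matching the diagonal action in Definition \ref{def: calibrated from excellent}; the action of $\Delta_{p_m}$ is diagonal with eigenvalue $p_m(\lambda^{(n)}_\bullet)+\sum w_i^m$ (this is the tautological-bundle interpretation and is built into the construction as in Corollary \ref{cor:mac-operator}); $d_-=g_*$ forgets the smallest diagram with coefficient $1$; and $d_+=q^k(q-1)f^*$ has coefficients computed in Lemma \ref{lem:d+ coeff}. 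The key computation is that these $d_+$-coefficients match \eqref{eq: c from edges new} for the edge function on the product poset given by Theorem \ref{thm: c for tensor product}, namely with $\psi_1=\psi_2=\psi(y,u)=\frac{(1-yu^{-1})(1-qtyu^{-1})}{(1-qyu^{-1})(1-tyu^{-1})}$ as in Example \ref{ex: psi easy}. Concretely, from Lemma \ref{lem:d+ coeff} the relevant edge function is (cf. \eqref{eq: c for partitions})
\[
c(\lambda_\bullet;x)=-\Lambda\!\left(-Ax^{-1}+(1-q)(1-t)B_{\lambda_\bullet}x^{-1}+1\right),
\]
and one computes, writing $x$ addable for $\lambda_\alpha$, that this factors as $c_\alpha(\lambda_\alpha;x)\prod_{\beta\neq\alpha}M_\psi(\lambda_\beta;x)$ where $c_\alpha(\lambda_\alpha;x)=-\Lambda(-(a_\alpha x)^{-1}a_\alpha+(1-q)(1-t)B_{\lambda_\alpha}(a_\alpha x)^{-1}a_\alpha+1)$ is the partition edge function of Theorem \ref{thm: polynomial is calibrated} for $\mathcal{P}(a_\alpha)$ and $M_\psi(\lambda_\beta;x)=\Lambda((1-q)(1-t)B_{\lambda_\beta}x^{-1})=\prod_{\sq\in\lambda_\beta}\psi(a_\beta\sq,x)$ satisfies the recursion \eqref{eq: M psi}. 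This uses the multiplicativity $\Lambda(\phi_1+\phi_2)=\Lambda(\phi_1)\Lambda(\phi_2)$ and the observation that the constant-term normalization in $D(\lambda^{(n+k)}_\bullet,x)$ isolates the $\alpha$-component.

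\textbf{Main obstacle.} The hard part will be the bookkeeping in the previous paragraph — verifying that the geometric $d_+$-coefficient from Lemma \ref{lem:d+ coeff}, namely $-q^k x D(\lambda^{(n+k)}_\bullet,x)\prod_i\frac{x-tw_i}{x-qtw_i}$, agrees on the nose (not just up to an overall rescaling of the basis) with the product-poset coefficient $q^k c((\lambda_\bullet,\ldots);x)\prod_i\frac{x-tw_i}{x-qtw_i}$ coming from \eqref{c product}. By Theorem \ref{thm: existence and uniqueness} any two edge functions satisfying \eqref{eq: monodromy} give isomorphic representations, so strictly speaking it suffices to check that the geometric coefficients do satisfy \eqref{eq: c from edges new} for \emph{some} edge function satisfying \eqref{eq: monodromy} and that that edge function is the product one up to a coboundary; but making this rigorous requires confirming the $\Lambda$-factorization above and that the cross-terms $M_\psi$ are exactly the $M$-functions of Lemma \ref{lem: M psi} for this $\psi$. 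Once that identification is in place, Theorem \ref{thm: relations hold} guarantees $V^{(r)}$ is a representation of $\Bqt^{\ext}$ isomorphic to $V(E_1\times\cdots\times E_r)$, and by definition of $\boxtimes$ this is $V(a_1)\boxtimes\cdots\boxtimes V(a_r)$, completing the proof. I would also remark that the $r=1$ case reduces to Theorem \ref{thm: polynomial is calibrated}, providing a consistency check, and that the $\AH_k$-module structure on both sides matches the induction product from the Remark after Theorem \ref{thm: c for tensor product}, which is another useful sanity check.
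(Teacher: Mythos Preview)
Your approach is essentially identical to the paper's: identify the fixed-point basis with good chains in $\mathcal{P}(a_1)\times\cdots\times\mathcal{P}(a_r)$, then factor the geometric edge function from Lemma \ref{lem:d+ coeff} into the tensor-product form \eqref{c product} with $\psi$ as in Example \ref{ex: psi easy}. The one slip is in your $M_\psi$: to match $c(\lambda_\bullet;x)=-\Lambda(-Ax^{-1}+(1-q)(1-t)B_{\lambda_\bullet}x^{-1}+1)$ on the nose you need $M_{\psi;E(a_\beta)}(\lambda_\beta;u)=\Lambda\bigl(-a_\beta u^{-1}+(1-q)(1-t)a_\beta B_{\lambda_\beta}u^{-1}\bigr)$, i.e.\ include the prefactor $(1-a_\beta u^{-1})^{-1}$; this still satisfies \eqref{eq: M psi}, and with that normalization the $\Lambda$-factorization is exact, so the obstacle you flagged disappears and no appeal to Theorem \ref{thm: existence and uniqueness} is needed.
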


\begin{proof}
First, recall that by Theorem \ref{thm: polynomial is calibrated} $V$ is calibrated with underlying weighted poset of partitions $\mathcal{P}$. The good chains correspond to standard tableaux in horizontal strips between two partitions. Furthermore, we have
$$
c(\lambda;x)=-\Lambda\left(-x^{-1}+(1-q)(1-t)B_{\lambda}x^{-1}+1\right)
$$
so we can write
$$
M_{\psi}(\lambda;u)=\Lambda\left(-u^{-1}+(1-q)(1-t)B_{\lambda}u^{-1}\right)=\frac{1}{1-u^{-1}}\prod_{\sq\in \lambda}\frac{(1-\sq u^{-1})(1-qt\sq u^{-1})}{(1-q\sq u^{-1})(1-t\sq u^{-1})}.
$$
Here $\psi$ is as in Example \ref{ex: psi easy}.

The poset $\mathcal{P}(a_1)\times \cdots\times \mathcal{P}(a_r)$ consists of $r$-tuples of partitions. All good chains in the product are given by shuffles \eqref{eq: shuffle chains} of good chains in $\mathcal{P}$, which can be interpreted as standard tableaux in the union of $r$ horizontal strips. All coefficients of $d_{-}$ are equal to 1, so it remains to check the coefficients of $d_{+}$, which satisfy equation \eqref{eq: c from edges new} by Lemma \ref{lem:d+ coeff}.

Let 
$\lambda_{\bullet}=(\lambda_{\alpha})_{\alpha=1}^{r}$.
Then, the coefficients of $d_{+}:V^{(r)}_0\to V^{(r)}_{1}$ equal
$$
c(\lambda_{\bullet},x)=-\Lambda(-Ax^{-1}+(1-q)(1-t)B_{\lambda_{\bullet}}x^{-1}+1).
$$
If $x$ is addable for some $\lambda_{\alpha}$, then
\begin{align*}
c(\lambda_{\bullet},x)&=-\Lambda(-a_{\alpha}x^{-1}+(1-q)(1-t)B_{\lambda_{\alpha}}x^{-1}+1)
\prod_{\beta\neq \alpha}\Lambda(-a_{\beta}x^{-1}+(1-q)(1-t)B_{\lambda_{\beta}}x^{-1})
\\
&=c(\lambda_{\alpha};a_{\alpha}^{-1}x)\prod_{\beta\neq \alpha}M_{\psi;E(a_\beta)}(\lambda_{\beta};x)
\end{align*}
in agreement with \eqref{c product}. Hence,
$
M_{\psi}(\lambda_{\bullet};u)=\prod_{\alpha}M_{\psi;E(a_{\alpha})}(\lambda_{\alpha};u),
$
from which the result follows.
\end{proof}
 
\subsection{More Examples of Representations Arising from Tensor Products}
Using the tensor product structure defined in the previous section many more examples of representations of $\Bqt^{\ext}$ can be constructed.

\begin{example}
Let $E_1(\underline{b})$ be the trivial weighted poset from Section \ref{sec: trivial} with one element $\bullet$ and $p_m(\bullet)=b_m$ for some scalars $b_m$.

Let $E_2$ be an arbitrary excellent weighted poset. Then
$E=E_1(a_1)\times E_2(a_2)$ is the same poset as $E_2$ with 
$$
p_{m;E}(\lambda)=a_2^mp_m(\lambda)+a_1^mb_m.
$$
By Remark \ref{rem: shift rep} the resulting representation of $\Bqt$ is isomorphic to $E_2(a_2)$, but the action of $\Delta_{p_m}$ is shifted by $a_1^mb_m$.
\end{example}
 
Let $E$ be the weighted poset of nonnegative integers with $p_m(i)=1^m+q^m+\ldots+q^{(i-1)m}$, as in Section \ref{sec: linear posets}. 
For $\psi_1=\psi_2=\psi$ as in Example \ref{ex: psi easy}, we can use
$$
M_{\psi}(i;u)=\prod_{j=0}^{i-1}\psi(q^j,u)=\prod_{j=0}^{i-1}\frac{(1-q^{j}u^{-1})(1-tq^{j+1}u^{-1})}{(1-q^{j+1}u^{-1})(1-tq^{j}u^{-1})}=
\frac{(1-u^{-1})(1-tq^iu^{-1})}{(1-q^iu^{-1})(1-tu^{-1})}.
$$

The poset $E\times E(t)$ is {\bf not} excellent since, for example, both $1$ and $t$ are addable for $(0,0)$ and $t$ is addable for $(1,0)$. Also, $E$ and $E(t)$ are not in general position.
However, the set of partitions with at most 2 rows is an excellent sub-poset of $E\times E(t)$. More generally, we get the following result.

\begin{lemma}
The set $\mathcal{P}\setminus I_N$ of partitions with at most $N$ parts (see Lemma \ref{lem: restricted partitions}) 
is an excellent subposet of $E(1)\times E(t)\times \cdots\times E(t^{N-1})$. The edge functions \eqref{eq: c for partitions} agree with the edge functions from Theorem \ref{thm: c for tensor product}, with an appropriate choice of $M_{\psi}$.
\end{lemma}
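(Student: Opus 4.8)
The plan is to identify $\mathcal{P}\setminus I_N$ inside $E(1)\times E(t)\times\cdots\times E(t^{N-1})$ and then match up all the structure (poset, weighting, good chains, edge functions) on both sides. First I would set up the identification of elements: an element of $E(t^{\alpha-1})$ is a nonnegative integer $i_\alpha$, which I think of as the length of the $\alpha$-th row of a partition; the tuple $(i_1,\ldots,i_N)$ corresponds to the partition $\lambda$ with rows of these lengths precisely when $i_1\ge i_2\ge\cdots\ge i_N$, i.e.\ exactly when the tuple is a genuine partition with at most $N$ parts. So as a \emph{set} $\mathcal{P}\setminus I_N$ is the subset of the product poset consisting of weakly-decreasing tuples. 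I would then check this is a subposet with the induced order: the covering relations in the product are ``increase one coordinate $i_\alpha$ by one'', and the addable weight for that cover is $t^{\alpha-1}q^{i_\alpha}$, which is exactly the $(q,t)$-content of the box added to row $\alpha$ of $\lambda$; the condition that the result again be weakly decreasing is exactly the condition that this box be addable in $\mathcal{P}$. This shows the weighting $p_m$ on the product restricts to $p_m(\lambda)=\sum_{\sq\in\lambda}\sq^m$, matching the weighting of $\mathcal P$ from Section \ref{sec: poly}.

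Next I would verify that $\mathcal{P}\setminus I_N$, with this induced structure, is an \emph{excellent} subposet — meaning that for any $2$-chain inside the subposet, the excellence dichotomy of Definition \ref{def: excellent new} is witnessed \emph{within} the subposet. This is essentially the content of Example \ref{ex:Epartition}: given a $2$-step chain $[\lambda;x,y]$ of boxes added to $\lambda$ (a partition with $\le N$ rows), either $x,y$ are in the same row ($y=qx$), or in the same column ($y=tx$), or they are in distinct rows and columns, in which case swapping the order of addition stays inside partitions with $\le N$ rows. The one thing to be careful about — and this is why the lemma restricts to the subposet rather than claiming the full product is excellent — is that the ambient product $E(1)\times\cdots\times E(t^{N-1})$ is genuinely \emph{not} excellent (the $N=2$ counterexample with $1$ and $t$ both addable for $(0,0)$ is already flagged in the text), so I must check excellence only for chains that remain in the weakly-decreasing locus; there the problematic configurations (e.g.\ $y=qtx$, or both $x,y$ addable but with $y,x$ added in the ``wrong'' order leaving the partition locus) cannot occur, by the combinatorics of horizontal strips. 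I would also record that good chains in the subposet are exactly standard tableaux of horizontal skew strips with at most $N$ rows, recovering the basis of $V(\mathcal{P}\setminus I_N)$ from Lemma \ref{lem: restricted partitions}(a).

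The final and most substantive step is matching the edge functions. On the product side, Theorem \ref{thm: c for tensor product} (iterated, using the associativity from part (b) of the generically-monoidal-category theorem) gives an edge function $c((\lambda_1,\ldots,\lambda_N);x)$ built from the edge functions $c(i)$ of the linear posets and the $M_\psi$ functions, with $\psi=\psi_1=\psi_2$ as in Example \ref{ex: psi easy}. Concretely, using $M_\psi(i;u)=\prod_{j=0}^{i-1}\psi(q^j,u)=\frac{(1-u^{-1})(1-tq^iu^{-1})}{(1-q^iu^{-1})(1-tu^{-1})}$ computed just above, the product formula \eqref{c product} iterated gives, when $x$ is addable in row $\alpha$ of $\lambda$,
\[
c\bigl((\lambda_1,\ldots,\lambda_N);x\bigr)=c_\alpha(\text{row data})\cdot\prod_{\beta\ne\alpha}M_{\psi;E(t^{\beta-1})}(\lambda_\beta;x),
\]
and I would show by a direct $\Lambda$-calculus computation — exactly mirroring the computation in the proof of Theorem \ref{thm: gieseker as tensor power}, where the $r$-tuple-of-partitions case was handled — that the right-hand side collapses to
\[
c(\lambda;x)=-\Lambda\bigl(-x^{-1}+(1-q)(1-t)B_\lambda x^{-1}+1\bigr),
\]
the partition edge function \eqref{eq: c for partitions}; here one uses $B_\lambda=\sum_\alpha t^{\alpha-1}B_{\lambda_\alpha}$ with $B_{\lambda_\alpha}=1+q+\cdots+q^{\ell_\alpha-1}$ and the multiplicativity $\Lambda(\phi+\phi')=\Lambda(\phi)\Lambda(\phi')$. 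Since both edge functions satisfy \eqref{eq: monodromy} (shown in the proof of Theorem \ref{thm: polynomial is calibrated}), Theorem \ref{thm: existence and uniqueness} guarantees the resulting representations are isomorphic regardless, so strictly speaking I only need existence of \emph{some} compatible $M_\psi$; but exhibiting the explicit match is cleaner and is what the statement asks for. The main obstacle I anticipate is purely bookkeeping: carefully tracking the twists $a_\alpha=t^{\alpha-1}$ through the iterated product formula and confirming that the ``cross terms'' $\prod_{\beta\ne\alpha}M_\psi(\lambda_\beta;x)$ reassemble into the single $\Lambda$-expression, i.e.\ that no spurious zero/pole at $x$ appears — which is guaranteed because inside the partition locus $x$ is always addable and hence is neither a content already present nor creates the forbidden ratios, so the very-general-position hypotheses of Theorem \ref{thm: c for tensor product}, while false for the ambient product, hold along every chain we actually use.
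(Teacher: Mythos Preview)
Your overall strategy is correct and close to the paper's, but the final paragraph contains a genuine gap. You correctly locate the obstacle (``no spurious zero/pole at $x$ appears'') but your justification for dismissing it is wrong. When $x$ is addable in row $N$ of $\lambda$, the box $\sq$ directly above it (row $N-1$, same column) has content $t^{-1}x$ and \emph{must} lie in $\mu=(\lambda_1,\ldots,\lambda_{N-1})$. The denominator factor $(1-t\sq u^{-1})$ in $\psi(\sq,u)$ then vanishes at $u=x$, so $M_\psi(\mu;x)$ is naively $0/0$. The very-general-position hypothesis does not hold ``along every chain you use''; it fails precisely here, and not just on the ambient product. Your appeal to the Gieseker computation is misleading: there the twist parameters $a_1,\ldots,a_r$ are generic, so no such cancellation is ever needed.

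The paper handles this by working inductively via $\mathcal{P}\setminus I_N\subset(\mathcal{P}\setminus I_{N-1})\times E(t^{N-1})$ rather than factoring all rows at once, and---this is the ``appropriate choice of $M_\psi$'' in the statement---by taking the non-obvious normalization
\[
M_\psi(\mu;u)=\frac{1-t^{N-1}u^{-1}}{1-u^{-1}}\prod_{\sq\in\mu}\psi(\sq,u),
\]
with the extra numerator $(1-t^{N-1}u^{-1})$ (allowed since $M_\psi$ is only determined up to an overall function of $u$). The paper then checks explicitly that $M_\psi(\mu;u)$ has a finite nonzero limit at $u=x$: the vanishing denominator $(1-t\sq u^{-1})$ is cancelled by the numerator factor $(1-qt\sq'u^{-1})$ coming from the box $\sq'$ one step up-and-left when $x\neq t^{N-1}$, and by the inserted factor $(1-t^{N-1}u^{-1})$ when $x=t^{N-1}$. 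This cancellation is the substance of the proof, not bookkeeping.
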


\begin{proof}
We prove it by induction in $N$. Let $\mathcal{P}\setminus I_N$ be the poset of partitions with at most $N$ parts, so that clearly $\mathcal{P}\setminus I_1=E$ and the base case is done.  Let $\lambda=(\lambda_1,\ldots,\lambda_{N})$ and
$\mu=(\lambda_1,\ldots,\lambda_{N-1})$ so that $\mathcal{P}\setminus I_N$ can be realized as a subposet of $(\mathcal{P}\setminus I_{N-1})\times E(t^{N-1})$ by identifying $\lambda$ with $(\mu,\lambda_N(t^{N-1}))$.

Suppose that $x$ is an addable box for $\lambda$ and $\lambda\cup x$ has at most $N$ rows. Define 
\begin{align*}
&M_{\psi}(\lambda_N(t^{N-1});u)=\prod_{j=0}^{\lambda_{N}-1}\psi(q^jt^{N-1},u) \text{ 
\qquad  and }
\\
&M_{\psi}(\mu;u)=\frac{(1-t^{N-1}u^{-1})}{(1-u^{-1})}\prod_{\sq\in \mu}\frac{(1-\sq u^{-1})(1-qt \sq u^{-1})}{(1-q\sq u^{-1})(1-t \sq u^{-1})}.
\end{align*}
There are two cases:

a) Suppose $x$ is in the first $N-1$ rows so that $x$ is addable for $\mu$. Then, $q^jt^{N-1}x^{-1}$ is never equal to $1,q^{-1},t^{-1}$ or $(qt)^{-1}$, so $M_{\psi}(\lambda_N(t^{N-1});x)$ is well defined and nonzero. Hence,
\begin{align*}
c(\lambda;x)&=-\Lambda\Bigl(-x^{-1}+(1-q)(1-t)B_{\lambda}x^{-1}+1\Bigr)
\\
&=-\Lambda\Bigl(-x^{-1}+(1-q)(1-t)B_{\mu}x^{-1}+1\Bigr) \times \Lambda\Bigl((1-q)(1-t)x^{-1}\sum_{j=0}^{\lambda_{N}-1}q^jt^{N-1}\Bigr)
\\
&=c(\mu;x)M_{\psi}(\lambda_N(t^{N-1});x).
\end{align*}

b) Suppose $x$ is in the $N$-th row. Then $x$ is addable for $\lambda_N(t^{N-1})$ and and there is a box in $\mu$ below $x$. We claim that $M_{\psi}(\mu;u)$ has a well defined and nonzero limit at $u=x$. Indeed, for $x\neq t^{N-1}$ there will be exactly one factor of the form $(1-qt\sq u^{-1})$ and exactly one factor of the form $(1-t\sq u^{-1})$ which vanish at $u=x$; for $x=t^{N-1}$ the factors $(1-t^{N-1}u^{-1})$ and $(1-t\sq u^{-1})$ vanish. Thus,
\begin{align*}
&c(\lambda;x)=-\Lambda(-x^{-1}+(1-q)(1-t)B_{\lambda}x^{-1}+1)
\\
&=-\Lambda\Bigl(-x^{-1}+t^{N-1}x^{-1}+(1-q)(1-t)B_{\mu}x^{-1}\Bigr)\Lambda\Bigl(-t^{N-1}x^{-1}+(1-q)(1-t)x^{-1}\sum_{j=0}^{\lambda_{N}-1}q^jt^{N-1}+1\Bigr)
\\
&=M_{\psi}(\mu;x)c(\lambda_N,t^{1-N}x).
\end{align*}
\end{proof}

\section{A Duality Functor}\label{sec: duality}

In this section, we define a duality functor on the category of calibrated $\Bqt^{\ext}$ representations.

\subsection{A $\Bqt$ Involution} We start by defining an involution on $\Bqt$. This involution will not be $\C(q,t)$-linear.

\begin{definition}
We denote by $\aut: \C(q,t) \to \C(q,t)$ the automorphism given by $$\aut(q) = q^{-1}, \aut(t) = t^{-1}.$$ 
\end{definition}

We will also need to extend our field $\C(q,t)$ of definition by adjoining a square root $q^{1/2}$ of $q$. 

\begin{lemma}\label{lem:involution}
The following defines a $\aut$-linear anti-involution $\adj: \Bqt \to \Bqt$ of the algebra $\Bqt$.
\begin{align*}
\adj(\e_{k}T_{i}\e_{k}) &= \e_{k}T_{k-i}^{-1}\e_{k} 
&
\adj(\e_{k-1}d_{-}\e_{k}) &= q^{-k/2}\e_{k}d_{+}\e_{k-1} 
\\
\adj(\e_{k}z_{i}\e_{k}) &= \e_{k}z_{k+1-i}\e_{k}
&
\adj(\e_{k+1}d_{+}\e_{k}) &= q^{-(k+1)/2}\e_{k}d_{-}\e_{k+1}.
\end{align*}
\end{lemma}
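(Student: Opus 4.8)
The statement asserts that the prescribed assignment on generators extends to a well-defined $\aut$-linear anti-automorphism of $\Bqt$ that squares to the identity. The natural strategy is: (1) check that $\adj$ respects every defining relation of $\Bqt$, so that it gives a well-defined $\aut$-linear algebra anti-homomorphism $\Bqt \to \Bqt$; (2) check that $\adj^2 = \mathrm{id}$ on generators, which both shows surjectivity (so we indeed get an anti-automorphism) and that it is an involution. Step (2) is immediate: $\adj^2(T_i) = \adj(T_{k-i}^{-1}) = (T_i^{-1})^{-1} = T_i$ since $\adj$ is an anti-homomorphism and $\aut^2 = \mathrm{id}$; $\adj^2(z_i) = z_i$; and $\adj^2(d_{\pm}) = d_{\pm}$ because the two powers of $q^{1/2}$ multiply to $q^{-k/2}\cdot\aut(q^{-(k+1)/2})$-type factors that combine to $q^{-k/2}q^{k/2} = 1$ once one tracks the idempotent indices carefully (the $\aut$ sends $q^{-(k+1)/2}$ to $q^{(k+1)/2}$). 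So the real content is step (1).

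For step (1) I would go through the relations \eqref{eq:hecke relns}--\eqref{eq:qphi} one at a time, using that $\adj$ reverses products and applies $\aut$ to scalars. The Hecke relations \eqref{eq:hecke relns}: applying $\adj$ to $(T_i-1)(T_i+q)=0$ gives $(T_{k-i}^{-1}+q^{-1})(T_{k-i}^{-1}-1)=0$, which is equivalent to the quadratic relation for $T_{k-i}$ after multiplying by $T_{k-i}^2$ and rescaling; the braid relation is preserved since $i \mapsto k-i$ is an involution of the Dynkin diagram and $\adj$ reverses the order of the three-term product (and $T_iT_{i+1}T_i = T_{i+1}T_iT_{i+1}$ is invariant under reversal and under $T\mapsto T^{-1}$). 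Relation \eqref{eq:T and z}: $\adj$ of $T_i^{-1}z_{i+1}T_i^{-1} = q^{-1}z_i$ becomes (reading indices mod the switch $i \leftrightarrow k-i$, $j \leftrightarrow k+1-j$) a relation of the same shape with $q^{-1}$ on the correct side; one checks the index bookkeeping matches. The commutations \eqref{eq:affine Hecke relns} are clearly preserved. For \eqref{eq:T d-}--\eqref{eq:T d+}: $\adj$ interchanges these two families of relations, since $\adj(d_-) \sim d_+$, $\adj(d_+) \sim d_-$, and $\adj(T_i) = T_{k-i}^{-1}$ converts $d_-T_i = T_id_-$ (for $1\le i\le k-2$) into $d_+T_{i'} = T_{i'+1}d_+$ (for $2 \le i' \le k-1$) after reindexing $i' = k-i$, and similarly $d_-^2 T_{k-1} = d_-^2$ maps to $T_1 d_+^2 = d_+^2$ up to the powers of $q^{1/2}$ which cancel. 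The scalar factors $q^{-k/2}$ are exactly engineered so this works; this is the point worth doing carefully.

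The genuinely delicate relations are \eqref{eq:phi}, \eqref{eq: d z}, and \eqref{eq:qphi}, because $\varphi = \frac{1}{q-1}(d_+d_- - d_-d_+)$ is not a generator. I would first compute $\adj(\varphi)$: since $\adj$ is an anti-homomorphism, $\adj(d_+d_-\e_k) = \adj(d_-)\adj(d_+) = q^{-(k+1)/2} d_+ \cdot q^{-k/2} d_- $-type products, and combining the two terms one finds $\adj(\varphi\,\e_k) = c_k\,\varphi\,\e_k$ for an explicit scalar $c_k$ (a power of $q^{1/2}$ times $\frac{\aut(q-1)}{q-1} = \frac{q^{-1}-1}{q-1} = -q^{-1}$), so $\varphi$ is, up to scalar, an eigenvector of $\adj$. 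With that in hand, relations \eqref{eq:phi} ($q\varphi d_- = d_-\varphi T_{k-1}$ and $T_1\varphi d_+ = qd_+\varphi$) get swapped by $\adj$ into each other, and \eqref{eq: d z} ($z_id_- = d_-z_i$, $d_+z_i = z_{i+1}d_+$) likewise map to each other under the index switch $i \leftrightarrow k+1-i$; one verifies the scalars match. The hardest single check is \eqref{eq:qphi}, $z_1(qd_+d_- - d_-d_+) = qt(d_+d_- - d_-d_+)z_k$: applying $\adj$ (and using $\adj(z_1\e_k) = z_k$, $\adj(z_k\e_k) = z_1$, $\adj$ of the commutator $= c_k$ times the commutator) one should land back on \eqref{eq:qphi} with $q \mapsto q^{-1}$, $t \mapsto t^{-1}$, and after clearing denominators and using $qd_+d_- - d_-d_+$ versus $d_+d_- - qd_-d_+$ identities, it reduces to an identity of scalars. \textbf{The main obstacle} I anticipate is precisely this last verification together with the consistent tracking of the half-integer powers of $q$ across all relations involving $d_\pm$; the powers $q^{-k/2}$ were chosen to make \eqref{eq:qphi} and the $\varphi$-relations symmetric, and confirming that no stray factor survives is the crux of the proof. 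Everything else is routine once the diagram-automorphism $i \mapsto k-i$, $j \mapsto k+1-j$ and the product-reversal are kept straight.
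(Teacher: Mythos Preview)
Your overall strategy matches the paper's, but there is a genuine error in your computation of $\adj(\varphi)$. You claim that $\adj(\varphi\,\e_k) = c_k\,\varphi\,\e_k$ for some scalar $c_k$, i.e.\ that $\varphi$ is (up to scalar) fixed by $\adj$. This is false. Writing $\widetilde\varphi = (q-1)\varphi = d_+d_- - d_-d_+$ and tracking the idempotents carefully, one gets
\[
\adj(d_+d_-\,\e_k)=q^{-k}d_+d_-\,\e_k,\qquad \adj(d_-d_+\,\e_k)=q^{-(k+1)}d_-d_+\,\e_k,
\]
so $\adj(\widetilde\varphi\,\e_k) = q^{-k-1}(qd_+d_- - d_-d_+)\e_k$, which is \emph{not} a scalar multiple of $\widetilde\varphi$. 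Dually, $\adj(qd_+d_- - d_-d_+)\e_k = q^{-k-1}\widetilde\varphi\,\e_k$. Thus $\adj$ swaps the two expressions $\widetilde\varphi$ and $qd_+d_- - d_-d_+$ (up to the scalar $q^{-k-1}$) rather than fixing either.

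This error matters most for the relations \eqref{eq:phi}. Because $\adj(\varphi)$ is not proportional to $\varphi$, applying $\adj$ to $q\widetilde\varphi d_- = d_-\widetilde\varphi T_{k-1}$ does \emph{not} immediately yield $T_1\widetilde\varphi d_+ = q d_+\widetilde\varphi$; instead one gets expressions involving $qd_+d_- - d_-d_+$, and an extra manipulation is required. The paper handles this by rewriting $qT_1^{-1} = T_1 + (q-1)$, which produces a correction term $(q-1)d_+d_-d_+$ on each side; these then cancel, and only after that does one recognize the other half of \eqref{eq:phi}. Your sketch would miss this step entirely. By contrast, relation \eqref{eq:qphi} is actually \emph{easier} than you suggest once the correct $\adj(\widetilde\varphi)$ is in hand, since \eqref{eq:qphi} already relates $\widetilde\varphi$ and $qd_+d_- - d_-d_+$, and $\adj$ maps it back to itself.
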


\begin{remark}
The $\Bqt$ algebra can equivalently be defined as a category with objects given by idempotents $\e_{k}$, $k \in \Z_{\geq 0}$ and morphisms $T_i, z_i, d_\pm$ between them. In this setting, the anti-involution $\adj$ in Lemma \ref{lem:involution} may be thought of as a contravariant functor $\adj: \Bqt \to \Bqt$ that is the identity on objects. 
\end{remark}

\begin{proof}
    We need to verify that $\adj$ preserves all the defining $\Bqt$ relations in equations \eqref{eq:hecke relns}--\eqref{eq:qphi}. From the definitions, it is clear that relations \eqref{eq:hecke relns}--\eqref{eq:affine Hecke relns} are maintained by $\adj$.
    
    We check \eqref{eq:T d-}, starting with $d_{-}^{2}T_{k-1} = d_{-}^{2}$. Applying $\adj$ to both sides we obtain
    \begin{align*}
    \adj(d_{-}^{2}T_{k-1}) = \adj(T_{k-1})\adj(d_{-}^{2}) = q^{-(2k+1)/2}T^{-1}_{1}d_{+}^{2}
    \text{ \quad versus \quad} 
    \adj(d_{-}^{2}) = q^{-(2k+1)/2}d_{+}^{2}
    \end{align*}
    so the result follows from \eqref{eq:T d+}. 
    To verify relation $d_{-}T_{i} = T_{i}d_{-}$ for $1 \leq i \leq k-2$ we again apply $\adj$ to both sides and obtain 
 \begin{align*}
 \adj(d_{-}T_{i})=q^{-k/2}T_{k-i}^{-1}d_{+} \overset{\eqref{eq:T d+}}{=} q^{-k/2}d_{+}T_{k-1-i}^{-1}= \adj(T_{i}d_{-})
 \end{align*} 
Checking that $\adj$ preserves \eqref{eq:T d+} and \eqref{eq: d z} is similar.

    It remains to see that $\adj$ preserves \eqref{eq:phi} and \eqref{eq:qphi}. For the sake of brevity, let us denote by $\widetilde{\varphi} = d_{+}d_{-} - d_{-}d_{+} = (q-1)\varphi$ and note that the relations in \eqref{eq:phi} are equivalent to their counterparts with $\varphi$ replaced by $\widetilde{\varphi}$. We compute $\adj(\widetilde{\varphi})$:
    \[
    \adj(\widetilde{\varphi}) =   \adj(d_{+}d_{-} - d_{-}d_{+})
    =  q^{-k/2}q^{-k/2}d_{+}d_{-} - q^{-(k+1)/2}q^{-(k+1)/2}d_{-}d_{+} 
    =  q^{-k-1}(qd_{+}d_{-} - d_{-}d_{+}). 
    \]
   Similarly,
    \[
    \begin{array}{rl}
    \adj(qd_{+}d_{-} - d_{-}d_{+}) = & q^{-1}\adj(d_{+}d_{-}) - \adj(d_{-}d_{+}) =  q^{-k-1}\widetilde{\varphi}.
    \end{array}
    \]
    Thus, equating both sides we obtain
    \[
    \adj(z_{1}(qd_{+}d_{-} - d_{-}d_{+})) = q^{-k-1}(d_{+}d_{-} - d_{-}d_{+})z_{k} \overset{\eqref{eq:qphi}}{=} q^{-1}t^{-1}q^{-k-1}z_{1}(qd_{+}d_{-} - d_{-}d_{+}) =  \adj(qt\widetilde{\varphi}z_{k})
    \]
Lastly, we verify that $\adj$ preserves \eqref{eq:phi}. Consider first equation $q\widetilde{\varphi} d_{-} = d_{-}\widetilde{\varphi} T_{k-1}$. Applying $\adj$ to the left-hand side we have,
\begin{equation}\label{eqn:annoy}
\begin{array}{rl}
\adj({q\widetilde{\varphi} d_{-}}) = & q^{-1}q^{-k/2}q^{-k}d_{+}(qd_{+}d_{-} - d_{-}d_{+})  
=   q^{-(3k+2)/2}(qd_{+}\widetilde{\varphi} + (q-1)d_{+}d_{-}d_{+})
\end{array}
\end{equation}
Conversely, applying $\adj$ to the right-hand side of this equation we obtain
\begin{align}
\adj{(d_{-}\widetilde{\varphi}T_{k-1})} = & q^{-k-1}q^{-k/2}T_{1}^{-1}(qd_{+}d_{-} - d_{-}d_{+})d_{+} \nonumber \\ 
= & q^{-(3k+2)/2}\left(qT_{1}^{-1}\widetilde{\varphi} d_{+} + (q-1)T_{1}^{-1}d_{-}d_{+}d_{+}\right)\nonumber \\
= &  q^{-(3k+2)/2}\left(T_{1}\widetilde{\varphi} d_{+} + (q-1)\widetilde{\varphi} d_{+} + (q-1)T_{1}^{-1}d_{-}d_{+}d_{+})\right) \nonumber\\
= & q^{-(3k+2)/2}\left(T_{1}\widetilde{\varphi} d_{+} + (q-1)(\widetilde{\varphi} d_{+} + d_{-}d_{+}d_{+})\right)\nonumber \\
= & q^{-(3k+2)/2}\left(T_{1}\widetilde{\varphi} d_{+} + (q-1)d_{+}d_{-}d_{+}\right)\label{eqn:more annoy}
\end{align}
\noindent where we used that $qT_{1}^{-1} = T_{1} + q - 1$ followed by $T_{1}^{-1}d_{-} = d_{-}T_{1}^{-1}$ and $T_{1}^{-1}d_{+}d_{+} = d_{+}d_{+}$. Finally, using \eqref{eq:phi} it follows that \eqref{eqn:annoy} and \eqref{eqn:more annoy} coincide. 

Instead of repeating the computation for the last relation $T_{1}\widetilde{\varphi} d_{+} = qd_{+}\widetilde{\varphi}$, let us observe first that $\adj$ is indeed an involution on the generators. This is easy to see for $z_{i}$ and $T_{i}$. Now,
\[
\adj^2(d_{-}) = \adj(q^{-k/2}d_{+}) = q^{k/2}q^{-k/2}d_{-} = d_{-},
\]

\noindent and similarly for $\adj^2(d_{+}) = d_{+}$. Thus, from \eqref{eqn:annoy} and \eqref{eqn:more annoy} we can see that $\adj$ essentially interchanges both parts of \eqref{eq:phi}. The result follows.
\end{proof}

\begin{lemma}
The anti-involution $\adj$ on $\Bqt$ extends to a $\aut$-linear anti-involution on $\Bqt^{\ext}$ by setting:
$$
\adj(\Delta_{p_m})=-\Delta_{p_m}+z_1^{m}+\ldots+z_k^m.
$$
\end{lemma}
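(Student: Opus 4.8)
The plan is to verify that the extended formula for $\adj$ on $\Bqt^{\ext}$ is compatible with the two new relations \eqref{eq:ext-Bqt-1} and \eqref{eq:ext-Bqt-2}, granting that $\adj$ already respects all relations of $\Bqt$ by the previous lemma. Since $\adj$ is declared to be a $\aut$-linear \emph{anti}-involution, the essential checks are: (i) $\adj$ squares to the identity on $\Delta_{p_m}$; (ii) $\adj$ sends the relations $[\Delta_{p_m},T_i]=[\Delta_{p_m},z_i]=[\Delta_{p_m},d_-]=0$ to valid relations; and (iii) $\adj$ sends $[\Delta_{p_m},d_+]=z_1^m d_+$ to a valid relation.

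First I would compute $\adj^2(\Delta_{p_m})$. Applying $\adj$ to $-\Delta_{p_m}+z_1^m+\cdots+z_k^m$ and using that $\adj(z_i)=z_{k+1-i}$ (so $\adj(\sum_i z_i^m)=\sum_i z_i^m$) and $\adj$ is an anti-homomorphism, one gets $\adj^2(\Delta_{p_m})=-\adj(\Delta_{p_m})+\sum z_i^m = -(-\Delta_{p_m}+\sum z_i^m)+\sum z_i^m=\Delta_{p_m}$, so $\adj$ remains an involution. Next, for (ii): applying $\adj$ (which reverses products) to $[\Delta_{p_m},z_i]=0$ gives $[\adj(z_i),\adj(\Delta_{p_m})]=0$, i.e. $[z_{k+1-i},-\Delta_{p_m}+\sum_j z_j^m]=0$, which holds since $z_{k+1-i}$ commutes with each $z_j$ and the relation $[\Delta_{p_m},z_{k+1-i}]=0$ is among \eqref{eq:ext-Bqt-1}. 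For $[\Delta_{p_m},T_i]=0$, applying $\adj$ yields $[\adj(T_i),\adj(\Delta_{p_m})]=0$, i.e. $[T_{k-i}^{-1},-\Delta_{p_m}+\sum z_j^m]=0$; here we must use that $T_{k-i}^{-1}$ commutes with $\Delta_{p_m}$ (equivalent to $[\Delta_{p_m},T_{k-i}]=0$) and that $T_{k-i}$ commutes with $z_{k-i}^m+z_{k-i+1}^m$ (from the affine Hecke relations \eqref{eq:T and z}), so it commutes with $\sum_j z_j^m$. For the relation $[\Delta_{p_m},d_-]=0$, applying $\adj$ produces (up to the scalar $q^{-(k+1)/2}$, which is central and invertible) $[\,d_+,\,-\Delta_{p_m,(k)}+\sum_{j=1}^{k}z_j^m\,]$ paired against $\Delta_{p_m,(k+1)}$-data; unwinding the idempotent bookkeeping, this becomes exactly the statement $[\Delta_{p_m},d_+]=z_1^m d_+$ read from $V_{k+1}$ to $V_k$, which is \eqref{eq:ext-Bqt-2}. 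Thus (ii) reduces cleanly to the relations we already have.

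For (iii), the relation $\Delta_{p_m}d_+-d_+\Delta_{p_m}=z_1^m d_+$ — more precisely $\e_{k+1}\Delta_{p_m}\e_{k+1}d_+\e_k - \e_{k+1}d_+\e_k\Delta_{p_m}\e_k = z_1^m d_+$ — I would apply $\adj$ to both sides. The left side becomes $\adj(d_+)\adj(\Delta_{p_m,(k+1)}) - \adj(\Delta_{p_m,(k)})\adj(d_+) = q^{-(k+1)/2}\bigl(d_-(-\Delta_{p_m,(k+1)}+\sum_{j=1}^{k+1}z_j^m) - (-\Delta_{p_m,(k)}+\sum_{j=1}^{k}z_j^m)d_-\bigr)$, and the right side becomes $\adj(d_+)\adj(z_1^m) = q^{-(k+1)/2}d_-\,z_{k+1}^m$. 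Cancelling $q^{-(k+1)/2}$ and rearranging, the required identity is
\[
\bigl(\Delta_{p_m}d_- - d_-\Delta_{p_m}\bigr) \;=\; \sum_{j=1}^{k+1}z_j^m\,d_- - d_-\sum_{j=1}^{k}z_j^m - d_-\,z_{k+1}^m,
\]
where the index conventions are that the $d_-$ on the left carries $\e_k d_- \e_{k+1}$. Using $[\Delta_{p_m},d_-]=0$ the left side is $0$, and on the right, since $z_j d_- = d_- z_j$ for $1\le j\le k$ by \eqref{eq: d z}, the terms $\sum_{j=1}^{k}z_j^m d_- $ and $d_-\sum_{j=1}^k z_j^m$ cancel, leaving $z_{k+1}^m d_- - d_- z_{k+1}^m$; but $z_{k+1}d_- = d_- z_{k+1}$ is again \eqref{eq: d z} (the relation $z_i d_- = d_- z_i$ holds for all admissible $i$ including $i=k+1$ in the appropriate idempotent block), so the right side vanishes too. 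Hence the relation is preserved.

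The routine but fiddly part — and the one I expect to be the main obstacle to writing it out cleanly — is the idempotent bookkeeping: $\Delta_{p_m}$ restricted to $V_k$ involves the sum $\sum_{j=1}^k z_j^m$ with $k$ depending on the block, and $\adj$ shifts blocks, so one must be scrupulous about \emph{which} $k$ appears in each occurrence of $\Delta_{p_m}$ and in each partial sum $z_1^m+\cdots+z_k^m$. Once the blocks are tracked correctly, every verification collapses to one of the commutation relations \eqref{eq:ext-Bqt-1}, \eqref{eq:ext-Bqt-2}, \eqref{eq:T and z}, or \eqref{eq: d z}, together with the fact that $\adj$ already respects the $\Bqt$ relations. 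I would therefore present the proof by first fixing careful notation $\Delta_{p_m}^{(k)} := \e_k\Delta_{p_m}\e_k$, then dispatching $\adj^2 = \mathrm{id}$, then the three commuting relations, and finally the $d_+$ relation as above.
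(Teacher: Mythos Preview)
Your approach is correct and is essentially the same as the paper's: verify that $\adj$ respects the extra relations \eqref{eq:ext-Bqt-1}--\eqref{eq:ext-Bqt-2} and squares to the identity on $\Delta_{p_m}$, using that $\sum_j z_j^m$ is central in the affine Hecke algebra and the commutation relations \eqref{eq: d z}. The paper organizes the check by computing $\adj(\Delta_{p_m}d_\pm)$ directly and matching, while you apply $\adj$ to each relation and verify the image; these are the same computation.

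One concrete slip to fix in (iii): with $d_- = \e_k d_-\e_{k+1}$, the correct rearrangement is
\[
[\Delta_{p_m},d_-] \;=\; \sum_{j=1}^{k} z_j^m\, d_- \;-\; d_-\sum_{j=1}^{k+1} z_j^m \;+\; d_-\,z_{k+1}^m,
\]
not what you displayed (your $k$ and $k+1$ are swapped). Then the two $d_-\,z_{k+1}^m$ terms cancel on the nose, and the rest vanishes by $z_j d_- = d_- z_j$ for $1\le j\le k$. In particular, you should not invoke ``$z_{k+1}d_- = d_- z_{k+1}$'': there is no $z_{k+1}$ on $V_k$, so that equation is not even well-formed. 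This is exactly the idempotent bookkeeping you flagged as delicate; once repaired, the argument is complete.
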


\begin{proof}
Since the $\Delta_{p_m}$ operators commute with all $z_i$ and amongst themselves, the operators $\adj(\Delta_{p_m})$ commute amongst themselves as well. Furthermore, $z_1^m+\ldots+z_k^m$ is central in the affine Hecke algebra, so $\adj(\Delta_{p_m})$ commutes with $T_i$ and $z_i$ as well. 

Checking the commutation relations with $d_+$ and $d_-$, yields
\begin{align*}
\adj(\Delta_{p_m}d_{-})&=q^{-k/2}d_{+}(-\Delta_{p_m}+z_1^{m}+\ldots+z_{k-1}^m)
\\
&=-q^{-k/2}d_{+}\Delta_{p_m}+q^{-k/2}(z_2^{m}+\ldots+z_k^{m})d_+
\\
&=-q^{-k/2}\Delta_{p_m}d_{+}+q^{-k/2}z_1^{m}d_{+}+q^{-k/2}(z_2^{m}+\ldots+z_k^{m})d_+=\adj(d_{-}\Delta_{p_m}).
\end{align*}
Similarly, 
\begin{align*}
\adj(\Delta_{p_m}d_{+})&=q^{-(k+1)/2}d_{-}(-\Delta_{p_m}+z_1^{m}+\ldots+z_{k+1}^m)
\\
&=q^{-(k+1)/2}(-\Delta_{p_m}+z_1^m+\ldots+z_k^m)d_{-}+q^{-(k+1)/2}d_{-}z_{k+1}^m=\adj(d_+\Delta_{p_m}+z_1^{m}d_+).
\end{align*}
Finally, to see $\adj$ is an involution on the generators, we note that since 
$
\adj(z_1^m+\ldots+z_k^m)=z_1^m+\ldots+z_k^m,
$
then $\adj^2(\Delta_{p_m})=\Delta_{p_m}$.
\end{proof}

\subsection{The Duality Functor} The anti-involution $\adj$ allows us to define a duality functor on the category of calibrated $\Bqt^{\ext}$ representations. For a $\C(q,t)$-vector space $V$, denote by $V^{*}$ its $\aut$-dual; that is, $V^{*}$ is the set of $\aut$-linear maps $V \to \C(q,t)$. Notice that if $V$ is a $\Bqt^{\ext}$-representation then $V^{*}$ becomes a $\Bqt^{\ext}$-representation by setting, for each $b \in \Bqt^{\ext}$,
\[
(b.f)(v) = f(\adj(b)v). 
\]
We will restrict to representations that decompose as a direct sums,
\begin{equation}\label{eq:rep direct sum}
V = \bigoplus_{k \geq 0}V_{k}, \qquad V_{k} := \e_{k}V.
\end{equation}
Abusing the notation, we will denote:
\begin{equation}\label{eq: duality bad}
V^{*} := \bigoplus_{k \geq 0}V_{k}^{*}
\end{equation}
which is naturally a $\Bqt^{\ext}$-representation since $\adj(\e_{k}) = \e_{k}$ for all $k$. Note, however, that even when restricting to the class of representations in \eqref{eq:rep direct sum},  the duality functor mapping to \eqref{eq: duality bad} need not be an involution (i.e. $(V^*)^*$ need not equal $V$) since the spaces $V_{k}$ are, in general, infinite-dimensional $\C(q,t)$-vector spaces. To overcome this problem, we will restrict to the class of \emph{calibrated} $\Bqt^{\ext}$-representations (see Definition \ref{def: calibrated}). Thus, from now on, $V$ will denote a calibrated $\Bqt^{\ext}$-representation.

Abusing the notation again, we will denote 
\begin{equation}\label{eq: duality good}
V^{*} := \bigoplus_{k \geq 0}(V_{k}^{*})_{l.f.}
\end{equation}
where $(V_{k}^{*})_{l.f.}$ is the set of vectors where the action of the subalgebra generated by $z_1, \dots, z_k$ and Delta operators is locally finite. It is easy to see that if $\{v_{\lambda}\}$ is a basis of $V_{k}$ consisting of simultaneous eigenvectors for $z_1, \dots, z_k$ and the Delta operators $\Delta_{p_{i}}$, then the $\aut$-dual basis $\{v_{\lambda}^{*}\}$ is a basis of $(V_{k}^{*})_{l.f.}$. In particular, the functor sending $V \mapsto V^*$ from \eqref{eq: duality good} is an involution on the category of calibrated representations.

\subsection{Dual Poset Representations} Now, let $E$ be an excellent weighted poset (see Def. \ref{def: excellent new}) with associated representation $V = V(E)$. Let us recall that $V_{k}(E)$ has a basis given by the set of good chains
\[
V_{k} = \bigoplus \C(q,t)[\lambda;\underline{w}]
\quad
\mathrm{so\ that}
\quad
(V_{k}^{*})_{l.f.} = \bigoplus \C(q,t)[\lambda; \underline{w}]^{*}. 
\]
We can explicitly describe the action of $\Bqt$ on $V(E)^{*}$.

\begin{proposition}
\label{prop: dual operators}
The space $V(E)^{*}$ is a $\Bqt$-module with action given by: 
 \begin{align*}
 z_{i}[\lambda;\underline{w}]^*& = \aut(w_{k-i+1})[\lambda; \underline{w}]^*,
\\
T_{i}[\lambda; \underline{w}]^* &= \begin{cases}  \frac{(q-1)\aut(w_{k-i})}{\aut(w_{k-i+1}) - \aut(w_{k-i})}[\lambda; \underline{w}]^{*} + \frac{q\aut(w_{k-i+1}) - \aut(w_{k-i})}{\aut(w_{k-i+1}) - \aut(w_{k-i})}[\lambda;s_{k-i}(\underline{w})]^* &; \; \text{if} \; s_{k-i} \; \text{is admissible}. \\ \frac{(q-1)\aut(w_{k-i})}{\aut(w_{k-i+1}) - \aut(w_{k-i})}[\lambda; \underline{w}]^{*} &; \; \text{else} \end{cases} 
\\ 
d_{+}[\lambda; \underline{w}]^{*} &= q^{(k+1)/2}\sum_{\lambda = \mu \cup x}[\mu; x,\underline{w}]^{*} 
\\
d_{-}[\lambda; \underline{w}]^{*} &=  q^{k/2}\aut\Bigl(c(\lambda; \underline{w})\Bigr)[\lambda; w_k, \dots, w_2]^{*}.
 \end{align*}

\end{proposition}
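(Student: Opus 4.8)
The plan is to compute the action of each generator of $\Bqt^{\ext}$ on a dual basis vector $[\lambda;\underline{w}]^{*}$ directly from the defining rule $(b.f)(v) = f(\adj(b)v)$, using the explicit formulas for $\adj$ from Lemma \ref{lem:involution} together with the explicit $\Bqt^{\ext}$-action on $V(E)$ from Definition \ref{def: calibrated from excellent} (and its consequence, the formula for $\varphi$ and for $d_{+}$ via edge functions $c(\lambda;\underline{w},x)$ as in \eqref{eq: c from edges new}). For each generator $b$, I would pair $b.[\lambda;\underline{w}]^{*}$ against an arbitrary basis vector $[\mu;\underline{v}] \in V_{k'}(E)$ of the appropriate degree, obtaining $[\lambda;\underline{w}]^{*}(\adj(b)[\mu;\underline{v}])$, expand $\adj(b)[\mu;\underline{v}]$ in the good-chain basis, and read off the coefficient of $[\lambda;\underline{w}]$.

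First I would handle $z_{i}$: since $\adj(z_i) = z_{k+1-i}$, we get $(z_i.[\lambda;\underline{w}]^{*})([\mu;\underline{v}]) = [\lambda;\underline{w}]^{*}(z_{k+1-i}[\mu;\underline{v}]) = [\lambda;\underline{w}]^{*}(\aut(v_{k+1-i})[\mu;\underline{v}])$; here one must be careful that the scalar $\aut(v_{k+1-i})$ is pulled out with the $\aut$ (since dual basis vectors are $\aut$-linear functionals), which produces $\aut(w_{k-i+1})[\lambda;\underline{w}]^{*}$ as claimed. Next for $T_i$: $\adj(T_i) = T_{k-i}^{-1}$, so I would first record the matrix of $T_{k-i}^{-1}$ in the good-chain basis — obtained by inverting the $2\times 2$ block \eqref{eq: T Ram new}, noting that admissibility of $s_{k-i}$ is symmetric and that $T_{k-i}^{-1}$ has the same off-diagonal support — then transpose-and-$\aut$. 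The diagonal entry of $T_{k-i}^{-1}$ acting on $[\mu;\underline{v}]$ contributes to the $[\lambda;\underline{w}]^{*}$ coefficient, and the off-diagonal entry of $T_{k-i}^{-1}$ on $[\mu;s_{k-i}\underline{v}]$ contributes the $[\lambda;s_{k-i}\underline{w}]^{*}$ term; a short computation with $q T_{k-i}^{-1} = T_{k-i} + q - 1$ converts the coefficients into the stated form in the variables $\aut(w_{k-i}), \aut(w_{k-i+1})$. For $d_{+}$: $\adj(d_{+}) = q^{-(k+1)/2} d_{-}$ when acting $V_{k+1}^{*} \to V_{k}^{*}$, wait — more precisely $d_{+}: V_k(E)^{*} \to V_{k+1}(E)^{*}$ is dual to $d_{-}\cdot q^{-(k+1)/2}$ acting $V_{k+1}(E) \to V_{k}(E)$, hmm, I need to track indices: $\adj(\e_{k+1}d_{+}\e_k) = q^{-(k+1)/2}\e_k d_{-}\e_{k+1}$, so $(d_{+}.[\lambda;\underline{w}]^{*})([\mu;\underline{v}]) = [\lambda;\underline{w}]^{*}(q^{-(k+1)/2} d_{-}[\mu;\underline{v}])$ for $[\mu;\underline{v}]$ a good chain of length $k+1$; since $d_{-}[\mu;\underline{v}] = [\mu\cup v_{k+1}; v_k,\dots,v_1]$, the coefficient is $\aut(q^{-(k+1)/2}) = q^{(k+1)/2}$ exactly when $\mu\cup v_{k+1} = \lambda$ and $(v_k,\dots,v_1) = \underline{w}$, i.e. when $[\mu;\underline{v}] = [\mu; v_{k+1}, \underline{w}]$ with $\mu\cup v_{k+1} = \lambda$, which is the sum over decompositions $\lambda = \mu\cup x$ of $[\mu; x, \underline{w}]^{*}$. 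Finally $d_{-}$: $\adj(\e_{k-1}d_{-}\e_k) = q^{-k/2}\e_k d_{+}\e_{k-1}$, so $(d_{-}.[\lambda;\underline{w}]^{*})([\mu;\underline{v}]) = [\lambda;\underline{w}]^{*}(q^{-k/2}d_{+}[\mu;\underline{v}])$ where $[\mu;\underline{v}]$ has length $k-1$; by \eqref{eq:d+ rep}/\eqref{eq: c from edges new}, $d_{+}[\mu;\underline{v}] = \sum_x c(\mu;\underline{v},x)[\mu;\underline{v},x]$, and the coefficient of $[\lambda;\underline{w}]$ forces $\mu = \lambda$, $\underline{v} = (w_{k},\dots,w_2)$, $x = w_1$, giving the scalar $\aut(q^{-k/2} c(\lambda; w_k,\dots,w_2,w_1)) = q^{k/2}\aut(c(\lambda;\underline{w}))$ — using the notation $c(\lambda;\underline{w}) := c(\lambda; w_k,\dots,w_1)$ as in Lemma \ref{lem: qphi}.

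The main obstacle I anticipate is bookkeeping rather than conceptual: one must consistently track that dualizing reverses the order of the weight sequence $\underline{w} = (w_k,\dots,w_1)$ (so that $z_i$ on the dual corresponds to $z_{k+1-i}$ and hence picks out $w_{k-i+1}$), keep the half-integer powers of $q$ straight as the idempotent index changes under $d_{\pm}$, and handle the $\aut$-semilinearity carefully — every scalar coefficient that appears in the primal action gets hit by $\aut$ when transported to the dual. A secondary point worth verifying is that the formulas indeed land in $(V_k^{*})_{l.f.}$, i.e. that only finitely many terms appear: for $d_{+}$ this is the sum over $x$ with $\lambda = \mu\cup x$, finite by local finiteness of $E$ (only finitely many $\mu$ are covered by $\lambda$), and for $d_{-}$ and $T_i$ the output is manifestly a finite combination. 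Once these dictionary points are pinned down, each of the four computations is a one- or two-line verification, and the proposition follows; strictly speaking one should also remark that $\adj$ being an anti-involution guarantees these formulas assemble into an honest $\Bqt^{\ext}$-action, which is automatic from Lemma \ref{lem:involution} and the preceding discussion.
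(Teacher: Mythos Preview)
Your approach is exactly the one taken in the paper: pair $b.[\lambda;\underline{w}]^{*}$ against an arbitrary good chain $[\mu;\underline{v}]$, expand $\adj(b)[\mu;\underline{v}]$ in the good-chain basis using Definition \ref{def: calibrated from excellent}, and pull scalars through the $\aut$-semilinear dual pairing. The paper treats $z_i$ and $T_i$ as obvious (using $\adj(T_i)=T_{k-i}^{-1}=q^{-1}(T_{k-i}+q-1)$) and writes out exactly your $d_{\pm}$ computations; your extra remarks about finiteness and index bookkeeping are correct but omitted in the paper.

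One small slip to clean up: in your $z_i$ computation you wrote $z_{k+1-i}[\mu;\underline{v}] = \aut(v_{k+1-i})[\mu;\underline{v}]$, but the action in $V(E)$ gives $v_{k+1-i}[\mu;\underline{v}]$; the $\aut$ only appears when that scalar is pulled through the $\aut$-linear functional.
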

\begin{proof}
The formula for $z_{i}$ is obvious. The formula for $T_{i}$ is analogous and follows from the identity $\adj(T_{i}) = T_{k-i}^{-1} = q^{-1}(T_{k-i} + q -1)$. Hence, we verify only the actions of  $d_{+}$ and $d_{-}$. Beginning with $d_{+}$ we have,
\begin{align*}
 d_{+}[\lambda; \underline{w}]^{*}\Bigl([\mu;\underline{v}]\Bigr)   &=  [\lambda; \underline{w}]^{*}\Bigl(\adj(d_{+})[\mu; \underline{v}]\Bigr)= 
 [\lambda; \underline{w}]^{\ast}\Bigl(q^{-s/2}d_{-}[\mu; \underline{v}]\Bigr) 
 \\ &=  q^{s/2}[\lambda; \underline{w}]^{*}\Bigl([\mu\cup v_s; v_{s-1}, \dots, v_{1}]\Bigr) 
 =  q^{s/2}\delta_{s-1, k}\delta_{\lambda, \mu \cup v_s}\delta_{[\underline{w}], [v_{s-1}, \dots, v_{1}]} 
\end{align*}

\noindent Hence, we conclude that
\[
d_{+}[\lambda; \underline{w}]^{*} = q^{(k+1)/2}\sum_{\lambda = \mu \cup x}[\mu;x,\underline{w}]^{*}.
\]
Continuing with $d_{-}$, we follow a similar strategy:
\begin{align*}
 d_{-}[\lambda; \underline{w}]^{*}\Bigl([\mu; \underline{v}]\Bigr) &=  [\lambda; \underline{w}]^{*}\Bigl(\adj(d_{-})[\mu; \underline{w}]\Bigr) =
 q^{(s+1)/2}[\lambda; \underline{w}]^{*}\Bigl(d_{+}[\mu; \underline{v}]\Bigr) \\
 &= q^{(s+1)/2}[\lambda; \underline{w}]^{*}\Bigl(\sum_{x}c(\mu; \underline{v},x)[\mu;\underline{v},x]\Bigr) 
=  \delta_{k-1, s}q^{(s+1)/2}\aut\Bigl(c(\mu; \underline{v},x)\Bigr)\delta_{\lambda, \mu}
\delta_{[\underline{w}],[\underline{v},x]}
\end{align*}
and obtain,
\[
d_{-}[\lambda; \underline{w}]^{*} = q^{k/2}\aut \Bigl(c(\lambda; \underline{w})\Bigr)[\lambda; w_k, \dots, w_2]^{*}.
\]
\end{proof}

Given a $\Bqt^{\ext}$-homomorphism $f: V(E_1) \to V(E_2)$, we can also give a formula for the dual map $f^{*}: V(E_2)^{*} \to V(E_1)^{*}$ in terms of the dual bases $[\lambda; \underline{w}]^{*}$.

Recall from Section \ref{subsec:Homomorphisms} that given any poset $E$ we defined a new poset $E_{\mathbf{0}} := E\sqcup\{\mathbf{0}\}$, with $\lambda < \mathbf{0}$ for all $\lambda \in E$, such that for any $\Bqt^{ext}$-module homomorphism $f:V(E) \to V(E')$ there exists a map $F: E_{\bf0} \to E'_{\bf0}$ satisfying certain properties and vice versa (see Theorem \ref{thm: hom}). 

\begin{lemma}\label{lem: dual hom}
Let $E_1, E_2$ be excellent posets, and $f: V(E_1) \to V(E_2)$ a $\Bqt^{\ext}$-homomorphism. Assume that the edge functions on $E_1$ and $E_2$ are compatible as in Lemma \ref{lem: compatible edge}, and that $f$ is given as in Proposition \ref{prop:hom}. Then, $f^{*}: V(E_2)^{*} \to V(E_1)^{*}$ is given by:
\[
f^{*}\Bigl([\mu; \underline{w}]^{*}\Bigr) = \begin{cases} \aut(\alpha_{\lambda})[\lambda; \underline{w}]^{*} & ; \text{if there exists} \; \lambda \in E_1 \; \text{such that} \; F(\lambda) = \mu,  \; \\
& \;\; \;\; \lambda \cup \underline{w} \in E_1, \; \text{and} \;\; F(\lambda \cup \underline{w}) \neq \mathbf{0}, \\
0 & ; \; \text{else}.\end{cases}
\]
In particular, if there is a $\lambda$ for which $F(\lambda) = \mu$, then such a $\lambda$ is unique by the simple spectrum condition. 
\end{lemma}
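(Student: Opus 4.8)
\textbf{Proof plan for Lemma \ref{lem: dual hom}.}

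The plan is to compute $f^*([\mu;\underline{w}]^*)$ directly by evaluating it on the basis vectors $[\nu;\underline{v}]$ of $V(E_1)$, using the defining property $f^*(g)(v) = g(f(v))$ and the explicit formula for $f$ from Proposition \ref{prop:hom}. First I would recall that for a good chain $[\nu;\underline{v}]$ in $E_1$ we have, by \eqref{eq: hom},
\[
f([\nu;\underline{v}]) = \begin{cases}\alpha_\nu[F(\nu);\underline{v}] & ; \text{ if } F(\nu\cup\underline{v})\neq\mathbf{0},\\ 0 & ;\text{ else},\end{cases}
\]
where $\alpha_\nu\in\C(q,t)^*$ is the scalar with $f([\nu])=\alpha_\nu[F(\nu)]$. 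Then I would write
\[
f^*\bigl([\mu;\underline{w}]^*\bigr)\bigl([\nu;\underline{v}]\bigr) = [\mu;\underline{w}]^*\bigl(f([\nu;\underline{v}])\bigr),
\]
and observe that this is nonzero only when $F(\nu\cup\underline{v})\neq\mathbf{0}$ and $[F(\nu);\underline{v}] = [\mu;\underline{w}]$ as chains in $E_2$, in which case it equals $[\mu;\underline{w}]^*(\alpha_\nu[\mu;\underline{w}]) = \aut(\alpha_\nu)$. (Here I must be careful that $[\mu;\underline{w}]^*$ is the \emph{$\aut$-dual} functional, so pairing it against $\alpha_\nu$ times the chain produces $\aut(\alpha_\nu)$, not $\alpha_\nu$; this is the only place the $\aut$-twist enters.) The equality of chains $[F(\nu);\underline{v}]=[\mu;\underline{w}]$ forces $F(\nu)=\mu$ and $\underline{v}=\underline{w}$ (as sequences, since a chain records both its bottom element and its ordered sequence of weights). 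Hence $f^*([\mu;\underline{w}]^*)$ is supported on those dual basis vectors $[\nu;\underline{w}]^*$ with $F(\nu)=\mu$, $\nu\cup\underline{w}\in E_1$, and $F(\nu\cup\underline{w})\neq\mathbf{0}$, with coefficient $\aut(\alpha_\nu)$.

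Next I would address uniqueness of such a $\nu$: if $F(\nu)=F(\nu')=\mu$ with $\nu,\nu'\in E_1$, then by Theorem \ref{thm: hom}(3) we get $p_m(\nu)=p_m(\mu)=p_m(\nu')$ for all $m>0$, and the simple spectrum condition on the weighted poset $E_1$ forces $\nu=\nu'$. Thus at most one term survives and the piecewise formula in the statement follows, recording $\aut(\alpha_\lambda)$ when such a $\lambda$ exists with $\lambda\cup\underline{w}\in E_1$ and $F(\lambda\cup\underline{w})\neq\mathbf{0}$, and $0$ otherwise. One small bookkeeping point to check: when $\nu$ with $F(\nu)=\mu$ exists but $\nu\cup\underline{w}\notin E_1$ (i.e.\ $[\nu;\underline{w}]$ is not a chain, so not a basis vector) or $F(\nu\cup\underline{w})=\mathbf{0}$, the pairing above is identically zero on all basis vectors, giving $f^*([\mu;\underline{w}]^*)=0$ — consistent with the \lq\lq else\rq\rq\ branch.

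I do not expect a serious obstacle here; the lemma is essentially a transpose computation once Proposition \ref{prop:hom} is in hand. The one subtlety worth stating carefully, rather than a genuine difficulty, is keeping track of the $\aut$-linearity: since $V(E_i)^*$ is the $\aut$-dual and $f^*$ is defined via $f^*(g)(v)=g(f(v))$ with $f$ a $\Bqt^{\ext}$-homomorphism (hence $\C(q,t)$-linear, not $\aut$-linear), the scalar $\alpha_\lambda$ picked up by $f$ reappears as $\aut(\alpha_\lambda)$ after transposing through the $\aut$-dual pairing. I would make this explicit by writing out the pairing $[\mu;\underline{w}]^*(c\cdot[\mu;\underline{w}]) = \aut(c)$ for $c\in\C(q,t)$ once, and everything else is routine.
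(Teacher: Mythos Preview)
Your proposal is correct and is exactly the direct computation the paper has in mind; indeed, the paper's own proof consists of the single sentence ``The result follows from direct computation.'' Your careful tracking of the $\aut$-linearity of the dual pairing (so that $\alpha_\lambda$ becomes $\aut(\alpha_\lambda)$) and the use of simple spectrum for uniqueness are the only points requiring any care, and you have handled both.
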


\begin{proof}
The result follows from direct computation.
\end{proof}

Let us now produce a poset $E^{\vee}$ for which $V(E)^{*} \cong V(E^{\vee})$. 

\begin{definition} Given a poset $E$, define the \newword{dual poset $E^{\vee}$} as follows. 
The nodes $\lambda^{\vee}$ of $E^{\vee}$ are in bijection with the nodes $\lambda$ of $E$, but the order is reversed:
$$
\lambda^{\vee}\prec \mu^{\vee} \Leftrightarrow \lambda\succ \mu.
$$
The weights for $E^{\vee}$ are defined by
$$
p_m(\lambda^{\vee})=-\theta(p_m(\lambda)).
$$
\end{definition}

Clearly, $E^{\vee}$ is locally finite and graded by $|\lambda^{\vee}|=-|\lambda|$. If $\mu$ covers $\lambda$, then $\mu=\lambda\cup x$ and
$$
p_m(\mu^{\vee})=-\theta(p_m(\mu))=-\theta(p_m(\lambda))-\theta(x)^m=p_m(\lambda^{\vee})-\theta(x)^m,
$$
so that $\lambda^{\vee}=\mu^{\vee}\cup \theta(x)$.

\begin{lemma}
Assume $E$ is excellent. Then, $E^{\vee}$ is excellent as well. 
\end{lemma}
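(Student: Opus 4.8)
The statement to prove is that if $E$ is an excellent weighted poset then so is its dual $E^{\vee}$. The strategy is to check the two defining conditions of Definition~\ref{def: excellent new} directly, by transporting them through the order-reversing, weight-negating-and-$\aut$-twisting bijection $\lambda \mapsto \lambda^{\vee}$. The first observation, which I would record explicitly, is the one already noted just before the lemma: if $\mu = \lambda \cup x$ in $E$, then $\lambda^{\vee} = \mu^{\vee} \cup \aut(x)$ in $E^{\vee}$. In other words, covering relations in $E^{\vee}$ are exactly the reverses of those in $E$, and the addable weight in $E^{\vee}$ attached to the edge $\mu^{\vee} \to \lambda^{\vee}$ is $\aut$ applied to the addable weight of the edge $\lambda \to \mu$ in $E$.

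The key translation step is then: a $2$-step chain $[\nu; x, y]$ in $E^{\vee}$ corresponds precisely to a $2$-step chain $[\sigma; \aut^{-1}(y), \aut^{-1}(x)]$ in $E$, where $\sigma$ is the top of that chain in $E$ (i.e. $\nu = \sigma^{\vee}$ up-side down). Concretely, if $\nu \xrightarrow{x} \nu\cup x \xrightarrow{y} \nu \cup x \cup y$ is a chain in $E^{\vee}$, write $\sigma = (\nu \cup x \cup y)$ regarded back in $E$; then in $E$ one has $\sigma \to (\text{something}) \to \nu$ with the two edge-weights being $\aut^{-1}(y)$ (for the edge out of $\sigma$) and $\aut^{-1}(x)$ (for the edge into $\nu$). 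So $[\sigma;\, \aut^{-1}(y),\, \aut^{-1}(x)]$ is a chain in $E$. Now I would run the excellence conditions for $E$ on this chain and push the conclusions back. For condition~(1): excellence of $E$ gives $\aut^{-1}(x) \neq \aut^{-1}(y)$ and $\aut^{-1}(x) \neq (qt)^{\pm 1}\aut^{-1}(y)$; applying $\aut$ (which is a field automorphism sending $qt \mapsto (qt)^{-1}$) yields $x \neq y$ and $x \neq (qt)^{\pm 1} y$, which is condition~(1) for $E^{\vee}$. For condition~(2): excellence of $E$ says exactly one of $\aut^{-1}(y) = q\aut^{-1}(x)$, or $\aut^{-1}(y) = t\aut^{-1}(x)$, or $[\sigma;\, \aut^{-1}(x),\, \aut^{-1}(y)]$ is a chain in $E$ — wait, one must be careful about which of $x,y$ plays which role. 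Here the chain in $E$ coming from $[\nu;x,y]$ has the weight $\aut^{-1}(y)$ \emph{first} (adjacent to $\sigma$) and $\aut^{-1}(x)$ \emph{second}; so condition~(2) for $E$ applied to the $2$-chain $[\,\cdot\,;\,\aut^{-1}(y),\,\aut^{-1}(x)]$ reads: exactly one of $\aut^{-1}(x) = q\aut^{-1}(y)$, $\aut^{-1}(x) = t\aut^{-1}(y)$, or the reversed chain $[\,\cdot\,;\,\aut^{-1}(x),\,\aut^{-1}(y)]$ lies in $E$ holds. Applying $\aut$ and reading the reversed-chain-in-$E$ condition as \lq\lq$[\nu; y, x]$ is a chain in $E^{\vee}$\rq\rq, this becomes: exactly one of $x = q^{-1}y$, $x = t^{-1}y$, or $[\nu;y,x]$ is a chain in $E^{\vee}$.

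The mild subtlety — and the one point I would slow down on — is reconciling the \lq\lq$q^{-1}, t^{-1}$\rq\rq\ that appear here with the \lq\lq$q, t$\rq\rq\ in Definition~\ref{def: excellent new}. This is resolved using Lemma~\ref{lem: no inverse new}: in an excellent poset, a $2$-step chain $[\lambda;x,y]$ never has $y = q^{-1}x$ or $y = t^{-1}x$. Equivalently, the three alternatives in Definition~\ref{def: excellent new}(2) can be restated as: exactly one of $y = q^{\pm1}x$ happens via $y=qx$, etc. So when the translation produces $x = q^{-1}y$, i.e. $y = qx$ — good, that is the first alternative of Definition~\ref{def: excellent new}(2) for $E^{\vee}$ directly; similarly $x = t^{-1}y$ means $y = tx$. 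Thus the three cases match up exactly with $y = qx$, $y = tx$, $[\nu;y,x]$ a chain, and \lq\lq exactly one holds\rq\rq\ transports verbatim. (One should double check the exclusivity too: e.g. $y = qx$ and $y = tx$ can't both hold since $q \neq t$, and $y = qx$ forces $[\nu;y,x]$ not to be a chain by the same Lemma~\ref{lem: no inverse new} logic applied in $E^{\vee}$ — but excellence of $E$ already guarantees exclusivity, so this is automatic.) I expect the main obstacle to be purely bookkeeping: keeping straight which variable is \lq\lq inner\rq\rq\ vs \lq\lq outer\rq\rq\ along a chain under order reversal, and making sure the $\aut$-twist is applied consistently to every weight; once the dictionary $[\nu;x,y]_{E^{\vee}} \leftrightarrow [\,\cdot\,;\,\aut^{-1}(y),\,\aut^{-1}(x)]_{E}$ is pinned down, both conditions follow mechanically.

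Finally I would remark that the simple-spectrum condition for $E^{\vee}$ is immediate since $\aut$ is injective, and local finiteness and the grading were already observed above, so $E^{\vee}$ genuinely is a weighted poset and the excellence verification just completed finishes the lemma.
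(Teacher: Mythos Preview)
Your proposal is correct and follows essentially the same route as the paper's proof: translate a $2$-step chain $[\nu;x,y]$ in $E^{\vee}$ to the reversed chain $[\mu;\aut(y),\aut(x)]$ in $E$, apply excellence of $E$, and push the three alternatives of Definition~\ref{def: excellent new}(2) back through $\aut$. The only difference is cosmetic: your detour through Lemma~\ref{lem: no inverse new} is unnecessary, since (as you yourself observe a sentence later) the relation $\aut(x) = q\,\aut(y)$ already reads $x = q^{-1}y$, i.e.\ $y = qx$, which is \emph{exactly} the first alternative for $E^{\vee}$ with no reconciliation needed --- the paper simply records this equivalence directly and moves on.
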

\begin{proof}
Let $[\lambda^{\vee}; x,y]$ be a two-step chain in $E^{\vee}$:
\[
\lambda^{\vee} \to \lambda^{\vee}\cup x \to \lambda^{\vee} \cup x \cup y =: \mu^{\vee}
\]
This corresponds to the following chain in $E$:
\[
\mu \to \mu \cup \aut(y) \to \mu \cup \aut(y) \cup \aut(x).
\]
Since $E$ is excellent, then $\aut(x) \neq \aut(y)$ and $\aut(x) \neq (qt)^{\pm 1} \aut(y)$, so $y \neq x$ and $y \neq (qt)^{\pm 1}x$. Thus, Condition (1) in Definition \ref{def: excellent new} is satisfied in $E^{\vee}$. 

Now, $\aut(x) = q\aut(y)$ if and only if $y = qx$, $\aut(x) = t\aut(y)$ if and only if $y = tx$; likewise, $[\mu; \aut(x),\aut(y)]$ is a chain in $E$ if and only if $[\lambda^{\vee}; x,y]$ is a chain in $E^{\vee}$. From here, it follows that Condition (2) in Definition \ref{def: excellent new} is satisfied in $E^{\vee}$.  Hence, $E^{\vee}$ is excellent.
\end{proof}

\begin{lemma}
\label{lem: c dual}
Assume that $\mu=\lambda\cup x$, so that $\lambda^{\vee}=\mu^{\vee}\cup \aut(x)$. Then the coefficients 
\begin{equation}
\label{eq: c dual}
c_{E^{\vee}}(\mu^{\vee};\aut(x)):=\aut(c_{E}(\lambda;x))
\end{equation}
satisfy \eqref{eq: monodromy}.
\end{lemma}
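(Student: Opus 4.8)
The plan is to verify \eqref{eq: monodromy} for the coefficients $c_{E^{\vee}}$ directly from the corresponding identity for $c_E$, using that $\aut$ is a field automorphism. Suppose $[\lambda^{\vee};x,y]$ and $[\lambda^{\vee};y,x]$ are both chains in $E^{\vee}$. Unwinding the definition of $E^{\vee}$, this corresponds to a pair of two-step chains in $E$ of the form $[\mu;\aut(x),\aut(y)]$ and $[\mu;\aut(y),\aut(x)]$, where $\mu^{\vee}=\lambda^{\vee}\cup x\cup y$, i.e. $\mu = \lambda \cup x \cup y$ in $E$ (here I use that $\aut$ is an involution, so applying $\aut$ twice returns the original weight). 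In particular $\lambda^{\vee}\cup x = \mu^{\vee}\cup\aut(y)$ and $\lambda^{\vee}\cup y = \mu^{\vee}\cup\aut(x)$, so by \eqref{eq: c dual} one has $c_{E^{\vee}}(\lambda^{\vee};x) = \aut(c_E(\mu\cup\aut(y);\aut(x)))$, $c_{E^{\vee}}(\lambda^{\vee}\cup x;y) = \aut(c_E(\mu;\aut(y)))$, and similarly with $x,y$ interchanged.

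Next I would substitute these into the left-hand side of \eqref{eq: monodromy} for $E^{\vee}$ and pull $\aut$ outside the ratio (legitimate since $\aut$ is a field homomorphism), obtaining
\[
\frac{c_{E^{\vee}}(\lambda^{\vee};x)c_{E^{\vee}}(\lambda^{\vee}\cup x;y)}{c_{E^{\vee}}(\lambda^{\vee};y)c_{E^{\vee}}(\lambda^{\vee}\cup y;x)}
= \aut\!\left(\frac{c_E(\mu\cup\aut(y);\aut(x))\,c_E(\mu;\aut(y))}{c_E(\mu\cup\aut(x);\aut(y))\,c_E(\mu;\aut(x))}\right).
\]
The argument of $\aut$ on the right is precisely the left-hand side of \eqref{eq: monodromy} for $E$ applied at $\mu$ with the pair of addable weights $\aut(x),\aut(y)$ (note the chains $[\mu;\aut(x),\aut(y)]$ and $[\mu;\aut(y),\aut(x)]$ both exist, so \eqref{eq: monodromy} applies). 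Hence it equals
\[
-\frac{(\aut(x)-t\aut(y))(\aut(x)-q\aut(y))(\aut(y)-qt\aut(x))}{(\aut(y)-t\aut(x))(\aut(y)-q\aut(x))(\aut(x)-qt\aut(y))}.
\]

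Finally I would apply $\aut$ to this expression. Since $\aut(q)=q^{-1}$, $\aut(t)=t^{-1}$, each factor like $\aut(x)-t\aut(y)$ gets sent to $x - t^{-1}y$; clearing the obvious powers of $q,t$ from numerator and denominator (a short bookkeeping step: each of the six linear factors contributes a monomial prefactor, and these cancel in the ratio by homogeneity of both numerator and denominator in the pair $(x,y)$ — the total degree in $q,t$ matches), one recovers exactly
\[
-\frac{(x-ty)(x-qy)(y-qtx)}{(y-tx)(y-qx)(x-qty)},
\]
which is the right-hand side of \eqref{eq: monodromy} for $E^{\vee}$. I do not anticipate a genuine obstacle here; the only point requiring a little care is checking that the monomial prefactors arising from $\aut$ acting on the six linear factors cancel, which follows because the right-hand side of \eqref{eq: monodromy} is a ratio of two expressions each homogeneous of degree $3$ in $(x,y)$, so $\aut$ of it is again homogeneous of the same form and the $q,t$-weights are forced to agree. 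This completes the verification, and combined with Theorem \ref{thm: relations hold} it shows $V(E^{\vee}, c_{E^{\vee}})$ is a well-defined calibrated representation, which (via Proposition \ref{prop: dual operators}) will be identified with $V(E)^{*}$.
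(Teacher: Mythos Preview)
Your strategy is exactly the paper's: translate the monodromy ratio for $c_{E^{\vee}}$ into $\aut$ applied to a monodromy ratio for $c_E$, then simplify using $\aut(q)=q^{-1}$, $\aut(t)=t^{-1}$. The displayed identity
\[
\frac{c_{E^{\vee}}(\lambda^{\vee};x)c_{E^{\vee}}(\lambda^{\vee}\cup x;y)}{c_{E^{\vee}}(\lambda^{\vee};y)c_{E^{\vee}}(\lambda^{\vee}\cup y;x)}
= \aut\!\left(\frac{c_E(\mu\cup\aut(y);\aut(x))\,c_E(\mu;\aut(y))}{c_E(\mu\cup\aut(x);\aut(y))\,c_E(\mu;\aut(x))}\right)
\]
is correct.

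There are, however, bookkeeping slips you should fix. First, ``$\mu=\lambda\cup x\cup y$ in $E$'' is backwards: since $\mu^{\vee}=\lambda^{\vee}\cup x\cup y$ in $E^{\vee}$, in $E$ one has $\lambda=\mu\cup\aut(x)\cup\aut(y)$; likewise ``$\lambda^{\vee}\cup x=\mu^{\vee}\cup\aut(y)$'' should read $\lambda^{\vee}\cup x=(\mu\cup\aut(y))^{\vee}$. These do not affect your formulas. Second, and this one matters: the fraction inside $\aut$ is the \emph{left-hand side of \eqref{eq: monodromy} at $\mu$ with the ordered pair $(\aut(y),\aut(x))$}, not $(\aut(x),\aut(y))$. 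Consequently its value is the reciprocal of what you displayed, namely
\[
-\frac{(\aut(y)-t\aut(x))(\aut(y)-q\aut(x))(\aut(x)-qt\aut(y))}{(\aut(x)-t\aut(y))(\aut(x)-q\aut(y))(\aut(y)-qt\aut(x))}.
\]
Applying $\aut$ to \emph{this} expression and clearing the $q^{-1},t^{-1}$ factors gives the correct right-hand side $-\frac{(x-ty)(x-qy)(y-qtx)}{(y-tx)(y-qx)(x-qty)}$; applying $\aut$ to the expression you wrote gives its reciprocal. So the argument is sound once you swap $\aut(x)\leftrightarrow\aut(y)$ in that intermediate step.
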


\begin{proof}
Suppose that $\mu=\lambda\cup x\cup y$, so that $\lambda^{\vee}=\mu^{\vee}\cup \aut(x)\cup \aut(y)$. Then
\begin{align*}
\frac{c_{E^{\vee}}(\mu^{\vee};\aut(x))c_{E^{\vee}}(\mu^{\vee}\cup \aut(x);\aut(y))}{c_{E^{\vee}}(\mu^{\vee};\aut(y))c_{E^{\vee}}(\mu^{\vee}\cup \aut(y);\aut(x))}&=\aut\left[
\frac{c_{E}(\lambda\cup y;x)c_{E}(\lambda;y)}{c_{E}(\lambda\cup x;y)c_{E}(\lambda;x)}\right]
\\
&=-\frac{(\aut(y)-t^{-1}\aut(x))(\aut(y)-q^{-1}\aut(x))(\aut(x)-q^{-1}t^{-1}\aut(y))}{(\aut(x)-t^{-1}\aut(y))(\aut(x)-q^{-1}\aut(y))(\aut(y)-q^{-1}t^{-1}\aut(x))}
\\ &=-\frac{(\aut(x)-t\aut(y))(\aut(x)-q\aut(y))(\aut(y)-qt\aut(x))}{(\aut(y)-t\aut(x))(\aut(y)-q\aut(x))(\aut(x)-qt\aut(y))}.
\end{align*}
\end{proof}

\begin{theorem}\label{thm: duality poset}
    Let $E$ be an excellent weighted poset, then $V(E)^{*} \cong V(E^{\vee})$ as $\Bqt^{\ext}$-representations. 
\end{theorem}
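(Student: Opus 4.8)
The plan is to write down an explicit $\aut$-linear map $\Psi\colon V(E^{\vee}) \to V(E)^{*}$ on basis elements and verify it intertwines the $\Bqt^{\ext}$-action using the formulas of Proposition \ref{prop: dual operators}. First I would set up the dictionary between good chains in $E^{\vee}$ and good chains in $E$. A maximal chain in $E^{\vee}$ from $\mu^{\vee}$ going up reverses to a chain in $E$ going up into $\mu$; concretely, if $[\lambda;\underline{w}] = [\lambda; w_k,\ldots,w_1]$ is a good chain in $E$ with endpoint $\mu = \lambda\cup\underline{w}$, then reading it backwards and applying $\aut$ to each weight produces a chain $[\mu^{\vee}; \aut(w_1), \ldots, \aut(w_k)]$ in $E^{\vee}$, and one should check (using $\aut(w_i) \neq t\aut(w_j)$, equivalently $w_i\neq t^{-1}w_j$, which holds by Lemma \ref{lem: no inverse new} and Lemma \ref{lem: far quotient} applied in $E$) that this backwards chain is \emph{good} in $E^{\vee}$ precisely when $[\lambda;\underline{w}]$ is good in $E$. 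This gives a bijection between the basis of $V(E^{\vee})$ and the basis of $V(E)^{*}$, and I would define $\Psi$ by sending the $E^{\vee}$-basis vector indexed by the reversed chain to a scalar multiple $\kappa(\lambda;\underline{w})\cdot [\lambda;\underline{w}]^{*}$, where $\kappa$ is a normalizing factor (a power of $q^{1/2}$ together with a $\gamma$-type product of edge functions, cf. Lemma \ref{lem: gamma}) to be pinned down so that all four operator relations match.

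Next I would match the operators one at a time. For $z_i$: the $E^{\vee}$-action reads the $i$-th weight of the reversed chain, which is $\aut(w_{k-i+1})$ — exactly the eigenvalue in Proposition \ref{prop: dual operators}, so $z_i$ matches with $\kappa \equiv 1$ needed there. For $T_i$: both sides are governed by Ram's formula \eqref{eq: T Ram new} in the variables $\aut(w_\bullet)$, and the index shift $i \mapsto k-i$ is built into the chain reversal; since $\kappa(\lambda;\underline{w})$ should be chosen symmetric in the $w$'s (so that it is unchanged by admissible transpositions), $T_i$ will intertwine automatically. The substantive checks are $d_+$ and $d_-$. In $V(E^{\vee})$, the operator $d_+$ appends an addable weight to the \emph{right} end of the reversed chain, i.e. it prepends to the \emph{left} end of the original chain — this is $\lambda = \mu\cup x$, matching the $\sum_{\lambda = \mu\cup x}[\mu;x,\underline{w}]^{*}$ in Proposition \ref{prop: dual operators}, up to the $q^{(k+1)/2}$ factor and the edge function $c_{E^{\vee}}$ from \eqref{eq: c dual}. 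Meanwhile $d_-$ in $V(E^{\vee})$ drops the smallest element of the chain (the endpoint $\mu^{\vee}$), which corresponds in $E$ to dropping the last-added weight $w_1$ — matching $[\lambda; w_k,\ldots,w_2]^{*}$. Comparing the scalar coefficients will force a recursion on $\kappa$, which I expect to solve with $\kappa(\lambda;\underline{w}) = q^{\#}\cdot\aut(\gamma(\lambda;\underline{w}))^{\pm1}$ for an appropriate power of $q^{1/2}$ depending only on $k = |\underline{w}|$; the monodromy identity \eqref{eq: monodromy} for $c_{E^\vee}$ (Lemma \ref{lem: c dual}) together with the identities of Lemma \ref{lem: gamma} are exactly what is needed to make the $d_+$-versus-$d_-$ scalars consistent.

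The main obstacle I anticipate is bookkeeping the normalization $\kappa$ so that $d_+$ and $d_-$ \emph{simultaneously} get the right coefficients: on $V(E)^{*}$, by Proposition \ref{prop: dual operators}, $d_+$ carries a clean coefficient $q^{(k+1)/2}$ while $d_-$ carries $q^{k/2}\aut(c(\lambda;\underline{w}))$, whereas in the standard basis of $V(E^{\vee})$ it is the other way around (coefficient $1$ on $d_-$'s target after reversal, edge-function-weighted on $d_+$) — so $\Psi$ cannot be the naive identification and the rescaling must absorb precisely the asymmetry between the $V(E,c)$-normalization (all $d_-$ coefficients $=1$) and its dual. I would handle this by instead working with the rescaled basis of Theorem \ref{thm: rescale} on one side, or equivalently by choosing $\kappa(\lambda;\underline{w}) = q^{-k(k-1)/4}\,\aut(\gamma(\lambda;\underline{w}))$ and verifying via Lemma \ref{lem: gamma}(a) that the resulting $d_\pm$ coefficients transform correctly; the $T_i$-compatibility then needs the additional check that $\aut(\gamma)$ transforms under admissible transpositions the same way $\gamma$ does, which follows from Lemma \ref{lem: gamma}(b) since $\aut$ fixes the relation and the reversal swaps $w_i \leftrightarrow w_{i+1}$ into $\aut(w_{k-i+1})\leftrightarrow\aut(w_{k-i})$. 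Finally I would note the construction is functorial: combined with Lemma \ref{lem: dual hom}, the assignment $E \mapsto E^{\vee}$, $f \mapsto f^{*}$ upgrades to the claimed duality functor on calibrated representations, and $(E^{\vee})^{\vee} = E$ on the nose since $\aut^{2} = \mathrm{id}$ and the double order-reversal is trivial, so the functor is an involution, consistent with the remark preceding \eqref{eq: duality good}.
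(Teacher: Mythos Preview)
Your proposal is correct and follows essentially the same approach as the paper: both set up the bijection between good chains in $E$ and good chains in $E^{\vee}$ via chain-reversal-plus-$\aut$, and both resolve the $d_{\pm}$ normalization mismatch by passing to the rescaled basis of Theorem \ref{thm: rescale}. The paper carries this out on the $V(E^{\vee})$ side with the precise identification $q^{n_k}[\lambda^{\vee};\aut(w_1),\dots,\aut(w_k)]^{\resc} \leftrightarrow [\mu;w_k,\dots,w_1]^{*}$ where $n_k = -k(k+1)/4$ (so your tentative exponent $-k(k-1)/4$ is off by one shift in $k$), and the $d_{-}$ check reduces to exactly the computation of $\aut(c_E(\mu;\underline{w}))$ via Lemma \ref{lem: c dual} that you anticipate. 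One small cleanup: your early claim that $\kappa$ should be ``symmetric in the $w$'s'' is inconsistent with your later (correct) observation that $\gamma$ transforms by the ratio of Lemma \ref{lem: gamma}(b); the paper sidesteps this by working in the rescaled basis throughout, where the $T_i$ off-diagonal coefficient $\frac{qw_i - w_{i+1}}{w_i - w_{i+1}}$ already matches the formula in Proposition \ref{prop: dual operators} directly.
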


\begin{proof}
We have representation $V(E^{\vee})$ of $\Bqt^{ext}$, with basis the set of good chains $[\lambda^{\vee}; \aut(w_1), \dots, \aut(w_k)]$ in $E^{\vee}$. Setting $\mu^{\vee} := \lambda^{\vee} \cup \aut(w_{1}) \cup \cdots \cup \aut(w_k)$, we have a bijection between a basis of $V(E^{\vee})$ and a basis of $V(E)^{*}$:
\begin{equation}
\label{eq: dual bijection}
q^{n(k)}[\lambda^{\vee}; \aut(w_1), \dots, \aut(w_k)]^{\resc} \leftrightarrow [\mu; w_k, \dots, w_1]^{*},\quad n_k=-\frac{k(k+1)}{4}.
\end{equation}

By comparing Theorem \ref{thm: rescale} and Proposition \ref{prop: dual operators}, it is not hard to check that the actions of $\Delta_{p_m},z_i$ and $T_i$ in the respective bases agree. For $d_{-}$, by Proposition \ref{prop: dual operators} we get
$$d_{-}[\mu; w_k, \dots, w_1]^{*} =  q^{k/2}\aut\Bigl(c(\mu; w_k, \dots, w_1)\Bigr)[\mu; w_k, \dots, w_2]^{*}
$$
and, setting $\nu := \mu \cup w_k \cup \cdots \cup w_2$ (so that $\lambda = \nu \cup w_1$) by Lemma \ref{lem: c dual} we have
\begin{align*}
\aut(c_E(\mu;\underline{w}))&=\aut\left(q^kc_E(\nu;w_1)\prod_{i=2}^{k}\frac{w_1-tw_i}{w_1-qtw_i}\right) \\
&=
q^{-k}c_{E^{\vee}}(\lambda^{\vee};\aut(w_1))
\prod_{i=2}^{k}\frac{\aut(w_1)-t^{-1}\aut(w_i)}{\aut(w_1)-q^{-1}t^{-1}\aut(w_i)}
\\
&=q^{-k}c_{E^{\vee}}(\lambda^{\vee};\aut(w_1))\frac{t^{1-k}}{q^{1-k}t^{1-k}}\prod_{i=2}^{k}\frac{\aut(w_i)-t\aut(w_1)}{\aut(w_i)-qt\aut(w_1)}\\
&=
q^{-1}c_{E^{\vee}}(\lambda^{\vee};\aut(w_1))\prod_{i=2}^{k}\frac{\aut(w_i)-t\aut(w_1)}{\aut(w_i)-qt\aut(w_1)}.
\end{align*}
On the other hand, by Theorem \ref{thm: rescale}
we get
\begin{align*}
d_{-}&\left(q^{n_k}[\lambda^{\vee}; \aut(w_1), \dots, \aut(w_k)]^{\resc}\right) \\
&=q^{k-1+n_k-n_{k-1}}c(\lambda^{\vee};\aut(w_1))\prod_{i=2}^{k}\frac{\aut(w_i)-t\aut(w_1)}{\aut(w_i)-qt\aut(w_1)}q^{n_{k-1}}[\lambda^{\vee}\cup \aut(w_1);\aut(w_2)\ldots,\aut(w_k)]^{\resc}.
\end{align*}
Now note that, under \eqref{eq: dual bijection} we have
\[
q^{n_{k-1}}[\lambda^{\vee}\cup \aut(w_1); \aut(w_2), \dots, \aut(w_k)]^{\resc} = q^{n_{k-1}}[\nu^{\vee}; \aut(w_2), \dots, \aut(w_k)]^{\resc} \leftrightarrow [\mu; w_k, \dots, w_2]^{*}.
\]
Similarly, for $d_+$ by Theorem \ref{thm: rescale} we get
$$d_{+}\left(q^{n_k}[\lambda^{\vee}; \aut(w_1), \dots, \aut(w_k)]^{\resc}\right)=
q^{n_k-n_{k+1}}\sum_{x}q^{n_{k+1}}[\lambda^{\vee}; \aut(w_1), \dots, \aut(w_k),x]^{\resc}
$$
The statement now follows since
$$
k-1+n_k-n_{k-1}=\frac{k}{2}-1,\ n_k-n_{k+1}=\frac{k+1}{2}.
$$
\end{proof}

\subsection{Tensor Products of Dual Poset Representations}
 
With our definition of tensor product as in Section \ref{sec: tensor}, it is easy to see that it is not fully compatible with the duality, in particular, by Theorem \ref{thm: hom} there are no maps
$$
V(\{\bullet\}) \to V(E)\boxtimes V(E^{\vee})\to V(\{\bullet\})
$$
satisfying the definition of a rigid monoidal category \cite[2.1]{EGNO}.

Nevertheless, in this subsection we show that there is some partial compatibility between duality and tensor product.

\begin{definition}
Let $\psi(y,u)$ be a rational function with coefficients in $\C(q,t)$. We define another rational function $\psi^{\aut}$ by the equation
$$
\psi^{\aut}(\aut(y),\aut(u))=\aut\left[\psi(y,u)\right]^{-1}.
$$
\end{definition}

\begin{lemma}
\label{lem: M dual}
The function
\begin{equation}
\label{eq: M dual}
M_{\psi^{\aut},E^{\vee}}(\mu^{\vee},\aut(u))=\aut\left[M_{\psi,E}(\mu,u)\right].
\end{equation}
satisfies \eqref{eq: M psi} for the function $\psi^{\aut}$ and the dual poset $E^{\vee}$.
\end{lemma}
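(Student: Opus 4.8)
The plan is to verify Equation~\eqref{eq: M psi} directly for the candidate function defined in \eqref{eq: M dual}, using the defining property of $M_{\psi,E}$ on $E$ together with the relation between covering relations in $E$ and $E^{\vee}$. Recall that if $\mu$ covers $\lambda$ in $E$, say $\mu=\lambda\cup x$, then in $E^{\vee}$ the node $\lambda^{\vee}$ covers $\mu^{\vee}$ via $\lambda^{\vee}=\mu^{\vee}\cup\aut(x)$; so ``adding the weight $\aut(x)$ to $\mu^{\vee}$ in $E^{\vee}$'' corresponds exactly to ``removing the weight $x$ from $\lambda$ in $E$''. This is the combinatorial dictionary that makes the computation work.

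First I would fix an addable weight for $\mu^{\vee}$ in $E^{\vee}$; by the dictionary above any such weight is of the form $\aut(x)$ where $x$ is a weight that was added to obtain $\mu$ from some $\lambda$ with $\mu=\lambda\cup x$ in $E$, i.e. $\lambda^{\vee}=\mu^{\vee}\cup\aut(x)$. Then I would compute
\[
\frac{M_{\psi^{\aut},E^{\vee}}(\mu^{\vee}\cup\aut(x);\aut(u))}{M_{\psi^{\aut},E^{\vee}}(\mu^{\vee};\aut(u))}
=\frac{M_{\psi^{\aut},E^{\vee}}(\lambda^{\vee};\aut(u))}{M_{\psi^{\aut},E^{\vee}}(\mu^{\vee};\aut(u))}
=\aut\!\left[\frac{M_{\psi,E}(\lambda;u)}{M_{\psi,E}(\mu;u)}\right],
\]
using \eqref{eq: M dual} twice. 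Since $\mu=\lambda\cup x$, Equation~\eqref{eq: M psi} for $E$ gives $M_{\psi,E}(\mu;u)=M_{\psi,E}(\lambda;u)\psi(x,u)$, so the bracketed ratio is $\psi(x,u)^{-1}$, and applying $\aut$ yields $\aut[\psi(x,u)^{-1}]$. By the definition of $\psi^{\aut}$, evaluated at $y=x$, $u\mapsto u$, this is precisely $\psi^{\aut}(\aut(x),\aut(u))$. Hence the ratio equals $\psi^{\aut}(\aut(x),\aut(u))$, which is exactly what \eqref{eq: M psi} demands for the pair $(\psi^{\aut},E^{\vee})$ with addable weight $\aut(x)$.

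The only genuine subtlety is well-definedness: Equation~\eqref{eq: M psi} is a relation that must hold consistently along every covering relation, and the existence statement in Lemma~\ref{lem: M psi} guarantees a solution, but here I am exhibiting an explicit formula and must check it satisfies the relation everywhere (which the computation above does) and that the formula itself is unambiguous. The latter follows because $\aut$ is a bijection on nodes (so $\mu\mapsto\mu^{\vee}$ is a bijection $E\to E^{\vee}$) and $\aut$ is a field automorphism on $\C(q,t)$ (extended to rational functions in $u$ by $u\mapsto\aut(u)$, a bijective substitution), so \eqref{eq: M dual} unambiguously assigns to each $\mu^{\vee}\in E^{\vee}$ a rational function in $\aut(u)$, equivalently in $u$. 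I do not expect any real obstacle; the main thing to be careful about is keeping track of which variable $\aut$ is applied to, and noting that the evaluation $\psi^{\aut}(\aut(x),\aut(u))=\aut[\psi(x,u)]^{-1}$ is used with the running poset-weight playing the role of the first argument. This completes the proof of Lemma~\ref{lem: M dual}.
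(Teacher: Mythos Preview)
Your proof is correct and takes essentially the same approach as the paper: both verify \eqref{eq: M psi} by using the covering-relation dictionary $\mu=\lambda\cup x \Leftrightarrow \lambda^{\vee}=\mu^{\vee}\cup\aut(x)$, applying $\aut$ to the defining relation $M_{\psi,E}(\mu;u)=M_{\psi,E}(\lambda;u)\psi(x,u)$, and recognizing $\aut[\psi(x,u)]^{-1}=\psi^{\aut}(\aut(x),\aut(u))$. The paper's version is slightly more terse (working multiplicatively rather than with ratios), but the content is identical.
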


\begin{proof}
Suppose that $\mu=\lambda\cup y$, then $\lambda^{\vee}=\mu^{\vee}\cup \theta(y)$. We have
$$
M_{\psi,E}(\mu;u)=M_{\psi,E}(\lambda;u)\psi(y,u),\ M_{\psi,E}(\mu;u)\psi(y,u)^{-1}=M_{\psi,E}(\lambda;u).
$$
By applying $\aut$ we get
$$
M_{\psi^{\aut},E^{\vee}}(\mu^{\vee},\aut(u))\psi^{\aut}(\aut(y),\aut(u))=M_{\psi^{\aut},E^{\vee}}(\lambda^{\vee},\aut(u))
$$
as desired.
\end{proof}

\begin{theorem}
There exists an isomorphism
$$
V((E_1\times E_2)^{\vee})\simeq V(E_1^{\vee})\boxtimes V(E_2^{\vee}). 
$$
More precisely, we have a monoidal contravariant functor 
$
\mathbb{D}:\mathcal{C}_{\psi_1,\psi_2}\to \mathcal{C}_{\psi_1^{\aut},\psi_2^{\aut}}
$
defined by
$$
\mathbb{D}(E,c,M_{\psi_1},M_{\psi_2})=(E^{\vee},c_{E^{\vee}},M_{\psi_2^{\aut},E^{\vee}},M_{\psi_1^{\aut},E^{\vee}})
$$
where $c_{E^{\vee}}$ is defined by \eqref{eq: c dual} and $M_{\psi_i^{\aut},E^{\vee}}$  is defined by \eqref{eq: M dual}. 
\end{theorem}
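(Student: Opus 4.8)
The plan is to build the isomorphism $V((E_1\times E_2)^\vee)\simeq V(E_1^\vee)\boxtimes V(E_2^\vee)$ by unwinding all three relevant constructions at the level of underlying posets, edge functions, and $M$-functions, and checking that the recipes on the two sides literally agree. At the level of posets this is immediate: $(E_1\times E_2)^\vee$ has nodes in bijection with pairs $(\lambda^{(1)},\lambda^{(2)})$, with order reversed in each coordinate, and weights $p_m((\lambda^{(1)},\lambda^{(2)})^\vee)=-\aut(p_m(\lambda^{(1)}))-\aut(p_m(\lambda^{(2)}))=p_m((\lambda^{(1)})^\vee)+p_m((\lambda^{(2)})^\vee)$, so $(E_1\times E_2)^\vee=E_1^\vee\times E_2^\vee$ as weighted posets; one also checks that $E_1^\vee$ and $E_2^\vee$ remain in (very) general position with respect to $\psi_1^\aut,\psi_2^\aut$, since the defining ratio conditions are stable under applying $\aut$ and inverting. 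First I would record this poset identification together with the observation that if $x$ is addable for $\lambda^{(1)}$ in $E_1$, then $\aut(x)$ is addable for $(\lambda^{(1)})^\vee$ in $E_1^\vee$, and similarly for $E_2$, so the case division in \eqref{c product} is preserved under dualization.

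The heart of the argument is matching the edge functions. By \eqref{eq: c dual}, $c_{(E_1\times E_2)^\vee}((\mu^{(1)},\mu^{(2)})^\vee;\aut(x))=\aut\big(c_{E_1\times E_2}((\mu^{(1)},\mu^{(2)});x)\big)$. In the case $x$ addable for $\mu^{(1)}$, Theorem \ref{thm: c for tensor product} gives $c_{E_1\times E_2}((\mu^{(1)},\mu^{(2)});x)=c_1(\mu^{(1)};x)\,M_{\psi_2;E_2}(\mu^{(2)};x)$; applying $\aut$ and using \eqref{eq: c dual} for $E_1$ together with \eqref{eq: M dual} for $E_2$ yields $c_{E_1^\vee}((\mu^{(1)})^\vee;\aut(x))\,M_{\psi_2^\aut;E_2^\vee}((\mu^{(2)})^\vee;\aut(x))$, which is exactly the value prescribed by \eqref{c product} for the product $V(E_1^\vee)\boxtimes V(E_2^\vee)$ built with the pair $(\psi_2^\aut,\psi_1^\aut)$ — note the swap, which is why $\mathbb D$ lands in $\mathcal C_{\psi_1^\aut,\psi_2^\aut}$ with the $M$-functions in the order $(M_{\psi_2^\aut},M_{\psi_1^\aut})$. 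The symmetric case $x$ addable for $\mu^{(2)}$ is checked the same way. I would also verify that $\psi_2^\aut,\psi_1^\aut$ satisfy \eqref{eq: psi1 psi2}: applying $\aut$ to \eqref{eq: psi1 psi2} for $(\psi_1,\psi_2)$ and inverting gives $\psi_1^\aut(\aut x,\aut y)/\psi_2^\aut(\aut y,\aut x)$ equal to the homogeneous ratio evaluated at $\aut x,\aut y$, which by homogeneity of degree $0$ is the same expression in $\aut x,\aut y$ — hence $(\psi_2^\aut,\psi_1^\aut)$ also satisfies \eqref{eq: psi1 psi2}. The multiplicativity $M_{\psi_i^\aut;(E_1\times E_2)^\vee}((\mu^{(1)},\mu^{(2)})^\vee;u)=M_{\psi_i^\aut;E_1^\vee}((\mu^{(1)})^\vee;u)\,M_{\psi_i^\aut;E_2^\vee}((\mu^{(2)})^\vee;u)$ follows by combining Lemma \ref{lem: M psi}(c) for $E_1\times E_2$ with \eqref{eq: M dual}.

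Having matched the data $(E,c,M_{\psi_1},M_{\psi_2})$ on both sides, the isomorphism $V((E_1\times E_2)^\vee)\simeq V(E_1^\vee)\boxtimes V(E_2^\vee)$ of $\Bqt^{\ext}$-modules is then formal, and the compatibility $\mathbb D(X\boxtimes Y)\simeq \mathbb D(X)\boxtimes \mathbb D(Y)$ together with $\mathbb D(I)\simeq I$ (using $p_m(\bullet)=0$ so $\bullet^\vee$ has the same trivial weights, $c_{I^\vee}$ empty, $M_{\psi_i^\aut;I^\vee}\equiv 1$) gives the monoidal structure on $\mathbb D$. That $\mathbb D$ is contravariant on morphisms follows from Theorem \ref{thm: duality poset} and Lemma \ref{lem: dual hom}: a morphism $F:E_\mathbf{0}\to E'_\mathbf{0}$ in $\mathcal C_{\psi_1,\psi_2}$ dualizes to $F^\vee:(E')^\vee_\mathbf{0}\to E^\vee_\mathbf{0}$, and the conditions (1)--(5) of Theorem \ref{thm: hom} are self-dual under reversing the order, while the edge-function and $M$-function compatibility conditions transform correctly under \eqref{eq: c dual} and \eqref{eq: M dual}. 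The main obstacle I anticipate is purely bookkeeping: keeping the $\psi_1\leftrightarrow\psi_2$ swap consistent throughout — it appears in the target category $\mathcal C_{\psi_1^\aut,\psi_2^\aut}$, in the ordering of the $M$-functions inside $\mathbb D$, and in checking that $\mathbb D$ intertwines $\boxtimes$ with $\boxtimes$ (rather than with its reverse), and a sign or index slip there would break functoriality; careful comparison with part (d) of the generic-monoidal-category theorem (the equivalence $\beta:\mathcal C_{\psi_1,\psi_2}\to\mathcal C_{\psi_2,\psi_1}^{\rev}$) is the right sanity check. The underlying $\aut$-duality of $\Bqt^{\ext}$-modules itself requires no new input, being exactly Theorem \ref{thm: duality poset} applied componentwise.
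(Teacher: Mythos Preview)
Your proposal is correct and follows essentially the same route as the paper's proof: identify $(E_1\times E_2)^\vee$ with $E_1^\vee\times E_2^\vee$ as weighted posets, verify that $(\psi_1^\aut,\psi_2^\aut)$ again satisfies \eqref{eq: psi1 psi2}, and then apply $\aut$ to \eqref{c product} and to the multiplicativity of the $M$-functions to match the data on both sides. Your discussion of contravariance on morphisms, the monoidal unit, and the $\psi_1\leftrightarrow\psi_2$ bookkeeping is more explicit than the paper's terse treatment, but there is no substantive difference in strategy.
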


\begin{proof}
First, we need to check that $(\psi_1^{\aut},\psi_2^{\aut})$ satisfy \eqref{eq: psi1 psi2}. Indeed,
\[
\frac{\psi_1^{\aut}(x,y)}{\psi_2^{\aut}(y,x)}=\aut\left[\frac{\psi_1(\aut(x),\aut(y))}{\psi_2(\aut(y),\aut(x))}\right]^{-1}=-
\frac{(x-ty)(x-qy)(y-qtx)}{(y-tx)(y-qx)(x-qty)}
\]
similarly to the proof of Lemma \ref{lem: c dual}.
By Lemma \ref{lem: c dual} the function $c_{E^{\vee}}$ satisfies \eqref{eq: monodromy}, and by Lemma \ref{lem: M dual} the functions $M_{\psi_i^{\aut},E^{\vee}}$ satisfy \eqref{eq: M psi}.  This implies that the functor $\mathbb{D}$ is well defined.

Next, we check the identity $(E_1\times E_2)^{\vee}=E_1^{\vee}\times E_2^{\vee}$ on the level of weighted posets. This follows from the bijection $(\lambda\times \mu)^{\vee}\leftrightarrow \lambda^{\vee}\times \mu^{\vee}$ and the fact that the eigenvalues of $\Delta_{p_m}$ are both equal to 
$$
-\aut(p_m(\lambda)+p_m(\mu))=-\aut(p_m(\lambda))-\aut(p_m(\mu)).
$$
Finally, by applying $\aut$ to \eqref{c product} we verify that $\mathbb{D}$ is monoidal.
\end{proof}

\begin{remark}
By Lemma \ref{lem: dual hom} and \eqref{eq: dual bijection}, we understand the behavior of the functor $\mathbb{D}$ on morphisms. Given a morphism $F: (E_1, c_1, M_{\psi_1; E_1}, M_{\psi_2; E_2}) \to (E_2, c_2, M_{\psi_1; E_2}, M_{\psi_2; E_2})$ in $\mathcal{C}_{\psi_1, \psi_2}$ then the morphism $\mathbb{D}(F)$ is given by
\[
\mathbb{D}(F)(\mu^{\vee}) = \begin{cases} \lambda^{\vee} & ; \; \text{if there exists} \; \lambda \in E_1 \; \text{such that} \; F(\lambda) = \mu, \\ \mathbf{0} & ; \; \text{else.} \end{cases}
\]
We note that if such $\lambda$ exists then it is unique by the simple spectrum condition. 
\end{remark}

\subsection{$\adj$ and the Weyl Involution of Symmetric Functions} In this section, we show that the involution $\adj: \Bqt^{\ext} \to \Bqt^{\ext}$ is an extension of the classical graded \newword{Weyl involution} $\omega: \Sym_{q,t} \to \Sym_{q,t}$ on symmetric functions \cite{Macdonald}\footnote{The involution $\omega$ is denoted by $\overline{\omega}$ in \cite{Shuffle}.}. The involution $\omega$ is defined by the fact that it is $\aut$-linear and
\[
\omega(e_{i}) = q^{-i}h_{i},
\]
where $e_{i}$ and $h_{i}$ are the elementary and complete symmetric functions of degree $i$, respectively. Note that the fact that $\omega$ is $\aut$-linear implies that it is indeed an involution. For any partition $\lambda$, we have
\[
\omega(s_{\lambda}) = q^{-|\lambda|}s_{\lambda^{t}}
\]
where $s_{\lambda}$ is the Schur function corresponding to $\lambda$.

The algebra $\Sym_{q,t}$ embeds into $\Bqt$ (and thus also into $\Bqt^{\ext}$) as follows. Let us denote by $\Aq \subseteq \Bqt$ the subalgebra generated by the idempotents $\e_{k}$ together with the elements $d_{+}, d_{-}, T_{i}$. Note that the involution $\adj$ preserves the algebra $\Aq$. 

\begin{theorem}[\cite{Shuffle}]
There is an algebra isomorphism $\Sym_{q,t} \cong \e_{0}\Aq\e_{0}$. 
\end{theorem}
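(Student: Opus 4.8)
The plan is to prove the isomorphism by realizing the corner algebra $\e_0\Aq\e_0$ as a concrete algebra of operators on the polynomial representation and then pinning that algebra down exactly. In \cite{Shuffle} the algebra $\mathbb{A}_{q,t}$ (hence also its subalgebra $\Aq$) is introduced through its action on the polynomial representation $V=\bigoplus_k V_k$, so that action is faithful; moreover every element of $\e_0\Aq\e_0$ sends $V$ into $V_0=\Sym_{q,t}$, while $V$ is generated over $\Bqt$ by $V_0$ (Lemma \ref{lem:polrep}). Hence an element of $\e_0\Aq\e_0$ annihilating $V_0$ annihilates all of $V$ and must be $0$, so the restriction map
\[
\rho\colon \e_0\Aq\e_0 \longrightarrow \mathrm{End}_{\C(q,t)}\bigl(\Sym_{q,t}\bigr)
\]
is injective. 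It therefore suffices to show that the image of $\rho$ is exactly the subalgebra $\mathsf{M}$ of all multiplication operators $m_f\colon g\mapsto fg$ with $f\in\Sym_{q,t}$, since $f\mapsto m_f$ is an algebra isomorphism $\Sym_{q,t}\xrightarrow{\ \sim\ }\mathsf{M}$; then $\e_0\Aq\e_0\cong\mathsf{M}\cong\Sym_{q,t}$, the isomorphism being $a\mapsto a\cdot 1$.

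For the inclusion $\mathrm{im}\,\rho\supseteq\mathsf{M}$ I would exhibit explicit corner words acting as multiplication by algebra generators of $\Sym_{q,t}$. The smallest case is immediate from \eqref{eqn:raising polynomial}--\eqref{eqn:lowering polynomial}: on $V_0$ one has $d_+F[X]=F[X+(q-1)y_1]$, so $d_-d_+F[X]=-F[X]\operatorname{Exp}[-y_1^{-1}X]|_{y_1^{-1}}$, and reading off the coefficient of $y_1^{-1}$ gives $d_-d_+F=e_1F$, i.e.\ $\rho(\e_0 d_-d_+\e_0)=m_{e_1}$. More generally, for each $m\ge 1$ I would build a word $w_m$ by surrounding $d_+^{\,m}$ and $d_-^{\,m}$ with a suitable product of Demazure--Lusztig operators $T_i$; a direct, if somewhat lengthy, plethystic computation with \eqref{eqn:action Ti polynomial} and the formulas for $d_\pm$ shows that $\rho(\e_0 w_m\e_0)$ is a nonzero scalar times $m_{e_m}$. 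Since the $e_m$ generate $\Sym_{q,t}$ as a $\C(q,t)$-algebra and the $m_{e_m}$ commute, the subalgebra they generate is all of $\mathsf{M}$.

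The reverse inclusion $\mathrm{im}\,\rho\subseteq\mathsf{M}$ is the technical core. I would argue that, using only the defining relations of $\Bqt$ that stay inside $\Aq$ --- the Hecke and braid relations \eqref{eq:hecke relns}, the conjugation relations $d_-T_i=T_id_-$ and $d_+T_i=T_{i+1}d_+$ of \eqref{eq:T d-}--\eqref{eq:T d+}, the collapsing relations $d_-^2T_{k-1}=d_-^2$ and $T_1d_+^2=d_+^2$, and the $\varphi$-relations \eqref{eq:phi} --- any word in $d_+,d_-,T_i$ representing an element of $\e_0\Aq\e_0$ can be rewritten as a $\C(q,t)$-linear combination of scalar multiples of the $w_m$. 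This is a normal-form reduction: push all $T_i$'s into a canonical position with the conjugation relations, delete those that collide with a $d_\pm^2$ block via the collapsing relations, and use the $\varphi$-relations to commute each surviving $d_+$ past a $d_-$ at the cost of strictly shorter words, the process terminating by induction on a complexity statistic such as the number of occurrences of $d_+$ lying to the right of some $d_-$, refined by total word length. Combining the two inclusions gives $\mathrm{im}\,\rho=\mathsf{M}$, and with the injectivity of $\rho$ this yields $\e_0\Aq\e_0\cong\Sym_{q,t}$.

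The step I expect to be the main obstacle is precisely this last normal-form argument: verifying that the listed relations genuinely suffice to pull every corner word into the span of the $w_m$, and that the rewriting terminates. This --- rather than the comparatively routine plethystic identifications of $\rho(\e_0 w_m\e_0)$ with $m_{e_m}$ --- is the substantive content carried out in \cite{Shuffle}. I note finally that $t$ enters none of the relations or formulas invoked above, consistent with the corner algebra $\e_0\Aq\e_0$ depending only on $q$.
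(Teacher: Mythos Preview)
Your overall strategy is sound, but the paper takes a different and shorter route. Rather than proving the two inclusions $\mathrm{im}\,\rho\supseteq\mathsf{M}$ and $\mathrm{im}\,\rho\subseteq\mathsf{M}$ separately, the paper simply invokes \cite[Theorem 5.2]{Shuffle} for the statement that the evaluation map $\phi\colon \e_0\Aq\e_0\to\Sym_{q,t}$, $a\mapsto a(1)$, is a bijection of \emph{vector spaces}, and then checks directly that $\phi$ is multiplicative. The generators used are not words of the shape $d_-^{\,m}(\text{Hecke})\,d_+^{\,m}$ as you propose, but rather $\e_0 d_- y_1^{\,i-1} d_+\e_0$ (recall $y_1=\varphi$ at level~$1$), which a short plethystic computation shows act as $(-1)^{i-1}e_i$; one more line verifies $\phi(\e_0 d_- y_1^{\,i-1} d_+\e_0\cdot\e_0 d_- y_1^{\,j-1} d_+\e_0)=(-1)^{i+j}e_ie_j$, and since the $e_i$ freely generate $\Sym_{q,t}$ this finishes.

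What this buys: the paper never needs your normal-form reduction, because the hard combinatorial content (that every corner word reduces to multiplication by a symmetric function) is already packaged in the cited vector-space bijection. Your plan instead unpacks that black box and re-derives it, which is honest but amounts to reproving the cited result. A concrete weakness on your side is that the words $w_m$ for $m>1$ are left unspecified; the construction $d_-\varphi^{\,m-1}d_+$ stays at level~$1$ and makes the plethystic computation a one-liner, whereas your $d_+^{\,m}$ route climbs to $V_m$ and forces you through the Demazure--Lusztig operators. Finally, a small point: in your injectivity step the appeal to Lemma~\ref{lem:polrep} is unnecessary, since any $a\in\e_0\Aq\e_0$ already kills $V_k$ for $k>0$; faithfulness on $V$ alone gives what you need.
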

\begin{proof}
    The fact that we have an isomorphism \emph{of vector spaces} is \cite[Theorem 5.2]{Shuffle}. Hence, need only verify that this isomorphism holds at the level of algebras. We recall how the definition of the vector space isomorphism $\phi: \e_{0}\Aq\e_{0} \to \Sym_{q,t}$ given in \cite{Shuffle}.

    For $k \geq 0$, let $V_k := \Sym_{q,t}[y_1, \dots, y_k]$, so that $V_0 =\Sym_{q,t} $. The algebra $\Aq$ acts on $V := \bigoplus_{k \geq 0}V_{k}$ by restricting the polynomial representation of $\Bqt$ from Section \ref{sec: polynomial}. It is a result of \cite{Shuffle} that the operator $y_i = q^{i-k}T_{i-1}^{-1}\cdots T_1^{-1}\varphi T_{k-1}\cdots T_i$ acts on $V_k$ by multiplication by the variable $y_i \in V_k$.  
    The isomorphism $\phi: \e_0\Aq\e_0 \to \Sym_{q,t}$ then sends an element in $g \in \e_0\Aq\e_0$ to its action $g(1)$ on $1 \in \Sym_{q,t}$. 

In particular, 
\begin{align*}
\phi(\e_{0}d_{-}y_1^{i-1}d_{+}\e_{0}) = & \e_{0}d_{-}y_1^{i-1}d_{+}\e_0(1)  
=  \e_0d_{-}(y_1^{i-1}) 
=  \e_{0}\Bigl(-y_{1}^{i-1}\sum_{n \geq 0}(-y_1^{-n})e_{n}|_{y_{1}^{-1}}\Bigr) 
=  (-1)^{i-1}e_{i}.
\end{align*}
Since the elementary symmetric functions freely generate $\Sym_{q,t}$ as a commutative algebra and $\phi$ is bijective, to see $\phi$ is an algebra isomorphism it suffices to show that
\begin{align*}
\phi(\e_{0}d_{-}y_{1}^{i-1}d_{+}\e_{0}d_{-}y_{1}^{j-1}d_{+}) = & (-1)^{j-1}\e_{0}d_{-}y_1^{i-1}d_{+}(e_{j}) \\
= & (-1)^{j-1}\e_0d_{-}(y_1^{i-1}e_{j}[X + (q-1)y_1]) \\ 
= & -(-1)^{j-1}y_{1}^{i-1}e_{j}[X+(q-1)y_1 - (q-1)y_1]\sum_{n \geq 0}(-y_1)^{-n}e_{n}|_{y_{1}^{-1}} \\
= & (-1)^{j-1}(-1)^{i-1}e_{j}e_{i}
\end{align*}
and the result follows. 
\end{proof}

As remarked above, the involution $\adj$ preserves the algebra $\e_0\Aq\e_0 \subseteq \Bqt$ and therefore induces an involution on the algebra $\Sym_{q,t} \cong \e_0\Aq\e_0$.

\begin{theorem}\label{thm: omega}
We have
\[
\adj|_{\Sym_{q,t}} = \omega. 
\]
\end{theorem}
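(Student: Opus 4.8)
The plan is to verify the identity $\adj|_{\Sym_{q,t}} = \omega$ on a set of algebra generators of $\Sym_{q,t}$, using the identification $\Sym_{q,t} \cong \e_0\Aq\e_0$ from the previous theorem and the explicit generators $\e_0 d_- y_1^{i-1} d_+ \e_0 \mapsto (-1)^{i-1}e_i$ established in its proof. Since $\omega$ is determined by $\omega(e_i) = q^{-i}h_i$ and $\adj$ is also $\aut$-linear, it suffices to compute $\adj\bigl(\e_0 d_- y_1^{i-1} d_+ \e_0\bigr)$ inside $\e_0\Aq\e_0$ and check that, under $\phi$, it maps to $q^{-i}h_i$ (equivalently, to $\omega\bigl((-1)^{i-1}e_i\bigr) = (-1)^{i-1}q^{-i}h_i$).

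First I would expand $y_1^{i-1}$ in terms of the $\Bqt^{\ext}$-generators. Recall $y_1 = q^{1-k}\varphi T_{k-1}\cdots T_1$ acting on $V_k$; when sandwiched as $\e_0 d_- y_1^{i-1} d_+ \e_0$, only the $k=1$ component of the middle factor matters, where $y_1 = \varphi$. So $\e_0 d_- y_1^{i-1} d_+ \e_0 = \e_0 d_- \varphi^{i-1} d_+ \e_0$ (all with the appropriate idempotents, living in $\e_1 \cdots \e_1$ in the middle). Now apply $\adj$: using that $\adj$ is an anti-automorphism, $\aut$-linear, fixes $\e_k$, sends $d_+ \mapsto q^{-1/2}d_-$ and $d_- \mapsto q^{-1/2}d_+$ (the relevant idempotent indices give $q^{-(k+1)/2}$ with $k=0$, i.e. $q^{-1/2}$), and sends $\varphi \mapsto$ some explicit multiple of $\widetilde\varphi = (q-1)\varphi$ on the $\e_1$-component — from the computation in the proof of Lemma~\ref{lem:involution}, $\adj(\widetilde\varphi) = q^{-k-1}(q d_+d_- - d_-d_+)$ and $\adj(\varphi)$ on $\e_1 V \to \e_1 V$ can be read off (with $k=1$). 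Carrying this through, $\adj(\e_0 d_- \varphi^{i-1} d_+ \e_0) = q^{-1}\,\e_0 d_- \,\adj(\varphi)^{i-1}\, d_+ \e_0$ up to explicit powers of $q$, which is again an element of $\e_0\Aq\e_0$.

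The second, and I expect harder, step is to identify $\phi$ of this element with $(-1)^{i-1}q^{-i}h_i \in \Sym_{q,t}$. For this I would apply the resulting operator to $1 \in V_0 = \Sym_{q,t}$ and compute explicitly using the plethystic formulas of Section~\ref{sec: polynomial}: $d_+(1) = T_1(1)[X + (q-1)y_1]$ (but on $V_0$, $T_1$ is not present, so $d_+(1) = \operatorname{Exp}[(q-1)y_1 \cdot 1]$-type expression — more precisely $d_+ 1 = 1 \in V_1$ interpreted correctly), then apply $\varphi = \frac{1}{q-1}(d_+d_- - d_-d_+)$ repeatedly (which acts as multiplication by $y_1$ on $V_1$), then apply $d_-$ which extracts a coefficient of $y_1^{-1}$ against $\operatorname{Exp}[-y_1^{-1}X]$. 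The key computation is that $d_- y_1^{i-1} d_+ (1)$ picks out, via $\operatorname{Exp}[-y_1^{-1}X] = \sum_n (-y_1^{-n})e_n$ and the $y_1$-shift $X \mapsto X + (q-1)y_1 - (q-1)y_1 = X$, a multiple of $e_i$; the claim is that after applying $\adj$ (which reverses the roles of $d_+$ and $d_-$ and rescales by powers of $q$) the analogous computation instead produces $\operatorname{Exp}[+y_1^{-1}X]$-type contributions, i.e. $\sum_n y_1^{-n}h_n$, yielding $h_i$ with the correct power $q^{-i}$. Concretely, I would track how $\adj$ turns the "lowering with $e_n$" structure into a "raising with $h_n$" structure: the sign $(-1)^i$ and the factor $q^{-i}$ should emerge precisely from the $q^{-1/2}$ factors attached to each $d_\pm$ and from $\aut$ acting on the coefficients.

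The main obstacle will be bookkeeping in this second step: correctly interpreting $\varphi^{i-1}$ sandwiched between $d_-$ and $d_+$ after applying $\adj$ (since $\adj(\varphi)$ is a combination $q d_+ d_- - d_- d_+$ up to scalars, not a single operator, one must be careful that the intermediate idempotents and the $q$-powers combine to give exactly $q^{-i}$), and verifying that the plethystic coefficient extraction genuinely converts $e$'s to $h$'s. As a sanity check I would do the cases $i=1$ and $i=2$ by hand: $i=1$ gives $\adj(\e_0 d_- d_+ \e_0)$, which should be $\phi^{-1}(\omega(e_1)) = \phi^{-1}(q^{-1}h_1) = \phi^{-1}(q^{-1}e_1) = q^{-1}\e_0 d_- d_+ \e_0$ — matching the expected scalar $q^{-1}$ from a single $d_+$-$d_-$ pair (two factors of $q^{-1/2}$); $i=2$ would confirm the pattern before asserting the general case. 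Once the generator identity $\adj(e_i) = q^{-i}h_i$ (in $\Sym_{q,t}$) is checked, since both $\adj|_{\Sym_{q,t}}$ and $\omega$ are $\aut$-linear algebra involutions agreeing on the algebra generators $e_i$, they coincide, completing the proof.
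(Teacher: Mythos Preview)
Your overall strategy matches the paper's: verify on the generators $e_i \leftrightarrow (-1)^{i-1}\e_0 d_- y_1^{i-1} d_+ \e_0$, compute $\adj$ of these elements, and identify the result with $(-1)^{i-1}q^{-i}h_i$. The reduction $\adj(\e_0 d_- y_1^{i-1}d_+\e_0) = q^{-1}\e_0 d_-\,\adj(y_1)^{i-1}d_+\e_0$ is correct, and your $i=1$ sanity check is fine. The genuine gap is exactly the step you flag as ``the main obstacle'': you have not found the one identity that makes $\adj(y_1)^{i-1}$ tractable. On $\e_1$, the paper rewrites $qd_+d_- - d_-d_+ = (q-1)(y_1 + d_+d_-)$ to obtain
\[
\adj(y_1) \;=\; -q^{-1}\bigl(y_1 + d_+d_-\bigr).
\]
With this, $\adj(y_1)^{i-1} = (-q)^{-(i-1)}(y_1 + d_+d_-)^{i-1}$ and the expansion is purely algebraic in $\e_0\Aq\e_0$: each word $\e_0 d_-\, y_1^{a_1}(d_+d_-)^{a_2}y_1^{a_3}\cdots\, d_+\e_0$ factors as a product in $\Sym_{q,t}$ using $\e_0 d_- y_1^{a}d_+\e_0 \mapsto (-1)^{a}e_{a+1}$ and $\e_0 d_-(d_+d_-)^{b}d_+\e_0 \mapsto e_1^{\,b+1}$. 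Summing over all words gives
\[
\adj(e_i) \;=\; q^{-i}\sum_{k=1}^{i}(-1)^{i-k}\!\!\sum_{\substack{d_1+\cdots+d_k=i\\ d_j>0}}\!\! e_{d_1}\cdots e_{d_k},
\]
which one then checks equals $q^{-i}h_i$ via the Newton-type recursion $\sum_{j}(-1)^{m-j}e_j h_{m-j}=0$.

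Your alternate plan for the second step --- act on $1\in V_0$ by the plethystic formulas and expect the $\operatorname{Exp}[-y_1^{-1}X]=\sum_n(-y_1)^{-n}e_n$ kernel to ``flip'' to an $h$-kernel after $\adj$ --- is misleading. The element $\adj(\e_0 d_- y_1^{i-1}d_+\e_0)$ still has $d_-$ on the left and $d_+$ on the right, so the final extraction is again against $\operatorname{Exp}[-y_1^{-1}X]$ and produces \emph{elementary} symmetric functions; the passage to $h_i$ is the combinatorial identity over compositions above, not a change of plethystic kernel. Without the simplification $\adj(y_1)=-q^{-1}(y_1+d_+d_-)$ the bookkeeping you anticipate does not close up on its own.
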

\begin{proof}
As we have seen above, the elementary symmetric function $e_{i}$ corresponds to the element
\[
(-1)^{i-1}\e_{0}d_{-}y_{1}^{i-1}d_{+}\e_{0} \in \e_0\Aq\e_0 \subseteq \Bqt
\]
so we need to apply $\adj$ to this element. Note that this is
\[
(-1)^{i-1}q^{-1}\e_0d_{-}\adj(y_1)^{i-1}d_{+}\e_0
\]
so our first job is to find $\adj(y_1)$. We have
\begin{align*}
\adj(y_1) &=  \adj\left(\frac{1}{q-1}(d_+d_- - d_-d_+)\right) 
=  \frac{q^{-2}}{q^{-1} - 1}(qd_{+}d_{-} - d_{-}d_{+}) \\
&=  \frac{-q^{-1}}{q-1}(qd_{+}d_{-} - d_{-}d_{+} + d_{+}d_{-} - d_{+}d_{-}) 
=  -q^{-1}(y_1 + d_{+}d_{-}).
\end{align*}
Thus,
\[
\adj((-1)^{i-1}\e_0d_{-}y_{1}^{i-1}d_{+}\e_{0}) = (-1)^{i-1}(-q)^{-i+1}q^{-1}\e_0d_{-}(y + d_{+}d_{-})^{i-1}d_{+}\e_0. 
\]
Note that
\begin{align*}
\e_0d_{-}(y+d_{+}d_{-})^{i-1}d_{+}\e_0 &=  \e_0d_{-}\Bigl(\sum_{\substack{a_1 + a_2 + \cdots  + a_{2k} = i-1 \\ a_{s} = 0 \Rightarrow a_{s+1} = 0, s \geq 2}}y^{a_1}(d_+d_-)^{a_2}\cdots y^{a_{2k-1}}(d_+d_-)^{a_{2k}}\Bigr)d_{+}\e_0 \\
&\mapsto  \sum_{k \geq 0} \Bigl(\sum_{\substack{d_1 + \cdots + d_k = i \\ d_1, \dots, d_k > 0}}(-1)^{i-k}e_{d_{1}}\cdots e_{d_{k}} \Bigr)
\end{align*}
where in the last part we have used the fact that $\e_0d_{-}y^{a}d_{+}\e_0 \mapsto (-1)^{a}e_{a+1}$ and $\e_0d_{-}(d_{+}d_{-})^{b}d_{+}\e_0 \mapsto e_{1}^{b+1}$, so that we have:
\begin{itemize}[leftmargin=*]
\item Assuming $a_1, \dots, a_{2k} > 0$:
\begin{align*}
&\e_0d_{-}\left(y^{a_1}(d_{+}d_{-})^{a_2}\cdots y^{a_{2k-1}}(d_{+}d_{-})^{a_{2k}}\right)d_{+}\e_0 \\
&=(\e_0d_{-}y^{a_1}d_{+}\e_0)(\e_0d_{-}(d_{+}d_{-})^{a_2 - 2}d_{+}\e_0)(\e_0d_{-}y^{a_3}d_{+}\e_0)\cdots (\e_0d_{-}y^{a_{2k-1}}d_{+}\e_0)(\e_0 d_{-}(d_{+}d_{-})^{a_{2k} - 1}d_{+}\e_0)  \\
&\mapsto (-1)^{a_1 + a_3 + \cdots + a_{2k-1}}e_{a_1+1}e_{a_3+1}\cdots e_{a_{2k-1}+1}e_{1}^{a_{2} -1}e_1^{a_4 - 1}\cdots e_1^{a_{2k-2} - 1}e_1^{a_{2k}},
\end{align*}
where the term $\e_0(d_{-}(d_{+}d_{-})^{a_{2j} - 2}d_{+})\e_0$ is skipped if $a_{2j} = 1$.

\item Assuming $a_1, a_{2}, \dots, a_{2k-1} \neq 0, a_{2k} = 0$:
\begin{align*}
&\e_0d_{-}\left(y^{a_1}(d_{+}d_{-})^{a_2}\cdots y^{a_{2k-1}}\right)d_{+}\e_0 = 
\\
&=(\e_0d_{-}y^{a_1}d_{+}\e_0)(\e_0d_{-}(d_{+}d_{-})^{a_2 - 2}d_{+}\e_0)(\e_0d_{-}y^{a_3}d_{+}\e_0)\cdots (\e_0d_{-}y^{a_{2k-1}}d_{+}\e_0) \\
&\mapsto (-1)^{a_1 + a_3 + \cdots + a_{2k-1}}e_{a_1+1}e_{a_3+1}\cdots e_{a_{2k-1}+1}e_{1}^{a_{2} -1}e_1^{a_4 - 1}\cdots e_1^{a_{2k-2} - 1},
\end{align*}
where again the term $\e_0(d_{-}(d_{+}d_{-})^{a_{2j} - 2}d_{+})\e_0$ is skipped if $a_{2j} = 1$.

\item Assuming $a_1 = 0$ but $a_{2}, \dots, a_{2k} \neq 0$:
\begin{align*}
&\e_0d_{-}\left((d_{+}d_{-})^{a_2}\cdots y^{a_{2k-1}}(d_{+}d_{-})^{a_{2k}}\right)d_{+}\e_0 
\\&= (\e_0d_{-}(d_{+}d_{-})^{a_2 - 1}d_{+}\e_0)(\e_0d_{-}y^{a_3}d_{+}\e_0)\cdots (\e_0d_{-}y^{a_{2k-1}}d_{+}\e_0)(\e_0 d_{-}(d_{+}d_{-})^{a_{2k} - 1}d_{+}\e_0)
\\
 &\mapsto 
(-1)^{a_3 + \cdots + a_{2k-1}}e_{a_3+1}\cdots e_{a_{2k-1}+1}e_{1}^{a_{2}}e_1^{a_4 - 1}\cdots e_1^{a_{2k-2} - 1}e_1^{a_{2k}},
\end{align*}

\item Assuming $a_{1}, a_{2k} = 0$, $a_2, \dots, a_{2k-1} \neq 0$: 
\begin{align*}
&\e_0d_{-}\left((d_{+}d_{-})^{a_2}\cdots y^{a_{2k-1}}\right)d_{+}\e_0 
\\
&= (\e_0d_{-}(d_{+}d_{-})^{a_2 - 1}d_{+}\e_0)(\e_0d_{-}y^{a_3}d_{+}\e_0)\cdots (\e_0d_{-}y^{a_{2k-1}}d_{+}\e_0)
\\
&\mapsto 
(-1)^{a_3 + \cdots + a_{2k-1}}e_{a_3+1}\cdots e_{a_{2k-1}+1}e_{1}^{a_{2}}e_1^{a_4 - 1}\cdots e_1^{a_{2k-2} - 1}.
\end{align*}
    
\end{itemize} We conclude that, upon the isomorphism $\Sym_{q,t} \cong \e_0\Aq\e_0$ we get
\[
\adj(e_{i}) = q^{-i}\sum_{k = 1}^{i}(-1)^{i-k} \sum_{\substack{d_1 + \cdots + d_k = i \\ d_1, \dots, d_k > 0}}e_{d_{1}}\cdots e_{d_{k}}.
\]
Finally, to confirm that indeed $\adj(e_{i})= q^{-i}h_{i}$ it suffices to verify that it satisfies the defining relation 
\[\sum_{i=0}^m (-1)^{m-i}e_{i}h_{m-i}=0.\]
Computing we obtain, 
\begin{align*}
&\sum_{i=0}^m (-1)^{m-i}e_{i}q^{m-i}\adj(e_{m-i}) 
= \sum_{i=0}^m e_{i} \Big(\sum_{k = 1}^{m-i}(-1)^{k}\sum_{\substack{d_1 + \cdots + d_k = m-i \\ d_1, \dots, d_k > 0}}e_{d_{1}}\cdots e_{d_{k}}\Bigr)
\\
&= \sum_{k = 1}^{m} (-1)^k\sum_{\substack{d_1 + \cdots + d_k = m \\ d_1, \dots, d_k > 0}}e_{d_{1}}\cdots e_{d_{k}} 
+ \sum_{i=1}^m e_{i}\Bigl(\sum_{k = 1}^{m-i} (-1)^{k}\sum_{\substack{d_1 + \cdots + d_k = m-i \\ d_1, \dots, d_k > 0}}e_{d_{1}}\cdots e_{d_{k}}\Bigr)
\\
&= -e_m + \sum_{k = 2}^{m} (-1)^k   
\sum_{i=1}^{m-k+1}e_i 
 \Bigl(\sum_{\substack{d_2 + \cdots + d_k = m-i \\ d_2, \dots, d_k > 0}}e_{d_{2}}\cdots e_{d_{k}} \Bigr)
+  \sum_{i=1}^{m} e_{i} \Bigl(\sum_{k = 1}^{m-i} (-1)^{k}\sum_{\substack{d_1 + \cdots + d_k = m-i \\ d_1, \dots, d_k > 0}}e_{d_{1}}\cdots e_{d_{k}}\Bigr)\\
&=\sum_{i=1}^{m-1}e_i \Bigl(\sum_{k=2}^{m-i+1}(-1)^k \sum_{\substack{d_2 + \cdots + d_k = m-i \\ d_2, \dots, d_k > 0}}e_{d_{2}}\cdots e_{d_{k}} \Bigr)
+ \sum_{i=1}^{m-1} e_{i} \Bigl(\sum_{k = 1}^{m-i} (-1)^{k}\sum_{\substack{d_1 + \cdots + d_k = m-i \\ d_1, \dots, d_k > 0}}e_{d_{1}}\cdots e_{d_{k}}\Bigr)
\\
&=\sum_{i=1}^{m-1}e_i \Bigl(\sum_{k=1}^{m-i}(-1)^{k+1} \sum_{\substack{d_2 + \cdots + d_{k+1} = m-i \\ d_2, \dots, d_{k+1} > 0}}e_{d_{2}}\cdots e_{d_{k+1}} \Bigr)
+ \sum_{i=1}^{m-1} e_{i} \Bigl(\sum_{k = 1}^{m-i} (-1)^{k}\sum_{\substack{d_1 + \cdots + d_k = m-i \\ d_1, \dots, d_k > 0}}e_{d_{1}}\cdots e_{d_{k}}\Bigr)\\
&=0.
\end{align*}
\end{proof}

\begin{remark} It is tempting to try relate $\adj$ to the involution $\mathcal{N}$ on $\Aqt$ described in \cite{Shuffle} which when restricted to $\Sym_{q,t}$ is equal to $\nabla \omega$. Unfortunately, a direct relation between these is unclear. Whereas $\adj$ clearly preserves the subalgebra $\mathbb{A}_q$ generated by $d_\pm, T_i$, the involution $\mathcal{N}$ does not. 
 \end{remark}

\section{Towards a Classification of Calibrated Representations}\label{sec: classification}

\subsection{Reconstructing a Poset from a Representation}\label{sec: reconstruct poset} 

In the previous sections we started from a poset $E$ and constructed a representation of $\Bqt^{\ext}$. In this section we  we start with an \emph{arbitrary} calibrated representation $V$ and (under certain assumptions) reconstruct a poset from it. 

Let $V$ be a calibrated representation of $\Bqt^{ext}$. Recall that this means the following:
\begin{enumerate}
    \item $V = \bigoplus_{k \geq 0} V_{k}$, with the idempotent $\e_{k}$ acting as the projection to the summand $V_{k}$.
    \item Each summand $V_{k}$ comes equipped with an eigenbasis $\cB_{k}$ consisting of simultaneous eigenvectors for $z_1, \dots, z_k$ and $\Delta_{p_m}$. Moreover, each element $v_{k} \in \cB_{k}$ is (up to scalar) uniquely determined by its weight.
\end{enumerate}

For $v_{k} \in V_{k}$, we will denote by $\zeta_1(v_k), \dots, \zeta_k(v_k)$ its eigenvalues under $z_1, \dots, z_k$ (so that $z_{i}(v_k) = \zeta_i(v_k)v_k$). Similarly, we denote its eigenvalue under   $\Delta_{p_m}$ by $p_{m}(v_k)$.
\medskip

\textbf{We will make the following simplifying assumptions:}

\begin{assumption}
\label{ass: d-}
For every $v_{k} \in \cB_{k}$, $d_{-}^{k}(v_k) \neq 0$. 
\end{assumption}

\begin{assumption}\label{ass: semisimple}
    We will assume that $V = \bigoplus_{k \geq 0} V_{k}$ is a calibrated representation of $\Bqt^{ext}$ such that $V_{k}$ is a completely reducible $\AH_{k}$-representation for every $k \geq 0$. 
\end{assumption}

Let us consider the set $\cB := \bigsqcup_{k \geq 0}\cB_{k}$.

\begin{definition}
We define the set $E := \cB/\!\sim$ where the equivalence relation on $\cB$ is given by:
\[
\cB_{k} \ni v_{k} \sim v_{s} \in \cB_s \qquad \text{if} \qquad p_{m}(v_k) - \sum_{i = 1}^{k}\zeta_{i}^{m}(v_k) = p_{m}(v_s) - \sum_{j = 1}^{s}\zeta_{j}^{m}(v_s) \, \text{for every} \, m\geq 1.
\]  
\end{definition}

For $v_k \in \cB_k \subseteq \cB$, we denote its equivalence class by $[v_k] \in E$. Now endow $E$ with a partial order structure. First, we define a weighting on $E$. Let,
\begin{equation}\label{eqn:delta operators on the quotient}
p_{m}[v_k] := p_{m}(v_k) - \sum_{i = 1}^{k}\zeta_{i}^{m}(v_k).
\end{equation}

Note that if $v_k \in \cB_k$, $d_{-}v_k$ is a simultaneous eigenvector for $z_1, \dots, z_{k-1}$ and $\Delta_{p_m}$, and by Assumption \ref{ass: d-} we have $d_-(v_k)\neq 0$. Up to rescaling, we may assume that $d_{-}v_k \in \cB_{k-1}$.

\begin{lemma}
The relation
\[
[v_{k}] \prec [d_{-}v_{k}]
\]
is the covering relation for a partial order on $E$.
\end{lemma}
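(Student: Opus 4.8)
The goal is to verify that the relation $[v_k] \prec [d_-v_k]$ generates a genuine partial order on $E$; concretely, one must check (i) that the relation is well-defined, i.e. it does not depend on the choice of representative $v_k$ within its equivalence class, and (ii) that the transitive closure of $\prec$ is antisymmetric, i.e. it contains no cycles. I would begin with (i). Suppose $v_k \in \cB_k$ and $v'_s \in \cB_s$ satisfy $[v_k] = [v'_s]$. By the definition of $d_-$ on weight vectors and \eqref{eqn:delta operators on the quotient}, the weight of $[d_-v_k]$ is determined by $p_m[d_-v_k] = p_m(v_k) - \sum_{i=1}^{k-1}\zeta_i^m(v_k)$; since $d_-$ commutes with all $\Delta_{p_m}$ and $\adj$-independently with $z_1,\dots,z_{k-1}$ (relations \eqref{eq:ext-Bqt-1} and \eqref{eq: d z}), one has $\zeta_i(d_-v_k) = \zeta_i(v_k)$ for $i \le k-1$ and $p_m(d_-v_k) = p_m(v_k)$. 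Hence $p_m[d_-v_k] = p_m[v_k] + \zeta_k^m(v_k)$. The key point is then that $\zeta_k(v_k)$ is itself determined by the equivalence class $[v_k]$ together with the grading: if $v_k \sim v'_s$ then necessarily $k = s$ (since the grading $|[v_k]| := k$ is well-defined because $d_-$ strictly decreases $k$ and the weights $p_1$ are shifted by a nonzero amount at each step — here I would invoke Assumption \ref{ass: d-} to guarantee the chain of $d_-$-images is genuinely of length $k$), and moreover two basis vectors in $\cB_k$ with the same class have the same multiset $\{\zeta_1,\dots,\zeta_k\}$ (by \eqref{eqn:delta operators on the quotient} the power-sum combinations $p_m(v_k)-\sum\zeta_i^m$ determine $\{\zeta_1,\dots,\zeta_k\}$ once $p_m(v_k)$ — i.e. the $\Delta$-eigenvalues — are fixed, but these $p_m(v_k)$ may differ between $v_k$ and $v'_k$). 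I would argue instead directly: $[d_-v_k]$ has grading $k-1$ and weight $p_m[v_k] + \zeta_k^m(v_k)$, and since $[d_-v_k]$ is the unique class covered by $[v_k]$ of the appropriate grading and weight, it suffices to show $\zeta_k(v_k)$ depends only on $[v_k]$. This follows because the weight of $[v_k]$ plus the grading pins down, via the simple-spectrum hypothesis on $V$, a unique basis vector up to scalar, hence a unique $\zeta_k$.

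For step (ii), antisymmetry of the transitive closure, I would exploit the grading: the covering relation $[v_k] \prec [d_-v_k]$ always strictly decreases the grading, since $|[d_-v_k]| = k-1 < k = |[v_k]|$. Therefore any directed path in the Hasse diagram strictly decreases $|\cdot|$, so no directed cycle can occur, and the transitive closure is a strict partial order. This is essentially the same mechanism used in the proof of the lemma preceding Theorem \ref{thm:submodules} (the partial order on $\Ch(E)$), and also mirrors Definition \ref{def: excellent new} where the grading controls everything. I would also note in passing that local finiteness of $E$ — needed for $E$ to satisfy the poset conditions of Section \ref{sec: def poset new} — follows from the assumption that each $V_k$ is finite in the relevant weight spaces (simple spectrum), but strictly speaking only the partial-order claim is asserted in this lemma.

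The main obstacle, I expect, is the well-definedness in (i): one must carefully rule out the pathology that two distinct classes $[v_k] \ne [v'_k]$ of the same grading could map under $d_-$ to the same class, or worse, that $d_-v_k$ and $d_-v'_k$ land in different classes while $[v_k]$ happens to equal $[v'_k]$ — the latter is what we must exclude. The resolution hinges on showing that the class $[v_k] \in E$, together with the knowledge that $|[v_k]| = k$, determines the $z$-spectrum multiset $\{\zeta_1(v_k),\dots,\zeta_k(v_k)\}$ and hence the class $[d_-v_k]$. I would extract this from the simple-spectrum property of $V$ as a calibrated representation: the map sending a basis vector to its pair (weight of $[\cdot]$, grading) is injective on $\cB$, because $(p_m(v_k))_m$ is recovered from $(p_m[v_k])_m$ and $(\zeta_i(v_k))_i$, and conversely distinct $\zeta$-multisets with the same $\Delta$-eigenvalues would force distinct classes by \eqref{eqn:delta operators on the quotient}. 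Once that bijective correspondence is in place, both the well-definedness and the covering-relation structure follow formally, and the grading argument finishes antisymmetry.
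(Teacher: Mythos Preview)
Your argument for antisymmetry in (ii) has a genuine gap: the ``grading'' $|[v_k]| := k$ is \emph{not} well-defined on $E$. The whole point of the equivalence relation is that basis vectors in different $\cB_k$ can be equivalent --- for instance, in the model case $V(E)$, the chain $[\lambda] \in \cB_0$, the chain $[\lambda; w_1] \in \cB_1$, and any $[\lambda;\underline{w}] \in \cB_k$ all have the same value of $p_m - \sum_i \zeta_i^m$, namely $p_m(\lambda)$, so they all represent the same class in $E$. Thus there is no reason for the index $k$ to descend to $E$, and your cycle-exclusion argument collapses.

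The paper's proof avoids this entirely. It records the weight identity $p_m[d_-v_k] - p_m[v_k] = \zeta_k^m(v_k)$ and observes that a putative cycle $[v_{k_1}] \prec \cdots \prec [v_{k_s}] = [v_{k_1}]$ forces $\sum_i \zeta_{k_i}^m(v_{k_i}) = 0$ for all $m\ge 1$; by the standard power-sum argument this forces every $\zeta_{k_i}(v_{k_i})=0$, contradicting part (3) of Definition~\ref{def: calibrated}. No grading is invoked.

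Your step (i) on well-definedness is also off target. The relation $\prec$ is defined as the set of \emph{all} pairs $([v_k],[d_-v_k])$ as $v_k$ ranges over $\bigsqcup_{k\ge 1}\cB_k$; there is no choice of representative to worry about, because one is not asserting that every representative of a class produces the same cover --- different representatives may (and typically do) yield different covering pairs, and one simply takes them all. The only content of the lemma is that this collection of pairs has acyclic transitive closure, which is exactly what the power-sum argument establishes.
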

\begin{proof}
    We need to show that no cycles appear in the transitive closure of $\prec$. Note that
    \begin{equation}\label{eqn: difference in weights}
    p_{m}[d_{-}v_k] - p_{m}[v_k] = \zeta^{m}_{k}(v_k) \; \text{for all} \; m \geq 1.
    \end{equation}
    Thus, if we had a cycle $[v_{k_1}] \prec [v_{k_2}] \prec \cdots \prec [v_{k_{s}}] = [v_{k_{1}}]$ we would have
    \[
    \sum_{i = 1}^{s}\zeta_{k_{i}}^{m}(v_{k_{i}}) = 0 \; \text{for all} \; m \geq 1
    \]
    and it follows that $\zeta_{k_{i}}(v_{k_{i}}) = 0$ for all $i$, which contradicts our standing assumption that the elements $z_{i}$ act with nonzero eigenvalues. 
\end{proof}

\begin{remark}
    Note that \eqref{eqn: difference in weights} implies that the collection $p_{m}$ is indeed a weighting on the poset $E$. 
\end{remark}

\begin{lemma}
\label{lem: inject into chains}
Let $v_k \in \cB_k$, then \begin{equation}\label{eqn:from basis to chains}
    c_k(v_k) := ([v_{k}] \prec [d_{-}v_{k}] \prec [d_{-}^{2}v_k] \prec \cdots \prec [d_{-}^{k}v_{k}])
    \end{equation}
    is a maximal chain in $E$. Furthermore, the map $c_k:\cB_k\to \Ch(E)$  is injective. 
\end{lemma}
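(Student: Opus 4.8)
The statement has two parts: that $c_k(v_k)$ is a maximal chain, and that $c_k$ is injective. For the first part, I would argue as follows. By construction each link $[d_-^{i}v_k] \prec [d_-^{i+1}v_k]$ is a covering relation in $E$ (this is exactly how the covering relations were defined), so $c_k(v_k)$ is a chain of length $k$ in $E$. To see it is \emph{maximal}, note that by the grading on $E$ induced by $|[v_s]| := s$ (more precisely, by \eqref{eqn: difference in weights} the degree drops by exactly $1$ along each covering relation), a chain starting at $[v_k]$ and ending at $[d_-^k v_k]$ which has length $k$ cannot be refined: there is no room to insert an element strictly between consecutive terms because their degrees differ by exactly $1$. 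It cannot be extended downward below $[d_-^k v_k] \in E$ coming from $\cB_0$ because $d_-^{k}v_k \in V_0$ and elements of $\cB_0$ are minimal (there is no $d_-$ to apply); similarly it cannot be extended upward above $[v_k]$ because that would require an element $[u] \in E$ with $[v_k] \prec [u]$, i.e.\ $v_k = d_-(u)$ for some $u \in \cB_{k+1}$, but this need not happen and in any case maximality as stated in Ram-style classifications is with respect to the property of not being a \emph{proper subchain} of the form \eqref{eqn:from basis to chains}. I would phrase maximality carefully as: every maximal chain of $E$ through $[v_k]$ of this form has length determined by the grading, and $c_k(v_k)$ realizes it. The cleanest route is to invoke the grading: since $|[d_-^{i}v_k]| = k - i$, the chain $c_k(v_k)$ passes through all degrees from $k$ down to $0$, and any chain in a graded poset passing through consecutive degrees with unit drops is maximal among chains with those endpoints, hence a maximal chain.

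For injectivity, suppose $c_k(v_k) = c_k(v_k')$ for $v_k, v_k' \in \cB_k$. Then in particular their starting points agree, $[v_k] = [v_k']$, which by the definition of $E$ means $p_m(v_k) - \sum_i \zeta_i^m(v_k) = p_m(v_k') - \sum_j \zeta_j^m(v_k')$ for all $m$. But I need more: I need to recover the individual eigenvalues $\zeta_i(v_k)$. This is where \eqref{eqn: difference in weights} does the work: along the chain $c_k(v_k)$, the weight jump at step $i$ is $p_m[d_-^{i-1}v_k] - p_m[d_-^{i}v_k] = \zeta_{k-i+1}^m(\text{the basis vector } d_-^{i-1}v_k)$. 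Since $c_k(v_k)$ and $c_k(v_k')$ are equal as chains in $E$, all these jumps agree, so the basis vectors $d_-^{i-1}v_k$ and $d_-^{i-1}v_k'$ have the same $z$-eigenvalues and the same $\Delta$-eigenvalues for every $i$; taking $i = 1$ gives that $v_k$ and $v_k'$ have identical joint $(z, \Delta)$-weight. By the calibration hypothesis (the joint spectrum is simple, so a basis vector is determined up to scalar by its weight), $v_k$ and $v_k'$ are proportional, and since both lie in $\cB_k$ which is a fixed basis, $v_k = v_k'$.

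The \textbf{main obstacle} I anticipate is the bookkeeping in the injectivity argument — specifically, being careful about the indexing of the $z$-operators along the chain. When we apply $d_-$ to pass from $V_k$ to $V_{k-1}$, the operator $d_-$ satisfies $z_i d_- = d_- z_i$ for $i \le k-1$ (relation \eqref{eq: d z}), so the eigenvalues $\zeta_1, \dots, \zeta_{k-1}$ are carried along, and the ``lost'' eigenvalue at each step is $\zeta_k$ of the current vector. One must check that the sequence of lost eigenvalues $(\zeta_k(v_k), \zeta_{k-1}(d_-v_k), \dots, \zeta_1(d_-^{k-1}v_k))$ is precisely the sequence $[\underline{w}]$ that one reads off from the chain via \eqref{eqn: difference in weights}, and that knowing all of $p_m(v_k)$ together with this whole sequence pins down $v_k$. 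The subtlety is purely notational (matching the ``right-to-left'' chain convention of the paper with the $d_-$-iteration), but it must be handled correctly, and I would also want to double-check that Assumption \ref{ass: d-} is genuinely needed here — it is, since without $d_-^k v_k \neq 0$ the chain \eqref{eqn:from basis to chains} could collapse or fail to reach $V_0$, and the degree-counting argument for maximality would break. I expect the proof to be short once these conventions are fixed.
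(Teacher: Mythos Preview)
Your injectivity argument is correct and is exactly what the paper does: recover the $\zeta_i$'s from the weight jumps \eqref{eqn: difference in weights} along the chain, recover the $p_m$'s from $[v_k]=[v'_k]$ via \eqref{eqn:delta operators on the quotient}, and invoke simple spectrum.

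For the first part you are overthinking it, and the overthinking leads you into a genuine muddle. In this paper ``maximal chain'' is used synonymously with ``saturated chain'': see the beginning of Section~3.3, where chains are by convention sequences of covering relations. So the single sentence ``each link $[d_-^{i}v_k]\prec[d_-^{i+1}v_k]$ is a covering relation by definition, hence $c_k(v_k)$ is a maximal chain'' is the entire proof, and is what the paper gives. Your subsequent attempt to argue non-extendability at the endpoints is both unnecessary and incorrect: elements $[d_-^k v_k]$ with $d_-^k v_k\in\cB_0$ are not minimal in $E$ (indeed with the order as written they sit at the \emph{top} of the chain, and nothing prevents them from being covered further), and you yourself notice that extension above $[v_k]$ can happen. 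Drop that paragraph entirely.
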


\begin{proof}
By definition of the partial order on $E$, $[v_{k}] \prec [d_{-}v_{k}]$ is a cover relation, so $c_k(v_k)$ is indeed a maximal chain.

    Assume that $v_k, v'_k \in \cB_k$ are such that $c(v_k) = c(v'_k)$. It follows from \eqref{eqn: difference in weights} that $\zeta_i(v_k) = \zeta_i(v'_k)$ for $i = 1, \dots, k$. Since $c(v_k) = c(v'_k)$ implies, in particular, that $[v_k] = [v'_k]$, we then obtain from \eqref{eqn:delta operators on the quotient} that $p_m(v_k) = p_{m}(v'_k)$ for every $m$. Now the result follows by the simple spectrum assumption. 
\end{proof}

\subsection{Calibrated Representations from Posets}\label{sec: calibrated from posets}

By Lemma \ref{lem: inject into chains}, we can assume that the calibrated representation $V(\cB)=\bigoplus_{k=0}^{\infty}V_k(\cB)$ is given by some underlying weighted poset $E$, and some subset $\cB$ of the set of maximal chains in $E$.

The joint eigenbasis in $V_k(\cB)$ for the operators $\Delta_{p_k}$ and $z_i$ is given by $[\lambda;\underline{w}]\in \cB$:  
\begin{equation}
\label{eq: Delta z new}
\Delta_{p_m}[\lambda;\underline{w}]=(p_m(\lambda)+w_1^m+\ldots+w_k^m)[\lambda;\underline{w}],\qquad z_i[\lambda;\underline{w}]=w_i[\lambda;\underline{w}].
\end{equation}

We will say that a chain $[\lambda;\underline{w}]$ \newword{appears} in $\cB$ if there is a corresponding eigenvector in $V(\cB)$ satisfying \eqref{eq: Delta z new}.

\begin{lemma}
\label{lem: d plus minus basic}
a) We have 
\begin{equation}
\label{eq: d minus new}
d_{-}[\lambda;\underline{w}]=d(\lambda;\underline{w})[\lambda\cup w_k;w_{k-1},\ldots,w_{1}]
\end{equation}
for some coefficient $d(\lambda;\underline{w})$.

b) We have
\begin{equation}
\label{eq: d plus new}
d_{+}[\lambda;\underline{w}]=\sum_{x}c(\lambda;\underline{w},x)
[\lambda;\underline{w},x]
\end{equation}
for some coefficients $c(\lambda;\underline{w},x)$.
\end{lemma}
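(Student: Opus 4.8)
The plan is to exploit the fact that $V(\cB)$ is a calibrated representation, so that every generator must send a joint eigenvector for $z_1,\dots,z_k$ and the $\Delta_{p_m}$ to a linear combination of joint eigenvectors whose weights are prescribed by the commutation relations in $\Bqt^{\ext}$. This is the standard ``weight bookkeeping'' argument: one computes how $z_i$ and $\Delta_{p_m}$ act after applying $d_\pm$, reads off the possible weights of the image, and then invokes the simple spectrum hypothesis (part (2) of Definition \ref{def: calibrated} together with the fact that $V(\cB)$ comes from a poset, so each weight space is at most one-dimensional) to conclude that the image is a single basis vector up to scalar.

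For part (a): let $[\lambda;\underline{w}]$ appear in $\cB$, with $z$-eigenvalues $w_1,\dots,w_k$ and $\Delta_{p_m}$-eigenvalue $p_m(\lambda)+\sum_i w_i^m$. Since $d_-$ lowers the idempotent index by one, $d_-[\lambda;\underline{w}]\in V_{k-1}(\cB)$. Using the relations $z_i d_- = d_- z_i$ for $1\le i\le k-1$ (from \eqref{eq: d z}) and $[\Delta_{p_m},d_-]=0$ (from \eqref{eq:ext-Bqt-1}), I would argue that $d_-[\lambda;\underline{w}]$ lies in the simultaneous eigenspace for $z_1,\dots,z_{k-1}$ with eigenvalues $w_1,\dots,w_{k-1}$ and for $\Delta_{p_m}$ with eigenvalue $p_m(\lambda)+\sum_{i=1}^k w_i^m$. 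By the weighting on $E$ reconstructed from $\cB$, a $(k-1)$-chain with those $z$-weights and that $\Delta$-weight must be of the form $[\mu;w_{k-1},\dots,w_1]$ with $p_m(\mu)=p_m(\lambda)+w_k^m$, i.e.\ $\mu=\lambda\cup w_k$. Since the weight space is at most one-dimensional, $d_-[\lambda;\underline{w}]$ is a scalar multiple of $[\lambda\cup w_k;w_{k-1},\dots,w_1]$, which is exactly \eqref{eq: d minus new}; define $d(\lambda;\underline{w})$ to be that scalar. (One should note the chain $[\lambda\cup w_k;w_{k-1},\dots,w_1]$ automatically appears in $\cB$ whenever the scalar is nonzero, but the formula holds regardless, with $d(\lambda;\underline{w})=0$ if the image vanishes.)

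For part (b): similarly $d_+[\lambda;\underline{w}]\in V_{k+1}(\cB)$. Here the relevant relations are $d_+ z_i = z_{i+1}d_+$ (from \eqref{eq: d z}) and $[\Delta_{p_m},d_+]=z_1^m d_+$ (from \eqref{eq:ext-Bqt-2}). The first shows that in any eigencomponent of $d_+[\lambda;\underline{w}]$ the eigenvalues of $z_2,\dots,z_{k+1}$ are $w_1,\dots,w_k$, while $z_1$ acts with some eigenvalue $x$ that is a priori unconstrained; write such a component as a multiple of $[\lambda';w_k,\dots,w_1,x]$. The second relation forces the $\Delta_{p_m}$-eigenvalue of that component to be $p_m(\lambda)+\sum_i w_i^m + x^m$, which by the weighting means $p_m(\lambda')+\sum_i w_i^m + x^m = p_m(\lambda)+\sum_i w_i^m+x^m$, hence $p_m(\lambda')=p_m(\lambda)$ for all $m$, so $\lambda'=\lambda$ by simple spectrum. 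Summing over the finitely many joint eigenvalues occurring (finiteness coming from $d_+[\lambda;\underline{w}]$ being a finite sum, which one gets from local finiteness of $E$ or simply from $V_{k+1}(\cB)$ being spanned by chains), and using the one-dimensionality of each weight space, gives $d_+[\lambda;\underline{w}]=\sum_x c(\lambda;\underline{w},x)[\lambda;\underline{w},x]$ as in \eqref{eq: d plus new}, the sum being over those $x$ with $[\lambda;\underline{w},x]$ a chain appearing in $\cB$.

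The main obstacle, such as it is, is purely organizational: one must be careful that $d_\pm$ really does preserve the joint eigenspace decomposition and that the relevant weight spaces are at most one-dimensional. The first point is immediate from the commutation relations quoted above (the $z_i$ and $\Delta_{p_m}$ either commute with $d_\pm$ or get shifted in a controlled, known way), and the second is exactly the calibration hypothesis built into the reconstruction of $E$ from $\cB$ in Section \ref{sec: reconstruct poset}. Thus both statements reduce to matching eigenvalues and invoking the simple spectrum condition; no genuine difficulty is expected, and I would present it as a short direct verification.
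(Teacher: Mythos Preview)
Your proposal is correct and follows essentially the same approach as the paper: use the commutation relations $z_i d_- = d_- z_i$, $[\Delta_{p_m},d_-]=0$, $d_+z_i=z_{i+1}d_+$, and $[\Delta_{p_m},d_+]=z_1^m d_+$ to determine the possible joint $(z,\Delta)$-weights of $d_\pm[\lambda;\underline{w}]$, then invoke the simple spectrum condition. The paper's proof is just the terse version of what you wrote.
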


\begin{proof}
a) $d_{-}[\lambda;\underline{w}]$ is an eigenvector for $\Delta_{p_m}$ with eigenvalue $p_m(\lambda)+\sum_{i = 1}^{k} w_i^m$,
and an eigenvector for $z_1,\ldots,z_{k-1}$ with eigenvalues $w_1,\ldots,w_{k-1}$.

b) Consider the projection of $d_{+}[\lambda;\underline{w},x]$ to the $z_1$-eigenspace with eigenvalue $x$. The result is an eigenvector for $\Delta_{p_m}$ with eigenvalue $p_m(\lambda)+\sum w_i^m+x^m$, and for $z_2,\ldots,z_{k+1}$ with the eigenvalues $w_1,\ldots,w_k$. 
\end{proof}

\begin{remark}
If $[\lambda;\underline{w}]$ appears in $\cB$ but some of the chains in the right hand side of \eqref{eq: d minus new} or \eqref{eq: d plus new} do not appear in $\cB$, we assume that the corresponding coefficients are zero.  
\end{remark}

By Assumption \ref{ass: d-} all coefficients $d(\lambda;\underline{w})$ in \eqref{eq: d minus new} are nonzero. In particular, if $[\lambda;\underline{w}]$ appears in $\cB_k$ then
$[\lambda\cup w_k;w_{k-1},\ldots,w_{1}]$ appears in $\cB_{k-1}$.

\begin{lemma}
\label{lem: Ti admissible}
If $[\lambda;\underline{w}]$ appears in $\cB_k$ and $s_i$ is an admissible transposition for $w$ then $[\lambda;s_i(\underline{w})]$ appears in $\cB_k$. Furthermore, we can normalize the basis in $V_k(\cB)$ such that the action of $T_i$ is given by \eqref{eq: T Ram new}.
\end{lemma}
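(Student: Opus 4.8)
The statement is that $V_k(\cB)$ is a calibrated representation of the affine Hecke algebra $\AH_k$ coming from a subset of calibrated sequences, and so the Ram classification (Theorem \ref{thm: Ram new}) applies directly. First I would observe that $V_k(\cB)$ is, by hypothesis, a calibrated $\AH_k$-representation: the operators $z_1,\dots,z_k$ act diagonally with simple joint spectrum and nonzero eigenvalues (the latter by the standing nonvanishing assumption on the $z_i$, part of the definition of calibrated). Moreover, by Assumption \ref{ass: semisimple}, $V_k(\cB)$ is completely reducible as an $\AH_k$-module. Hence $V_k(\cB)$ decomposes as a direct sum of irreducible calibrated $\AH_k$-representations, each of which is of the form $V_{\underline w}$ for some calibrated sequence $\underline w$ by Theorem \ref{thm: Ram new}. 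By the converse part of that theorem, the joint $z$-eigenvalues occurring in $V_k(\cB)$ are all calibrated sequences, and whenever a sequence $[\underline w]$ occurs and $s_i$ is admissible for it, the sequence $[s_i(\underline w)]$ also occurs (with a $1$-dimensional weight space). Translating back through the identification of weight vectors with chains, this says precisely that if $[\lambda;\underline w]$ appears in $\cB_k$ and $s_i$ is admissible for $\underline w$, then $[\lambda; s_i(\underline w)]$ appears in $\cB_k$ as well.

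Next I would pin down the normalization. For each $\AH_k$-isotypic summand $V_{\underline w} \subseteq V_k(\cB)$, Theorem \ref{thm: Ram new} provides an explicit model with basis indexed by the equivalence class $[\underline w']$ and with $T_i$ acting by \eqref{eq: T Ram new}. The point is that the basis vectors of $V_k(\cB)$ are eigenvectors for the commutative subalgebra generated by $z_1,\dots,z_k$ and the $\Delta_{p_m}$, each determined up to scalar by its weight; within a single $\AH_k$-orbit the $\Delta$-eigenvalue is constant (the eigenvalue $p_m(\lambda)+\sum w_i^m$ is symmetric in the $w_i$, cf. \eqref{eq: Delta z new}), so the ambiguity is exactly the scaling freedom on each weight line, and an $\AH_k$-isomorphism to Ram's model fixes these scalars simultaneously across the orbit. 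Concretely, one fixes the scalar on $[\lambda;\underline w]$ arbitrarily and then propagates along admissible transpositions using the $T_i$-formula \eqref{eq: T Ram new}: since $s_i^2 = 1$ and the $T_i$ satisfy the braid relations (guaranteed because we are inside a genuine $\AH_k$-representation), this propagation is consistent and yields exactly the action \eqref{eq: T Ram new}. One also needs that two weight vectors $[\lambda;\underline w]$ and $[\lambda;\underline w']$ in the same orbit are never proportional unless $\underline w' = \underline w$; this follows from the simple spectrum condition (different chains through the same orbit have distinct $z$-sequences, so distinct $z$-weights).

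The main obstacle — really the only nontrivial point — is ensuring that the normalization can be chosen \emph{coherently on the whole of} $V_k(\cB)$ and not just orbit-by-orbit in a way that might clash with the later need to relate $V_k(\cB)$ to $V_{k-1}(\cB)$ via $d_-$ and to $V_{k+1}(\cB)$ via $d_+$. At this stage of the argument, though, we are only asked to normalize within each fixed $V_k(\cB)$, so the compatibility across different $k$ is deferred to subsequent lemmas; within a single $k$, since distinct $\AH_k$-orbits have disjoint sets of $z$-weights, the normalizations on different orbits are independent and can be chosen freely. Thus the proof reduces to: (i) invoke semisimplicity and Theorem \ref{thm: Ram new} to get the appearance statement for $[\lambda; s_i(\underline w)]$; (ii) on each isotypic piece, transport Ram's explicit model along admissible transpositions to rescale the chain basis so that \eqref{eq: T Ram new} holds, using the braid relations to check consistency. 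I would present it in that order, with step (ii) being a short consistency check rather than a computation.
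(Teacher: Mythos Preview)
Your proposal is correct and follows essentially the same route as the paper: invoke Assumption \ref{ass: semisimple} to reduce to irreducible calibrated $\AH_k$-summands and then apply Theorem \ref{thm: Ram new}, using that the $\Delta_{p_m}$-eigenvalue is symmetric in the $w_i$ to pin down that the new weight vector corresponds to the chain $[\lambda;s_i(\underline{w})]$ with the \emph{same} $\lambda$. The paper's proof compresses this into two sentences by noting that $T_i$ commutes with $\Delta_{p_m}$ and with $z_1^m+\cdots+z_k^m$, hence preserves the span of $[\lambda;\underline{w}']$ with $\underline{w}'$ a permutation of $\underline{w}$, and then invokes Theorem \ref{thm: Ram new}; your more explicit treatment of the normalization step (propagating scalars along admissible transpositions and checking consistency via the braid relations) is correct but is exactly what Ram's theorem already packages.
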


\begin{proof}
The operators $T_i$ commute with $\Delta_{p_m}$ and with $z_1^{m}+\ldots+z_k^{m}$, hence $T_i[\lambda;\underline{w}]$ belongs to the span of 
$[\lambda;\underline{w}']$ where $\underline{w}'$ is a permutation of $\underline{w}$. In view of the Assumption \ref{ass: semisimple}, the result now follows from Theorem \ref{thm: Ram new}.
\end{proof}

Our final assumption concerns the bases in the components $V_0(\cB)$ and $V_1(\cB)$.

\begin{assumption}\label{ass: d+}
All chains $[\lambda]$ appear in $\cB_0$ (in other words, $\cB_0=E$) and all chains $[\lambda;x]$ appear in $\cB_1$ 
provided that $x$ is an addable weight for $\lambda$.
Furthermore, the coefficients $c(\lambda;x)$  in \eqref{eq: d plus new} are nonzero for all addable weights $x$.
\end{assumption}

It turns out that these easy assumptions give strong constraints on the poset $E$ and the basis $\cB$. 

\begin{lemma}
\label{lem: V1 V2 nonzero}
Suppose that $y$ is addable for $\lambda\cup x$. Then:

1) If $y=tx$ then $c(\lambda;x,y)=0$.

2) If $y\neq tx$ then $c(\lambda;x,y)\neq 0$ and $y\neq qtx$. In particular, $[\lambda;x,y]$ appears in $\cB_2$.
\end{lemma}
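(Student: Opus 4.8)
\textbf{Proof plan for Lemma \ref{lem: V1 V2 nonzero}.}

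The plan is to extract the constraints purely from the relation \eqref{eq:qphi}, $z_1(qd_+d_- - d_-d_+) = qt(d_+d_- - d_-d_+)z_k$, applied to the basis vector $[\lambda;x]\in\cB_1$, together with the relation $T_1 d_+^2 = d_+^2$ from \eqref{eq:T d+}. First I would compute $d_+[\lambda;x]$ using Lemma \ref{lem: d plus minus basic}(b); it is a sum $\sum_y c(\lambda;x,y)[\lambda;x,y]$ over those $y$ for which $[\lambda;x,y]$ appears in $\cB_2$, where a priori by Assumption \ref{ass: d+} and Lemma \ref{lem: d plus minus basic} each such $y$ is addable for $\lambda\cup x$. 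Then I would compute $d_-$ of each term: by Lemma \ref{lem: d plus minus basic}(a), $d_-[\lambda;x,y] = d(\lambda;x,y)[\lambda\cup x; y]$, and by Assumption \ref{ass: d+} the chain $[\lambda\cup x;y]$ does appear in $\cB_1$ (since $y$ is addable for $\lambda\cup x$), with $d(\lambda;x,y)\neq 0$ by Assumption \ref{ass: d-}. So $d_-d_+[\lambda;x] = \sum_y c(\lambda;x,y)d(\lambda;x,y)[\lambda\cup x;y]$, and similarly $d_+d_-[\lambda;x] = d(\lambda;x)d_+[\lambda\cup x] = d(\lambda;x)\sum_y c(\lambda\cup x;y)[\lambda\cup x;y]$ — here I must be careful to record that this second sum runs over \emph{all} $y$ addable for $\lambda\cup x$, again by Assumption \ref{ass: d+}.

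Next I would substitute these into \eqref{eq:qphi} and read off the coefficient of each basis vector $[\lambda\cup x;y]$. On the left side $z_1$ multiplies by $y$ (the $z_1$-eigenvalue of $[\lambda\cup x;y]$), on the right side $z_k = z_1$ on $V_1$ multiplies by $x$ (the $z_1$-eigenvalue of $[\lambda;x]$). Writing $c := c(\lambda;x,y)d(\lambda;x,y)$ for the $d_-d_+$ coefficient and $c' := d(\lambda;x)c(\lambda\cup x;y)$ for the $d_+d_-$ coefficient, the relation becomes
\[
y\,(q c' - c) = qt\,(c' - c)\,x.
\]
Rearranging gives $c(qtx - y) = c'(qtx - qy)$, i.e.
\[
c(\lambda;x,y)\,d(\lambda;x,y)\,(y - qtx) = d(\lambda;x)\,c(\lambda\cup x;y)\,(y - qx).
\]
Now $c(\lambda\cup x;y)\neq 0$ by Assumption \ref{ass: d+}, $d(\lambda;x)\neq 0$ by Assumption \ref{ass: d-}, and $y\neq qx$ — indeed if $y=qx$ the right side vanishes while the left side has $d(\lambda;x,y)(y-qtx)\neq 0$ (since $qx \neq qtx$ as $t\neq 1$), forcing $c(\lambda;x,y)=0$; but then the right side is nonzero and the left is zero, a contradiction, so in fact $y\neq qx$ always and the right side is never zero. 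Hence the left side $c(\lambda;x,y)\,d(\lambda;x,y)\,(y-qtx)$ is nonzero, which immediately yields both $c(\lambda;x,y)\neq 0$ and $y\neq qtx$. This handles part (2) of the case $y\neq tx$, except that I still need to separately rule out $y=tx$ giving a \emph{nonzero} coefficient — that is, prove part (1).

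For part (1), I would appeal to $T_1 d_+^2 = d_+^2$. Computing $d_+^2[\lambda]$ (starting from $[\lambda]\in\cB_0$, which exists by Assumption \ref{ass: d+}) gives $\sum_{x,y} c(\lambda;x)c(\lambda;x,y)[\lambda;x,y]$, a sum over pairs with $x$ addable for $\lambda$ and $[\lambda;x,y]$ appearing. The operator $T_1$ acts on $[\lambda;x,y]$ via \eqref{eq: T Ram new}; when $y=tx$ the transposition $s_1$ is \emph{not} admissible (by Corollary \ref{cor: excellent chains}(2) it fails when $y=qx$; and for $y=tx$, chasing the AHA relations shows $[\lambda;x,y]$ is an eigenvector of $T_1$ with eigenvalue $\frac{(q-1)y}{x-y} = \frac{(q-1)tx}{x-tx} = -\frac{(q-1)t}{t-1}$, which is $\neq 1$). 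So if $y=tx$ appeared with nonzero coefficient in $d_+^2[\lambda]$, comparing $T_1$-eigenvalues would force $1 = -\frac{(q-1)t}{t-1}$, impossible generically; hence $c(\lambda;x)c(\lambda;x,tx) = 0$, and since $c(\lambda;x)\neq 0$ we get $c(\lambda;x,tx)=0$. Strictly, I should first verify that the chain $[\lambda;x,y]$ with $y=tx$ indeed lies in $\cB_2$ only potentially, and that the AHA action computed in Lemma \ref{lem: Ti admissible} applies. \textbf{The main obstacle} I anticipate is precisely this bookkeeping around which chains "appear" in $\cB$: one must be disciplined that $d_+[\lambda]$ and $d_+d_-$ range over exactly the addable weights (Assumption \ref{ass: d+}), that chains with vanishing coefficients are by convention treated as not appearing, and that the $T_1$-eigenvalue argument is applied to a genuine eigenvector — this requires knowing $[\lambda;x,y]$ with $y=tx$ is at worst absent, never part of a Jordan block, which follows from Assumption \ref{ass: semisimple} and Theorem \ref{thm: Ram new}.
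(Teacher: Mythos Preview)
Your rearrangement of the \eqref{eq:qphi} relation contains an algebra slip that derails the argument. From $c(qtx-y)=c'(qtx-qy)$ you should obtain
\[
c(\lambda;x,y)\,d(\lambda;x,y)\,(y-qtx)\;=\;q\,d(\lambda;x)\,c(\lambda\cup x;y)\,(y-tx),
\]
not $(y-qx)$ on the right. The factor $(y-tx)$ is exactly the one that makes the lemma work: if $y=tx$ the right side vanishes, and since $y=tx\neq qtx$ you get $c(\lambda;x,y)=0$ (part~1); if $y\neq tx$ the right side is nonzero (by Assumptions~\ref{ass: d-} and~\ref{ass: d+}), forcing both $c(\lambda;x,y)\neq 0$ and $y\neq qtx$ (part~2). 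This is precisely the paper's argument, and it handles both parts at once from \eqref{eq:qphi} alone. You also need a short paragraph for the case where $[\lambda;x,y]$ does not yet appear in $\cB_2$: then the $d_-d_+$ contribution vanishes and the same identity forces $y=tx$.

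Your detour through $T_1 d_+^2=d_+^2$ for part~(1) is therefore unnecessary, and in fact the argument there is broken: with the convention $[\lambda;x,y]$ meaning $w_2=x$, $w_1=y$, the diagonal term of $T_1$ is $\frac{(q-1)w_2}{w_1-w_2}=\frac{(q-1)x}{y-x}$, not $\frac{(q-1)y}{x-y}$; and $s_1$ \emph{is} admissible when $y=tx$ (admissibility fails only at $y=q^{\pm1}x$), so $[\lambda;x,tx]$ is not a $T_1$-eigenvector. Fix the single sign/factor in the \eqref{eq:qphi} computation and the whole lemma drops out cleanly.
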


\begin{proof}
By Assumption \ref{ass: d+}, the chains $[\lambda;x]$, $[\lambda\cup x]$ and $[\lambda\cup x;y]$ appear in $\cB$. Suppose that $[\lambda;x,y]$ appears in $\cB_2$. We have
$$
d_{-}[\lambda;x]=d(\lambda;x)[\lambda\cup x],\qquad d_{-}[\lambda;x,y]=d(\lambda;x,y)[\lambda\cup x;y].
$$ 
Then the equation
$$z_1(qd_{+}d_{-}-d_{-}d_{+})=qt(d_{+}d_{-}-d_{-}d_{+})z_k$$
implies
$$
y(qd(\lambda;x)c(\lambda\cup x;y)-d(\lambda;x,y)c(\lambda;x,y))=qt(d(\lambda;x)c(\lambda\cup x;y)-d(\lambda;x,y)c(\lambda;x,y))x
$$
or, equivalently,
\begin{equation}\label{eqn: v1 v2 nonzero}
q(y-tx)d(\lambda;x)c(\lambda\cup x;y)=(y-qtx)d(\lambda;x,y)c(\lambda;x,y).
\end{equation}

By our assumptions, $d(\lambda;x),c(\lambda\cup x;y)$ and $d(\lambda;x,y)$ are all nonzero, so either $y=tx$ and $c(\lambda;x,y)=0$, or $y\neq tx$ and $c(\lambda;x,y)\neq 0$. Note that, by \eqref{eqn: v1 v2 nonzero} the latter option implies that $y \neq qtx$.

Finally, suppose that $[\lambda;x,y]$ does not appear in $\cB_2$. Then the coefficient at $[\lambda\cup x;y]$ in $d_{-}d_{+}[\lambda;x]$ must vanish, and similarly to the above we get $q(y-tx)d(\lambda;x)c(\lambda\cup x;y)=0$. By our assumptions, this implies $y=tx$.
\end{proof}

\begin{lemma}\label{lem: excellent}
Assumptions \ref{ass: d-} and \ref{ass: d+} imply that $E$ is excellent. 
\end{lemma}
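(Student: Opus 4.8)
The goal is to verify the two defining conditions of an excellent poset (Definition \ref{def: excellent new}) for the reconstructed poset $E$: for every two-step chain $[\lambda; x, y]$ that appears (that is, with $y$ addable for $\lambda \cup x$), we must show (1) $y \neq x$ and $y \neq (qt)^{\pm 1}x$, and (2) exactly one of $y = qx$, $y = tx$, or $[\lambda; y, x]$ is also a chain holds. Condition (1) is almost immediate: $y \neq x$ follows from the simple spectrum condition on $E$ (since $\lambda \cup x \cup y$ would otherwise be ill-defined or violate injectivity of weights), $y \neq qtx$ is exactly the conclusion of Lemma \ref{lem: V1 V2 nonzero}(2) together with the observation that the remaining case $y = tx$ is compatible with $c(\lambda;x,y)=0$, and $y \neq (qt)^{-1}x$ needs the analogous argument with the roles reversed, which I address below.

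\textbf{Key steps.} First I would record that by Assumption \ref{ass: d+}, whenever $[\lambda;x]$ appears and $x$ is addable for $\lambda$ and $y$ is addable for $\lambda \cup x$, we are precisely in the setup of Lemma \ref{lem: V1 V2 nonzero}, so $c(\lambda;x,y) = 0$ iff $y = tx$, and when $y \neq tx$ we get $c(\lambda;x,y)\neq 0$, $[\lambda;x,y]$ appears in $\cB_2$, and $y \neq qtx$. This handles the cases $y \in \{qx, tx\}$ of condition (2) directly and rules out $y = qtx$ for condition (1). Next, to prove condition (2) in the remaining case (when $y \notin \{qx, tx, qtx\}$ and $x \neq y$), I need to show $[\lambda; y, x]$ is a chain, i.e.\ $y$ is addable for $\lambda$ and $x$ is addable for $\lambda \cup y$. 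Since $[\lambda;x,y]$ appears in $\cB_2$, by Lemma \ref{lem: Ti admissible} applied to the admissible transposition $s_1$ (admissible because $y \neq q^{\pm 1}x$ — here I'd need $y \neq q^{-1}x$, which follows since otherwise $x = qy$ and by symmetry $c(\lambda;y,x) = 0$, yet applying $T_1$ to $[\lambda;x,y]$ would force $[\lambda;y,x]$ to appear, contradiction via Lemma \ref{lem: V1 V2 nonzero} with $(x,y)$ swapped), we conclude that $[\lambda; s_1(x,y)] = [\lambda; y, x]$ appears in $\cB_2$. Finally, a chain $[\lambda;y,x]$ appearing in $\cB_2$ forces, via $d_-[\lambda;y,x] = d(\lambda;y,x)[\lambda\cup y; x]$ with nonzero coefficient (Assumption \ref{ass: d-}), that $y$ is addable for $\lambda$ and $x$ is addable for $\lambda \cup y$ in the poset $E$ — so $[\lambda;y,x]$ is genuinely a chain in $E$. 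Exclusivity of the three options is then clear: $y = qx$ and $y = tx$ cannot both hold since $q \neq t$; if $[\lambda;y,x]$ were a chain while $y = qx$, Lemma \ref{lem: no inverse new}-type reasoning (or directly Lemma \ref{lem: V1 V2 nonzero} with swapped arguments: $c(\lambda;y,x) = 0$ since $x = q^{-1}y$... ) would contradict that $[\lambda;y,x]$ appears with nonzero $d_-$-image.

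\textbf{Main obstacle.} The delicate point is the $y \neq (qt)^{-1}x$ part of condition (1) and, relatedly, showing the three alternatives in (2) are genuinely \emph{exclusive} rather than merely exhaustive. The asymmetry in Lemma \ref{lem: V1 V2 nonzero} (it is stated with $y$ addable for $\lambda\cup x$, not symmetrically) means I must carefully run the argument a second time with the chain $[\lambda;y,x]$ in place of $[\lambda;x,y]$ once I know the former appears in $\cB_2$ — extracting $x - qty \neq 0$, i.e.\ $y \neq (qt)^{-1}x$, and $c(\lambda;y,x) = 0 \iff x = ty \iff y = t^{-1}x$. Combining: if $y = t^{-1}x$ then $[\lambda;y,x]$ appears with $x = ty$, Lemma \ref{lem: V1 V2 nonzero}(2) applied to it gives $x \neq qty$, fine, but then $c(\lambda;y,x) = 0$ means $[\lambda;y,x]$ does \emph{not} appear — contradiction, so $y \neq t^{-1}x$. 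A symmetric contradiction rules out $y = q^{-1}x$. The bookkeeping of which coefficient vanishes in which case, and chasing the relation $z_1(qd_+d_- - d_-d_+) = qt(d_+d_- - d_-d_+)z_k$ through each configuration, is where care is needed; once organized, each individual implication is a one-line computation of the form already displayed in \eqref{eqn: v1 v2 nonzero}.

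\begin{proof}
By Assumption \ref{ass: d+}, every chain $[\lambda]$ and every chain $[\lambda;x]$ with $x$ addable for $\lambda$ appears in $\cB$, and the coefficients $c(\lambda;x)$ are nonzero. Let $[\lambda;x,y]$ be a two-step chain in $E$, so $x$ is addable for $\lambda$ and $y$ is addable for $\lambda \cup x$. We verify the two conditions of Definition \ref{def: excellent new}.

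First, $y \neq x$: otherwise $p_1(\lambda\cup x\cup y) - p_1(\lambda) = 2x$ while $\mu := \lambda\cup x\cup y$ covers $\lambda\cup x$, and the element $\lambda \cup x \cup y$ would be forced to coincide (by the edge condition and simple spectrum) with an element whose weight difference from $\lambda$ is $2x$; but then by Lemma \ref{lem: V1 V2 nonzero} applied with the two addable boxes equal, $c(\lambda;x,x)$ would have to satisfy $q(x - tx)d(\lambda;x)c(\lambda\cup x;x) = (x - qtx)d(\lambda;x,x)c(\lambda;x,x)$; since $x \neq tx$ and $x \neq qtx$ both sides are forced to be simultaneously zero and nonzero, a contradiction. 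Hence $y \neq x$.

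Now apply Lemma \ref{lem: V1 V2 nonzero}. If $y = tx$, then condition (2) holds (the case $y = tx$). If $y = qx$, then $y \neq tx$, so by Lemma \ref{lem: V1 V2 nonzero}(2) we have $y \neq qtx$ (automatic here) and $[\lambda;x,y]$ appears in $\cB_2$; condition (2) holds (the case $y = qx$). Moreover in this case $c(\lambda;x,y) \neq 0$ and, in particular, $y \neq qtx$, which is the relevant instance of condition (1).

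It remains to treat $y \notin \{qx, tx\}$; we must show $y \neq (qt)^{\pm1}x$ and that $[\lambda;y,x]$ is a chain. Since $y \neq tx$, Lemma \ref{lem: V1 V2 nonzero}(2) gives $c(\lambda;x,y)\neq 0$, $[\lambda;x,y]$ appears in $\cB_2$, and $y \neq qtx$. As $y \neq q^{\pm1}x$ (we have $y \neq qx$, and if $y = q^{-1}x$ then $x = qy$, so running Lemma \ref{lem: V1 V2 nonzero} with the chain $[\lambda;y,x]$ — which we show appears below — would force $c(\lambda;y,x) = 0$, contradicting its appearance), the transposition $s_1$ is admissible for $(x,y)$. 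By Lemma \ref{lem: Ti admissible}, $[\lambda; s_1(x,y)] = [\lambda; y, x]$ appears in $\cB_2$. Then $d_-[\lambda;y,x] = d(\lambda;y,x)[\lambda\cup y; x]$ with $d(\lambda;y,x) \neq 0$ by Assumption \ref{ass: d-}, so $[\lambda\cup y;x]$ appears in $\cB_1$; hence $y$ is addable for $\lambda$ and $x$ is addable for $\lambda\cup y$. Thus $[\lambda;y,x]$ is a chain in $E$, establishing condition (2) in this case.

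Finally, applying Lemma \ref{lem: V1 V2 nonzero} to the chain $[\lambda;y,x]$ (with $x$ addable for $\lambda\cup y$): since $[\lambda;y,x]$ appears, we have $c(\lambda;y,x)\neq 0$, so $x \neq ty$, i.e.\ $y \neq t^{-1}x$; and by part (2) of that lemma, $x \neq qty$, i.e.\ $y \neq (qt)^{-1}x$. Together with $y \neq qtx$ and $y \neq x$ shown above, this gives condition (1). Exclusivity in condition (2) is clear: $q\neq t$ rules out $y = qx = tx$; and if $[\lambda;y,x]$ is a chain while $y = qx$ (resp.\ $y = tx$), then $c(\lambda;y,x) = 0$ by Lemma \ref{lem: V1 V2 nonzero}, contradicting that $[\lambda;y,x]$ appears. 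Hence $E$ is excellent.
\end{proof}
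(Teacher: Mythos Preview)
Your overall architecture---use Lemma~\ref{lem: V1 V2 nonzero} to get $[\lambda;x,y]\in\cB_2$, then Lemma~\ref{lem: Ti admissible} to swap and get $[\lambda;y,x]$---is reasonable, but there is a genuine gap in the case $y=q^{-1}x$, and it propagates into your exclusivity argument.

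\textbf{The circular step.} To apply Lemma~\ref{lem: Ti admissible} you need $s_1$ admissible, i.e.\ $y\neq q^{\pm 1}x$. You have $y\neq qx$ by assumption, but your argument for $y\neq q^{-1}x$ is: ``if $y=q^{-1}x$ then $x=qy$, so running Lemma~\ref{lem: V1 V2 nonzero} with $[\lambda;y,x]$ would force $c(\lambda;y,x)=0$, contradicting its appearance.'' This is circular---you need $[\lambda;y,x]$ to appear in order to invoke the lemma, but appearance is exactly what you are trying to establish via admissibility. Worse, the lemma does \emph{not} say what you claim: Lemma~\ref{lem: V1 V2 nonzero}(1) gives $c(\lambda;y,x)=0$ only when $x=ty$, not when $x=qy$. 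So even granting the circularity, the conclusion is false.

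\textbf{The same error in exclusivity.} You write that if $[\lambda;y,x]$ is a chain while $y=qx$ (resp.\ $y=tx$), then $c(\lambda;y,x)=0$ by Lemma~\ref{lem: V1 V2 nonzero}. But with $y=qx$ we have $x=q^{-1}y\neq ty$, so the lemma gives $c(\lambda;y,x)\neq 0$; similarly for $y=tx$. So this argument does not rule out anything.

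\textbf{How the paper handles it.} The paper never tries to push these cases through Lemma~\ref{lem: Ti admissible}; instead it uses the relation $T_1d_+^2=d_+^2$, which you never invoke. The vector $d_+^2[\lambda]$ is a $T_1$-eigenvector with eigenvalue $1$. If $y=q^{-1}x$ then $s_1$ is not admissible and $T_1[\lambda;x,y]=-q[\lambda;x,y]$, contradicting the nonzero appearance of $[\lambda;x,y]$ in $d_+^2[\lambda]$. If $y=qx$ and $[\lambda;y,x]$ were also a chain (with $x\neq ty$), then $[\lambda;y,x]$ would appear in $d_+^2[\lambda]$ with nonzero coefficient but with $T_1$-eigenvalue $-q$, again a contradiction; this gives exclusivity. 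The case $y=tx$ with $[\lambda;y,x]$ a chain is handled similarly: then $[\lambda;y,x]$ appears in $d_+^2[\lambda]$, forcing $[\lambda;x,y]$ to appear too, contradicting Lemma~\ref{lem: V1 V2 nonzero}(1).

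(A smaller issue: your argument for $y\neq x$ is garbled---the displayed equation does not yield a contradiction as written. The clean reason is that $[x,x]$ is not a calibrated sequence, so it cannot occur as a $z$-weight in the calibrated $\AH_2$-module $V_2$.)
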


\begin{proof}
Assume $[\lambda;x,y]$ is a chain in $E$, and $y\neq tx$. By Lemma \ref{lem: V1 V2 nonzero} the coefficient $c(\lambda;x,y)$ is nonzero, so $[\lambda;x,y]$ appears in $d_+^2[\lambda]$ with a nonzero coefficient $c(\lambda;x)c(\lambda;y,x)$ (and, in particular, appears in $\cB_2$).  If $[\lambda;y,x]$ is a chain and $x\neq ty$, then it also appears in $d_+^2[\lambda]$ with a nonzero coefficient (and, in particular, appears in $\cB_2$).

Now we use the relation $T_1d_{+}^2=d_{+}^2$ which implies that $d_{+}^2[\lambda]$ is an eigenvector of $T_1$ with eigenvalue 1.  Since $T_1$ preserves the span of $[\lambda;x,y]$ and $[\lambda;y,x]$, we have $y\neq x$ and the following cases:

1) $y=qx$, then $[\lambda;x,y]$ is an eigenvector for $T_1$ with eigenvalue 1, and $[\lambda;y,x]$ is an eigenvector for $T_1$ with eigenvalue $-q$. This is impossible, so $[\lambda;y,x]$ is not a chain in $E$.

2) $y=q^{-1}x$, then $[\lambda;x,y]$ is an eigenvector for $T_1$ with eigenvalue $-q$, contradiction. So this case is not possible.

3) $y\neq q^{\pm 1}x$. Then the eigenvector for $T_1$ must contain both $[\lambda;x,y]$ and $[\lambda;y,x]$ with nonzero coefficients, and we conclude that $[\lambda;y,x]$ is a chain in $E$.

Finally, assume that $y=tx$ and $[\lambda;y,x]$ is a chain in $E$. Then by Lemma \ref{lem: V1 V2 nonzero} the chain $[\lambda;y,x]$ appears in $d_+^2[\lambda]$ with nonzero coefficient (and, in particular, appears in $\cB_2$). Then by the above $[\lambda;x,y]$ also must appear in $d_+^2[\lambda]$ with nonzero coefficient, which contradicts Lemma \ref{lem: V1 V2 nonzero}(1).
\end{proof}

\begin{lemma}
\label{lem: must be good}
Assume that $[\lambda;\underline{w}]$ appears in $\cB_k$. Then the chain $[\lambda;\underline{w}]$ is good.
\end{lemma}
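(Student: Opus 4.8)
The goal is to rule out the possibility that a non-good chain $[\lambda;\underline{w}]$ appears in $\cB_k$. The plan is to reduce to a two-step situation and then invoke Lemma \ref{lem: V1 V2 nonzero}(1). First I would suppose for contradiction that $[\lambda;\underline{w}]$ appears in $\cB_k$ but is not good, so that $w_i = tw_j$ for some $i<j$. By Lemma \ref{lem: excellent} (valid under our standing Assumptions \ref{ass: d-} and \ref{ass: d+}) the poset $E$ is excellent, so Lemma \ref{lem: not good} applies: there is a sequence of \exc{} transpositions carrying $[\lambda;\underline{w}]$ to a chain $[\lambda;\underline{w'}]$ in $E$ with $w'_\ell = tw'_{\ell+1}$ for some $\ell$. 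The first key point is to propagate "appears in $\cB$" along this sequence of transpositions: each \exc{} transposition $s_m$ appearing in the sequence is in particular admissible, and by Lemma \ref{lem: Ti admissible} (after renormalizing the basis so $T_m$ acts by \eqref{eq: T Ram new}), if $[\lambda;\underline{v}]$ appears in $\cB_k$ and $s_m$ is admissible for $\underline{v}$, then $[\lambda;s_m(\underline{v})]$ also appears in $\cB_k$. Applying this repeatedly shows that $[\lambda;\underline{w'}]$ appears in $\cB_k$.

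Next I would peel off the \emph{good} tail to the right of position $\ell$. Concretely, write $\underline{w'} = (w'_k,\dots,w'_{\ell+1},w'_\ell,\dots,w'_1)$ and apply $d_-^{\,\ell-1}$ to $[\lambda;\underline{w'}]$. By Lemma \ref{lem: d plus minus basic}(a), each application of $d_-$ has a nonzero coefficient (Assumption \ref{ass: d-}) and drops the leftmost weight, so $d_-^{\,\ell-1}[\lambda;\underline{w'}]$ is a nonzero multiple of the chain $[\lambda;w'_k,\dots,w'_{\ell+1},w'_\ell]$ — wait, rather: $d_-$ removes the \emph{leftmost} entry $w'_k$, so I should instead note that the sub-chain $[\mu; w'_\ell, w'_{\ell-1}, \dots, w'_1]$, where $\mu := \lambda \cup w'_k \cup \cdots \cup w'_{\ell+1}$, appears in $\cB_\ell$ after applying $d_-^{\,k-\ell-1}$ (removing the top $k-\ell-1$ entries), and then $d_-^{\,\ell-2}$ to reach the two-step chain $[\nu; w'_{\ell+1}, w'_\ell]$ with $\nu := \lambda \cup w'_k \cup \cdots \cup w'_{\ell+2}$; here I use Proposition \ref{prop: good chains}-type bookkeeping only for the fact that these are chains in $E$, which is automatic since sub-chains of chains are chains. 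All intermediate chains appear in $\cB$ by repeated use of Lemma \ref{lem: d plus minus basic}(a) with nonzero coefficients. Thus the two-step chain $[\nu; w'_{\ell+1}, w'_\ell]$ with $w'_\ell = t w'_{\ell+1}$ appears in $\cB_2$.

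Finally I invoke Lemma \ref{lem: V1 V2 nonzero}: with $x = w'_{\ell+1}$ and $y = w'_\ell = tx$ addable for $\nu \cup x$, part (1) forces $c(\nu; x, y) = 0$, which by Lemma \ref{lem: d plus minus basic}(b) (and the convention that coefficients of chains not appearing in $\cB$ vanish) means $[\nu; x, y]$ does \emph{not} appear in $\cB_2$ — contradicting what we just established. Hence no non-good chain can appear in $\cB_k$.

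The main obstacle I anticipate is the bookkeeping in the middle step: carefully tracking that "appears in $\cB$" is preserved both under the admissible transpositions (needs the semisimplicity Assumption \ref{ass: semisimple} and Lemma \ref{lem: Ti admissible}, plus a compatible renormalization of the basis that does not clash with the normalizations used elsewhere) and under the successive applications of $d_-$ (needs Assumption \ref{ass: d-} to guarantee nonvanishing, and a clean indexing of which entry $d_-$ removes). Everything else is a direct citation of Lemmas \ref{lem: excellent}, \ref{lem: not good}, \ref{lem: V1 V2 nonzero}, and \ref{lem: d plus minus basic}; the only real content is organizing the reduction to the two-step case without losing track of membership in $\cB$.
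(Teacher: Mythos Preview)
Your reduction to the adjacent case via Lemma~\ref{lem: not good} and Lemma~\ref{lem: Ti admissible} is fine and matches the paper. The argument breaks down after that, in two places.

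\textbf{First gap: the $d_-$ reduction does not isolate the pair.} The operator $d_-$ peels off the \emph{leftmost} weight $w'_k$, so iterating $d_-$ on $[\lambda;\underline{w'}]$ produces chains of the form $[\lambda\cup w'_k\cup\cdots\cup w'_{j+1};\,w'_j,\ldots,w'_1]$. There is no way to land on the two-step chain $[\nu;\,w'_{\ell+1},w'_\ell]$: once you have stripped down to $w'_{\ell+1}$ on the left, the next $d_-$ removes $w'_{\ell+1}$, not $w'_1$. Your own mid-sentence correction shows you sensed the indexing was off, but the repaired version still claims to reach $[\nu;\,w'_{\ell+1},w'_\ell]$ via further $d_-$'s, which is impossible.

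\textbf{Second gap: $c=0$ does not force non-appearance.} Even granting that $[\nu;\,x,tx]$ lies in $\cB_2$, Lemma~\ref{lem: V1 V2 nonzero}(1) gives only $c(\nu;x,tx)=0$. The convention after Lemma~\ref{lem: d plus minus basic} runs one way (``if the chain does not appear, set the coefficient to zero''), not the other. So $c(\nu;x,tx)=0$ is perfectly compatible with $[\nu;x,tx]\in\cB_2$, and no contradiction is reached.

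\textbf{How the paper closes the argument.} Once you have $[\lambda;\underline{w'}]\in\cB_k$ with $w'_\ell=tw'_{\ell+1}$, you do not need $d_-$ or Lemma~\ref{lem: V1 V2 nonzero} at all. The transposition $s_\ell$ is admissible (since $w'_{\ell}/w'_{\ell+1}=t\neq q^{\pm 1}$), so Lemma~\ref{lem: Ti admissible} forces $[\lambda;s_\ell(\underline{w'})]\in\cB_k$, hence this must be a chain in $E$ by Lemma~\ref{lem: inject into chains}. But in $s_\ell(\underline{w'})$ the relevant two-step piece has second weight $w'_{\ell+1}=t^{-1}w'_\ell$, which is forbidden in an excellent poset by Lemma~\ref{lem: no inverse new} (via Lemma~\ref{lem: excellent}). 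That is the contradiction.
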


\begin{proof}
By Lemma \ref{lem: Ti admissible}, if $[\lambda;\underline{w}]$  appears in $\cB_k$ and $s_i$ is an admissible transposition for $w$ then $[\lambda;s_i(\underline{w})]$ also appears in $\cB_k$.

Suppose that $[\lambda;\underline{w}]$  appears in $\cB_k$ and $w_i=tw_{i+1}$. Then $s_i$ is admissible, but $[\lambda; s_i(\underline{w})]$ is not a chain in $E$ by Lemmas \ref{lem: no inverse new} and \ref{lem: excellent}, contradiction.

Now suppose that $w_i=tw_j$ for $i<j$. By Lemma \ref{lem: not good} we can find a sequence of admissible transpositions which transforms $w$ to $w'$ with $w'_i= tw'_{i+1}$. Contradiction.
\end{proof}

The following result is clear, and will be used to simplify the coefficients for $d_{-}$.

\begin{proposition}
\label{prop: normalize symmetric}
Assume that we renormalize the basis 
$$
[\lambda;\underline{w}]\mapsto \varphi(\lambda;\underline{w})[\lambda;\underline{w}]
$$
where $\varphi(\lambda;\underline{w})$ is nonzero and symmetric under admissible transpositions of $\underline{w}$. Then the formulas for $T_i$ do not change.
\end{proposition}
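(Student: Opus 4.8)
The statement to prove is Proposition \ref{prop: normalize symmetric}: if we renormalize the basis by a nonzero function $\varphi(\lambda;\underline{w})$ that is symmetric under admissible transpositions of $\underline{w}$, then the formulas for the operators $T_i$ are unchanged.

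The plan is to directly substitute the renormalized basis into the formula \eqref{eq: T Ram new} for $T_i$ and check that the transition coefficients are preserved. Concretely, write $[\lambda;\underline{w}]' := \varphi(\lambda;\underline{w})[\lambda;\underline{w}]$. When $s_i$ is \emph{not} admissible for $\underline{w}$, the operator $T_i$ acts on $[\lambda;\underline{w}]$ by the scalar $\frac{(q-1)w_{i+1}}{w_i-w_{i+1}}$, and rescaling an eigenvector by a nonzero scalar does not change its eigenvalue, so the formula is trivially unchanged. When $s_i$ \emph{is} admissible for $\underline{w}$, I compute
\[
T_i[\lambda;\underline{w}]' = \varphi(\lambda;\underline{w})\,T_i[\lambda;\underline{w}] = \varphi(\lambda;\underline{w})\left(\frac{(q-1)w_{i+1}}{w_i-w_{i+1}}[\lambda;\underline{w}] + \frac{w_i-qw_{i+1}}{w_i-w_{i+1}}[\lambda;s_i(\underline{w})]\right),
\]
and then re-express the right-hand side in terms of the primed basis: $[\lambda;\underline{w}] = \varphi(\lambda;\underline{w})^{-1}[\lambda;\underline{w}]'$ and $[\lambda;s_i(\underline{w})] = \varphi(\lambda;s_i(\underline{w}))^{-1}[\lambda;s_i(\underline{w})]'$. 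The first term reproduces $\frac{(q-1)w_{i+1}}{w_i-w_{i+1}}[\lambda;\underline{w}]'$ exactly, and the second becomes $\frac{w_i-qw_{i+1}}{w_i-w_{i+1}}\cdot\frac{\varphi(\lambda;\underline{w})}{\varphi(\lambda;s_i(\underline{w}))}[\lambda;s_i(\underline{w})]'$.

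The key point is now the hypothesis: since $\varphi$ is symmetric under admissible transpositions and $s_i$ is admissible for $\underline{w}$, we have $\varphi(\lambda;s_i(\underline{w})) = \varphi(\lambda;\underline{w})$, so the ratio $\frac{\varphi(\lambda;\underline{w})}{\varphi(\lambda;s_i(\underline{w}))}$ equals $1$ and the coefficient of $[\lambda;s_i(\underline{w})]'$ is precisely $\frac{w_i-qw_{i+1}}{w_i-w_{i+1}}$, matching \eqref{eq: T Ram new}. One should also note for completeness that the eigenvalues $w_1,\dots,w_k$ of $z_1,\dots,z_k$ on $[\lambda;s_i(\underline{w})]$ are the entries of $s_i(\underline{w})$, i.e.\ the coefficient in the formula is the same function of the weights as before; this is just bookkeeping. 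There is essentially no obstacle here — the only thing to be careful about is that the renormalization is applied consistently and that the symmetry hypothesis is invoked exactly where the permuted chain appears; the "hard part" is merely making sure the indexing of the weights in $s_i(\underline{w})$ is tracked correctly, which is routine.

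I would write this up in a few lines, splitting into the admissible and non-admissible cases as above, and remark that the same rescaling manifestly changes neither the action of $z_i$ and $\Delta_{p_m}$ (being diagonal) — though strictly only the $T_i$ claim is asserted — and that it does modify the coefficients of $d_\pm$, which is the point of introducing $\varphi$ in the first place (cf.\ the use in Lemma \ref{lem: must be good} and the subsequent normalization of $d_-$).
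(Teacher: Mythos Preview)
Your proof is correct. The paper does not give a proof of this proposition at all --- it simply states that the result is clear --- so your direct verification by substituting the rescaled basis into \eqref{eq: T Ram new} and using $\varphi(\lambda;\underline{w})=\varphi(\lambda;s_i(\underline{w}))$ for admissible $s_i$ is exactly what one would write if a proof were demanded.
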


First we simplify the equations for $d_{-}$.

\begin{lemma}
\label{lem: d minus symmetry}
Assume that the action of $T_i$ is normalized as in \eqref{eq: T Ram new} and $[\lambda;\underline{w}]$ appears in $\cB_k$. Then we have the following:

a) Assume that $s_i$ is an admissible transposition for $\underline{w}$, $1\le i\le k-2$. Then $d(\lambda;\underline{w})=d(\lambda;s_i(\underline{w}))$.

b) Assume that $s_{k-1}$ is an admissible transposition for $w$. Then
$$
d(\lambda;\underline{w})d(\lambda\cup w_k;w_{k-1}\ldots,w_{1})=d(\lambda;s_{k-1}(\underline{w}))d(\lambda\cup w_{k-1};w_k, w_{k-2}\ldots,w_{1}).
$$
\end{lemma}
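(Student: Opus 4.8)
\textbf{Proof proposal for Lemma \ref{lem: d minus symmetry}.}

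The plan is to extract the two identities directly from the defining relations of $\Bqt$ connecting $d_-$ with the Hecke generators, namely $d_- T_i = T_i d_-$ for $1 \le i \le k-2$ (from \eqref{eq:T d-}) and $d_-^2 T_{k-1} = d_-^2$ (also from \eqref{eq:T d-}). In both cases the strategy is to apply the relevant operator identity to the basis vector $[\lambda;\underline{w}]$, expand both sides using the known formula \eqref{eq: T Ram new} for $T_i$ and the formula \eqref{eq: d minus new} for $d_-$, and compare coefficients of a single basis vector on each side. Throughout I will use that $[\lambda;\underline{w}]$ is good by Lemma \ref{lem: must be good}, so every chain obtained by an admissible transposition of $\underline{w}$ (or by applying $d_-$ and then such a transposition) again appears in $\cB$; and I will use Lemma \ref{lem: d plus minus basic}(a) together with Assumption \ref{ass: d-} so that the relevant coefficients $d(-;-)$ are all nonzero.

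For part (a), fix $1 \le i \le k-2$ and assume $s_i$ is admissible for $\underline{w}$. Apply $d_- T_i = T_i d_-$ to $[\lambda;\underline{w}]$. On the left: $T_i[\lambda;\underline{w}] = \frac{(q-1)w_{i+1}}{w_i - w_{i+1}}[\lambda;\underline{w}] + \frac{w_i - qw_{i+1}}{w_i - w_{i+1}}[\lambda;s_i(\underline{w})]$, and then $d_-$ sends these to scalar multiples of $[\lambda\cup w_k; w_{k-1},\ldots,w_1]$ and $[\lambda\cup w_k; s_i(w_{k-1},\ldots,w_1)]$ respectively (note that since $i \le k-2$, the transposition $s_i$ acts only on the surviving entries, and $w_k$ is untouched), with coefficients $d(\lambda;\underline{w})$ and $d(\lambda;s_i(\underline{w}))$. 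On the right: $d_-[\lambda;\underline{w}] = d(\lambda;\underline{w})[\lambda\cup w_k; w_{k-1},\ldots,w_1]$ and then $T_i$ (now acting on a chain of length $k-1$, where $s_i$ is still an admissible transposition) produces the same two basis vectors with Hecke coefficients built from $w_i, w_{i+1}$, which are identical to those appearing on the left. Comparing the coefficient of $[\lambda\cup w_k; s_i(w_{k-1},\ldots,w_1)]$ on both sides, the Hecke coefficient $\frac{w_i - qw_{i+1}}{w_i - w_{i+1}}$ cancels (it is nonzero since $s_i$ is admissible, so $w_i \neq qw_{i+1}$), yielding $d(\lambda;s_i(\underline{w})) = d(\lambda;\underline{w})$.

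For part (b), assume $s_{k-1}$ is admissible for $\underline{w}$ and apply $d_-^2 T_{k-1} = d_-^2$ to $[\lambda;\underline{w}]$. The right-hand side gives $d_-^2[\lambda;\underline{w}] = d(\lambda;\underline{w}) \, d(\lambda\cup w_k; w_{k-1},\ldots,w_1) \, [\lambda\cup w_k\cup w_{k-1}; w_{k-2},\ldots,w_1]$. For the left-hand side, first $T_{k-1}[\lambda;\underline{w}] = \frac{(q-1)w_k}{w_{k-1}-w_k}[\lambda;\underline{w}] + \frac{w_{k-1}-qw_k}{w_{k-1}-w_k}[\lambda;s_{k-1}(\underline{w})]$, and then apply $d_-^2$ to each term: $d_-^2[\lambda;\underline{w}] = d(\lambda;\underline{w})d(\lambda\cup w_k;w_{k-1},\ldots,w_1)[\lambda\cup w_k\cup w_{k-1};\ldots]$ and $d_-^2[\lambda;s_{k-1}(\underline{w})] = d(\lambda;s_{k-1}(\underline{w}))d(\lambda\cup w_{k-1};w_k,w_{k-2},\ldots,w_1)[\lambda\cup w_{k-1}\cup w_k;\ldots]$. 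By Lemma \ref{lem: uniqueness of weights up to permutation} the two length-$(k-2)$ chains coincide (both end at $\lambda\cup\underline{w}$ via the same $w_{k-2},\ldots,w_1$), so equating the scalar in front on both sides gives
\[
d(\lambda;\underline{w})d(\lambda\cup w_k;w_{k-1},\ldots,w_1) = \frac{(q-1)w_k}{w_{k-1}-w_k}d(\lambda;\underline{w})d(\lambda\cup w_k;w_{k-1},\ldots,w_1) + \frac{w_{k-1}-qw_k}{w_{k-1}-w_k}d(\lambda;s_{k-1}(\underline{w}))d(\lambda\cup w_{k-1};w_k,w_{k-2},\ldots,w_1).
\]
Since $1 - \frac{(q-1)w_k}{w_{k-1}-w_k} = \frac{w_{k-1}-qw_k}{w_{k-1}-w_k}$ and this factor is nonzero by admissibility, dividing through yields exactly the claimed identity. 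I do not expect a genuine obstacle here; the only point requiring care is bookkeeping of which permutation acts on which truncated chain and the repeated appeal to Lemma \ref{lem: uniqueness of weights up to permutation} to identify the terminal chains — together with checking that goodness (Lemma \ref{lem: must be good}) guarantees every intermediate chain actually lies in $\cB$ so that the coefficients $d(-;-)$ are defined and the relations may be read off coefficient-wise.
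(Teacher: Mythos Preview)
Your proposal is correct and follows essentially the same approach as the paper: both parts are obtained by applying the relations $d_-T_i=T_id_-$ (for $1\le i\le k-2$) and $d_-^2T_{k-1}=d_-^2$ to $[\lambda;\underline{w}]$, expanding via \eqref{eq: T Ram new} and \eqref{eq: d minus new}, and comparing the coefficient carrying the nonzero factor $\frac{w_i-qw_{i+1}}{w_i-w_{i+1}}$ (respectively $\frac{w_{k-1}-qw_k}{w_{k-1}-w_k}$), using admissibility to cancel it. Your extra care in invoking Lemma~\ref{lem: uniqueness of weights up to permutation} to identify the two endpoints and Lemma~\ref{lem: must be good} to ensure the intermediate chains lie in $\cB$ is a nice touch but does not alter the argument.
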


\begin{remark}
By Lemma \ref{lem: Ti admissible} and Assumption \ref{ass: d-} all coefficients in Lemma \ref{lem: d minus symmetry} are nonzero.
\end{remark}

\begin{proof}
Let $\alpha_i=\frac{(q-1)w_{i+1}}{w_i-w_{i+1}}$ and $\beta_i=\frac{w_i-qw_{i+1}}{w_i-w_{i+1}}$. 

a) For $1\le i\le k-2$ equation $T_id_{-}=d_{-}T_i$ translates to 
\begin{align*}
\alpha_id&(\lambda;\underline{w})[\lambda\cup w_k;w_{k-1}\ldots,w_{1}]+
\beta_id(\lambda;\underline{w})[\lambda\cup w_k;s_i(w_{k-1}\ldots,w_{1})]
\\
&=\alpha_id(\lambda;\underline{w})[\lambda\cup w_k;w_{k-1}\ldots,w_{1}]+
\beta_id(\lambda;s_i(\underline{w}))[\lambda\cup w_k;s_i(w_{k-1}\ldots,w_{1})]
\end{align*}
which implies
$$
\beta_id(\lambda;\underline{w})=\beta_id(\lambda;s_i(\underline{w})).
$$
Since $s_i$ is admissible, $\beta_i\neq 0$ and $d(\lambda;\underline{w})=d(\lambda;s_i(\underline{w}))$.

b) The equation $d_-^2T_{k-1}=d_{-}^2$ translates to
$$
\alpha_{k-1}d(\lambda;\underline{w})d(\lambda\cup w_k;w_{k-1}\ldots,w_{1})+
\beta_{k-1}d(\lambda;s_{k-1}(\underline{w}))d(\lambda\cup w_{k-1};w_{k},w_{k-2},\ldots,w_1)
$$
$$
=d(\lambda;\underline{w})d(\lambda\cup w_k;w_{k-1}\ldots,w_{1}).
$$
Since $\alpha_{k-1}+\beta_{k-1}=1$, we can rewrite this as
$$
\beta_{k-1}d(\lambda;\underline{w})d(\lambda\cup w_k;w_{k-1}\ldots,w_{1})=
\beta_{k-1}d(\lambda;s_{k-1}(\underline{w}))d(\lambda\cup w_{k-1};w_{k},w_{k-2},\ldots,w_1).
$$
Since $s_{k-1}$ is admissible, $\beta_{k-1}\neq 0$ and 
we get the desired equation.
\end{proof}

\begin{corollary}
Assume that the action of $T_i$ is normalized as in \eqref{eq: T Ram new} and $[\lambda;\underline{w}]$ appears in $\cB_k$. The product 
\begin{equation}
\label{eq: product d}
d(\lambda;\underline{w})d(\lambda\cup w_k;w_{k-1}\ldots,w_{1})\cdots d(\lambda\cup w_k\cup\dots \cup w_2;w_1).
\end{equation}
is nonzero and symmetric under admissible transpositions of $w_1\ldots,w_k$.
\end{corollary}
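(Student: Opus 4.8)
The statement packages two assertions: (i) the product \eqref{eq: product d} is nonzero, and (ii) it is invariant under admissible transpositions of $w_1, \dots, w_k$. The plan is to derive both from Lemma \ref{lem: d minus symmetry} together with the fact (already in place via Lemma \ref{lem: Ti admissible} and Assumption \ref{ass: d-}) that every factor appearing is the $d$-coefficient of a chain that genuinely appears in $\cB$, hence is nonzero. For (i), the key observation is that each of the $k$ factors in \eqref{eq: product d} has the form $d(\mu; \underline{v})$ where $[\mu; \underline{v}]$ is obtained from $[\lambda; \underline{w}]$ by repeatedly applying $d_{-}$; since $[\lambda; \underline{w}]$ appears in $\cB_k$, Assumption \ref{ass: d-} guarantees $d_{-}^{j}[\lambda; \underline{w}] \neq 0$ for all $j \le k$, so each intermediate chain appears in $\cB$ and each factor is nonzero. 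Thus the product is nonzero.

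For (ii), I would argue that the symmetric group $S_k$ acting on the positions of $w_1, \dots, w_k$ is generated by the simple transpositions $s_1, \dots, s_{k-1}$, and that it suffices to check invariance of \eqref{eq: product d} under each admissible $s_i$. There are two cases, matching the two parts of Lemma \ref{lem: d minus symmetry}. For $1 \le i \le k-2$: such an $s_i$ permutes $w_i \leftrightarrow w_{i+1}$, both of which appear \emph{inside} the various intermediate sequences $w_{k-1}, \dots, w_1$, $w_{k-2}, \dots, w_1$, etc.; applying Lemma \ref{lem: d minus symmetry}(a) at the appropriate stage (and noting $s_i$ remains admissible for the truncated sequences, which one checks from the eigenvalues) shows each individual factor $d(\lambda \cup w_k \cup \cdots \cup w_{j+1}; w_j, \dots, w_1)$ involving both $w_i, w_{i+1}$ in its argument list is unchanged, while factors where one of $w_i, w_{i+1}$ has already been ``absorbed'' into the base point are handled by the earlier stage's equality; so the whole product is fixed. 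For $i = k-1$: here $s_{k-1}$ swaps $w_k$ (the weight that $d_{-}$ strips off first, going into the base point) with $w_{k-1}$. Only the first two factors $d(\lambda; \underline{w})$ and $d(\lambda \cup w_k; w_{k-1}, \dots, w_1)$ are affected, and Lemma \ref{lem: d minus symmetry}(b) states precisely that their product equals $d(\lambda; s_{k-1}(\underline{w})) \, d(\lambda \cup w_{k-1}; w_k, w_{k-2}, \dots, w_1)$; the remaining $k-2$ factors depend only on the multiset $\{w_k, w_{k-1}\}$ absorbed into the base point together with the unchanged tail $w_{k-2}, \dots, w_1$, so they are literally identical. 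Hence the product is invariant under $s_{k-1}$ as well.

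The main obstacle I anticipate is bookkeeping: one must be careful that when $s_i$ ($i \le k-2$) is admissible for $\underline{w}$ it is still admissible for each relevant truncation $[\lambda \cup w_k \cup \cdots \cup w_{j+1}; w_j, \dots, w_1]$ that appears in the telescoping product, and that the chain in question actually appears in $\cB$ so that Lemma \ref{lem: d minus symmetry}(a) applies. Admissibility only involves the ratio $w_i/w_{i+1}$, which is unchanged by truncation, so this is immediate; appearance in $\cB$ follows from Assumption \ref{ass: d-} as above. A clean way to organize the whole argument is to define $D([\lambda; \underline{w}]) := $ the product \eqref{eq: product d} and observe $D([\lambda; \underline{w}]) = d(\lambda; \underline{w}) \cdot D([\lambda \cup w_k; w_{k-1}, \dots, w_1])$, then induct on $k$: the inductive hypothesis gives symmetry of $D([\lambda \cup w_k; w_{k-1}, \dots, w_1])$ in $w_1, \dots, w_{k-1}$, and Lemma \ref{lem: d minus symmetry}(a) upgrades this to symmetry of $D([\lambda; \underline{w}])$ in $w_1, \dots, w_{k-1}$, while Lemma \ref{lem: d minus symmetry}(b) provides the final transposition $w_{k-1} \leftrightarrow w_k$ needed to generate all of $S_k$. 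This inductive formulation avoids the combinatorial mess entirely and is the version I would write up.
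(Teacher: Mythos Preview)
Your inductive reformulation at the end is correct and is essentially the same argument as the paper's, just repackaged: the paper fixes an admissible $s_i$ and splits the $k$ factors into three groups --- those with $j>i+2$ (handled by part (a)), those with $j\le i$ (trivially unchanged since both $w_i,w_{i+1}$ are in the base point), and the two boundary factors $j=i+1,i+2$ (handled together by part (b)) --- whereas your induction on $k$ peels off the first factor and uses (a) for $s_1,\dots,s_{k-2}$ and (b) for $s_{k-1}$. Both work.

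One correction to your preliminary direct sketch, however: the case split ``use (a) for $1\le i\le k-2$, use (b) for $i=k-1$'' does \emph{not} match the two parts of Lemma~\ref{lem: d minus symmetry}. For any $i$, the factor with $j=i+2$ is $d(\mu;w_{i+1},\dots,w_1)$ where $\mu=\lambda\cup w_k\cup\cdots\cup w_{i+2}$; here $s_i$ swaps the two outermost positions of a length-$(i+1)$ sequence, so part (a) (which requires the transposition index to be at most length$-2$) does \emph{not} apply, and the factor with $j=i+1$ has $w_{i+1}$ already in the base point, so part (a) cannot apply there either. These two factors are exactly where part (b) is needed, for \emph{every} $i$, not just $i=k-1$. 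Your inductive version sidesteps this by effectively reducing to the case $i=k-1$ at each step, which is why it works; but the direct case analysis as you wrote it has a gap.
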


\begin{proof}
Suppose that $s_i$ is an admissible transposition. All the factors $d(\lambda\cup w_k\dots\cup w_j;w_{j-1},\ldots w_1 )$ for $j>i+2$ are symmetric under $s_i$ by Lemma \ref{lem: d minus symmetry}(a). All the factors for $j\le i$ are clearly symmetric under $s_i$ since $\lambda\cup w_k\dots\cup w_j$ is symmetric. We are left with two factors
$$
d(\lambda\cup w_k\dots\cup w_{i+2};w_{i+1},\ldots w_{1})d(\lambda\cup w_k\dots\cup w_{i+1};w_{i},\ldots w_{1})
$$
whose product is symmetric by Lemma \ref{lem: d minus symmetry}(b).
\end{proof}

\begin{lemma}
\label{lem: normalize d minus}
The representation $V(\cB)$ is isomorphic to a representation where all coefficients of $d_{-}$ are equal to $1$, and the coefficients of $T_i$ are given by \eqref{eq: T Ram new}.
\end{lemma}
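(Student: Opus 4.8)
\textbf{Proof proposal for Lemma \ref{lem: normalize d minus}.} The plan is to rescale the basis $\{[\lambda;\underline{w}]\}$ by a nonzero symmetric factor in order to absorb the $d_{-}$-coefficients into the identity, using the product \eqref{eq: product d} to produce a consistent normalization. By Proposition \ref{prop: normalize symmetric}, any such rescaling leaves the formulas \eqref{eq: T Ram new} for $T_i$ intact, so the only thing to verify is that the rescaled $d_{-}$-coefficients all become $1$, and that the rescaling factor is well-defined (independent of the chain representing a given pair of endpoints).

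First I would set, for a chain $[\lambda;\underline{w}]$ appearing in $\cB_k$,
\[
P(\lambda;\underline{w}):=d(\lambda;\underline{w})\,d(\lambda\cup w_k;w_{k-1},\ldots,w_1)\cdots d(\lambda\cup w_k\cup\cdots\cup w_2;w_1),
\]
which is nonzero by Assumption \ref{ass: d-} and, by the Corollary following Lemma \ref{lem: d minus symmetry}, symmetric under admissible transpositions of $\underline{w}$. The key observation is that $P(\lambda;\underline{w})$ depends only on the endpoints $\lambda$ and $\mu:=\lambda\cup\underline{w}$: indeed $\mu$ is the common endpoint of all chains from $\lambda$ obtained by permuting $\underline{w}$ (Lemma \ref{lem: uniqueness of weights up to permutation}), any two good chains between fixed endpoints are connected by a sequence of admissible transpositions through good chains (Proposition \ref{prop: good chains}(d), noting all appearing chains are good by Lemma \ref{lem: must be good}), and $P$ is invariant under admissible transpositions. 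Hence I may write $P(\lambda;\underline{w})=\rho(\lambda\cup\underline{w})/\rho(\lambda)$ for a suitable function $\rho$ on $E$; concretely, pick a base element in each connected component of the graph $\Gamma_E$, declare $\rho$ on it arbitrarily, and extend along covering edges by the single-step relation $\rho(\lambda\cup x)=d(\lambda;x)\,\rho(\lambda)$ (which is the $k=1$ case of the displayed product). The fact that this extension is well-defined is exactly the statement that $P$ factors through the endpoints, which we have just argued.

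Next I would define the rescaled basis $[\lambda;\underline{w}]':=\rho(\lambda\cup\underline{w})^{-1}[\lambda;\underline{w}]$; this rescaling factor is symmetric in $w_1,\ldots,w_k$ since it depends only on the endpoint $\lambda\cup\underline{w}$, so Proposition \ref{prop: normalize symmetric} applies and the $T_i$ still act by \eqref{eq: T Ram new}. Now compute $d_{-}$ in the new basis:
\[
d_{-}[\lambda;\underline{w}]'=\rho(\lambda\cup\underline{w})^{-1}d(\lambda;\underline{w})[\lambda\cup w_k;w_{k-1},\ldots,w_1]
=\rho(\lambda\cup\underline{w})^{-1}d(\lambda;\underline{w})\,\rho(\lambda\cup\underline{w})[\lambda\cup w_k;w_{k-1},\ldots,w_1]',
\]
where I used $\lambda\cup\underline{w}=(\lambda\cup w_k)\cup(w_{k-1},\ldots,w_1)$, so the endpoint is unchanged under $d_{-}$ and the two $\rho$-factors cancel up to the factor $d(\lambda;\underline{w})$. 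But wait — this only cancels one factor $d(\lambda;\underline{w})$, not the whole product; the point is that $\rho(\lambda\cup\underline{w})=d(\lambda;\underline{w})\,\rho(\lambda\cup w_k)$ by the telescoping definition of $\rho$ (applying the defining relation along the first step of the chain read from the top), so in fact $\rho(\lambda\cup\underline{w})^{-1}d(\lambda;\underline{w})\,\rho(\lambda\cup\underline{w})=\rho(\lambda\cup w_k)$, and since $\lambda\cup\underline{w}=\lambda\cup w_k\cup(w_{k-1},\ldots,w_1)$ has the \emph{same} endpoint, $\rho(\lambda\cup\underline{w})=\rho(\lambda\cup w_k\cup\cdots\cup w_1)$ — here I must be careful: $\lambda\cup w_k$ is the \emph{starting} point of the truncated chain and $\lambda\cup\underline{w}$ its endpoint, so the correct identity is $\rho\big((\lambda\cup w_k)\cup(w_{k-1},\ldots,w_1)\big)=P(\lambda\cup w_k;w_{k-1},\ldots,w_1)\,\rho(\lambda\cup w_k)$, while $\rho(\lambda\cup\underline{w})=P(\lambda;\underline{w})\rho(\lambda)=d(\lambda;\underline{w})P(\lambda\cup w_k;w_{k-1},\ldots,w_1)\rho(\lambda)$. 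Substituting gives $d_{-}[\lambda;\underline{w}]'=[\lambda\cup w_k;w_{k-1},\ldots,w_1]'$ as desired. I would organize this bookkeeping cleanly by first proving the clean recursion $\rho(\mu)=d(\lambda;\underline{w})\,\rho(\lambda\cup w_k)$ whenever $\mu=\lambda\cup\underline{w}$ (independent of the chain), which follows from the endpoint-invariance of $P$ together with the single-step defining relation for $\rho$. The main obstacle is precisely this well-definedness of $\rho$ / endpoint-invariance of $P$: once that is established via Proposition \ref{prop: good chains}(d) and the symmetry corollary, the rest is a one-line cancellation. There is nothing to check for $d_{+}$ or $z_i$ since the lemma only asserts control of $d_{-}$ and $T_i$; the new $d_{+}$-coefficients are simply $\rho$-conjugates of the old ones.
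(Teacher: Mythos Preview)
There is a genuine gap. Your rescaling factor $\rho(\lambda\cup\underline{w})^{-1}$ depends only on the \emph{endpoint} $\mu=\lambda\cup\underline{w}$ of the chain, but $d_{-}$ does not change the endpoint: $[\lambda;\underline{w}]$ and $[\lambda\cup w_k;w_{k-1},\ldots,w_1]$ both terminate at $\mu$. Hence the two $\rho$-factors in your displayed computation cancel exactly, and the new $d_{-}$-coefficient is still $d(\lambda;\underline{w})$, not $1$. You noticed this (``the two $\rho$-factors cancel'') but the subsequent attempted repair conflates two different things: the single-step recursion $\rho(\lambda\cup x)=d(\lambda;x)\rho(\lambda)$ you used to \emph{define} $\rho$, and the unjustified relation $\rho(\mu)=d(\lambda;\underline{w})\rho(\lambda\cup w_k)$ involving the \emph{multi-step} coefficient $d(\lambda;\underline{w})$. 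These agree only if $d(\lambda;\underline{w})$ depends solely on the first edge $\lambda\to\lambda\cup w_k$, which has not been shown. Relatedly, your claim that $P(\lambda;\underline{w})=\rho(\mu)/\rho(\lambda)$ fails: $\rho(\mu)/\rho(\lambda)$ is a product of \emph{single-step} coefficients $d(\cdot\,;x)$ along the chain, whereas $P$ is the product \eqref{eq: product d} of \emph{multi-step} coefficients $d(\cdot\,;w_j,\ldots,w_1)$; these are not the same, and $P$ does not satisfy the cocycle condition $P(\lambda\to\mu)P(\mu\to\nu)=P(\lambda\to\nu)$ needed to factor it through a vertex function.

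The fix is not to factor $P$ through the endpoints at all: simply rescale by $\varphi(\lambda;\underline{w})=P(\lambda;\underline{w})^{-1}$ itself. This factor is symmetric under admissible transpositions by the Corollary to Lemma~\ref{lem: d minus symmetry}, so Proposition~\ref{prop: normalize symmetric} applies and the $T_i$-formulas are unchanged. For $d_{-}$, the new coefficient is
\[
\frac{\varphi(\lambda;\underline{w})}{\varphi(\lambda\cup w_k;w_{k-1},\ldots,w_1)}\,d(\lambda;\underline{w})
=\frac{P(\lambda\cup w_k;w_{k-1},\ldots,w_1)}{P(\lambda;\underline{w})}\,d(\lambda;\underline{w})
=\frac{1}{d(\lambda;\underline{w})}\cdot d(\lambda;\underline{w})=1,
\]
using the obvious telescoping $P(\lambda;\underline{w})=d(\lambda;\underline{w})\,P(\lambda\cup w_k;w_{k-1},\ldots,w_1)$. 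This is precisely the paper's argument.
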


\begin{proof}
By Lemma \ref{lem: Ti admissible} we can assume that the coefficients of $T_i$ are given by \eqref{eq: T Ram new}.

Now we normalize the basis $[\lambda;\underline{w}]$ by the product \eqref{eq: product d}. Since it is symmetric under admissible transpositions, the coefficients for $T_i$ will not change. On the other hand, the coefficients for $d_{-}$ will be multiplied by
$$
\frac{d(\lambda\cup w_k;w_{k-1}\ldots,w_{1})\cdots d(\lambda\cup w_k\cup\dots \cup w_2;w_1).}{d(\lambda;\underline{w})d(\lambda\cup w_k;w_{k-1}\ldots,w_{1})\cdots d(\lambda\cup w_k\cup\dots \cup w_2;w_1)}=\frac{1}{d(\lambda;\underline{w})}
$$
and become all equal to 1.
\end{proof}

From now on we assume that all coefficients of $d_{-}$ are equal to 1, and the coefficients of $T_i$ as given by \eqref{eq: T Ram new}. This is allowed by Lemma \ref{lem: normalize d minus} and  the computations simplify significantly.
  
\begin{lemma}
\label{lem: c from edges}
Assume $[\lambda;\underline{w}]$ appears in $\cB_k$. 
If $[\lambda;\underline{w},x]$ is a good chain then it appears in $\cB_{k+1}$ and 
\begin{equation}
\label{eq: c from edges}
c(\lambda;\underline{w},x)=q^{k}c(\lambda\cup \underline{w};x)\prod_{i = 1}^{k}\frac{x-tw_i}{x-qtw_i}.
\end{equation}
Conversely, if \eqref{eq: c from edges} is satisfied for all good chains then the relation \eqref{eq:qphi} is satisfied on $V(\cB)$.
\end{lemma}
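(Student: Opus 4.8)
\textbf{Proof proposal for Lemma \ref{lem: c from edges}.}

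The plan is to reduce the statement about $V(\cB)$ to a computation with the single relation \eqref{eq:qphi} (i.e. $z_1(qd_+d_- - d_-d_+) = qt(d_+d_- - d_-d_+)z_k$) applied to a basis vector $[\lambda;\underline{w}]$, exactly as in the proof of Lemma \ref{lem: V1 V2 nonzero} but now with arbitrary $k$. First I would fix a good chain $[\lambda;\underline{w}]$ appearing in $\cB_k$ and a good chain $[\lambda;\underline{w},x]$ (so $x$ is addable for $\lambda\cup\underline{w}$ and $x\neq tw_i$ for all $i$). Since $d_-$ is normalized to have all coefficients equal to $1$, the right-hand side of \eqref{eq: d minus new} gives $d_-[\lambda;\underline{w}] = [\lambda\cup w_k; w_{k-1},\ldots,w_1]$ and $d_-[\lambda;\underline{w},x] = [\lambda; w_k,\ldots,w_2, x]$ after suitably permuting (here I would first move $x$ past $w_1$ using an admissible transposition, which is allowed by Lemma \ref{lem: must be good} and Corollary \ref{cor: excellent chains} since $x\neq qw_1, tw_1$; one has to be slightly careful about which chain $d_-$ lands on, so I would instead work directly with $d_-[\lambda;w_k,\ldots,w_2,x]$ by its defining formula). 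The key point is that $[\lambda;\underline{w},x]$ appears in $\cB_{k+1}$: this follows because $[\lambda\cup w_k; w_{k-1},\ldots,w_1]$ appears in $\cB_{k-1}$ (by Assumption \ref{ass: d-}), hence by induction on $k$ its image $[\lambda\cup w_k;w_{k-1},\ldots,w_1,x']$-type chains appear, and ultimately $c(\lambda;\underline{w},x)$ is forced to be nonzero by the relation below — so I should phrase the argument as: assume first that $[\lambda;\underline{w},x]$ appears, derive \eqref{eq: c from edges}, which exhibits $c(\lambda;\underline{w},x)$ as a nonzero multiple of $c(\lambda\cup\underline{w};x)\neq 0$ (nonzero by Assumption \ref{ass: d+}), giving a contradiction with non-appearance.

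The central computation is then to expand both sides of \eqref{eq:qphi} applied to $[\lambda;\underline{w}]$ and extract the coefficient of the basis vector $[\lambda\cup w_k; w_{k-1},\ldots,w_1,x]$. On the left, $d_+d_-[\lambda;\underline{w}] = d_+[\lambda\cup w_k;w_{k-1},\ldots,w_1]$ contributes $c(\lambda\cup w_k; w_{k-1},\ldots,w_1,x)$ to that coefficient, while $d_-d_+[\lambda;\underline{w}] = \sum_y c(\lambda;\underline{w},y)d_-[\lambda;\underline{w},y]$ contributes $c(\lambda;\underline{w},x)$ (after accounting for $d_-$ removing the top weight $w_k$ — again this requires a transposition argument to line up the chains, contributing a factor coming from the $T$-action, but since we work with the chain where $x$ sits in the last slot this is transparent). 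Applying $z_1$ (eigenvalue $x$ on this vector after the shift) on the left and $z_k$ (eigenvalue $w_k$) on the right yields exactly
\[
x\bigl(q\,c(\lambda\cup w_k; w_{k-1},\ldots,w_1,x) - c(\lambda;\underline{w},x)\bigr) = qt\bigl(c(\lambda\cup w_k; w_{k-1},\ldots,w_1,x) - c(\lambda;\underline{w},x)\bigr)w_k,
\]
i.e. $q(x-tw_k)\,c(\lambda\cup w_k; w_{k-1},\ldots,w_1,x) = (x-qtw_k)\,c(\lambda;\underline{w},x)$. Iterating this identity, peeling off $w_k$, then $w_{k-1}$, and so on down to $k=0$ (the base case being $c(\lambda; x) = c(\lambda\cup\underline{w};x)$ with an empty chain, tautologically), produces the product formula \eqref{eq: c from edges}; the telescoping of the factors $q(x-tw_i)/(x-qtw_i)$ is routine. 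This is essentially the same recursion as in Lemma \ref{lem: qphi}, run in reverse.

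For the converse, I would run the computation backwards: assuming \eqref{eq: c from edges} holds for every good chain, I need to verify \eqref{eq:qphi} holds on every basis vector $[\lambda;\underline{w}]$ of $V(\cB)$. Both sides of \eqref{eq:qphi} are sums over basis vectors of the form $[\lambda\cup w_k;w_{k-1},\ldots,w_1,x]$ (with appropriate coefficients from $d_\pm$ and the $T$-operators entering through the normalization), and comparing coefficients reduces precisely to the identity $q(x-tw_k)c(\lambda\cup w_k;w_{k-1},\ldots,w_1,x) = (x-qtw_k)c(\lambda;\underline{w},x)$, which is an immediate consequence of \eqref{eq: c from edges} by dividing the instances for $\underline{w}$ and for $w_{k-1},\ldots,w_1$. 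The one case needing care is when $x = tw_k$: then $[\lambda;\underline{w},x]$ is not good, so $c(\lambda;\underline{w},x)$ is interpreted as $0$ (per the Remark following Lemma \ref{lem: d plus minus basic}), and the identity still holds since the factor $(x-tw_k)$ vanishes; this matches the discussion in the proof of Lemma \ref{lem: qphi}.

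\textbf{Main obstacle.} The genuine difficulty is bookkeeping: $d_-$ in the representation $V(\cB)$ removes the \emph{top} weight $w_k$, not $x$, from $[\lambda;\underline{w},x]$, so to isolate the coefficient of a given basis chain one must commute $x$ into the correct slot using admissible transpositions and track the resulting $T$-coefficients. I would handle this by choosing, at each stage, to work with the chain in which the newly-added weight $x$ is already in the last position — which is legitimate precisely because all chains appearing in $\cB$ are good (Lemma \ref{lem: must be good}) and good chains in an excellent poset are connected by admissible transpositions through good chains (Proposition \ref{prop: good chains}(d)) — so that $d_-$ and the projections onto $z$-eigenspaces interact cleanly. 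Everything else is the same telescoping computation already carried out in Lemmas \ref{lem: qphi} and \ref{lem: V1 V2 nonzero}.
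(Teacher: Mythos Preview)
Your approach is essentially the same as the paper's: induction on $k$, apply \eqref{eq:qphi} to $[\lambda;\underline{w}]$, extract the coefficient of $[\lambda\cup w_k;w_{k-1},\ldots,w_1,x]$, obtain the one-step recursion $q(x-tw_k)c(\lambda\cup w_k;w_{k-1},\ldots,w_1,x)=(x-qtw_k)c(\lambda;\underline{w},x)$, and telescope. The converse is exactly your reversal argument, with the $x=tw_k$ case handled as you say.

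However, your ``Main obstacle'' is a phantom. In the chain $[\lambda;\underline{w},x]=[\lambda;w_k,\ldots,w_1,x]$ the operator $d_-$ removes the \emph{first} listed weight $w_k$, not the last one, so
\[
d_-[\lambda;w_k,\ldots,w_1,x]=[\lambda\cup w_k;w_{k-1},\ldots,w_1,x]
\]
directly --- no admissible transpositions, no $T$-coefficients. Likewise $d_+d_-[\lambda;\underline{w}]=\sum_x c(\lambda\cup w_k;w_{k-1},\ldots,w_1,x)[\lambda\cup w_k;w_{k-1},\ldots,w_1,x]$. Both $d_+d_-$ and $d_-d_+$ already land on the same basis vectors, so the coefficient comparison is immediate. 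Your first paragraph (moving $x$ past $w_1$, worrying about which chain $d_-$ lands on) and your entire final paragraph should simply be deleted; the proof is cleaner than you think.
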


\begin{proof}
We prove the statement by induction in $k$. The base case $k=1$ follows from Assumption \ref{ass: d+}. By Lemma \ref{lem: must be good} $[\lambda;\underline{w}]$ is good, hence by Assumption \ref{ass: d-} $[\lambda\cup w_k;w_{k-1}\ldots,w_{1}]$ is good and appears in  $\cB_{k-1}$.
We look at the equation 
$$
z_1(qd_{+}d_{-}-d_{-}d_{+})=qt(d_{+}d_{-}-d_{-}d_{+})z_k
$$
which implies
$$
x(qc(\lambda\cup w_k;w_{k-1}\ldots,w_{1},x)-c(\lambda;\underline{w},x))=
qt(c(\lambda\cup w_k;w_{k-1}\ldots,w_{1},x)-c(\lambda;\underline{w},x))w_k
$$
and 
\begin{equation}
\label{eq: qphi induction}
q(x-tw_k)c(\lambda\cup w_k;w_{k-1}\ldots,w_{1},x)=(x-qtw_k)c(\lambda;\underline{w},x).
\end{equation}
Assume that the chain $[\lambda;\underline{w},x]$ is good, then $[\lambda;\underline{w}]$ and $[\lambda\cup w_k;w_{k-1}\ldots,w_{1},x]$ are good too. Also, $x\neq tw_k$ by definition of a good chain and $x\neq qtw_k$ by Lemma \ref{lem: no qt}.
By the assumption of induction, $[\lambda\cup w_k;w_{k-1}\ldots,w_{1},x]$ appears in $\cB_{k-1}$ and 
$c(\lambda\cup w_k;w_{k-1}\ldots,w_{1},x)\neq 0$, therefore
by \eqref{eq: qphi induction} we get
$c(\lambda;\underline{w},x)\neq 0$. In particular, $[\lambda;\underline{w},x]$ appears in $\cB_{k+1}$ and we conclude that
$$
c(\lambda;\underline{w},x)=q\frac{x-tw_k}{x-qtw_k}c(\lambda\cup w_k;w_{k-1}\ldots,w_{1},x).
$$
which by the assumption of induction implies \eqref{eq: c from edges}.

Assume now that  the chain $[\lambda;\underline{w},x]$ is not good. If $[\lambda\cup w_k;w_{k-1}\ldots,w_{1},x]$ is not good either, then it does not appear in $\cB_k$, and the corresponding terms in $d_{+}d_{-}$ and $d_{-}d_{+}$ both vanish. If  $[\lambda\cup w_k;w_{k-1}\ldots,w_{1},x]$ is good but $[\lambda;\underline{w},x]$ is not good, then we must have $x=tw_{k}$ and $c(\lambda;\underline{w},x)$
vanishes by \eqref{eq: c from edges}, so \eqref{eq: qphi induction} (and thus \eqref{eq:qphi}) holds.
\end{proof}

\begin{corollary}
\label{cor: only good chains appear}
A chain $[\lambda;\underline{w}]$ appears in $\cB_k$ if and only if it is good.
\end{corollary}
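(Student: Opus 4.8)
\textbf{Proof of Corollary \ref{cor: only good chains appear}.}
The plan is to combine the structural results established immediately above into a single biconditional. The forward direction is precisely the content of Lemma \ref{lem: must be good}: if a chain $[\lambda;\underline{w}]$ appears in $\cB_k$, then it is good. So only the converse requires argument, and this is where the inductive machinery of Lemma \ref{lem: c from edges} enters.

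For the converse, I would argue by induction on $k$. The base cases $k=0$ and $k=1$ are handled by Assumption \ref{ass: d+}: every $[\lambda]$ appears in $\cB_0$, and every $[\lambda;x]$ with $x$ addable for $\lambda$ appears in $\cB_1$ (and these one-step chains are automatically good). For the inductive step, suppose $[\lambda;\underline{w},x]$ is a good chain of length $k+1$. Then by Proposition \ref{prop: good chains}(a), its truncation $[\lambda;\underline{w}]$ obtained by deleting the final step is still a good chain of length $k$, so by the inductive hypothesis it appears in $\cB_k$. Now apply Lemma \ref{lem: c from edges}: since $[\lambda;\underline{w}]$ appears in $\cB_k$ and $[\lambda;\underline{w},x]$ is good, the lemma concludes that $[\lambda;\underline{w},x]$ appears in $\cB_{k+1}$ (with the explicit nonzero coefficient \eqref{eq: c from edges}). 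This closes the induction.

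Concretely I would write: by Lemma \ref{lem: must be good} every chain appearing in $\cB_k$ is good, so it remains to show every good chain of length $k$ appears in $\cB_k$, which we prove by induction on $k$. The cases $k \le 1$ are Assumption \ref{ass: d+}. If $k \ge 2$ and $[\lambda;w_k,\ldots,w_1]$ is good, then $[\lambda; w_k,\ldots, w_2]$ is a good chain of length $k-1$ by Proposition \ref{prop: good chains}(a), hence appears in $\cB_{k-1}$ by induction; then $[\lambda; w_k,\ldots, w_1]$ appears in $\cB_k$ by Lemma \ref{lem: c from edges}. This completes the proof.

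There is essentially no obstacle here — the corollary is a bookkeeping consequence of the two lemmas it cites, and the only thing to be careful about is making sure the induction is set up so that the truncation used is the one for which Lemma \ref{lem: c from edges} is stated (namely, deleting the \emph{last-added} weight, i.e. passing from $[\lambda;\underline{w},x]$ back to $[\lambda;\underline{w}]$), and that Proposition \ref{prop: good chains}(a) indeed guarantees this truncation remains good. One should also note, for completeness, that the inductive hypothesis is applied to a chain with the \emph{same} starting node $\lambda$, so no compatibility issue arises.
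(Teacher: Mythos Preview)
Your proof is correct and follows precisely the same approach as the paper's proof, which simply says ``This follows from Lemma \ref{lem: must be good} and Lemma \ref{lem: c from edges}.'' You have spelled out the induction that is implicit in combining these two lemmas, and your care in using Proposition \ref{prop: good chains}(a) to ensure the truncation $[\lambda;w_k,\ldots,w_2]$ remains good, and in matching the direction of truncation to the hypothesis of Lemma \ref{lem: c from edges}, is exactly right.
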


\begin{proof}
This follows from Lemma \ref{lem: must be good} and Lemma \ref{lem: c from edges}.
\end{proof}

\begin{theorem}\label{thm: classification}
Under Assumptions \ref{ass: d-}, \ref{ass: semisimple}, and \ref{ass: d+}, the representation $V=V(\cB)$ is isomorphic to $V(E,c)$ from Definition \ref{def: calibrated from excellent}.
\end{theorem}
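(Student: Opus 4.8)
The plan is to assemble the statement from the structural results already established in Section~\ref{sec: classification}. By Lemma~\ref{lem: normalize d minus} we may and do assume, after an isomorphism, that on $V(\cB)$ all coefficients of $d_-$ are $1$ and the $T_i$ act by the Ram formulas \eqref{eq: T Ram new}. By Lemma~\ref{lem: excellent} the reconstructed poset $E$ is excellent, so $V(E,c)$ is defined for any choice of edge functions satisfying \eqref{eq: monodromy}, and by Theorem~\ref{thm: existence and uniqueness} the isomorphism class of $V(E,c)$ does not depend on that choice. So it suffices to exhibit \emph{one} edge function $c(\lambda;x)$ on $E$ satisfying \eqref{eq: monodromy} and to check that $V(\cB)$, in the normalized basis, is isomorphic to $V(E,c)$ for that $c$.

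First I would fix the edge functions: by Assumption~\ref{ass: d+} the scalars $c(\lambda;x)$ appearing in $d_+[\lambda] = \sum_x c(\lambda;x)[\lambda;x]$ are all nonzero, so I set these to be the edge functions of $E$. The content of Lemma~\ref{lem: c from edges} is then precisely that (i) every good chain of $E$ appears in $\cB$ (Corollary~\ref{cor: only good chains appear}, together with Lemma~\ref{lem: must be good}, shows $\cB_k$ is exactly the set of good chains, so the underlying graded vector spaces of $V(\cB)$ and $V(E,c)$ agree), and (ii) the higher coefficients $c(\lambda;\underline{w},x)$ of $d_+$ on $V(\cB)$ satisfy the recursion \eqref{eq: c from edges}, which is exactly \eqref{eq: c from edges new} defining the $d_+$-action in Definition~\ref{def: calibrated from excellent}. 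Combined with the normalization of $d_-$ and $T_i$, this shows that the action of $z_i$, $\Delta_{p_m}$ (which act diagonally with the eigenvalues \eqref{eq: Delta z new}, matching \eqref{eqn:delta operators on the quotient} and Definition~\ref{def: calibrated from excellent}), $T_i$, $d_-$ and $d_+$ on $V(\cB)$ all coincide with their counterparts on $V(E,c)$ under the evident bijection of bases $[\lambda;\underline{w}] \leftrightarrow [\lambda;\underline{w}]$.

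It remains to verify that this $c$ actually satisfies the monodromy condition \eqref{eq: monodromy}. Here I would invoke the hypothesis that $V(\cB)$ \emph{is} a representation of $\Bqt^{\ext}$: in particular the relation $T_1 d_+^2 = d_+^2$ holds on $V(\cB)$. Running the computation in the proof of Lemma~\ref{lem: monodromy} backwards — i.e. extracting the coefficients of $[\lambda;\underline{w},x,y]$ and $[\lambda;\underline{w},y,x]$ in $T_1 d_+^2[\lambda;\underline{w}] = d_+^2[\lambda;\underline{w}]$ using the already-established formula \eqref{eq: c from edges} for the $d_+$-coefficients and the formula \eqref{eq: T Ram new} for $T_1$ — yields exactly equation \eqref{eq: monodromy} for $c$ whenever $[\lambda;x,y]$ and $[\lambda;y,x]$ are both chains, which is the statement of Lemma~\ref{lem: monodromy}. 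Thus $c$ is a valid edge function, $V(E,c)$ is the representation of Definition~\ref{def: calibrated from excellent}, and the basis bijection above is an isomorphism of $\Bqt^{\ext}$-modules.

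The main obstacle, and the only place where genuine care is needed, is the bookkeeping around chains that are \emph{not} good: one must check that whenever a basis element $[\lambda;\underline{w},x]$ with $x = tw_k$ (or more generally a non-good chain) would appear in $d_+$ or in $d_+d_-$, the corresponding coefficient genuinely vanishes, so that $V(\cB)$ really is spanned by good chains and no spurious terms survive. This is handled by Lemma~\ref{lem: V1 V2 nonzero} at the two-step level and propagated by the inductive argument of Lemma~\ref{lem: c from edges}; I would simply cite these and note that Corollary~\ref{cor: only good chains appear} then closes the gap between the index set $\cB$ and the good-chain basis of $V(E,c)$. Everything else is a direct comparison of the operator formulas, so the proof is short:

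\begin{proof}
By Lemma~\ref{lem: normalize d minus} we may assume that on $V = V(\cB)$ the operators $T_i$ act by \eqref{eq: T Ram new} and all coefficients of $d_-$ equal $1$. By Lemma~\ref{lem: excellent} the poset $E$ is excellent. Let $c(\lambda;x)$ be the coefficients appearing in $d_+[\lambda] = \sum_x c(\lambda;x)[\lambda;x]$; these are nonzero by Assumption~\ref{ass: d+}. Since $V$ is a $\Bqt^{\ext}$-representation, the relation $T_1 d_+^2 = d_+^2$ holds on $V$, so by Lemma~\ref{lem: monodromy} (whose proof only uses formulas \eqref{eq: c from edges} and \eqref{eq: T Ram new}, both valid on $V$) the functions $c(\lambda;x)$ satisfy \eqref{eq: monodromy}. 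Hence $V(E,c)$ is defined via Definition~\ref{def: calibrated from excellent}.

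By Corollary~\ref{cor: only good chains appear} the set $\cB_k$ is exactly the set of good chains of length $k$ in $E$, which is the basis of $V_k(E,c)$; let $\Phi\colon V \to V(E,c)$ be the induced $\C(q,t)$-linear bijection of bases. The operators $z_i$ and $\Delta_{p_m}$ act on both sides diagonally with the same eigenvalues, by \eqref{eq: Delta z new} and \eqref{eqn:delta operators on the quotient} on the one side and Definition~\ref{def: calibrated from excellent} on the other. The operators $T_i$ act by \eqref{eq: T Ram new} on both sides. The operator $d_-$ acts by $d_-[\lambda;\underline{w}] = [\lambda \cup w_k; w_{k-1},\ldots,w_1]$ on both sides. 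Finally, by Lemma~\ref{lem: c from edges} the coefficients of $d_+$ on $V$ satisfy $c(\lambda;\underline{w},x) = q^k c(\lambda\cup\underline{w};x)\prod_{i=1}^k \frac{x-tw_i}{x-qtw_i}$, which is exactly \eqref{eq: c from edges new}, the formula defining the action of $d_+$ on $V(E,c)$; moreover by Lemma~\ref{lem: V1 V2 nonzero} and Corollary~\ref{cor: only good chains appear} the terms indexed by non-good chains vanish on both sides. Therefore $\Phi$ intertwines the actions of all generators of $\Bqt^{\ext}$, and $V \cong V(E,c)$.
\end{proof}
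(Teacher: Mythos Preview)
Your proof is correct and follows essentially the same approach as the paper's own proof, which likewise assembles Lemmas~\ref{lem: excellent}, \ref{lem: normalize d minus}, \ref{lem: c from edges}, and Corollary~\ref{cor: only good chains appear} to match all operator formulas with those of Definition~\ref{def: calibrated from excellent}. Your version is in fact slightly more explicit: you take the extra care to verify, via Lemma~\ref{lem: monodromy}, that the extracted edge functions $c(\lambda;x)$ satisfy \eqref{eq: monodromy}, a point the paper leaves implicit (it follows from the ``only if'' direction of Theorem~\ref{thm: relations hold}).
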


\begin{proof}
We combine all of  the results in this section. By Lemma \ref{lem: excellent} the weighted poset $E$ is excellent.
By Corollary \ref{cor: only good chains appear} the basis $\cB$ of $V(\cB)$ is given by all good chains in $E$. By Theorem \ref{thm: Ram new} the coefficients of $T_i$ are given by \eqref{eq: T Ram new}, and by Lemma \ref{lem: normalize d minus} we can normalize the basis so that all coefficients of $d_{-}$ are equal to 1. 

Finally, by Lemma \ref{lem: c from edges} we conclude that the coefficients of $d_{+}$ are given by \eqref{eq: c from edges} which agrees with \eqref{eq: c from edges new}.
\end{proof}

\bibliographystyle{plain}
\bibliography{bibliography.bib}

\end{document}